\numberwithin{equation}{section}
\numberwithin{equation}{section}
\tikzset{
>=stealth',
help lines/.style={dashed, thick},
axis/.style={<->},
important line/.style={thick},
connection/.style={thick, dotted},
punkt/.style={
rectan\mathrm{GL}e,
rounded corners,
draw=black, thick,
text width=4.5em,
minimum height=2em,
text centered,
},
pil/.style={
->,
thick,
gray,
shorten <=2pt,
shorten >=2pt,}
}
\newtheorem{proposition}{Proposition}[section]
\newtheorem{lemma}[proposition]{Lemma}
\newtheorem{problem}[proposition]{Problem}
\newtheorem{corollary}[proposition]{Corollary}
\newtheorem{theorem}[proposition]{Theorem}
\theoremstyle{definition}
\newtheorem{definition}[proposition]{Definition}
\newtheorem{remark}[proposition]{Remark}
\newtheorem{example}[proposition]{Example}
\newtheorem{alphatheorem}{Theorem}
\newcommand{\arxiv}[1]{\href{http://arxiv.org/abs/#1}{\tt arXiv:\nolinkurl{#1}}}
\newcommand{\Rmnum}[1]{\expandafter\@slowromancap\romannumeral #1@}
\def \A{\mathcal A}
\def \g{\mathfrak{g}}
\def \sl{\mathfrak{sl}}
\def \bB{{\mathbf B }}
\def \N{\mathbb{N}}
\def \Q{\mathbb{Q}}
\def \Z{\mathbb{Z}}
\def \I{\mathbb{I}}
\def \bZ{\mathbb{Z}}
\def \la{\lambda}
\def \ad{\text{ad}\, }
\def \ov{\overline}
\def \un{\underline}
\def \Xi{X_\imath}
\newcommand{\nc}{\newcommand}
\nc{\browntext}[1]{\textcolor{brown}{#1}}
\nc{\greentext}[1]{\textcolor{green}{#1}}
\nc{\redtext}[1]{\textcolor{red}{#1}}
\nc{\bluetext}[1]{\textcolor{blue}{#1}}
\nc{\brown}[1]{\browntext{ #1}}
\nc{\green}[1]{\greentext{ #1}}
\nc{\red}[1]{\redtext{ #1}}
\nc{\blue}[1]{\bluetext{ #1}}
\def \Q {\mathbb Q}
\def \TT{\mathbf T}
\def \TTd{\dot{\mathbf T}}
\def \bt{\mathbf t}
\newcommand{\wt}{\text{wt}}
\def \vs{\varsigma}
\def \F{\mathbb{F}}
\def \a{\mathfrak{a}}
\def \wItau{\I_{\tau}}
\def \g{\mathfrak{g}}
\def \sl{\mathfrak{sl}}
\def \N{\mathbb{N}}
\def \Q{\mathbb{Q}}
\def \Z{\mathbb{Z}}
\def \I{\mathbb{I}}
\def \Br{\mathrm{Br}}
\def \bZ{\mathbb{Z}}
\def \A{\mathcal{A}}
\def \one{{\mathbf 1}}
\def \bs{\boldsymbol{\varsigma}}
\def \fX{\Upsilon}
\def \wI{\I}
\def \oc{\ov{c}}
\def \htau{\widehat{\tau}}
\def \cB{\mathcal{B}}
\def \bs{\mathbf{r}} 
\def \bvs{{\boldsymbol{\varsigma}}} 
\def \ba{\mathbf{a}}
\def \tTT{\widetilde{\mathbf{T}}}
\def \Id{\mathrm{Id}}
\def \dm{\diamond}
\newcommand{\U}{\mathbf{U}}
\newcommand{\Ui}{\U^\imath}
\newcommand{\dUi}{\dot\U^\imath}
\newcommand{\dUiA}{\dot\U_{\!\mathcal{A}}^\imath}
\newcommand{\tUi}{\widetilde{{\mathbf U}}^\imath}
\newcommand{\qbinom}[2]{\begin{bmatrix} #1\\#2 \end{bmatrix} }
\def \Ifin{\wI^{\mathrm{fin}}}
\def \fwItau{\wItau^{\mathrm{fin}}}
\newcommand{\dv}[2]{{B}_{#1}^{{(#2)}}}
\newcommand{\dvi}[2]{{B}_{i,#1}^{{(#2)}}}
\newcommand{\ev}{\bar{0}}
\newcommand{\odd}{\bar{1}}
\newcommand{\cbinom}[2]{\left\{\begin{matrix} #1\\#2 \end{matrix} \right\}}
\crefname{definition}{Definition}{Definitions}
\crefname{example}{Example}{Examples}
\crefname{lemma}{Lemma}{Lemmas}
\crefname{conjecture}{Conjecture}{Conjectures}
\crefname{corollary}{Corollary}{Corollaries}
\crefname{proposition}{Proposition}{Propositions}
\crefname{theorem}{Theorem}{Theorems}
\crefname{alphatheorem}{Theorem}{Theorems}
\crefname{remark}{Remark}{Remarks}
\crefname{equation}{}{}
\crefname{enumi}{}{}
\crefname{section}{Section}{Section}
\begin{document}

\title[Relative braid group symmetries on modules]{Relative braid group symmetries on modified $\mathrm{i}$quantum groups and their modules}

\author[Weiqiang Wang]{Weiqiang Wang}
	\address{Department of Mathematics, University of Virginia, Charlottesville, VA 22904, USA}
	\email{ww9c@virginia.edu}

	\author[Weinan Zhang]{Weinan Zhang}
	\address{Department of Mathematics and New Cornerstone Science Laboratory, The University of Hong Kong, Pokfulam, Hong Kong SAR, P.R.China}
	\email{mathzwn@hku.hk}
	
\subjclass[2020]{Primary 17B37, 17B67.}
\keywords{Braid group actions, Quantum symmetric pairs, $\mathrm{i}$Quantum groups}

\begin{abstract}
We present a comprehensive generalization of Lusztig's braid group symmetries for quasi-split iquantum groups. Specifically, we give 3 explicit rank one formulas for symmetries acting on integrable modules over a quasi-split iquantum group of arbitrary Kac-Moody type with general parameters. These symmetries are formulated in terms of idivided powers and iweights of the vectors being acted upon. We show that these symmetries are consistent with the relative braid group symmetries on iquantum groups, and they are also related to Lusztig's symmetries via quasi $K$-matrices. Furthermore, through appropriate rescaling, we obtain compatible symmetries for the integral forms of modified iquantum groups and their integrable modules. We verify that these symmetries satisfy the relative braid group relations.
\end{abstract}

\maketitle

\begin{quote}
\begin{center}
{\em In memory of Gail Letzter 
}
\end{center}
\end{quote}

\setcounter{tocdepth}{2}
\tableofcontents

\vspace{1em}
\section{Introduction}

\subsection{Background}
The foundation of quantum symmetric pairs $(\U, \Ui)$ was developed by Gail Letzter \cite{Let99, Let02} (and then in \cite{Ko14}), who pursued harmonic analysis from this perspective. The iquantum group $\Ui=\Ui_\bvs$, depending on a parameter $\bvs$, is a coideal subalgebra of a Drinfeld-Jimbo quantum group $\U$. In the past decade, it has become increasingly well known and fruitful to view Drinfeld-Jimbo quantum groups as iquantum groups of diagonal type, and the $\imath$-program \cite{BW18a} aims at generalizing all fundamental (algebraic, geometric, and categorical) constructions of quantum groups to the setting of iquantum groups; see \cite{Wa23} for a survey on the $\imath$-program including iSchur duality, super Kazhdan-Lusztig theory, (quasi) $K$-matrix, icanonical bases, iHall algebra, iDrinfeld presentation, and others.  

Braid group symmetries for quantum groups and their modules developed by Lusztig \cite{Lus90a, Lus90c, Lus93} have played a fundamental role in understanding fine structures of quantum groups such as PBW bases and canonical bases; they also play a central role in geometric and categorical representation theory. 

In the theory of symmetric pairs, the relative root systems and relative Weyl groups often play the role of root systems and Weyl groups for complex semisimple Lie algebras; see \cite{OV90}. Kolb and Pellegrini \cite{KP11} made an audacious conjecture that there exist relative braid group symmetries on iquantum groups of finite type. Relying on computer calculations, they constructed and verified the relative braid group symmetries for quasi-split iquantum groups of finite type (and also type AII) in special parameters. See \cite{Ch07, MR08} for earlier examples of relative braid group symmetries on iquantum groups. 

The $\imath$-program has provided some conceptual frameworks and powerful tools studying the relative braid group action on iquantum groups. Relative braid group symmetries have been naturally realized as BGP reflection functors within the framework of iHall algebras \cite{LW21, LW22}. However, the Hall algebra approach does not lead to braid group actions on the modules (for either quantum groups or iquantum groups). 

By making an ansatz with constructions for quantum groups \cite{KR90, LS90, Lus90c} and developing new characterizations and intertwining properties of quasi $K$-matrices, in the previous work \cite{WZ23} the authors constructed relative braid group symmetries for iquantum groups of finite type through Lusztig's braid group symmetries on quantum groups, completely settling the conjecture in \cite{KP11}. This approach has been generalized in \cite{Z23} to quasi-split iquantum groups of Kac-Moody type \cite{Ko14}. It was not known back then whether or not these relative braid group symmetries preserve the integral form modified iquantum groups. 

Formulas of relative braid group operators acting on integrable $\U$-modules with weights bounded above (viewed as $\Ui$-modules by restriction) are given in \cite{WZ23} in terms of Lusztig's braid group operators and rank one quasi $K$-matrices; the weight bounded above constraint is imposed since the quasi $K$-matrix lies in the positive half of $\U$. These formulas are not always explicit enough for direct computations and they are not broad enough to make sense for $\Ui$-modules which are not restrictions of $\U$-modules. 

The rank one formula for the braid group action on integrable modules over a quantum group has played a most fundamental role and formed the starting point of Lusztig's approach (see \cite[5.2.1]{Lus93} or \eqref{eq:mod1}--\eqref{eq:mod2}). This has led to the following basic open problem for iquantum groups.

\begin{problem} \label{prob:1}
    Find rank one formulas for relative braid group symmetries on all integrable $\Ui$-modules and modified iquantum groups, preserving the integral $\Z[q,q^{-1}]$-forms.
\end{problem}
The complexity of this problem for iquantum groups is also due to the existence of 3 quasi-split finite types of rank one, among 8 finite types of (real) rank one; see \cite{BW18b}. 

\subsection{What is achieved in this paper?}

The goal of this paper is to provide a comprehensive answer to Problem \ref{prob:1} in the generality of quasi-split iquantum groups $\Ui$ of arbitrary Kac-Moody type for general parameters $\bvs =(\vs_i)_{i\in \I}$. 

The relative Weyl group $W^\circ$ associated to a quasi-split Satake diagram $(\I,\tau)$ can be identified with the Coxeter group generated by the simple reflections $\bs_i =\bs_{\tau i}$, for $i\in \Ifin =\{i\in \I \mid c_{i,\tau i}=2,0,-1\}$; see \eqref{def:Ifinite}--\eqref{risi}. For each of the 3 quasi-split finite types of rank one, we present an explicit braid group formula acting on integrable $\Ui$-modules in terms of idivided powers. 
We show that these actions are compatible with the relative braid group action on $\Ui$ in \cite{WZ23, Z23}, and that our formulas coincide with the (non-explicit) formulas {\em loc. cit.} on integrable $\U$-modules with weights bounded above. 

By a suitable modification of the above formulas with specific parameters, we produce formulas for relative braid group actions that preserve the integral forms of integrable $\Ui$-modules. Moreover, by the requiring compatibility, we obtain relative braid group actions with explicit formulas on the modified iquantum group and its integral form. 

It is well known that braid group actions in the quantum group setting \cite{Lus93} provide a wealth of formulas involving divided powers and $q$-root vectors and long computations are sometimes unavoidable; see also \cite{Ja95}. The proofs of various relative braid group action formulas and compatibilities in this paper are reduced to proving several highly nontrivial identities involving idivided powers and $q$-root vectors in $\Ui$. We are also led to develop new formulas relating canonical bases, icanonical bases, and quasi $K$-matrices. 

Let us describe the main results in some details. 

 \subsection{New rank one formulas}\label{sec:formula}

The quasi-split iquantum group $\Ui$ is a subalgebra of $\U$ generated by $B_i$ and $k_i$ for $i\in \I$; see \eqref{def:iQG}. 
The notion of idivided powers (introduced in \cite{BW18a, BeW18} and denoted by $B_{i,\ov{p}}^{(k)}$, for $\tau i=i\in \I$, $k\ge 1$ and $\ov{p} \in \Z_2$) on (modified) iquantum groups has become indispensable in integral forms and icanonical bases \cite{BW18b}, iSerre relations \cite{CLW21a, Car23}, and Serre-Lusztig relations as well as earlier braid group formulas \cite{CLW21b}. 
The idivided powers in split rank one case depend on an iweight lying in the iweight lattice $X_\imath$ (see \cite{BW18b}). 
For $\tau i\neq i\in \I$, the idivided powers $B_i^{(k)}$ are defined as Lusztig's. 

Let $M=\oplus_{\ov{\la} \in X_\imath} M_{\ov{\la}}$ be an integrable $\Ui$-module; cf. Definition~\ref{def:integrable}. The following are the key new formulas for linear operators $\TT'_{i,-1}$ on $M$, for $i\in \Ifin$, introduced in this paper. We shall refer to the 3 cases below as split, diagonal, and quasi-split type rank one cases, respectively; See  
\eqref{eq:ibraid1'}, \eqref{eq:ibraid2'} and \eqref{eq:ibraid3'} for $\TT''_{i,+1} (v)$. Set $\ov{\la}_{i} =\langle h_i,\ov{\lambda}\rangle$, $\la_{i,\tau} =\langle h_i-h_{\tau i},\ov{\lambda}\rangle$, and $v\in M_{\ov{\lambda}}$ below.

\vspace{2mm}
$\boxed{\text{(i). Split type}~c_{i,\tau i}=2}$:
\begin{align*}
\TT'_{i,-1}(v) & =\sum_{k\geq 0:\:\ov{k}=\ov{\la}_i}  (-1)^{-k/2}(q_i^2\vs_i)^{-k/2} \dvi{\ov{k}}{k}v;
\end{align*}

\vspace{2mm}
$\boxed{\text{(ii). Diagonal type}~c_{i,\tau i}=0}$:
\begin{align*}
\TT'_{i,-1}(v)&= (-1)^{\la_{i,\tau}/2} q_i^{-\la_{i,\tau}/2} \sum_{a-b=\la_{i,\tau}} (-1)^b q_i^{-b} \vs_i^{-(a+b)/2} B_{i}^{(a)}  B_{\tau i}^{(b)}v;
\end{align*}

\vspace{2mm}
$\boxed{\text{(iii). Quasi-split type}~c_{i,\tau i}=-1}$:
\begin{align*} 
    \TT'_{i,-1}(v)&= q_i^{\la_{i,\tau}^2/2} k_i^{-\la_{i,\tau}}\sum_{t\ge 0,l\ge 0} (-1)^{t+l}  q_i^{ -\frac{(t-l)^2}{2}-\frac{t+l}{2}} k_i^{t-l} \vs_i^{\frac{\la_{i,\tau}}{2}-t } \vs_{\tau i}^{-\frac{\la_{i,\tau}}{2}-l} B^{(t)}_{\tau i} B^{(t+l)}_{i}B^{(l)}_{\tau i}v.
\end{align*}
In our setting, the formula in the diagonal type recovers Lusztig's formula in \cite[2.2(b)]{Lus09} (up to a rescaling); the original formulas in \cite[5.2]{Lus93} in different forms are a $q$-deformation of some classic formulas. In contrast, for the split and quasi-split types, even the classical limits of our formulas in (i) and (iii) are new. 

\subsection{Compatible braid group actions}

Denote by $\fX$ the quasi $K$-matrix associated to the quantum symmetric pair $(\U,\Ui_\bvs)$, by $\fX_{i}$ the rank one quasi $K$-matrix associated to the rank one Satake subdiagram $(\{i,\tau i\}, \tau)$. The quasi $K$-matrix in distinguished parameters satisfies intertwining properties with bar involutions \cite{BW18a, BK19, BW18b, BW21}; the quasi $K$-matrix for general parameters was suggested by Appel and Vlaar in \cite{AV20} and then formulated in \cite{WZ23} in terms of intertwining properties with anti-involutions (similar to \cite{Lus93}).

The relative braid group symmetries $\TT'_{i,-1}$, for $i\in \Ifin$, are constructed in \cite{WZ23, Z23} on $\Ui$. They are characterized by $\TT'_{i,\bvs,-1}(x) \fX_i =\fX_i T'_{\bs_i,\bvs, -1} (x), \forall x\in \Ui$; see \eqref{eq:compTT}, where $T'_{\bs_i,\bvs, -1}$ is a normalized Lusztig braid group operator \eqref{braid_Urescale} on $\U$ associated with $\bs_i\in W$ in \eqref{risi}.

We show that the new formulas on integrable $\Ui$-module $M$ given in \S \ref{sec:formula} are compatible with these symmetries on $\Ui$. 

\begin{alphatheorem}
[\cref{thm:split,thm:diag,thm:qs-split,prop:split2,prop:diag2,prop:quasisplit2}]
\label{thm:main1Intro}
Let $M$ be an integrable $\Ui$-module, and let $i\in \Ifin$. The following identities hold, for any $x\in \Ui,v\in M$: 
\begin{equation*}
\TT'_{i,-1}(x v)=\TT'_{i,-1}(x) \TT'_{i,-1}(v),\qquad
\TT''_{i,+1}(x v)=\TT''_{i,+1}(x) \TT''_{i,+1}(v).
\end{equation*}
\end{alphatheorem}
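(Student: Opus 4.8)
The plan is to use that each $\TT'_{i,-1}$ (and each $\TT''_{i,+1}$) is already an algebra automorphism of $\Ui$, so that the asserted module identity is multiplicative in $x$: if it holds for $x_1$ and $x_2$ for all $v$, then for $x_1x_2$ one has
\[
\TT'_{i,-1}(x_1 x_2 v)=\TT'_{i,-1}(x_1)\TT'_{i,-1}(x_2 v)=\TT'_{i,-1}(x_1)\TT'_{i,-1}(x_2)\TT'_{i,-1}(v)=\TT'_{i,-1}(x_1 x_2)\TT'_{i,-1}(v),
\]
using $\TT'_{i,-1}(x_1x_2)=\TT'_{i,-1}(x_1)\TT'_{i,-1}(x_2)$. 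Hence it suffices to verify both identities when $x$ runs over the algebra generators $k_j,B_j$ with $j\in\I$. The $k_j$ act by an iweight shift and are immediate, and for $B_j$ with $j$ not linked to $\{i,\tau i\}$ the operator $\TT'_{i,-1}$ fixes $B_j$ up to a scalar and commutes with its action, so those cases are routine. All the content is therefore concentrated in the finitely many generators $B_j$ with $j\in\{i,\tau i\}$ or $j$ adjacent, i.e.\ a rank $\le 2$ computation depending only on $\ov\lambda$ and the Cartan integers.

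Next I would dispose of the case of an integrable $\U$-module $M$ with weights bounded above (restricted to $\Ui$) conceptually, where by the stated coincidence our formula agrees with the operator $\fX_i\,T'_{\bs_i,\bvs,-1}$ of \cite{WZ23}. There the compatibility is formal: Lusztig's operator satisfies $T'_{\bs_i,\bvs,-1}(xv)=T'_{\bs_i,\bvs,-1}(x)\,T'_{\bs_i,\bvs,-1}(v)$, and the defining intertwining relation $\TT'_{i,-1}(x)\fX_i=\fX_i\,T'_{\bs_i,\bvs,-1}(x)$ yields
\[
\TT'_{i,-1}(xv)=\fX_i\,T'_{\bs_i,\bvs,-1}(xv)=\fX_i\,T'_{\bs_i,\bvs,-1}(x)\,T'_{\bs_i,\bvs,-1}(v)=\TT'_{i,-1}(x)\,\fX_i\,T'_{\bs_i,\bvs,-1}(v)=\TT'_{i,-1}(x)\,\TT'_{i,-1}(v).
\]
This settles the bounded-above $\U$-module case for free, but not yet a general integrable $\Ui$-module.

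To extend to all integrable $\Ui$-modules I would argue by universality. For a fixed generator $x=B_j$ and fixed iweight $\ov\lambda$, substituting the explicit rank one formulas of \S\ref{sec:formula} turns each side of the desired identity into a locally finite sum of idivided powers and $q$-root vectors acting on $v$, with coefficients depending only on $\ov\lambda_i$, $\la_{i,\tau}$ and the Cartan integers, never on $M$. The problem thus reduces to a family of universal identities among idivided powers in $\Ui$, indexed by the rank $\le 2$ data. Since these identities are already forced by the bounded-above $\U$-modules treated above, which realize every relevant rank $\le 2$ configuration and on which $\Ui$ acts faithfully enough to separate such combinations, they hold identically and hence on every integrable module; equivalently, one proves the requisite idivided-power identities directly. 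The statement for $\TT''_{i,+1}$ then follows by the parallel computation, or by transporting the $\TT'_{i,-1}$ identity through the bar involution relating the two sign conventions.

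The hard part will be the direct verification of these universal identities in the quasi-split rank one type $c_{i,\tau i}=-1$, whose formula carries the triple product $B^{(t)}_{\tau i}B^{(t+l)}_i B^{(l)}_{\tau i}$: matching $\TT'_{i,-1}(B_j v)$ against $\TT'_{i,-1}(B_j)\TT'_{i,-1}(v)$ there amounts precisely to the delicate idivided-power and $q$-root-vector identities flagged in the introduction. I expect this to require the new formulas relating canonical bases, icanonical bases, and quasi $K$-matrices, together with the Serre--Lusztig relations among idivided powers, and it is where the genuine work lies; the split and diagonal types should be comparatively direct by the same scheme.
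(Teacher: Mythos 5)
Your overall skeleton — reduction to generators via multiplicativity, the easy $K_\mu$ case, and obtaining the $\TT''_{i,+1}$ statement from the $\TT'_{i,-1}$ one through the bar involution at a suitable parameter — matches the paper. The fatal problem is your second step. The ``stated coincidence'' of the explicit rank one formulas with $\fX_i\,T'_{\bs_i,\bvs,-1}$ on integrable $\U$-modules with weights bounded above is not an available fact: it is \cref{thm:main2Intr} (i.e.\ \cref{thm:res-split,thm:res-diag,thm:res-qssplit}), one of the main theorems this paper sets out to prove, and its proof in the paper \emph{uses} \cref{thm:main1Intro}. Concretely, in the proof of \cref{thm:res-split} the reduction from a general vector to the highest weight vector $\eta$ of a rank one constituent proceeds by writing $\TT'_{i,-1}(B_{i,\ov n}^{(k)}\eta)=\TT'_{i,-1}(B_{i,\ov n}^{(k)})\,\TT'_{i,-1}(\eta)$ — exactly the identity you are trying to establish — and similarly \cref{prop:TTv} invokes \cref{thm:qs-split} to propagate the identity \eqref{Ti:old-definition} from $\eta$ to all of $L(m,n)$. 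So your plan is circular: you use B to get A on bounded-above modules, while B itself (even just its statement for a single highest weight vector, via the transition formulas \cref{prop:inverse,prop:FB}) cannot be extended to all vectors without A. The formal intertwining computation you display is valid only for the \emph{old} definition $\TT'_{i,-1}(v):=\fX_i T'_{\bs_i,\bvs,-1}(v)$ of \cite{WZ23}; it says nothing about the new formulas in terms of idivided powers until the coincidence is proved.

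With the base case gone, your universality/faithfulness extension has nothing to bootstrap from, and the actual content collapses onto your parenthetical fallback ``one proves the requisite idivided-power identities directly'' — which is precisely where the paper's proof lives, and for which you offer no method. The paper first reduces, via the isomorphisms $\phi_{\bvs,\bvs'}$, to a distinguished parameter, and then proves universal identities in (a completion of) $\Ui$ case by case: in split type, \eqref{eq:fsplit} follows from \cref{prop:BBij0}, which rests on the long inductive commutation formulas of \cref{prop:BBij2} (Appendix A); in diagonal type, the rank one identity \cref{prop:qs-rkone} is obtained by transporting Lusztig's $\sl_2$ symmetry through the isomorphism $\Ui_{i,\tau i}\cong \U(\sl_2)$, and the rank two identity is \cref{prop:qsBB}; in quasi-split type one needs the $\U(\sl_3)$ multiplicative formulas of Section 5 to prove \cref{thm:qs-rk1} and the rank two identity \cref{thm:qs-rktwo}. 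Only after these is \cref{thm:main2Intr} deduced (together with the canonical/icanonical transition matrices). In short, the logical order is the reverse of what you propose: A is proved by direct identities, and B is a consequence; your route would only become noncircular if you could prove B independently of A on \emph{all} vectors, which amounts to the same hard identities you deferred.
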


In \cite[\S10.2]{WZ23} and \cite{Z23}, the relative braid group symmetries on integrable $\U$-modules with weights bounded above were defined via quasi $K$-matrix as the right-hand sides of the identities \eqref{Ti:old-definition} below. Our next main result, Theorem~\ref{thm:main2Intr}, relates the symmetries on $\Ui$-modules via explicit rank one formulas in this paper to Lusztig's through quasi $K$-matrix. In particular, the current formulas specialize to those defined {\em loc. cit.}.

\begin{alphatheorem} [\cref{thm:res-split,thm:res-diag,thm:res-qssplit}]
\label{thm:main2Intr}
Let $M$ be an integrable $\U$-module whose weights are bounded above, and $i\in \Ifin$. Assume that the parameter $\bvs$ is balanced. Then the following identities hold, for all $v\in M$:
\begin{align}  \label{Ti:old-definition}
\TT'_{i,-1}(v)=\fX_{i} T'_{\bs_i,\bvs,-1}(v),\qquad \TT''_{i,+1}(v)=T''_{\bs_i,\bvs,+1}(\fX_{i}^{-1}) T''_{\bs_i,\bvs,+1}(v).
\end{align}
\end{alphatheorem}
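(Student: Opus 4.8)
The plan is to exploit that both sides of each identity are twisted $\Ui$-intertwiners for the \emph{same} algebra automorphism, so that the claim collapses to a computation on highest weight vectors. Write $L(v)=\TT'_{i,-1}(v)$ for the operator defined by the explicit rank one formula of \S\ref{sec:formula}, and $R(v)=\fX_i\, T'_{\bs_i,\bvs,-1}(v)$ for the right-hand side. By \cref{thm:main1Intro} the operator $L$ satisfies $L(xv)=\TT'_{i,-1}(x)\,L(v)$ for all $x\in\Ui$. I claim $R$ obeys the same twisted intertwining: using the characterization $\TT'_{i,\bvs,-1}(x)\fX_i=\fX_i T'_{\bs_i,\bvs,-1}(x)$ from \eqref{eq:compTT} together with the compatibility of Lusztig's operator $T'_{\bs_i,\bvs,-1}$ with the $\U$-module structure, one computes $R(xv)=\fX_i\, T'_{\bs_i,\bvs,-1}(x)\,T'_{\bs_i,\bvs,-1}(v)=\TT'_{i,-1}(x)\,\fX_i\, T'_{\bs_i,\bvs,-1}(v)=\TT'_{i,-1}(x)\,R(v)$. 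Hence $L$ and $R$ are both $\TT'_{i,-1}$-twisted $\Ui$-intertwiners on $M$.

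Next I would reduce to generators. Since $M$ is integrable with weights bounded above, it decomposes as a direct sum of integrable highest weight $\U$-modules $L(\la)$, each of which is cyclic over $\Ui$ generated by its highest weight vector $v_\la$: the $B_i$ act through their lowering part $F_i$ plus strictly higher-weight terms, so a downward induction on weights (well-founded because weights are bounded above) yields $\U^{-}v_\la\subseteq\Ui v_\la$, whence $\Ui v_\la=L(\la)$. Combined with the previous paragraph, the identity $L(v)=R(v)$ for all $v\in M$ follows once it is verified on the vectors $v_\la$. The $\TT''_{i,+1}$ identity is handled by the same scheme, using the companion intertwining property attached to $T''_{\bs_i,\bvs,+1}$ and $\fX_i^{-1}$.

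On a highest weight vector the two sides become tractable. Since $E_i v_\la=E_{\tau i}v_\la=0$, Lusztig's formula for $T'_{\bs_i,\bvs,-1}(v_\la)$ collapses: in the split and diagonal types it reduces to a single divided power $F_i^{(n)}v_\la$ up to a scalar, and in the quasi-split type to a short sum of products of $F$-divided powers. Applying the explicit rank one quasi $K$-matrix $\fX_i$ then produces $R(v_\la)$, while evaluating the idivided-power series of \S\ref{sec:formula} on $v_\la$ gives $L(v_\la)$. The heart of the proof is the resulting identity: one must show that $\fX_i F_i^{(n)}v_\la$ (and its quasi-split analogue) coincides, after the appropriate $q$-binomial rearrangement, with the idivided-power sum $\sum_k c_k\,\dvi{\ov{k}}{k}v_\la$. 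Here the balanced hypothesis on $\bvs$ is exactly what aligns the scalar prefactors $(q_i^2\vs_i)^{-k/2}$, $\vs_i^{-(a+b)/2}$ and the powers of $k_i$ occurring on the two sides.

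The main obstacle is the quasi-split type (iii). There the rank one quasi $K$-matrix $\fX_i$ is a genuine multi-index series, Lusztig's operator on $v_\la$ is already a double sum of $F$-divided powers, and the explicit formula is assembled from the triple products $B^{(t)}_{\tau i}B^{(t+l)}_{i}B^{(l)}_{\tau i}$. Matching these expressions requires the nontrivial identities among idivided powers and $q$-root vectors flagged in the introduction, and the delicate point is to control the Gaussian exponents $q_i^{-(t-l)^2/2-(t+l)/2}$ through the rearrangement. I would attack this by first establishing, at the level of the rank one algebra, the factorization $\dvi{\ov{k}}{k}\fX_i=\fX_i\cdot(\text{divided-power polynomial})$ — the module incarnation of \eqref{eq:compTT} — and then specializing it on $v_\la$, so that the triple sum decouples into a product of two single-index identities of split and diagonal type already settled above.
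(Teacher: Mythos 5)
Your overall skeleton is the same as the paper's: both sides of \eqref{Ti:old-definition} are $\TT'_{i,-1}$-twisted $\Ui$-intertwiners (your computation with \eqref{eq:compTT} is exactly the paper's \eqref{eq:fXTT}), and complete reducibility plus cyclicity $\Ui\eta = L(\la)$ reduces everything to the highest weight vector; this is precisely Claim~($\star$) in the proof of \cref{thm:res-split} and \cref{prop:TTv} in the quasi-split case. The genuine gap is at the step you yourself call the heart of the proof: the verification on $\eta$ is never carried out, and the strategy you sketch for it would fail. In the quasi-split case you assert that the triple sum "decouples into a product of two single-index identities of split and diagonal type already settled above." No such decoupling exists: the relevant rank one object is the quasi-split pair $(\U(\sl_3),\Ui(\sl_3))$, which does not factor through the split and diagonal cases. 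What the paper actually needs here is the explicit transition between Lusztig's canonical basis $F_1^{(a)}F_2^{(b)}F_1^{(c)}\eta$ and the pseudo icanonical basis $B_1^{(a)}B_2^{(b)}B_1^{(c)}\eta$ of $L(m,n)$ — i.e.\ \cref{thm:qsibasis} and the inversion formula \cref{prop:FB}, resting on the multiplicative identities \eqref{eq:BBad}, \eqref{eq:B2B1} — an entire section of new computations with no substitute in your proposal. Even in the split case the required input is the new inversion formula \cref{prop:inverse}, which you defer to an unspecified "$q$-binomial rearrangement."

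There is a second, related gap: you never evaluate $\fX_i$ on any vector, and your proposed tool for doing so — a factorization $\dvi{\ov{k}}{k}\fX_i=\fX_i\cdot(\text{divided-power polynomial})$ "at the level of the rank one algebra" — is nothing but the intertwining property \eqref{eq:compTT} you already used for the reduction; it determines no scalar coefficients and cannot produce the prefactors $(-q_i)^{-k/2}$ appearing in \eqref{specialClaim}. The paper's mechanism is different: first reduce (via $\phi_{\bvs,\bvs'}$ and $\Phi_{\ov{\bvs}\bvs'}(\fX_{i,\bvs})=\fX_{i,\bvs'}$) from a general balanced parameter to the distinguished $\bvs_\star$, where $\psi^\imath=\fX_{i,\bvs_\star}\circ\psi$ exists and fixes the (pseudo) icanonical basis, so that $\fX_{i,\bvs_\star}\ov{b_k}=b_k$; this is what converts the inversion formulas into the value of $\fX_{i}T'_{\bs_i,\bvs_\star,-1}(\eta)$. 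Your write-up works with an arbitrary balanced $\bvs$ throughout and so has no access to this. Finally, note that the paper handles the diagonal case by an entirely different (and cleaner) route: identifying $\Ui_{i,\tau i}\hookrightarrow\U_{i,\tau i}$ with $\Delta:\U(\sl_2)\to\U(\sl_2)\otimes\U(\sl_2)$ and $\fX_i$ with the quasi $R$-matrix $\Theta$, so that the statement becomes Lusztig's $\Delta(T'_{1,-1})v=\Theta(T'_{1,-1}\otimes T'_{1,-1})v$, with no highest weight vector computation at all.
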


More importantly, the current explicit definitions of $\TT'_{i,-1}$ and $\TT''_{i,+1}$ work for arbitrary parameters and for arbitrary integrable $\Ui$-modules (not necessarily integrable $\U$-modules with weights bounded above), and it remains compatible with the relative braid symmetries on $\Ui$ by \cref{thm:main1Intro}.

\subsection{Formulas for braid group actions on idivided powers}

Our current approach has led to new closed formulas for relative braid symmetries acting on idivided powers in $\Ui$. We list the formulas for $\TT'_{i,-1}$ below. 

\begin{alphatheorem} [\cref{thm:split-T1DP,thm:diag-T1DP,thm:qsplit-T1DP}]
\label{thm:main3-iDP}
Let $i\in\Ifin$ and $j\in \I$ such that $j\neq i,\tau i$. Set $\alpha=-c_{ij}, \beta=-c_{\tau i,j}$. Let $n\ge 0,\ov{t}\in \Z_2$. Assume that $j=\tau j$. We have
\vspace{2mm}
\begin{enumerate}
    \item[(i)] 
\boxed{\text{Split type}~ c_{i,\tau i}=2}: 
\begin{align*}
\begin{split}
\TT'_{i,-1}(\dv{j,\ov{t}}{n})&=\sum_{ r+s =n\alpha} (-1)^{r} q_i^{r} \dv{i,\ov{p+n\alpha}}{s} \dv{j,\ov{t}}{n} \dv{i,\ov{p}}{r}
\\
&\quad +\sum_{u\geq 1}\sum_{\substack{r+s+2u=n\alpha\\ \ov{r}=\ov{p}}} (-1)^{r} q_i^{r} \dv{i,\ov{p+n\alpha}}{s} \dv{j,\ov{t}}{n} \dv{i,\ov{p}}{r};
\end{split}
\end{align*}
\item[(ii)]
\boxed{\text{Diagonal type}~ c_{i,\tau i}=0}:
\begin{align*} 
\TT'_{i,-1}(\dv{j,\ov{t}}{n})&= \sum_{u=0}^{ \min(n\alpha,n\beta)} \sum_{r=0}^{n\alpha-u}\sum_{s=0}^{n\beta-u} (-1)^{r+s }q_i^{r(-u+1)+s(u+1) } 
\\ 
  &\qquad \times B_i^{(n\alpha-r-u)}B_{\tau i}^{(n\beta-s-u)}\dv{j,\ov{t}}{n} B_{\tau i}^{(s)}B_i^{(r)}  k_i^u;
\end{align*}
\item[(iii)]
\boxed{\text{Quasi-split type}~ c_{i,\tau i}=-1}:
\begin{align*} 
\TT'_{i,-1}(\dv{j,\ov{t}}{n})&=\sum_{u,w\geq 0} \sum_{t=0}^{n\beta-w} \sum_{s=0}^{n\beta+n\alpha-w-u} \sum_{r=0}^{n\alpha-u} (-1)^{t+r+s}
 q_i^{t(-2w+1) + r(u+1)+s(w-2u+1)+uw+3ut}
 \\
&\quad\times q_i^{-\frac{u^2+w^2}{2}} B_i^{(n\beta-w-t)}B_{\tau i}^{(n\alpha+n\beta-w-u-s)}B_i^{(n\alpha-u-r)} \dv{j,\ov{t}}{n} B_i^{(r)} B_{\tau i}^{(s)} B_i^{(t)} k_i^{w-u}.
\end{align*}
\end{enumerate}

If $\tau j\neq j$, then $\TT'_{i,-1}(\dv{j}{n})$ is given by Formulas (i)--(iii) with $\dv{j,\ov{t}}{n}$ replaced by $\dv{j}{n}$. 
\end{alphatheorem}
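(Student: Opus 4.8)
The plan is to establish the three formulas for $\TT'_{i,-1}(\dv{j,\ov{t}}{n})$ by reducing them to the action of $\TT'_{i,-1}$ on the integrable $\Ui$-module built from $\Ui$ itself. The key conceptual input is \cref{thm:main1Intro}: since $\TT'_{i,-1}$ is a twisted algebra automorphism of $\Ui$ (equivalently, it intertwines the module and algebra actions), the value $\TT'_{i,-1}(\dv{j,\ov{t}}{n})$ is determined by how $\TT'_{i,-1}$ acts on the generator $B_j$ (or its idivided power) together with the already-known rank one action on the $B_i, B_{\tau i}$ factors. Concretely, I would first compute $\TT'_{i,-1}(\dv{j,\ov{t}}{n})$ as a ``highest weight'' calculation: apply the explicit rank one operator from \S\ref{sec:formula} to the vector $\dv{j,\ov{t}}{n}\cdot \one$ inside a suitable integrable module (e.g.\ a universal Verma-type or the regular $\Ui$-module in the sense of Definition~\ref{def:integrable}), and read off the coefficients.

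First I would fix the combinatorial setup: with $\alpha=-c_{ij}$ and $\beta=-c_{\tau i,j}$, the relevant iSerre/Serre--Lusztig relations (from \cite{CLW21a, CLW21b, Car23}) control the commutation of $B_i$ (and $B_{\tau i}$) past $\dv{j,\ov{t}}{n}$, and they guarantee that only the exponents in the stated ranges ($0\le r\le n\alpha$, etc.) contribute. Then, in each of the three cases, I would substitute the corresponding rank one formula for $\TT'_{i,-1}$ and expand. In the split case the operator is a single sum over idivided powers $\dvi{\ov{k}}{k}$, so the computation amounts to moving these past $\dv{j,\ov{t}}{n}$ using the split Serre--Lusztig relation and regrouping into the $(r,s)$-double-sum form, carefully tracking the iweight parities $\ov{p}$ and $\ov{p+n\alpha}$. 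The diagonal case reduces, via the identification with Lusztig's rescaled formula noted after \S\ref{sec:formula}, to a known quantum-group computation of $T'_{i,-1}$ on a divided power $E_j^{(n)}$-type element, rewritten in terms of $B_i, B_{\tau i}, k_i$; here the sum over the auxiliary index $u$ records the $k_i$-power arising from straightening $B_i^{(a)}B_{\tau i}^{(b)}$ products.

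The hard part will be the quasi-split case (iii), where three families of divided powers $B^{(t)}_{\tau i}B^{(t+l)}_{i}B^{(l)}_{\tau i}$ must be commuted past $\dv{j,\ov{t}}{n}$ and then recombined. The quadratic exponent $q_i^{-(u^2+w^2)/2}$ and the mixed term $q_i^{uw+3ut}$ in the target formula signal that this is where the genuinely nontrivial $q$-bookkeeping lives, and I expect the main obstacle to be proving the underlying straightening identity for products of the form $B_{\tau i}^{(\cdot)}B_i^{(\cdot)}B_{\tau i}^{(\cdot)}$ acting across $B_j$ — precisely the type of ``highly nontrivial identities involving idivided powers and $q$-root vectors'' flagged in the introduction. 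My strategy for this step is to isolate that identity as a standalone lemma, prove it by induction on $n$ (peeling off one factor of $B_j$ at a time and using the rank one action on the quasi-split $\{i,\tau i\}$ subalgebra), and then assemble the full formula by summing the contributions. Finally, for the case $\tau j\neq j$, I would observe that $B_j$ behaves as in Lusztig's setting (its idivided powers are defined as Lusztig's, per \S\ref{sec:formula}), so the identical computation goes through verbatim with $\dv{j,\ov{t}}{n}$ replaced by $\dv{j}{n}$, giving the concluding sentence of the statement.
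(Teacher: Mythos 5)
Your high-level skeleton does match the paper's: both arguments pin down $\TT'_{i,-1}(\dv{j,\ov{t}}{n})$ through its interaction with the explicit rank one operators acting on integrable modules, using \cref{thm:main1Intro}. But two of your concrete steps fail as stated. First, the test modules you propose --- ``a universal Verma-type or the regular $\Ui$-module'' --- are not integrable, and the rank one operators of \S\ref{sec:formula} are infinite sums of idivided powers that only make sense on modules satisfying Definition~\ref{def:integrable}; there is no single cyclic integrable module from which one can ``read off coefficients.'' The paper instead runs the argument over \emph{all} integrable modules: \cref{thm:split} gives $\TT'_{i,-1}(X_{j,n})\,\TT'_{i,-1}(v)=\TT'_{i,-1}(X_{j,n}v)$, the key algebraic identity (\cref{prop:BBij0}) gives $b_{i,j;n,n\alpha}\,\TT'_{i,-1}(v)=\TT'_{i,-1}(X_{j,n}v)$, and then one replaces $v$ by $\TT''_{i,+1}v$, invokes mutual invertibility (\cref{cor:mod-inv}), and uses the separation property recorded after \cref{lem:integrable} (an element of $\Ui$ killing every integrable module is zero). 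Your proposal omits both the inversion step and the faithfulness step, and without them the extraction of an element of $\Ui$ from its action on vectors is not justified.

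Second, and more substantively, the real content is not obtained by ``expanding and regrouping'' the rank one operator past $\dv{j,\ov{t}}{n}$, nor by induction on $n$. The paper first defines candidate elements in closed form --- the higher-order $q$-root vectors $b_{i,j;n,m}$, $b_{i,\tau i,j;n,m_1,m_2}$, $b_{i,\tau i,j;n,a,b,c}$, new for general $n$ --- via \eqref{splitdiv1}--\eqref{splitdiv2}, \eqref{diagdiv}, \eqref{eq:qsdv1} together with their recursions \eqref{def:splitBij}, \eqref{def:diagBij1}--\eqref{def:diagBij2}, \eqref{def:qsqsBij1}--\eqref{def:qsqsBij2}, and then proves, with $n$ \emph{fixed}, the intertwining identities \cref{prop:BBij0} (resting on the appendix induction on $k$ in \cref{prop:BBij2}), \cref{prop:qsBB}, and \cref{thm:qs-rktwo}; the stated formulas are exactly these elements at the extremal indices. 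Your plan of peeling off one factor of $B_j$ at a time would force you to multiply the closed formula for $\TT'_{i,-1}(B_j)$ against that for $\TT'_{i,-1}(\dv{j,\ov{t}}{n-1})$ and restraighten products of $q$-root vectors, a computation at least as hard as the identities you hope to bypass, and for which you supply no tools. Finally, your claim that the diagonal case reduces to a known quantum-group computation is incorrect when $\tau j=j$: there $\dv{j,\ov{t}}{n}$ is an idivided power depending on a parity, not a Lusztig divided power, so the rank-two identity \cref{prop:qsBB} is genuinely new and cannot be imported from the quantum group setting.
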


Let us comment on the proofs of \cref{thm:main1Intro,thm:main2Intr,thm:main3-iDP}.
By applying some general algebra isomorphisms, one can reduce the proofs of \cref{thm:main1Intro,thm:main2Intr,thm:main3-iDP} for general parameters $\bvs$ to any special parameter adapted to each statement and each locally finite type. 

It was shown in \cite[Chap. 37]{Lus93} that a braid group operator $T_i'$ on $\U$ sends the Chevalley generators (and their divided powers) to suitable $q$-root vectors (and higher order $q$-root vectors). For quasi-split iquantum groups, $q$-root vectors and their higher order analogues have been introduced in \cite{CLW21b,CLW23,Z23} for each of the 3 rank one finite type. The automorphism $\TT'_{i,-1}$ on $\Ui$ sends the generators $B_j$ to a suitable $q$-root vector, cf. \cite{CLW21b, Z23}. 

On the other hand, the rank one braid formulas for $\TT'_{i,-1}$ on integrable $\Ui$-modules in this current work are given in terms of idivided powers. Therefore, proving \cref{thm:main1Intro} boils down to verifying various highly nontrivial identities involving idivided powers and $q$-root vectors (or their higher order analogues). See \cref{prop:BBij2,prop:BBij0} for the split type (i), \cref{prop:qs-rkone,prop:qsBB} for the diagonal type (ii), and \cref{thm:qs-rk1,thm:qs-rktwo} for the quasi-split type (iii), respectively. \cref{thm:main3-iDP}   follows from the higher order identities. 

For the proof of \cref{thm:main2Intr}, we choose to work with a specific parameter $\bvs_\star$ so the $\imath$bar involution on $\Ui$ and $\U$-modules can be defined via quasi $K$-matrix \cite{BW18a,BW21}. In the split rank one case, a formula expressing Lusztig canonical basis via icanonical basis on simple $\U(\sl_2)$-modules is obtained in \cref{prop:inverse}. The quasi-split rank one case is much more challenging, and to that end we construct a new monomial basis on any  finite-dimensional simple $\U(\sl_3)$-module viewed as a $\Ui$-module, which is of independent interest and will have additional futuree applications; see \cref{thm:qsibasis}. Then the transition matrices between Lusztig canonical basis and the monomial basis on a simple $\U(\sl_3)$-module are obtained in \cref{prop:BBBFFF,prop:FB}. These formulas and  \cref{thm:main1Intro} (as well as some identities established toward the proof of \cref{thm:main1Intro}) are used to prove \cref{thm:main2Intr}. The compatibility between relative group action and Lusztig's braid group action in \cref{thm:main2Intr} was helpful in the initial formulation of the rank one formulas.

\subsection{Relative braid group actions and integral forms}

A notable feature of iquantum groups is the {\em nonexistence} of an integral $\A$-form on $\Ui$ in general (where $\A=\Z[q,q^{-1}]$) compatible with $\A$-forms of {\em all} highest weight integrable $\U$-modules; this phenomenon occurs already in split rank one and motivated the notion of idivided powers depending on parities (which led to the notion of iweights). The idivided powers with $\vs=q^{-1}$ associated with parity $\ov{0}$ (or respectively, $\ov{1}$) preserve only the integral forms of the simple $\U(\sl_2)$-modules $L(n\omega)$ for $n\in \N$ even (or  respectively, odd); see \cite{BW18a, BeW18}.

There is an $\A$-form $\dUiA$ in the modified iquantum group $\dUi$, for parameters $\bvs$ which satisfies suitable constraints \eqref{vsi_same} and \eqref{parameter_iCB}; the icanonical basis of $\dUi$ forms an $\A$-basis for $\dUiA$ \cite{BW18b, BW21}. For $i\in \Ifin$, we introduce linear operators 
\[
\TTd'_{i,-1}: M \longrightarrow M,
\qquad
\TTd''_{i,+1}: M\longrightarrow M 
\]
by rescaling the definitions of $\TT'_{i,-1}$ and $\TT''_{i,+1}$ on any integrable $\Ui$-module $M$; they are now given by actions of elements ${\bf t}_i', {\bf t}_i''$ in (a completion of) $\dUiA$ on $M$; we refer to formulas \eqref{ti'}--\eqref{ti''}, \eqref{diagti'}--\eqref{diagti''} and \eqref{qss-ti'}--\eqref{qss-ti''} for ${\bf t}_i', {\bf t}_i''$, and list the formulas for ${\bf t}_i'$ below:
\vspace{2mm}

$\boxed{\text{(i). Split type}~ c_{i,\tau i}=2,\ \vs_i=q_i^{-1}}:$
\begin{align*}
{\bf t}_i' = \sum_{m\geq 0} \sum_{\substack{\ov{\lambda}\in X_\imath\\\ov{\la_i}=\ov{0}}} (-1)^{m}  q_i ^{-m} \dvi{\ov{0} }{2m}\one_{\ov{\lambda}}
+\sum_{m\geq 0} \sum_{\substack{\ov{\lambda}\in X_\imath\\\ov{\la_i}=\ov{1}}} (-1)^{m} q_i^{-m} \dvi{\ov{1} }{2m+1}\one_{\ov{\lambda}};
\end{align*}
$\boxed{\text{(ii). Diagonal type}~ c_{i,\tau i}=0,\ \vs_i=\vs_{\tau i}=1}:$ 
\begin{align*}
    {\bf t}_i' =   \sum_{ \ov{\lambda}\in X_\imath }  \sum_{a-b=\la_{i,\tau}} (-1)^b q_i^{-b}  B_{i}^{(a)}  B_{\tau i}^{(b)} \one_{\ov{\lambda}};
\end{align*}
$\boxed{\text{(iii). Quasi-split type}~ c_{i,\tau i}=-1,\ \vs_{\tau i} \in q_i^{\Z}}:$ 
\begin{align*}
    {\bf t}_i' =   \sum_{ \ov{\lambda}\in X_\imath }  q_i^{-\la_{i,\tau}(\la_{i,\tau}-1)/2}  \sum_{t\ge 0,l\ge 0} (-1)^{t+l}  q_i^{ -\frac{(t-l)(t-l+1)}{2}-t-l+\la_{i,\tau}(t-l)}  \vs_{\tau i}^{-\la_{i,\tau}+t-l}   B^{(t,t+l,l)}_{i} \one_{\ov{\lambda}}.
\end{align*}
By definition, $\TTd'_{i,-1}$ and $\TTd''_{i,+1}$ preserve an $\A$-form $M_\A$ of $M$. These elements ${\bf t}_i', {\bf t}_i''$ should be viewed as $\mathrm{i}$-counterparts of the elements $s_{i,-1}', s_{i,+1}''$ in \eqref{si'si''} or \cite[2.2(b)]{Lus09}; the formula in (ii) is the same as Lusztig's.

\begin{alphatheorem}  [\cref{thm:integralTi}]
\label{thm:main4-integral}
Let $i\in \Ifin$. 
\begin{enumerate}
    \item 
For any $x\in \dUi$, there exist unique $x',x''\in \dUi$ such that
$
x' {\bf t}_i' ={\bf t}_i' x,\ x'' {\bf t}_i'' ={\bf t}_i'' x.
$
\item
The maps $\TTd'_{i,-1} \colon \dUi \rightarrow \dUi\; (x\mapsto x')$ and $\TTd''_{i,+1}\colon \dUi \rightarrow \dUi \; (x\mapsto x'')$ are automorphisms of $\dUi$, mutually inverse to each other; see Table~\ref{table:rkone-int}--\ref{table:rktwo-int2} for explicit formulas. 
\item 
Let $M$ be an integrable $\Ui$-module. For any $v\in M$ and $x\in \dUi$, we have
$\TTd'_{i,-1} (xv) =\TTd'_{i,-1}(x) \TTd'_{i,-1}(v)$ and $\TTd''_{i,+1}(xv) =\TTd''_{i,+1}(x) \TTd''_{i,+1}(v)$.
\item 
The automorphisms $\TTd'_{i,-1} $ and $\TTd''_{i,+1}$ of $\dUi$ preserve the $\A$-form $\dUiA$. 
\end{enumerate}
\end{alphatheorem}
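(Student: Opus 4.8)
\textbf{Proof proposal for \cref{thm:main4-integral}.}

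The plan is to deduce all four parts from the results already established, reducing the integral statement to the module-level compatibility of \cref{thm:main1Intro} together with the explicit rescaled formulas for $\mathbf{t}_i',\mathbf{t}_i''$ in each of the three rank one types. The central observation is that $\TTd'_{i,-1}$ and $\TTd''_{i,+1}$ are, by their very definition, the operators $\TT'_{i,-1}$ and $\TT''_{i,+1}$ realized as left (respectively, suitably completed) multiplication by the distinguished elements $\mathbf{t}_i',\mathbf{t}_i''$ in the completion of $\dUiA$. Thus parts (1)--(3) are essentially the assertion that the module-level automorphisms intertwine with the algebra automorphisms on $\dUi$, while part (4) is the genuinely new integrality statement.

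First I would treat part (1). The existence and uniqueness of $x',x''$ with $x'\mathbf{t}_i'=\mathbf{t}_i' x$ (and the primed analogue) follows from the fact that $\mathbf{t}_i'$ lies in a completion of $\dUiA$ and acts invertibly: on each integrable module the operator $\TT'_{i,-1}$ is invertible with inverse built from $\mathbf{t}_i''$, so the relation $x'\mathbf{t}_i'=\mathbf{t}_i' x$ determines $x'=\mathbf{t}_i' x (\mathbf{t}_i')^{-1}$ uniquely; one must check that this conjugate actually lands in $\dUi$ rather than merely its completion. For this I would invoke the rank one formulas of \cref{thm:main3-iDP}, which give $\TT'_{i,-1}(\dv{j,\ov{t}}{n})$ and $\TT'_{i,-1}(\dv{j}{n})$ as explicit finite sums of $q$-root vectors times idivided powers, hence as elements of $\dUi$; since the $B_j$ and their idivided powers generate, closure follows. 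Part (2) is then formal: $\TTd'_{i,-1}$ and $\TTd''_{i,+1}$ are algebra homomorphisms because conjugation by $\mathbf{t}_i'$ is, they are inverse to each other because $\mathbf{t}_i'$ and $\mathbf{t}_i''$ are (up to normalization) inverse operators on every integrable module and $\dUi$ embeds into $\prod_M \End(M)$ over integrable $M$, and surjectivity follows from having a two-sided inverse.

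Part (3) is where \cref{thm:main1Intro} enters directly. Since $\TTd'_{i,-1}$ differs from $\TT'_{i,-1}$ only by the rescaling that produces $\mathbf{t}_i'$ from the module formula, the compatibility $\TTd'_{i,-1}(xv)=\TTd'_{i,-1}(x)\TTd'_{i,-1}(v)$ is inherited verbatim from $\TT'_{i,-1}(xv)=\TT'_{i,-1}(x)\TT'_{i,-1}(v)$, with the definition in (1) of $x'=\TTd'_{i,-1}(x)$ making the equation tautological once one unwinds $\TTd'_{i,-1}(xv)=\mathbf{t}_i'(xv)=(\mathbf{t}_i' x (\mathbf{t}_i')^{-1})(\mathbf{t}_i' v)$. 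I expect the main obstacle to be part (4), the preservation of the $\A$-form $\dUiA$. Here the delicate point is precisely the phenomenon flagged in the introduction: idivided powers do \emph{not} in general preserve all integral forms, and the correct rescaling by the specified parameters ($\vs_i=q_i^{-1}$ in the split case, $\vs_i=\vs_{\tau i}=1$ in the diagonal case, $\vs_{\tau i}\in q_i^{\Z}$ in the quasi-split case) is exactly what is needed. I would verify integrality directly from the explicit images in \cref{thm:main3-iDP}: under the stated parameter specializations, every coefficient $(-1)^{\cdots}q_i^{\cdots}\vs_{\tau i}^{\cdots}k_i^{\cdots}$ appearing there lies in $\A$, and the $q$-root vectors together with idivided powers and the $k_i^{\pm}, \one_{\ov\lambda}$ are $\A$-integral elements of $\dUiA$ by the results of \cite{BW18b, BW21}. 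Since $\dUiA$ is the $\A$-span of products of idivided powers against $\one_{\ov\lambda}$, and $\TTd'_{i,-1}$ sends each such generator to an $\A$-combination of the same, it restricts to an $\A$-linear endomorphism; applying the identical argument to the inverse $\TTd''_{i,+1}$ shows both are $\A$-linear automorphisms, completing the proof.
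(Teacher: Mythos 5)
Your parts (1)--(3) are essentially the paper's own argument: uniqueness from invertibility of ${\bf t}_i'$ in the completion, existence checked on the generators $\one_{\ov\la}$, $B_j\one_{\ov\la}$ via the intertwining identities \eqref{eq:fsplit}, \eqref{eq:Tzm}, \eqref{eq:Tgnu} (and their $''$-counterparts) established in the proof of Theorem~A, multiplicative extension, and mutual inverses from ${\bf t}_i'{\bf t}_i''={\bf t}_i''{\bf t}_i'=1$ (\cref{lem:inverse}); your ``tautological'' conjugation derivation of (3) is a clean repackaging of the paper's generator-by-generator check of the $(-q_i)^{1/2}$-scalars, and is fine.

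Part (4), however, contains a genuine gap, precisely at the point you identify as the main obstacle. Your argument rests on two claims: (a) that $\dUiA$ is the $\A$-span (or the $\A$-algebra generated by) products of idivided powers against the idempotents $\one_{\ov\la}$, and (b) that the monomials appearing in the explicit formulas of \cref{thm:main3-iDP} (equivalently, Tables~\ref{table:rkone-int}--\ref{table:rktwo-int2}) are themselves elements of $\dUiA$. Neither is available. The form $\dUiA$ is \emph{defined} by $\dUiA=\{u\in\dUi\mid u\,\dot{\U}_\A\subseteq\dot{\U}_\A\}$, and its known $\A$-basis is the icanonical basis \cite{BW18b,BW21}; no monomial, PBW-type $\A$-spanning or generation statement is proved in this quasi-split Kac--Moody generality, and since integrality of idivided powers is parity-sensitive, such a statement would itself require substantial proof. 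As for (b), the paper explicitly flags (in the remark following \cref{prop:dotTiDP}) that the integrality of exactly these explicit expressions ``is not manifestly clear'' and is \emph{deduced from} \cref{thm:integralTi}(4); taking it as an input makes your argument circular. (Your assertion that $q$-root vectors, idivided powers, $k_i^{\pm1}$ are ``$\A$-integral elements of $\dUiA$ by \cite{BW18b,BW21}'' is not something those references prove.) The paper's actual route avoids all of this by a module-theoretic transfer: since ${\bf t}_i',{\bf t}_i''\in\widehat{\U}^\imath_\A$, the operators $\TTd'_{i,-1},\TTd''_{i,+1}$ preserve the $\A$-form $M_\A$ of any integrable module; the identity $\TTd'_{i,-1}(x)v=\TTd'_{i,-1}\bigl(x\,\TTd''_{i,+1}(v)\bigr)$ then shows $\TTd'_{i,-1}(x)$ preserves $M_\A$; finally, taking $M={}^\omega L(\mu)\otimes L(\nu)$ with $v=\xi_{-\mu}\otimes\eta_\nu$ and $\mu,\nu\gg0$, together with \cite[Lemma 3.20]{BW18b}, yields $\TTd'_{i,-1}(x)\one_\la\in\dot{\U}_\A$ for every $\la$, i.e.\ $\TTd'_{i,-1}(x)\in\dUiA$. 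Some such detour through $\A$-forms of modules, rather than direct inspection of coefficients in the closed formulas, is what makes part (4) go through.
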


Formulas for actions of $\TTd'_{i,-1} $ and $\TTd''_{i,+1}$ on idivided powers in $\dUi$ are available based on the formulas in \cref{thm:main3-iDP}; see \cref{prop:dotTiDP}. 

For $i,j\in \fwItau$ such that $j\neq i,\tau i$, we denote by $m_{ij}$ the order of $\bs_i\bs_j$ in the relative Weyl group $W^\circ$; see \eqref{risi}. Just as in \cite{Lus93}, we can define two more variants of relative braid group operators $\TTd''_{i,-1}$ and $\TTd'_{i,+1}$; see \eqref{TTdM}.

\begin{alphatheorem}  [\cref{thm:TTd_braid,cor:braid}]
\label{thm:main5-braid}
    Let $i,j\in \Ifin$ such that $j\neq i,\tau i$ and $m_{ij}<\infty$. When acting on $\dUi$ or on any integrable $\Ui$-module the following relative braid relations hold:
    \begin{align*}
        \underbrace{\TTd'_{i,-1} \TTd'_{j,-1} \TTd'_{i,-1} \ldots}_{m_{ij}} 
        &=
        \underbrace{\TTd'_{j,-1} \TTd'_{i,-1} \TTd'_{j,-1} \ldots}_{m_{ij}} \ ,
        \\
        \underbrace{\TTd''_{i,+1} \TTd''_{j,+1} \TTd''_{i,+1} \ldots}_{m_{ij}} 
        &=
        \underbrace{\TTd''_{j,+1} \TTd''_{i,+1} \TTd''_{j,+1} \ldots}_{m_{ij}} \ .
    \end{align*}
    Moreover, the operators $\TT'_{i,-1}$ as well as $\TT''_{i,+1}$ on any integrable $\Ui$-module satisfy the same relative braid relations as above.
\end{alphatheorem}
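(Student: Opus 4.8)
The plan is to deduce all four families of relative braid relations from the already-established relative braid group relations for the symmetries $\TT'_{i,-1}$ and $\TT''_{i,+1}$ on $\Ui$ (from \cite{WZ23, Z23}), propagating them first to the modified algebra $\dUi$ and then to every integrable module. Throughout I fix $i,j\in \Ifin$ with $j\neq i,\tau i$ and $m_{ij}<\infty$, and I write $\Phi=\underbrace{\TTd'_{i,-1}\TTd'_{j,-1}\cdots}_{m_{ij}}$ and $\Psi=\underbrace{\TTd'_{j,-1}\TTd'_{i,-1}\cdots}_{m_{ij}}$ for the two braid words (and analogously for the $''$, $+1$ operators); the goal is $\Phi=\Psi$.

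First I would prove the relation on $\dUi$. By \cref{thm:main4-integral}(2) each $\TTd'_{i,-1}$ is an automorphism of $\dUi$, realized through the element ${\bf t}_i'$ of the completion of $\dUiA$; concretely $\TTd'_{i,-1}$ is the rescaling of the relative braid symmetry $\TT'_{i,-1}$ of \cite{WZ23, Z23} by the iweight-dependent scalars attached to the idempotents $\one_{\ov\lambda}$ recorded in \S\ref{sec:formula}. Since the $\TT'_{i,-1}$ already satisfy the braid relations on $\Ui$, the only point to check is that the rescaling is consistent along the braid word: the relative Weyl group $W^\circ$ permutes the iweights $\ov\lambda\in X_\imath$, and the product of rescaling scalars accumulated along $\Phi$ must equal that accumulated along $\Psi$. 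This is a cocycle-type identity for the quadratic-in-$\la_{i,\tau}$ exponents appearing in ${\bf t}_i',{\bf t}_i''$, which I would verify directly from the formulas of \S\ref{sec:formula} and \cref{thm:main4-integral}; granting it, $\Phi=\Psi$ as automorphisms of $\dUi$.

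Next I would transfer the relation to an arbitrary integrable $\Ui$-module $M$. By \cref{thm:main4-integral}(3) (resp.\ \cref{thm:main1Intro}) each operator satisfies the intertwining property $\TTd'_{i,-1}(xv)=\TTd'_{i,-1}(x)\,\TTd'_{i,-1}(v)$, so the module braid words $\Phi_M,\Psi_M$ obey $\Phi_M(xv)=\Phi(x)\Phi_M(v)$ and $\Psi_M(xv)=\Psi(x)\Psi_M(v)$ for all $x\in\dUi$, $v\in M$. Using $\Phi=\Psi$ on $\dUi$ from the previous step, the operator $\Xi_M:=\Psi_M^{-1}\Phi_M$ commutes with the $\dUi$-action, i.e.\ $\Xi_M(xv)=x\,\Xi_M(v)$, so on each iweight space it is a central scalar. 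I would pin this scalar to $1$ by comparing with \cref{thm:main2Intr}, which on integrable $\U$-modules with weights bounded above (for balanced parameters) identifies $\Phi_M$ and $\Psi_M$ with quasi $K$-matrix twists of Lusztig's braid words; for these the braid relations on $\U$-modules (\cite{Lus93}) together with the intertwining property of $\fX_i$ force $\Xi_M=\mathrm{id}$ outright. The general case then follows because the coefficients of $\Xi_M$ on a fixed iweight space are universal, independent of $M$ (being built from the explicit formulas of \S\ref{sec:formula} and \cref{thm:main3-iDP}), so the family of bounded-above $\U$-modules, combined with the parameter-reduction isomorphisms used to pass to a special $\bvs$, detects them. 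Conversely, once $\Phi_M=\Psi_M$ holds on all integrable modules, faithfulness of (the completion of) $\dUi$ on integrable modules re-derives $\Phi=\Psi$ on $\dUi$, and the statement for the unrescaled operators $\TT'_{i,-1},\TT''_{i,+1}$ on modules follows by dividing out the same iweight-scalars that relate $\TT'$ to $\TTd'$.

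The main obstacle I expect is the normalization in the third paragraph: showing that $\Xi_M$ acts by the scalar exactly $1$ (not merely a nonzero constant) on every iweight space of every integrable $\Ui$-module, including those that are not restrictions of integrable $\U$-modules with weights bounded above, where \cref{thm:main2Intr} does not directly apply. Handling this requires the universality of the explicit rank-one and idivided-power formulas (\cref{thm:main3-iDP}) so that the scalar is forced to agree with the constant computed in the bounded-above case, combined with the cocycle consistency of the rescaling exponents from Step 1, which is itself a nontrivial quadratic identity in the iweights.
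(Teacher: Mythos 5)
Your overall architecture --- reduce everything to the relative braid relations already established in \cite{WZ23,Z23} plus a consistency check on the rescaling scalars --- is sound, and these are in fact the same two ingredients the paper uses, just run in the opposite order (you go $\Ui \to \dUi \to$ modules, while the paper goes integrable $\U$-modules $\to$ integrable $\Ui$-modules $\to \dUi$, using that the braid words are actions of fixed elements of the completion $\widehat{\U}^\imath_\A$). But there are genuine gaps. First and most seriously, the scalar-consistency (``cocycle'') identity that you defer with ``I would verify directly\dots granting it'' \emph{is} the theorem: it is not one uniform quadratic identity in the iweights, but a case-by-case verification over all combinations of rank-one types of $i$ and $j$ (split/diagonal/quasi-split) and over $m_{ij}\in\{2,3,4,6\}$. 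The paper's proofs of \cref{thm:TTd_braid_split} and \cref{thm:TTd_braid} consist almost entirely of this check (the parity sequences \eqref{paritysequence}, the scalar \eqref{scaleG2}, and cases (a)--(f)); moreover, in the diagonal--diagonal case the scalar comparison is abandoned altogether in favor of a different argument, namely identifying $\TTd'_{i,-1},\TTd'_{j,-1}$ with Lusztig's symmetries on an auxiliary rank-two quantum group (\cref{lem:diagonal}). A proposal that grants this verification has granted precisely what is to be proved.

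Second, two steps of your module argument are incorrect as stated. The operator $\Xi_M=\Psi_M^{-1}\Phi_M$ does commute with the $\dUi$-action, but it does \emph{not} follow that it is a scalar on each iweight space: integrable $\Ui$-modules need not be irreducible or semisimple, so no Schur-type argument is available. The correct substitute is the universality you invoke afterwards --- $\Phi_M,\Psi_M$ are the actions of fixed elements of $\widehat{\U}^\imath_\A$, so it suffices to prove equality of those elements by checking the action on a faithful family of modules (this is exactly how the paper argues via \eqref{eq:titj}); the per-iweight-space Schur step should simply be deleted. Finally, your pinning of the scalar to $1$: the braid relation for the quasi-$K$-matrix-twisted Lusztig words $\fX_i T'_{\bs_i,\bvs,-1}$ on weight-bounded-above modules does \emph{not} follow ``outright'' from Lusztig's braid relations together with the intertwining property of $\fX_i$ --- the interleaving of the quasi $K$-matrices makes this a substantive theorem, namely \cite[Theorem 10.6]{WZ23}, which is exactly what the paper cites at this point. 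You may cite it too, but as written your step asserts an easy deduction where none exists. Note also that for the quasi-split integral parameters ($\vs_i\vs_{\tau i}=q_i$ with $\vs_{\tau i}\in q_i^{\Z}$) the parameter is \emph{not} balanced, so invoking \cref{thm:main2Intr} requires the parameter-change isomorphisms $\phi_{\bvs,\bvs'}$ with their own scalar bookkeeping, which feeds back into the very consistency check you left unexecuted.
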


Let us briefly comment on the proofs of these two theorems, say for $\TTd'_{i,-1}$, which is a rescaled version of $\TT'_{i,-1}$. The integrality of $\TTd'_{i,-1}$ in \cref{thm:main4-integral}(4) follows from the integrality of $\TTd'_{i,-1}$ on integrable $\Ui$-modules. The compatibility in \cref{thm:main4-integral}(3) as well as \cref{thm:main5-braid} follow from their (non-modified) counterparts for $\TT'_{i,-1}$ in \cref{thm:main1Intro} and in \cite{WZ23} after some additional checking on the compatibility of scalars arising from the rescaling. 


It will be a natural though very challenging question to find explicit rank one formulas on integrable modules over iquantum groups {\em beyond} the quasi-split type.

The relative braid group action has played an indispensable role in Drinfeld presentations of affine iquantum groups (cf. \cite{LWZ24, LPWZ25}) as it is used for the construction of various $q$-root vectors as new Drinfeld generators. In particular, the rank one formulas in this paper have been used to prove some crucial identities stated in these papers. 

Lusztig's rank one braid group formula for quantum groups admits a categorification in terms of Rickard complexes \cite{CR08}, which has remarkable applications to derived equivalences. We expect that our 3 rank one relative braid group formulas will lead to iRickard complexes in $2$-iquantum groups and derived equivalences in settings where $\mathfrak{sl}_2$-categorification may not be available. 
\subsection{Organization}

The paper is organized as follows. 
In \cref{sec:prelim}, we set up the basics on quantum groups and Lusztig's braid group symmetries. We review from \cite{WZ23} the relative braid group symmetries on iquantum groups, and their connections to Lusztig's braid group symmetries via quasi $K$-matrix. We introduce their higher order analogue of $q$-root vectors in iquantum groups, and present some new higher order Serre-Lusztig relations.

We then deal with the proofs of the main results in \cref{thm:main1Intro,thm:main2Intr,thm:main3-iDP} separately in the 3 cases: (i) split, (ii) diagonal, and (iii) quasi-split types of rank one.
In \cref{sec:split} and Appendix~\ref{sec:proof}, we prove these 3 main theorems in the case for $i=\tau i$. In \cref{sec:diagonal}, we prove these 3 main theorems in the case for $c_{i,\tau i}=0$.

The last and most complex case for $c_{i,\tau i}=-1$ occupies Sections~\ref{sec:quasi basis} and \ref{sec:quasisplit}. We first study systematically the quasi-split rank one quantum symmetric pair $(\U(\sl_3),\Ui(\sl_3))$ in \cref{sec:quasi basis}. Recall Lusztig's canonical basis for the simple $\U(\sl_3)$-module $L(m\omega_1+n\omega_2)$ (descending from $\U^-(\sl_3)$) can be made very explicit. We obtain a remarkable $\imath$-version of this basis (called monomial basis) for
$L(m\omega_1+n\omega_2)$ viewed as an $\Ui$-module in terms of idivided powers. Explicit transition matrices between the canonical basis and the monomial basis on $L(m\omega_1+n\omega_2)$ are obtained. 
In \cref{sec:quasisplit}, we prove the main results in \cref{thm:main1Intro,thm:main2Intr,thm:main3-iDP} in the quasi-split rank one case when $c_{i,\tau i}=-1$. The transition matrices of bases obtained in Section~\ref{sec:quasi basis} play a crucial role here. 

In \cref{sec:integral}, for each of the three rank one finite type, we rescale the  braid operators $\TT'_{i,-1}$ and $\TT''_{i,+1}$ to define new operators $\TTd'_{i,-1}$ and $\TTd''_{i,+1}$ on integrable $\Ui$-modules. Through compatibility, this induces symmetries on $\dUi$ and its integral $\A$-form $\dUiA$ which again satisfy the relative braid group relations; see \cref{thm:main4-integral,thm:main5-braid}. Tables for formulas of actions of $\TTd'_{i,-1}$ and $\TTd''_{i,+1}$ on the generators can be found in Section~\ref{sec:tables}. 

The main results of this paper hold in a slight generality of locally quasi-split iquantum groups as in \cite{Z23}. 

\vspace{4mm}
{\bf Acknowledgment.}
WW is partially supported by DMS--2401351. WZ is partially supported by the New Cornerstone Science Foundation through the New Cornerstone Investigator Program awarded to Xuhua He.
    
\section{Preliminaries}
\label{sec:prelim}

In this section, we review basic on quantum groups and iquantum groups, including quasi K-matrix, idivided powers and relative braid group action on $\Ui$. Several formulas and identities in \S\ref{subsec:rootvector_diag}
--\ref{subsec:rootvector_qs} involving higher order $q$-root vectors are new.

\subsection{Quantum groups}

Let $(\I,\cdot)$ be a Cartan datum and $(Y,X,\langle \cdot ,\cdot\rangle,\dots)$ be a root datum of type $(\I,\cdot)$; cf. \cite[2.2.1]{Lus93}. i.e., we have
\begin{itemize}
\item[(a)] a perfect bilinear pairing $\langle\cdot,\cdot\rangle:Y\times X\rightarrow\Z$ between two finitely generated free abelian groups $Y,X$;
\item[(b)] an embedding $\I\hookrightarrow X$ ($i\mapsto \alpha_i$) and an embedding $\I\hookrightarrow Y$ ($i\mapsto h_i$) such that $\langle h_i,\alpha_j\rangle =2\frac{i\cdot j}{i\cdot i}$ for all $i,j\in \I$.
\end{itemize}

We call $X$ the weight lattice. Let $c_{ij} = \langle h_i,\alpha_j\rangle $ and $C=(c_{ij})_{i,j\in \I} $ be the Cartan matrix. 
Let $s_i:Y\rightarrow Y,i\in \I$ be the simple reflection given by $s_i(\mu)=\mu-\langle \mu,\alpha_i\rangle h_i$. 
Let $W=\langle s_i\mid i\in \I \rangle$ be the Weyl group with the length function $\ell(\cdot)$. The group $W$ also acts on $X$ via $s_i(\lambda)=\lambda-\langle h_i,\lambda \rangle \alpha_i$. Let $\g$ be the symmetrizable Kac-Moody algebra associated to the Cartan datum $(\I,\cdot)$.

Let $q$ be an indeterminate and set $q_i=q^{\frac{i\cdot i}{2}}$. We denote, for $i\in \I,m,r\in\N$,
\begin{align}
[r]_i =\frac{q_i^r-q_i^{-r}}{q_i-q_i^{-1}},
 \quad
[r]_i!=\prod_{s=1}^r [s]_i, \quad 
\qbinom{m}{r}_{q_i} =\qbinom{m}{r}_i =\frac{[m]_i! }{[r]_i! [m-r]_i!}.
\end{align}
Let $\Q(q)$ be the field of rational functions in $q$ and $\F$ be the algebraic closure of $\Q(q)$.

For $A, B$ in an $\F$-algebra, we shall denote $[A,B]_{q^a} =AB -q^aBA$, and $[A,B] =AB - BA$. The  Drinfeld-Jimbo quantum group $\U$ is defined to be the $\F$-algebra generated by $E_i,F_i$ for $i\in\I$, and $K_\mu$, $\mu\in Y$, subject to the following relations:
\begin{align}
K_0=1, & \quad K_{\mu}K_{\nu}=K_{\mu+\nu}, \quad \quad \forall \mu,\nu\in Y,
\label{eq:KK}
\\
K_\mu E_i &=q^{\langle \mu,\alpha_i\rangle} E_iK_\mu, \qquad K_\mu F_i =q^{-\langle \mu,\alpha_i\rangle} F_iK_\mu, 
\\
[E_i,F_j] &=\delta_{ij}\frac{K_i-K_i^{-1}}{q_i-q_i^{-1}}, \qquad \text{ where } K_i=K_{h_i}^{\frac{i\cdot i}{2}},
\label{Q4}
\end{align}
and the quantum Serre relations, for $i\neq j \in \I$,
\begin{align}
& \sum_{r=0}^{1-c_{ij}} (-1)^r  E_i^{(r)} E_j  E_i^{(1-c_{ij}-r)}=0,
  \label{eq:serre1} \\
& \sum_{r=0}^{1-c_{ij}} (-1)^r  F_i^{(r)} F_j  F_i^{(1-c_{ij}-r)}=0,
  \label{eq:serre2}
\end{align}
where $E_i^{(r)}=\frac{E_i^r}{[r]_i !}$, $F_i^{(r)}=\frac{F_i^r}{[r]_i !}$ are the divided powers.

The algebra $\U$ is $\Z\I$-graded by the weight function $\wt$ where $\wt(E_i)=\alpha_i,\wt(F_i)=-\alpha_i,\wt(K_\mu)=0$ for $i\in \I,\mu\in Y$.

Let $\psi$ be the bar involution on $\U$ and $\sigma$ be the anti-involution on $\U$ such that
\begin{align}
&\psi: q\mapsto q^{-1}, \quad E_i\mapsto E_i, \quad F_i\mapsto F_i, \quad K_\mu \mapsto K_{-\mu}.\\
&\sigma:  E_i\mapsto E_i, \quad F_i\mapsto F_i, \quad K_\mu \mapsto K_{-\mu}.
\end{align}

We denote, for $t\in \Z,s\in \N,i\in \I$, 
\[
[K_i;t]_i =\frac{K_i q_i^t -K_i^{-1} q_i^{-t}}{q_i - q_i^{-1}}, 
\qquad 
\qbinom{K_i;t}{s}_i =\frac{\prod_{j=1}^s [K_i;t-j+1]_i}{[s]_i!}.
\]
 
 \begin{lemma}
 \cite[\S3.1.9]{Lus93}  
    \label{lem:EF}
 We have, for $\ell,k\geq 0, $
 \begin{align}
 E_i^{(\ell)} F_i^{(k)} =\sum_{s=0}^{\min(\ell,k)}  F_i^{(k-s)}\qbinom{K_i;2s-\ell-k+1-j}{s}_i  E_i^{(\ell-s)}.
 \end{align}
 \end{lemma}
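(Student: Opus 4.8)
The plan is to exploit the fact that the identity involves only the generators $E_i,F_i,K_i$ for a single fixed $i$. By \eqref{Q4} together with $K_iE_i=q_i^2E_iK_i$ and $K_iF_i=q_i^{-2}F_iK_i$, these generate a subalgebra isomorphic to $\U_q(\sl_2)$, so it suffices to establish the identity in $\U_q(\sl_2)$; I will suppress the subscript $i$ and write $[m]$ for the quantum integer at $q_i$. (The stated exponent $2s-\ell-k+1-j$ carries no index $j$ in scope, and I read it as the standard $2s-\ell-k$.) I would argue by induction on $\ell$, the base case $\ell=0$ being immediate since both sides reduce to the $s=0$ term $F^{(k)}$.

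The inductive step rests on two auxiliary commutation facts, each proved by a short separate induction. First, the single-$E$ formula
\[
E F^{(m)} = F^{(m)} E + F^{(m-1)}\,\frac{K q^{1-m} - K^{-1} q^{m-1}}{q - q^{-1}},
\]
obtained by induction on $m$ from $[E,F]=\tfrac{K-K^{-1}}{q-q^{-1}}$ and $KF=q^{-2}FK$. Second, the relations moving the Gaussian binomial in $K$ past $E$ and $F$,
\[
\qbinom{K;t}{s} E = E\,\qbinom{K;t+2}{s}, \qquad F\,\qbinom{K;t}{s} = \qbinom{K;t+2}{s}\,F,
\]
which follow by applying $KE=q^2EK$ and $KF=q^{-2}FK$ factorwise to the definition of $\qbinom{K;t}{s}$. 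These let me rewrite every product produced below back into the standard ordered form $F^{(\bullet)}\qbinom{K;\bullet}{\bullet}E^{(\bullet)}$.

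For the step itself I would write $[\ell+1]\,E^{(\ell+1)}=E\,E^{(\ell)}$, insert the induction hypothesis for $E^{(\ell)}F^{(k)}$, and push the leading $E$ rightward through each summand using the single-$E$ formula and the $K$-commutation relations. The summand of old index $s$ splits into a contribution to new index $s$ (from the $F^{(k-s)}E$ part, using $EE^{(\ell-s)}=[\ell-s+1]E^{(\ell-s+1)}$) and to new index $s+1$ (from the $F^{(k-s-1)}[\cdots]$ part). Collecting the two inputs feeding new index $s$ and matching against $[\ell+1]$ times the $s$-th claimed term, the whole identity reduces to the Pascal-type recursion for $K$-shifted Gaussian binomials
\[
[\ell+1-s]\,\qbinom{K;2s-\ell-k-2}{s} + [K;s-k]\,\qbinom{K;2s-\ell-k-2}{s-1} = [\ell+1]\,\qbinom{K;2s-\ell-k-1}{s},
\]
where $[K;a]=\tfrac{Kq^a-K^{-1}q^{-a}}{q-q^{-1}}$. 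I expect the main obstacle to be precisely the bookkeeping leading to this recursion: tracking the exact $q$-powers and $K$-shifts so the two contributions assemble into a single coefficient with argument $2s-(\ell+1)-k$, and then verifying the recursion itself (a $K$-deformation of $\qbinom{m}{s}=q^{-s}\qbinom{m-1}{s}+q^{m-s}\qbinom{m-1}{s-1}$, checkable from the telescoping $[s]\qbinom{K;t}{s}=[K;t-s+1]\qbinom{K;t}{s-1}$ and standard $q$-integer identities). Once the recursion is confirmed, the induction closes and the lemma follows.
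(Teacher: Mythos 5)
Your proposal is correct, but there is no in-paper argument to compare it with: the paper states this lemma purely as a citation of \cite[\S 3.1.9]{Lus93} and supplies no proof of its own. Your reading of the top entry is also the right one; the ``$+1-j$'' in the displayed statement is an artifact of the paper's definition $\qbinom{K_i;t}{s}_i=\prod_{j=1}^{s}[K_i;t-j+1]_i\big/[s]_i!$, whose dummy index $j$ has leaked into the statement, and the intended top entry is $2s-\ell-k$, exactly as in Lusztig.

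On the mathematics: your two auxiliary commutation facts are correctly stated, the reduction to the rank-one relations is harmless (one only uses \eqref{Q4} and the $K_iE_i$, $K_iF_i$ relations, which hold in $\U$), and the induction does close. Writing $[\ell+1]E^{(\ell+1)}F^{(k)}=E\cdot E^{(\ell)}F^{(k)}$ and pushing $E$ to the right, the coefficient of $F^{(k-s)}\cdots E^{(\ell+1-s)}$ is precisely the left-hand side of your recursion, so everything hinges on
\[
[\ell+1-s]\,\qbinom{K;2s-\ell-k-2}{s} + [K;s-k]\,\qbinom{K;2s-\ell-k-2}{s-1} = [\ell+1]\,\qbinom{K;2s-\ell-k-1}{s},
\]
and this recursion is true. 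Indeed, using $[s]\qbinom{K;t}{s}=[K;t-s+1]\qbinom{K;t}{s-1}$ and $[s]\qbinom{K;t+1}{s}=[K;t+1]\qbinom{K;t}{s-1}$ (both immediate from the definition), it reduces to
\[
[\ell+1-s]\,[K;s-\ell-k-1]+[s]\,[K;s-k]=[\ell+1]\,[K;2s-\ell-k-1],
\]
which is an instance of the standard identity $[a][K;b]+[c][K;a+b+c]=[a+c][K;b+c]$ (here $a=\ell+1-s$, $b=s-\ell-k-1$, $c=s$), verified by expanding both sides, where the cross terms $Kq^{a+b}=Kq^{-k}$ and $K^{-1}q^{-a-b}$ cancel. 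The boundary bookkeeping also works out: when $s=\ell+1$ (possible only for $k\ge\ell+1$) the first summand vanishes since $[\ell+1-s]=0$, and when $k\le\ell$ the old index $s=k$ produces no second contribution because $EF^{(0)}$ has no correction term, so the summation range $\min(\ell+1,k)$ emerges correctly. In short, yours is the standard elementary induction found in the textbook literature (e.g.\ Jantzen), and it constitutes a complete proof of a statement the paper itself only quotes.
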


Let $\Br(W)$ be the braid group associated to the Weyl group $W$. Lusztig introduced the braid group symmetries $T_{i,e}',T_{i,e}''$, for $i\in\I$ and $e=\pm1$, on the quantum group $\U$; cf. \cite[\S37.1.3]{Lus93}. We recall the formulation of $T'_{i,-1}$ below.  

\begin{proposition}
[\cite{Lus90a}]
   \label{prop:braid1}
Set $\alpha=-c_{ij}$. There exists an automorphism $T'_{i,-1}$, for $i\in \I$, on $\U$ such that
\begin{align*}
&T'_{i,-1}(K_\mu)= K_{s_i(\mu)},
\qquad \;\;\forall \mu\in Y,\\
&T'_{i,-1}(E_i)=- K_i  ^{-1} F_i  ,\qquad T'_{i,-1}(F_i)=- E_i K_i ,\\
&T'_{i,-1}(E_j)= \sum_{s=0}^\alpha (-1)^s q_i^{-s} E_i^{(s)} E_j E_i^{(\alpha-s)},\qquad  j\neq i,\\
&T'_{i,-1}(F_j)= \sum_{s=0}^\alpha (-1)^s q_i^{s} F_i^{(\alpha-s)} F_j F_i^{(s)}, \qquad  j\neq i.
\end{align*}
Moreover, $T'_{i,-1}$, for $i\in \I$, satisfy the braid relations in $W$.    
\end{proposition}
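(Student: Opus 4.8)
The plan is to follow Lusztig's strategy: realize $T'_{i,-1}$ first as an invertible operator on every integrable $\U$-module, and then descend to the algebra, extracting both the automorphism property and the braid relations from the corresponding statements for the module operators.

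First I would fix, for each integrable $\U$-module $M=\bigoplus_\lambda M_\lambda$, the linear operator $\widehat{T}'_{i,-1}$ defined on a weight vector $v\in M_\lambda$ by the rank one formula (see \cite[5.2.1]{Lus93}): an alternating sum of terms $E_i^{(a)}F_i^{(b)}E_i^{(c)}v$ over $a-b+c=\langle h_i,\lambda\rangle$ with the appropriate $q_i$-powers and signs, the sum being finite by integrability. The elementary input is the representation theory of $\U(\sl_2)=\langle E_i,F_i,K_i\rangle$: on each finite-dimensional $\U(\sl_2)$-isotypic component $\widehat{T}'_{i,-1}$ implements the nontrivial element of the rank one Weyl group and is invertible, so $\widehat{T}'_{i,-1}$ is a bijection of $M$.

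Next I would establish the intertwining identity
\[
\widehat{T}'_{i,-1}(x\,v)=T'_{i,-1}(x)\,\widehat{T}'_{i,-1}(v),\qquad x\in\U,\ v\in M,
\]
where $T'_{i,-1}$ on the right denotes the map defined on generators by the formulas in the statement. It suffices to verify this for $x$ a generator. For $x=K_\mu$ it is immediate from the weight shift $\lambda\mapsto s_i(\lambda)$; for $x=E_i,F_i$ it is the rank one $\U(\sl_2)$ calculation. The substantial case is $x=E_j$ or $F_j$ with $j\neq i$: commuting $\widehat{T}'_{i,-1}$ past $E_j$ must reproduce exactly $\sum_{s=0}^{\alpha}(-1)^s q_i^{-s}E_i^{(s)}E_jE_i^{(\alpha-s)}$ with $\alpha=-c_{ij}$, and carrying this out relies on the divided-power commutation relation \cref{lem:EF} together with the quantum Serre relations \eqref{eq:serre1}--\eqref{eq:serre2}. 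Given the intertwining identity, composing two instances shows that $T'_{i,-1}$ respects multiplication; since the integrable highest weight modules $L(\lambda)$ (and their tensor products) separate the elements of $\U$, the generator assignment extends to a well-defined algebra homomorphism, and the bijectivity of every $\widehat{T}'_{i,-1}$ forces $T'_{i,-1}$ to be an automorphism (its inverse being realized by the corresponding companion braid operator).

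Finally, the braid relations are proved as operator identities on integrable modules and then transported to $\U$ by the same faithfulness. Both alternating products $\underbrace{\widehat{T}'_{i,-1}\widehat{T}'_{j,-1}\cdots}_{m_{ij}}$ and $\underbrace{\widehat{T}'_{j,-1}\widehat{T}'_{i,-1}\cdots}_{m_{ij}}$ act on a given weight vector through the rank two quantum subgroup attached to $\{i,j\}$, so the verification reduces to the four finite-type cases $A_1\times A_1$, $A_2$, $B_2$, $G_2$, with $m_{ij}=2,3,4,6$; in each case one evaluates the two composites on the finite-dimensional rank two integrable modules and compares. The main obstacle is precisely this rank two computation: the $G_2$ case ($m_{ij}=6$) requires long manipulations of divided powers and $q$-root vectors, and arranging the two sides so that they visibly coincide is the technical heart of the argument, while the existence half carries its own obstacle in the Serre-relation-laden intertwining step for $E_j,F_j$ with $j\neq i$. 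Alternatively, existence can be obtained by defining $T'_{i,-1}$ directly on generators and checking it preserves every defining relation of $\U$, where again the quantum Serre relations are the delicate point.
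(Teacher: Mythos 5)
The paper does not actually prove this proposition: it is imported verbatim from Lusztig \cite{Lus90a} (see also \cite[Ch.~37, 39]{Lus93}), so there is no internal proof to compare your attempt against. That said, your outline is a faithful reconstruction of the standard (Lusztig's) argument, and it is sound: define operators on integrable modules by the rank one formula; prove the intertwining identity $\widehat{T}'_{i,-1}(xv)=T'_{i,-1}(x)\,\widehat{T}'_{i,-1}(v)$ on generators, the case $x=E_j,F_j$ with $j\neq i$ being the substantial Serre-relation computation; then use invertibility of the module operators together with the fact that integrable modules separate elements of $\U$ to descend both the homomorphism property and the braid relations (checked rank two by rank two, with $G_2$ the heavy case) to the algebra. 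This is in fact exactly the logic the present paper imitates in its $\mathrm{i}$-setting: compare the proofs of \cref{thm:split}, \cref{thm:split-T1DP} (where an identity of operators on all integrable modules is upgraded to an identity in the algebra) and \cref{thm:TTd_braid_split} (where braid relations on modules are transported to $\dUi$).

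One concrete slip to fix: you pair the ordering $E_i^{(a)}F_i^{(b)}E_i^{(c)}$ with the constraint $a-b+c=\langle h_i,\lambda\rangle$. With that ordering the constraint must be $-a+b-c=\langle h_i,\lambda\rangle$, which is the operator $T''_{i,e}$ of \eqref{eq:mod2}; the operator $T'_{i,e}$ of \eqref{eq:mod1} uses $F_i^{(a)}E_i^{(b)}F_i^{(c)}$ with $a-b+c=\lambda_i$. As literally written, your operator sends $M_\lambda$ into $M_{\lambda+\lambda_i\alpha_i}$ rather than $M_{s_i\lambda}$, so the subsequent weight bookkeeping (e.g.\ the $K_\mu$ intertwining step) would fail until the formula is corrected.
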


The symmetries  $T_{i,e}',T_{i,e}''$, for $e=\pm 1$, are related to each other via conjugations by the bar involution $\psi$ or the anti-involution $\sigma$
\begin{align*}
T'_{i,e}=\sigma T''_{i,-e} \sigma,\qquad T'_{i,e}=\psi T'_{i,-e} \psi,\qquad T''_{i,e}=\psi T''_{i,-e} \psi.
\end{align*}
 
For a reduced expression $w = s_{i_1}
\cdots s_{i_r}$ of $w \in W$, we define
$T'_{w,e} := T'_{i_1,e}\cdots T'_{i_r,e}$ in $\mathrm{Aut}(\U),$
which is independent of the choice of reduced expressions. Similarly, we define the automorphisms $T''_{w,e} $ of $\U$.

Let $\ba = (a_i)_{i\in\I} \in (\F^\times)^\I
$. Following \cite[Proposition 2.1]{WZ23}, we have an automorphism $\Phi_\ba$ on $\U$ such that
\begin{align}
\label{def:Phi}
    \Phi_\ba:K_i\mapsto K_i,\quad E_i\mapsto a_i^{1/2}E_i,\quad F_i\mapsto a_i^{-1/2}F_i.
\end{align}

A $\U$-module is called integrable if $E_i,F_i$ act locally nilpotently. Let $M$ be any integrable $\U$-module with a weight space decomposition
\[
M=\bigoplus_{\mu \in X} M_\mu. 
\]
Following \cite[5.2.1]{Lus93}, there are linear operators $T'_{i,e},T''_{i,e}$, for $e=\pm1$, on $M$ defined by
\begin{align}\label{eq:mod1}
T'_{i,e}(v)&=\sum_{\substack{a,b,c \geq 0;\\a- b+c=\lambda_i}}(-1)^b q_i^{e(b-ac)} F_i^{(a)}E_i^{(b)}F_i^{(c)}v, \\
T''_{i,e}(v)&=\sum_{\substack{a,b,c \geq 0;\\-a+b-c=\lambda_i}}(-1)^b q_i^{e(b-ac)} E_i^{(a)}F_i^{(b)}E_i^{(c)}v,
\label{eq:mod2}
\end{align}
for  $v\in M_\lambda,\lambda_i=\langle h_i,\lambda\rangle.$ For $e=\pm 1$ and $i\in \I$, following \cite[2.1]{Lus09} we define
\begin{align}
\label{si'si''}
    s_{i,e}' &=\sum_{a,b\in \Z, \la \in X,\la_i =a+b} (-1)^b q_i^{eb} F_i^{(a)} 1_\la E_i^{(b)},
    \\
    s_{i,e}'' &=\sum_{a,b\in \Z, \la \in X,\la_i =a+b} (-1)^a q_i^{ea} F_i^{(a)} 1_\la E_i^{(b)}.
\end{align}
By \cite[2.2(b)]{Lus09}, we have
\begin{align}
    T_{i,e}'(v) =s_{i,e}' v,
    \qquad
    T_{i,e}''(v) =s_{i,e}'' v.
\end{align}

The symmetries $T'_{i,e},T''_{i,e}$ on $M$ satisfy the following compatibility condition: for any $u\in \U,v\in M$,
\begin{equation}\label{eq:mod3}
T'_{i,e}(uv)=T'_{i,e}(u)T'_{i,e}(v),\qquad T''_{i,e}(uv)=T''_{i,e}(u)T''_{i,e}(v).
\end{equation}
\subsection{The iroot datum and iquantum groups} \label{subsec:irootdatum}

 We call a permutation $\tau$ of the set $\I$ an involution 
of the Cartan datum $(\I, \cdot)$ if $\tau^2 =\Id$ and $\tau(i) \cdot \tau(j) = i \cdot j$ for $i$, $j \in \I$.  Note we allow $\tau =1$. 
Such involutions $\tau$ are naturally in bijection with diagram involutions on the Dynkin diagram of $\g$. We call a pair $(\I,\tau)$ a (quasi-split) {\em Satake datum} or a {\em Satake diagram}. 

An involution $\tau$ of $(\I, \cdot)$ naturally induces an involution on $\g$ and an involution on $\U$, both denoted again by $\tau$. Let $\omega$ be the Chevalley involution on $\g$. Associated to $(\I,\tau)$, we have a quasi-split symmetric pair $(\g,\g^{\omega \tau})$. 

We shall always assume that $\tau$ extends to an involution on $X$ and an involution on $Y$, respectively, such that the perfect bilinear pairing is invariant under the involution $\tau$. 
Following \cite{BW18b}, we introduce
\begin{align}
  \label{XY}
 \begin{split}
X_{{\imath}} &= X \big / \{ \la +\tau(\la) \vert \la \in X\},
 \\
Y^{\imath} &= \{h \in Y \big \vert \tau (h) =-h \}.
\end{split}
\end{align}

We denote by $\overline{\la} \in X_{\imath}$ (and call it an iweight) the image of $\la \in X$; we shall refer to $\la$ a lift of $\overline{\la}$ and call $X_{\imath}$ the iweight lattice. This induces a well-defined bilinear pairing  
$$ 
\langle \cdot, \cdot \rangle: 
Y^{\imath}  \times X_{\imath} \longrightarrow \Z
$$
defined by $\langle \gamma, \overline{\la} \rangle  : = \langle \gamma, \la \rangle$, 
where $\la \in X$ is any preimage of $\overline{\la}$ and $\gamma \in Y^\imath$.

The quasi-split {\em iquantum group} $\Ui_{\bvs}$ associated to the Satake datum $(\I,\tau)$ with parameter $\bvs=(\vs_i)_{i\in \wI}$ is the subalgebra of $\U$ generated by
\begin{align}
\label{def:iQG}
\begin{split}
&B_i=F_i+ \vs_i E_{\tau i} K_i^{-1}, \qquad k_i =K_i K_{\tau i}^{-1}, \qquad (i\in \wI).
\end{split}
\end{align}
We always assume that the parameter $\bvs=(\vs_i)_{i\in \wI}$ satisfies that (cf. \cite{Let99}, \cite[\S5.1]{Ko14})
\begin{align}
\label{vsi_same}
\vs_i=\vs_{\tau i},\qquad \text{ if } i\cdot \tau i=0.
\end{align}
We call a parameter $\bvs=(\vs_i)_{i\in \wI}$ {\em balanced} if $\vs_i=\vs_{\tau i}$ for each $i\in \wI$. The pair $(\U,\Ui_\bvs)$ forms a quantum symmetric pair and its $q\rightarrow 1$ limit is the classical symmetric pair $(\g, \g^{\omega \tau})$. Let $\U^{\imath0}$ be the subalgebra of $\Ui$ generated by $k_i$, for $i\in \wI$. 

\begin{lemma} \cite[Lemma 2.5.1]{Wat21} 
\label{lem:iso-parameter}
For any two parameters $\bvs,\bvs'$, there exists an $\F$-algebra isomorphism \begin{align} \label{phibvs2}
\begin{split}
\phi_{\bvs,\bvs'} :\Ui_{\bvs} & \longrightarrow \Ui_{\bvs'}
\\
B_i  \mapsto \sqrt{\vs_{ i} (\vs_i')^{-1}} B_i, 
& \quad 
k_i\mapsto \sqrt{\vs_{ i} \vs_{\tau i}' (\vs_i' \vs_{\tau i})^{-1}}k_i, \quad \text{for }i\in \I.
\end{split}
\end{align}
\end{lemma}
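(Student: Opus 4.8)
The plan is to derive the isomorphism from the presentation of the quasi-split $\imath$quantum group $\Ui_\bvs$ by the generators $B_i,k_i$ ($i\in\I$) subject to the $\U^{\imath0}$-relations, the commutation relations between $\U^{\imath0}$ and the $B_i$, and the $\imath$Serre (Serre--Lusztig) relations. Since $\F$ is algebraically closed, the scalars $\lambda_i:=\sqrt{\vs_i(\vs_i')^{-1}}$ and $\mu_i:=\sqrt{\vs_i\vs_{\tau i}'(\vs_i'\vs_{\tau i})^{-1}}$ lie in $\F^\times$, so the assignment $B_i\mapsto\lambda_iB_i$, $k_i\mapsto\mu_ik_i$ of \eqref{phibvs2} is well defined on generators. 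I would check that this assignment carries each defining relation of $\Ui_\bvs$ to the corresponding relation of $\Ui_{\bvs'}$ (i.e.\ with $\bvs$ replaced by $\bvs'$); this shows $\phi_{\bvs,\bvs'}$ is an algebra homomorphism. The same argument gives a homomorphism $\phi_{\bvs',\bvs}$, and the two compose to the identity on generators because $\lambda_i\sqrt{\vs_i'\vs_i^{-1}}=1$ and $\mu_i\sqrt{\vs_i'\vs_{\tau i}(\vs_i\vs_{\tau i}')^{-1}}=1$; hence $\phi_{\bvs,\bvs'}$ is an isomorphism with inverse $\phi_{\bvs',\bvs}$.

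The $\U^{\imath0}$-relations are immediate: $\mu_i\mu_{\tau i}=1$ ensures that $k_ik_{\tau i}=1$ (and $k_i=1$, $\mu_i=1$ when $\tau i=i$) are preserved, while commutativity among the $k_i$ is clear. The relations $k_iB_jk_i^{-1}=q^{-\langle h_i-h_{\tau i},\alpha_j\rangle}B_j$ have parameter-free $q$-powers, so both sides rescale by the same factor $\lambda_j$. The substantive check is the $\imath$Serre relations. For instance, in split type with $\tau i=i$ and $c_{ij}=-1$ the relation reads $B_i^{(2)}B_j-B_iB_jB_i+B_jB_i^{(2)}=-\vs_iB_j$; applying the assignment multiplies the left side by $\lambda_i^2\lambda_j$ and the right side by $\lambda_j$, and after dividing by $\lambda_i^2\lambda_j$ the right side becomes $-\vs_i\lambda_i^{-2}B_j=-\vs_i'B_j$ since $\lambda_i^2=\vs_i(\vs_i')^{-1}$ — exactly the $\bvs'$-relation.

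In general I would organize the bookkeeping by the $\Z\I$-multidegree in the $B_i$'s and $k_i$'s: the leading Serre sum $\sum_{s}(-1)^sB_i^{(s)}B_jB_i^{(1-c_{ij}-s)}$ is multidegree-homogeneous and rescales uniformly by $\lambda_i^{1-c_{ij}}\lambda_j$, while each lower-order correction term has strictly smaller $B_i$-degree and carries an explicit power of $\vs_i$ (and of $k_i$ when $\tau i\neq i$); the drop in $B_i$-degree is compensated by $\lambda_i$-powers and the $k_i$-factors by $\mu_i$-powers, so the whole relation reduces to a single scalar identity in $\F^\times$ that holds by the definitions of $\lambda_i$ and $\mu_i$. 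I expect the main obstacle to be precisely the correct handling of the parameter-dependence of the higher $\imath$Serre/Serre--Lusztig relations in Kac--Moody generality (large $1-c_{ij}$ together with the $\tau$-twisting), i.e.\ confirming in each correction term that the power of $\vs_i$ matches the degree drop; once the relevant presentation from \cite{Ko14,CLW21a,CLW21b,Car23} is fixed, this is a finite, if intricate, verification.

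Finally, I note that part of the statement is cheaper: when $\bvs,\bvs'$ share the same imbalance, $\vs_i/\vs_{\tau i}=\vs_i'/\vs_{\tau i}'$ for all $i$, the automorphism $\Phi_\ba$ of \eqref{def:Phi} with $a_i=\vs_i'\vs_i^{-1}$ restricts to $\phi_{\bvs,\bvs'}$ (there $\mu_i=1$ and $\Phi_\ba$ fixes each $k_i$). The genuinely new content is the change of imbalance, which no diagonal automorphism of $\U$ can realize — the relation $[E_i,F_i]=\tfrac{K_i-K_i^{-1}}{q_i-q_i^{-1}}$ constrains such an automorphism to rescale each $k_i$ only by a sign — so this part must be handled internally to $\Ui_\bvs$ through its presentation as above.
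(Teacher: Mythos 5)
The paper does not actually prove this lemma: it imports it verbatim from \cite[Lemma 2.5.1]{Wat21}. So the comparison is with what a proof of this statement requires. Your strategy --- define $\phi_{\bvs,\bvs'}$ on generators (legitimate since $\F$ is algebraically closed), check that the defining relations of a Serre presentation of $\Ui_\bvs$ are carried to the corresponding $\bvs'$-relations, and invert with $\phi_{\bvs',\bvs}$ --- is the natural one for this statement, and your split-type computation, the treatment of the $\U^{\imath 0}$-relations, and the composition argument are all correct.

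The genuine gap is that the one family of relations you actually verify (split type, $\tau i=i$, $c_{ij}=-1$) is precisely the case in which $k_i=1$ and $\mu_i=1$, so the rescaling of $k_i$ --- the only genuinely new feature of the map, as you yourself observe --- is never exercised. The crux is the relations for $\tau i\neq i$, e.g.\ the general-parameter form of \eqref{B121} when $c_{i,\tau i}=-1$:
\begin{align*}
B_i^{(2)}B_{\tau i}-B_iB_{\tau i}B_i+B_{\tau i}B_i^{(2)}
= B_i\bigl(c_1\,\vs_{\tau i}\,k_i + c_2\,\vs_i\,k_i^{-1}\bigr),
\qquad c_1,c_2\in q^{\Z}.
\end{align*}
Substituting $B_i\mapsto\lambda_iB_i$, $k_i\mapsto\mu_ik_i$ and using the $\bvs'$-relation, preservation of this relation is equivalent to the two scalar identities
$\vs_{\tau i}\,\mu_i=\lambda_i\lambda_{\tau i}\,\vs_{\tau i}'$ and $\vs_i\,\mu_i^{-1}=\lambda_i\lambda_{\tau i}\,\vs_i'$, which indeed hold identically for $\lambda_i=\sqrt{\vs_i/\vs_i'}$, $\mu_i=\sqrt{\vs_i\vs_{\tau i}'/(\vs_i'\vs_{\tau i})}$. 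But they hold \emph{only because} it is $\vs_{\tau i}$ that accompanies $k_i$ and $\vs_i$ that accompanies $k_i^{-1}$: with the opposite pairing the same computation forces $\vs_i\vs_{\tau i}'=\pm\vs_i'\vs_{\tau i}$, which fails for general parameters. So your assertion that the general case ``reduces to a single scalar identity in $\F^\times$ that holds by the definitions of $\lambda_i$ and $\mu_i$'' is not a consequence of multidegree bookkeeping alone; it hinges on pinning down exactly how $\vs_i$ versus $\vs_{\tau i}$ and $k_i$ versus $k_i^{-1}$ are distributed in each lower-order term, and that is exactly what your sketch defers. The gap is fillable (the identities do come out right, as above), but as written the proposal has not verified the one case that carries the content of the lemma. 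A related point: your argument needs the \emph{completeness} of the Serre presentation (not merely the validity of the relations) for quasi-split Kac--Moody type with arbitrary parameters, on both the source and target side; this is available in \cite{CLW21a, Car23} but should be invoked explicitly.

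Two smaller remarks. First, your closing claim that the imbalance-changing part ``must be handled internally to $\Ui_\bvs$'' is overstated: Letzter--Kolb's twisting by a character $\chi$ of $\Ui_\bvs$, $x\mapsto(\chi\otimes\mathrm{id})\Delta(x)$, is an algebra homomorphism for free (no Serre-relation check), and with $\chi(B_i)=0$, $\chi(k_i)=\sqrt{\vs_i/\vs_{\tau i}}$ it carries $\Ui_\bvs$ onto $\Ui_{\bvs'}$ with $\vs_i'=\vs_{\tau i}'=\sqrt{\vs_i\vs_{\tau i}}$; composed with the restriction of $\Phi_\ba$ from \eqref{def:Phi} this yields $\phi_{\bvs,\bvs'}$ in general, giving an external route that avoids the iSerre relations entirely. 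Second, both your inverse argument and the transitivity $\phi_{\bvs',\bvs''}\circ\phi_{\bvs,\bvs'}=\phi_{\bvs,\bvs''}$ require coherent choices of the square roots; this is a minor issue shared with the statement \eqref{phibvs2} itself, but worth a sentence.
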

It is also clear that $\phi_{\bvs',\bvs''}\circ \phi_{\bvs,\bvs'}=\phi_{\bvs ,\bvs''}$.

\begin{lemma}[\text{cf. \cite[Propositions 3.12 and 3.14]{WZ23}}] 
   \label{lem:inv}
$\qquad$
\begin{enumerate}
    \item For an arbitrary parameter $\bvs$, there exists an anti-involution $\sigma_\tau$ on $\Ui$  such that $ \sigma_\tau (B_i) = B_{\tau i}, \sigma_\tau (k_i) =k_i$, for $i\in \wI$. 
    \item For any balanced parameter $\bvs$, there exists an involution $\widehat\tau$ on $\Ui$ such that $ \widehat\tau (B_i) = B_{\tau i}, \widehat\tau (k_i) =k_i^{-1}$, for $i\in \wI$. 
    \item For any balanced parameter $\bvs$, there exists an anti-involution $\sigma_\imath$ on $\Ui$ which fixes $B_i$ and sends $k_i \mapsto k_i^{-1}$, for $i\in \wI$. 
\end{enumerate}
\end{lemma}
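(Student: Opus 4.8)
The plan is to treat the three maps in increasing order of difficulty, obtaining the automorphism $\widehat\tau$ of (2) directly and the anti-involution $\sigma_\imath$ of (3) for free by composition, so that essentially all the work is concentrated in constructing the anti-involution $\sigma_\tau$ of (1).

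First I would dispose of part (2). Let $\tau$ also denote the algebra automorphism of $\U$ induced by the diagram involution, so $\tau(E_j)=E_{\tau j}$, $\tau(F_j)=F_{\tau j}$, $\tau(K_\mu)=K_{\tau\mu}$. A direct computation gives $\tau(B_i)=F_{\tau i}+\vs_i E_i K_{\tau i}^{-1}$ and $\tau(k_i)=K_{\tau i}K_i^{-1}=k_i^{-1}$. When $\bvs$ is balanced, $\vs_i=\vs_{\tau i}$ forces $\tau(B_i)=B_{\tau i}\in\Ui$; since $k_i^{-1}\in\Ui$ as well, $\tau$ preserves $\Ui_\bvs$ and restricts to $\widehat\tau$, which is an involution because $\tau^2=\Id$ on $\U$. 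The balancedness hypothesis enters exactly here, to guarantee $\tau(B_i)\in\Ui$.

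For part (1) I would work from the presentation of the quasi-split $\Ui_\bvs$ by the generators $B_i,k_i$ ($i\in\wI$) subject to the torus relations, the $k$-$B$ commutation relations, and the quasi-split iSerre / Serre--Lusztig relations. One cannot simply restrict a structural anti-automorphism of $\U$: for instance $\sigma\circ\tau$ fixes each $k_i$ but sends $B_i$ to $F_{\tau i}+\vs_i q^{\tau i\cdot i}E_i K_{\tau i}$, whose $E_i K_{\tau i}$ term lies outside $\Ui$. Instead I define $\sigma_\tau$ on generators by $B_i\mapsto B_{\tau i}$, $k_i\mapsto k_i$, extend it as an anti-homomorphism, and verify it respects every defining relation; involutivity is then automatic, as $\sigma_\tau^2$ fixes each generator. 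The torus and $k$-$B$ relations are immediate from $\tau$-equivariance: writing $k_iB_j=q^{(\tau i-i)\cdot j}B_jk_i$ (the two summands of $B_j$ share this exponent since $(i-\tau i)\cdot(j+\tau j)=0$), applying $\sigma_\tau$ reduces the claim to $(\tau i-i)\cdot\tau j=-(\tau i-i)\cdot j$, which holds because the form $\cdot$ is $\tau$-invariant while $\tau(\tau i-i)=i-\tau i$ and $\tau(\tau j)=j$.

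The main obstacle is verifying that $\sigma_\tau$ respects the iSerre relations. Here one must show that applying $\sigma_\tau$ to the relation indexed by a pair $(i,j)$---which simultaneously reverses the order of all products of $B$'s and replaces each index by its $\tau$-image---produces precisely the iSerre relation indexed by $(\tau i,\tau j)$, valid in $\Ui$. This rests on the $\tau$-invariance of the Cartan datum together with the reversal symmetry built into the explicit idivided-power / Serre--Lusztig expressions, and the three locally finite rank-one types $c_{i,\tau i}=2,0,-1$ must be inspected separately. To lighten the bookkeeping I would first use the isomorphism $\phi_{\bvs,\bvs'}$ of Lemma~\ref{lem:iso-parameter} to reduce to one convenient parameter per value of the ratios $\vs_i/\vs_{\tau i}$; transporting $\sigma_\tau$ along $\phi$ reproduces the assignment $B_i\mapsto B_{\tau i}$ exactly, since the resulting scalar $\sqrt{\vs_i\vs'_{\tau i}/(\vs'_i\vs_{\tau i})}$ equals $1$ precisely when these ratios are preserved. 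Finally, part (3) follows by composition: since $\widehat\tau$ (an automorphism) and $\sigma_\tau$ (an anti-automorphism) both send $B_i\mapsto B_{\tau i}$, the composite $\sigma_\imath:=\sigma_\tau\circ\widehat\tau$ is an anti-automorphism with $\sigma_\imath(B_i)=\sigma_\tau(B_{\tau i})=B_i$ and $\sigma_\imath(k_i)=\sigma_\tau(k_i^{-1})=k_i^{-1}$, and it is an involution because it fixes each $B_i$ and inverts each $k_i$, hence squares to the identity on generators; the balancedness in (3) is inherited from its use in $\widehat\tau$.
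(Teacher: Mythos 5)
First, a remark on what you are being compared against: the paper does not prove this lemma at all; it is imported with a ``cf.'' from \cite[Propositions 3.12 and 3.14]{WZ23}, where the analogous statements are proved by a generators-and-relations check in the universal iquantum group $\tUi$ (with arbitrary parameters handled by central reduction, rather than by transport along $\phi_{\bvs,\bvs'}$). So your proposal has to stand on its own. Parts (2) and (3) do stand: the computation $\tau(B_i)=F_{\tau i}+\vs_i E_i K_{\tau i}^{-1}$, the observation that balancedness is exactly what makes this equal $B_{\tau i}$ while $\tau(k_i)=k_i^{-1}$ automatically, and the derivation of $\sigma_\imath:=\sigma_\tau\circ\widehat\tau$ (an anti-homomorphism fixing each $B_i$ and inverting each $k_i$, hence an involution) are correct and complete, and your strategy for (1) is the same one used in the cited source.

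The gap is in part (1), at precisely the step you call the main obstacle. You assert that $\sigma_\tau$ carries the relation indexed by $(i,j)$ to the relation indexed by $(\tau i,\tau j)$ by ``the reversal symmetry built into'' the iSerre/Serre--Lusztig expressions, but those relations are not reversal-symmetric on the nose: their right-hand sides contain $k_i^{\pm1}$ and parameter-dependent constants that transform nontrivially. Concretely, applying $\sigma_\tau$ to \eqref{B212} produces the left-hand side of \eqref{B121} together with the right-hand side $(q^{3/2}k_1+q^{-3/2}k_1^{-1})B_1$; only after commuting $k_1^{\pm1}$ across $B_1$ via \eqref{KB1} does this become $B_1(q^{-3/2}k_1+q^{3/2}k_1^{-1})$, and for non-balanced parameters one must in addition track how $\vs_i$ and $\vs_{\tau i}$ are distributed between the $(i,\tau i)$- and $(\tau i,i)$-relations. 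None of this is carried out, and it is where all the content of (1) lies. Moreover, your list of cases --- ``the three locally finite rank-one types $c_{i,\tau i}=2,0,-1$'' --- is not exhaustive in the Kac--Moody generality in which the lemma is stated: nodes with $c_{i,\tau i}\le -2$ occur (this is exactly why the paper must single out $\Ifin$ as a proper subset of $\I$), and for such pairs $(i,\tau i)$ the defining relations are the higher-order ones of \cite{CLW21a}, with their own $k_i$-corrections, which must also be checked. Finally, your $\phi_{\bvs,\bvs'}$-transport computation is correct as far as it goes, but since the ratios $\vs_i/\vs_{\tau i}$ range over all of $\F^\times$, it does not reduce the verification to finitely many parameters and so cannot substitute for the relation check; the cleaner route in the literature is to do the check once in $\tUi$ and descend to every $\Ui_\bvs$.
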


For a balanced parameter $\bvs$, $\sigma_\tau$ is simply the composition of $\sigma_\imath$ and $\htau$.

\subsection{Relative Weyl groups}

The relative Weyl group associated to the quasi-split Satake diagram $(\I,\tau)$ is defined by
\[
 W^\circ :=\{w\in W\mid w\tau =\tau w\}.
\]

Let $\wItau$ be a fixed set of representatives for $\tau$-orbits on $\wI$. The {\em real rank} of a Satake diagram $(\I,\tau)$ is defined to be the cardinality of $\wItau$. Denote
\begin{align}
\label{def:Ifinite}
\fwItau=\{i\in \wItau \mid c_{i,\tau i}=2,0,-1\},
\qquad
\Ifin =\{i\in \I \mid c_{i,\tau i}=2,0,-1\}.
\end{align}
i.e., $\Ifin$ contains exactly those $i\in \I$ such that the Dynkin subdiagram $\{i,\tau i\}$ is finite type.
For $i\in \Ifin$, define $\bs_i$ to be the longest element in the Weyl group associated with the subdiagram $\{i,\tau i\}$, that is, 
\begin{align} \label{risi}
\bs_i=\begin{cases}
s_i, & \text{ if } c_{i,\tau i}=2,
\\
s_i s_{\tau i}, & \text{ if } c_{i,\tau i}=0,
\\
s_i s_{\tau i}s_i, & \text{ if } c_{i,\tau i}=-1.
\end{cases}
\end{align}
Note that $\bs_i =\bs_{\tau i}$.

 For each Satake diagram $(\I,\tau)$, there is a relative root system with the simple system $\{\ov{\alpha}_i \mid i\in \fwItau\}$, where $\ov{\alpha}_i$ is identified with the following element (cf. \cite[\S 2.3]{DK19})
\begin{align}
 \label{def:ovalpha}
\ov{\alpha}_i:
=\frac{\alpha_i+\alpha_{\tau i}}{2},\qquad (i\in \fwItau).
\end{align}
The group $W^\circ$ can be identified with the Weyl group of the relative root system with simple reflections $\{\bs_i \mid i\in \fwItau\}$.


Let $(\oc_{ij})_{i,j\in \fwItau}$ be the Cartan matrix for the relative root system. Using \eqref{def:ovalpha}, we have 
\begin{align}
\label{eq:ovc}
\oc_{ij}=\frac{c_{i,j}+c_{\tau i,j}}{1+\frac{1}{2}c_{i,\tau i}}.
\end{align}

\subsection{Quasi $K$-matrix and relative braid group action}

 \begin{proposition} \cite[Theorem 3.16]{WZ23}
   \label{prop:qK}
 Let $(\U,\Ui_\bvs)$ be a quasi-split quantum symmetric pair of arbitrary Kac-Moody type with a general parameter $\bvs$. There exists a unique element $\fX_{\bvs}=\sum_{\mu \in \N \I} \fX_{\bvs}^\mu$, for $\fX_{\bvs}^\mu\in \U_\mu^+$, such that $\fX_{\bvs}^0=1$ and the following identities hold:
 \begin{align}\label{eq:fX1av}
 B_{\tau i}   \fX_{\bvs} &=  \fX_{\bvs}  \sigma \tau\big(B_i\big),\\
 x  \fX_{\bvs}  &=  \fX_{\bvs}  x,
 \end{align}
 for $i\in \wI$ and $x \in \U^{\imath 0}  $. Moreover, $\fX_\bvs^\mu =0$ unless $\tau (\mu) = \mu$.
 \end{proposition}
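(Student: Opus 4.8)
The plan is to construct $\fX_\bvs$ recursively by weight, using Lusztig's skew-derivations on $\U^+$ together with the non-degeneracy of Lusztig's bilinear form, and then to read off both the support statement and the commutation with $\U^{\imath 0}$ as consequences. Writing $\fX_\bvs=\sum_\mu \fX_\bvs^\mu$ with $\fX_\bvs^\mu\in\U^+_\mu$, I would first translate the defining relation into a recursion. A direct computation gives $\sigma\tau(B_i)=F_{\tau i}+\vs_i K_{\tau i}E_i$ and $B_{\tau i}=F_{\tau i}+\vs_{\tau i}E_i K_{\tau i}^{-1}$, so the relation $B_{\tau i}\fX_\bvs=\fX_\bvs\,\sigma\tau(B_i)$ reads
\[
(F_{\tau i}+\vs_{\tau i}E_iK_{\tau i}^{-1})\,\fX_\bvs=\fX_\bvs\,(F_{\tau i}+\vs_iK_{\tau i}E_i).
\]
Moving each $F_{\tau i}$ through $\fX_\bvs^\mu$ by Lusztig's commutation formula $uF_j-F_ju=\tfrac{r_j(u)K_j-K_j^{-1}\,{}_jr(u)}{q_j-q_j^{-1}}$ \cite[\S3.1]{Lus93}, the pure terms $\fX_\bvs^\mu F_{\tau i}$ on the two sides cancel, and separating the remainder according to the two $\U^0$-components $K_{\tau i}$ and $K_{\tau i}^{-1}$ yields, after relabelling $j=\tau i$ and up to explicit nonzero scalars $\kappa_{j,\mu},\kappa'_{j,\mu}\in\F^\times$,
\[
r_j(\fX_\bvs^\mu)=\kappa_{j,\mu}\,\fX_\bvs^{\,\mu-\alpha_j-\alpha_{\tau j}}\,E_{\tau j},
\qquad
{}_jr(\fX_\bvs^\mu)=\kappa'_{j,\mu}\,E_{\tau j}\,\fX_\bvs^{\,\mu-\alpha_j-\alpha_{\tau j}},
\]
for every $j\in\I$. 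Thus, for fixed $i$, the defining relation prescribes $r_{\tau i}$ and ${}_{\tau i}r$ of $\fX_\bvs^\mu$ in terms of the strictly lower-weight component $\fX_\bvs^{\,\mu-\alpha_i-\alpha_{\tau i}}$.

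Uniqueness then follows quickly: as $i$ ranges over $\I$ the index $j=\tau i$ ranges over all of $\I$, so the first display prescribes $r_j(\fX_\bvs^\mu)$ for \emph{every} $j$. Since Lusztig's form is non-degenerate on $\U^+_\mu$, the family $(r_j)_{j\in\I}$ is injective on $\U^+_\mu$ for $\mu\ne 0$ (an element killed by all $r_j$ is orthogonal to $\sum_j E_j\U^+$, hence to all of $\bigoplus_{\nu\ne 0}\U^+_\nu$), so $\fX_\bvs^\mu$ is determined by the lower-weight data; with $\fX_\bvs^0=1$, induction on $\rht(\mu)$ gives at most one solution. The same injectivity also yields the support statement and the second relation at no extra cost: whenever $\mu-\alpha_j-\alpha_{\tau j}\notin\N\I$ the right-hand side vanishes, so $r_j(\fX_\bvs^\mu)=0$ for all $j$, hence $\fX_\bvs^\mu=0$; inducting from $\fX_\bvs^0=1$ confines the support to $\N$-combinations of the $\tau$-stable weights $\alpha_i+\alpha_{\tau i}$, all fixed by $\tau$, which is exactly $\fX_\bvs^\mu=0$ unless $\tau(\mu)=\mu$. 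As $k_j$ conjugates $u\in\U^+_\mu$ by $q_j^{\langle h_j-h_{\tau j},\,\mu\rangle}$, whose exponent vanishes for all $j$ precisely when $\tau(\mu)=\mu$, the relation $x\fX_\bvs=\fX_\bvs x$ for $x\in\U^{\imath 0}$ holds automatically on this support.

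The remaining and hardest point is existence: I must show the recursively prescribed derivative data is \emph{integrable}, i.e. that there is an actual $\fX_\bvs^\mu\in\U^+_\mu$ realizing the prescribed values $r_j(\fX_\bvs^\mu)=\kappa_{j,\mu}\,\fX_\bvs^{\,\mu-\alpha_j-\alpha_{\tau j}}E_{\tau j}$ simultaneously for all $j$ (and likewise for ${}_jr$). An element of $\U^+_\mu$ is recovered from the tuple $(r_j)_j$, but a tuple $(y_j)_j$ with $y_j\in\U^+_{\mu-\alpha_j}$ lies in the image of $(r_j)_j$ only when it satisfies the compatibility condition relating $r_k(y_j)$ and $r_j(y_k)$ up to the standard $q$-grading twist. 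I expect the main obstacle to be verifying this cross-index compatibility: expanding $r_k\big(\fX_\bvs^{\,\mu-\alpha_j-\alpha_{\tau j}}E_{\tau j}\big)$ by the Leibniz rule and substituting the inductive recursion for $\fX_\bvs^{\,\mu-\alpha_j-\alpha_{\tau j}}$, one must match it against the same expression with $j$ and $k$ interchanged, and the nontrivial terms cancel only by invoking the quantum Serre relations together with the $\imath$-relations of $\Ui_\bvs$; the constraint \eqref{vsi_same} is precisely what makes the recursion consistent for $\tau$-linked indices with $i\cdot\tau i=0$. Once this consistency is established, the two recursions for $r_j$ and ${}_jr$ determine a well-defined $\fX_\bvs^\mu$, closing the induction. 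The whole construction is uniform in $\bvs$, so no reduction to a special parameter is required; alternatively, existence could be bootstrapped from Lusztig's quasi-$R$-matrix, but the direct recursion is the cleaner route here.
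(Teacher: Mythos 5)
This proposition is not proved in the paper at all: it is quoted verbatim from \cite[Theorem 3.16]{WZ23}, so your attempt has to be measured against the proof given there (which in turn rests on the quasi $K$-matrix theory of \cite{BW18a, BW18b, BK19, BW21, AV20}). Your first step is correct and is exactly how the recursion is derived in that literature: writing $\sigma\tau(B_i)=F_{\tau i}+\vs_i K_{\tau i}E_i$, moving $F_{\tau i}$ across $\fX_\bvs^\mu$ by Lusztig's commutation formula, and separating the $\U^+K_{\tau i}$- and $\U^+K_{\tau i}^{-1}$-components (legitimate by triangular decomposition) turns \eqref{eq:fX1av} into the system prescribing $r_{\tau i}(\fX_\bvs^\mu)$ and ${}_{\tau i}r(\fX_\bvs^\mu)$ in terms of $\fX_\bvs^{\mu-\alpha_i-\alpha_{\tau i}}$. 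From this, your uniqueness argument (joint injectivity of $(r_j)_{j\in\I}$ on $\U^+_\mu$ for $\mu\neq 0$, by non-degeneracy of Lusztig's form), your support argument (induction confines the support to $\sum_i\N(\alpha_i+\alpha_{\tau i})$, whose elements are $\tau$-fixed), and the resulting commutation with $\U^{\imath 0}$ are all sound.

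The genuine gap is existence, which is the only hard part of the theorem and which you leave as a program rather than a proof. A tuple $(y_j)_j$ with $y_j\in\U^+_{\mu-\alpha_j}$ is of the form $(r_j(x))_j$ only if it satisfies nontrivial cross-index compatibilities, and one must additionally check that the solution of the $r_j$-system also satisfies the ${}_jr$-system; verifying both for the specific right-hand sides here is the heavy computational core of the subject (in \cite{BK19} this consistency occupies a long and delicate analysis, not a routine Leibniz-plus-Serre cancellation), and your assertion that the constraint \eqref{vsi_same} ``is precisely what makes the recursion consistent'' is an unsupported guess. Your closing claim that ``no reduction to a special parameter is required'' is also the opposite of how the cited proof is organized: rather than redoing the consistency analysis for arbitrary $\bvs$, \cite{WZ23} exploits the fact that for parameters satisfying the bar-compatibility condition $\vs_{\tau i}=q_i^{-c_{i,\tau i}}\ov{\vs_i}$ (e.g.\ $\bvs_\star$ of \eqref{def:bvs-star}) the quasi $K$-matrix is already known to exist as the bar-involution intertwiner \eqref{def:psi-fX} of \cite{BW18b, BK19, BW21}, checks that this element satisfies the $\sigma\tau$-intertwining identities, and then transports it to general parameters by the rescaling automorphism $\Phi_\ba$ of \eqref{def:Phi} --- precisely the compatibility $\Phi_{\ov{\bvs}\bvs'}(\fX_{i,\bvs})=\fX_{i,\bvs'}$ that the present paper invokes in the proof of \cref{thm:res-split}. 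As written, your argument establishes uniqueness and the two supplementary statements, but not the existence of $\fX_\bvs$.
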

 
The element $\fX=\fX_{\bvs}$ is called the quasi $K$-matrix associated to $(\U,\Ui_\bvs)$; we omit the index $\bvs$ if there is no ambiguity. For $i\in\I$, denote by $\fX_i=\fX_{i,\bvs}$ the quasi $K$-matrix associated to the real rank one subdiagram $( \{i,\tau i\} ,\tau|_{\{i,\tau i\} })$. The formulation of $\fX$ for a general parameter $\bvs$ in \cref{prop:qK} was inspired by \cite{AV20}, who studied quasi $K$-matrix in general parameters in a more complicated formulation. In case of special parameters $\bvs$ so that $\Ui_\bvs$ admits a bar involution, the quasi $K$-matrix was first formulated as an intertwiner for bar involutions on $\Ui_\bvs$ and on $\U$; cf. \cite{BW18a, BK19, BW18b}.

 Following \cite{WZ23}, let $\bvs_{\dm}=(\vs_{\dm,i})_{i\in \wI}$ be the distinguished parameter given by
\begin{align}
\label{def:bvs}
\vs_{\diamond,i}= - q_i^{-1-\frac12 c_{i,\tau i}} .
\end{align} 
Let $\ov{\cdot}:\F\rightarrow \F$ be the $\Q$-linear map sending $q\mapsto q^{-1}$ and set $\ov{\bvs_\dm}\bvs=(\ov{\vs_{i,\dm}}\vs_i)_{i\in \wI}$. 

Recall the rescaling automorphism $\Phi_{\ba}$ of $\U$ \eqref{def:Phi}. Following \cite[(10.1)]{WZ23}, we define the normalized braid group symmetries on $\U$:
\begin{align}
\label{braid_Urescale}
   T'_{i,\bvs, -1} :=\Phi_{\ov{\bvs_\diamond}\bvs} T'_{i,-1} \Phi_{\ov{\bvs_\diamond}\bvs}^{-1},
   \qquad
   T''_{i,\bvs, +1} :=\Phi_{\ov{\bvs_\diamond}\bvs} T''_{i,+1} \Phi_{\ov{\bvs_\diamond}\bvs}^{-1},
   \qquad
   i\in\Ifin.
\end{align}
Accordingly, we have $T'_{w,\bvs, +1},T''_{w,\bvs, +1}$ on $U$, for $w\in W$.
There are also linear operators $T'_{i,\bvs, -1},T''_{i,\bvs, +1}$ on an integrable $\U$-module $M$ which are compatible with these normalized braid group symmetries on $\U$. The actions of $T'_{i,\bvs, -1},T''_{i,\bvs, +1}$ on $M$ is given by
\begin{align}
\label{braid_rescale}
T'_{i,\bvs,-1}v=(\ov{\vs_{\dm,i}}\vs_i)^{-m/2} T'_{i,-1}v,
\qquad
T''_{i,\bvs,+1}v=(\ov{\vs_{\dm,i}}\vs_i)^{-m/2} T''_{i,+1}v,
\end{align}
for $v\in M_\lambda,m=\langle h_i,\lambda\rangle.$

We constructed relative braid group symmetries $\TT'_{i,e},\TT''_{i,e}$ on $\Ui$ in \cite{WZ23,Z23}. We recall the formulation and properties for $\TT'_{i,-1}$. 

\begin{theorem} [\text{cf. \cite{WZ23,Z23}}]
\label{thm:braid-iQG}
Let $e=\pm1$ and $\bvs$ be an arbitrary parameter. For $i\in\fwItau$, there exists an automorphism $\TT'_{i,\bvs,-1}$ on $\Ui_\bvs$ such that
\begin{itemize}
\item[(a)] 
for a balanced parameter $\bvs$, we have
\begin{align}
\label{eq:compTT}
\TT'_{i,\bvs,-1}(x) \fX_i =\fX_i T'_{\bs_i,\bvs, -1} (x),\qquad \forall x\in \Ui.
\end{align}
\item[(b)] 
for arbitrary parameters $\bvs,\bvs'$, we have
\begin{align}
\label{eq:phiTT}
\phi_{\bvs,\bvs'}\circ \TT'_{i,\bvs,-1}=\TT'_{i,\bvs',-1} \circ \phi_{\bvs,\bvs'}.
\end{align}
\end{itemize}
Moreover, the symmetries $\TT'_{i,-1}$ satisfy braid relations in $W^\circ$.
\end{theorem}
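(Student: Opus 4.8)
The plan is to construct $\TT'_{i,\bvs,-1}$ first for a balanced parameter $\bvs$, where the rank-one quasi $K$-matrix $\fX_i$ of \cref{prop:qK} is available together with its intertwining properties \eqref{eq:fX1av}, and then to transport the construction to arbitrary parameters through the isomorphisms $\phi_{\bvs,\bvs'}$ of \cref{lem:iso-parameter}. For balanced $\bvs$, the characterizing identity \eqref{eq:compTT} forces
\begin{equation*}
\TT'_{i,\bvs,-1}(x) := \fX_i\, T'_{\bs_i,\bvs,-1}(x)\, \fX_i^{-1}, \qquad x\in\Ui,
\end{equation*}
where $\fX_i$ is viewed as an invertible element of a suitable completion of $\U$ and $T'_{\bs_i,\bvs,-1}$ is the normalized Lusztig operator \eqref{braid_Urescale} attached to $\bs_i\in W$ as in \eqref{risi}. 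As a composite of the algebra automorphism $T'_{\bs_i,\bvs,-1}$ of $\U$ with conjugation by $\fX_i$, this map is an algebra homomorphism, \eqref{eq:compTT} holds by construction, and injectivity is inherited from $\U$. Two points remain: (A) the image of $\Ui$ lies again in $\Ui$, so that $\TT'_{i,\bvs,-1}$ restricts to an endomorphism of $\Ui$; and (B) this restriction is surjective.

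I expect (A) to be the main obstacle. First I would reduce it to the generators $B_j, k_j$. Since $\fX_i$ is supported on positive root vectors of the rank-one subdiagram $\{i,\tau i\}$, and $T'_{\bs_i,\bvs,-1}$ acts as the identity away from $\{i,\tau i\}$ and its neighbours, both operators fix every $B_j, k_j$ with $j$ not adjacent to $\{i,\tau i\}$; only finitely many generators require attention. For those I would split into the three rank-one types $c_{i,\tau i}=2,0,-1$ of \eqref{risi} and compute $\fX_i\, T'_{\bs_i,\bvs,-1}(B_j)\,\fX_i^{-1}$ explicitly, the key input being the intertwining relation \eqref{eq:fX1av}, namely $B_{\tau i}\fX_i=\fX_i\,\sigma\tau(B_i)$ together with $x\fX_i=\fX_i x$ for $x\in\U^{\imath0}$, which lets one move $\fX_i$ across the Lusztig root vectors and recognise each output as a $q$-root vector of $\Ui$ attached to $\bs_i$. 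These type-by-type rank-one identities are the genuinely hard computations; they are exactly the content reappearing later as the explicit formulas for $\TT'_{i,-1}(B_j)$. Point (B) I would obtain symmetrically: the candidate inverse $x\mapsto T''_{\bs_i,\bvs,+1}(\fX_i^{-1}x\fX_i)$, built from the companion operator $T''_{\bs_i,\bvs,+1}=(T'_{\bs_i,\bvs,-1})^{-1}$, preserves $\Ui$ by the same analysis, so that $\TT'_{i,\bvs,-1}$ is an automorphism of $\Ui$ and part (a) holds for balanced $\bvs$.

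To reach arbitrary parameters I would use the distinguished parameter $\bvs_\diamond$ of \eqref{def:bvs}, which is balanced because $q_i=q_{\tau i}$ and $c_{i,\tau i}=c_{\tau i,i}$, and define
\begin{equation*}
\TT'_{i,\bvs,-1} := \phi_{\bvs_\diamond,\bvs}\circ\TT'_{i,\bvs_\diamond,-1}\circ\phi_{\bvs,\bvs_\diamond},
\end{equation*}
an automorphism of $\Ui_\bvs$. The compatibility \eqref{eq:phiTT} then follows from the cocycle identity $\phi_{\bvs',\bvs''}\circ\phi_{\bvs,\bvs'}=\phi_{\bvs,\bvs''}$ recorded after \cref{lem:iso-parameter}: telescoping the $\phi$'s, both $\phi_{\bvs,\bvs'}\circ\TT'_{i,\bvs,-1}$ and $\TT'_{i,\bvs',-1}\circ\phi_{\bvs,\bvs'}$ collapse to $\phi_{\bvs_\diamond,\bvs'}\circ\TT'_{i,\bvs_\diamond,-1}\circ\phi_{\bvs,\bvs_\diamond}$. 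In particular the definition is independent of choices; for balanced $\bvs$ it matches the conjugation formula once one checks that $\phi_{\bvs_\diamond,\bvs}$ transports $\fX$ and the normalized Lusztig operator compatibly, which is immediate from the characterization in \cref{prop:qK} and from \eqref{braid_Urescale}.

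Finally, for the braid relations in $W^\circ$ it suffices, by \eqref{eq:phiTT}, to argue at $\bvs_\diamond$. Expanding a length-$m_{ij}$ braid word in the conjugation definition, $\TT'_{i,-1}\TT'_{j,-1}\cdots(x)$ equals $T'_{\bs_i,-1}T'_{\bs_j,-1}\cdots(x)$ conjugated by $\fX_i\,T'_{\bs_i,-1}(\fX_j)\,T'_{\bs_i,-1}T'_{\bs_j,-1}(\fX_i)\cdots$. On the Lusztig side the two words $\bs_i\bs_j\cdots$ and $\bs_j\bs_i\cdots$ represent the same element $w\in W$ (the longest element of the rank-two relative standard parabolic on $\{i,\tau i, j,\tau j\}$) and the factorizations are length-additive, so both give $T'_{w,-1}$. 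The remaining task is to show the accompanying product of Lusztig-twisted rank-one quasi $K$-matrices is independent of the reduced word; I would deduce this from the uniqueness in \cref{prop:qK} by identifying that product with the quasi $K$-matrix of the subdiagram $\{i,\tau i, j,\tau j\}$. This reduced-word independence of the factorized quasi $K$-matrix, together with the closure (A), is the core of the argument.
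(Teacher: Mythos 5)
Your overall architecture for the transport step matches the paper exactly: the paper also defines $\TT'_{i,\bvs,-1}:=\phi_{\bvs_\dm,\bvs}\circ\TT'_{i,\bvs_\dm,-1}\circ\phi_{\bvs,\bvs_\dm}$ at the distinguished (balanced) parameter $\bvs_\dm$ and deduces part (b) from the cocycle identity $\phi_{\bvs',\bvs''}\circ\phi_{\bvs,\bvs'}=\phi_{\bvs,\bvs''}$, with part (a) obtained by transporting the intertwining identity. Where you diverge is the existence core: the paper does not construct $\TT'_{i,\bvs_\dm,-1}$ by conjugation inside (a completion of) $\U$ at all; it cites the construction of $\tTT'_{i,-1}$ on the universal iquantum group $\tUi$ from \cite{Z23} and obtains the symmetry on $\Ui_{\bvs_\dm}$ by central reduction. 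Your route — turn the characterization \eqref{eq:compTT} into a definition $\TT'_{i,\bvs,-1}(x)=\fX_i\,T'_{\bs_i,\bvs,-1}(x)\,\fX_i^{-1}$ and prove closure — is essentially the finite-type strategy of \cite{WZ23}.

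The genuine gap is that the two steps you yourself flag as the hard ones are precisely the mathematical content of the theorem, and they are never carried out. First, closure (your point (A)): since $\fX_i$ is an infinite sum, $\fX_i\,T'_{\bs_i,\bvs,-1}(B_j)\,\fX_i^{-1}$ is a priori only an element of a completion of $\U$, and showing that it lies in $\Ui$ — indeed that it equals a specific $q$-root vector, as in \cref{prop:rkone} and \cref{prop:TiBjZ} — is the substance of the rank-one and rank-two identities of \cite{WZ23, Z23}; nothing in your text reduces this to anything simpler, and in Kac--Moody generality this is exactly the difficulty that led \cite{Z23} to work with $\tUi$ rather than argue as in finite type. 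Until (A) is proved, your map is not known to be an endomorphism of $\Ui$, so the claims that it is an algebra homomorphism onto $\Ui$, that ``(a) holds by construction,'' and that the candidate inverse works, are all premature. Second, the braid relations: identifying the twisted product $\fX_i\,T'_{\bs_i,-1}(\fX_j)\,T'_{\bs_i,-1}T'_{\bs_j,-1}(\fX_i)\cdots$ with the quasi $K$-matrix of the rank-two subdiagram via the uniqueness in \cref{prop:qK} requires verifying that this product satisfies the intertwining relations \eqref{eq:fX1av} for the rank-two pair — a nontrivial factorization theorem (cf. \cite{DK19}, \cite{WZ23}) which you assert but do not prove; the length-additivity statement for $\bs_i\bs_j\cdots$ in $W$ also needs justification. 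In short, the proposal is a plausible roadmap along the \cite{WZ23} route rather than a proof; the paper closes both gaps by citing \cite{Z23} and \cite{WZ23}, which is where that work is actually done.
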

(We write $\TT'_{i,-1}$ for $\TT'_{i,\bvs,-1}$ whenever the parameter $\bvs$ is clear from the context.)

\begin{proof}
Let $\tUi$ be the universal iquantum groups \cite{LW22} (also cf. \cite{WZ23}). The algebra $\Ui_\bvs$ is recovered via a central reduction on $\tUi$. The relative braid group symmetries $\tTT'_{i,-1}$ were constructed on $\tUi$ in \cite[Theorem 3.1]{Z23} in the Kac-Moody setting, generalizing \cite{WZ23} in the finite-type setting. 

In the Kac-Moody setting, the symmetries $\TT'_{i,\bvs_\dm,-1}$ for the distinguished parameter $\bvs_\dm$ is obtained from $\tTT'_{i,-1}$ via the central reduction, and $\TT'_{i,\bvs ,-1}$ for a general parameter $\bvs$ is defined by
\[
 \TT'_{i,\bvs,-1}=\phi_{\bvs_\dm,\bvs}\circ\TT'_{i,\bvs_\dm,-1} \circ \phi_{\bvs,\bvs\dm};
\]
this construction is similar to the one in \cite[\S9.4]{WZ23} for finite type. The identity \eqref{eq:compTT} is then established in the same way as \cite[Theorem 9.10]{WZ23}, while the identity \eqref{eq:phiTT} is established in the same way as \cite[Theorem 10.1]{WZ23}.
\end{proof}

 \subsection{$\mathrm{i}$Divided powers and integrable $\Ui$-modules}

We recall from \cite{BW18a, BeW18, CLW21a} the idivided powers.

\begin{itemize}
\item[(i)] Let $i=\tau i$. The idivided powers $B_{i,\ov{p},\bvs}^{(m)}$ or simply $\dvi{\ov{p}}{m}$ when there is no confusion, for $ m\ge 1$ and $\ov{p}\in \Z_2$, are defined by
\begin{align}
\label{def:idv}
\begin{split}
&\dvi{\ov{0},\bvs}{m}=\frac{1}{[m]_i!}
B_i^{1+\delta_{\ov{m},\ov{0}}} \prod_{r=1}^{\lceil m/2 \rceil -1}  (B_i^2- q_i \vs_i [2r]_i^2), 
\\
&\dvi{\ov{1},\bvs}{m}=\frac{1}{[m]_i!}
B_i^{\delta_{\ov{m},\ov{1}}} \prod_{r=1}^{\lfloor m/2 \rfloor} (B_i^2- q_i \vs_i [2r-1]_i^2).
\end{split}
\end{align}
Also, set $B_{i,\ov{p}}^{(0)}=1$. The idivided powers satisfy the recursive relations:
\begin{align} \label{recursion}
  B \dvi{\ov{p}}{m} =
  \begin{dcases}
   [m+1]_{i} \dvi{\ov{p}}{m+1}  , & \text{ if } \ov{p} \not = \ov{m},
   \\
   [m+1]_{i} \dvi{\ov{p}}{m+1} + q_i\vs_i [m]_i \dvi{\ov{p}}{m-1} , & \text{ if } \ov{p} = \ov{m}.
    \end{dcases}
\end{align}

\item[(ii)] 
Let $i\neq \tau i$. The idivided powers $B_i^{(m)}$, for $m \ge 0$, are defined by 
\begin{align}
\label{eq:iDPusual}
    B_i^{(m)}=\frac{B_i^m}{[m]_i!}.
\end{align} 
\end{itemize}

\begin{lemma}\label{lem:div-par}
Let $i\in \wI$ such that $i=\tau i$. Let $\bvs=(\vs_i)_{i\in \I}$ and $\bvs'=(\vs'_i)_{i\in \I}$ be two parameters. We have 
\[
\phi_{\bvs,\bvs'}\Big(\dvi{\ov{p},\bvs}{m}\Big)=\vs_i^{m/2}(\vs_{i}')^{-m/2} \dvi{\ov{p},\bvs'}{m},
\]
for all $m\in \N$ and $\ov{p}\in \Z_2$.
\end{lemma}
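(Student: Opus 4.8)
The plan is to compute $\phi_{\bvs,\bvs'}$ directly on the explicit polynomial expressions \eqref{def:idv} defining the split idivided powers. Since $i=\tau i$, \cref{lem:iso-parameter} gives $\phi_{\bvs,\bvs'}(B_i)=c\,B_i$ with $c=\sqrt{\vs_i (\vs_i')^{-1}}$ (the image $\phi_{\bvs,\bvs'}(k_i)=k_i$ will not intervene, since $k_i$ does not appear in \eqref{def:idv}). The structural observation I would emphasize first is that $\phi_{\bvs,\bvs'}$ is an $\F$-algebra isomorphism, hence fixes every scalar in $\F$; in particular it fixes the coefficients $q_i\vs_i[2r]_i^2$ (resp.\ $q_i\vs_i[2r-1]_i^2$) appearing in \eqref{def:idv} \emph{verbatim}, and only rescales the generator $B_i$. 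Thus the entire computation reduces to substituting $B_i\mapsto c\,B_i$ into a fixed polynomial and rearranging.

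First I would record the effect of $\phi_{\bvs,\bvs'}$ on a single quadratic factor:
\[
(cB_i)^2 - q_i\vs_i[2r]_i^2 = c^2\big(B_i^2 - c^{-2}q_i\vs_i[2r]_i^2\big) = c^2\big(B_i^2 - q_i\vs_i'[2r]_i^2\big),
\]
where the last equality uses $c^{-2}\vs_i=\vs_i'$ (and identically with $[2r-1]_i^2$ in place of $[2r]_i^2$). This is the conceptual heart: each quadratic factor simultaneously contributes one factor of $c^2$ and converts the $\bvs$-coefficient into the corresponding $\bvs'$-coefficient, so that the product occurring in $\dvi{\ov p,\bvs}{m}$ becomes the product occurring in $\dvi{\ov p,\bvs'}{m}$, up to a power of $c$. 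Applying the same substitution to the monomial prefactor $B_i^{1+\delta_{\ov m,\ov 0}}$ (resp.\ $B_i^{\delta_{\ov m,\ov 1}}$) pulls out $c^{1+\delta_{\ov m,\ov 0}}$ (resp.\ $c^{\delta_{\ov m,\ov 1}}$), while $1/[m]_i!$ is fixed.

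The remaining step is bookkeeping of the total power of $c$. For $\ov p=\ov 0$ the exponent is $(1+\delta_{\ov m,\ov 0})+2(\lceil m/2\rceil-1)$, and for $\ov p=\ov 1$ it is $\delta_{\ov m,\ov 1}+2\lfloor m/2\rfloor$; I would verify, by splitting into $m$ even and $m$ odd, that each of these equals $m$ in all four cases. Combining with the factor-by-factor conversion above then yields $\phi_{\bvs,\bvs'}(\dvi{\ov p,\bvs}{m})=c^m\,\dvi{\ov p,\bvs'}{m}$, and $c^m=\vs_i^{m/2}(\vs_i')^{-m/2}$ gives the claim; the case $m=0$ is immediate from $B_{i,\ov p}^{(0)}=1$.

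The only point requiring genuine care is the parity bookkeeping in the last step---confirming that the exponent of $c$ is uniformly $m$ regardless of the parity of $m$ and of $\ov p$---together with the easily overlooked fact that $\phi_{\bvs,\bvs'}$ must \emph{not} be applied to the parameter $\vs_i$ sitting inside the fixed scalars $q_i\vs_i[2r]_i^2$: the replacement of $\vs_i$ by $\vs_i'$ is produced entirely by extracting $c^{-2}$ from each quadratic factor, and not by any action of $\phi_{\bvs,\bvs'}$ on a parameter.
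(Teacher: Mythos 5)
Your proposal is correct and follows exactly the paper's route: the paper's proof is the one-line observation that the claim "follows from Lemma~\ref{lem:iso-parameter} and the definition of $\dvi{\ov{p},\bvs}{m}$," which is precisely the computation you carry out — substituting $\phi_{\bvs,\bvs'}(B_i)=\sqrt{\vs_i(\vs_i')^{-1}}\,B_i$ into \eqref{def:idv}, extracting $c^2$ from each quadratic factor to convert the $\bvs$-coefficients into $\bvs'$-coefficients, and checking the total power of $c$ equals $m$. Your explicit parity bookkeeping and the caution that $\phi_{\bvs,\bvs'}$ fixes the scalar $q_i\vs_i[2r]_i^2$ are exactly the details the paper leaves implicit.
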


\begin{proof}
This follows from Lemma~\ref{lem:iso-parameter} and the definition of $\dvi{\ov{p},\bvs}{m}$.
\end{proof}

Following \cite[Section 3.3]{BW18b}, an $\Xi$-weighted $\Ui$-module $M$ is a $\Ui$-module with a decomposition $M =\oplus_{\ov{\la} \in \Xi} M_{\ov{\la}}$ such that 
\[
  K_\gamma x = q^{\langle \gamma, \overline{\la} \rangle} x,\qquad\forall \gamma \in Y^\imath,\ov{\la}\in X_\imath,x\in M_{\ov{\la}}.
\]
By \eqref{def:iQG}, we have $B_i v\in M_{\ov{\lambda-\alpha_i}}$ for any $v\in M_{\ov{\lambda}}$.

\begin{definition}
\label{def:integrable}
A $\Ui$-module $M$ is called {\em integrable} if $M$ is $\Xi$-weighted and each $x\in M_{\ov{\la}}$ for any $\ov{\la}\in X_\imath$ is annihilated by $B_j^{(k)}$ ($\tau j\neq j\in \wI$) and $B_{i,\ov{\la}_i}^{(k)}$ ($\tau i= i\in \wI$), for $k\gg 0$.
\end{definition}

Denote by $\mathcal C$ the category of integrable $\Ui$-modules. We shall always assume that the $\Ui$-modules in this paper lie in $\mathcal C$. 

\begin{remark}
Definition~\ref{def:integrable} was known to us for a number of years. 
In \cite[Def.~ 3.3.4]{Wat24b}, Watanabe formulated a notion of integrable $\Ui$-modules in a different way. By \cite[Prop. 4.3.1]{Wat24b}, any module in the category $\mathcal C$ is integrable in the sense of that paper.
\end{remark}

\begin{lemma}  \label{lem:integrable}
    Any integrable $\U$-module vied as a $\Ui$-module by restriction lies in $\mathcal C$. 
\end{lemma}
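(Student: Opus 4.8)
The plan is to show that any integrable $\U$-module $M$, when restricted to $\Ui$, satisfies the two conditions in Definition~\ref{def:integrable}: that $M$ is $\Xi$-weighted, and that the appropriate i-divided powers act locally nilpotently on each i-weight space. First I would address the i-weight decomposition. Since $M$ is an integrable $\U$-module it carries a weight decomposition $M=\bigoplus_{\mu\in X} M_\mu$, and one checks that for $\gamma\in Y^\imath$ and $v\in M_\mu$ we have $K_\gamma v = q^{\langle\gamma,\mu\rangle} v$. Because $\gamma\in Y^\imath$ satisfies $\tau(\gamma)=-\gamma$, the pairing $\langle\gamma,\mu\rangle$ depends only on the class $\overline{\mu}\in X_\imath$: indeed $\langle\gamma,\la+\tau(\la)\rangle=\langle\gamma,\la\rangle+\langle\tau(\gamma),\la\rangle=0$. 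Hence grouping the weight spaces by their image in $X_\imath$ gives a decomposition $M=\bigoplus_{\overline{\la}\in X_\imath} M_{\overline{\la}}$ on which $K_\gamma$ acts by the scalar $q^{\langle\gamma,\overline{\la}\rangle}$, exactly as required for an $\Xi$-weighted module.

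Next I would verify the local nilpotency condition. For $i\in\wI$ with $\tau i\neq i$, the i-divided power is $B_i^{(k)}=B_i^k/[k]_i!$, where $B_i=F_i+\vs_i E_{\tau i}K_i^{-1}$. Fix $v\in M_{\overline{\la}}$; lifting to an actual weight vector, $v$ lies in a finite sum of weight spaces, and since the weights of $M$ are bounded in the relevant directions for an integrable module (each of $E_j,F_j$ acts locally nilpotently and the weights lie in finitely many cosets reachable from $v$), the operator $B_i$ maps $v$ into a finite-dimensional subspace built from finitely many weight spaces; applying $B_i$ repeatedly lowers the weight by $\alpha_i$ each time (mod the i-weight grading), and integrability of $M$ as a $\U$-module forces the weights appearing to be bounded below in the $\alpha_i$-string direction, so $B_i^k v=0$ for $k\gg0$. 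For the case $\tau i=i$, the i-divided powers $\dvi{\overline{\la}_i}{k}$ are polynomials in $B_i=F_i+\vs_i E_iK_i^{-1}$ (of degree $k$) by \eqref{def:idv}, so the same boundedness argument applied to powers of $B_i$ shows $\dvi{\overline{\la}_i}{k}v=0$ for $k\gg0$.

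The cleanest way to package the nilpotency argument is to observe that on any integrable $\U$-module the rank-one subalgebra generated by $E_i,F_i,E_{\tau i},F_{\tau i}$ together with the torus acts through a finite-dimensional (or locally finite) representation of the corresponding finite-type quantum group, since $\{i,\tau i\}$ spans a finite-type subdiagram for the relevant nilpotency to kick in; restricting to the finite-dimensional $\U$-submodule generated by $v$ makes every operator $B_i$ act nilpotently automatically. The main obstacle I anticipate is making precise the boundedness of weights needed to conclude nilpotency of $B_i$: unlike $F_i$ alone, $B_i$ mixes a lowering term $F_i$ and a raising term $\vs_i E_{\tau i}K_i^{-1}$, so one cannot argue purely by monotone weight decrease. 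The resolution is to note that $v$ generates a finite-dimensional $\U$-submodule (integrable highest/lowest weight theory in the relevant rank-one or rank-two finite-type direction), on which any element of $\U$ — in particular any power of $B_i$ — acts on a finite-dimensional space, and the i-divided powers are constructed precisely so that their top-degree behavior matches the vanishing of high powers of $B_i$ on such finite-dimensional pieces. Thus the whole statement reduces to the standard fact that integrable $\U$-modules are locally finite in each finite-type direction, which I would invoke directly.
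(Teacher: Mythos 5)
Your proposal breaks down at the split case $i=\tau i$, and the failure is not a technicality. You claim that on the finite-dimensional piece generated by $v$ the ``vanishing of high powers of $B_i$'' forces $\dvi{\ov{\la}_i}{k}v=0$; but $B_i=F_i+\vs_i E_iK_i^{-1}$ is \emph{not} nilpotent on finite-dimensional $\U(\sl_2)$-modules. On the two-dimensional simple module $L(1)$ one computes $B_i^2=q_i\vs_i\cdot\mathrm{Id}$, so no power of $B_i$ vanishes, and local finiteness alone can never yield the required annihilation. What makes the idivided powers vanish is the specific root structure of the polynomials in \eqref{def:idv}: the factors $B_i^2-q_i\vs_i[2r]_i^2$, resp.\ $B_i^2-q_i\vs_i[2r-1]_i^2$, are tuned to the eigenvalues of $B_i$ on the finite-dimensional module, and only for the parity matching the iweight of $v$ --- on $L(1)$ one has $\dvi{\ov{1}}{2}=(B_i^2-q_i\vs_i)/[2]_i=0$, while $\dvi{\ov{0}}{2}=B_i^2/[2]_i\neq 0$. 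This parity sensitivity is exactly why Definition~\ref{def:integrable} is phrased in terms of $B_{i,\ov{\la}_i}^{(k)}$, and the vanishing is a genuine theorem rather than a formal consequence of local finiteness: the paper closes this case by invoking \cite[Theorems 2.10, 3.6]{BeW18}, which assert that the $s$-dimensional simple $\U(\sl_2)$-module is annihilated by $B_{i,\ov{s-1}}^{(n)}$ for $n\ge s$. Your proposal contains no substitute for this input.

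Your treatment of $\tau i\neq i$ also needs repair, as you partly sensed. The monotone-weight argument is invalid ($B_i$ lowers only the iweight; the weights $\mu-a\alpha_i+b\alpha_{\tau i}$ occurring in $B_i^nv$ need not leave the support of $M$), and your patch assumes the subdiagram $\{i,\tau i\}$ is of finite type, which fails in the paper's Kac--Moody generality: $c_{i,\tau i}$ can be any integer $\le -2$ (this is precisely why $\Ifin$ is a proper subset of $\I$), so the rank-two submodule generated by $v$ need not be finite dimensional. The paper's argument avoids both problems: since $E_{\tau i}$ commutes with $F_i$ when $\tau i\neq i$, the two summands of $B_i$ $q$-commute, and the quantum binomial theorem gives $B_i^{(n)}=\sum_{a=0}^{n} q_i^{\ast}\,\vs_i^a K_i^{-a}E_{\tau i}^{(a)}F_i^{(n-a)}$ for suitable powers $q_i^{\ast}$; applied to $v$, every term for $n\gg 0$ contains either a divided power $F_i^{(n-a)}$ killing the finite-dimensional quantum-$\sl_2$ submodule through $v$, or a divided power $E_{\tau i}^{(a)}$ large enough to kill that finite-dimensional space by local nilpotency of $E_{\tau i}$ on $M$. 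Only rank-one local finiteness at the single node $i$ is ever used.
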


\begin{proof}
    Let $M$ be an integrable $\U$-module. Take any weight vector $v \in M$. Let $i\in \I$. Denote by $S$ the finite-dimensional simple module generated by $v$ over the quantum $\sl_2$ generated by $E_i, F_i$. Write $s=\dim S$.

    If $i\neq \tau i$, by quantum binomial theorem we have 
    \[
    B_i^{(n)}=\frac{B_i^n}{[n]_i!} =  \sum_{a=0}^n  q_i^{\binom{a+1}{2} \langle h_i,\alpha_{\tau i}\rangle-a(n-a)} \vs_i^a K_i^{-a} E_{\tau i}^{(a)} \ F_i^{(n-a)}. 
    \]
    Then $B_i^{(n)}$ annihilates the module $S$ (and hence $B_i^{(n)} v=0$), for $n\ge 2s$. 

    If $i=\tau i$, by \cite[Theorems 2.10, 3.6]{BeW18}, $B_{i,\ov{s-1}}^{(n)}$ annihilates the module $S$ (and hence $B_{i,\ov{s-1}}^{(n)} v=0$), for $n\ge s$. 
    The lemma is proved. 
\end{proof}
In particular, if an element $x\in \Ui$ acts as $0$ on every integrable $\Ui$-module, then $x=0$, since the same statement holds when $\Ui $ is replaced by $\U$ \cite{Lus93}.

\begin{example}
    Let $L(n)$ be the simple $\U(\sl_2)$-module of highest weight $n\in \N$ with highest weight vector $\eta_n$. There exists (see \cite{BW18a}) a unique $\Ui$-module homomorphism $\varphi_n: L(n) \rightarrow L(n-2)$, $\eta_n \mapsto \eta_{n-2}$, for $n\ge 2$. The kernel $K_n =\ker \varphi_n$ is an integrable $\Ui$-module with a basis $\{B_{\ov{n}}^{(n)}\eta_n, B_{\ov{n}}^{(n-1)}\eta_n\}$, though $K_n$ is not a $\U(\sl_2)$-module.
\end{example}

\begin{lemma}\label{lem:AI}
Let $i\in\wI$ such that $i=\tau i$. Let $\ov{\lambda}\in X_\imath$ with a lift $\lambda\in X$. Then the parity of $\langle h_i, \lambda \rangle$ does not depend on the choice of $\lambda$.
\end{lemma}

\begin{proof}
By the definition of $X_\imath$ \eqref{XY}, it suffices to show that $\langle h_i,\beta+\tau\beta \rangle\in 2\Z$,  for any $\beta\in X$. Indeed, we have
$\langle h_i,\beta+\tau\beta \rangle
=\langle h_i,\beta \rangle +\langle \tau h_i,  \beta \rangle
= 2\langle h_i,\beta \rangle\in 2\Z.
$
\end{proof}

\subsection{Formulas for relative braid group symmetries}
\label{sec:braid-formula}
In this subsection, we recall from \cite{WZ23,Z23} the formulas for $\TT'_{i,-1},\TT''_{i,+1}$  for the distinguished parameter $\bvs_\dm$ in the Kac-Moody type. 

Let $i\in \fwItau$. Set $\tau_{i}$ to be the diagram involution associated to the longest element in the subgroup of $W$ generated by $ s_i,s_{\tau i} $. Explicitly, for $j\in \{i,\tau i\}$, we have
\begin{align*}
\tau_{i} (j)=
\begin{cases}
j & \text{ if } c_{i,\tau i}=2,0,
\\
\tau j & \text{ if } c_{i,\tau i}=-1.
\end{cases}
\end{align*}
Note that $\tau_{i} (i)\in \{i,\tau i\}$ and $\tau_i$ commutes with the restriction of $\tau$.
\begin{proposition}\label{prop:rkone}
Let $i\in \fwItau$. Set the parameter $\bvs=\bvs_\dm$.
\begin{itemize}
\item[(1)]\cite[Proposition 4.11 and Proposition 6.2]{WZ23}
For any $x\in \U^{\imath 0} $, $\TT'_{i,-1}(x)=T'_{i ,-1}(x)$ and $\TT''_{i,+1}(x)=T''_{i ,+1}(x)$, i.e., 
\[
\TT'_{i,-1}(K_\mu)=K_{\bs_i\mu},\qquad \TT''_{i,+1}(K_\mu)=K_{\bs_i\mu}, \qquad \forall \mu\in Y^\imath.
\]
\item[(2)]\cite[Theorem 4.14 and Proposition 6.3]{WZ23}
We have 
\begin{align*}
 &\TT'_{i,-1}(B_i)=q_i^{- c_{i,\tau i}/2 +1}   B_{\tau_{ i} \tau i} k_{\tau_{ i} \tau i}^{ -1},
 \qquad
 \TT'_{i,-1}(B_{\tau i})=q_i^{- c_{i,\tau i}/2 +1}   B_{\tau_{ i}  i} k_{\tau_{ i}   i}^{ -1}
 \\
 &\TT''_{i,+1}(B_i)=q_i^{c_{i,\tau i}/2-1}  B_{\tau_{ i} \tau i}  k_{\tau_{ i}  i}^{-1},
 \qquad\quad
 \TT''_{i,+1}(B_{\tau i})=q_i^{c_{i,\tau i}/2-1}  B_{\tau_{ i}  i}  k_{\tau_{ i} \tau i}^{-1}.
\end{align*}
\end{itemize}
\end{proposition}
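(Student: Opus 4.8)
The plan is to exploit the characterizing intertwining property \eqref{eq:compTT} of \cref{thm:braid-iQG}, which determines the automorphism $\TT'_{i,-1}$ of $\Ui$ uniquely. The distinguished parameter $\bvs_\dm$ of \eqref{def:bvs} is balanced, since $\vs_{\dm,i}=-q_i^{-1-\frac12 c_{i,\tau i}}$ depends on $i$ only through $i\cdot i$ and through $c_{i,\tau i}=c_{\tau i,i}$, so that $\vs_{\dm,i}=\vs_{\dm,\tau i}$; hence \eqref{eq:compTT} applies. Moreover, at $\bvs=\bvs_\dm$ the rescaling \eqref{braid_Urescale} is trivial: each entry of $\ov{\bvs_\dm}\bvs_\dm$ equals $\ov{\vs_{\dm,i}}\,\vs_{\dm,i}=(-q_i^{1+\frac12 c_{i,\tau i}})(-q_i^{-1-\frac12 c_{i,\tau i}})=1$, so $\Phi_{\ov{\bvs_\dm}\bvs_\dm}=\Id$ and therefore $T'_{\bs_i,\bvs_\dm,-1}=T'_{\bs_i,-1}$. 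The entire computation is local: both $T'_{\bs_i,-1}$ (built from $\bs_i$, which lies in the rank-two parabolic on $\{i,\tau i\}$, see \eqref{risi}) and the rank one quasi $K$-matrix $\fX_i$ are supported on the finite-type subdiagram $\{i,\tau i\}$. Thus the verification reduces to a computation inside the rank one quantum symmetric pair, treated separately in the three finite types $c_{i,\tau i}=2,0,-1$.

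For part (1), take $x=K_\mu$ with $\mu\in Y^\imath$. Since $\bs_i\in W^\circ$ commutes with $\tau$, we get $\tau(\bs_i\mu)=\bs_i\tau(\mu)=-\bs_i\mu$, so $\bs_i\mu\in Y^\imath$ and $K_{\bs_i\mu}\in\U^{\imath 0}$. Combining $x\fX_i=\fX_i x$ from \cref{prop:qK} with $T'_{\bs_i,-1}(K_\mu)=K_{\bs_i\mu}$, relation \eqref{eq:compTT} reads $\TT'_{i,-1}(K_\mu)\fX_i=\fX_i K_{\bs_i\mu}=K_{\bs_i\mu}\fX_i$; cancelling the invertible $\fX_i$ (whose degree-$0$ part is $1$) yields $\TT'_{i,-1}(K_\mu)=K_{\bs_i\mu}$, i.e. $\TT'_{i,-1}$ agrees with $T'_{\bs_i,-1}$ on $\U^{\imath 0}$. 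The statement for $\TT''_{i,+1}$ follows the same way.

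For part (2), I would first compute $T'_{\bs_i,-1}(B_i)$ from Lusztig's generator formulas in \cref{prop:braid1}, applied to the longest element $\bs_i$ of the rank-two Weyl group and to $B_i=F_i+\vs_{\dm,i}E_{\tau i}K_i^{-1}$; the result lies in the subalgebra on $\{i,\tau i\}$. It then remains to check the identity
\begin{equation*}
q_i^{-c_{i,\tau i}/2+1}\,B_{\tau_i\tau i}\,k_{\tau_i\tau i}^{-1}\,\fX_i=\fX_i\,T'_{\bs_i,-1}(B_i),
\end{equation*}
together with its analogue for $B_{\tau i}$; by the uniqueness in \cref{thm:braid-iQG} these identities give the asserted formulas. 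The engine is the quasi $K$-matrix relation $B_{\tau i}\fX_i=\fX_i\,\sigma\tau(B_i)$ of \eqref{eq:fX1av}, which converts left multiplication by a $B$ into a right multiplication across $\fX_i$. In the split case $i=\tau i$ (where $k_i=1$ and $q_i^{-c_{i,\tau i}/2+1}=1$) one checks, after inserting $\vs_{\dm,i}=-q_i^{-2}$, that $T'_{i,-1}(B_i)=F_i-E_iK_i=\sigma(B_i)$, so the required identity is exactly $B_i\fX_i=\fX_i\sigma(B_i)$, a special case of \eqref{eq:fX1av}. The diagonal case $c_{i,\tau i}=0$ (with $\{i,\tau i\}$ of type $A_1\times A_1$ and $\bs_i=s_is_{\tau i}$) is analogous and recovers Lusztig's operator up to the displayed scalar. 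Finally, the formulas for $\TT''_{i,+1}$ are obtained by the same strategy applied to the parallel characterization of $\TT''_{i,+1}$ (with $\fX_i$ replaced by the intertwiner $T''_{\bs_i,+1}(\fX_i^{-1})$ coming from \eqref{Ti:old-definition}), or equivalently by transporting the $\TT'_{i,-1}$ formulas through the anti-involution $\sigma_\tau$ of \cref{lem:inv}, mirroring $T''_{i,+1}=\sigma T'_{i,-1}\sigma$ on $\U$ and matching the $q_i$-powers after commuting $k_{\tau_i\tau i}^{-1}$ past the $B$.

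The \emph{main obstacle} is the quasi-split case $c_{i,\tau i}=-1$, where $\{i,\tau i\}$ is of type $A_2$ with $\tau$ swapping the two nodes and $\tau_i=\tau$ on $\{i,\tau i\}$. There $\bs_i=s_is_{\tau i}s_i$, so $T'_{\bs_i,-1}$ is a triple product of Lusztig operators whose value on $B_i$ is a lengthy combination of $q$-root vectors, and the rank one quasi $K$-matrix $\fX_i$ of $(\U(\sl_3),\Ui(\sl_3))$ is a genuinely nontrivial sum rather than a single term. Verifying the intertwining identity then requires a sequence of commutation relations in $\U(\sl_3)$ among $\fX_i$, the higher $q$-root vectors, and the $K$-factors, which is where essentially all the computational effort lies. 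As an independent check one may verify the resulting operator identities by letting both sides act on integrable modules and invoking the faithfulness observed after \cref{lem:integrable}.
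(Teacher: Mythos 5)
First, a framing remark: the paper does not prove this proposition at all — both parts are imported verbatim from \cite{WZ23} (Propositions 4.11 and 6.2, Theorem 4.14 and Proposition 6.3 there), so there is no internal proof to compare against. Your strategy — treat \eqref{eq:compTT} as a uniqueness characterization (valid, since $\fX_i$ has constant term $1$ and is therefore invertible in the completion) and verify that the asserted right-hand sides satisfy the same intertwining identity — is sound, and it is essentially the mechanism by which these formulas are obtained in \cite{WZ23} itself. Your preliminary reductions are correct: $\bvs_\dm$ is balanced, $\ov{\vs_{\dm,i}}\vs_{\dm,i}=1$ so that $T'_{\bs_i,\bvs_\dm,-1}=T'_{\bs_i,-1}$, and part (1) goes through (one small repair: commutation of $K_\mu$, $\mu\in Y^\imath$, with the \emph{rank-one} element $\fX_i$ is not literally the second identity of \cref{prop:qK} for the subdiagram, but follows from the support property $\fX_i^\mu=0$ unless $\tau(\mu)=\mu$, which forces $\langle \mu', \mu\rangle =0$ for $\mu'\in Y^\imath$). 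The split case of part (2) is also complete and correct: at $\vs_i=-q_i^{-2}$ one has $T'_{i,-1}(B_i)=F_i-E_iK_i=\sigma(B_i)$, so \eqref{eq:fX1av} is exactly the required identity and $\TT'_{i,-1}(B_i)=B_i$.

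As a proof of the proposition, however, the proposal has a genuine gap: part (2) is actually verified only in the split case. The diagonal case is waved off as ``analogous'' (plausible, but it is a different computation: one must evaluate $T'_{i,-1}T'_{\tau i,-1}(B_i)$ and commute it past $\fX_i$, which in this case is the quasi $R$-matrix of $\U(\sl_2)$), and the quasi-split case $c_{i,\tau i}=-1$ — which, as you yourself say, carries essentially all of the content — is not carried out at all: one must compute $T'_{s_is_{\tau i}s_i,-1}(B_i)$ inside $\U(\sl_3)$ and push it through the genuinely nontrivial rank-one quasi $K$-matrix (e.g.\ via its factorization in \cite{DK19}). Identifying where the work lies is not the same as doing it, so the stated formulas remain unproved in the hardest case. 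There is also a structural problem in your treatment of $\TT''_{i,+1}$: the characterization you invoke via \eqref{Ti:old-definition} is Theorem B of this paper, which is proved in Sections 3--5 \emph{using} the present proposition, so that route is circular here. Your fallback — transporting the $\TT'_{i,-1}$ formulas through $\sigma_\tau$ — can be made to work, but the intertwining relation between $\TT''_{i,+1}$ and $\sigma_\tau\,\TT'_{i,-1}\,\sigma_\tau$ is itself a fact from \cite{WZ23} that your argument would need to establish (say, from \eqref{eq:compTT} together with the behavior of $\sigma$, $\tau$, and $\fX_i$ under Lusztig's symmetries), and the proposal does not do this either.
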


We next recall from \cite{Z23} the rank two formulas for $\TT'_{i,-1}$ for the distinguished parameter $\bvs_\dm$. For $j\in \I$ with $j\neq i,\tau i$ and $\ov{t}\in \Z_2$, we set  
\begin{align}
    \label{Xjn}
    X_{j,n,\ov{t}}=
    \begin{dcases}
        \dv{j,\ov{t}}{n}, & \text{ if } \tau j=j,
        \\
        \dv{j}{n}, & \text{ if }\tau j\neq j.
    \end{dcases}
\end{align}

To simplify the notation, we sometimes suppress the dependence on $\ov{t}$ in the notation $X_{j,n,\ov{t}}$ and write $X_{j,n}$.

\subsubsection{$\boxed{\text{Split type}~c_{i,\tau i}=2}$} 

In this case, $\vs_{i,\dm}=-q_i^{-2}$.  For $j\ne i \in \I$, we let $b_{i,j;n,m}\in \Ui$, for $m,n\geq 0$, be the elements defined as follows: if $m-nc_{ij}$ is odd, then we set
\begin{align}\notag
b_{i,j;n,m} &:=\sum_{u\ge 0} \sum_{\substack{r+s+2u=m\\ \ov{p}=\ov{r}+1}} (-1)^{r+u} q_i^{-(m+nc_{ij})(r+u)+r-u} \qbinom{\frac{m+n c_{ij}-1}{2}}{u}_{q_i^2}\dv{i,\ov{p }}{s} X_{j,n} \dv{i,\ov{p+n c_{ij}}}{r}
\\\label{splitdiv1}
&\quad +\sum_{u\ge 0} \sum_{\substack{r+s+2u=m\\ \ov{p}=\ov{r}}} (-1)^{r+u} q_i^{-(m+n c_{ij}-2)(r+u)-r-u} \qbinom{\frac{m+n c_{ij}-1}{2}}{u}_{q_i^2}\dv{i,\ov{p }}{s} X_{j,n} \dv{i,\ov{p+n c_{ij}}}{r};
\end{align}
if $m-n c_{ij}$ is even, then we set
\begin{align}\notag
b_{i,j;n,m} &:=\sum_{u\ge 0} \sum_{\substack{r+s+2u=m\\ \ov{p}=\ov{r}+1}} (-1)^{r+u} q_i^{-(m+n c_{ij}-1)(r+u)-u} \qbinom{\frac{m+n c_{ij}}{2}}{u}_{q_i^2}\dv{i,\ov{p }}{s} X_{j,n} \dv{i,\ov{p+n c_{ij}}}{r}
\\\label{splitdiv2}
&\quad +\sum_{u\ge 0} \sum_{\substack{r+s+2u=m\\ \ov{p}=\ov{r}}} (-1)^{r+u} q_i^{-(m+n c_{ij}-1)(r+u)-u} \qbinom{\frac{m+n c_{ij}-2}{2}}{u}_{q_i^2}\dv{i,\ov{p }}{s} X_{j,n} \dv{i,\ov{p+n c_{ij}}}{r}.
\end{align}
The element $b_{i,j;n,m}$ is identified with $\tilde{y}'_{i,j;n,m,\ov{p},\ov{t},1}$ in \cite[(6.3)-(6.4)]{CLW21b} via the central reduction. 

The element $b_{i,j;n,m}$ satisfies the following recursive relation \cite[Theorem~ 6.2]{CLW21b}
\begin{align}
\label{def:splitBij}
\begin{split}
&-q_i^{-(n c_{ij}+2m)}b_{i,j;n,m} B_i +B_i b_{i,j;n,m}\\
 =& \; [m+1]_i  b_{i,j;n,m+1} + [n c_{ij} +m-1]_i q_i^{-2m-n c_{ij}} b_{i,j;n,m-1},
\end{split}
\end{align}
where $b_{i,j;n, 0}=X_{j,n}$ and $b_{i,j;n, m}=0$ if $m<0$. Moreover, we have \cite[Theorem~ 6.3]{CLW21b} 
\begin{align}
    \label{SerreL:split}
    b_{i,j;n, m}=0, \qquad \text{ for } m>-nc_{ij}.
\end{align}

\subsubsection{$\boxed{\text{Diagonal type}~c_{i,\tau i}=0}$} 
\label{subsec:rootvector_diag}

In this case, $\vs_{i,\dm}=\vs_{\tau i,\dm}=-q_i^{-1}$. For $j \in \I$ such that $j\neq i,\tau i$ and for $n,m_1,m_2\ge 0$, we define elements $b_{i,\tau i,j;n,m_1,m_2}$ in $\Ui$ as follows:
 \begin{align}\notag
 &b_{i,\tau i,j;n,m_1,m_2}=\sum_{u=0}^{\min(m_1,m_2)} \sum_{r=0}^{m_1-u}\sum_{s=0}^{m_2-u} (-1)^{r+s}q_i^{r(-n c_{ij}-m_1-u+1)+s(-n c_{\tau i,j}-m_2+u+1)}\times
\\
& \qquad \times q_i^{-u(n c_{ij}+m_1)} \qbinom{-n c_{\tau i,j}-m_2+u}{u}_i B_i^{(m_1-r-u)}B_{\tau i}^{(m_2-s-u)}X_{j,n} B_{\tau i}^{(s)}B_i^{(r)} k_i^u .
\label{diagdiv}
\end{align}
The element $b_{i,\tau i,j;1,m_1,m_2}$ is identified with the element $b_{i,\tau i,j;m_1,m_2}$ in \cite[Definition~  5.6, Proposition 5.11]{Z23} via the central reduction. The elements $b_{i,\tau i,j;n,m_1,m_2}$, for general $n\ge 1$, are new. 

\begin{lemma}
    The elements $b_{i,\tau i,j;n,m_1,m_2}$ satisfy the following recursive relations:
 \begin{align}\notag
&-q_i^{-(n c_{ij}+2m_1)} b_{i,\tau i, j;n,m_1,m_2} B_i + B_i b_{i,\tau i, j;n,m_1,m_2} 
\\\label{def:diagBij1}
=& \; [m_1+1]_i\; b_{i,\tau i, j;n,m_1+1,m_2} 
- q_i^{-(n c_{ij}+2m_1+1)} [-n c_{\tau i,j}-m_2+1]_{i} \; b_{i,\tau i, j;n,m_1,m_2-1}k_i, 
\\\notag
&-q_i^{-(n c_{\tau i,j}+2m_2)}b_{i,\tau i, j;n,m_1,m_2}  B_{\tau i}
+ B_{\tau i} b_{i,\tau i, j;n,m_1,m_2}
\\\label{def:diagBij2}
=& \; [m_2+1]_{i}\; b_{i,\tau i, j;n,m_1,m_2+1} - q_i^{-(n c_{\tau i,j}+2m_2+1)} [-n c_{ i,j}-m_1+1]_{i} \; b_{i,\tau i, j;n,m_1-1,m_2}k_i^{-1},
 \end{align}
 where $b_{i,\tau i, j;n,0,0}=X_{j,n}$ and $b_{i,\tau i, j;n,m_1,m_2}=0$ if either one of $m_1,m_2$ is negative. Moreover, we have 
\begin{align}
    \label{SerreL:diagonal}
    b_{i,\tau i,j;n,m_1,m_2}=0, \qquad \text{ for } m_1>-nc_{ij} \text{ or } m_2>-nc_{\tau i,j}.
\end{align}
\end{lemma}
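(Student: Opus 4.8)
The plan is to treat the two recursions \eqref{def:diagBij1}--\eqref{def:diagBij2} as identities in $\Ui$ to be verified by directly rewriting their left-hand sides, and then to extract the vanishing \eqref{SerreL:diagonal} from the recursions together with a single higher Serre--Lusztig identity. For \eqref{def:diagBij1} I would substitute \eqref{diagdiv} into both $B_i\,b_{i,\tau i,j;n,m_1,m_2}$ and $b_{i,\tau i,j;n,m_1,m_2}\,B_i$. Only two elementary moves are needed: the divided-power rule $B_i\dv{i}{a}=[a+1]_i\dv{i}{a+1}$ together with $\dv{i}{r}B_i=[r+1]_i\dv{i}{r+1}$, and the scalar relation $k_iB_i=q_i^{-2}B_ik_i$ (which follows from $k_i=K_iK_{\tau i}^{-1}$ and $c_{i,\tau i}=0$). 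The essential observation is that the middle factor $X_{j,n}$ is never transported across $B_i$ or $k_i$: left multiplication only raises the leading $B_i$-divided power in the ``head'' $\dv{i}{m_1-r-u}\dv{\tau i}{m_2-s-u}$, while right multiplication only raises the trailing $B_i$-divided power in the ``tail'' $\dv{\tau i}{s}\dv{i}{r}k_i^u$. Thus both $B_i\,b$ and $b\,B_i$ are again $\F$-combinations of the standard monomials $\dv{i}{a}\dv{\tau i}{b}X_{j,n}\dv{\tau i}{s}\dv{i}{r}k_i^u$.

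After reindexing the tail contribution by $r\mapsto r-1$ so that the two pieces share the same monomials, forming the twisted commutator $B_ib-q_i^{-(nc_{ij}+2m_1)}bB_i$ reduces the entire claim to a numerical identity among coefficients. That identity is a $q$-Pascal rule comparing the factor $\qbinom{-nc_{\tau i,j}-m_2+u}{u}_i$ at adjacent values of $u$, and its two summands produce exactly the leading term $[m_1+1]_i\,b_{i,\tau i,j;n,m_1+1,m_2}$ and the boundary correction $-q_i^{-(nc_{ij}+2m_1+1)}[-nc_{\tau i,j}-m_2+1]_i\,b_{i,\tau i,j;n,m_1,m_2-1}k_i$. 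Two features streamline the work. First, the Cartan integers enter only through coefficients already present in \eqref{diagdiv}; since $X_{j,n}$ is inert under the rewriting, the computation for general $n$ is formally the $n=1$ computation of \cite{Z23} with $c_{ij},c_{\tau i,j}$ replaced by $nc_{ij},nc_{\tau i,j}$. Second, \eqref{def:diagBij2} follows by the symmetric computation with $B_{\tau i}$ in place of $B_i$ (now $k_iB_{\tau i}=q_i^2B_{\tau i}k_i$); equivalently, when $\tau j=j$ one may apply the anti-involution $\sigma_\tau$ of \cref{lem:inv}, which swaps $B_i\leftrightarrow B_{\tau i}$, fixes $k_i$ and $X_{j,n}$, and converts \eqref{def:diagBij1} into \eqref{def:diagBij2} after $i\leftrightarrow\tau i$, $m_1\leftrightarrow m_2$.

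For \eqref{SerreL:diagonal}, set $\alpha=-c_{ij}$ and first establish the single base identity $b_{i,\tau i,j;n,n\alpha+1,0}=0$. Specializing \eqref{diagdiv} at $m_1=n\alpha+1$, $m_2=0$ forces $u=s=0$ and collapses every $q$-power, leaving $b_{i,\tau i,j;n,n\alpha+1,0}=\sum_{r=0}^{n\alpha+1}(-1)^r\dv{i}{n\alpha+1-r}X_{j,n}\dv{i}{r}$. This is the higher Serre--Lusztig element for the pair $(B_i,B_j)$; because $j\neq i,\tau i$ implies $\tau i\neq j$ and $i\neq\tau j$, the elements $B_i,B_j$ satisfy the undeformed quantum Serre relation, so this element vanishes (cf.\ the Serre--Lusztig relations of \cite{CLW21b} and \cite{Lus93}). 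I would then propagate the vanishing using the recursions: in \eqref{def:diagBij2} the coefficient $[-nc_{ij}-m_1+1]_i$ of the correction term equals $[0]_i=0$ at $m_1=n\alpha+1$, so that recursion reduces to $[m_2+1]_i\,b_{i,\tau i,j;n,n\alpha+1,m_2+1}=B_{\tau i}\,b_{i,\tau i,j;n,n\alpha+1,m_2}-q_i^{-(nc_{\tau i,j}+2m_2)}\,b_{i,\tau i,j;n,n\alpha+1,m_2}B_{\tau i}$; induction on $m_2$ gives $b_{i,\tau i,j;n,n\alpha+1,m_2}=0$ for all $m_2\ge 0$. Feeding this into \eqref{def:diagBij1} and inducting upward on $m_1$ yields $b_{i,\tau i,j;n,m_1,m_2}=0$ for every $m_1>n\alpha=-nc_{ij}$, and the bound $m_2>-nc_{\tau i,j}$ follows by the $i\leftrightarrow\tau i$ symmetric argument.

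The step I expect to be the main obstacle is the exact $q$-exponent bookkeeping in the $q$-Pascal reduction: one must check that the $m_1$-dependent powers in \eqref{diagdiv}, the twist $q_i^{-(nc_{ij}+2m_1)}$, the factor $q_i^{-2}$ from $k_iB_i$, and the tail reindexing conspire to reproduce the correction coefficient $-q_i^{-(nc_{ij}+2m_1+1)}[-nc_{\tau i,j}-m_2+1]_i$ precisely (and its analogue in \eqref{def:diagBij2}). A secondary technical point is confirming that $(B_i,B_j)$ indeed obey the undeformed Serre relation for $j\neq i,\tau i$, which underlies the base identity for all $n$.
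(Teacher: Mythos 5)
Your proposal follows the same route as the paper: the recursions \eqref{def:diagBij1}--\eqref{def:diagBij2} are to be checked by the direct head/tail computation that the paper delegates to \cite[Proof of Proposition 5.11]{Z23} (your reduction to a $q$-Pascal identity for $\qbinom{-nc_{\tau i,j}-m_2+u}{u}_i$, using $k_iB_i=q_i^{-2}B_ik_i$ and the inertness of $X_{j,n}$, is exactly that computation), and the vanishing \eqref{SerreL:diagonal} is proved, as in the paper, by induction on $m_1,m_2$ with a higher-order $q$-Serre relation as the base case. Your induction scheme --- kill the correction term of \eqref{def:diagBij2} at $m_1=-nc_{ij}+1$ because $[0]_i=0$, climb in $m_2$, then climb in $m_1$ via \eqref{def:diagBij1}, then argue symmetrically --- is a correct and usefully more explicit account of what the paper leaves implicit, and it reduces everything to the two $q$-power-free identities $\sum_{r+s=-nc_{ij}+1}(-1)^rB_i^{(s)}X_{j,n}B_i^{(r)}=0$ and its $\tau i$-analogue.

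The one genuine gap is your justification of this base case when $\tau j=j$ and $n\ge 2$. You argue: since $j\neq i,\tau i$, the pair $(B_i,B_j)$ satisfies the undeformed quantum Serre relation, ``so this element vanishes.'' That inference is valid when $\tau j\neq j$: then \emph{both} Serre relations (the one with $B_i$'s outside and the one with $B_j$'s outside) are undeformed in Kolb's presentation, so $\theta_i\mapsto B_i,\ \theta_j\mapsto B_j$ extends to a homomorphism from the rank-two algebra $\mathbf{f}$, and Lusztig's higher-order Serre relation \cite[7.1.5]{Lus93}, whose $q$-exponents become trivial exactly at $m=-nc_{ij}+1$, transports to $\Ui$. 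But when $\tau j=j$, the $(j,i)$-Serre relation is deformed (split node), so no homomorphism from rank-two $\mathbf{f}$ exists; moreover $X_{j,n}=\dv{j,\ov{t}}{n}$ is an idivided power, a linear combination of $B_j^{(n-2k)}$ whose Lusztig relations carry thresholds and exponents attached to $n-2k$ rather than to $n$. Hence the vanishing you need is not a formal consequence of the one-sided Serre relation as stated; it requires either showing that these elements lie in the two-sided ideal generated by the single $(i,j)$-Serre relator, or an adjoint-action/intertwining argument in $\U$ of the kind the paper runs for the quasi-split case in \cref{lem:qsvanish}, or a Serre--Lusztig-type result covering a non-split node outside and a split node inside (the results of \cite{CLW21b} that you cite concern a \emph{split} node outside, which is not this configuration). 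In fairness, the paper's own two-line proof is equally silent on this point, so this is a shared omission rather than a wrong turn; but your stated reason does not close it.
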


\begin{proof}
    The recursions follow by similar arguments as in \cite[Proof of Proposition 5.11]{Z23}. The relation \eqref{SerreL:diagonal} follows by induction on $m_1, m_2$ (with higher order $q$-Serre relation as the base case).
\end{proof}

\subsubsection{$\boxed{\text{Quasi-split type}~c_{i,\tau i}=-1}$} 
\label{subsec:rootvector_qs}

In this case, $\vs_{i,\dm}=\vs_{\tau i,\dm}=-q_i^{-1/2}$. For $j \in \I$ such that $j\neq i,\tau i$ and for $n,a,b,c\geq 0$, we define $b_{i,\tau i,j;n,a,b,c}$ in $\Ui$ as follows: 
\begin{align}\notag
&b_{i,\tau i, j;n,a,b,c} =\sum_{u,v\geq 0} \sum_{t=0}^{a-v} \sum_{s=0}^{b-v-u} \sum_{r=0}^{c-u} (-1)^{t+r+s}\qbinom{-n c_{\tau i,j}-b+c+v}{v}_i \qbinom{-n c_{ij}-c+u}{u}_i\times
\\\notag
&\qquad\times  q_i^{t(b-2c-n c_{ij}-a-2v+1)+ v(b-2c-n c_{ij}-a-\frac{v}{2})+ r(-n c_{ij}-c+u+1)+s(c-n c_{\tau i,j}-b+v-2u+1)}\times
\\\label{eq:qsdv1}
&\qquad\times q_i^{u(c-n c_{\tau i,j}-b+v-\frac{u}{2}+3t)}B_i^{(a-v-t)}B_{\tau i}^{(b-v-u-s)}B_i^{(c-u-r)} X_{j,n} B_i^{(r)} B_{\tau i}^{(s)} B_i^{(t)} k_i^{v-u}.
\end{align}
The element $b_{i,\tau i,j;1,a,b,c}$ is identified with the element $b_{i,\tau i,j;a,b,c}$ in \cite[Definition 6.6, Theorem 6.16]{Z23} via the central reduction. The elements $b_{i,\tau i,j;n,a,b,c}$, for $n\ge 1$, are new. 

\begin{lemma}\label{lem:qsrecur}
The elements $b_{i,\tau i,j;n,a,b,c}$ satisfy the following recursive relations:
\begin{align}\notag
& B_i  b_{i,\tau i, j;n,a,b,c} -q_i^{b-2a-2c-n c_{ij}} b_{i,\tau i, j;n,a,b,c} B_i
\\\label{def:qsqsBij1}
=& \; [a+1]_i b_{i,\tau i, j;n,a+1,b,c}-q_i^{b-2a-2c-nc_{ij}-3/2}[-b+c+1-n c_{\tau i,j}]_i b_{i,\tau i, j;n,a,b-1,c} k_i,
\end{align}
and
\begin{align}\notag
& B_{\tau i}  b_{i,\tau i, j;n,a,b,c}-q_i^{-2b+a+c-n c_{\tau i,j}} b_{i,\tau i, j;n,a,b,c}B_{\tau i}
\\\notag
=& \;[b-a+1 ]_i b_{i,\tau i, j;n,a,b+1,c} + [c+1]_i b_{i,\tau i, j;n,a-1,b+1,c+1}
\\\notag
&-q_i^{-2b+c+a-n c_{\tau i,j}-3/2} [-nc_{ij}-a+b-2c+1]_i b_{i,\tau i, j;n,a-1,b,c} k_i^{-1}
\\\label{def:qsqsBij2}
&+q_i^{-2b+c+a-n c_{\tau i,j}-3/2}[ nc_{ij}+c-1]_i b_{i,\tau i, j;n,a,b,c-1}k_i^{-1},
\end{align}
 where $b_{i,\tau i, j;n,0,0,0}=X_{j,n}$ and $b_{i,\tau i, j;n,a,b,c}=0$ if any one of $a,b,c$ is negative.
 \end{lemma}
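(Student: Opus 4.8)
The plan is to establish the two recursive relations \eqref{def:qsqsBij1}--\eqref{def:qsqsBij2} by direct computation, paralleling the strategy already used in \cite{Z23} for the base case $n=1$, and then to deduce the boundary/initialization statements separately. The elements $b_{i,\tau i,j;n,a,b,c}$ are explicit multisummations in idivided powers $B_i^{(\bullet)}, B_{\tau i}^{(\bullet)}$ of $\Ui(\sl_3)$ with $X_{j,n}$ sitting in the middle and a power of $k_i$ on the right; since $j\neq i,\tau i$, the variable $X_{j,n}$ interacts with $B_i, B_{\tau i}$ only through the Cartan integers $c_{ij}, c_{\tau i,j}$ (which is precisely how the exponents $-nc_{ij}, -nc_{\tau i,j}$ enter). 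The core of the computation is therefore a rank-one-type manipulation in the quasi-split $\Ui(\sl_3)$, with $X_{j,n}$ treated as a weight vector of $k_i$-weight governed by those Cartan integers.

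First I would record the two basic commutation facts that drive everything: the $q$-commutator of $B_i$ (resp. $B_{\tau i}$) past $k_i$, and the straightening relations expressing $B_i^{(m)} B_i = [m+1]_i B_i^{(m+1)}$ together with the genuinely quasi-split identities relating products $B_i^{(\bullet)} B_{\tau i}^{(\bullet)} B_i^{(\bullet)}$ (the $\sl_3$ Serre-type and reordering relations underlying the triple-divided-power structure). Then I would multiply the defining sum \eqref{eq:qsdv1} on the left by $B_i$ and separately compute $q_i^{b-2a-2c-nc_{ij}} b_{i,\tau i,j;n,a,b,c} B_i$, push $B_i$ inward using these relations, and collect terms. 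The point is that the specific power of $q_i$ in the $q$-commutator is engineered so that the ``diagonal'' contributions cancel, leaving exactly the two surviving terms on the right-hand side of \eqref{def:qsqsBij1}, with the $q$-binomial coefficients $\qbinom{\cdot}{u}_i, \qbinom{\cdot}{v}_i$ absorbing the shifts $a\mapsto a+1$ and $b\mapsto b-1$. The computation for $B_{\tau i}$ in \eqref{def:qsqsBij2} is more involved because $B_{\tau i}$ appears in the \emph{middle} of the triple product $B_i^{(\bullet)} B_{\tau i}^{(\bullet)} B_i^{(\bullet)}$, so commuting it past the outer $B_i^{(\bullet)}$ produces two shifted families (accounting for the two distinct terms $b_{i,\tau i,j;n,a,b+1,c}$ and $b_{i,\tau i,j;n,a-1,b+1,c+1}$) plus the two $k_i^{-1}$-terms.

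The main obstacle will be the bookkeeping in the $B_{\tau i}$-recursion \eqref{def:qsqsBij2}: one must carefully track how a single $B_{\tau i}$ interacts with the nested triple-divided-power structure, which forces reindexing across \emph{three} summation variables and requires several nontrivial $q$-binomial identities (Pascal-type and Vandermonde-type for $\qbinom{\cdot}{\cdot}_i$) to merge and resplit the sums into the four advertised terms with their precise $q_i$-powers and $k_i^{\pm 1}$ factors. I would organize this by isolating the coefficient of each monomial $B_i^{(\bullet)} B_{\tau i}^{(\bullet)} B_i^{(\bullet)} X_{j,n} B_i^{(\bullet)} B_{\tau i}^{(\bullet)} B_i^{(\bullet)} k_i^\bullet$ on both sides and verifying equality of scalar coefficients, which reduces the identity to a finite collection of purely combinatorial $q$-identities. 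For the initialization, $b_{i,\tau i,j;n,0,0,0}=X_{j,n}$ is immediate since all summation ranges collapse to $u=v=t=s=r=0$ with trivial binomial and $q$-power factors, and the vanishing convention for negative $a,b,c$ is built into the empty summation ranges; thus these require no separate argument beyond remarking that the recursions are consistent with them.
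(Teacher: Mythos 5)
Your proposal is correct and is essentially the paper's own approach: the paper proves this lemma by citing \cite[Proof of Theorem 6.16]{Z23}, which is exactly the direct verification you describe for $n=1$ (expand the closed formula \eqref{eq:qsdv1}, multiply by $B_i$ resp.\ $B_{\tau i}$, reindex the summations, and compare coefficients of the ordered monomials $B_i^{(\bullet)}B_{\tau i}^{(\bullet)}B_i^{(\bullet)}X_{j,n}B_i^{(\bullet)}B_{\tau i}^{(\bullet)}B_i^{(\bullet)}k_i^{\bullet}$ via Pascal-type $q$-binomial identities), the general-$n$ case being the same computation with $c_{ij},c_{\tau i,j}$ replaced by $nc_{ij},nc_{\tau i,j}$. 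One caution on your wording: $B_i$ and $B_{\tau i}$ do \emph{not} $q$-commute with $X_{j,n}$, so ``treating $X_{j,n}$ as a weight vector'' is legitimate only because in these two recursions nothing but $k_i^{\pm 1}$ ever has to cross $X_{j,n}$ --- left multiplication acts on the outer left factors, right multiplication on the right factors after passing $k_i^{v-u}$, and the $k_i^{\pm 1}$ terms produced by the quasi-split straightening are moved to the far right by a genuine weight computation --- a structural fact that your monomial-by-monomial coefficient comparison implicitly, and correctly, relies on.
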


\begin{proof}
    Follows by similar arguments as in \cite[Proof of Theorem 6.16]{Z23}.
\end{proof}

\begin{lemma}\label{lem:qsvanish}
We have $$b_{i,\tau i,j;n,a,b,c}=0, 
$$
if either $\textnormal{(i)}~c>-nc_{ij}$, or $\textnormal{(ii)}~b-c>-nc_{\tau i,j}$, or $\textnormal{(iii)}~a+c>-nc_{ij}-nc_{\tau i,j}$.
\end{lemma}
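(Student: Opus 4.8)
The plan is to establish the three vanishing statements by induction on the indices $a,b,c$, driven by the two recursions of \cref{lem:qsrecur} and seeded by Serre--Lusztig relations on the coordinate edges; this mirrors the treatment of the split and diagonal cases in \eqref{SerreL:split} and \eqref{SerreL:diagonal}, now with one extra coupled index. Throughout I abbreviate $b_{i,\tau i,j;n,a,b,c}$ by $b_{n;a,b,c}$ and set $P=-nc_{ij}\ge 0$, $Q=-nc_{\tau i,j}\ge 0$, so that conditions (i)--(iii) read $c>P$, $b-c>Q$, and $a+c>P+Q$. The guiding principle is that the $q$-integer coefficients in \eqref{def:qsqsBij1}--\eqref{def:qsqsBij2} vanish exactly along the walls of the polytope: in \eqref{def:qsqsBij1} the factor $[-b+c+1-nc_{\tau i,j}]_i$ is zero precisely when $b-c=Q+1$ (wall of (ii)), while in \eqref{def:qsqsBij2} the factor $[nc_{ij}+c-1]_i$ is zero precisely when $c=P+1$ (wall of (i)).

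For the base cases I would specialize the defining formula \eqref{eq:qsdv1} to the coordinate edges. Setting $a=b=0$ forces all internal summation indices to vanish and collapses \eqref{eq:qsdv1} to
\[
b_{n;0,0,c}=\sum_{r=0}^{c}(-1)^{r}q_i^{r(-nc_{ij}-c+1)}B_i^{(c-r)}X_{j,n}B_i^{(r)},
\]
which is a higher-order $q$-Serre (Serre--Lusztig) element in $B_i$ and $X_{j,n}$ and hence vanishes for $c>P$; similarly $b_{n;a,0,0}$ collapses to the analogous Serre--Lusztig element and vanishes for $a>P$, while the slices $b_{n;0,b,c}$ and $b_{n;a,b,0}$ collapse to the two-variable Serre--Lusztig elements in $B_i,B_{\tau i},X_{j,n}$ governing the $B_{\tau i}$-direction.

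I would then prove (i) by a nested induction with $a$ outermost, $c$ in the middle (increasing from $P+1$), and $b$ innermost: the $b=0$ slice is controlled purely by \eqref{def:qsqsBij1} starting from the seed $b_{n;0,0,c}=0$, and \eqref{def:qsqsBij2} then raises $b$, all of its right-hand terms being killed either by the induction hypotheses on $a$ and $c$ or by the vanishing of $[nc_{ij}+c-1]_i$ at $c=P+1$; the degenerate locus $b=a-1$, where $[b-a+1]_i=0$ and \eqref{def:qsqsBij2} cannot be solved for $b_{n;a,a,c}$, is reached instead through \eqref{def:qsqsBij1} from $b_{n;a-1,a,c}$ and $b_{n;a-1,a-1,c}$. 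Condition (ii) is handled by the symmetric induction, raising $a$ through \eqref{def:qsqsBij1} and using that $[-b+c+1-nc_{\tau i,j}]_i$ vanishes on the wall $b-c=Q+1$, with the $a=0$ base case supplied by the two-variable Serre--Lusztig relation in the $B_{\tau i}$-direction.

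The main obstacle is condition (iii). Its wall $a+c=P+Q+1$ is \emph{not} the zero-locus of any single coefficient in \eqref{def:qsqsBij1}--\eqref{def:qsqsBij2}: raising $a$ by \eqref{def:qsqsBij1} expresses $b_{n;a,b,c}$ through $b_{n;a-1,b,c}$ and $b_{n;a-1,b-1,c}$, which sit on the boundary $(a-1)+c=P+Q$ inside the region allowed by (i) and (ii), where they are not yet known to vanish. I therefore expect (iii) to require a genuine top-level Serre--Lusztig identity rather than mere propagation. The route I would pursue is a coupled induction within the wall, exploiting the secondary factor $[-nc_{ij}-a+b-2c+1]_i$ of \eqref{def:qsqsBij2}, which one checks vanishes exactly on the intersection of the (iii)-wall with the (ii)-wall $b-c=Q$; feeding in conditions (i) and (ii) already established (so that every term raising $c$ or $b$ beyond their walls drops out) should reduce the wall to this one-dimensional locus, where the argument closes. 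Verifying that the three walls interlock consistently — so that the chosen well-ordering on $(a,b,c)$ never reaches a violating triple before its predecessors are known to vanish, and that the boundary commutators collapse — is the delicate bookkeeping on which the proof ultimately rests.
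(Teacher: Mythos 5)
Your proposal takes a genuinely different route from the paper's, and it does not close: the decisive case (iii) is missing, as you yourself signal. The paper never fights the recursions inside $\Ui$ at all. It uses the intertwining relations of \cite[Propositions 6.10--6.11]{Z23} to reduce the whole lemma to the vanishing $\ad(E_i^{(a)}E_{\tau i}^{(b)}E_i^{(c)})E_j=0$ in $\U^+$, where (i) is the $q$-Serre relation, (ii) follows from the Leibniz rule for $\ad$ together with $\ad(E_{\tau i}^{(2)})E_i=0$, and (iii) is handled by rewriting $E_i^{(a)}E_{\tau i}^{(b)}E_i^{(c)}$ (when $a+c>b$) via \cite[Lemma 42.1.2(d)]{Lus93} as a combination of $E_{\tau i}^{(c-k)}E_i^{(a+c)}E_{\tau i}^{(a-l)}$ with $k+l=a+c-b$; the hypothesis $a+c>-nc_{ij}-nc_{\tau i,j}$ then forces $a-l>-nc_{\tau i,j}$ or $c+l>-nc_{ij}$, and case (ii) applies. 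This commutation identity on the $\U^+$ side is exactly the ``genuine top-level identity'' your sketch for (iii) calls for but does not supply.

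Concretely, write $P=-nc_{ij}$, $Q=-nc_{\tau i,j}$ and $b_{n;a,b,c}=b_{i,\tau i,j;n,a,b,c}$. Your wall-induction for (iii) fails because every instance of \eqref{def:qsqsBij1}--\eqref{def:qsqsBij2} whose target lies on the wall $a+c=P+Q+1$ also involves elements with $a+c=P+Q$, which lie outside regions (i)--(iii) and are nonzero in general (they include $\TT'_{i,-1}(B_j)$ itself), with coefficients that do not vanish where you need them to. Even on your one-dimensional locus $a+c=P+Q+1$, $b-c=Q$, where $[P-a+b-2c+1]_i$ does vanish as you checked, relation \eqref{def:qsqsBij2} still equates the commutator of the unknown wall element $b_{n;a,b,c}$ with $[c+1]_i\,b_{n;a-1,b+1,c+1}$ (another unknown wall element) plus $q_i^{\ast}[c-P-1]_i\,b_{n;a,b,c-1}k_i^{-1}$, and since one may assume $c\le P$ the last coefficient is nonzero while $b_{n;a,b,c-1}$ sits on $a+(c-1)=P+Q$; so no wall element is ever isolated, and the induction has no foothold. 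There is a second, smaller gap: your base case for (ii) is misidentified. Setting $a=0$ in \eqref{eq:qsdv1} does not yield a two-variable Serre--Lusztig element: the slice $b_{n;0,b,c}$ still carries sums over $u$ with $k_i^{-u}$, $q$-binomial coefficients, and $B_i$'s on both sides of $X_{j,n}$; only the genuine edges $b_{n;0,b,0}$, $b_{n;0,0,c}$, $b_{n;a,0,0}$ collapse as you claim. That gap is repairable (run (ii) by induction on $c$, solving \eqref{def:qsqsBij2} at $(a+1,b-1,c-1)$ for the term $[c]_i\,b_{n;a,b,c}$, all other terms lying in region (ii) with smaller $c$), but no analogous repair is visible for (iii). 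Finally, even your edge base cases need a word when $\tau j=j$: there $X_{j,n}$ is an idivided power, not $B_j^{(n)}$, so one must check that the Serre--Lusztig combination with exponent tuned to $n$ also annihilates the lower terms $B_j^{(k)}$ with $k<n$, $k\equiv n \pmod 2$; this is true but is not literally a citation of the higher-order Serre relations.
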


\begin{proof}
We formulate the proof for $n=1$; the proof for general $n$ follows by the same strategy. Let $\ad$ be the adjoint action in a Hopf algebra $\U$, which is an algebra homomorphism \cite{Ja95}. It suffices to show that $\ad(E_i^{(a)}E_{\tau i}^{(b)}E_i^{(c)})E_j=0$ if $a,b,c$ satisfy the condition (i) or (ii) or (iii). Indeed, this implies that elements $y_{i,\tau i,j;a,b,c},x_{i,\tau i,j;a,b,c}$ introduced in \cite[Definition 6.1]{Z23} are equal to $0$ and then the desired statement follows from the intertwining relations in \cite[Proposition 6.10--6.11]{Z23}.

Let $x,y\in \U$ be two homogeneous elements with weights denoted $|x|,|y|$. Recall from \cite{Ja95} that $\ad(E_i)x=[E_i,x]_{q_i^{\langle h_i,|x| \rangle}}$, $\ad(E_i)y=[E_i,y]_{q_i^{\langle h_i,|y| \rangle}}$, and the Leibniz rule 
\[
\ad(E_i)(xy)= \ad(E_i)(x) \cdot y +q_i^{\langle h_i,|x| \rangle} x\cdot \ad(E_i)(y).
\]

By \cite{Ja95}, the $q$-Serre relation \eqref{eq:serre1} can be rewritten as $\ad(E_i^{(1-c_{ij})})E_j=0$. Hence 
\begin{enumerate}
    \item[(i)] 
    $\ad(E_i^{(c)})E_j=0$ if $c>-c_{ij}$. 
\end{enumerate}
Since $c_{i,\tau i}=-1$, we have $\ad(E_{\tau i}^{(2)})E_i=0$. Then, as $\ad(E_i^{(c)})E_j$ is a linear combination of $E_i^xE_jE_i^{c-x}$, for $0\le x \le c$, by the Leibniz rule, $\ad(E_{\tau i}^{(b)})(E_i^xE_jE_i^{c-x})=0$ if $b>c-c_{\tau i,j}$, and hence 
\begin{enumerate}
    \item[(ii)]
$\ad(E_{\tau i}^{(b)}E_i^{(c)})E_j=0$ if $b-c>-c_{\tau i,j}$.
\end{enumerate}

It remains to show that (iii) $\ad(E_i^{(a)}E_{\tau i}^{(b)}E_i^{(c)})E_j=0$ if $a+c>-c_{ij}-c_{\tau i,j}$. We assume that $a+c> b$; otherwise, $b\ge a+c>-c_{ij}-c_{\tau i,j}$ implies that either $c>-c_{ij}$ or $b-c>-c_{\tau i,j}$, which go back to the previous two cases.

Let $a+c>b$ and $a+c>-c_{ij}-c_{\tau i,j}$. By \cite[Lemma 42.1.2(d)]{Lus93}, $E_i^{(a)}E_{\tau i}^{(b)}E_i^{(c)}$ is a linear combination of $E_{\tau i}^{(c-k)}E_i^{(a+c)}E_{\tau i}^{(a-l)}$ for some $k,l\in \N$ such that $k+l=a+c-b$. The condition $a+c>-c_{ij}-c_{\tau i,j}$ forces that either $a-l>-c_{\tau i,j}$ or $c+l>-c_{ij}.$ 
In either cases, we have $\ad(E_{\tau i}^{(c-k)}E_i^{(a+c)}E_{\tau i}^{(a-l)})E_j=0$ by (ii) established above (applied to different notation here). This establishes Case (iii).
\end{proof}


Now we are ready to summarize the rank two formulas for $\TT_{i,-1}'$ in all 3 cases.
In case $c_{i,\tau i}=2$, it is due to \cite[Theorem 4.11]{Z23}  (which proved  \cite[Conjecture 6.5]{CLW21b}).
In case $c_{i,\tau i}=0$, it was due to \cite[Theorem 5.17]{Z23} (which proved \cite[Conjecture 3.7]{CLW23}).
In case $c_{i,\tau i}=-1$, it is due to  \cite[Theorem 6.14]{Z23}. The formulas {\em loc. cit.} were formulated in the universal iquantum group setting, and we have reformulated below for $\Ui$ for the distinguished parameter $\bvs_\dm$ via central reduction.

\begin{proposition}
\label{prop:TiBjZ}
Set the parameter $\bvs =\bvs_\dm$. Then, for $i\in \Ifin,j \in \I$ such that $j \neq i, \tau i$, we have
    \begin{align*}
    \TT_{i,-1}'(B_j)=
        \begin{dcases}
            b_{i,j;1,-c_{ij}}, & \text{ if } c_{i,\tau i}=2, 
            \\
        b_{i,\tau i,j;1,-c_{ij},-c_{\tau i,j}}, & \text{ if } c_{i,\tau i}=0, 
            \\
        b_{i,\tau i,j;1,-c_{\tau i,j},-c_{ij}-c_{\tau i,j},-c_{i,j}}, & \text{ if } c_{i,\tau i}=-1.
        \end{dcases}
    \end{align*}
\end{proposition}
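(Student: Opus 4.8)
The three formulas are the specializations, at the distinguished parameter $\bvs_\dm$, of identities already established in the universal iquantum group $\tUi$; the plan is to transport them along the central reduction used in the proof of \cref{thm:braid-iQG}. Recall from that proof that $\Ui_{\bvs_\dm}$ is a central reduction $\pi\colon \tUi \twoheadrightarrow \Ui_{\bvs_\dm}$ of $\tUi$, under which the universal symmetry $\tTT'_{i,-1}$ descends to $\TT'_{i,\bvs_\dm,-1}$, i.e. $\pi\circ \tTT'_{i,-1}=\TT'_{i,\bvs_\dm,-1}\circ\pi$. Since $\pi$ carries the generator $B_j$ of $\tUi$ to the generator $B_j$ of $\Ui_{\bvs_\dm}$, we obtain $\TT'_{i,\bvs_\dm,-1}(B_j)=\pi\big(\tTT'_{i,-1}(B_j)\big)$, so the problem reduces to recalling the universal formula for $\tTT'_{i,-1}(B_j)$ and applying $\pi$.

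\textbf{The three cases.} Explicit formulas for $\tTT'_{i,-1}(B_j)$ in the universal setting were obtained in \cite{Z23}: \cite[Theorem 4.11]{Z23} (which proved \cite[Conjecture 6.5]{CLW21b}) for the split case $c_{i,\tau i}=2$, \cite[Theorem 5.17]{Z23} (which proved \cite[Conjecture 3.7]{CLW23}) for the diagonal case $c_{i,\tau i}=0$, and \cite[Theorem 6.14]{Z23} for the quasi-split case $c_{i,\tau i}=-1$. Applying $\pi$ and invoking the identifications recorded right after \eqref{splitdiv2}, \eqref{diagdiv} and \eqref{eq:qsdv1} --- namely that the $n=1$ elements $b_{i,j;1,m}$, $b_{i,\tau i,j;1,m_1,m_2}$ and $b_{i,\tau i,j;1,a,b,c}$ are exactly the $\pi$-images of the corresponding universal root vectors of \cite{CLW21b,Z23} --- yields the three identities after the substitutions $m=-c_{ij}$, $(m_1,m_2)=(-c_{ij},-c_{\tau i,j})$, and $(a,b,c)=(-c_{\tau i,j},\,-c_{ij}-c_{\tau i,j},\,-c_{ij})$, respectively.

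\textbf{Main obstacle.} The one point requiring care is the bookkeeping of parameters under $\pi$: one must verify that the central characters are specialized so as to produce precisely the distinguished value $\vs_{i,\dm}=-q_i^{-1-\frac12 c_{i,\tau i}}$ entering the definitions \eqref{splitdiv1}--\eqref{splitdiv2}, \eqref{diagdiv} and \eqref{eq:qsdv1} of the $b$-elements, so that the universal coefficients collapse to the $q$-powers written above. This is exactly the content of the element-by-element identifications cited above, and once it is granted the proposition is formal, since $\pi$ is an algebra map intertwining the two symmetries.

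\textbf{An independent check.} One may also bypass $\tUi$ and argue by uniqueness directly inside $\Ui_{\bvs_\dm}$. The recursions \eqref{def:splitBij}, \eqref{def:diagBij1}--\eqref{def:diagBij2} and \eqref{def:qsqsBij1}--\eqref{def:qsqsBij2}, together with the vanishing statements \eqref{SerreL:split}, \eqref{SerreL:diagonal} and \cref{lem:qsvanish}, determine the right-hand $b$-elements. Applying the automorphism $\TT'_{i,-1}$ to the Serre--Lusztig relations satisfied by $B_j$ relative to $B_i,B_{\tau i}$, and using the rank one values $\TT'_{i,-1}(B_i),\TT'_{i,-1}(B_{\tau i})$ of \cref{prop:rkone}(2), shows that $\TT'_{i,-1}(B_j)$ obeys the same recursions and vanishing, whence the two agree.
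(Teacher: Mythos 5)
Your proposal is correct and follows essentially the same route as the paper: the paper likewise obtains these formulas by citing \cite[Theorems 4.11, 5.17, 6.14]{Z23} in the universal iquantum group setting and reformulating them for $\Ui_{\bvs_\dm}$ via central reduction, using exactly the element-by-element identifications of the $b$-elements recorded after \eqref{splitdiv2}, \eqref{diagdiv} and \eqref{eq:qsdv1}. The additional uniqueness-based check you sketch is not needed (and as written it would still require an argument to pin down the terminal, rather than initial, element of the recursion), but the main argument stands on its own.
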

In this paper, we shall generalize these formulas to $\TT_{i,-1}'(X_{j,n})$, for $n\ge 1$; see \eqref{Xjn}.

\subsection{Bar involution on $\Ui$ and quasi $K$-matrix}
\label{subsec:ibar}

When the parameter $\bvs$ satisfies the condition $\vs_{\tau i}=q_i^{-c_{i,\tau i} } \ov{\vs_i}$, there exists an anti-linear bar involution $\psi^\imath$ on $\Ui$ such that (cf. \cite{CLW21a})
\begin{align}
\psi^\imath(B_i)=B_i,\quad K_\mu=K_{-\mu},\quad \forall i\in \I,\mu\in Y^\imath.
\end{align}
Moreover, we have the following relation (cf. \cite[Theorem 3.8]{BW18a}, \cite{BW18b}, \cite{BK19})
\begin{align}\label{def:psi-fX}
\psi^\imath(x) \fX =\fX \psi(x),\qquad \forall x\in \Ui.
\end{align}

Let $M$ be an integrable $\U$-module equipped with a bar involution $\psi$ whose weights are bounded above. One can also define an anti-linear map $\psi^\imath$ on $M$ by $\psi^\imath (v)=\fX_i \psi (v),$ for $v\in M$. Then $\psi^\imath(xv)=\psi^\imath(x)\psi^\imath (v)$ for any $x\in \Ui,v\in M$; cf. \cite[Proposition 5.1]{BW18b}.

We define a balanced parameter $\bvs_\star=(\vs_{i,\star})_{i\in\wI}\in(\F^\times)^{\wI}$ such that  
\begin{align}\label{def:bvs-star}
\vs_{i,\star}=q_i^{-c_{i,\tau i}/2}.
\end{align}

Then $\bvs_\star$ satisfies the condition $\vs_{\tau i}=q_i^{-c_{i,\tau i} } \ov{\vs_i}$. Thus, there exist anti-linear involutions $\psi^\imath$ on $\Ui$ and $\U$-modules as discussed above. 

 

 \section{Braid group operators of split type on integrable $\Ui$-modules}
 \label{sec:split}
 
In this section, we fix $i\in \wI$ such that $i=\tau i $. We present explicit formulas for linear operators  $\TT'_{i,-1},\TT''_{i,+1}$ (which are shown to be mutually inverse) on any integrable $\Ui$-modules and show that they are compatible with corresponding symmetries on $\Ui$ given in \cite{WZ23, Z23}. We further show that $\TT'_{i,-1},\TT''_{i,+1}$ specialize to the corresponding ones {\em loc. cit.} on integrable $\U$-modules by relating to Lusztig symmetries. 

\subsection{Rank one formulas of split type}

Let $(\U,\Ui_\bvs)$ be a quasi-slit quantum symmetric pair with a general parameter $\bvs=(\vs_j)_{j\in \I}$. Let $M\in \mathcal{C}$ be an integrable $\Ui$-module.

For $i\in \wI$ with $i=\tau i$, we define linear operators $\TT'_{i,-1},\TT''_{i,+1}$ on $M$ by letting  
\begin{align}
\label{eq:ibraid1}
\TT'_{i,-1}(v) & =\sum_{k\geq 0:\:\ov{k}=\ov{\la}_i}  (-1)^{-k/2}(q_i^2\vs_i)^{-k/2} \dvi{\ov{k}}{k}v, 
\\
\label{eq:ibraid1'}
\TT''_{i,+1} (v) & =\sum_{k\geq 0:\:\ov{k}=\ov{\la}_i}  (-1)^{k/2}\vs_i^{-k/2} \dvi{\ov{k}}{k}v,
\end{align}
for any iweight vector $v\in M_{\ov{\lambda}}$, where $\la_i = \langle h_i, \lambda \rangle, \ov{\la}_i=\ov{\langle h_i, \lambda \rangle}\in \Z_2$ (which is well defined by Lemma~\ref{lem:AI}). Only finitely many summands on the right-hand sides of \eqref{eq:ibraid1}--\eqref{eq:ibraid1'} are nonzero since $M$ is integrable. 

We also introduce the following elements in some completion of $\Ui$, for $\ov{p} \in \Z_2$:
\begin{align}
\label{def:fbvs} 
    f_{\ov{p},\bvs}(B_i):=& \; \sum_{\substack{k\geq 0\\\ov{k}=\ov{p}}}  (-1)^{-k/2} (q_i^2\vs_i)^{-k/2} \dvi{\ov{k}}{k},
    \\
    \label{def:ovfbvs}
    \tilde{f}_{\ov{p},\bvs}(B_i):= &\sum_{\substack{k\geq 0\\\ov{k}=\ov{p}}}  (-1)^{k/2}\vs_i^{-k/2} \dvi{\ov{k}}{k}.
\end{align} 
They act as well-defined linear operators on any integrable $\Ui$-modules. By definition \eqref{eq:ibraid1}--\eqref{eq:ibraid1'}, for any $v \in M_{\ov{\la}}$ we have \begin{align*}
     \TT'_{i,-1}v=f_{\ov{\la}_i,\bvs} (B_i) v, 
     \qquad
     \TT''_{i,+1}v=\tilde{f}_{\ov{\la}_i,\bvs} (B_i) v.
 \end{align*}

\subsection{Compatibility with braid group symmetries on $\Ui$}

 \begin{lemma}
 \label{lem:musplit}
 Let  $i\in \I$ such that $i=\tau i$. For any $\mu\in Y^\imath$, we have $\langle \mu,\alpha_i\rangle=0$ and $\bs_i \mu=\mu$.
 \end{lemma}

\begin{proof}
Since $\mu\in Y^\imath$, $\tau \mu=-\mu$. Then we have $\langle \mu,\alpha_i \rangle = \langle \tau\mu,\tau\alpha_i \rangle = -\langle \mu,\alpha_i \rangle$ and thus $\langle \mu,\alpha_i \rangle=0$. Therefore, $\bs_i\mu=\mu-\langle \mu,\alpha_i \rangle h_i =\mu$.
\end{proof}

Recall from \cref{thm:braid-iQG} the braid group symmetries $\TT'_{i,-1}$ and $\TT''_{i,+1}$ on $\Ui$. The proof of the next theorem will be reduced to verifying a rank two formula, which is the $n=1$ special case of  Proposition~\ref{prop:BBij0} below.

\begin{theorem}
\label{thm:split}
Let $i\in \I$ such that $i=\tau i$. For any $x\in \Ui,v\in M$, we have
\begin{align}\label{eq:split}
 \TT'_{i,-1}(x) \TT'_{i,-1} (v) =  \TT'_{i,-1} (x v).
\end{align}
\end{theorem}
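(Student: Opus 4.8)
The goal is the compatibility identity $\TT'_{i,-1}(xv)=\TT'_{i,-1}(x)\,\TT'_{i,-1}(v)$ for $i=\tau i$, where the left factor $\TT'_{i,-1}(x)$ is the \emph{algebra} symmetry on $\Ui$ from \cref{thm:braid-iQG} and the right factor is the \emph{module} operator defined by \eqref{eq:ibraid1}. My first reduction is to use the parameter-rescaling compatibility. By \cref{lem:iso-parameter} and the intertwining relation \eqref{eq:phiTT}, together with the fact that the module operator is expressed through idivided powers whose behavior under $\phi_{\bvs,\bvs'}$ is controlled by \cref{lem:div-par}, it suffices to prove \eqref{eq:split} for one convenient choice of parameter; I would fix $\bvs=\bvs_\dm$ (so $\vs_{i,\dm}=-q_i^{-2}$) to match the explicit rank two formulas of \cref{prop:TiBjZ} and the element $b_{i,j;n,m}$ of \eqref{splitdiv1}--\eqref{splitdiv2}.

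Next I would reduce to generators. Since $\TT'_{i,-1}$ on $\Ui$ is an algebra automorphism and the proposed identity is multiplicative in $x$ (if it holds for $x$ and $y$ then it holds for $xy$, because $\TT'_{i,-1}(xyv)=\TT'_{i,-1}(x)\TT'_{i,-1}(yv)=\TT'_{i,-1}(x)\TT'_{i,-1}(y)\TT'_{i,-1}(v)$), it is enough to verify it when $x$ is one of the algebra generators $k_\mu$ and $B_j$. For $x=k_\mu$ with $\mu\in Y^\imath$, \cref{lem:musplit} gives $\langle\mu,\alpha_i\rangle=0$ and $\bs_i\mu=\mu$, and by \cref{prop:rkone}(1) the automorphism fixes $\U^{\imath 0}$; a direct weight computation then shows both sides agree, since $k_\mu$ acts by a scalar that is constant along the $B_i$-string and $\TT'_{i,-1}$ shifts the iweight by the $\bs_i$-action (which is trivial on the $\mu$-pairing). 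The generator case $x=B_i$ is the split rank one $\mathfrak{sl}_2$-type identity: here one checks directly using the recursion \eqref{recursion} for idivided powers and the formula $\TT'_{i,-1}(B_i)=q_i^{-c_{i,\tau i}/2+1}B_{\tau_i\tau i}k_{\tau_i\tau i}^{-1}=q_iB_ik_i^{-1}$ from \cref{prop:rkone}(2) that applying $f_{\ov{\la}_i,\bvs}(B_i)$ intertwines multiplication by $B_i$ on $M$ with multiplication by $\TT'_{i,-1}(B_i)$; this is essentially the iquantum analogue of Lusztig's rank one module computation and should follow from the defining recursions.

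The genuine obstacle, and the case the theorem statement flags, is $x=B_j$ for $j\neq i$. Here the left-hand side is $\TT'_{i,-1}(B_j)=b_{i,j;1,-c_{ij}}$ acting on the shifted vector $\TT'_{i,-1}(v)$, while the right-hand side is $\TT'_{i,-1}(B_j v)=f_{\ov{\la}_i-c_{ij},\bvs}(B_i)\,B_j v$, noting that $B_jv$ lies in iweight $\ov{\la-\alpha_j}$ so the parity index shifts by $c_{ij}$. Equating these forces the nontrivial commutation identity between $B_j$ and the generating function $f_{\ov{p},\bvs}(B_i)$ of idivided powers, which is exactly the content of \cref{prop:BBij0} (its $n=1$ specialization). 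My strategy for that identity is to establish it at the level of \emph{elements} (in a completion of $\Ui$), independent of the module, by inducting on $m$ via the recursion \eqref{def:splitBij} for $b_{i,j;n,m}$ and the idivided-power recursion \eqref{recursion}, and by invoking the Serre--Lusztig vanishing \eqref{SerreL:split} to truncate the sums. Because any element of $\Ui$ acting as zero on all integrable modules is zero (the remark after \cref{lem:integrable}), proving the elementwise identity yields the module identity for all of $\mathcal C$ at once. The main difficulty is purely computational: matching the parity-dependent powers of $q_i$ and the $\vs_i$-weights on the two sides through the recursions, where careful bookkeeping of the shift $\ov{p}\mapsto\ov{p}+nc_{ij}$ and the two-term nature of \eqref{recursion} in the $\ov{p}=\ov{m}$ case is unavoidable.
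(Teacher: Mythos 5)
Your overall architecture coincides with the paper's proof: reduce to generators via multiplicativity, settle $x=K_\mu$ by \cref{lem:musplit} and \cref{prop:rkone}(1), transfer to the distinguished parameter $\bvs_\dm$ using \cref{lem:iso-parameter}, \eqref{eq:phiTT} and \cref{lem:div-par}, and reduce the case $x=B_j$ ($j\neq i$) to exactly the commutation identity $b_{i,j;1,-c_{ij}}\, f_{\ov{p},\bvs_\dm}(B_i) = f_{\ov{p+c_{ij}},\bvs_\dm}(B_i)\, b_{i,j;1,0}$, i.e.\ the $n=1$ case of \cref{prop:BBij0}. The paper likewise proves that identity by a recursion-driven computation, organized through \cref{prop:BBij2} and an induction on the order $k$ of the idivided power (rather than on $m$, as you suggest), but this is a difference of bookkeeping, not of method. (Also, passing from an elementwise identity to the module identity needs no faithfulness statement; that remark is only needed for the converse direction, as in \cref{thm:split-T1DP}.)

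However, your treatment of the rank one case $x=B_i$ contains a concrete error that would make the step fail as written. You evaluate \cref{prop:rkone}(2) as $\TT'_{i,-1}(B_i)=q_iB_ik_i^{-1}$, but in the split case $c_{i,\tau i}=2$ the prefactor is $q_i^{-c_{i,\tau i}/2+1}=q_i^{0}=1$, and moreover $k_i=K_iK_{\tau i}^{-1}=1$ because $\tau i=i$; hence $\TT'_{i,-1}(B_i)=B_i$. Since $f_{\ov{\la}_i,\bvs}(B_i)$ is a formal power series in $B_i$ alone, it commutes with $B_i$, so the intertwining you propose to verify, namely $f_{\ov{\la}_i,\bvs}(B_i)\,B_i = q_iB_ik_i^{-1}\, f_{\ov{\la}_i,\bvs}(B_i) = q_iB_i\, f_{\ov{\la}_i,\bvs}(B_i)$, is false on any integrable module where $B_i f_{\ov{\la}_i,\bvs}(B_i)v\neq 0$; no amount of recursion bookkeeping can rescue it. With the correct value $\TT'_{i,-1}(B_i)=B_i$ the case becomes trivial---there is no analogue of Lusztig's rank one module computation to perform---exactly as in the paper: both sides of \eqref{eq:split} equal $f_{\ov{\la}_i,\bvs}(B_i)B_iv$, using also that the parity index does not shift because $c_{ii}=2$ is even.
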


\begin{proof}

Suppose that the identity \eqref{eq:split} holds for $x,y\in \Ui$. Since $\TT'_{i,-1}$ is an algebra automorphism on $\Ui$, we have
\[
\TT'_{i,-1}(x y v)=\TT'_{i,-1}(x) \TT'_{i,-1}(y v)=\TT'_{i,-1}(x) \TT'_{i,-1}(y) \TT'_{i,-1}(v)=\TT'_{i,-1}(xy) \TT'_{i,-1}(v).
\] 
That is, the identity \eqref{eq:split} also holds for $xy$. Thus it suffices to  verify \eqref{eq:split} as $x$ runs over a generating set of $\Ui$ and any $v\in M_{\ov{\lambda}}$, for $\ov{\lambda}\in X_\imath$; set $\ov{p}=\ov{\langle h_i,\lambda\rangle}\in \Z_2$. 

We first prove \eqref{eq:split} for $x=K_\mu \; (\mu\in Y^\imath)$. By Lemma~\ref{lem:musplit} and Proposition~\ref{prop:rkone}, we have $\TT'_{i,\bvs_\dm,-1}(K_\mu)=K_{\mu}$. For any parameter $\bvs$, using Lemma~\ref{lem:iso-parameter} and \cref{thm:braid-iQG}(b), $\TT'_{i,\bvs ,-1}(K_\mu)=K_\mu$. It remains to show that $\TT'_{i,-1}(K_\mu v)=K_\mu \TT'_{i,-1}(v) $. Note that the formula \eqref{eq:ibraid1} is a summation of $\dvi{\ov{k}}{k}$, which are polynomials in $B_i$. By Lemma~\ref{lem:musplit}, we have $K_\mu B_i = B_i K_\mu$. Therefore, $K_\mu$ commutes with $\TT_{i,-1}'$ when acting on $M$, as desired.

We next prove \eqref{eq:split} for $x=B_i$. Recall from Proposition~\ref{prop:rkone}(2) that $\TT'_{i,\bvs_\dm,-1}(B_i)=B_i$. By Lemma~\ref{lem:iso-parameter} and \cref{thm:braid-iQG}(2), we have $\TT'_{i,\bvs,-1}(B_i)=B_i$ for a general parameter $\bvs$. On the other hand, note that $B_iv\in M_{\ov{\lambda-\alpha_i}}$ and $\ov{\langle h_i,\lambda\rangle}=\ov{\langle h_i,\lambda-\alpha_i\rangle}$. Then, by \eqref{eq:ibraid1}, the actions of $\TT'_{i,-1}$ on both $v,B_iv$ are given by a sum of idivided powers, which are polynomials of $B_i$. Hence, we have 
$\TT'_{i,-1}(B_i) f_{\ov{p},\bvs} (B_i) =B_i f_{\ov{p},\bvs} (B_i) =f_{\ov{p},\bvs} (B_i) B_i$. 

Finally, we prove \eqref{eq:split} for $x=B_j\, (j\neq i)$.  By \eqref{eq:ibraid1}, we have $\TT'_{i,-1}v=f_{\ov{p},\bvs} (B_i) v$ and $\TT'_{i,-1}(B_j v)=f_{\ov{p}+\ov{c_{ij}},\bvs} (B_i) B_j v$. Thus, it suffices to verify that 
\begin{align}\label{eq:fsplit}
\TT'_{i,\bvs,-1}(B_j) f_{\ov{p},\bvs} (B_i) =f_{\ov{p+c_{ij}},\bvs} (B_i) B_j.
\end{align}
By Lemma~\ref{lem:div-par} and the definition of $f_{\ov{p},\bvs}(B_i)$, we have $f_{\ov{p},\bvs}(B_i)=\phi_{\bvs_\dm,\bvs} f_{\ov{p},\bvs_\dm}(B_i)$ for any $\ov{p}\in \Z_2$. By \cref{thm:braid-iQG}(2), we have $\TT'_{i,\bvs,-1}\phi_{\bvs_\dm,\bvs}(B_j)=\phi_{\bvs_\dm,\bvs} \TT'_{i,\bvs_\dm,-1}(B_j)$. Therefore, it suffices to verify \eqref{eq:fsplit} for the distinguished parameter $\bvs_\dm$. 

Recall from \S\ref{sec:braid-formula} that $\TT'_{i,\bvs_\dm, -1}(B_j)=b_{i,j;1,-c_{ij}}$ for $j\in \wI,j\neq i$. Note that $b_{i,j;1,0}=B_j$. Thus, the desired identity \eqref{eq:fsplit} for $\bvs=\bvs_\dm$ can be reformulated as
\begin{align}\label{eq:fsplit2}
 b_{i,j;1,-c_{ij}} f_{\ov{p},\bvs_\dm} (B_i) =  f_{\ov{p}+\ov{c_{ij}},\bvs_\dm}(B_i) b_{i,j;1,0}.
\end{align}
The proof of the identity \eqref{eq:fsplit2} is given as a special case (when $n=1$) of Proposition~\ref{prop:BBij0} in the next subsection.

Combining all the above cases, we have proved the theorem.
\end{proof}

Our immediate goal is to derive the counterpart of Theorem~\ref{thm:split} for $\TT''_{i, +1}$.

 \begin{lemma}\label{lem:sbar}
 Set the parameter $\bvs =\bvs_\star$ in \eqref{def:bvs-star}. For any $\ov{p}\in\Z_2,j\in \I$, we have 
 \begin{align*}
 \psi^\imath\big(f_{\ov{p},\bvs_\star}(B_i)\big)&=(-1)^p \tilde{f}_{\ov{p},\bvs_\star}(B_i),
 \\
 \psi^\imath\big(\TT'_{i, -1}(B_j)\big)&=(-1)^{c_{ij}} \TT''_{i, +1}(B_j).
 \end{align*}
 \end{lemma}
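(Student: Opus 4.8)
The whole lemma turns on a single structural fact about the special parameter $\bvs_\star$. In split type $c_{i,\tau i}=2$ we have $\vs_{i,\star}=q_i^{-1}$ by \eqref{def:bvs-star}, hence $q_i\vs_{i,\star}=1$. Consequently every factor $B_i^2-q_i\vs_i[2r]_i^2$ (resp. $B_i^2-q_i\vs_i[2r-1]_i^2$) in the product expansion \eqref{def:idv} of $\dvi{\ov{p}}{m}$ collapses to $B_i^2-[2r]_i^2$ (resp. $B_i^2-[2r-1]_i^2$), which is fixed by $\psi^\imath$ since $\psi^\imath(B_i)=B_i$ and $\psi^\imath([m]_i)=[m]_i$. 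Thus $\psi^\imath\big(\dvi{\ov{k}}{k}\big)=\dvi{\ov{k}}{k}$ for $\bvs_\star$, and this is the one fact feeding both identities.

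For the first identity I would apply $\psi^\imath$ directly to \eqref{def:fbvs}. Because $q_i^2\vs_{i,\star}=q_i$, the coefficient of $\dvi{\ov{k}}{k}$ in $f_{\ov{p},\bvs_\star}(B_i)$ is $(-1)^{-k/2}q_i^{-k/2}$; applying the anti-linear $\psi^\imath$ (which inverts $q$, fixes the scalar $(-1)^{1/2}$, and fixes the idivided power) yields $(-1)^{-k/2}q_i^{k/2}$. On the other side, since $\vs_{i,\star}^{-k/2}=q_i^{k/2}$, the coefficient of $\dvi{\ov{k}}{k}$ in $(-1)^p\tilde{f}_{\ov{p},\bvs_\star}(B_i)$ is $(-1)^{p+k/2}q_i^{k/2}$. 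These agree termwise because $(-1)^{-k/2}=(-1)^{p+k/2}$ is equivalent to $(-1)^{-k}=(-1)^{p}$, which holds as $\ov{k}=\ov{p}$. This gives $\psi^\imath\big(f_{\ov{p},\bvs_\star}(B_i)\big)=(-1)^p\tilde{f}_{\ov{p},\bvs_\star}(B_i)$.

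For the second identity I would argue by explicit comparison of root vectors. Using \eqref{eq:phiTT}, \eqref{phibvs2} and Lemma~\ref{lem:div-par}, transport the formula $\TT'_{i,\bvs_\dm,-1}(B_j)=b_{i,j;1,-c_{ij}}$ of Proposition~\ref{prop:TiBjZ} to the parameter $\bvs_\star$; this only rescales the coefficients of the balanced sums \eqref{splitdiv1}--\eqref{splitdiv2} by explicit half-integer powers of $q_i$, while the monomials $\dvi{\ov{p}}{s}\,B_j\,\dvi{\ov{p'}}{r}$ are unchanged. Since every such idivided power and $B_j$ is $\psi^\imath$-fixed for $\bvs_\star$, applying $\psi^\imath$ acts solely on the scalar coefficients, replacing $q\mapsto q^{-1}$ while preserving the order of the monomials. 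The key point is that the passage from $\TT'$ to $\TT''$ is itself anti-linear: already in Proposition~\ref{prop:rkone}(2) the exponent of $q_i$ in $\TT''_{i,+1}(B_i)$ is the $q$-bar of the one in $\TT'_{i,-1}(B_i)$. I would then write out the analogous explicit expansion of $\TT''_{i,\bvs_\star,+1}(B_j)$ (either from the $\TT''$-analogue of Proposition~\ref{prop:TiBjZ} or by transporting it from $\bvs_\dm$) and check termwise that the $q$-inverted coefficients of $\TT'_{i,\bvs_\star,-1}(B_j)$ equal $(-1)^{c_{ij}}$ times those of $\TT''_{i,\bvs_\star,+1}(B_j)$.

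The main obstacle is precisely this last coefficient bookkeeping: one must match the half-integer powers of $q_i$ and the two sign patterns in \eqref{splitdiv1}--\eqref{splitdiv2} after the parameter change and the $q$-inversion, and confirm that the entire discrepancy between $\psi^\imath(\TT'_{i,-1}(B_j))$ and $\TT''_{i,+1}(B_j)$ collapses to the single global sign $(-1)^{c_{ij}}$. That this sign is forced is a consistency check against the first identity: acting by $B_j$ sends $M_{\ov{\la}}$ to $M_{\ov{\la-\alpha_j}}$ and so changes the parity $\ov{\la}_i$ by $\ov{c_{ij}}$, which is exactly the shift that converts the parity-dependent sign $(-1)^p$ of the first identity into the factor $(-1)^{c_{ij}}$ of the second.
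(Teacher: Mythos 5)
Your proposal is correct and follows essentially the same route as the paper: both hinge on the $\psi^\imath$-invariance of the idivided powers at $\bvs_\star$ (since $q_i\vs_{i,\star}=1$), verify the first identity by matching coefficients in \eqref{def:fbvs}--\eqref{def:ovfbvs}, and prove the second by transporting the explicit expansions of $\TT'_{i,-1}(B_j)$ and $\TT''_{i,+1}(B_j)$ from $\bvs_\dm$ to $\bvs_\star$ via $\phi_{\bvs_\dm,\bvs_\star}$ and comparing coefficients termwise after applying $\psi^\imath$. The only point you should add is the trivial case $j=i$, which Proposition~\ref{prop:TiBjZ} does not cover: there $\TT'_{i,-1}(B_i)=\TT''_{i,+1}(B_i)=B_i$ by Proposition~\ref{prop:rkone} and $(-1)^{c_{ii}}=1$, which the paper dispatches in one sentence.
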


 \begin{proof}
 Recall that $\vs_{\star,i}=q_i^{-1}$ and the bar involution $\psi^\imath$ on $\Ui_{\bvs_\star}$ exists; see \S\ref{subsec:ibar}. By definition \eqref{def:idv}, $\dvi{\ov{k},\bvs_\star}{k}$ are invariant under $\psi^\imath$. Then the first identity follows by the formulas \eqref{def:fbvs}--\eqref{def:ovfbvs}.

 For $j=i$, recall from Proposition~\ref{prop:rkone} that $\TT'_{i, -1}(B_i)=\TT''_{i, +1}(B_i)=B_i$, and hence the second identity in the lemma follows. For $j\neq i$, the formulas of $\TT'_{i, -1}(B_j),\TT''_{i, +1}(B_j)$ are obtained by applying the central reduction and $\phi_{\bvs_\dm, \bvs_\star}$ to the formulas  conjectured in \cite{CLW21b} and established partially in \cite{LW22} and fully in \cite[Theorem 3.7(i)]{Z23}. Explicitly, we have
 \begin{align*}
\TT'_{i, -1}(B_j)=& \;\sum_{r+s=-c_{ij}} (-1)^{r+\frac{r+s}{2}} q_i^{ \frac{r-s}{2}} \dv{i,\ov{p-c_{ij}}}{s} B_j \dv{i,\ov{p}}{r}
\\
&+\sum_{u\geq 1}\sum_{\substack{r+s+2u=-c_{ij}\\ \ov{r}=\ov{p}}} (-1)^{r+\frac{r+s}{2}} q_i^{ \frac{r-s}{2}} \dv{i,\ov{p-c_{ij}}}{s} B_j \dv{i,\ov{p}}{r},
\\ 
\TT''_{i, +1}(B_j)=& \; \sum_{r+s=-c_{ij}} (-1)^{s+\frac{r+s}{2}} q_i^{\frac{s-r}{2}} \dv{i,\ov{p-c_{ij}}}{s} B_j \dv{i,\ov{p}}{r}
\\
&+\sum_{u\geq 1}\sum_{\substack{r+s+2u=-c_{ij}\\ \ov{r}=\ov{p}}} (-1)^{s+\frac{r+s}{2}} q_i^{\frac{s-r}{2}} \dv{i,\ov{p-c_{ij}}}{s} B_j \dv{i,\ov{p}}{r}.
 \end{align*}
 
The second identity in the lemma follows from these two formulas.
 \end{proof}

\begin{proposition} \label{prop:split2}
Let $i=\tau i$. For any $x\in \Ui,v\in M$, we have
\begin{align}\label{eq:split'}
 \TT''_{i,+1}(x) \TT''_{i,+1} v =  \TT''_{i,+1} (x v).
\end{align}
\end{proposition}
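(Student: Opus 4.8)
The plan is to deduce Proposition~\ref{prop:split2} from the already-established Theorem~\ref{thm:split} by conjugating with the $\imath$bar involution $\psi^\imath$, thereby avoiding a repetition of the entire generator-by-generator argument. Since both sides of \eqref{eq:split'} are compatibility statements over all parameters $\bvs$, and the algebra isomorphisms $\phi_{\bvs,\bvs'}$ intertwine the braid operators via \cref{thm:braid-iQG}(b), I would first reduce to the special parameter $\bvs=\bvs_\star$ of \eqref{def:bvs-star}, for which $\psi^\imath$ exists on both $\Ui_{\bvs_\star}$ and on the integrable module $M$ (here one takes $M$ to be $\U$-integrable with weights bounded above first, then extends, as in the split case). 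On such $M$, the anti-linear map $\psi^\imath(v)=\fX_i\psi(v)$ satisfies $\psi^\imath(xv)=\psi^\imath(x)\psi^\imath(v)$.

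The key computational input is Lemma~\ref{lem:sbar}, which tells us that $\psi^\imath$ sends the operator-valued series $f_{\ov{p},\bvs_\star}(B_i)$ defining $\TT'_{i,-1}$ to (a sign times) the series $\tilde f_{\ov{p},\bvs_\star}(B_i)$ defining $\TT''_{i,+1}$, and correspondingly sends $\TT'_{i,-1}(B_j)$ to $(-1)^{c_{ij}}\TT''_{i,+1}(B_j)$. First I would record the relation between the two module operators: for $v\in M_{\ov{\la}}$ with $\ov{p}=\ov{\la}_i$, the first identity of Lemma~\ref{lem:sbar} gives
\begin{align}\label{eq:psiTT}
\psi^\imath\big(\TT'_{i,-1}(v)\big)=\psi^\imath\big(f_{\ov{p},\bvs_\star}(B_i)\big)\psi^\imath(v)=(-1)^p\,\tilde f_{\ov{p},\bvs_\star}(B_i)\,\psi^\imath(v)=(-1)^p\,\TT''_{i,+1}\big(\psi^\imath(v)\big),
\end{align}
where I must be careful that $\psi^\imath$ is anti-linear but the scalar signs are real, and that $\psi^\imath$ preserves iweight spaces (it sends $M_{\ov{\la}}$ to $M_{\ov{\la}}$ since $\fX_i\in\U^+$ and $\psi$ fixes weights), so the parity $\ov p$ attached to $\psi^\imath(v)$ is again $\ov p$. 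Thus $\TT''_{i,+1}=(-1)^p\,\psi^\imath\,\TT'_{i,-1}\,\psi^\imath$ on the $\ov p$-parity piece, and similarly $\psi^\imath\,\TT''_{i,+1}\,\psi^\imath=(-1)^p\,\TT'_{i,-1}$.

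With these intertwining relations in hand, the proof of \eqref{eq:split'} is a direct unwinding. Starting from $\TT''_{i,+1}(xv)$ with $x\in\Ui$ and $v\in M_{\ov{\la}}$, I would write $\TT''_{i,+1}=(-1)^p\psi^\imath\TT'_{i,-1}\psi^\imath$, then push $\psi^\imath$ inside the product $xv$ using that $\psi^\imath$ is an algebra anti/homomorphism on the module (it is a homomorphism in the relevant sense: $\psi^\imath(xv)=\psi^\imath(x)\psi^\imath(v)$), apply Theorem~\ref{thm:split} to split $\TT'_{i,-1}(\psi^\imath(x)\psi^\imath(v))=\TT'_{i,-1}(\psi^\imath(x))\TT'_{i,-1}(\psi^\imath(v))$, and finally re-apply $\psi^\imath$ together with the analogous algebra-level identity $\psi^\imath\big(\TT'_{i,-1}(y)\big)=(-1)^{?}\TT''_{i,+1}(\psi^\imath(y))$ for $y\in\Ui$ (the second displayed identity of Lemma~\ref{lem:sbar}, extended multiplicatively from the generators $B_j$ and $K_\mu$). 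The bookkeeping of the various signs $(-1)^p$ and $(-1)^{c_{ij}}$ must cancel consistently; this sign-tracking, rather than any deep structural point, is the main obstacle, and I expect it to work out precisely because the parity shift $\ov p\mapsto\ov{p+c_{ij}}$ induced by applying $B_j$ matches the sign $(-1)^{c_{ij}}$ appearing in Lemma~\ref{lem:sbar}. Once the signs are verified to cancel, \eqref{eq:split'} follows for $\bvs_\star$, and then for arbitrary $\bvs$ by transporting through $\phi_{\bvs_\star,\bvs}$ exactly as in the reduction used in Theorem~\ref{thm:split}.
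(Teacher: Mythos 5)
Your mechanism is the right one and is in fact the same mechanism the paper uses: reduce to the parameter $\bvs_\star$, and transfer Theorem~\ref{thm:split} to the $\TT''_{i,+1}$ statement via $\psi^\imath$ and Lemma~\ref{lem:sbar}. Your sign bookkeeping does close up exactly as you hoped: for $x=B_j$ and $v$ of parity $\ov{p}$, the three factors combine as $(-1)^{p+c_{ij}}\cdot(-1)^{c_{ij}}\cdot(-1)^{p}=1$, because the parity shift $\ov{p}\mapsto \ov{p+c_{ij}}$ matches the sign in Lemma~\ref{lem:sbar}.

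The genuine gap is in the scope of your conjugation. You conjugate by the \emph{module-level} involution $\psi^\imath(v)=\fX\,\psi(v)$, which exists only when $M$ is (the restriction of) an integrable $\U$-module whose weights are bounded above: you need both a bar involution $\psi$ on $M$ and a well-defined action of the quasi $K$-matrix. But Proposition~\ref{prop:split2} is asserted for arbitrary $M\in\mathcal{C}$, and $\mathcal{C}$ contains $\Ui$-modules that are not restrictions of $\U$-modules at all (e.g.\ the kernels $K_n$ in the example following Lemma~\ref{lem:integrable}). Your phrase ``then extends, as in the split case'' is therefore carrying real weight and is not spelled out: to extend, you would have to first promote your module-level identities to the identity \eqref{eq:fsplit'} in the completion of $\Ui$ (which requires a faithfulness argument for the completion acting on the restricted class of weight-bounded-above modules, in the spirit of \cref{cor:mod-inv}), and only then descend to all of $\mathcal{C}$. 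The paper avoids this entirely by conjugating one level up: $\psi^\imath$ is an algebra-level bar involution of $\Ui_{\bvs_\star}$ (extended to the completion, with no module in sight), and applying it directly to the completion identity \eqref{eq:fsplit} --- the identity that actually underlies Theorem~\ref{thm:split} --- yields \eqref{eq:fsplit'} in one line; the compatibility \eqref{eq:split'} for \emph{every} $M\in\mathcal{C}$ then follows immediately, with no boundedness hypothesis and no extension step. So the repair is simple: apply $\psi^\imath$ to \eqref{eq:fsplit} rather than to vectors of $M$. Two smaller points: your ``multiplicative extension'' of the second identity of Lemma~\ref{lem:sbar} to general $y\in\Ui$ is not a single sign but a character of the $\alpha_i$-content of $y$ (harmless here, since the standard reduction means you only ever need the generators $K_\mu$, $B_i$, $B_j$), and your computation silently uses $(\psi^\imath)^2=\mathrm{Id}$ on $M$, which is true but should be cited.
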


\begin{proof}
By a similar argument as in the proof of Theorem~\ref{thm:split}, it suffices to verify \eqref{eq:split'} when $x$ runs over a generating set for $\Ui$, for $\bvs=\bvs_\star$ and $v\in M_{\ov{\lambda}}$ for $\ov{\lambda}\in X_\imath$; set $\ov{p}=\ov{\langle h_i,\lambda\rangle}\in \Z_2$. 

When $x$ is one of $K_\mu,B_i$ for $\mu\in Y^\imath,i\in \I$, the proof of \eqref{eq:split'} is essentially the same as the proof of \eqref{eq:split} in Theorem~\ref{thm:split}.

It remains to prove \eqref{eq:split'} for $x=B_j$ with $j\neq i$. The bar involution $\psi^\imath$ exists for $\Ui$ with parameter $\bvs_\star$; cf. \S\ref{subsec:ibar}. By Lemma~\ref{lem:sbar}, we have $\psi^\imath\big( f_{\ov{p},\bvs_\star} (B_i) \big)=\tilde{f}_{\ov{p},\bvs_\star} (B_i) $. Applying $\psi^\imath$ to \eqref{eq:fsplit}, we obtain the following identity
\begin{align}\label{eq:fsplit'}
\TT''_{i,\bvs_\star,+1}(B_j) \tilde{f}_{\ov{p},\bvs_\star} (B_i) =\tilde{f}_{\ov{p+c_{ij}},\bvs_\star} (B_i) B_j.
\end{align}
This proves the proposition.
\end{proof}

\subsection{Identities in split rank two with parameter $\bvs_\dm$}
\label{sec:splitrank2}

Set the parameter $\bvs=\bvs_\dm$, and thus, $\vs_i=-q_i^{-2}$ thanks to $i=\tau i$. 

We shall use the standard roof and floor notation for integers $\lceil a \rceil$ 
and $\lfloor a \rfloor$, for $a\in \mathbb R$. 
Recall the element $b_{i,j;n,m}$ from \eqref{splitdiv1}-\eqref{splitdiv2} and \eqref{def:splitBij}, for $n,m \ge 0$. 

\begin{proposition}
\label{prop:BBij2}
Let $\alpha =-c_{ij}$, for $i\neq j\in \wI$. The following formulas hold, for $n, k\ge 0$: 
\begin{align}\notag
 &b_{i,j;n,n\alpha}\dvi{\ov{k}}{k} 
 =\sum_{x=0}^{n\alpha} q_i^{(k-x)(n\alpha-x) } \times
 \\
 &\qquad\quad \times \bigg(\sum_{y=0}^{\lceil \frac{n\alpha-x}{2} \rceil} (-1)^y q_i^{2y(\lceil \frac{n\alpha-x}{2} \rceil-1-n\alpha+x)}\qbinom{\lceil \frac{n\alpha-x}{2} \rceil}{y}_{q_i^2} \dvi{\ov{k+n \alpha}}{k-x-2y}\bigg) b_{i,j;n,n\alpha-x},
 \label{Com:qrootB2}
 \\\notag
&b_{i,j;n,n\alpha}\dvi{\ov{k+1}}{k} 
=\sum_{x=0}^{n\alpha} q_i^{(k-x)(n\alpha-x) } \times
\\
&\qquad\quad \times\bigg(\sum_{y=0}^{\lfloor \frac{n\alpha-x}{2} \rfloor} (-1)^y q_i^{2y(\lfloor \frac{n\alpha-x}{2} \rfloor -n\alpha+x)}\qbinom{\lfloor \frac{n\alpha-x}{2} \rfloor}{y}_{q_i^2} \dvi{\ov{k+1+n \alpha}}{k-x-2y}\bigg) b_{i,j;n,n\alpha-x}.
 \label{Com:qrootB3}
\end{align}
\end{proposition}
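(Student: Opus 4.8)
The statement is a pair of commutation identities in the rank-two iquantum group $\Ui$, transporting the idivided power $\dvi{\ov k}{k}$ (resp. $\dvi{\ov{k+1}}{k}$) from the right of the top root vector $b_{i,j;n,n\alpha}$ to the left at the cost of producing the lower root vectors $b_{i,j;n,n\alpha-x}$. The plan is to argue by induction on $k$, proving \eqref{Com:qrootB2} and \eqref{Com:qrootB3} simultaneously; they must be handled together because the idivided-power recursion \eqref{recursion} couples the two parities. Indeed, with $\ov p=\ov{k+1}\neq\ov k$ it gives the clean relation $B_i\dvi{\ov{k+1}}{k}=[k+1]_i\dvi{\ov{k+1}}{k+1}$, which feeds the matching-parity identity \eqref{Com:qrootB2} at level $k+1$ from \eqref{Com:qrootB3} at level $k$; whereas with $\ov p=\ov k$ it carries a correction term $q_i\vs_i[k]_i\dvi{\ov k}{k-1}$ (here $q_i\vs_i=-q_i^{-1}$, since $\bvs=\bvs_\dm$), which feeds \eqref{Com:qrootB3} at level $k+1$ from \eqref{Com:qrootB2} at levels $k$ and $k-1$. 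The base case $k=0$ is immediate: $\dvi{\ov 0}{0}=1$ and only the $x=y=0$ term survives on each right-hand side, returning $b_{i,j;n,n\alpha}$.

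For the inductive step I would realize $\dvi{\cdot}{k+1}$ through \eqref{recursion}, so that the extra factor $B_i$ may be moved leftward across $b_{i,j;n,n\alpha}$. The crucial input is the top-level specialization of the root-vector recursion \eqref{def:splitBij}: since $nc_{ij}=-n\alpha$ and $b_{i,j;n,n\alpha+1}=0$ by \eqref{SerreL:split}, one obtains the clean two-term relation
\[
B_i\, b_{i,j;n,n\alpha}=q_i^{-n\alpha}\,b_{i,j;n,n\alpha}\,B_i-q_i^{-n\alpha}\,b_{i,j;n,n\alpha-1}.
\]
Applying this together with the induction hypothesis, the power $q_i^{-n\alpha}$ should account for the exponent shift $(k-x)(n\alpha-x)$ in the prefactor, while the correction term from \eqref{recursion} combines with the $b_{i,j;n,n\alpha-1}$ contribution to reorganize into the inner $y$-sum over $\qbinom{\lceil \frac{n\alpha-x}{2}\rceil}{y}_{q_i^2}$ (resp. the floor version in \eqref{Com:qrootB3}).

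The main obstacle is twofold. First, the element $b_{i,j;n,n\alpha-1}$ produced by the commutation is genuinely new, and upon iterating one meets all the lower root vectors $b_{i,j;n,n\alpha-x}$; one checks that the induction does not close within the two displayed identities alone, since expressing $b_{i,j;n,n\alpha-1}$ back through $b_{i,j;n,n\alpha}$ and $B_i$ merely returns a tautology. I would therefore strengthen the induction hypothesis to the whole family of commutation relations for $b_{i,j;n,m}$, $0\le m\le n\alpha$, using the full three-term recursion \eqref{def:splitBij}; alternatively one may expand $b_{i,j;n,n\alpha}$ via its defining formula \eqref{splitdiv1}--\eqref{splitdiv2} and commute idivided powers of $B_i$ directly. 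Second, and this is where the real effort lies, after substitution the identity reduces to a family of $q$-binomial identities in $q_i^2$ relating the coefficients at levels $k$, $k-1$, $k+1$, with the ceiling/floor functions and the distinction between the parities $\ov{k+n\alpha}$ and $\ov{k+1+n\alpha}$ forcing a case analysis according to the parity of $n\alpha-x$. These are of $q$-Pascal / $q$-Vandermonde type, and I expect each to succumb to a short elementary induction once isolated from the algebra.

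Finally, summing \eqref{Com:qrootB2} against the coefficients defining $f_{\ov p,\bvs_\dm}(B_i)$ in \eqref{def:fbvs} is precisely what yields the intertwining identity \eqref{eq:fsplit2} used in Theorem~\ref{thm:split}; it is prudent to first verify the two formulas by hand in the smallest case $n\alpha=1$ as a consistency check before attempting the general $q$-binomial recombination.
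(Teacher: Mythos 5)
Your skeleton matches the paper's proof: induction on $k$ with the two identities interleaved, base case $k=0$, the clean-parity case of \eqref{recursion} feeding \eqref{Com:qrootB2}$_{k+1}$ from \eqref{Com:qrootB3}$_{k}$, the mixed-parity case feeding \eqref{Com:qrootB3}$_{k+1}$, and a final reduction to $q$-binomial identities split by the parity of $n\alpha-x$. (One bookkeeping slip: since $\dvi{\ov{k}}{k-1}$ carries the parity label $\ov{(k-1)+1}$, the second implication uses \eqref{Com:qrootB2}$_{k}$ together with \eqref{Com:qrootB3}$_{k-1}$, not ``\eqref{Com:qrootB2} at levels $k$ and $k-1$''.) However, the core of your inductive step --- pulling $B_i$ across $b_{i,j;n,n\alpha}$ first via the (correct) two-term relation $B_i b_{i,j;n,n\alpha}=q_i^{-n\alpha}b_{i,j;n,n\alpha}B_i-q_i^{-n\alpha}b_{i,j;n,n\alpha-1}$ --- creates the term $b_{i,j;n,n\alpha-1}\dvi{\ov{k+1}}{k}$, about which the induction hypothesis says nothing; as you yourself observe, the induction does not close on this route. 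The repairs you offer are not carried out and are substantially harder than necessary: a strengthened hypothesis covering $b_{i,j;n,m}\dvi{\ov{p}}{k}$ for all $0\le m\le n\alpha$ cannot take the displayed lowering-only shape, because for $m<n\alpha$ commuting with $B_i$ produces $b_{i,j;n,m+1}$ as well as $b_{i,j;n,m-1}$, so one would need a two-sided family of formulas that you neither state nor prove; and expanding $b_{i,j;n,n\alpha}$ via \eqref{splitdiv1}--\eqref{splitdiv2} and commuting idivided powers brute-force has no evident structure.

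The missing idea is a reassociation of the product. Write $b_{i,j;n,n\alpha}\dvi{\ov{k+1}}{k+1}=\frac{1}{[k+1]_i}\bigl(b_{i,j;n,n\alpha}\dvi{\ov{k+1}}{k}\bigr)B_i$ and apply the induction hypothesis to the bracket \emph{before} commuting anything. Then the only commutations needed are of the form $b_{i,j;n,n\alpha-x}B_i$ for the lower root vectors already produced on the right-hand side, and these require no induction: rearranging the defining recursion \eqref{def:splitBij} gives, for every $m\ge 0$,
\begin{align*}
b_{i,j;n,m}B_i = q_i^{2m-n\alpha}B_ib_{i,j;n,m}-[m+1]_i q_i^{2m-n\alpha}b_{i,j;n,m+1}+[n\alpha-m+1]_i b_{i,j;n,m-1},
\end{align*}
which is exactly the paper's Lemma~\ref{lem:BB1}. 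The factors $B_i$ that land on the left are then absorbed into the idivided powers by \eqref{recursion}, and everything recombines, as you anticipate, through $q$-Pascal type identities in $q_i^2$ with a case analysis on the parity of $n\alpha-x$; this recombination is the bulk of the paper's appendix. With this ordering the induction closes within the two displayed identities and no strengthening is needed; your proposal as written never reaches this point, so the inductive step has a genuine gap.
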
 
 
The long and difficult proof for Proposition~\ref{prop:BBij2} is postponed to Appendix \ref{sec:proof}. 

By definition \eqref{def:fbvs}, we have
\begin{align}
    \label{f_dm}
    f_{\ov{0},\bvs_\dm} (B_i)=\sum_{k=0}^\infty \dvi{\ev}{2k},
    \qquad
    f_{\ov{1},\bvs_\dm} (B_i)=\sum_{k=0}^\infty \dvi{\odd}{2k+1}.
\end{align}
The formula in the next proposition for $n=1$ completes the proof of Theorem \ref{thm:split}, and its proof relies essentially on \cref{prop:BBij2}.

\begin{proposition}\label{prop:BBij0}
Let $\alpha =-c_{ij}$, for $i\neq j\in \wI$. Then for any $n\ge 1$, we have 
\begin{align*}
 b_{i,j;n,n\alpha} f_{\ov{p},\bvs_\dm} (B_i) =  f_{\ov{p+n\alpha},\bvs_\dm}(B_i) b_{i,j;n,0}.
\end{align*}
\end{proposition}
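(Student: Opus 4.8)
The plan is to work throughout at the distinguished parameter $\bvs=\bvs_\dm$ (which is what the statement assumes) and to expand the left-hand side by the commutation formula \eqref{Com:qrootB2} of Proposition~\ref{prop:BBij2}. Since $\vs_i=-q_i^{-2}$ forces $q_i^2\vs_i=-1$, the operator $f_{\ov p,\bvs_\dm}(B_i)$ is the bare sum of matched-parity idivided powers recorded in \eqref{f_dm}, so that
\[
b_{i,j;n,n\alpha}\,f_{\ov p,\bvs_\dm}(B_i)=\sum_{\substack{k\ge 0\\ \ov k=\ov p}} b_{i,j;n,n\alpha}\,\dvi{\ov k}{k}.
\]
I would then substitute \eqref{Com:qrootB2} into each summand. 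Because every idivided power occurring in $f_{\ov p,\bvs_\dm}(B_i)$ has its parity label equal to its power, only \eqref{Com:qrootB2} is needed here (never the companion formula \eqref{Com:qrootB3}).

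Next I would interchange the finite sum over $x$ with the sum over $k$ and, setting $m=n\alpha-x$, collect the total left coefficient of each $b_{i,j;n,m}$. The leading term $x=n\alpha$ (that is, $m=0$) is immediate: the outer factor is $q_i^{0}=1$, the inner sum over $y$ collapses to its single term $y=0$, and one is left with $\sum_{k}\dvi{\ov{p+n\alpha}}{k-n\alpha}\,b_{i,j;n,0}$. Reindexing by $l=k-n\alpha$, using $\dvi{\cdot}{l}=0$ for $l<0$ and $\ov l=\ov{p+n\alpha}$, identifies this contribution with $f_{\ov{p+n\alpha},\bvs_\dm}(B_i)\,b_{i,j;n,0}$, which is exactly the right-hand side since $b_{i,j;n,0}=X_{j,n}$.

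The heart of the argument is to show that every remaining term, with $1\le m\le n\alpha$, vanishes. For fixed $m$ I would reindex the surviving idivided power by its power $l=k-x-2y$; for a fixed $l$ the contributions run over $y=0,1,\dots,\lceil m/2\rceil$ through $k=l+x+2y$. After collecting the powers of $q_i$ from the outer factor $q_i^{(k-x)(n\alpha-x)}$ and the inner factor of \eqref{Com:qrootB2}, the coefficient of $\dvi{\ov{p+n\alpha}}{l}\,b_{i,j;n,m}$ should factor as a power of $q_i$ times an alternating Gauss sum
\[
\sum_{y=0}^{\lceil m/2\rceil}(-1)^y q_i^{\,y(y-1)}\qbinom{\lceil m/2\rceil}{y}_{q_i^2}
=\prod_{r=0}^{\lceil m/2\rceil-1}\bigl(1-q_i^{2r}\bigr),
\]
which vanishes for $m\ge 1$ because the factor with $r=0$ is $1-1=0$. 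Hence each such coefficient is zero, all terms with $m\ge 1$ drop out, and the identity follows. The case $n=1$ then supplies \eqref{eq:fsplit2} and completes the proof of Theorem~\ref{thm:split}.

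I expect the main obstacle to be precisely the bookkeeping in this last step: one must carry the parity constraint $\ov k=\ov p$ through the reindexing $l=k-x-2y$, determine which $y$ survive for a given $l$, and — most delicately — organize the several $q_i$-exponents coming from \eqref{Com:qrootB2} so that the residual $y$-dependence assembles into the quadratic shape $q_i^{y(y-1)}$ needed to invoke the vanishing Gauss identity above; a naive collection of exponents lands on a linear-in-$y$ power and does \emph{not} vanish, so recognizing the correct quadratic arrangement is the crux. A pleasant feature is that the mismatched-parity idivided powers $\dvi{\ov{p+n\alpha}}{l}$ with $\ov l\ne\ov{p+n\alpha}$ (which arise for odd $m$) require no separate treatment: their coefficients are killed by the very same Gauss sum, so the two parities can be handled uniformly once the exponents are correctly arranged.
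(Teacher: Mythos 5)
Your overall strategy is the same as the paper's: expand $b_{i,j;n,n\alpha}\,f_{\ov{p},\bvs_\dm}(B_i)$ via \eqref{Com:qrootB2} (and you are right that only the matched-parity formula is needed), identify the $x=n\alpha$ term with $f_{\ov{p+n\alpha},\bvs_\dm}(B_i)\,b_{i,j;n,0}$, and kill every term with $m=n\alpha-x\ge 1$ by an alternating $q$-binomial sum. The gap is in your crux step, and it is twofold. First, the identity you invoke,
\begin{align*}
\sum_{y=0}^{N}(-1)^y q_i^{\,y(y-1)}\qbinom{N}{y}_{q_i^2}
=\prod_{r=0}^{N-1}\bigl(1-q_i^{2r}\bigr),
\qquad N=\lceil m/2\rceil,
\end{align*}
is false for the balanced $q$-binomials used throughout this paper: already at $N=2$ the left-hand side is $1-(q_i^2+q_i^{-2})+q_i^2=1-q_i^{-2}\neq 0$, while the right-hand side is $0$. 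The quadratic-exponent Gauss sum equals such a product only for the \emph{unbalanced} binomials $\frac{(q;q)_N}{(q;q)_y(q;q)_{N-y}}$; conflating the two conventions is the error. Second, your closing warning is exactly backwards: the ``naive'' collection of exponents is what works. Writing $d=k-x-2y$ and $m=n\alpha-x$, the $q_i$-exponent coming from \eqref{Com:qrootB2} collects to
\begin{align*}
(k-x)m+2y\bigl(N-1-m\bigr)=dm+2y(N-1),
\end{align*}
which is linear in $y$, so the coefficient of $\dvi{\ov{p+n\alpha}}{d}\,b_{i,j;n,m}$ is
$\delta_{\ov{d},\ov{p-x}}\,q_i^{dm}\sum_{y=0}^{N}(-1)^y q_i^{2y(N-1)}\qbinom{N}{y}_{q_i^2}$,
and this sum \emph{does} vanish for all $N\ge 1$ by the standard identity \cite[1.3.4]{Lus93} with $v=q_i^{\pm2}$ (equivalently, $\sum_{y}(-1)^y v^{y(N-1)}\qbinom{N}{y}_v=\prod_{r=0}^{N-1}(1-v^{2r})$, whose $r=0$ factor is zero).

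So as written your proof fails at the vanishing step: the coefficient never assembles into your quadratic shape, and the quadratic sum would not vanish even if it did. The repair is simply to keep the linear-in-$y$ exponent that the bookkeeping hands you and cite the linear-exponent Gauss identity; with that replacement your argument goes through verbatim and coincides with the paper's proof (your treatment of the leading term $x=n\alpha$ and your observation that both parities of output idivided powers are killed uniformly are both correct).
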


\begin{proof}
By Proposition~\ref{prop:BBij2}, we have 
\begin{align}
 \label{Com:qrootf0}
 b_{i,j;n,n\alpha} f_{\ov{p},\bvs_\dm} (B_i) 
 &= \sum_{x\ge 0} \xi_{n,n\alpha, x,\ov{p}} b_{i,j;n,n\alpha-x},
\end{align}
where
\begin{align}
 \label{eq:Aax}
\xi_{n,n\alpha, x,\ov{p}}  
& = \sum_{k:\:\ov{k}=\ov{p}}  \sum_{y=0}^{\lceil \frac{n\alpha-x}{2} \rceil} (-1)^y q_i^{(k-x)(n\alpha-x) +2y(\lceil \frac{n\alpha-x}{2} \rceil-1-n\alpha+x)} \qbinom{\lceil \frac{n\alpha-x}{2} \rceil}{y}_{q_i^2} \dvi{\ov{p+n \alpha}}{k-x-2y}. 
\end{align}
Setting $d=k -x -2y$ above, we rewrite the $q_i$-power in \eqref{eq:Aax} as 
\[
(k-x)(n\alpha-x) +2y(\lceil \frac{n\alpha-x}{2} \rceil -1-n\alpha+x)
=  d(n\alpha -x) + 2y \big( \lceil \frac{n\alpha-x}{2} \rceil -1 \big).
\]
The coefficient of $\dvi{\ov{p+n \alpha}}{d}$ in $\xi_{n,n\alpha, x,\ov{p}}$ for any $d\ge 0$, denoted by $[\dvi{\ov{p+n \alpha}}{d}] \xi_{n,n\alpha, x,\ov{p}}$, is given by
\begin{align*}
[\dvi{\ov{p+n \alpha}}{d}] \xi_{n,n\alpha, x,\ov{p}}  
&= \delta_{\ov{d},\ov{p-x}}\sum_{y=0}^{\lceil \frac{n\alpha-x}{2} \rceil} 
(-1)^y q_i^{d(n\alpha -x) + 2y \big( \lceil \frac{n\alpha-x}{2} \rceil -1 \big)} \qbinom{\lceil \frac{n\alpha-x}{2} \rceil}{y}_{q_i^2} 
\\
&=\delta_{\ov{d},\ov{p-x}} q_i^{d(n\alpha -x)} \sum_{y=0}^{\lceil \frac{n\alpha-x}{2} \rceil} 
(-1)^y (q_i^{-2})^{y \big(1- \lceil \frac{n\alpha-x}{2} \rceil  \big)} \qbinom{\lceil \frac{n\alpha-x}{2} \rceil}{y}_{q_i^{-2}}
\\
&=
\begin{cases}
0 & \text{ if }x < n\alpha
\\
\delta_{\ov{d},\ov{p-n\alpha}} & \text{ if }x = n\alpha,
\end{cases}
\end{align*}
where the last equality follows by a standard quantum binomial identity; cf. \cite[1.3.4]{Lus93} with $v=q_i^{-2}$. 

Summarizing, we can rewrite \eqref{eq:Aax} as 
\begin{align*}
\xi_{n,n\alpha, x,\ov{p}} =  
     \begin{cases}
0 & \text{ if }x < n\alpha,
\\
\sum_{d:\ov{d}=\ov{p-n\alpha}} \dvi{\ov{p+n \alpha}}{d} & \text{ if } x = n\alpha,
\end{cases}
\end{align*}
and hence rewrite \eqref{Com:qrootf0} as 

\[
b_{i,j;n,n\alpha} f_{\ov{p},\bvs_\dm} (B_i) = \sum\limits_{d:\:\ov{d}=\ov{p+n \alpha}} \dvi{\ov{p+n \alpha}}{d} \cdot b_{i,j;n,0} \overset{\eqref{f_dm}}{=} f_{\ov{p+n \alpha},\bvs_\dm}(B_i) b_{i,j;n,0}, 
\]
proving the proposition.
\end{proof}

%
%

\subsection{Canonical basis and icanonical basis in split rank one}

In this subsection, we restrict ourselves to the rank one quantum group $\U$ and the split rank one iquantum group $\Ui=\Q(q) [B]$ with $\vs =q^{-1}$, i.e., 
\begin{align*}
B=F+q^{-1} E K^{-1}.
\end{align*}
We then omit the first subindex $i$ for notations $E_i, F_i, T'_{i,e}, T''_{i,e}$ in this subsection. 

 Let $L(n)$ be the irreducible highest weight module over $\U=\U(\sl_2)$ with the highest weight $n\varpi$ and a highest weight vector $\eta =\eta_n$. The canonical basis for the $\U$-module $L(n)$ is  
$\mathcal{CB}:=\{F^{(m)} \eta\mid 0\le m\le n \}.$
 
 Set $v_k=F^{(k)}\eta$ if $0\leq k\leq n$ and $v_k=0$ otherwise. We have
 \begin{align}
 F v_k  =
 \begin{cases}
 [k+1]v_{k+1},& \text{ if } k<n\\
 0,& \text{ if } k=n,
 \end{cases}
 \qquad
 E v_k=
 \begin{cases}
 0,& \text{ if } k=0\\
 [n+1-k]v_{k-1},& \text{ if } k>0.
 \end{cases}
 \end{align}
 By induction, we have that, for $m\geq 0$, 
 \begin{align}
 F^{(m)}v_k=\qbinom{m+k}{m}v_{k+m},\qquad  E^{(m)} v_k =\qbinom{n+m-k}{m} v_{k-m}.
 \end{align}
 
Recall the idivided powers from \eqref{def:idv}, and here we will drop the indices $i$ and $\bvs$ to write $B^{(m)}_{\ov{a}}$, for $\ov{a}\in \Z_2$. There exists a bar involution $\psi^\imath$ on $\Ui$ which fixes $B$. 
The icanonical basis for $L(n)$ is (see \cite[Theorems 2.10, 3.6]{BeW18}, which proves a conjecture in \cite{BW18a})
 \[
\imath\mathcal{CB}:=\{B_{\ov{n}}^{(m)}\eta \mid 0\le m\le n\}.
 \]

We recall from \cite{BeW18} the formulas for the icanonical basis elements as a linear combination of $\mathcal{CB}$. A notation $\cbinom{m-\lambda-c}{c}$ was used {\em loc. cit.}, which is replaced here by a standard $q$-binomial notation thanks to
$\cbinom{m-\lambda-c}{c} = q^{2(m-\lambda)c} \qbinom{c+\lambda-m}{c}_{q^2}.$

 \begin{lemma} \cite[(2.16)-(2.17),(3.8)-(3.9)]{BeW18}
   \label{lem:BeW}
The following identities hold in $L(n)$, for $n,\ell \in \N$:
\begin{align}
\dv{\ov{n}}{n-2\ell}  \eta
& = \sum_{c=0}^{\lfloor \frac{n}2 \rfloor -\ell} q^{-2c^2-(2\ell -1)c}  \qbinom{\ell +c}{c}_{q^2}
F^{(n-2\ell-2c)}  \eta,
 \label{BviaF} \\
\dv{\ov{n}}{n-1-2\ell}   \eta
& = \sum_{c=0}^{\lfloor \frac{n-1}2 \rfloor -\ell} q^{-2c^2-(2\ell +1)c}  \qbinom{\ell +c}{c}_{q^2}
F^{(n-1-2\ell-2c)}  \eta.
\label{BviaF1}
\end{align}
%
\end{lemma}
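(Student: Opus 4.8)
The plan is to give a self-contained inductive proof, reconstructing the argument of \cite{BeW18}. The starting point is that in the present normalization $\vs=q^{-1}$ one has $q_i\vs_i=1$, so the recursion \eqref{recursion} for the idivided powers collapses to
\begin{equation*}
B\,\dv{\ov{n}}{m} = [m+1]\,\dv{\ov{n}}{m+1} + \delta_{\ov{m},\ov{n}}\,[m]\,\dv{\ov{n}}{m-1},
\end{equation*}
valid as operators and hence on $\eta$. Writing $\Phi(m):=\dv{\ov{n}}{m}\eta$ and solving for the top term, $[m+1]\Phi(m+1)=B\Phi(m)-\delta_{\ov{m},\ov{n}}[m]\Phi(m-1)$; since $[0]=0$, the single initial value $\Phi(0)=\eta$ determines all $\Phi(m)$ for $0\le m\le n$ recursively.

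The second ingredient is the explicit action of $B=F+q^{-1}EK^{-1}$ on the canonical basis $v_k=F^{(k)}\eta$. Using the standard $\sl_2$ formulas together with $K^{-1}v_k=q^{2k-n}v_k$, one gets
\begin{equation*}
B v_k = [k+1]\,v_{k+1} + q^{2k-n-1}[n-k+1]\,v_{k-1}.
\end{equation*}
Denote by $\Psi(m)$ the right-hand side of \eqref{BviaF} when $\ov{m}=\ov{n}$ and of \eqref{BviaF1} when $\ov{m}\ne\ov{n}$. The plan is to check that $\Psi(0)=\eta$ and that $\Psi$ obeys the same recursion as $\Phi$; by the uniqueness just noted this forces $\Phi=\Psi$, which is the claim. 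The induction runs on $m$ (equivalently downward on $\ell$), and the two formulas are proved simultaneously, since consecutive values of $m$ alternate parity and the term $\delta_{\ov{m},\ov{n}}$ is present exactly at the transition from \eqref{BviaF} to \eqref{BviaF1}.

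Carrying out the recursion step, I would expand $B\Psi(m)$ using the displayed $B$-action, regroup the $v_{m+1-2j}$ components, subtract $\delta_{\ov{m},\ov{n}}[m]\Psi(m-1)$, and match the coefficient of each $v_{m+1-2j}$ against $[m+1]$ times the corresponding coefficient of $\Psi(m+1)$. This reduces the entire statement to a single $q$-binomial identity relating $\qbinom{\ell+j}{j}_{q^2}$, $\qbinom{\ell+j-1}{j}_{q^2}$ and $\qbinom{\ell+j-1}{j-1}_{q^2}$ with the appropriate powers of $q$ together with the brackets $[m+1-2j]$ and $[2\ell+2j-1]$ arising from $B v_k$. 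Verifying this weighted $q$-Pascal relation is the main obstacle: the bookkeeping of the $q^{-2c^2}$-type exponents against the $q^{2k-n-1}$ produced by $K^{-1}$ is delicate, and one must track the summation ranges (the floors $\lfloor n/2\rfloor-\ell$) carefully at the boundary $j=0$ and at the top level $m=n$. A more conceptual alternative would be to show directly that $\Psi(m)$ is fixed by $\psi^\imath$ and congruent to $v_m$ modulo $q^{-1}\Z[q^{-1}]$-combinations of lower $v_k$, and then invoke the uniqueness characterization of the icanonical basis; this bypasses the explicit identity but requires computing the $\psi^\imath$-action through the quasi $K$-matrix.
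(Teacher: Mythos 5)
The first thing to note is that the paper itself contains no proof of this lemma: it is quoted wholesale from [BeW18] (equations (2.16)--(2.17) and (3.8)--(3.9) of that reference), so there is no internal argument to compare your proposal against. That said, your reconstruction is sound and, in substance, it is the inductive derivation one would give. All your preparatory steps check out: with $\vs=q^{-1}$ one has $q\vs=1$, so \eqref{recursion} collapses exactly as you say; $Bv_k=[k+1]v_{k+1}+q^{2k-n-1}[n-k+1]v_{k-1}$ is the correct action; $\Phi(0)=\Psi(0)=\eta$; and since $[m+1]\neq 0$, the recursion plus this initial value pins down $\Phi(m)$ for $0\le m\le n$. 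Your worry about the summation ranges is also harmless: the upper limits $\lfloor n/2\rfloor-\ell$ and $\lfloor (n-1)/2\rfloor-\ell$ are exactly $\lfloor m/2\rfloor$, so both sums may be taken over all $c\ge 0$ with the convention $v_k=0$ for $k<0$, and coefficient matching is legitimate by linear independence of the $v_k$. The one step you leave undone --- the ``weighted $q$-Pascal relation'' --- does go through, and it is not ad hoc. In the one-term step (from \eqref{BviaF1} at $m$ to \eqref{BviaF} at $m+1$), matching the coefficient of $v_{m+1-2j}$ reduces, after cancelling the common $q$-power, to
\begin{equation*}
[2j]\qbinom{\ell+j}{j}_{q^2}=[2\ell+2j]\qbinom{\ell+j-1}{j-1}_{q^2},
\end{equation*}
which is precisely Lemma~\ref{lem:qbinom1} from the paper's appendix. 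In the two-term step (from \eqref{BviaF} at $m=n-2\ell$ to \eqref{BviaF1} at $m+1$), the match reads
\begin{equation*}
\qbinom{\ell+j}{j}_{q^2}[m+1-2j]+q^{m}[2\ell+2j-1]\qbinom{\ell+j-1}{j-1}_{q^2}
=[m+1]\qbinom{\ell+j-1}{j}_{q^2}+q^{2\ell+2j-1}[m]\qbinom{\ell+j-1}{j-1}_{q^2},
\end{equation*}
and this follows from the $q$-Pascal identity $\qbinom{\ell+j}{j}_{q^2}=q^{-2j}\qbinom{\ell+j-1}{j}_{q^2}+q^{2\ell}\qbinom{\ell+j-1}{j-1}_{q^2}$, the bracket identity $q^{a}[b]-q^{b}[a]=[b-a]$, and the variant $[2j]\qbinom{\ell+j-1}{j}_{q^2}=[2\ell]\qbinom{\ell+j-1}{j-1}_{q^2}$ (both sides equal $[2][j]_{q^2}\qbinom{\ell+j-1}{j}_{q^2}$). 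So your plan is complete once these two displays are verified; no step fails. Your alternative route via $\psi^\imath$-invariance and the lattice characterization of the icanonical basis would also work, but it requires the quasi $K$-matrix machinery and is heavier than the elementary induction.
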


It is remarkable that explicit inverse formulas to the formulas \eqref{BviaF}--\eqref{BviaF1} exist.
\begin{proposition}\label{prop:inverse}
The following identities hold in $L(n)$, for $n, \ell \in \N$:
\begin{align}
F^{(n-2\ell)}  \eta
& = \sum_{c=0}^{\lfloor \frac{n}2 \rfloor -\ell} (-1)^c q^{-(2\ell +1)c}  \qbinom{\ell +c}{c}_{q^2}
\dv{\ov{n}}{n-2\ell-2c}  \eta,
\label{FviaB}
  \\
F^{(n-1-2\ell)}  \eta
& = \sum_{c=0}^{\lfloor \frac{n-1}2 \rfloor -\ell} (-1)^c q^{-(2\ell +3)c}  \qbinom{\ell +c}{c}_{q^2}
\dv{\ov{n}}{n-1-2\ell-2c}  \eta.
\label{FviaB1}
\end{align}
\end{proposition}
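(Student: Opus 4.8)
The plan is to prove \eqref{FviaB}--\eqref{FviaB1} by showing that the claimed expansions are precisely inverse to the expansions \eqref{BviaF}--\eqref{BviaF1} of Lemma~\ref{lem:BeW}. Both pairs of formulas are transition relations between the bases $\{F^{(n-2\ell)}\eta\}$ and $\{\dv{\ov{n}}{n-2\ell}\eta\}$ (and their odd analogues), and each transition is unitriangular in the shift parameter $\ell$ (the $c=0$ term in \eqref{BviaF} has coefficient $1$), so its inverse is uniquely determined and it suffices to substitute one formula into the other and check the composite is the identity. Concretely, for the even case I would plug \eqref{BviaF} for $\dv{\ov{n}}{n-2\ell-2c}\eta$ (i.e.\ \eqref{BviaF} with $\ell$ replaced by $\ell+c$) into the right-hand side of \eqref{FviaB}, interchange the two finite sums, and read off the coefficient of $F^{(n-2\ell-2d)}\eta$ for each $d\ge 0$, where $d=c+c'$ records the total shift. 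The goal is then to verify that this coefficient equals $\delta_{d,0}$.

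After collecting terms, the coefficient of $F^{(n-2\ell-2d)}\eta$ is a single sum over $c$ of a power of $q$ times $\qbinom{\ell+c}{c}_{q^2}\qbinom{\ell+d}{d-c}_{q^2}$. First I would apply the elementary identity $\qbinom{\ell+c}{c}_{q^2}\qbinom{\ell+d}{d-c}_{q^2}=\qbinom{\ell+d}{d}_{q^2}\qbinom{d}{c}_{q^2}$ to pull the $c$-independent factor $\qbinom{\ell+d}{d}_{q^2}$ out of the sum. A direct expansion of the exponent $-(2\ell+1)c-2(d-c)^2-(2\ell+2c-1)(d-c)$ shows that its $c^2$-contribution cancels, leaving an exponent linear in $c$ of the form $2(d-1)c$ (up to a $c$-independent shift $-2d^2-(2\ell-1)d$ that merges with $\qbinom{\ell+d}{d}_{q^2}$ and is irrelevant once $d=0$); the identical reduction, with the shift $-2d^2-(2\ell+1)d$, occurs in the odd case starting from \eqref{BviaF1}. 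Thus the whole problem reduces to the single $q$-binomial identity
\[
\sum_{c=0}^{d}(-1)^{c}\,q^{2(d-1)c}\,\qbinom{d}{c}_{q^2}=\prod_{j=0}^{d-1}\bigl(1-q^{4j}\bigr)=\delta_{d,0},
\]
which follows from the $q$-binomial theorem (cf.\ \cite[1.3.4]{Lus93} with $v=q^{2}$): converting the symmetric Gaussian binomial to the asymmetric one via $\qbinom{d}{c}_{q^2}=q^{-2c(d-c)}\binom{d}{c}_{q^4}$ restores the exponent $q^{4\binom{c}{2}}$, and the factor from $j=0$ kills the product whenever $d\ge 1$.

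I expect the only genuine obstacle to be the exponent bookkeeping in the second step: one must carefully expand the three quadratic contributions and confirm the cancellation of the $c^2$-term, since it is exactly this cancellation that makes the sum collapse, after the symmetric-to-asymmetric conversion, to a pure application of the $q$-binomial theorem. Once \eqref{FviaB} is established, the odd formula \eqref{FviaB1} follows by the same substitution using \eqref{BviaF1} in place of \eqref{BviaF}, because both cases produce the same reduced sum over $c$; alternatively, both identities are subsumed by the remark that $\{\dv{\ov{n}}{m}\eta\}$ and $\{F^{(m)}\eta\}$ are related by a unitriangular matrix whose inverse is unique, so checking the one-sided composite above suffices.
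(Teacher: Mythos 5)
Your proof is correct, and it follows the same broad strategy as the paper — verify that the two explicit unitriangular expansions are mutually inverse by substituting one into the other and collapsing the resulting double sum with a $q$-binomial evaluation — but the execution differs in two genuine ways. First, you compose in the opposite order: you substitute the \emph{known} expansion \eqref{BviaF} (Lemma~\ref{lem:BeW}, with $\ell$ shifted to $\ell+c$) into the right-hand side of the \emph{claimed} formula \eqref{FviaB}, so your argument is a direct chain of vector equalities ending in $F^{(n-2\ell)}\eta$; strictly speaking you never need the uniqueness/unitriangularity remark you invoke. The paper instead substitutes the claimed \eqref{FviaB} into the right-hand side of \eqref{BviaF}, which only shows the claimed matrix is a one-sided inverse, and therefore does rely on unitriangularity to conclude. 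Second, the combinatorial core is different: after the trinomial-revision identity $\qbinom{\ell+c}{c}_{q^2}\qbinom{\ell+d}{d-c}_{q^2}=\qbinom{\ell+d}{d}_{q^2}\qbinom{d}{c}_{q^2}$ and the cancellation of the $c^2$-terms in the exponent (your bookkeeping $-(2\ell+1)c-2(d-c)^2-(2\ell+2c-1)(d-c)=2(d-1)c-2d^2-(2\ell-1)d$ is right, as is the odd-case shift $-2d^2-(2\ell+1)d$), you reduce to the alternating sum $\sum_{c}(-1)^c q^{2(d-1)c}\qbinom{d}{c}_{q^2}=\delta_{d,0}$, i.e.\ \cite[1.3.4]{Lus93} at $v=q^2$, whereas the paper flips $\qbinom{\ell+c}{c}_{q^2}$ to a negative-argument binomial via $\qbinom{c+\lambda-m}{c}_{q^2}=(-1)^c\qbinom{m-\lambda-1}{c}_{q^2}$ and then applies the $q$-Vandermonde convolution \eqref{eq:1.3.1} to produce $\qbinom{k-1}{k}_{q^2}=\delta_{k,0}$. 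Your route is arguably the more elementary of the two (no negative upper arguments, and the $\delta_{d,0}$ identity is the same one the paper itself uses in the proof of Proposition~\ref{prop:BBij0}); the paper's route keeps both binomials intact and trades the trinomial revision for Vandermonde. Both are complete proofs of \eqref{FviaB}--\eqref{FviaB1}.
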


\begin{proof}
The proofs of these 2 formulas are entirely similar, and we shall only provide the details for \eqref{FviaB}. Thanks to \cref{lem:BeW}, the transition matrix from $\mathcal{CB}$ to $\imath\mathcal{CB}$ is uni-triangular. 

Recall the following standard quantum binomial identity (cf. \cite[1.3.1(e)]{Lus93} and we have replaced $v$ therein by $q^2$): for $k\in \N$ and $x,y\in \Z$,
\begin{align}  \label{eq:1.3.1}
\sum_{a+c=k}
{(q^2)}^{a x -cy}
\qbinom{x}{c}_{q^2} \qbinom{y}{a}_{q^2}
= \qbinom{x+y}{k}_{q^2}.
\end{align}

We show that \eqref{FviaB} provides the inverse formula for \eqref{BviaF} by plugging the formula \eqref{FviaB} into RHS \eqref{BviaF} and simplifying:
\begin{align*}
\text{RHS } \eqref{BviaF}
&= \sum_{c=0}^{\lfloor \frac{n}2 \rfloor -\ell} \sum_{a=0}^{\lfloor \frac{n}2 \rfloor -\ell-c}
q^{-2c^2-(2\ell-1)c}  \qbinom{\ell+c}{c}_{q^2}
\\
&\qquad\qquad\qquad \times (-1)^a q^{-(2\ell+2c+1)a}  \qbinom{\ell+c+a}{a}_{q^2}
\dv{\ev}{n-2\ell-2c-2a}  \eta
\\
& \stackrel{(*)}{=} \sum_{k=0}^{\lfloor \frac{n}2 \rfloor -\ell}  (-1)^{k} q^k   \\
&\qquad
\times \sum_{a+c=k}
{(q^2)}^{a (-\ell-1) -c(\ell+k)}  
\qbinom{-\ell-1}{c}_{q^2} \qbinom{\ell+k}{a}_{q^2}
\dv{\ev}{n-2\ell-2k}  \eta
\\
& \stackrel{(**)}{=} \sum_{k=0}^{\lfloor \frac{n}2 \rfloor -\ell}  (-1)^{k} q^k  \qbinom{k-1}{k}_{q^2} \dv{\ev}{n-2\ell-2k}  \eta
\\
&=  \dv{\ev}{n-2\ell} \eta = \text{LHS } \eqref{BviaF} ,
\end{align*}
where we used $\qbinom{c+\lambda-m}{c}_{q^2} =(-1)^c \qbinom{m-\lambda-1}{c}_{q^2}$ for (*), and used \eqref{eq:1.3.1} for (**).
Therefore, the identity \eqref{FviaB} is valid.
\end{proof}
Only the special case for $\ell=0$ of the formula \eqref{FviaB} will be needed in a later section, but we are not aware of any simpler way to prove this special case directly. 


\subsection{Relation with Lusztig symmetries}
\label{sec:rk1-mod}

Recalling that $i=\tau i$, we have $\bs_i=s_i$. We can view any $\U$-module as a $\Ui$-module by restriction. Recall from \eqref{braid_rescale}
the normalized Lusztig symmetries $T'_{i,\bvs, -1}$ on $\U$.
  
\begin{theorem} \label{thm:res-split}
Let $i\in \I$ such that $i=\tau i$ and $\bvs$ be a balanced parameter. Let $M$ be an integrable $\U$-module whose weights are bounded above. For any $v\in M$, we have
\begin{align}
\label{eq:sTmod}
    \TT'_{i,-1}(v)=\fX_{i,\bvs} T'_{i,\bvs,-1}(v),\qquad
    \TT''_{i,+1}(v)= T''_{i,\bvs,+1}(\fX_{i,\bvs}^{-1} v).
\end{align} 
\end{theorem}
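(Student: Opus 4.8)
The plan is to reduce both operator identities in \eqref{eq:sTmod} to a single check on a highest weight vector of a simple $\U(\sl_2)$-module, and then to match the two sides using the transition formula \eqref{FviaB} of \cref{prop:inverse} between the canonical and icanonical bases.

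Since $i=\tau i$, every operator occurring in \eqref{eq:sTmod} --- the explicit $\TT'_{i,-1},\TT''_{i,+1}$ from \eqref{eq:ibraid1}--\eqref{eq:ibraid1'}, the rank one quasi $K$-matrix $\fX_{i,\bvs}$, and the normalized Lusztig symmetries of \eqref{braid_Urescale}--\eqref{braid_rescale} --- depends only on $E_i,F_i,K_i$ and $B_i$. I therefore restrict to $\U(\sl_2)=\langle E_i,F_i,K_i\rangle$ and decompose $M$ into simple modules $L(n)$, each preserved by all four operators. Using the isomorphisms $\phi_{\bvs,\bvs'}$ together with \eqref{eq:phiTT}, the rescaling $\Phi_\ba$ entering \eqref{braid_Urescale}, and the induced transformation of $\fX_{i,\bvs}$, I then reduce from a general balanced $\bvs$ to the special parameter $\bvs=\bvs_\star$ of \eqref{def:bvs-star}, for which $\vs_i=q_i^{-1}$ and the $\imath$bar involution $\psi^\imath$ is available on $\Ui$ and on $M$ with $\psi^\imath(v)=\fX_i\psi(v)$; see \S\ref{subsec:ibar}. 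I expect this parameter reduction --- tracking simultaneously the scalars produced by $\Phi_\ba$ on $\fX_{i,\bvs}$ and by \cref{lem:div-par} on the idivided powers --- to be the most delicate bookkeeping in the proof.

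By \cref{prop:rkone}(2) we have $\TT'_{i,-1}(B_i)=B_i=\TT''_{i,+1}(B_i)$ in the split case. It follows that all four operators in \eqref{eq:sTmod} commute with the action of $B_i$ on $M$: the left-hand sides are polynomials in $B_i$ by \eqref{eq:ibraid1}--\eqref{eq:ibraid1'}, while for the right-hand sides one computes $\fX_iT'_{i,\bvs,-1}(B_iv)=\TT'_{i,-1}(B_i)\fX_iT'_{i,\bvs,-1}(v)=B_i\fX_iT'_{i,\bvs,-1}(v)$ using \eqref{eq:compTT}, and similarly for $\TT''_{i,+1}$ using its analogous intertwining property. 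On each $L(n)$ the icanonical basis $\{\dv{\ov n}{m}\eta\}_{0\le m\le n}$ exhibits $L(n)=\Q(q)[B_i]\cdot\eta$, so $M$ is spanned by the $B_i$-orbits of its $\sl_2$-highest weight vectors. Hence it suffices to verify \eqref{eq:sTmod} on such a vector $\eta$.

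Fix the highest weight vector $\eta\in L(n)$, so $E_i\eta=0$, $\la_i=n$, and write $v_n=F^{(n)}\eta$. From the operator forms $T'_{i,-1}(v)=s'_{i,-1}v$ and $T''_{i,+1}(v)=s''_{i,+1}v$ in \eqref{si'si''}, only the term without $E_i$ survives on $\eta$, giving $T'_{i,-1}(\eta)=v_n$ and $T''_{i,+1}(\eta)=(-1)^nq_i^n v_n$; the normalization \eqref{braid_rescale} contributes the scalar $(\ov{\vs_{\dm,i}}\vs_{i})^{-n/2}=(-q_i)^{-n/2}$ at $\bvs_\star$. Expanding $v_n$ through the $\ell=0$ case of \eqref{FviaB}, namely $v_n=\sum_c(-1)^cq_i^{-c}\dv{\ov n}{n-2c}\eta$, and using that the $\dv{\ov n}{m}\eta$ and $\eta$ are $\psi^\imath$-invariant while $\fX_i=\psi^\imath\psi$ on $M$, I obtain $\fX_iv_n=\psi^\imath(v_n)=\sum_c(-1)^cq_i^{c}\dv{\ov n}{n-2c}\eta$ and $\fX_i^{-1}\eta=\eta$. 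Substituting into \eqref{eq:sTmod} and comparing with \eqref{eq:ibraid1}--\eqref{eq:ibraid1'} under $k=n-2c$ (at $\bvs_\star$ one has $q_i^2\vs_i=q_i$ and $\vs_i^{-1}=q_i$), both sides of the first identity equal $(-1)^{-n/2}q_i^{-n/2}\sum_c(-1)^cq_i^{c}\dv{\ov n}{n-2c}\eta$, and both sides of the second equal $(-1)^{n/2}q_i^{n/2}\sum_c(-1)^cq_i^{-c}\dv{\ov n}{n-2c}\eta$. This verifies \eqref{eq:sTmod} on $\eta$ and completes the proof.
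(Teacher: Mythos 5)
Your proposal is correct and follows essentially the same route as the paper's proof: reduce to the parameter $\bvs_\star$ via $\phi_{\bvs,\bvs'}$ and $\Phi_{\ov{\bvs}\bvs'}(\fX_{i,\bvs})=\fX_{i,\bvs'}$, decompose $M$ into simple $\U(\sl_2)$-modules, reduce to the highest weight vector $\eta$ by an intertwining argument, and then match both sides on $\eta$ using Lusztig's rank one formulas, the $\ell=0$ case of \eqref{FviaB}, and the relation $\psi^\imath=\fX_{i,\bvs_\star}\psi$ together with the $\psi^\imath$-invariance of the icanonical basis. The only (cosmetic) differences are that you phrase the reduction to $\eta$ as ``all four operators commute with $B_i$'' (via \eqref{eq:compTT} and \eqref{eq:mod3}) where the paper runs the same idea through the basis $\{B_{i,\ov{n}}^{(k)}\eta\}$ and \cref{thm:split}, and that you use the elements $s'_{i,e},s''_{i,e}$ of \eqref{si'si''} in place of \cite[Proposition 5.2.2]{Lus93}.
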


\begin{proof}
For any two balanced parameters $\bvs,\bvs'$, the isomorphism $\phi_{\bvs ,\bvs'}$ is the restriction of $\Phi_{\ov{\bvs }\bvs'}$. Note that $\Phi_{\ov{\bvs}\bvs'} (\fX_{i,\bvs} ) = \fX_{i,\bvs'}$. Therefore, by \cref{thm:braid-iQG}, it suffices to prove \eqref{eq:sTmod} for any fixed balanced parameter, say, $\bvs_\star$. In this case, we have $\vs_i=q_i^{-1}$ thanks to $i=\tau i$ and \eqref{def:bvs-star}. Moreover, $M$ is equipped with the bar involution $\psi^\imath$ for this choice of parameter; cf. \S\ref{subsec:ibar} and \cite{BW18b}.
  
Set $\U_i\cong \U(\sl_2)$ to be the subalgebra of $\U$ generated by $E_i,F_i,K_i^{\pm1}$. Since $M$ is the direct sum of its irreducible $\U_i$-submodules, without loss of generality we assume that $v\in L_i$ for some irreducible $\U_i$-submodule $L_i$ of $M$. Let $\eta$ be the $\U_i$-highest weight vector for $L_i$. Set $n\in \N$ such that $K_i \eta=q_i^{n}\eta$.

Since $\vs_i=q_i^{-1}$, we can rewrite the formulas \eqref{def:fbvs}--\eqref{def:ovfbvs}, for $\ov{p}\in \Z_2$, as follows:
\begin{align}
  \label{eq:fevodd}
    f_{\ov{p},\bvs_\star}(B_i)=\sum_{k\geq 0:\:\ov{k}=\ov{p}}  (-q_i)^{-k/2} \dvi{\ov{k}}{k},
    \qquad
    \tilde{f}_{\ov{p},\bvs_\star}(B_i)=\sum_{k\geq 0:\:\ov{k}=\ov{p}}  (-q_i)^{k/2} \dvi{\ov{k}}{k}.
\end{align} 

By \eqref{eq:ibraid1}--\eqref{eq:ibraid1'}, we have $\TT'_{i,-1}v=f_{\ov{n},\bvs_\star}(B_i)v$ and $\TT''_{i,+1}v = \tilde{f}_{\ov{n},\bvs_\star}(B_i)v$. To prove the identities \eqref{eq:sTmod} for the parameter $\bvs_\star$, it suffices to prove the following.
\vspace{2mm}

$\boxed{\text{Claim }(\star)}$.   
$\fX_{i,\bvs_\star} T'_{i,\bvs_\star,-1}(v)=f_{\ov{n},\bvs_\star}(B_i)v,
$ and $
T''_{i,\bvs_\star,+1}(\fX_{i,\bvs_\star}^{-1} v)=  \tilde{f}_{\ov{n},\bvs_\star}(B_i)v.$

Write $b_k$ for $B_{i,\ov{n}}^{(k)}\eta$ and $v_k$ for $F_i^{(k)}\eta$; note that $v_0=\eta$. The icanonical basis of $L_i$ is given by $\{b_k\mid 0\leq k\leq n\}$ and the canonical basis of $L_i$ is given by $\{v_k\mid 0\leq k\leq n\}$. 
Recall from \cite[Proposition 5.2.2]{Lus93} that, for $0\leq k\leq n ,e=\pm 1,$
\begin{align*}
T'_e (v_k)= (-1)^k q^{ek(n-k+1)}_i v_{n-k},\qquad T''_{e}(v_k)=(-1)^{n-k} q^{e(n-k)(k+1)}_i v_{n-k}.
\end{align*}
By a direct computation via the above formulas and the renormalization \eqref{braid_rescale}, the actions of $T'_{i,\bvs_\star,-1},T''_{i,\bvs_\star,+1}$ are given by
\begin{align}  \label{Tvk_rescaled}
\begin{split}
T'_{i,\bvs_\star,-1} (v_k) &= (-1)^k (-q_i)^{(2k-n)/2}q_i^{-k(n-k+1)} v_{n-k},\\ 
T''_{i,\bvs_\star,+1}(v_k)&=(-1)^{n-k} (-q_i)^{(2k-n)/2}q_i^{(n-k)(k+1)} v_{n-k}.
\end{split}
\end{align}

We now reduce the proof of Claim ($\star$) to the special case for $v=\eta$, the highest weight vector. Indeed, suppose that $\TT'_{i,-1}\eta=f_{\ov{n}}(B_i)\eta$. Since $\TT'_{i,-1} (B_{i,\ov{n}}^{(k)} )=B_{i,\ov{n}}^{(k)}$, applying \cref{thm:split} we have, for any $0\leq k\leq n$,
\begin{align*}
\TT'_{i,-1}b_k
=\TT'_{i,-1} (B_{i,\ov{n}}^{(k)} \eta)
&=\TT'_{i,-1} (B_{i,\ov{n}}^{(k)} )\TT'_{i,-1} \eta
\\
&=B_{i,\ov{n}}^{(k)} f_{\ov{n},\bvs_\star}(B_i) \eta=f_{\ov{n},\bvs_\star}(B_i) b_k.
\end{align*}
This implies that $\TT'_{i,-1}=f_{\ov{n},\bvs_\star}(B_i)$ on $L_i$ since $\{b_k\mid 0\leq k\leq n\}$ is a basis for $L_i$. Thanks to Proposition~\ref{prop:split2}, a similar reduction works for $\TT''_{i,+1}$.

It remains to prove the following special case of Claim ($\star$) with $v=\eta$:
\begin{align} \label{specialClaim}
\fX_{i,\bvs_\star} T'_{i,\bvs_\star,-1} (\eta) =f_{\ov{n},\bvs_\star}(B_i)\eta,
\qquad
T''_{i,\bvs_\star,+1} (\fX_{i,\bvs_\star}^{-1} \eta) =\tilde{f}_{\ov{n},\bvs_\star}(B_i)\eta. 
\end{align}

Let $A=(a_{kl})$ be the transition matrix of the canonical and icanonical bases on $L(n)$, and denote its inverse by $A^{-1}=(a'_{kl})$; that is, for $0\leq l,k \leq n,$ we have
\begin{align}  \label{blvk}
b_k = \sum_{l\leq k} a_{kl} v_l,\qquad
v_k = \sum_{l\leq k} a'_{kl} b_l.
\end{align}
We shall often write $\ov{v} =\psi(v)$, for $v\in L_i$. By \cite[Theorem 5.7]{BW18b}, $\psi^\imath =\fX_{i,\bvs_\star} \psi$ fixes $b_k$, and thus we have
\begin{align}
\label{eq:fXB}
\fX_{i,\bvs_\star} \ov{b_k } =b_k,\qquad (0\leq k\leq n).
\end{align}

To prove the first formula in \eqref{specialClaim}, we compute the action of $\fX_{i,\bvs_\star} T'_{i,\bvs_\star,-1}$ on $\eta =v_0$ using \eqref{Tvk_rescaled}, \eqref{blvk} and \eqref{eq:fXB}:
\begin{align} 
  (-q_i)^{n/2}\fX_{i,\bvs_\star}  T'_{i,\bvs_\star,-1}(\eta)
 =\fX_{i,\bvs_\star} v_{n}
 =\fX_{i,\bvs_\star} \ov{v_{n}}
&=\fX_{i,\bvs_\star}\sum_l \ov{a'_{nl} b_l}  \notag
\\
&= \sum_l \ov{a'_{nl}} b_l 
=\sum_l \ov{a'_{nl}} B_{i,\ov{n}}^{(l)} \eta.
\label{eq:mod7}
\end{align}

Setting $\ell=0$ in \eqref{FviaB} and inserting the index $i$ back, we obtain 
\begin{align*}
F_i^{(n)} \eta =\sum_{c=0}^{\lfloor \frac{n}2 \rfloor} (-q_i)^{-c} \dvi{\ov{n}}{n-2c} \eta,
\end{align*}
that is, we have
\begin{align} \label{anl}
   a'_{nl} = 
   \begin{dcases}
        (-q_i)^{-c}, & \text{ if } l=n-2c,
        \\
        0, & \text{ otherwise}.
    \end{dcases}
\end{align}
Plugging \eqref{anl} into \eqref{eq:mod7} gives us
\begin{align*}
\fX_{i,\bvs_\star} T'_{i,\bvs_\star,-1} (\eta)
=(-q_i)^{-n/2}\sum_{c=0}^{\lfloor \frac{n}2 \rfloor} (-q_i)^{c} \dvi{\ov{n}}{n-2c}\eta
=\sum_{\ov{k}=\ov{n}, 0\leq k \leq n }(-q_i)^{-\frac{k}{2}}  \dvi{\ov{k}}{k}\eta.
\end{align*}
This proves the first formula in \eqref{specialClaim} since $\dvi{\ov{n}}{k}\eta=0$ for $k>n$.
 
Finally, we compute the action of $T''_{i,\bvs_\star,+1} \fX_{i,\bvs_\star}^{-1}$ on $\eta=v_0$ by \eqref{Tvk_rescaled}, \eqref{blvk}, \eqref{eq:fXB} and \eqref{anl}:
\begin{align*}
 T''_{i,\bvs_\star,+1} (\fX_{i,\bvs_\star}^{-1} \eta )&=T''_{i,\bvs_\star,+1}( \eta)= (-q_i)^{n/2} v_n
  =(-q_i)^{n/2}\sum_{l} a'_{nl}b_{l} 
  \\
 &= (-q_i)^{n/2}
    \sum_{c=0}^{\lfloor \frac{n}2 \rfloor} (-q_i)^{-c} \dvi{\ov{n}}{n-2c}\eta 
    =\sum_{\ov{k}=\ov{n}, 0\leq k \leq n }(-q_i)^{\frac{k}{2}}  \dvi{\ov{k}}{k}\eta.
\end{align*}
Hence the second formula in \eqref{specialClaim} holds. 

This completes the proofs of Claim ($\star$) and \eqref{eq:sTmod} for the parameter $\bvs_\star$, and hence \cref{thm:res-split}.
\end{proof}

\begin{corollary}\label{cor:mod-inv}
For $i\in \I$ with $i=\tau i$ and any integrable $\Ui$-module $M$, $\TT'_{i,-1}$ and $\TT''_{i,+1}$ are mutually inverse linear operators on $M$.
\end{corollary}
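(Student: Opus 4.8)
The plan is to show that the two composites $\TT''_{i,+1}\TT'_{i,-1}$ and $\TT'_{i,-1}\TT''_{i,+1}$ act as the identity on any integrable $\Ui$-module $M$. The crucial structural observation is that, by \eqref{eq:ibraid1}--\eqref{eq:ibraid1'}, each of $\TT'_{i,-1}$ and $\TT''_{i,+1}$ is given by a \emph{universal} operator: on a parity sector $M_{\ov\lambda}$ with $\ov{\la}_i=\ov p$ it acts as the element $f_{\ov p,\bvs}(B_i)$, resp. $\tilde f_{\ov p,\bvs}(B_i)$, of a completion of $\Ui$ (see \eqref{def:fbvs}--\eqref{def:ovfbvs}), depending only on $\ov p$ and not on $M$. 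Since $i=\tau i$ forces the image of $\alpha_i$ in $X_\imath$ to be $2$-torsion, $B_i$ preserves the parity $\ov{\la}_i$, so each composite preserves every iweight space and acts on the $\ov p$-sector as the universal power series $\tilde f_{\ov p,\bvs}(B_i)\,f_{\ov p,\bvs}(B_i)$, resp. $f_{\ov p,\bvs}(B_i)\,\tilde f_{\ov p,\bvs}(B_i)$, in $B_i$. It therefore suffices to prove the two identities $\tilde f_{\ov p,\bvs}(B_i) f_{\ov p,\bvs}(B_i)=1$ and $f_{\ov p,\bvs}(B_i)\tilde f_{\ov p,\bvs}(B_i)=1$ in this completion, for each $\ov p\in\Z_2$.

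First I would reduce to the balanced parameter $\bvs_\star$. The isomorphism $\phi_{\bvs,\bvs_\star}$ of \cref{lem:iso-parameter}, extended to completions, sends $f_{\ov p,\bvs}(B_i)\mapsto f_{\ov p,\bvs_\star}(B_i)$ and $\tilde f_{\ov p,\bvs}(B_i)\mapsto\tilde f_{\ov p,\bvs_\star}(B_i)$ by \cref{lem:div-par} (the computation already used for \eqref{eq:fsplit}); hence the two identities for $\bvs$ are equivalent to those for $\bvs_\star$. For $\bvs_\star$ I can invoke \cref{thm:res-split}: on any integrable $\U$-module $N$ with weights bounded above, $\TT'_{i,-1}(v)=\fX_{i}T'_{i,\bvs_\star,-1}(v)$ and $\TT''_{i,+1}(v)=T''_{i,\bvs_\star,+1}(\fX_{i}^{-1}v)$, so that
\[
\TT''_{i,+1}\TT'_{i,-1}(v)=T''_{i,\bvs_\star,+1}\big(\fX_i^{-1}\fX_i\,T'_{i,\bvs_\star,-1}(v)\big)=T''_{i,\bvs_\star,+1}T'_{i,\bvs_\star,-1}(v),
\]
and symmetrically for the opposite order. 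Since $T'_{i,\bvs_\star,-1},T''_{i,\bvs_\star,+1}$ differ from $T'_{i,-1},T''_{i,+1}$ only by the weight rescaling \eqref{braid_rescale}, and $T''_{i,+1}T'_{i,-1}=T'_{i,-1}T''_{i,+1}=\mathrm{id}$ on integrable $\U$-modules (directly visible from the action on the canonical basis in \cite[5.2.2]{Lus93}), the rescaling factors cancel and both composites equal the identity on every such $N$.

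Finally I would bootstrap from these $\U$-modules to all integrable $\Ui$-modules via universality. Applying the previous step to the simple $\U(\sl_2)$-modules $N=L(n)$ (whose weights are bounded above), the universal operator $\tilde f_{\ov p,\bvs_\star}(B_i) f_{\ov p,\bvs_\star}(B_i)$, written as $\sum_{m\ge 0}d_m\,\dvi{\ov p}{m}$ with scalar $d_m$ using the unitriangularity of $\{\dvi{\ov p}{m}\}_{m\ge0}$ against $\{B_i^m\}$, must satisfy $\sum_{m\le n}d_m\,\dvi{\ov p}{m}\eta_n=\eta_n=\dvi{\ov p}{0}\eta_n$ for every $n$ with $\ov n=\ov p$. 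Because $\{\dvi{\ov n}{m}\eta_n\mid 0\le m\le n\}$ is the icanonical basis of $L(n)$, hence linearly independent \cite{BeW18}, this forces $d_0=1$ and $d_1=\dots=d_n=0$; letting $n\to\infty$ within each parity gives $\tilde f_{\ov p,\bvs_\star}(B_i) f_{\ov p,\bvs_\star}(B_i)=1$, and the opposite product is treated identically using $T'_{i,-1}T''_{i,+1}=\mathrm{id}$. As these are identities in the completion, they hold on every integrable $\Ui$-module. I expect the main obstacle to be exactly this last passage from $\U$-modules with weights bounded above to arbitrary integrable $\Ui$-modules (the latter, such as the kernels $K_n$, need not be restrictions of $\U$-modules); the universality of the rank one formulas together with the linear independence of the icanonical bases is what makes the reduction valid.
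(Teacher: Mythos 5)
Your proposal is correct and follows essentially the same route as the paper: both proofs use \cref{thm:res-split} together with the fact that Lusztig's operators $T'_{i,-1}$ and $T''_{i,+1}$ are mutually inverse to deduce the universal identities $f_{\ov{p},\bvs}(B_i)\tilde f_{\ov{p},\bvs}(B_i)=\tilde f_{\ov{p},\bvs}(B_i) f_{\ov{p},\bvs}(B_i)=1$ in the completion, and then transport them to arbitrary parameters and arbitrary integrable $\Ui$-modules via $\phi_{\bvs,\bvs'}$. The only (cosmetic) differences are that you reduce to $\bvs_\star$ at the outset rather than at the end, and you make explicit --- by testing against the modules $L(n)$ and invoking the linear independence of the icanonical bases --- the detection step that the paper compresses into the phrase ``since $M$ can be taken as an arbitrary integrable $\U$-module.''
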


\begin{proof}
We first assume that $M$ is an integrable $\U$-module and $\bvs$ is a balanced parameter. Recall that $T'_{i,\bvs,-1} T''_{i,\bvs,+1}=\text{Id}_M$. Let $v\in M$. Using \eqref{eq:sTmod}, we have
\[
\TT'_{i,\bvs,-1}\TT''_{i,\bvs,+1}v=\fX_{i,\bvs} T'_{i,\bvs,-1} T''_{i,\bvs,+1}(\fX_{i,\bvs}^{-1} v)=\fX_{i,\bvs}  \fX_{i,\bvs}^{-1} v =v.
\]
Thus, for any $v\in M_{\ov{\lambda}}$, we have
$f_{\ov{p},\bvs}(B_i) \tilde{f}_{\ov{p},\bvs}(B_i)v=v.$ 
Since $M$ can be taken as an arbitrary integrable $\U$-module, we have, for any $\ov{p}\in \Z_2$,
\begin{align}
\label{eq:ff=1}
f_{\ov{p},\bvs}(B_i) \tilde{f}_{\ov{p},\bvs}(B_i)=1.
\end{align}
Now let $M$ be an integrable $\Ui$-module and $\bvs'$ be an arbitrary parameter. Applying the isomorphism $\phi_{\bvs,\bvs'}$ to \eqref{eq:ff=1}, we obtain
\begin{align}
f_{\ov{p},\bvs'}(B_i) \tilde{f}_{\ov{p},\bvs'}(B_i)=1.
\end{align}
This equality implies that $\TT'_{i,\bvs',-1}\TT''_{i,\bvs',+1}v=v$ for any $v\in M$. 

Using similar arguments, one can also show that $\TT''_{i,+1}\TT'_{i,-1}v=v$.
\end{proof}

\subsection{Formulas of $\TT'_{i,-1}$ on divided powers} 
Set the parameter $\bvs=\bvs_\dm$, and thus, $\vs_i=-q_i^{-2}$ thanks to $i=\tau i$. Recall $X_{j,n,\ov{t}}$ from \eqref{Xjn}.

\begin{theorem}
\label{thm:split-T1DP}
Let $\alpha =-c_{ij}$, for $i\neq j\in \wI$. Then we have $\TT'_{i,-1}(X_{j,n,\ov{t}})=b_{i,j;n,n\alpha}$, for $n\ge 0,\ov{t}\in \Z_2$; that is,
\begin{align}
\label{eq:splitT1}
\begin{split}
\TT'_{i,-1}(X_{j,n,\ov{t}})&=\sum_{ r+s =n\alpha} (-1)^{r} q_i^{r} \dv{i,\ov{p+n\alpha}}{s} X_{j,n,\ov{t}} \dv{i,\ov{p}}{r}
\\
&\quad +\sum_{u\geq 1}\sum_{\substack{r+s+2u=n\alpha\\ \ov{r}=\ov{p}}} (-1)^{r} q_i^{r} \dv{i,\ov{p+n\alpha}}{s} X_{j,n,\ov{t}} \dv{i,\ov{p}}{r}.
\end{split}
\end{align}
Moreover, the formula of $\TT''_{i,+1}(X_{j,n,\ov{t}})$ is given by
\begin{align}\label{eq:splitT2}
\begin{split}
\TT''_{i,+1}(X_{j,n,\ov{t}})&=\sum_{ r+s =n\alpha} (-1)^{r} q_i^{r}  \dv{i,\ov{p}}{r} X_{j,n,\ov{t}} \dv{i,\ov{p+n\alpha}}{s}
\\
&\quad +\sum_{u\geq 1}\sum_{\substack{r+s+2u=n\alpha\\ \ov{r}=\ov{p}}} (-1)^{r} q_i^{r} \dv{i,\ov{p}}{r} X_{j,n,\ov{t}} \dv{i,\ov{p+n\alpha}}{s}.
\end{split}
\end{align}
\end{theorem}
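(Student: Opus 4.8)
The plan is to prove the element identity $\TT'_{i,-1}(X_{j,n,\ov t})=b_{i,j;n,n\alpha}$ first, then read off the explicit expansion \eqref{eq:splitT1} by unwinding the definition of $b_{i,j;n,n\alpha}$, and finally obtain \eqref{eq:splitT2} by the parallel argument for $\TT''_{i,+1}$. Since $\TT'_{i,-1}$ is an algebra automorphism of $\Ui$ and any element annihilating all integrable $\Ui$-modules vanishes (the faithfulness remark following \cref{lem:integrable}), it suffices to check that $\TT'_{i,-1}(X_{j,n,\ov t})$ and $b_{i,j;n,n\alpha}$ act identically on an arbitrary $M\in\mathcal C$.

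For the module computation I would fix $v\in M_{\ov\lambda}$ with $\ov\lambda_i=\ov p$. Because $X_{j,n}$ shifts the $h_i$-value by $-nc_{ij}=n\alpha$, the vector $X_{j,n}v$ has iweight parity $\ov{p+n\alpha}$. Writing $\TT'_{i,-1}w=f_{\ov q,\bvs_\dm}(B_i)w$ on parity-$\ov q$ vectors and applying the compatibility \cref{thm:split}, I obtain
\[
\TT'_{i,-1}(X_{j,n})\,f_{\ov p,\bvs_\dm}(B_i)\,v=\TT'_{i,-1}(X_{j,n}\,v)=f_{\ov{p+n\alpha},\bvs_\dm}(B_i)\,X_{j,n}\,v.
\]
On the other hand, \cref{prop:BBij0} together with $b_{i,j;n,0}=X_{j,n}$ gives $b_{i,j;n,n\alpha}f_{\ov p,\bvs_\dm}(B_i)=f_{\ov{p+n\alpha},\bvs_\dm}(B_i)X_{j,n}$. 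Hence $\TT'_{i,-1}(X_{j,n})$ and $b_{i,j;n,n\alpha}$ agree on every vector of the form $f_{\ov p,\bvs_\dm}(B_i)v$. Now $f_{\ov p,\bvs_\dm}(B_i)$ preserves the parity-$\ov p$ subspace and is invertible there, its inverse being $\tilde f_{\ov p,\bvs_\dm}(B_i)$ by \cref{cor:mod-inv} (cf. \eqref{eq:ff=1}); thus these vectors exhaust that subspace, and running over both parities shows the two elements act identically on all of $M$. By faithfulness they coincide in $\Ui$, proving $\TT'_{i,-1}(X_{j,n,\ov t})=b_{i,j;n,n\alpha}$.

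To extract \eqref{eq:splitT1} I would specialize $m=n\alpha$ in the definition \eqref{splitdiv2}. Here $m+nc_{ij}=0$, so $m-nc_{ij}=2n\alpha$ is even (justifying the use of \eqref{splitdiv2}) and the exponent $-(m+nc_{ij}-1)(r+u)-u$ collapses to $r$; moreover the two binomials become $\qbinom{0}{u}_{q_i^2}=\delta_{u,0}$ and $\qbinom{-1}{u}_{q_i^2}=(-1)^u$. The first kills all $u\ge 1$ terms in the $\ov p=\ov r+1$ branch, while the $(-1)^u$ exactly cancels the sign $(-1)^u$ in the $\ov p=\ov r$ branch. The surviving $u=0$ contributions of the two parity branches merge into the single unrestricted sum over $r+s=n\alpha$, and the $u\ge 1$ contributions of the $\ov p=\ov r$ branch reproduce the second sum; using $\ov{p+nc_{ij}}=\ov{p+n\alpha}$ to match parity labels yields exactly \eqref{eq:splitT1}.

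Finally, for \eqref{eq:splitT2} I would run the same module scheme with $\TT''_{i,+1}$ in place of $\TT'_{i,-1}$, now using the $\TT''$-compatibility \cref{prop:split2} and $\TT''_{i,+1}w=\tilde f_{\ov q,\bvs_\dm}(B_i)w$. This reduces the claim to a $\tilde f$-analogue of the commutation identity, namely $b''\,\tilde f_{\ov p}(B_i)=\tilde f_{\ov{p+n\alpha}}(B_i)X_{j,n}$ with $b''$ the right-hand side of \eqref{eq:splitT2}, which I would obtain by passing to the parameter $\bvs_\star$ (where the $\imath$bar involution $\psi^\imath$ is available) and applying $\psi^\imath$ to the $\bvs_\star$-version of \cref{prop:BBij0}, invoking $\psi^\imath(f_{\ov p,\bvs_\star}(B_i))=(-1)^p\tilde f_{\ov p,\bvs_\star}(B_i)$ and $\psi^\imath(X_{j,n})=X_{j,n}$ from \cref{lem:sbar}, and transporting back to $\bvs_\dm$ via \cref{lem:div-par} and \cref{thm:braid-iQG}(b). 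The genuinely hard commutation content is already absorbed into \cref{prop:BBij0} (hence \cref{prop:BBij2}), so the only obstacle I anticipate is in this last step: keeping the parity signs, the product ordering, and the $\bvs_\star\leftrightarrow\bvs_\dm$ change mutually consistent so that the resulting $\tilde f$-identity lands precisely on the reordered expression \eqref{eq:splitT2} rather than on a merely bar-conjugate form of it.
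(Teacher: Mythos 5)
Your treatment of the main identity $\TT'_{i,-1}(X_{j,n,\ov t})=b_{i,j;n,n\alpha}$ and of the expansion \eqref{eq:splitT1} is correct and is essentially the paper's own argument: play \cref{thm:split} against \cref{prop:BBij0} on an arbitrary $M\in\mathcal C$, strip off the factor $\TT'_{i,-1}(v)=f_{\ov p,\bvs_\dm}(B_i)v$ using invertibility (\cref{cor:mod-inv}), and conclude by faithfulness of integrable modules; your unwinding of \eqref{splitdiv2} at $m=n\alpha$ (the binomials degenerating to $\delta_{u,0}$ and $(-1)^u$, the exponent collapsing to $r$) is exactly the computation the paper compresses into one sentence.

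The derivation of \eqref{eq:splitT2}, however, has a genuine gap, and it sits precisely where you flagged your own doubt. You reduce to the identity $b''\,\tilde f_{\ov p}(B_i)=\tilde f_{\ov{p+n\alpha}}(B_i)\,X_{j,n,\ov t}$, with $b''$ the right-hand side of \eqref{eq:splitT2}, and propose to obtain it by applying $\psi^\imath$ to the $\bvs_\star$-version of \cref{prop:BBij0}. But $\psi^\imath$ is an anti-\emph{linear} algebra automorphism, not an anti-automorphism: it bar-conjugates coefficients and \emph{preserves} the order of products. Applying it to \cref{prop:BBij0} therefore yields $\psi^\imath(b_{i,j;n,n\alpha})\,\tilde f_{\ov p,\bvs_\star}(B_i)=(-1)^{n\alpha}\,\tilde f_{\ov{p+n\alpha},\bvs_\star}(B_i)\,X_{j,n,\ov t}$, in which $\psi^\imath(b_{i,j;n,n\alpha})$ is an order-preserved, coefficient-conjugated sum of terms $\dv{i,\cdot}{s}X_{j,n,\ov t}\dv{i,\cdot}{r}$ --- not the order-reversed, coefficient-unchanged element $b''$, whose terms read $(-1)^rq_i^r\,\dv{i,\cdot}{r}X_{j,n,\ov t}\dv{i,\cdot}{s}$. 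Identifying these two expressions (up to the sign $(-1)^{n\alpha}$) is a nontrivial symmetry of the elements $b_{i,j;n,m}$: for $n=1$ it is exactly \cref{lem:sbar}, which the paper can only prove by quoting the closed rank-two formulas of \cite{Z23}, and no general-$n$ analogue is established anywhere; deriving it from $\TT''_{i,+1}=\sigma_\imath\circ\TT'_{i,-1}\circ\sigma_\imath$ together with \eqref{eq:splitT2} itself would be circular. So, as written, your argument proves that $\TT''_{i,+1}(X_{j,n,\ov t})$ equals a bar-conjugated, same-ordered expression, which is not literally \eqref{eq:splitT2}.

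The missing ingredient is the paper's one-line alternative: once \eqref{eq:splitT1} is known as an identity in $\Ui$, apply the $\F$-linear anti-involution $\sigma_\imath$ of \cref{lem:inv}. It fixes $B_i$ (hence every $\dv{i,\ov p}{m}$) and $X_{j,n,\ov t}$, reverses products, and leaves the coefficients $(-1)^rq_i^r$ untouched; combined with the relation $\TT''_{i,+1}(X_{j,n,\ov t})=\sigma_\imath\big(\TT'_{i,-1}(X_{j,n,\ov t})\big)$ from \cite{WZ23}, it transforms \eqref{eq:splitT1} directly into \eqref{eq:splitT2}, with no second module computation and no bar-conjugation bookkeeping at all.
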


\begin{proof}
Recall from \cref{lem:inv} the anti-involution $\sigma^\imath$ and note that $\sigma^\imath$ fixes $X_{j,n,\ov{t}}.$ By \cite{WZ23}, we have $\TT''_{i,+1}(X_{j,n,\ov{t}})=\sigma^\imath\big(\TT'_{i,-1}(X_{j,n,\ov{t}})\big)$. Hence, by applying $\sigma^\imath$, \eqref{eq:splitT2} can be obtained from \eqref{eq:splitT1}.

We prove the formula \eqref{eq:splitT1}. Let $M$ be an integrable $\Ui$-module and $v\in M_{\ov{\lambda}}$ be an iweight vector.
By Theorem~\ref{thm:split}, we have
\begin{align}\label{eq:splitn1}
 \TT'_{i,-1}(X_{j,n,\ov{t}}) \TT'_{i,-1} (v) =  \TT'_{i,-1} \big(X_{j,n,\ov{t}} v\big).
\end{align}
On the other hand, note that $X_{j,n,\ov{t}} v\in M_{\ov{\lambda-n\alpha_j}}$. By \cref{prop:BBij0} and the definition \eqref{eq:ibraid1} of $\TT'_{i,-1}$, we have
\begin{align}\label{eq:esplitn2}
 b_{i,j;n,n\alpha}\TT'_{i,-1} (v) =  \TT'_{i,-1} \big(X_{j,n,\ov{t}} v\big).
\end{align}
Hence, we have
\[
\big(\TT'_{i,-1}(X_{j,n,\ov{t}})-b_{i,j;n,n\alpha}\big)\TT'_{i,-1} v =0.
\]
Replacing $v$ by $\TT''_{i,+1} v $ and using \cref{cor:mod-inv}, we see that $\TT'_{i,-1}(X_{j,n,\ov{t}})-b_{i,j;n,n\alpha}$ acts as $0$ on any integrable $\Ui$-module. Therefore, we must have $\TT'_{i,-1}(X_{j,n,\ov{t}})=b_{i,j;n,n\alpha}$, as desired. The explicit formula for $b_{i,j;n,n\alpha}$ is obtained by setting $m=n\alpha$ in \eqref{splitdiv2}.
\end{proof}

 \section{Braid group operators of diagonal type on integrable $\Ui$-modules}
 \label{sec:diagonal}
 
In this section, we fix $i\in \wI$ such that $c_{i,\tau i}=0$. We introduce linear operators  $\TT'_{i,-1},\TT''_{i,+1}$ (which are shown to be mutually inverse) via explicit formulas on any integrable $\Ui$-modules and show that they are compatible with corresponding symmetries on $\Ui$ given in \cite{WZ23, Z23}. We further show that $\TT'_{i,-1},\TT''_{i,+1}$ specialize to the corresponding ones {\em loc. cit.} on integrable $\U$-modules by relating to Lusztig symmetries. 

\subsection{Rank one formulas of diagonal type}

Let $M\in \mathcal{C}$ be an integrable $\Ui$-module. We define linear operators $\TT'_{i,-1},\TT''_{i,+1}$ on $M$ by letting 
\begin{align}
\label{eq:ibraid2}
\TT'_{i,-1}(v)&= (-1)^{\la_{i,\tau}/2} q_i^{-\la_{i,\tau}/2} \sum_{a-b=\la_{i,\tau}} (-1)^b q_i^{-b} \vs_i^{-(a+b)/2} B_{i}^{(a)}  B_{\tau i}^{(b)}v,
\\\label{eq:ibraid2'}
\TT''_{i,+1}(v)&= (-1)^{\la_{i,\tau}/2} q_i^{-\la_{i,\tau}/2} \sum_{b-a=\la_{i,\tau}} (-1)^b q_i^{b} \vs_i^{-(a+b)/2} B_{\tau i}^{(a)}  B_{i}^{(b)}v,
\end{align}
for any $v\in M_{\ov{\lambda}}$, where $\la_{i,\tau} =\langle h_i-h_{\tau i},\ov{\lambda}\rangle$. There are only finitely many nonzero summands on the right-hand sides of \eqref{eq:ibraid2}--\eqref{eq:ibraid2'} since $M$ is an integrable $\Ui$-module. 

We also introduce the following elements for $m\in \Z$ in some completion of $\Ui$:
\begin{align} 
\label{eq:zm}
z_{m,\bvs}( B_i,B_{\tau i}) 
:= (-1)^{m/2} q_i^{-m/2} \sum_{a-b=m} (-1)^b q_i^{-b} \vs_i^{-(a+b)/2} B_{i}^{(a)}  B_{\tau i}^{(b)},
\\\label{eq:zm'}
z'_{m,\bvs}( B_i,B_{\tau i}) 
:= (-1)^{m/2} q_i^{-m/2} \sum_{b-a=m} (-1)^b q_i^{b} \vs_i^{-(a+b)/2} B_{\tau i}^{(a)}  B_{ i}^{(b)}.
\end{align} 
Hence we have \begin{align} \label{ibraid:diag}
\TT'_{i,-1}(v) = z_{\la_{i,\tau},\bvs}( B_i,B_{\tau i})v, 
\qquad
\TT''_{i,+1}(v) = z'_{\la_{i,\tau},\bvs}( B_i,B_{\tau i})v.
\end{align}

\subsection{Compatibility with braid group symmetries on $\Ui$}

\begin{lemma}
\label{lem:diagmu}
For $\mu\in Y^\imath$, we have 
\begin{align*}
    \langle \mu,\alpha_i+\alpha_{\tau i}\rangle &=0 ,
    \\
\bs_i\mu &=\mu+\langle \mu,\alpha_{\tau i} \rangle(h_i-h_{\tau i}).
\end{align*}
\end{lemma}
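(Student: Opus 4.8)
The plan is to derive both identities directly from the defining relation $\tau\mu=-\mu$ for $\mu\in Y^\imath$ (see \eqref{XY}), the $\tau$-invariance of the pairing $\langle\cdot,\cdot\rangle$, and the identity $\bs_i=s_is_{\tau i}$ from \eqref{risi} valid when $c_{i,\tau i}=0$. Everything is elementary linear algebra on the lattice $Y$, so the work is bookkeeping rather than insight.

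First I would establish the vanishing identity. Since $\tau$ preserves the pairing and $\tau\alpha_{\tau i}=\alpha_i$, I compute $\langle\mu,\alpha_{\tau i}\rangle=\langle\tau\mu,\tau\alpha_{\tau i}\rangle=\langle-\mu,\alpha_i\rangle=-\langle\mu,\alpha_i\rangle$, using $\tau\mu=-\mu$; adding $\langle\mu,\alpha_i\rangle$ to both sides yields $\langle\mu,\alpha_i+\alpha_{\tau i}\rangle=0$. This is the same one-line manipulation already used in the proof of Lemma~\ref{lem:musplit}.

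For the second identity, I first note that $c_{i,\tau i}=0$ is equivalent to $i\cdot\tau i=0$, and by symmetry of the bilinear form this forces $c_{\tau i,i}=0$ as well; in particular $s_i(h_{\tau i})=h_{\tau i}-c_{\tau i,i}h_i=h_{\tau i}$, and $s_i,s_{\tau i}$ commute so that $\bs_i=s_is_{\tau i}$. Writing $b:=\langle\mu,\alpha_{\tau i}\rangle$, the first identity gives $\langle\mu,\alpha_i\rangle=-b$. Then, applying $s_{\tau i}$ and $s_i$ in turn and using $s_i(\mu)=\mu-\langle\mu,\alpha_i\rangle h_i$ on $Y$, I obtain
\[
\bs_i\mu=s_i\big(\mu-b\,h_{\tau i}\big)=\big(\mu+b\,h_i\big)-b\,h_{\tau i}=\mu+b\,(h_i-h_{\tau i}),
\]
which is exactly $\mu+\langle\mu,\alpha_{\tau i}\rangle(h_i-h_{\tau i})$.

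There is no genuine obstacle here; the two points requiring attention are the observation $c_{\tau i,i}=0$ (so that $s_i$ fixes $h_{\tau i}$) and the sign substitution $\langle\mu,\alpha_i\rangle=-b$ supplied by the first identity, which is precisely what converts the term $-\langle\mu,\alpha_i\rangle h_i$ into $+b\,h_i$. As a consistency check, since $s_i$ and $s_{\tau i}$ commute one may equally compute $s_{\tau i}s_i\mu$ and arrive at the same answer, confirming that the result is independent of the order in the product $\bs_i=s_is_{\tau i}$.
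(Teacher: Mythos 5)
Your proof is correct and follows essentially the same route as the paper: the first identity comes from $\tau\mu=-\mu$ together with $\tau$-invariance of the pairing, and the second from directly expanding $\bs_i\mu=s_is_{\tau i}(\mu)$ and substituting $\langle\mu,\alpha_i\rangle=-\langle\mu,\alpha_{\tau i}\rangle$. The only difference is that you make explicit the observation that $c_{\tau i,i}=0$ forces $s_i(h_{\tau i})=h_{\tau i}$, a step the paper leaves implicit in its ``direct computation.''
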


\begin{proof}
For $\mu \in Y^\imath$, we have $\tau\mu=-\mu$, and hence, $\langle \mu,\alpha_i \rangle=\langle\tau \mu, \alpha_{\tau i}\rangle=-\langle  \mu, \alpha_{\tau i}\rangle$. This proves the first identity.
The second identity follows by a direct computation:
$\bs_i\mu=  s_is_{\tau i}(\mu)=\mu-\langle \mu,\alpha_i \rangle h_i -\langle \mu,\alpha_{\tau i} \rangle h_{\tau i}=\mu+\langle \mu,\alpha_{\tau i} \rangle(h_i-h_{\tau i}).$
\end{proof}

\begin{theorem}
\label{thm:diag}
Let $i\in \I$ such that $c_{i,\tau i}=0 $. For any $x\in \Ui,v\in M$, we have
\begin{align}\label{eq:qsplit}
 \TT'_{i,-1}(x) \TT'_{i,-1} v &=  \TT'_{i,-1} (x v). 
\end{align}
\end{theorem}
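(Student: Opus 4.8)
The plan is to follow the template of the split-type proof of \cref{thm:split}. Since $\TT'_{i,-1}$ is an algebra automorphism of $\Ui$ by \cref{thm:braid-iQG}, the identity \eqref{eq:qsplit} is multiplicative in $x$: if it holds for $x$ and for $y$, then $\TT'_{i,-1}(xyv)=\TT'_{i,-1}(x)\TT'_{i,-1}(yv)=\TT'_{i,-1}(xy)\TT'_{i,-1}(v)$, so it holds for $xy$. Hence it suffices to verify \eqref{eq:qsplit} as $x$ runs over the generators $K_\mu$ ($\mu\in Y^\imath$), $B_i$, $B_{\tau i}$, and $B_j$ ($j\neq i,\tau i$), acting on an arbitrary iweight vector $v\in M_{\ov{\la}}$; write $\la_{i,\tau}=\langle h_i-h_{\tau i},\ov{\la}\rangle$ and recall $\TT'_{i,-1}(v)=z_{\la_{i,\tau},\bvs}(B_i,B_{\tau i})v$ from \eqref{ibraid:diag}.

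For $x=K_\mu$ I would argue directly. By \cref{prop:rkone}(1) together with \cref{thm:braid-iQG}(b), $\TT'_{i,-1}(K_\mu)=K_{\bs_i\mu}$, and by \cref{lem:diagmu} we have $\bs_i\mu=\mu+\langle\mu,\alpha_{\tau i}\rangle(h_i-h_{\tau i})$ with $\langle\mu,\alpha_i\rangle=-\langle\mu,\alpha_{\tau i}\rangle$. By \eqref{eq:ibraid2}, $\TT'_{i,-1}(v)$ is a sum of vectors $B_i^{(a)}B_{\tau i}^{(b)}v$ of iweight $\ov{\la-a\alpha_i-b\alpha_{\tau i}}$ subject to $a-b=\la_{i,\tau}$. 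A short bookkeeping using $c_{i,\tau i}=c_{\tau i,i}=0$ gives $\langle\bs_i\mu,\la-a\alpha_i-b\alpha_{\tau i}\rangle=\langle\mu,\la\rangle+\langle\mu,\alpha_{\tau i}\rangle(\la_{i,\tau}-a+b)=\langle\mu,\la\rangle$, the last equality precisely because $a-b=\la_{i,\tau}$. Thus $K_{\bs_i\mu}$ restores the scalar $q^{\langle\mu,\ov{\la}\rangle}$ on every summand, and \eqref{eq:qsplit} follows for $x=K_\mu$.

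The remaining cases reduce to the distinguished parameter $\bvs_\dm$. Since in diagonal type $\vs_i=\vs_{\tau i}$ by \eqref{vsi_same} and $\vs_{i,\dm}=\vs_{\tau i,\dm}$, applying $\phi_{\bvs_\dm,\bvs}$ to $z_{m,\bvs_\dm}$ and using \cref{lem:iso-parameter} gives $z_{m,\bvs}(B_i,B_{\tau i})=\phi_{\bvs_\dm,\bvs}\big(z_{m,\bvs_\dm}(B_i,B_{\tau i})\big)$; combined with \cref{thm:braid-iQG}(b) it is enough to verify each generator identity at $\bvs_\dm$. For $x=B_i$ (and symmetrically $x=B_{\tau i}$), since $B_iv\in M_{\ov{\la-\alpha_i}}$ has shifted value $\la_{i,\tau}-2$, the required statement is the operator equation $\TT'_{i,\bvs_\dm,-1}(B_i)\,z_{\la_{i,\tau},\bvs_\dm}=z_{\la_{i,\tau}-2,\bvs_\dm}\,B_i$, where $\TT'_{i,\bvs_\dm,-1}(B_i)=q_iB_{\tau i}k_{\tau i}^{-1}$ by \cref{prop:rkone}(2); this is the rank one identity \cref{prop:qs-rkone}. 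For $x=B_j$, recall from \cref{prop:TiBjZ} that $\TT'_{i,\bvs_\dm,-1}(B_j)=b_{i,\tau i,j;1,-c_{ij},-c_{\tau i,j}}$, so with $\la_{i,\tau}'=\la_{i,\tau}-(c_{ij}-c_{\tau i,j})$ the identity \eqref{eq:qsplit} reduces to the rank two intertwining relation $b_{i,\tau i,j;1,-c_{ij},-c_{\tau i,j}}\,z_{\la_{i,\tau},\bvs_\dm}=z_{\la_{i,\tau}',\bvs_\dm}\,B_j$ established in \cref{prop:qsBB}.

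I expect the rank two identity \cref{prop:qsBB} to be the main obstacle, exactly as \cref{prop:BBij0} (resting on the long \cref{prop:BBij2}) carried all the weight in the split case. The plan for \cref{prop:qsBB} is first to derive a closed commutation formula moving $b_{i,\tau i,j;n,m_1,m_2}$ past the monomials $B_i^{(a)}B_{\tau i}^{(b)}$ occurring in $z_{m,\bvs_\dm}$, using the recursions \eqref{def:diagBij1}--\eqref{def:diagBij2} together with the quantum $\sl_2$ commutation \cref{lem:EF}, and then to collapse the resulting double sum by a standard quantum binomial identity of the type \eqref{eq:1.3.1}, so that only the term $b_{i,\tau i,j;n,m_1,m_2}$ of maximal $(m_1,m_2)$ survives and reassembles $z_{\la_{i,\tau}',\bvs_\dm}$. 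The delicate point, beyond the split case, is the genuinely two-variable structure in $B_i$ and $B_{\tau i}$ and the careful tracking of the $\vs_i$- and $k_i$-powers produced by the $k_i^u$ factors in \eqref{diagdiv}.
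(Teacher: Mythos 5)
Your proposal is correct and follows essentially the same route as the paper's own proof: reduce to generators, handle $x=K_\mu$ by the direct iweight computation using \cref{lem:diagmu}, transport everything to the distinguished parameter $\bvs_\dm$ via $\phi_{\bvs_\dm,\bvs}$ and \cref{thm:braid-iQG}(b), and then cite \cref{prop:qs-rkone} for $x=B_i,B_{\tau i}$ and \cref{prop:qsBB} (with \cref{cor:diagBB}) for $x=B_j$, $j\neq i,\tau i$. One small caveat on your closing sketch of \cref{prop:qsBB}: the $q$-binomial collapse alone does not finish it — the paper additionally needs the vanishing statement \cref{lem:braid-neq} (via the $\Ui_{i,\tau i}\cong \U(\sl_2)$ identification) to kill the terms with $r<n\alpha$ — but since your proof of the theorem itself only cites that proposition, this does not affect its correctness.
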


\begin{proof}
By the same argument at the first paragraph in the proof of Theorem~\ref{thm:split}, it suffices to verify the identity \eqref{eq:qsplit} as $x$ runs over a generating set of $\Ui$ and $v\in M_{\ov{\lambda}}$ for $\ov{\lambda}\in X_\imath$; set $m= \langle h_i-h_{\tau i},\ov{\lambda}\rangle $.

We first prove \eqref{eq:qsplit} for $x=K_\mu,\mu\in Y^\imath$.
Recall from \eqref{vsi_same} that $\vs_i=\vs_{\tau i}$ thanks to $c_{i,\tau i}=0$. By \cref{thm:braid-iQG}(b) and \cref{prop:rkone}, we have $\TT'_{i,\bvs ,-1}(K_\mu) = K_{\bs_i\mu}$.
By \eqref{eq:ibraid2}, the action of $\TT'_{i,-1}$ on $M$ is given by a summation of $B_{i}^{(a)}  B_{\tau i}^{(b)} $ for $a-b=\langle h_i-h_{\tau i},\ov{\lambda}\rangle$. We have
\begin{align*}
 K_{\bs_i\mu}  B_{i}^{(a)}  B_{\tau i}^{(b)}v
&= q^{-\langle \bs_i\mu, a\alpha_i+b\alpha_{\tau i}\rangle} 
B_{i}^{(a)}  B_{\tau i}^{(b)} K_{\bs_i\mu} v
\\
&= q^{-\langle\mu,  a\bs_i\alpha_i+b\bs_i\alpha_{\tau i}\rangle} 
B_{i}^{(a)}  B_{\tau i}^{(b)} K_{\bs_i\mu} v
\\
&= q^{ \langle  \mu, a\alpha_i+b\alpha_{\tau i}\rangle} 
B_{i}^{(a)}  B_{\tau i}^{(b)} K_{\bs_i\mu} v
\\
&= q^{ -m\langle  \mu,  \alpha_{\tau i}\rangle} 
B_{i}^{(a)}  B_{\tau i}^{(b)} K_{\bs_i\mu} v
\\
&= B_{i}^{(a)}  B_{\tau i}^{(b)} K_{ \mu} v,
\end{align*}
where we used $ K_{\bs_i\mu}=K_\mu K_{h_i-h_{\tau i}}^{\langle \mu,\alpha_{\tau i}\rangle}$ in the last equality by Lemma~\ref{lem:diagmu}.
Thus, we have proved $\TT'_{i,-1}(K_\mu v)=K_{\bs_i\mu} \TT'_{i,-1}(v)$ as desired.

We next prove \eqref{eq:qsplit} for $x=B_j$, for $j\in \I$. By \eqref{eq:ibraid2}, $\TT'_{i,-1} v=z_{m,\bvs}( B_i,B_{\tau i}) v$ and $\TT'_{i,-1} B_j v=z_{m-c_{ij}+c_{\tau i,j},\bvs}( B_i,B_{\tau i}) B_j v$. Hence, it suffices to establish the following rank one identity (in case $j\in \{i,\tau i\}$) or rank two identity (in case $j\not\in \{i,\tau i\}$):
\begin{align}
\label{eq:Tzm}
 \TT'_{i,-1}(B_j) z_{m,\bvs}( B_i,B_{\tau i})v=z_{m-c_{ij}+c_{\tau i,j},\bvs}( B_i,B_{\tau i}) B_jv.
\end{align}
By Lemma~\ref{lem:iso-parameter} and the definition \eqref{eq:zm}, we have $z_{m,\bvs}(B_i,B_{\tau i})=\phi_{\bvs_\dm,\bvs} z_{m,\bvs_\dm}(B_i,B_{\tau i})$ for any $m$. By \cref{thm:braid-iQG}(2), we have $\TT'_{i,\bvs,-1}\phi_{\bvs_\dm,\bvs}(B_j)=\phi_{\bvs_\dm,\bvs} \TT'_{i,\bvs_\dm,-1}(B_j)$. Therefore, it suffices to establish \eqref{eq:Tzm} for the distinguished parameter $\bvs_\dm$. 

For the distinguished parameter $\bvs_\dm$, the proof of \eqref{eq:Tzm} for $j\in \{i,\tau i\}$ will be carried out in Proposition~\ref{prop:qs-rkone} in \S\ref{subsec:rk1} and for $j\not\in \{i,\tau i\}$ in \cref{prop:qsBB} and \cref{cor:diagBB} in \S\ref{subsec:rk2} below. This proves the theorem. 
\end{proof}

Next we shall derive from \cref{thm:diag} its counterpart for $\TT''_{i,+1}$ using the $\mathrm{i}$bar involution $\psi^\imath$. 

\begin{lemma}\label{lem:Tbar}
Set the parameter $\bvs=\bvs_\star$. We have $\psi^\imath \TT'_{i,-1}(B_j)=(-1)^{c_{ij}-c_{\tau i,j}}\TT''_{i,+1}(B_j)$, for any $j\in \wI$.
\end{lemma}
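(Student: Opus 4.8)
The plan is to follow the template of the split-type result \cref{lem:sbar}, verifying the asserted identity on each generator $B_j$ by comparing explicit formulas, and splitting into the rank-one case $j\in\{i,\tau i\}$ and the rank-two case $j\notin\{i,\tau i\}$. I work throughout with the balanced parameter $\bvs=\bvs_\star$, for which the $\imath$bar involution $\psi^\imath$ exists (cf. \S\ref{subsec:ibar}); here $c_{i,\tau i}=0$ gives $\vs_{i,\star}=1$. The only features of $\psi^\imath$ I will use are that it is an anti-linear algebra involution fixing every $B_l$ (hence every idivided power $B_l^{(m)}$, since $[m]_l!$ is bar-invariant), with $\psi^\imath(k_i)=k_i^{-1}$ and $\psi^\imath(q_i)=q_i^{-1}$.

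For $j\in\{i,\tau i\}$ I would invoke \cref{prop:rkone}(2). As $c_{i,\tau i}=0$ forces $\tau_i=\mathrm{id}$ on $\{i,\tau i\}$, this yields $\TT'_{i,-1}(B_i)=q_iB_{\tau i}k_{\tau i}^{-1}$ and $\TT''_{i,+1}(B_i)=q_i^{-1}B_{\tau i}k_i^{-1}$ (with the symmetric statements for $B_{\tau i}$), valid at $\bvs_\star$ after transporting the $\bvs_\dm$-formulas along $\phi_{\bvs_\dm,\bvs_\star}$ via \eqref{eq:phiTT}, where the rescaling scalars cancel. Applying $\psi^\imath$ to $q_iB_{\tau i}k_{\tau i}^{-1}$ inverts the scalar and turns $k_{\tau i}^{-1}$ into $k_{\tau i}=k_i^{-1}$, producing $q_i^{-1}B_{\tau i}k_i^{-1}=\TT''_{i,+1}(B_i)$; since $c_{ii}-c_{\tau i,i}=2$ and $c_{\tau i,\tau i}-c_{i,\tau i}=2$ are even, the sign $(-1)^{c_{ij}-c_{\tau i,j}}$ equals $+1$ in both subcases, so the identity holds.

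The essential case is $j\notin\{i,\tau i\}$. I would begin from the explicit formula for $\TT'_{i,-1}(B_j)$ at $\bvs_\star$, i.e.\ the $n=1$ instance of \eqref{diagdiv} furnished by \cref{prop:TiBjZ} (transported to $\bvs_\star$), a sum over $u,r,s$ of terms proportional to $B_i^{(\alpha-r-u)}B_{\tau i}^{(\beta-s-u)}B_jB_{\tau i}^{(s)}B_i^{(r)}k_i^u$ with $\alpha=-c_{ij}$ and $\beta=-c_{\tau i,j}$. Because $\psi^\imath$ fixes each factor and preserves the order of a product, applying it termwise leaves every noncommutative $B$-monomial unchanged while inverting each $q_i$-power and sending $k_i^u\mapsto k_i^{-u}$. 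It then remains to recognize the resulting expression, up to the global sign $(-1)^{c_{ij}-c_{\tau i,j}}=(-1)^{\alpha+\beta}$, as the explicit formula for $\TT''_{i,+1}(B_j)$ established in \cite{Z23} (the diagonal-type counterpart of the two displayed formulas in \cref{lem:sbar}), which completes the proof.

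The main obstacle is precisely this last identification: one must match all $q_i$-exponents and signs term by term, keeping the $k_i$-powers in their positions. The conceptual point — and the reason the anti-linear $\psi^\imath$, rather than the anti-involution $\sigma_\imath$, must be used — is that $\psi^\imath$ preserves the order of the $B$-factors in each monomial, whereas $\sigma_\imath$ reverses it. Reversing the order and then commuting the group-like factors $k_i^{\pm u}$ back to the right via $k_iB_l=q_i^{c_{\tau i,l}-c_{il}}B_l k_i$ introduces spurious $u$-dependent powers of $q_i$; consequently $\sigma_\imath(\TT'_{i,-1}(B_j))$ differs from $\TT''_{i,+1}(B_j)$, so the proof must compare against the genuine (order-preserving, bar-type) formula for $\TT''_{i,+1}(B_j)$ rather than naively apply $\sigma_\imath$ to the $\TT'$-formula.
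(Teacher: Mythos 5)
Your main argument is correct and is essentially the paper's own proof: the case $j\in\{i,\tau i\}$ follows from \cref{prop:rkone} transported to $\bvs_\star$ via \eqref{eq:phiTT}, and the case $j\neq i,\tau i$ is handled by applying $\psi^\imath$ term by term to the explicit $\bvs_\star$-formula for $\TT'_{i,-1}(B_j)$ and identifying the result, up to $(-1)^{\alpha+\beta}$, with the explicit formula for $\TT''_{i,+1}(B_j)$ coming from \cite{Z23} (the identification uses the re-indexing $(r,s)\mapsto(\alpha-u-r,\beta-u-s)$ together with $k_i^{-u}=k_{\tau i}^u$). The paper does exactly this, displaying the two formulas and reading off the identity, at the same level of detail as your sketch.

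One correction is needed, however: your closing ``conceptual point'' about $\sigma_\imath$ is false. Applying $\sigma_\imath$ to the $\TT'$-formula \emph{does} produce $\TT''_{i,+1}(B_j)$, on the nose. Indeed, since $\sigma_\imath$ is $\F$-linear (coefficients are untouched) and reverses products,
\begin{align*}
\sigma_\imath\bigl(B_i^{(\alpha-r-u)}B_{\tau i}^{(\beta-s-u)}B_j B_{\tau i}^{(s)}B_i^{(r)} k_i^u\bigr)
= k_i^{-u}\, B_i^{(r)}B_{\tau i}^{(s)}B_j B_{\tau i}^{(\beta-s-u)}B_i^{(\alpha-r-u)},
\end{align*}
and using $k_iB_ik_i^{-1}=q_i^{-2}B_i$, $k_iB_{\tau i}k_i^{-1}=q_i^{2}B_{\tau i}$, $k_iB_jk_i^{-1}=q_i^{\alpha-\beta}B_j$, the monomial on the right (which contains $B_i$ with total exponent $\alpha-u$ and $B_{\tau i}$ with total exponent $\beta-u$) has $k_i$-conjugation weight $q_i^{\beta-\alpha}$; pushing $k_i^{-u}$ to the right therefore yields exactly
$q_i^{(\alpha-\beta)u}\,B_i^{(r)}B_{\tau i}^{(s)}B_j B_{\tau i}^{(\beta-s-u)}B_i^{(\alpha-r-u)}k_{\tau i}^u$.
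The $u$-dependent powers you call ``spurious'' are precisely the factor $q_i^{(\alpha-\beta)u}$ already present in the $\TT''$-formula, with no sign and no re-indexing. This is no accident: it is the general identity $\TT''_{i,+1}=\sigma_\imath\circ\TT'_{i,-1}\circ\sigma_\imath$ from \cite{WZ23}, which the paper itself invokes (in the proofs of \cref{thm:split-T1DP,thm:diag-T1DP}). Your use of $\psi^\imath$ is of course forced by the statement of the lemma, but the contrast you draw with $\sigma_\imath$ is mistaken; since this remark is tangential, it does not affect the validity of your proof.
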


\begin{proof}
Recall from \S\ref{subsec:ibar} that $\vs_{i,\star}=\vs_{\tau i,\star}=1$, and the bar involution $\psi^\imath$ on $\Ui_{\bvs_\star}$ exists. For $j=i,\tau i$, the desired statement follows from Proposition~\ref{prop:rkone}.

Let $j\neq i,\tau i$. We write $\alpha =-c_{ij}, \beta=-c_{\tau i,j}$. The rank two formulas of relative braid symmetries on the universal iquantum group were conjectured in \cite{CLW23} and established partially in \cite{LW22} and fully in \cite[Theorem 3.7(ii)]{Z23}. Applying the central reduction to the formulas {\em loc. cit.} at $\bvs_\dm$ and then applying $\phi_{\bvs_\dm,\bvs_\star}$, we obtain the formulas for $\TT'_{i,-1}(B_j),\TT''_{i,+1}(B_j)$ as below
\begin{align*}
\TT'_{i,-1}(B_j)&= \sum_{u=0}^{ \min(\alpha,\beta)} \sum_{r=0}^{\alpha-u}\sum_{s=0}^{\beta-u} (-1)^{r+s+u+\frac{\alpha+\beta}{2}}q_i^{r(-u+1)+s(u+1)+u -\frac{\alpha+\beta}{2}}
\\ 
  &\qquad \times B_i^{(\alpha-r-u)}B_{\tau i}^{(\beta-s-u)}B_j B_{\tau i}^{(s)}B_i^{(r)} k_i^u ,
  \\ 
\TT''_{i,+1}(B_j)&=\sum_{u=0}^{ \min(\alpha,\beta)} \sum_{r=0}^{\alpha-u}\sum_{s=0}^{\beta-u} (-1)^{r+s+u+\frac{\alpha+\beta}{2}}q_i^{r( -u+1)+s( u+1)+u -\frac{\alpha+\beta}{2}}
\\ 
  &\qquad \times q_i^{(\alpha-\beta)u} B_i^{(r)}B_{\tau i}^{(s)}B_j B_{\tau i}^{(\beta-s-u)}B_i^{(\alpha-r-u)} k_{\tau i}^u.
\end{align*}
The desired statement directly follows from these formulas.
\end{proof}


\begin{proposition}
\label{prop:diag2}
Let $i\in \I$ such that $c_{i,\tau i}=0$ and $M\in \mathcal C$. For any $x\in \Ui,v\in M$, we have
\begin{align} 
 \label{eq:qsplit2}
 \TT''_{i,+1}(x) \TT''_{i,+1} v &=  \TT''_{i,+1} (x v).
\end{align}
\end{proposition}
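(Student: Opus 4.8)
The plan is to follow the template of the proof of \cref{prop:split2}, reducing \eqref{eq:qsplit2} to a family of rank-one and rank-two identities which I then extract from their already-established $\TT'_{i,-1}$ counterparts by applying the $\imath$bar involution. Since $\TT''_{i,+1}$ is an algebra automorphism of $\Ui$, the set of $x$ satisfying \eqref{eq:qsplit2} for all $v$ is closed under multiplication, so it suffices to treat $x=K_\mu$ ($\mu\in Y^\imath$) and $x=B_j$ ($j\in\wI$). Using \cref{lem:iso-parameter}, the $\TT''_{i,+1}$-analogue of \cref{thm:braid-iQG}(b), and the behaviour of $z'_{m,\bvs}$ under $\phi_{\bvs,\bvs'}$, I may fix the balanced parameter $\bvs=\bvs_\star$ of \eqref{def:bvs-star}, for which $\psi^\imath$ exists; I write $v\in M_{\ov\lambda}$ and $m=\la_{i,\tau}$.

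The case $x=K_\mu$ is verified exactly as the corresponding step of \cref{thm:diag}: by \cref{prop:rkone}(1) we have $\TT''_{i,+1}(K_\mu)=K_{\bs_i\mu}$, and the weight computation of \cref{lem:diagmu}, applied to the monomials $B_{\tau i}^{(a)}B_i^{(b)}$ in \eqref{eq:ibraid2'}, yields $\TT''_{i,+1}(K_\mu v)=K_{\bs_i\mu}\TT''_{i,+1}(v)$. For $x=B_j$, formulas \eqref{eq:ibraid2'}--\eqref{eq:zm'} show that \eqref{eq:qsplit2} is equivalent to the operator identity
\begin{equation}
\label{eq:Tzm-double}
\TT''_{i,+1}(B_j)\,z'_{m,\bvs_\star}(B_i,B_{\tau i})=z'_{m-c_{ij}+c_{\tau i,j},\,\bvs_\star}(B_i,B_{\tau i})\,B_j,
\end{equation}
which is a rank-one statement when $j\in\{i,\tau i\}$ and a rank-two statement otherwise.

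To prove \eqref{eq:Tzm-double} I would apply the anti-linear algebra homomorphism $\psi^\imath$ to the already-established $\TT'_{i,-1}$ identity \eqref{eq:Tzm} (valid at $\bvs_\star$ as an identity of operators on integrable modules, equivalently in a completion of $\Ui$). Using $\psi^\imath(B_j)=B_j$ and \cref{lem:Tbar} to replace $\psi^\imath\big(\TT'_{i,-1}(B_j)\big)$ by $(-1)^{c_{ij}-c_{\tau i,j}}\TT''_{i,+1}(B_j)$, this produces an identity of the shape \eqref{eq:Tzm-double} but with $z'_{m,\bvs_\star}$ replaced by $\psi^\imath(z_{m,\bvs_\star})$. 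The remaining, and crucial, step is to rewrite $\psi^\imath(z_{m,\bvs_\star})$ in terms of $z'_{m,\bvs_\star}$. This is where the diagonal case genuinely departs from the split case of \cref{prop:split2}: there $f_{\ov p}(B_i)$ and $\tilde f_{\ov p}(B_i)$ are polynomials in the single generator $B_i$ and $\psi^\imath$ interchanges them up to a sign (\cref{lem:sbar}); here $B_i$ and $B_{\tau i}$ do not commute and the operators $z_{m,\bvs_\star}$, $z'_{m,\bvs_\star}$ are written with the opposite ordering of these two root vectors, so $\psi^\imath$ does not simply interchange them. One must instead reorder $\psi^\imath(z_{m,\bvs_\star})$ --- via the $\imath$-analogue of Lusztig's commutation \cref{lem:EF} relating $B_i^{(a)}B_{\tau i}^{(b)}$ to $B_{\tau i}^{(a)}B_i^{(b)}$ --- into the form of $z'_{m,\bvs_\star}$ up to an explicit sign and a $k_i$-weight factor, which is then commuted through $B_j$ by the $q$-commutation of $k_i$ and $B_j$, precisely producing the index shift $m\mapsto m-c_{ij}+c_{\tau i,j}$ in \eqref{eq:Tzm-double}.

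I expect this reordering to be the main obstacle: it is the diagonal analogue of \cref{lem:sbar}(1), which \cref{lem:Tbar} alone does not supply, and the residual $k_i$-factor reflects the fact that the $\psi^\imath$-conjugate of $\TT'_{i,-1}$ differs from $\TT''_{i,+1}$ by a weight operator (in contrast to the commutative split case, where the two operators agree up to a sign). A shorter but logically later alternative would be to deduce \eqref{eq:qsplit2} directly from \cref{thm:diag} once the mutual-inverse relation $\TT'_{i,-1}\TT''_{i,+1}=\mathrm{Id}$ is known on $\Ui$ and on $M$ (the diagonal counterpart of \cref{cor:mod-inv}, itself a consequence of \cref{thm:res-diag}): applying $\TT'_{i,-1}$ to $\TT''_{i,+1}(x)\TT''_{i,+1}(v)$ and invoking \cref{thm:diag} collapses it to $xv$, giving \eqref{eq:qsplit2}. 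I would nonetheless prefer the self-contained $\psi^\imath$-route so as not to depend on the later comparison with Lusztig's symmetries.
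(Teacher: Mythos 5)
Your skeleton---reduction to generators, the $K_\mu$ case, fixing $\bvs=\bvs_\star$ via $\phi_{\bvs,\bvs'}$, reformulating the $B_j$ case as the paper's \eqref{eq:Tzm'}, and closing with \cref{lem:Tbar}---is exactly the paper's, and your diagnosis of where the difficulty sits (that applying $\psi^\imath$ to \eqref{eq:Tzm} produces $\psi^\imath(z_{m,\bvs_\star})$, which is \emph{not} $z'_{m,\bvs_\star}$ because $B_i$ and $B_{\tau i}$ appear in the opposite order) is correct. The gap is that your crucial step is only asserted: the needed statement is that
\begin{align*}
\psi^\imath\big(z_{m,\bvs_\star}(B_i,B_{\tau i})\big)\,v \;=\; (-1)^m\, z'_{m,\bvs_\star}(B_i,B_{\tau i})\,v
\qquad\text{for every iweight-$m$ vector } v,
\end{align*}
and this is \emph{not} a formal consequence of an ``$\imath$-analogue of \cref{lem:EF}.'' Under the identification of $\Ui_{i,\tau i}$ with $\U(\sl_2)$ at $\bvs_\star$ (matching $B_i,B_{\tau i},k_i$ with $F_1,E_1,K_1$ up to scalars), it is equivalent to comparing the two oppositely-ordered two-factor forms of Lusztig's operators, i.e.\ to the identity $T''_{1,+1}=(-1)^m q^m\,T'_{1,+1}$ on weight-$m$ vectors. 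That identity is true (and with it your bookkeeping does close: the sign $(-1)^{m'-m}=(-1)^{c_{ij}-c_{\tau i,j}}$ cancels against the sign in \cref{lem:Tbar}, and no residual $k_i$-factor survives), but proving it is a Lusztig-type rank-one module computation of the same order as \cref{lem:braid-neq}---it is essentially the content your argument is supposed to deliver. As written, the proposal trades the proposition for an unproven lemma of comparable depth.

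The paper's proof contains precisely the device you are missing, and it avoids any reordering. Instead of $\psi^\imath$ alone it uses $\psi^\imath\htau$, where $\htau$ is the involution of \cref{lem:inv}(2) swapping $B_i\leftrightarrow B_{\tau i}$ and inverting $k_i$: because $\htau$ interchanges the two root vectors, the purely algebraic identity \eqref{eq:psi-zm}, namely $\psi^\imath\htau\big(z_{-m,\bvs_\star}\big)=(-1)^m z'_{m,\bvs_\star}$, follows directly from the defining formulas \eqref{eq:zm}--\eqref{eq:zm'}, with no module computation at all. Since $\psi^\imath\htau$ does not preserve iweights, one cannot apply it termwise to \eqref{eq:Tzm}; the paper instead twists the module, defining $M^\star$ with action $x\star a=\psi^\imath\htau(x)a$ (still in $\mathcal C$), applies the already-proved \eqref{eq:Tzm} on $M^\star$ with $j$ replaced by $\tau j$ and $m$ by $-m$, unravels the $\star$-action using $\htau\,\TT'_{i,-1}=\TT'_{i,-1}\,\htau$ together with \eqref{eq:psi-zm}, and only then invokes \cref{lem:Tbar}. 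Your fallback route (deduce \eqref{eq:qsplit2} from \cref{thm:diag} plus the mutual-inverse property of \cref{cor:qsmod-inv}) is logically sound and non-circular, but, as you note, it rests on \cref{thm:res-diag}, which the paper establishes only afterwards.
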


\begin{proof}
By the same argument as in the first paragraph of the proof of Theorem~\ref{thm:split}, it suffices to verify the identity \eqref{eq:qsplit2} as $x$ runs over a generating set of $\Ui$ and $v\in M_{\ov{\lambda}}$ for $\ov{\lambda}\in X_\imath$; set $m= \langle h_i-h_{\tau i},\ov{\lambda}\rangle $.

When $x=K_\mu$ for $\mu\in Y^\imath$, the proof of \eqref{eq:qsplit2} is essentially the same as the proof of \eqref{eq:qsplit} in Theorem~\ref{thm:diag}.

We prove \eqref{eq:qsplit2} for $x=B_j$\; $(j\in \I)$. By \eqref{eq:ibraid2'}, it suffices to show that 
\begin{align}
\label{eq:Tzm'}
 \TT''_{i,+1}(B_j) z'_{m,\bvs}( B_i,B_{\tau i})v =z'_{m-c_{ij}+c_{\tau i,j},\bvs}( B_i,B_{\tau i}) B_jv.
\end{align}
Using $\phi_{\bvs,\bvs'}$ and similar arguments in the proof of Theorem~\ref{thm:split}, it suffices to verify \eqref{eq:Tzm'} for the parameter  $\bvs=\bvs_\star$. Recall from \S\ref{subsec:ibar} that $\vs_{i,\star}=\vs_{\tau i,\star}=1$ and the bar involution $\psi^\imath$ on $\Ui$ exists for $\bvs_\star$. Using \eqref{eq:zm}--\eqref{eq:zm'}, we obtain the following identity
\begin{align}\label{eq:psi-zm}
\psi^\imath \tau \big(z_{-m,\bvs_\star}(B_i,B_{\tau i})\big)=(-1)^m z'_{m,\bvs_\star}(B_i,B_{\tau i}).
\end{align} 
Define a new $\Ui$-module $M^\star$ with the same underlying set as $M$ equipped with a twisted action $x\star a :=\psi^\imath \htau(x)a$, for $x\in \Ui, a\in M$. Note that $M^\star\in \mathcal C$. Replacing $j$ by $\tau j$ and $M$ by $M^\star$ in \eqref{eq:Tzm}, we obtain
\begin{align}
\label{eq:Tzmtau}
 \TT'_{i,-1}(B_{\tau j}) z_{-m,\bvs_\star}( B_i,B_{\tau i})\star v =z_{-m+c_{ij}-c_{\tau i,j},\bvs_\star}( B_i,B_{\tau i}) B_{\tau j}\star v.
\end{align}
Recall that $\htau \TT' _{i,-1}=\TT' _{i,-1} \htau$. Unraveling the $\star$-action in \eqref{eq:Tzmtau} and using \eqref{eq:psi-zm}, we obtain 
\begin{align}  \label{eq:desired}
\psi^\imath \Big(\TT' _{i,-1}(B_j)\Big) \cdot  z'_{m,\bvs_\star}( B_i,B_{\tau i})v = (-1)^{c_{ij}-c_{\tau i,j}} z'_{m-c_{ij}+c_{\tau i,j},\bvs_\star}( B_i,B_{\tau i}) B_j v.
\end{align}
By Lemma~\ref{lem:Tbar}, we have $\psi^\imath  \TT'_{i,-1}(B_j)=  (-1)^{c_{ij}-c_{\tau i,j}} \TT''_{i,+1}(B_j)  $. Then the relation \eqref{eq:Tzm'} follows from \eqref{eq:desired} as desired. 
\end{proof}


\subsection{Identities in rank one diagonal type with parameter $\bvs_\dm$}
\label{subsec:rk1}

Set the parameter $\bvs =\bvs_\dm$ in this subsection. We write $z_m(B_i,B_{\tau i})$ for $z_{m,\bvs_\dm}(B_i,B_{\tau i})$. Set $\Ui_{i,\tau i}$ to be the subalgebra of $\Ui$ generated by $B_j,k_j^{\pm1}$ for $j=i,\tau i$. The algebra $\Ui_{i,\tau i}$ itself is an iquantum group of type AIII$_{11}$. By \cite[Theorem 7.4]{Let02}, the defining relations of $\Ui_{i,\tau i}$ are 
\begin{align*}
[B_i,B_{\tau i}]&=q^{-1} \frac{k_i-k_i^{-1}}{q_i-q_i^{-1}}, \qquad 
k_i B_i k_i^{-1}=q_i^{-2} B_i, \qquad k_i B_{\tau i} k_i^{-1}=q_i^{2} B_{\tau i}.
\end{align*}
Thus, there is an algebra isomorphism
\begin{align}
\label{eq:iso-sl2}
\Ui_{i,\tau i}\cong \U(\sl_2), \quad q_i\mapsto q, \quad B_i \mapsto F_1,  \quad B_{\tau i} \mapsto   -q^{-1} E_1, \quad k_i\mapsto K_1.
\end{align}
Under this isomorphism, we can view an (integrable) $\Ui_{i,\tau i}$-module as an $\U(\sl_2)$-module, and then an iweight $\ov{\lambda}$ for $\Ui_{i,\tau i}$ is identified with a $\sl_2$-weight $m$, where $m=\langle h_i-h_{\tau i},\ov{\lambda}\rangle$. Hence, we shall use an integer to denote an iweight.

In addition,  by Proposition~\ref{prop:rkone}, the braid group action $\TT'_{i,-1}$ on $\Ui_{i,\tau i}$ is explicitly given by
\begin{align}\label{eq:iso-braid}
\TT'_{i,-1}(B_i)=q_i B_{\tau i} k_{i},\qquad \TT'_{i,-1}(B_{\tau i})=q_i B_{i} k_{i}^{-1}.
\end{align}
Then the symmetry $\TT'_{i,-1}$ on $\Ui_{i,\tau i}$ is identified with Lusztig's symmetry $T'_{1,-1}$ on $\U(\sl_2)$. By \eqref{eq:ibraid2}, the action of $\TT'_{i,-1}$ on integrable $\Ui$-modules is identified with the action of $T'_{1,-1}$ on $\U(\sl_2)$-modules.


Following \cite[\S2.1]{Lus09}, the action of $T'_{1,-1}$ on a weight $m$ vector $v$ is given by
\begin{align}\label{eq:CKM}
  T'_{1,-1} v= \sum_{a-b=m} (-1)^b q^{-b}  F_{1}^{(a)}  E_{1}^{(b)}v.
\end{align}

\begin{lemma} 
\label{lem:braid-neq}
Let $v'$ be a $\U(\sl_2)$-weight vector with weight $m'< m$. Then we have
\begin{align}\label{eq:braid-neq}
 \sum_{a-b=m} (-1)^b q^{b(m-m'-1)}  F_{1}^{(a)}  E_{1}^{(b)}v'=0.
\end{align}
\end{lemma}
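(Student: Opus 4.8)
The plan is to reduce the identity to the finite-dimensional simple modules and then evaluate the resulting scalar by a $q$-Vandermonde identity, concluding via a complementary case split. By complete reducibility of integrable $\U(\sl_2)$-modules (via the identification \eqref{eq:iso-sl2}) and by linearity, it suffices to treat the case $v' = v_{k'} := F_1^{(k')}\eta$, a canonical basis vector of the simple module $L(n)$ of highest weight $n$, so that $m' = n-2k'$ and $n-k' = k'+m'$; the sum is finite because $E_1,F_1$ act locally nilpotently.

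First I would compute each summand $F_1^{(a)}E_1^{(b)}$ with $a=b+m$ on $v_{k'}$, using the standard formulas $E_1^{(b)} v_k = \qbinom{n+b-k}{b}_q v_{k-b}$ and $F_1^{(a)} v_k = \qbinom{a+k}{a}_q v_{k+a}$ recalled earlier. Since every surviving term lands in the single vector $v_{k'+m}$, the left-hand side collapses to
\[
\sum_{a-b=m} (-1)^b q^{b(m-m'-1)} F_1^{(a)} E_1^{(b)} v_{k'}
=\Big(\sum_{b\ge 0} (-1)^b q^{b(m-m'-1)} \qbinom{b+k'+m'}{b}_q \qbinom{m+k'}{b+m}_q\Big) v_{k'+m},
\]
where $v_{k'+m}:=F_1^{(k'+m)}\eta$ is understood to be $0$ unless $0\le k'+m\le n$.

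Next I would absorb the sign using the standard $q$-binomial identity $(-1)^b\qbinom{b+k'+m'}{b}_q=\qbinom{-(k'+m'+1)}{b}_q$ (cf. \cite[\S1.3]{Lus93}), rewrite $\qbinom{m+k'}{b+m}_q=\qbinom{m+k'}{k'-b}_q$, and apply the base-$q$ form of \eqref{eq:1.3.1} (namely \cite[1.3.1(e)]{Lus93} with $v=q$) with $x=m+k'$, $c=k'-b$, $y=-(k'+m'+1)$, $a=b$. A short bookkeeping of the exponent gives $ax-cy=b(m-m'-1)+k'(k'+m'+1)$, so the scalar equals $q^{-k'(k'+m'+1)}\qbinom{x+y}{k'}_q=q^{-k'(k'+m'+1)}\qbinom{m-m'-1}{k'}_q$. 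I would then finish by a complementary case split on $k'$: since $m'<m$ we have $m-m'-1\ge 0$; if $k'\ge m-m'$ then $k'>m-m'-1\ge 0$ forces $\qbinom{m-m'-1}{k'}_q=0$, while if $k'<m-m'$ then $k'+m>m'+2k'=n$, so the target vector $v_{k'+m}$ itself vanishes in $L(n)$. As the two ranges exhaust all $k'$, the left-hand side is $0$ in every case.

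The main obstacle, beyond the routine but error-prone tracking of the $q$-exponent in the Vandermonde step, is conceptual rather than computational: neither the binomial coefficient $\qbinom{m-m'-1}{k'}_q$ nor the image vector $v_{k'+m}$ vanishes uniformly in $k'$, so the vanishing is genuinely produced by the complementary case split, with exactly one of the two mechanisms active according to whether $k'\ge m-m'$ or $k'<m-m'$. Verifying that these two ranges are truly exhaustive (and that the boundary $k'+m<0$ is harmless, since then $v_{k'+m}=0$ as well) is the point that requires care.
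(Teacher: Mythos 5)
Your proposal is correct, but it follows a genuinely different route from the paper's own proof. The paper never leaves the algebra: using the $EF$-commutation formula of Lemma~\ref{lem:EF} it rewrites each $F_1^{(a)}E_1^{(b)}v'$ in the order $E_1^{(b-s)}F_1^{(b+m-s)}v'$ (the weight hypothesis enters only through the binomial $\qbinom{m-m'}{s}$ produced by the $K$-factor), regroups, and kills the coefficient of each $E_1^{(s)}F_1^{(m+s)}v'$ by the alternating-sum identity $\sum_t(-1)^tq^{t(N-1)}\qbinom{N}{t}=0$ for $N=m-m'\ge 1$ (cf. \cite[1.3.4]{Lus93}); it writes this out for $m\ge 0$ and declares $m<0$ similar. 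You instead reduce to canonical basis vectors of the simples $L(n)$ via complete reducibility, collapse the sum to a single scalar multiple of $v_{k'+m}$, evaluate that scalar by the $q$-Vandermonde identity \eqref{eq:1.3.1} as $q^{-k'(k'+m'+1)}\qbinom{m-m'-1}{k'}$, and conclude by the complementary case split ($k'\ge m-m'$ kills the binomial, $k'<m-m'$ kills the vector since $k'+m>n$). Both arguments are sound; I checked your exponent bookkeeping $ax-cy=b(m-m'-1)+k'(k'+m'+1)$ and the negation identity $(-1)^b\qbinom{b+k'+m'}{b}=\qbinom{-(k'+m'+1)}{b}$, which hold for balanced $q$-binomials (the paper uses the same trick in base $q^2$ in the proof of \cref{prop:inverse}). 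What each buys: your proof treats all $m\in\Z$ uniformly, with no case split on the sign of $m$, and makes the vanishing mechanism transparent on each simple summand; the paper's proof is shorter, avoids any appeal to complete reducibility, and is module-independent — it applies to any weight vector for which the sum makes sense, whereas your reduction requires the ambient module to be integrable. In the paper's applications (e.g.\ the claim \eqref{claim_BBb} inside the proof of \cref{prop:qsBB}) the vectors do lie in integrable modules, so your extra hypothesis costs nothing there.
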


\begin{proof}
We prove this lemma for $m\geq0$; the proof for $m<0$ is similar and hence omitted.

Using Lemma~\ref{lem:EF}, we rewrite the left-hand side of \eqref{eq:braid-neq} as
\begin{align*}
 &\sum_{a-b=m} (-1)^b q^{b(m-m'-1)}  F_{1}^{(a)}  E_{1}^{(b)}v'
 \\
 =& \; \sum_{b\geq 0} \sum_{s=0}^{b}  (-1)^b q^{b(m-m'-1)}\qbinom{m-m'}{s} E_{1}^{(b-s)}F_{1}^{(b+m-s)} v'
 \\ 
 =& \;   \sum_{s\geq 0} (-1)^s q_i^{s(m-m'-1)}\Big(\sum_{b\geq s} (-1)^{b-s} q^{(b-s)(m-m'-1)}    \qbinom{m-m'}{b-s} \Big)E_{1}^{( s)}F_{1}^{(m+s)} v'.
\end{align*}
Since $m> m'$, we have $\sum_{b\geq s} (-1)^{b-s} q^{(b-s)(m-m'-1)}    \qbinom{m-m'}{b-s}=0$, for fixed $s\ge 0$; cf. \cite[1.3.4]{Lus93}.
\end{proof}

\begin{proposition}
\label{prop:qs-rkone}
Let $M$ be an integrable $\Ui$-module and $v\in M_{\ov{\lambda}}$. Set  $m=\langle h_i-h_{\tau i},\ov{\lambda} \rangle $. We have 
\begin{align}
\label{qsrkone1}
z_{m-2}( B_i,B_{\tau i}) B_i v &= \TT'_{i,-1}(B_i) z_m( B_i,B_{\tau i})v,
\\\label{qsrkone2}
z_{m+2}( B_i,B_{\tau i}) B_{\tau i} v&= \TT'_{i,-1}(B_{\tau i}) z_m( B_i,B_{\tau i})v.
\end{align} 
\end{proposition}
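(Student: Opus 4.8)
The plan is to transport the whole statement into the rank one quantum group $\U(\sl_2)$ via the isomorphism \eqref{eq:iso-sl2} and then recognize both identities as instances of Lusztig's compatibility \eqref{eq:mod3}. First I would record that, for $\bvs=\bvs_\dm$, we have $\vs_i=\vs_{\tau i}=-q_i^{-1}$, and a direct simplification of \eqref{eq:zm} (using $a+b=2b+m$, so that all the prefactors $(-1)^{m/2}q_i^{-m/2}$, $(-1)^bq_i^{-b}$ and $\vs_i^{-(a+b)/2}$ cancel) gives $z_m(B_i,B_{\tau i})=\sum_{a-b=m}B_i^{(a)}B_{\tau i}^{(b)}$. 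Under \eqref{eq:iso-sl2} this becomes $\sum_{a-b=m}(-1)^bq^{-b}F_1^{(a)}E_1^{(b)}$, which is exactly $T'_{1,-1}$ acting on a weight $m$ vector by \eqref{eq:CKM}, as already noted in the text. All operators occurring in \eqref{qsrkone1}--\eqref{qsrkone2} — the various $z$'s, $B_i$, $B_{\tau i}$, and $\TT'_{i,-1}(B_i)$, $\TT'_{i,-1}(B_{\tau i})$ — lie in $\Ui_{i,\tau i}$, so the entire claim is a statement about the $\U(\sl_2)$-module obtained by restricting $M$ to $\Ui_{i,\tau i}$ and applying \eqref{eq:iso-sl2}. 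Since $M$ is integrable over $\Ui$, the generators $B_i,B_{\tau i}$ act locally nilpotently (Definition~\ref{def:integrable}), so this $\U(\sl_2)$-module is integrable and $T'_{1,-1}$ is well defined on it.

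Next I would rewrite \eqref{qsrkone1}. As $v$ has $\U(\sl_2)$-weight $m$, the vector $B_iv=F_1v$ has weight $m-2$, so the left side $z_{m-2}(B_i,B_{\tau i})B_iv$ equals $T'_{1,-1}(F_1v)$, while $z_m(B_i,B_{\tau i})v=T'_{1,-1}(v)$. Using \eqref{eq:iso-braid} and \eqref{eq:iso-sl2}, $\TT'_{i,-1}(B_i)=q_iB_{\tau i}k_i$ maps to $-E_1K_1=T'_{1,-1}(F_1)$ by Proposition~\ref{prop:braid1}. Hence \eqref{qsrkone1} is precisely $T'_{1,-1}(F_1v)=T'_{1,-1}(F_1)\,T'_{1,-1}(v)$, the $u=F_1$ case of \eqref{eq:mod3}. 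The same routine handles \eqref{qsrkone2}: now $B_{\tau i}v=-q^{-1}E_1v$ has weight $m+2$, so the left side is $T'_{1,-1}(-q^{-1}E_1\,v)$, and $\TT'_{i,-1}(B_{\tau i})=q_iB_ik_i^{-1}$ maps to $qF_1K_1^{-1}=q^{-1}K_1^{-1}F_1=T'_{1,-1}(-q^{-1}E_1)$; thus \eqref{qsrkone2} is the $u=-q^{-1}E_1$ case of \eqref{eq:mod3}.

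The only points demanding care are the scalar and weight bookkeeping in these translations: that the $\bvs_\dm$-specialization of $z_m$ really strips off every prefactor to leave the bare sum $\sum_{a-b=m}B_i^{(a)}B_{\tau i}^{(b)}$, and that the automorphism values $\TT'_{i,-1}(B_i)$ and $\TT'_{i,-1}(B_{\tau i})$ from \eqref{eq:iso-braid} match $T'_{1,-1}(F_1)$ and $T'_{1,-1}(-q^{-1}E_1)$ exactly — the latter using the commutation $K_1^{-1}F_1=q^2F_1K_1^{-1}$ to identify $qF_1K_1^{-1}$ with $q^{-1}K_1^{-1}F_1$. These are elementary but must be carried out precisely, since a stray sign or power of $q$ would break the reduction. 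Once this dictionary is set up there is no further obstacle, as \eqref{eq:mod3} is available for the integrable $\U(\sl_2)$-module $M$ and immediately yields both \eqref{qsrkone1} and \eqref{qsrkone2}.
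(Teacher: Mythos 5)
Your proposal is correct and follows essentially the same route as the paper: transport everything into $\U(\sl_2)$ via the isomorphism \eqref{eq:iso-sl2}, identify $z_m$ acting on a weight-$m$ vector with $T'_{1,-1}$ via \eqref{eq:CKM}, match $\TT'_{i,-1}(B_i)$, $\TT'_{i,-1}(B_{\tau i})$ from \eqref{eq:iso-braid} with $T'_{1,-1}(F_1)$, $T'_{1,-1}(-q^{-1}E_1)$, and invoke Lusztig's compatibility \eqref{eq:mod3}. The only cosmetic difference is that you carry out both identities and the scalar bookkeeping explicitly, whereas the paper proves \eqref{qsrkone1} and declares \eqref{qsrkone2} analogous.
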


\begin{proof}
The proofs for these two identities are similar and we only prove \eqref{qsrkone1} here. Under the isomorphism \eqref{eq:iso-sl2}, we view $M$ as a $\U(\sl_2)$-module and then $v$ is a $\U(\sl_2)$-weight vector of weight $m$. Using the compatibility of Lusztig's symmetry, we obtain 
\[
T'_{1,-1}(F_1v)=T'_{1,-1}(F_1)T'_{1,-1}v.
\] By \eqref{eq:CKM}, we have $T'_{1,-1}v=z_m(F_1,q^{-1}E_1)v$ and $T'_{1,-1}(F_1v)=z_{m-2}(F_1,q^{-1}E_1)v$. Thus, we have
\[
z_{m-2}(F_1,q^{-1}E_1)v=T'_{1,-1}(F_1) z_m(F_1,q^{-1}E_1)v.
\]
The desired identity follows once we identify $T'_{1,-1}$ and $\TT'_{1,-1}$; see \eqref{eq:iso-sl2}--\eqref{eq:iso-braid}.
\end{proof}

\subsection{Identities in rank two diagonal type with parameter $\bvs_\dm$}
\label{subsec:rk2}

Set the parameter $\bvs=\bvs_\dm$ in this subsection. Then we have $\vs_i=\vs_{\tau i}=-q_i^{-1}$.


Recall the element $b_{i,\tau i,j;n,m_1,m_2}$ from \eqref{diagdiv} and \eqref{def:diagBij1}-\eqref{def:diagBij2}. 
Write $b_{n, m_1,m_2}$ for $b_{i,\tau i,j;n,m_1,m_2}$, and $\alpha=-c_{ij}, \beta =-c_{\tau i,j}$.

\begin{lemma}
\label{lem:qsBBij}
We have, for any $ \ell\geq 0$,
\begin{align}
\label{eq:bBij1}
&b_{n, n\alpha,n\beta} B_i^{(\ell)}=\sum_{x=0}^\ell q_i^{(\ell-x)(n\alpha-x)-x^2} B_i^{(\ell-x)} b_{n, n\alpha,n\beta-x}k_i^x.
\end{align}
\end{lemma}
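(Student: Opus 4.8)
The plan is to prove the identity by induction on $\ell$, with the engine being a single-step commutation relation extracted from the recursion \eqref{def:diagBij1}. First I would specialize \eqref{def:diagBij1} to $m_1 = n\alpha$; since $b_{n,n\alpha+1,m_2} = 0$ by \eqref{SerreL:diagonal}, the first term on the right-hand side drops, and after clearing the power of $q_i$ one is left, for every $m_2 \ge 0$, with
\begin{equation*}
b_{n,n\alpha,m_2}\, B_i = q_i^{n\alpha}\, B_i\, b_{n,n\alpha,m_2} + q_i^{-1}[n\beta - m_2 + 1]_i\, b_{n,n\alpha,m_2-1}\, k_i .
\end{equation*}
Since $[1]_i = 1$, taking $m_2 = n\beta$ here is exactly the case $\ell = 1$ of the lemma, and this relation is the only structural input beyond bookkeeping.

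For the inductive step, assuming the formula for a given $\ell$, I would write $B_i^{(\ell+1)} = [\ell+1]_i^{-1} B_i^{(\ell)} B_i$ and multiply the inductive hypothesis on the right by $B_i$. Moving this $B_i$ leftward past each summand requires two commutations: the torus relation $k_i^x B_i = q_i^{-2x} B_i k_i^x$ (a consequence of $k_i B_i k_i^{-1} = q_i^{-2} B_i$, valid since $c_{i,\tau i} = 0$; cf. the relations recorded in \S\ref{subsec:rk1}), and the single-step relation above taken at $m_2 = n\beta - x$, which produces the factor $[x+1]_i$ in place of $[n\beta - m_2 + 1]_i$. Together with $B_i^{(\ell-x)} B_i = [\ell-x+1]_i B_i^{(\ell-x+1)}$, each term of the sum splits into two contributions, and after reindexing I would collect the total coefficient of $B_i^{(\ell+1-y)}\, b_{n,n\alpha,n\beta-y}\, k_i^y$ for each $0 \le y \le \ell+1$.

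The main (though ultimately routine) obstacle is the resulting scalar bookkeeping: after factoring out the target power $q_i^{(\ell+1-y)(n\alpha-y)-y^2}$, the two contributions combine into
\begin{equation*}
q_i^{-y}[\ell-y+1]_i + q_i^{\ell+1-y}[y]_i = [\ell+1]_i .
\end{equation*}
I would verify this by expanding both $q$-integers, whereupon the cross terms $q_i^{\ell+1-2y}$ cancel and the identity collapses to the definition of $[\ell+1]_i$. This single identity also handles the boundary indices $y=0$ and $y=\ell+1$ automatically through the convention $[0]_i = 0$, while the vanishing convention $b_{n,n\alpha,m_2} = 0$ for $m_2 < 0$ guarantees that the out-of-range summands drop consistently. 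Assembling these computations completes the induction and hence the proof.
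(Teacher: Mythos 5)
Your proof is correct and follows essentially the same route as the paper: induction on $\ell$ driven by the recursion \eqref{def:diagBij1} specialized at $m_1 = n\alpha$ (where \eqref{SerreL:diagonal} kills the $b_{n,n\alpha+1,m_2}$ term), together with $k_iB_ik_i^{-1}=q_i^{-2}B_i$ and the $q$-integer identity $q_i^{-y}[\ell-y+1]_i + q_i^{\ell+1-y}[y]_i = [\ell+1]_i$. The scalar bookkeeping you carried out, including the boundary cases $y=0$ and $y=\ell+1$ via $[0]_i=0$, checks out.
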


\begin{proof}
Follows by induction on $\ell$ using the recursive relation \eqref{def:diagBij1} and keeping in mind \eqref{SerreL:diagonal}.
\end{proof}

\begin{lemma}
\label{lem:qsbij}
We have, for any $0\leq x\leq n\beta$ and $\ell\geq 0$,
\begin{align}
\label{eq:bBij3}\notag
&\quad b_{n,n\alpha,n\beta-x} B_{\tau i}^{(\ell)}\\
& = \sum_{y=0}^x \sum_{r=0}^{n\alpha} (-1)^y q_i^{(\ell-r)(n\beta-2x+y)-\ell r-y} \qbinom{n\beta-x+y}{y}_i B_{\tau i}^{(\ell-r-y)} b_{n,n\alpha-r,n\beta-x+y} k_i^{-r}.
\end{align}
\end{lemma}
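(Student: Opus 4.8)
The plan is to prove the identity by induction on $\ell$, in the same spirit as Lemma~\ref{lem:qsBBij} but now driven by the second recursion \eqref{def:diagBij2} (the one involving $B_{\tau i}$) rather than \eqref{def:diagBij1}. Write $R(x,\ell)$ for the asserted right-hand side. The crucial structural difference from Lemma~\ref{lem:qsBBij} is that there, multiplying $b_{n,n\alpha,n\beta}$ by $B_i$ tries to raise $m_1$ to $n\alpha+1>n\alpha$, which vanishes by \eqref{SerreL:diagonal} and collapses everything to a single sum; here, multiplying $b_{n,n\alpha,n\beta-x}$ by $B_{\tau i}$ raises $m_2$ from $n\beta-x$, which does \emph{not} vanish (as $n\beta-x+1\le n\beta$), so a genuine double sum is unavoidable. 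For the base case $\ell=0$, the factor $B_{\tau i}^{(\ell-r-y)}$ forces $r=y=0$, leaving exactly $b_{n,n\alpha,n\beta-x}=b_{n,n\alpha,n\beta-x}B_{\tau i}^{(0)}$.

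For the inductive step I would use $B_{\tau i}^{(\ell+1)}=\frac{1}{[\ell+1]_i}B_{\tau i}^{(\ell)}B_{\tau i}$, so that it suffices to show $R(x,\ell)\,B_{\tau i}=[\ell+1]_i\,R(x,\ell+1)$. First I would commute $B_{\tau i}$ to the left past each $k_i^{-r}$ via $k_i^{-r}B_{\tau i}=q_i^{-2r}B_{\tau i}k_i^{-r}$ (from $k_iB_{\tau i}k_i^{-1}=q_i^2B_{\tau i}$), and then apply \eqref{def:diagBij2} to each product $b_{n,n\alpha-r,n\beta-x+y}B_{\tau i}$. With $m_1=n\alpha-r$, $m_2=n\beta-x+y$ one has $-(n\beta-2m_2)=n\beta-2x+2y$ and $n\alpha-m_1+1=r+1$, so the recursion rewrites $b_{n,n\alpha-r,n\beta-x+y}B_{\tau i}$ as three terms with heads $B_{\tau i}b_{n,n\alpha-r,n\beta-x+y}$, $b_{n,n\alpha-r,n\beta-x+y+1}$, and $b_{n,n\alpha-r-1,n\beta-x+y}k_i^{-1}$. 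Using $B_{\tau i}^{(m)}B_{\tau i}=[m+1]_iB_{\tau i}^{(m+1)}$ on the first family and reindexing $y\mapsto y-1$ on the second and $r\mapsto r-1$ on the third, all three families feed the same monomial $B_{\tau i}^{(\ell+1-r-y)}b_{n,n\alpha-r,n\beta-x+y}k_i^{-r}$ indexed exactly as in $R(x,\ell+1)$.

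Collecting the coefficients of that monomial, the second family (reindexed) carries $\qbinom{n\beta-x+y-1}{y-1}_i[n\beta-x+y]_i=[y]_i\qbinom{n\beta-x+y}{y}_i$, so after pulling out the common $\qbinom{n\beta-x+y}{y}_i$ the target identity reduces to a single $q$-integer relation of the shape $q^{a_1}[\ell-r-y+1]_i+q^{a_2}[y]_i+q^{a_3}[r]_i=q^{a_0}[\ell+1]_i$ with explicit exponents; this follows from the splitting $[\ell+1]_i=[(\ell-r-y+1)+y+r]_i$ via iterated $[a+b]_i=q_i^{b}[a]_i+q_i^{-a}[b]_i$. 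The summation ranges stay intact automatically: the second family at $y=x$ gives $b_{n,n\alpha-r,n\beta+1}=0$ and the third family at $r=n\alpha$ gives $b_{n,-1,n\beta-x+y}=0$ by \eqref{SerreL:diagonal} together with negativity of an index, matching the bounds $0\le y\le x$ and $0\le r\le n\alpha$.

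The main obstacle is the bookkeeping in the inductive step: running the three reindexed families simultaneously while keeping consistent the signs and the numerous $q_i$-powers, including the $q_i^{-2r}$ from commuting past $k_i^{-r}$, the prefactors $q_i^{-(n\beta-2m_2)}$ and $q_i^{-1}$ from the recursion, and the sign flips produced by $(-1)^y$ under the shifts $y\mapsto y-1$ and $r\mapsto r-1$. Verifying that the recursion's minus sign on the middle term combines with the reindexing sign to render all three contributions additive (so that the clean $[\ell+1]_i$-split applies) is the one place where an error is easiest to make, and I expect the careful sign-and-exponent verification of that three-term $q$-integer identity to be the delicate core of the argument.
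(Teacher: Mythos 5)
Your proposal is correct and takes essentially the same route as the paper's own proof: induction on $\ell$ for fixed $x$, multiplying the inductive hypothesis by $B_{\tau i}$, commuting it past $k_i^{-r}$, applying the recursion \eqref{def:diagBij2} to each $b_{n,n\alpha-r,n\beta-x+y}B_{\tau i}$, and regrouping the three resulting families by reindexing so that everything reduces to one three-term $q$-integer identity (the paper's coefficient $\varepsilon_{y,r}$ computation), which, as you say, follows from iterated splittings $[a+b]_i=q_i^{b}[a]_i+q_i^{-a}[b]_i$ together with $\qbinom{n\beta-x+y-1}{y-1}_i[n\beta-x+y]_i=[y]_i\qbinom{n\beta-x+y}{y}_i$. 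Your boundary observations (vanishing of $b_{n,n\alpha-r,n\beta+1}$ and $b_{n,-1,\cdot}$) and the sign bookkeeping under the shifts are exactly what makes the paper's collected coefficients all additive, so there is no gap.
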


\begin{proof}
We prove \eqref{eq:bBij3} by induction on $\ell$ for any fixed $x$. The case $\ell=0$ is obvious. It suffices to show that $\eqref{eq:bBij3}_{\ell}\Rightarrow\eqref{eq:bBij3}_{\ell+1}$. Using the induction hypothesis and the recursive relation \eqref{def:diagBij2}, we have
 \begin{align*}
& [\ell+1]_i b_{n,n\alpha,n\beta-x} B_{\tau i}^{(\ell+1)}= b_{n,n\alpha,n\beta-x} B_{\tau i}^{(\ell)} B_{\tau i}
\\
=& \;  \sum_{y=0}^x \sum_{r=0}^{n\alpha} (-1)^y  q_i^{(\ell-r)(n\beta-2x+y)-\ell r-y-2r} \qbinom{n\beta-x+y}{y}_i  B_{\tau i}^{(\ell -r-y)} b_{n,n\alpha-r,n\beta-x+y} B_{\tau i} k_i^{-r}
\\
=& \;  \sum_{y=0}^x \sum_{r=0}^{n\alpha} (-1)^y  q_i^{(\ell -r)(n\beta-2x+y)-\ell r-y-2r} \qbinom{n\beta-x+y}{y}_i  B_{\tau i}^{(\ell -r-y)} \times
\\
&\qquad \times \big(q_i^{n\beta-2x+2y}B_{\tau i} b_{n,n\alpha-r,n\beta-x+y}
-q_i^{n\beta-2x+2y} [n\beta-x+y+1]_i b_{n,n\alpha-r,n\beta-x+y+1}
\\
&\qquad\qquad  +q_i^{-1} [r+1]_i b_{n,n\alpha-r-1,n\beta-x+y} k_i^{-1} \big) k_i^{-r} 
\\
=& \; \sum_{y=0}^x \sum_{r=0}^{n\alpha}(-1)^y q_i^{(\ell -r)(n\beta-2x+y)-\ell r-y-2r} \varepsilon_{y,r}B_{\tau i}^{(\ell +1-r-y)} b_{n,n\alpha-r,n\beta-x+y }k_i^{-r} ,
\end{align*}
where the scalar $\varepsilon_{y,r}$ is given by
\begin{align*}
\varepsilon_{y,r} &= q_i^{n\beta-2x+2y} \qbinom{n\beta-x+y}{y}_i [\ell -r-y+1]_i + q_i^{n\beta-2x+y+\ell +1} \qbinom{n\beta-x+y}{y}_i [r ]_i
\\
&\quad + q_i^{n\beta-2x+2y-\ell +r-1} \qbinom{n\beta-x+y-1}{y-1}_i [n\beta-x+y ]_i 
\\
&= q_i^{n\beta-2x+ y +r}\qbinom{n\beta-x+y}{y}_i [\ell +1]_i.
\end{align*}
Combining the above two computations, we obtain $\eqref{eq:bBij3}_{\ell +1}$ as desired.
\end{proof}

\begin{proposition}
\label{prop:qsBB}
Let $v$ be an iweight vector of iweight $m$. For any $j\in \wI,j\neq i,\tau i$, we have
\begin{align}
\label{eq:zBTBz}
  b_{n,n\alpha,n\beta} z_m( B_i,B_{\tau i})v &=z_{m+n\alpha-n\beta}( B_i,B_{\tau i}) b_{n,0,0} v.
\end{align}
\end{proposition}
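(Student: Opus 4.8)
The plan is to establish \eqref{eq:zBTBz} by a direct computation, acting on the iweight vector $v$ and pushing $b_{n,n\alpha,n\beta}$ to the right through the divided powers that build up $z_m(B_i,B_{\tau i})$. First I record that, for the distinguished parameter $\bvs_\dm$ (so $\vs_i=\vs_{\tau i}=-q_i^{-1}$), the scalar in \eqref{eq:zm} reduces to $1$ and hence $z_m(B_i,B_{\tau i})=\sum_{a-b=m}B_i^{(a)}B_{\tau i}^{(b)}$; this is exactly the Lusztig operator $T'_{1,-1}$ under $\Ui_{i,\tau i}\cong\U(\sl_2)$ and keeps the bookkeeping clean. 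I would then move $b_{n,n\alpha,n\beta}$ past each summand $B_i^{(a)}B_{\tau i}^{(b)}$ in two moves: past $B_i^{(a)}$ by \cref{lem:qsBBij}, then past $B_{\tau i}^{(b)}$ by \cref{lem:qsbij} (having first slid the intermediate $k_i^x$ across $B_{\tau i}^{(b)}$, picking up $q_i^{2bx}$), using the vanishing \eqref{SerreL:diagonal} to control the ranges.

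This produces a quadruple sum, over $a-b=m$ and the indices $x,y,r$ introduced by the two lemmas, of terms
\[
(-1)^y q_i^{E}\,\qbinom{n\beta-x+y}{y}_i\,B_i^{(a-x)}B_{\tau i}^{(b-r-y)}\,b_{n,n\alpha-r,n\beta-x+y}\,k_i^{x-r}\,v,
\]
with $E$ an explicit quadratic exponent. Acting on $v$ (of $k_i$-weight $m$) collapses $k_i^{x-r}v=q_i^{m(x-r)}v$, so I may organize the whole expression by the output pair $(m_1,m_2)=(n\alpha-r,\,n\beta-x+y)$. The claim is that every contribution with $(m_1,m_2)\neq(0,0)$ cancels, while the $(0,0)$ part reassembles into $z_{m+n\alpha-n\beta}(B_i,B_{\tau i})\,b_{n,0,0}v$, the right-hand side of \eqref{eq:zBTBz} (recall $b_{n,0,0}=X_{j,n}$). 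I expect the vanishing to proceed by two complementary mechanisms. In the $B_{\tau i}$-direction, for fixed $m_1$ and $m_2>0$ the remaining sum carries the factor $\qbinom{m_2}{y}_i$ against a single alternating $q_i$-power, so it telescopes to $0$ by the standard identity \cite[1.3.4]{Lus93}, in the same spirit as the split-type argument of \cref{prop:BBij0}. In the $B_i$-direction, the leftover terms with $m_2=0$ but $m_1>0$ form, under $\Ui_{i,\tau i}\cong\U(\sl_2)$, an inner sum of the shape $\sum_c(-1)^c q_i^{c(N-1)}F_1^{(c+M')}E_1^{(c)}w$ applied to $w=b_{n,m_1,0}v$; one checks that the $k_i$-weight of $w$ equals the ``wrong'' value $M'-N$ with $N=m_1>0$, so this sum vanishes by the rank one \cref{lem:braid-neq}.

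The main obstacle is the exponent bookkeeping needed to put the two vanishing mechanisms into usable form: one must rewrite $E$ (together with the $q_i^{m(x-r)}$ coming from $k_i^{x-r}v$ and the $q_i^{2bx}$ from the $k_i$-slide) so that, after isolating the summation index, the coefficient becomes respectively a pure alternating $q_i$-binomial sum and an expression of the precise form $(-1)^c q_i^{c(N-1)}$ demanded by \cref{lem:braid-neq}. The consistency check that every surviving monomial carries $k_i$-weight $-2m-n\alpha+n\beta$ guides this reindexing and pins down the admissible ranges. Finally I would record the value of the surviving $(0,0)$ coefficient and verify that it matches the coefficient $1$ appearing in $z_{m+n\alpha-n\beta}(B_i,B_{\tau i})$, thereby identifying the surviving sum with the right-hand side of \eqref{eq:zBTBz}; the rank one identities needed when $j\in\{i,\tau i\}$ are supplied separately by \cref{prop:qs-rkone}.
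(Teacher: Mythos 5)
Your proposal is correct and follows essentially the same route as the paper's proof: reduce to $\bvs_\dm$ so that $z_m(B_i,B_{\tau i})=\sum_{a-b=m}B_i^{(a)}B_{\tau i}^{(b)}$, push $b_{n,n\alpha,n\beta}$ through via \cref{lem:qsBBij} and then \cref{lem:qsbij} (sliding the intermediate $k_i$-power across $B_{\tau i}^{(b)}$ and evaluating it on $v$), kill all terms with nonzero second index by the alternating $q$-binomial identity, and kill the remaining terms with $0\le r<n\alpha$ by \cref{lem:braid-neq} via the $\sl_2$ identification, using the iweight $m-n\alpha-n\beta+2r$ of $b_{n,n\alpha-r,0}v$. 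The exponent bookkeeping you flag as the main obstacle is exactly what the paper's displayed computation carries out, and the surviving $r=n\alpha$ terms reassemble into $z_{m+n\alpha-n\beta}(B_i,B_{\tau i})\,b_{n,0,0}v$ with coefficient $1$, as you predict.
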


\begin{proof}
For the parameter $\vs_i=\vs_{\tau i}=-q_i^{-1}$, the formula \eqref{eq:zm} for $z_m(B_i,B_{\tau i})$ specializes to
\begin{align}
z_m( B_i,B_{\tau i}) =  \sum_{a-b=m}   B_{i}^{(a)}  B_{\tau i}^{(b)}.
\end{align}
Using Lemmas~\ref{lem:qsBBij}--\ref{lem:qsbij}, we compute $b_{ n,n\alpha,n\beta}z_m( B_i,B_{\tau i})v$ as follows 
\begin{align*}
    b_{n,n \alpha,n\beta}z_m( B_i,B_{\tau i})v
    &=\sum_{a-b=m} b_{n,n\alpha,n\beta} B_{i}^{(a)}  B_{\tau i}^{(b)}v
    \\
    &=\sum_{a-b=m} \sum_{x=0}^{\min(a,n\beta)} q_i^{(a-x)(n\alpha-x)-x^2+2xb} B_i^{(a-x)} 
    b_{n, n\alpha,n\beta-x} B_{\tau i}^{(b)}k_i^xv
    \\
     &=\sum_{a-b=m} \sum_{x=0}^{\min(a,n\beta)} \sum_{y=0}^x \sum_{r=0}^{n\alpha}q_i^{(a-x)(n\alpha-r) }
     (-1)^y q_i^{(b-r)(n\beta- x+y) -y} \times
     \\
     &\qquad\times\qbinom{n\beta-x+y}{y}_i  B_i^{(a-x)} B_{\tau i}^{(b-r-y)} b_{n,n\alpha-r,n\beta-x+y}  v
     \\
     &=\sum_{r= 0}^{n\alpha} \sum_{a\geq0} \sum_{x=0}^{n\beta} \sum_{y=0}^x q_i^{a(n\alpha-r) }
     (-1)^{x-y} q_i^{(a+x-m-r)(n\beta- y) -x+y} \times
     \\
     &\qquad\times\qbinom{n\beta-y}{x-y}_i B_i^{(a)} B_{\tau i}^{(a-r+y-m)} b_{n,n\alpha-r,n\beta-y}  v
     \\
     &=\sum_{r= 0}^{n\alpha} \sum_{a\geq 0}  \sum_{y=0}^{n\beta} \sum_{ x=y}^{n\beta} q_i^{a(n\alpha-r) }
     (-1)^{x-y} q_i^{(a+x-m-r)(n\beta- y) -x+y} \times
     \\
     &\qquad\times\qbinom{n\beta-y}{x-y}_i  B_i^{(a)} B_{\tau i}^{(a-r +y-m)} b_{n,n\alpha-r,n\beta-y}  v.
\end{align*}
Using the following $q$-binomial identity
\begin{align*}
 \sum_{ x=y}^\beta(-1)^{x-y} q_i^{(x-y)(n\beta-y-1)}\qbinom{n\beta-y}{x-y}_i=0, \qquad\forall y<n\beta,
\end{align*}
we simplify the above formula of $b_{n, n\alpha,n\beta}z_m( B_i,B_{\tau i})v$ as
\begin{align}
\label{eq:qsbzij}
    b_{ n,n\alpha,n\beta}z_m( B_i,B_{\tau i})v
    & =\sum_{r= 0}^{\alpha}   \sum_{a\geq 0} q_i^{a(n\alpha-r) }
         B_i^{(a)} B_{\tau i}^{(a-r +n\beta-m)} b_{n,n\alpha-r, 0} v.
\end{align}

We claim that 
\begin{align}  \label{claim_BBb}
      \sum_{a\geq 0} q_i^{a(n\alpha-r) }
         B_i^{(a)} B_{\tau i}^{(a-r +n\beta-m)} b_{n,n\alpha-r, 0} v=0, \qquad \forall 0\leq r<n\alpha.
\end{align}
Indeed, note that $b_{n,n\alpha-r, 0} v$ has iweight $m-n\alpha-n\beta+2r$. Then this claim follows from the isomorphism $\Ui_{i,\tau i}\cong \U(\sl_2)$ and Lemma~\ref{lem:braid-neq}.

Now plugging \eqref{claim_BBb} into \eqref{eq:qsbzij}, we obtain the desired identity \eqref{eq:zBTBz}.
\end{proof}

We now complete the proof of \cref{thm:diag} by proving the desired identity \eqref{eq:Tzm}.

\begin{corollary}\label{cor:diagBB}
 For any $j\in \wI,j\neq i,\tau i$, the identity \eqref{eq:Tzm} holds.
\end{corollary}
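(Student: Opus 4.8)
The plan is to obtain the corollary as the immediate $n=1$ specialization of Proposition~\ref{prop:qsBB}, so that no new computation is needed beyond a little index bookkeeping. Recall that we work with the distinguished parameter $\bvs_\dm$, to which the proof of \cref{thm:diag} has already reduced the verification of \eqref{eq:Tzm}; thus it suffices to establish, for $j\neq i,\tau i$,
\[
\TT'_{i,-1}(B_j)\, z_m(B_i,B_{\tau i})v = z_{m-c_{ij}+c_{\tau i,j}}(B_i,B_{\tau i})\, B_j v.
\]

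First I would rewrite the operator $\TT'_{i,-1}(B_j)$ on the left using Proposition~\ref{prop:TiBjZ}: since $c_{i,\tau i}=0$, that proposition gives $\TT'_{i,-1}(B_j) = b_{i,\tau i,j;1,-c_{ij},-c_{\tau i,j}}$, which in the shorthand of this subsection is precisely $b_{1,\alpha,\beta}$ with $\alpha=-c_{ij}$ and $\beta=-c_{\tau i,j}$. Then I would specialize \eqref{eq:zBTBz} of Proposition~\ref{prop:qsBB} to $n=1$, yielding
\[
b_{1,\alpha,\beta}\, z_m(B_i,B_{\tau i})v = z_{m+\alpha-\beta}(B_i,B_{\tau i})\, b_{1,0,0}\, v.
\]

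It then remains to match this identity against the target. Two small bookkeeping points are needed: that the base term satisfies $b_{1,0,0}=X_{j,1}=B_j$, and that the weight shift agrees, $\alpha-\beta=-c_{ij}+c_{\tau i,j}$. The former holds because $b_{n,0,0}=X_{j,n}$ is the initial condition of the defining recursion (see \eqref{diagdiv} and \eqref{def:diagBij1}--\eqref{def:diagBij2}), while $X_{j,1}=B_j$ since the degree-one idivided power $\dv{j,\ov{t}}{1}=\dv{j}{1}$ equals $B_j$ in every case; the latter is immediate. Substituting both into the specialized identity gives exactly \eqref{eq:Tzm} for $\bvs_\dm$. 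I do not anticipate any genuine obstacle at this stage: the entire analytic weight of the argument has already been absorbed into Proposition~\ref{prop:qsBB} — that is, into the commutation Lemmas~\ref{lem:qsBBij} and \ref{lem:qsbij} together with the $\U(\sl_2)$ vanishing input of Lemma~\ref{lem:braid-neq} — and the $n=1$ specialization introduces no new calculation.
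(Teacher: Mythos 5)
Your proposal is correct and follows essentially the same route as the paper: both set $n=1$ in Proposition~\ref{prop:qsBB}, identify $b_{1,\alpha,\beta}=\TT'_{i,\bvs_\dm,-1}(B_j)$ via the rank two formula (Proposition~\ref{prop:TiBjZ}, i.e.\ \S\ref{sec:braid-formula}) and $b_{1,0,0}=B_j$, and read off \eqref{eq:Tzm}. The only difference is that you spell out the bookkeeping ($X_{j,1}=B_j$ and $\alpha-\beta=-c_{ij}+c_{\tau i,j}$) which the paper leaves implicit; nothing further is needed.
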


\begin{proof}
Setting $n=1$ in Proposition~\ref{prop:qsBB}, we obtain 
\begin{align} \label{eq:bzm}
  b_{1,\alpha,\beta} z_m( B_i,B_{\tau i})v &=z_{m+\alpha-\beta}( B_i,B_{\tau i}) b_{1,0,0} v,
\end{align}
where $v$ can be any iweight vector of iweight $m$ in any integrable $\Ui$-module. 
Recall from \S\ref{sec:braid-formula} that $\TT_{i,\bvs_\dm,-1}'(B_j)=b_{1,\alpha,\beta}$ and $b_{1,0,0}=B_j$. Then the desired identity \eqref{eq:Tzm} follows by \eqref{eq:bzm}.
\end{proof}

\subsection{Relation with Lusztig symmetries}

We can view any $\U$-module as a $\Ui$-module by restriction. Recall from \eqref{braid_rescale} the normalized Lusztig symmetries $T'_{\bs_i,\bvs, -1}$ on $\U$. Thanks to $c_{i,\tau i}=0 $, we have $\bs_i=s_i s_{\tau i}$. 

\begin{theorem}
\label{thm:res-diag}
Let $c_{i,\tau i}=0 $ and $\bvs$ be a balanced parameter. Let $M$ be an integrable $\U$-module whose weights are bounded above. For any $v\in M$, we have
\begin{align}
\label{eq:ssTmod0}
  \TT'_{i,-1}(v)=\fX_{i,\bvs} T'_{\bs_i,\bvs,-1}(v),\qquad
  \TT''_{i,+1}(v)=T''_{\bs_i,\bvs,+1}(\fX_{i,\bvs }^{-1} v).
\end{align}
\end{theorem}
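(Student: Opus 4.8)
The plan is to follow faithfully the template of \cref{thm:res-split}, replacing the split rank one data by the diagonal data $\Ui_{i,\tau i}\cong\U(\sl_2)$ of \eqref{eq:iso-sl2}. First I would reduce to a single balanced parameter: for balanced $\bvs,\bvs'$ the isomorphism $\phi_{\bvs,\bvs'}$ is the restriction of $\Phi_{\ov{\bvs}\bvs'}$, which sends $\fX_{i,\bvs}$ to $\fX_{i,\bvs'}$ and intertwines the normalized Lusztig operators, so by \cref{thm:braid-iQG} it suffices to prove \eqref{eq:ssTmod0} for the fixed parameter $\bvs_\star$ of \eqref{def:bvs-star}, where $\vs_i=\vs_{\tau i}=1$ and the ibar involution $\psi^\imath$ is available. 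Next I would reduce to a single highest weight vector. Both operators appearing in \eqref{eq:ssTmod0} satisfy the same twisted compatibility with the symmetry on $\Ui$: for $\TT'_{i,-1}$ this is \cref{thm:diag}, while for $\fX_{i,\bvs}T'_{\bs_i,\bvs,-1}$ it follows by combining \eqref{eq:compTT} with the module compatibility of Lusztig's operators. Since $M$ is a sum of highest weight modules $L(\lambda)$ and each satisfies $L(\lambda)=\Ui\,\eta_\lambda$ (by the standard triangularity of $B_i=F_i+\vs_iE_{\tau i}K_i^{-1}$), it is then enough to verify the identity on a highest weight vector $\eta=\eta_\lambda$.

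The heart of the argument is the highest weight computation. Writing $m=\langle h_i,\lambda\rangle$ and $n=\langle h_{\tau i},\lambda\rangle$, the vector $\eta$ generates a $\U(\sl_2)\otimes\U(\sl_2)$-module $L(m)\boxtimes L(n)$, and both sides of \eqref{eq:ssTmod0} land in the single weight space spanned by $\{F_i^{(m-k)}F_{\tau i}^{(n-k)}\eta\mid 0\le k\le\min(m,n)\}$. On the right I would use that $\bs_i=s_is_{\tau i}$ and that Lusztig's operators carry $\eta$ to the extremal vector, giving $T'_{\bs_i,\bvs_\star,-1}(\eta)=(-q_i)^{-(m+n)/2}F_i^{(m)}F_{\tau i}^{(n)}\eta$; since $\U^+_{k(\alpha_i+\alpha_{\tau i})}$ is one dimensional, the rank one quasi $K$-matrix for type AIII$_{11}$ has the form $\fX_{i,\bvs_\star}=\sum_{k\ge 0}c_k\,E_i^{(k)}E_{\tau i}^{(k)}$, and because $E_i^{(k)}E_{\tau i}^{(k)}F_i^{(m)}F_{\tau i}^{(n)}\eta=F_i^{(m-k)}F_{\tau i}^{(n-k)}\eta$ the right hand side becomes $(-q_i)^{-(m+n)/2}\sum_k c_k F_i^{(m-k)}F_{\tau i}^{(n-k)}\eta$. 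On the left I would expand the idivided power formula \eqref{eq:ibraid2} on $\eta$, using the highest weight relations $B_{\tau i}^{(b)}\eta=F_{\tau i}^{(b)}\eta$ and the $q$-commutation of $F_i$ with $\vs_iE_{\tau i}K_i^{-1}$, to rewrite each $B_i^{(a)}B_{\tau i}^{(b)}\eta$ in the same basis. Matching coefficients then reduces the theorem to a single $q$-binomial identity together with the explicit determination of the $c_k$ from the defining intertwining relation of \cref{prop:qK}; this is the diagonal analogue of the canonical/icanonical inversion in \cref{prop:inverse}, and it is the step I expect to be the main obstacle.

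Finally, the identity for $\TT''_{i,+1}$ would be obtained by the same highest weight reduction together with the ibar involution, in exact parallel with \cref{thm:res-split}. Since $E_i\eta=E_{\tau i}\eta=0$ one has $\fX_{i,\bvs_\star}\eta=\eta$, hence $\fX_{i,\bvs_\star}^{-1}\eta=\eta$, so it remains to match $\TT''_{i,+1}(\eta)$ from \eqref{eq:ibraid2'} with $T''_{\bs_i,\bvs_\star,+1}(\eta)$; this follows from the $\TT'$ case by applying $\psi^\imath$ and invoking \cref{lem:Tbar} and \cref{prop:diag2}. I expect the only genuinely delicate point to be pinning down the shape of the AIII$_{11}$ quasi $K$-matrix and carrying out the ensuing $q$-binomial bookkeeping, as everything else is a transcription of the split rank one argument.
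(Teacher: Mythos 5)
Your reductions are sound, but your proposal and the paper's proof diverge sharply at the crucial step, and your version leaves that step undone. The paper does not reduce to highest weight vectors at all in the diagonal case: it observes that under the isomorphism \eqref{eq:U-iso} the pair $(\U_{i,\tau i},\Ui_{i,\tau i})$ becomes $\big(\U(\sl_2)\otimes\U(\sl_2),\,\Delta(\U(\sl_2))\big)$, under which $T'_{\bs_i,\bvs,-1}=T'_{i,-1}T'_{\tau i,-1}$ is identified with $T'_{1,-1}\otimes T'_{1,-1}$, the rank one quasi $K$-matrix $\fX_{i}$ is identified with Lusztig's quasi $R$-matrix $\Theta$ (comparing \cite[3.3.3]{DK19} with \cite[4.1.4]{Lus93}), and the operator $\TT'_{i,-1}$ of \eqref{eq:ibraid2} is identified with $\Delta(T'_{1,-1})$ acting on the module (compare \eqref{eq:CKM}). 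The first identity in \eqref{eq:ssTmod0} is then literally $\Delta(T'_{1,-1})v=\Theta\,(T'_{1,-1}\otimes T'_{1,-1})v$, which is \cite[Proposition 5.3.4]{Lus93}; the second follows because $\TT''_{i,+1}$ is identified with $T''_{1,+1}$, the inverse of $T'_{1,-1}$, so one may substitute $v\mapsto \TT''_{i,+1}v$ in the first identity. In other words, the diagonal case \emph{is} Lusztig's tensor-product/quasi-$R$-matrix setup in disguise, and no rank one computation is needed.

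Your route — reduce to the parameter $\bvs_\star$, then to a highest weight vector via the intertwining property (\cref{thm:diag} on one side, \eqref{eq:compTT} plus Lusztig compatibility on the other, together with $L(\lambda)=\Ui\,\eta_\lambda$), then compute both sides on $\eta$ — is the template the paper uses for the split and quasi-split cases (\cref{thm:res-split}, \cref{thm:res-qssplit}), and nothing in it fails here; your scalar $T'_{\bs_i,\bvs_\star,-1}(\eta)=(-q_i)^{-(m+n)/2}F_i^{(m)}F_{\tau i}^{(n)}\eta$, the shape $\fX_i=\sum_k c_k E_i^{(k)}E_{\tau i}^{(k)}$, and the identity $E_i^{(k)}E_{\tau i}^{(k)}F_i^{(m)}F_{\tau i}^{(n)}\eta=F_i^{(m-k)}F_{\tau i}^{(n-k)}\eta$ are all correct. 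The gap is that the entire content of the theorem, on your route, sits in the step you explicitly defer: determining the $c_k$ from \cref{prop:qK} and matching them against the expansion of $z_{m-n,\bvs_\star}(B_i,B_{\tau i})\eta$ in the basis $F_i^{(m-k)}F_{\tau i}^{(n-k)}\eta$. Calling this "a single $q$-binomial identity" is an unverified claim, not a proof; what you would be re-deriving by hand is precisely the coefficient identity encoded in Lusztig's Proposition 5.3.4 for $\sl_2$. (Your handling of the $\TT''$ identity is also vaguer than it should be: the module bar involution exchanges $T'_{i,-1}$ with $T'_{i,+1}$, not $T''_{i,+1}$, so the extremal-vector scalars $(-1)^nq^{en}$ must be tracked, or one should instead argue via mutual inverses as the paper does — which is available to you only after the first identity is proved, exactly as in the paper, and not before.) So: viable strategy, correct scaffolding, but the proof is incomplete where it matters, and the paper's identification with the quasi $R$-matrix renders all of that work unnecessary.
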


\begin{proof}
By the same argument as in the first paragraph of the proof of \cref{thm:res-split}, it suffices to prove the identities \eqref{eq:ssTmod0} for a fixed balanced parameter, say  $\bvs_\dm$.

We prove the first identity in \eqref{eq:ssTmod0}. Let $\U_{i,\tau i}$ be the subalgebra of $\U$ generated by $E_i,E_{\tau i},F_i,F_{\tau i},K_i^{\pm1},K_{\tau i}^{\pm1}$. Then $\Ui_{i,\tau i}$ is naturally a subalgebra of $\U_{i,\tau i}$. There is an algebra isomorphism
\begin{align}\label{eq:U-iso}
\U_{i,\tau i} &\longrightarrow \U(\sl_2) \otimes \U(\sl_2),
\\\notag
K_i &\mapsto 1\otimes K_1,\quad  K_{\tau i} \mapsto K_1^{-1}\otimes 1, \qquad q_i\mapsto q,
\\\notag
F_i &\mapsto 1\otimes F_1, \quad F_{\tau i} \mapsto -q^{-1} E_1\otimes 1,
\quad E_i\mapsto 1\otimes E_1,\quad E_{\tau i} \mapsto -q F_1\otimes 1.
\end{align}
Under this isomorphism, $T'_{\bs_i ,-1}=T'_{i,-1} T_{\tau i,-1}'$ is identified with $T_{1,-1}'\otimes T_{1,-1}'$. Let $\Theta$ be the quasi $R$-matrix associated to $\U(\sl_2)$. Using the formula in \cite[3.3.3]{DK19} for $\fX_i$ and the formula \cite[4.1.4]{Lus93} for $\Theta$, $\fX_i$ is identified with $\Theta$ via \eqref{eq:U-iso}.  Moreover, $B_i,B_{\tau i}$ are sent to $\Delta(F_1),-q^{-1}\Delta(E_1)$. Hence, we have the following commutative diagram
\begin{center}
\begin{tikzcd}
\U_{i,\tau i} \arrow[r, "\sim" ] 
& \U(\sl_2) \otimes \U(\sl_2)  \\
\Ui_{i,\tau i}  \arrow[r, "\sim " ]\arrow[u]
&  \U(\sl_2)\arrow[u, "\Delta "]
\end{tikzcd}
\end{center} 
where the isomorphism on the top is given by \eqref{eq:U-iso} and the isomorphism on the bottom is given by \eqref{eq:iso-sl2}. Via the above identifications, the first identity in \eqref{eq:ssTmod0} is equivalent to
\begin{align}
\Delta(T'_{1,-1})v =\Theta (T'_{1,-1}\otimes T'_{1,-1})v.
\end{align}
The last equality follows from \cite[Proposition 5.3.4]{Lus93} and then the first identity in \eqref{eq:ssTmod0} follows.

We next prove the second identity in \eqref{eq:ssTmod0}. Under the isomorphism \eqref{eq:iso-sl2}, $\TT''_{i,+1}$ is identified with $T''_{1,+1}$. Since $T''_{1,+1}$ and $T'_{1,-1}$ are mutually inverses, $\TT''_{1,+1}$ and $\TT'_{i,-1}$ are mutually inverses. Now, replacing $v$ by $\TT''_{1,+1}v$ in the first identity of \eqref{eq:ssTmod0}, we obtain the second identity in \eqref{eq:ssTmod0}.
\end{proof}

\begin{corollary}
\label{cor:qsmod-inv}
For $i\in \I$ with $c_{i,\tau i}=0$ and any integrable $\Ui$-module $M$, $\TT'_{i,-1},\TT''_{i,+1}$ are mutually inverse linear operators on $M$.
\end{corollary}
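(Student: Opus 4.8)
The plan is to mirror the split-type argument of \cref{cor:mod-inv}: first reduce the mutual-inverse claim to an operator identity in (a completion of) the rank-two subalgebra $\Ui_{i,\tau i}$ for the distinguished parameter $\bvs_\dm$, prove that identity by transporting it to a statement about Lusztig's symmetries on $\U(\sl_2)$, and finally spread it to an arbitrary parameter via the rescaling isomorphisms $\phi_{\bvs,\bvs'}$ of \cref{lem:iso-parameter}. Throughout I would use \eqref{ibraid:diag} to record that $\TT'_{i,-1}$ and $\TT''_{i,+1}$ act on an iweight vector through the elements $z_{m,\bvs}(B_i,B_{\tau i})$ and $z'_{m,\bvs}(B_i,B_{\tau i})$ of \eqref{eq:zm}--\eqref{eq:zm'}, where $m=\la_{i,\tau}$.

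First I would invoke the isomorphism $\Ui_{i,\tau i}\cong \U(\sl_2)$ of \eqref{eq:iso-sl2}. Since $M$ is integrable, $B_i$ and $B_{\tau i}$ act locally nilpotently, so under this isomorphism the restriction of $M$ to $\Ui_{i,\tau i}$ becomes an integrable $\U(\sl_2)$-module. By the identifications established in \S\ref{subsec:rk1} and at the end of the proof of \cref{thm:res-diag}, the operators $\TT'_{i,-1}$ and $\TT''_{i,+1}$ defined by \eqref{eq:ibraid2}--\eqref{eq:ibraid2'} at $\bvs_\dm$ are identified with Lusztig's symmetries $T'_{1,-1}$ and $T''_{1,+1}$. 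As the latter are mutually inverse on any integrable $\U(\sl_2)$-module, I conclude that $\TT'_{i,\bvs_\dm,-1}$ and $\TT''_{i,\bvs_\dm,+1}$ are mutually inverse on $M$. Keeping track of the iweight shifts (each of $z_{m}$ and $z'_{m}$ sends $\la_{i,\tau}=m$ to $\la_{i,\tau}=-m$), this is equivalent to the operator identities $z_{-m,\bvs_\dm}z'_{m,\bvs_\dm}=1$ and $z'_{-m,\bvs_\dm}z_{m,\bvs_\dm}=1$ for all $m\in\Z$.

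Next I would pass to a general parameter. Because $c_{i,\tau i}=0$, the constraint \eqref{vsi_same} forces $\vs_i=\vs_{\tau i}$ for every admissible parameter, and a direct computation with \eqref{eq:zm}--\eqref{eq:zm'} and \cref{lem:iso-parameter} shows that the $\vs$-powers collapse so that $\phi_{\bvs_\dm,\bvs'}(z_{m,\bvs_\dm})=z_{m,\bvs'}$ and $\phi_{\bvs_\dm,\bvs'}(z'_{m,\bvs_\dm})=z'_{m,\bvs'}$. Applying the algebra homomorphism $\phi_{\bvs_\dm,\bvs'}$ to the two identities above transports them to $z_{-m,\bvs'}z'_{m,\bvs'}=1$ and $z'_{-m,\bvs'}z_{m,\bvs'}=1$, which by \eqref{ibraid:diag} say precisely that $\TT'_{i,-1}$ and $\TT''_{i,+1}$ are mutually inverse on every integrable $\Ui$-module for every parameter.

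An alternative route, literally following the template of \cref{cor:mod-inv}, would begin with an integrable $\U$-module of bounded-above weights and a balanced parameter, then use \eqref{eq:ssTmod0} to write $\TT'_{i,-1}\TT''_{i,+1}v=\fX_{i,\bvs}\,T'_{\bs_i,\bvs,-1}T''_{\bs_i,\bvs,+1}\,\fX_{i,\bvs}^{-1}v$ and cancel the rank-one quasi $K$-matrix; here one notes that $\bs_i=s_is_{\tau i}$ with $s_i,s_{\tau i}$ commuting (as $c_{i,\tau i}=0$), whence $T''_{\bs_i,\bvs,+1}=(T'_{\bs_i,\bvs,-1})^{-1}$ and the product is $\Id$. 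I expect the only genuine subtlety, in either route, to be the bookkeeping of iweight shifts—ensuring $z$ with argument $-m$ is correctly paired with $z'$ with argument $m$—together with checking that the $\vs$-exponents in $\phi_{\bvs_\dm,\bvs'}(z_{m,\bvs_\dm})$ collapse exactly to those of $z_{m,\bvs'}$; this collapse is where the balanced condition is used and where a sign or exponent slip would most easily occur.
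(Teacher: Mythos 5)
Your proposal is correct, and your two routes bracket the paper's own argument. The "alternative route" you sketch at the end is exactly the paper's proof: the paper proves \cref{cor:qsmod-inv} by citing \cref{thm:res-diag} and repeating the argument of \cref{cor:mod-inv} verbatim, i.e.\ for balanced parameters and integrable $\U$-modules with weights bounded above one cancels the rank-one quasi $K$-matrix in \eqref{eq:ssTmod0} against the mutually inverse normalized Lusztig operators $T'_{\bs_i,\bvs,-1}$, $T''_{\bs_i,\bvs,+1}$, extracts the resulting operator identities in the completion, and transports them to arbitrary parameters by $\phi_{\bvs,\bvs'}$. Your primary route is genuinely more direct: at $\bvs_\dm$ you identify $\TT'_{i,-1}$ and $\TT''_{i,+1}$ with Lusztig's mutually inverse $T'_{1,-1}$ and $T''_{1,+1}$ through $\Ui_{i,\tau i}\cong\U(\sl_2)$ in \eqref{eq:iso-sl2}, which is legitimate since integrability of $M$ makes its restriction an integrable $\U(\sl_2)$-module; this gives the mutual-inverse property on \emph{every} integrable $\Ui$-module at $\bvs_\dm$ with no quasi $K$-matrix and no bounded-above hypothesis, after which the same $\phi$-transport finishes. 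What this buys is a self-contained rank-one argument; what the paper's route buys is reuse of \cref{thm:res-diag}, which was already proved — though note the paper's own proof of the second identity of \cref{thm:res-diag} derives the mutual-inverse claim from exactly the $\sl_2$ identification you use, so your route essentially short-circuits the detour. Your $\phi$-collapse computation is also right: since $\vs_i=\vs_{\tau i}$ and $\vs_i'=\vs_{\tau i}'$, the powers of $\vs_{\dm,i}$ cancel and $\phi_{\bvs_\dm,\bvs'}(z_{m,\bvs_\dm})=z_{m,\bvs'}$, and the pairing $z_{-m}z'_m=1=z'_{-m}z_m$ is the correct iweight bookkeeping because $z'_m$ shifts $\la_{i,\tau}=m$ to $-m$ and $z_{-m}$ shifts it back. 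One small wording correction: the collapse uses the constraint \eqref{vsi_same}, which is automatic for every admissible parameter when $i\cdot\tau i=0$, rather than the balanced condition as your last sentence suggests (for this rank-one pair the two coincide, so nothing breaks).
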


\begin{proof} 
With the help of \cref{thm:res-diag}, the proof for this corollary is the same as the proof for \cref{cor:mod-inv}.
\end{proof}

\subsection{Formulas of $\TT'_{i,-1}$ on divided powers}

Set the parameter $\bvs=\bvs_\dm$ in this subsection. Then we have $\vs_i=\vs_{\tau i}=-q_i^{-1}$. Recall $X_{j,n,\ov{t}}$ from \eqref{Xjn}.

\begin{theorem}
\label{thm:diag-T1DP}
Let $i$ be such that $c_{i,\tau i}=0$, and $j\in\I$ with $j\neq i,\tau i$. Set $\alpha=-c_{ij},\beta=-c_{\tau i,j}$. Then we have $\TT'_{i,-1}(X_{j,n,\ov{t}})=b_{i,\tau i, j;n,n\alpha,n\beta}$, for $n\ge 0,\ov{t}\in \Z_2$; that is, 
\begin{align}\notag
\TT'_{i,-1}(X_{j,n,\ov{t}})&= \sum_{u=0}^{ \min(n\alpha,n\beta)} \sum_{r=0}^{n\alpha-u}\sum_{s=0}^{n\beta-u} (-1)^{r+s }q_i^{r(-u+1)+s(u+1) } 
\\ 
  &\qquad \times B_i^{(n\alpha-r-u)}B_{\tau i}^{(n\beta-s-u)}X_{j,n,\ov{t}} B_{\tau i}^{(s)}B_i^{(r)}  k_i^u.
  \label{TiBjn:diag}
\end{align}
Moreover, the formula of $\TT''_{i,+1}(X_{j,n,\ov{t}})$ is given by
\begin{align}\notag
\TT''_{i,+1}(X_{j,n,\ov{t}})&= \sum_{u=0}^{ \min(n\alpha,n\beta)} \sum_{r=0}^{n\alpha-u}\sum_{s=0}^{n\beta-u} (-1)^{r+s }q_i^{r(-u+1)+s(u+1) } 
\\ 
  &\qquad \times k_i^{-u} B_i^{(r)} B_{\tau i}^{(s)}  X_{j,n,\ov{t}} B_{\tau i}^{(n\beta-s-u)}B_i^{(n\alpha-r-u)}.
  \label{TiBjn:diag2}
\end{align}
\end{theorem}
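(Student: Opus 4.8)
The plan is to mirror the split-type argument of \cref{thm:split-T1DP}: deduce the identity $\TT'_{i,-1}(X_{j,n,\ov t}) = b_{i,\tau i,j;n,n\alpha,n\beta}$ in $\Ui$ from an operator identity valid on every integrable module, and then read off the explicit shape \eqref{TiBjn:diag} by a direct substitution into \eqref{diagdiv}. The two genuinely hard inputs, namely the compatibility \cref{thm:diag} and the rank-two commutation relation \cref{prop:qsBB}, are already established, so the present argument is mostly organizational.

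First I would prove \eqref{TiBjn:diag}. Fix an integrable $\Ui$-module $M$ and an iweight vector $v\in M_{\ov\lambda}$, and set $m = \langle h_i-h_{\tau i},\ov\lambda\rangle$. Since $X_{j,n,\ov t}v\in M_{\ov{\lambda-n\alpha_j}}$ and $\langle h_i-h_{\tau i},\alpha_j\rangle = c_{ij}-c_{\tau i,j} = \beta-\alpha$, the vector $X_{j,n,\ov t}v$ has iweight $m+n\alpha-n\beta$. Using the compatibility \cref{thm:diag}, the definition \eqref{ibraid:diag} of $\TT'_{i,-1}$, and $X_{j,n,\ov t} = b_{i,\tau i,j;n,0,0}$, I obtain
\[
\TT'_{i,-1}(X_{j,n,\ov t})\,\TT'_{i,-1}(v) = \TT'_{i,-1}(X_{j,n,\ov t}v) = z_{m+n\alpha-n\beta,\bvs_\dm}(B_i,B_{\tau i})\,b_{i,\tau i,j;n,0,0}\,v.
\]
By \cref{prop:qsBB} the right-hand side equals $b_{i,\tau i,j;n,n\alpha,n\beta}\,z_{m,\bvs_\dm}(B_i,B_{\tau i})v = b_{i,\tau i,j;n,n\alpha,n\beta}\,\TT'_{i,-1}(v)$, so $\big(\TT'_{i,-1}(X_{j,n,\ov t})-b_{i,\tau i,j;n,n\alpha,n\beta}\big)\TT'_{i,-1}(v)=0$. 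Replacing $v$ by $\TT''_{i,+1}(v)$ and invoking the invertibility of $\TT'_{i,-1}$ from \cref{cor:qsmod-inv} shows the bracketed element kills every vector of every module in $\mathcal C$; by the faithfulness statement following \cref{lem:integrable} it must vanish, giving $\TT'_{i,-1}(X_{j,n,\ov t}) = b_{i,\tau i,j;n,n\alpha,n\beta}$. The explicit form \eqref{TiBjn:diag} then follows by setting $m_1=n\alpha$, $m_2=n\beta$ in \eqref{diagdiv}, where the two $q_i$-exponents collapse to $r(-u+1)$ and $s(u+1)$, the factor $q_i^{-u(nc_{ij}+m_1)}$ becomes $1$, and the $q$-binomial becomes $\qbinom{u}{u}_i=1$.

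For the $\TT''_{i,+1}$ formula \eqref{TiBjn:diag2} I would invoke the anti-involution $\sigma_\imath$ of \cref{lem:inv}, which is available since $\bvs_\dm$ is balanced when $c_{i,\tau i}=0$. It fixes each of $B_i,B_{\tau i}$ and hence $X_{j,n,\ov t}$, reverses products, sends $k_i\mapsto k_i^{-1}$, and is $\F$-linear (so it leaves scalar coefficients untouched). Applying it term-by-term to \eqref{TiBjn:diag} sends $B_i^{(n\alpha-r-u)}B_{\tau i}^{(n\beta-s-u)}X_{j,n,\ov t}B_{\tau i}^{(s)}B_i^{(r)}k_i^u$ to $k_i^{-u}B_i^{(r)}B_{\tau i}^{(s)}X_{j,n,\ov t}B_{\tau i}^{(n\beta-s-u)}B_i^{(n\alpha-r-u)}$ with the same coefficient, which is exactly the matching term of \eqref{TiBjn:diag2}. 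Together with the relation $\TT''_{i,+1}(X_{j,n,\ov t})=\sigma_\imath\big(\TT'_{i,-1}(X_{j,n,\ov t})\big)$ from \cite{WZ23} this yields \eqref{TiBjn:diag2}.

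The main difficulty is in fact already behind us, packaged inside \cref{prop:qsBB} (which rests on the triangular commutation \cref{lem:qsBBij,lem:qsbij} and a $q$-binomial vanishing). In the present deduction the only points requiring care are the iweight bookkeeping, so that the shift produced by $X_{j,n,\ov t}$ matches the index $m+n\alpha-n\beta$ appearing in \cref{prop:qsBB}, and the verification that the coefficient in \eqref{TiBjn:diag} is genuinely $\sigma_\imath$-invariant, so that \eqref{TiBjn:diag2} emerges with precisely these scalars rather than bar-conjugated ones.
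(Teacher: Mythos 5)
Your proposal is correct and follows exactly the route the paper intends: the paper's proof of this theorem is simply ``the same as for \cref{thm:split-T1DP}'', and your argument is precisely that split-type proof transplanted to the diagonal case, with \cref{thm:diag}, \cref{prop:qsBB}, and \cref{cor:qsmod-inv} replacing their split-type counterparts, the specialization $m_1=n\alpha$, $m_2=n\beta$ in \eqref{diagdiv}, and the $\sigma_\imath$-argument for \eqref{TiBjn:diag2}. The iweight bookkeeping and the check that the scalars are $\sigma_\imath$-invariant are exactly the details the paper leaves implicit, and you have them right.
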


\begin{proof}
The proof is the same as for \cref{thm:split-T1DP} and hence omitted.
\end{proof}
\section{New monomial bases of simple $\U(\sl_3)$-modules}
\label{sec:quasi basis}

In this section, we construct a new monomial basis for any finite-dimensional simple $\U(\sl_3)$-module viewed as a $\Ui(\sl_3)$-module. We establish explicit transition matrices between the monomial basis and Lusztig's canonical basis on such a module. This section plays a preparatory role for Section~\ref{sec:quasisplit}. 

\subsection{Multiplicative formulas in $\Ui(\sl_3)$} \label{subsec:mult}

Recall $\U(\sl_3) =\langle E_a, F_a, K_a^{\pm 1} \mid a=1,2\rangle$. The quasi-split iquantum group $\Ui(\sl_3)$ of type AIII is the subalgebra of $\U(\sl_3)$ associated with a nontrivial diagram involution $\tau$ generated by $B_1,B_2,k_1$ with the parameter $\bvs=(\vs_1,\vs_2)$. We shall work exclusively with the quantum symmetric pair $\big(\U(\sl_3),\Ui(\sl_3)\big)$ in this section.

In this subsection, we fix the parameter $\vs_1=\vs_2=-q^{1/2}$ and will work out various multiplication formulas in $\Ui(\sl_3)$ involving the divided powers of $B_1, B_2$. Denote 
 \[
 \{k_1;a\}:=q^a k_1 +q^{-a} k_1^{-1}. 
 \]
 We shall freely use the following relations in $\Ui (\sl_3)$ with parameter $\vs_1=\vs_2=-q^{1/2}$ (cf., e.g., \cite[Theorem 7.1]{Ko14}): 
 \begin{align}
k_1 B_1 =q^{-3} B_1 k_1, \qquad &
k_1 B_2 =q^{3} B_2 k_1,
\label{KB1} \\
    B_1^{(2)}B_2 -B_1B_2B_1 +B_2B_1^{(2)}
      &= B_1 (q^{-3/2} k_1 + q^{3/2} k_1^{-1})=B_1\{k_1;-3/2\},
     \label{B121} \\
    B_2^{(2)}B_1 -B_2B_1B_2 +B_1B_2^{(2)}
      &= B_2 (q^{-3/2} k_1^{-1} + q^{3/2} k_1)=B_2\{k_1;3/2\}.
      \label{B212}
 \end{align}

 We introduce the shorthand notation, for $a,b,c \in \N$,  
\[
B^{(a,b,c)}_{2}= B_{1}^{(a)}B_{2}^{(b)}B_{1}^{(c)},
\qquad 
B^{(a,b,c)}_{1}= B_{2}^{(a)}B_{1}^{(b)}B_{2}^{(c)}.
\] 
We also set $B^{(a,b,c)}_{i}=0$ if one of $a,b,c$ is negative. 

 \begin{lemma}
 \label{lem:BBl}
 The following identity holds,  for $l\geq 0$ and $d\in\Z$:
 \begin{align}\notag
     B_1 B_{1}^{(1,l+1+d,l)}
     =& \;  [l+1+d]  B_{1}^{(1,l+2+d,l)}+[l+1] B_{1}^{(0,l+2+d,l+1)}
     \\
     &- [l+1+d] B_{1}^{(0,l+1+d,l)} \{k_1;l-3/2-2d\}.
     \label{eq:BBld}
 \end{align}
 \end{lemma}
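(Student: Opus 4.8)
The plan is to reduce \eqref{eq:BBld} to a cleaner two-variable commutation formula that carries no $B_2^{(l)}$ tail. Writing $m=l+1+d$ and recalling $B_{1}^{(1,m,l)}=B_2 B_1^{(m)} B_2^{(l)}$, I would first establish the key identity
\begin{equation*}
B_1 B_2 B_1^{(m)} = [m]\, B_2 B_1^{(m+1)} + B_1^{(m+1)} B_2 - [m]\, B_1^{(m)}\{k_1;\tfrac12-2m\},
\tag{$\star$}
\end{equation*}
valid in $\Ui(\sl_3)$ for all $m\ge 0$ with parameter $\vs_1=\vs_2=-q^{1/2}$. Granting $(\star)$, the lemma follows by right-multiplying by $B_2^{(l)}$: the first term becomes $[l+1+d]\,B_2 B_1^{(l+2+d)} B_2^{(l)}=[l+1+d]\,B_1^{(1,l+2+d,l)}$; in the second term $B_2 B_2^{(l)}=[l+1]B_2^{(l+1)}$ gives $[l+1]\,B_1^{(0,l+2+d,l+1)}$; and in the third term I commute the Cartan factor to the right using $k_1 B_2=q^3 B_2 k_1$ from \eqref{KB1}, which yields $\{k_1;a\}B_2^{(l)}=B_2^{(l)}\{k_1;a+3l\}$, turning $\{k_1;\tfrac12-2m\}=\{k_1;-\tfrac32-2l-2d\}$ into $B_2^{(l)}\{k_1;l-\tfrac32-2d\}$. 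These three terms match the right-hand side of \eqref{eq:BBld} exactly.

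To prove $(\star)$ I would induct on $m$. The base case $m=0$ is trivial since $[0]=0$ and both sides equal $B_1 B_2$, while the case $m=1$ is precisely the iSerre relation \eqref{B121} rewritten through $B_1^2=[2]B_1^{(2)}$. For the inductive step I would compute $B_1 P_m$, where $P_m:=B_1 B_2 B_1^{(m)}$, in two ways. On one hand $B_1 P_m=B_1^2 B_2 B_1^{(m)}$, and substituting the rearranged \eqref{B121} in the form $B_1^2 B_2=[2]B_1 B_2 B_1 - B_2 B_1^2+[2]B_1\{k_1;-\tfrac32\}$, together with $B_1 B_1^{(m)}=[m+1]B_1^{(m+1)}$ and the commutation $\{k_1;a\}B_1^{(m)}=B_1^{(m)}\{k_1;a-3m\}$ coming from \eqref{KB1}, expresses $B_1 P_m$ through $P_{m+1}$, $B_2 B_1^{(m+2)}$ and $B_1^{(m+1)}\{k_1;-\tfrac32-3m\}$. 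On the other hand, applying the induction hypothesis $(\star)_m$ and multiplying by $B_1$ on the left expresses the same element $B_1 P_m$ through $P_{m+1}$, $B_1^{(m+2)}B_2$ and $B_1^{(m+1)}\{k_1;\tfrac12-2m\}$. Equating the two expressions and using the quantum-integer identity $[2][m+1]-[m]=[m+2]$ to isolate $P_{m+1}$ produces $(\star)_{m+1}$, once the two surviving Cartan terms are shown to combine correctly.

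The one genuinely nontrivial point — the step I expect to require the most care — is the resulting scalar identity among the Cartan elements, namely
\begin{equation*}
[m]\{k_1;\tfrac12-2m\}+[2]\{k_1;-\tfrac32-3m\}=[m+2]\{k_1;-\tfrac32-2m\}
\end{equation*}
(obtained after clearing the common factor $[m+1]/[m+2]$), whose right-hand Cartan argument is exactly $-\tfrac32-2m=\tfrac12-2(m+1)$ as predicted by $(\star)_{m+1}$. Since $\{k_1;a\}=q^a k_1+q^{-a}k_1^{-1}$, this splits into the two scalar identities given by the coefficients of $k_1$ and of $k_1^{-1}$; after clearing the denominator $q-q^{-1}$ each reduces to a telescoping cancellation of powers of $q$, and I have verified that both hold. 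Once this scalar identity is in place the induction closes, $(\star)$ holds for all $m\ge0$, and the reduction of the first paragraph delivers \eqref{eq:BBld}.
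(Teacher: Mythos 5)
Your proposal is correct in substance, and I verified the computations it rests on: the identity $(\star)$ does hold (its $m=1$ case is exactly \eqref{B121} rewritten via $B_1^2=[2]B_1^{(2)}$), the two-way computation of $B_1P_m$ closes the induction using $[2][m+1]-[m]=[m+2]$, and the Cartan identity $[m]\{k_1;\tfrac12-2m\}+[2]\{k_1;-\tfrac32-3m\}=[m+2]\{k_1;-\tfrac32-2m\}$ is valid (both the $k_1$- and $k_1^{-1}$-coefficients telescope, as you claim). Your route is organized differently from the paper's. The paper proves \eqref{eq:BBld} by induction on $d$ for fixed $l$, carrying the tail $B_2^{(l)}$ (and a binomial coefficient $\qbinom{l+d+2}{2}$) through every step: it expands $B_1B_2B_1$ via \eqref{B121} inside the full three-factor product and applies the induction hypothesis to the term $B_1B_1^{(1,l+d,l)}$ that appears there. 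You instead strip the tail first, isolating the genuinely rank-two content in the tail-free statement $(\star)$ about $B_1B_2B_1^{(m)}$, and reattach $B_2^{(l)}$ at the very end by a single Cartan commutation $\{k_1;a\}B_2^{(l)}=B_2^{(l)}\{k_1;a+3l\}$. The ingredients are identical (the iSerre relation \eqref{B121}, the commutations \eqref{KB1}, and $[2][m+1]=[m]+[m+2]$), but your decomposition makes the bookkeeping lighter and produces in $(\star)$ a cleaner, reusable intermediate identity, at the cost of the extra "compute $B_1P_m$ two ways and solve for $P_{m+1}$" step in place of the paper's single direct expansion.

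One small omission: the lemma is asserted for all $d\in\Z$, while your reduction through $(\star)$ only reaches $m=l+1+d\ge 0$, i.e.\ $d\ge -l-1$. For $d\le -l-3$ both sides vanish term by term under the convention $B_1^{(a,b,c)}=0$ when an index is negative, so nothing is needed; but for $d=-l-2$ the left side is $0$ while the right side is $[-1]\,B_1^{(1,0,l)}+[l+1]\,B_1^{(0,0,l+1)}=-[l+1]B_2^{(l+1)}+[l+1]B_2^{(l+1)}$, which vanishes only after a one-line cancellation (the third term drops out since $B_1^{(0,-1,l)}=0$). The paper disposes of exactly these values of $d$ as separate base cases; you should add the corresponding sentence to make your argument cover the full range of $d$.
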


 \begin{proof} 
 We fix $l\geq 0$ and prove \eqref{eq:BBld} by induction on $d$. Denote \eqref{eq:BBld} by $\eqref{eq:BBld}_{d,l}$ in the proof. The base cases $\eqref{eq:BBld}_{-l-1,l}$ and $\eqref{eq:BBld}_{-l-2,l}$ follow by a simple computation while the case $\eqref{eq:BBld}_{d,l}$ for $l+2+d<0$ is trivial. 

 It remains to show that $\eqref{eq:BBld}_{d-1,l}\Rightarrow \eqref{eq:BBld}_{d,l}$ for $d>-l-1$. Indeed, we have
 \begin{align*}
    &\quad [l+1+d]  B_1 B_{1}^{(1,l+1+d,l)}
    \\
    &=(B_1 B_2 B_1) B_1^{(l+d)} B_2^{(l)}
    \\
    &= B_1^{(2)} B_2 B_1^{(l+d)} B_2^{(l)} +  B_2 B_1^{(2)} B_1^{(l+d)} B_2^{(l)}-B_1 \{k_1;-3/2\} B_1^{(l+d)} B_2^{(l)}
    \\
    &=[2]^{-1} B_1 \big( B_1 B_1^{(1,l+d,l)}\big) + \qbinom{l+d+2}{2} B_1^{(1,l+2+d,l)} 
    \\
    &\quad - [l+1+d] B_1^{(0,l+1+d,l)}\{k_1;-3/2-3d\}
    \\
    &\overset{(*)}{=}[2]^{-1}[l+d] B_1 B_{1}^{(1,l+1+d,l)}+[2]^{-1} [l+1] [l+2+d] B_{1}^{(0,l+2+d,l+1)}
     \\
     &\quad - \qbinom{l+1+d}{2} B_{1}^{(0,l+1+d,l)} \{k_1;l+1/2-2d\}- [l+1+d] B_1^{(0,l+1+d,l)}\{k_1;-3/2-3d\}
    \\
    &\quad + \qbinom{l+2+d}{2} B_1^{(1,l+2+d,l)} 
    \\
    &=[2]^{-1}[l+d] B_1 B_{1}^{(1,l+1+d,l)}+[2]^{-1} [l+1] [l+2+d] B_{1}^{(0,l+2+d,l+1)}
     \\
     &\quad - \qbinom{l+2+d}{2} B_{1}^{(0,l+1+d,l)} \{k_1;l-3/2-2d\} 
     + \qbinom{l+2+d}{2} B_1^{(1,l+2+d,l)},
 \end{align*}
 where the equality (*) follows by the induction hypothesis $\eqref{eq:BBld}_{d-1,l}$. Now, moving the first summand on the right-hand side to the left and using $[2][l+1+d]=[l+2+d]+[l+d]$, we obtain $\eqref{eq:BBld}_{d,l}$ from the above computation as desired.
 \end{proof}
 
 \begin{proposition}
 The following identity holds,  for $a,l \geq 0$ and $d\in\Z$:
 \begin{align}\notag
     B_1 B_{1}^{(a,l+a+d,l)}
     =& \; [l+1+d]  B_{1}^{(a,l+a+d+1,l)}+[l+1] B_{1}^{(a-1,l+a+d+1,l+1)}
     \\
     &- [l+1+d] B_{1}^{(a-1,l+a+d,l)} \{ k_1;l-a-1/2-2d \}.
     \label{eq:BBad}
 \end{align}
 In particular, when $d=0$, we have, for $t,l\geq 0$,
  \begin{align} 
     B_1 B_{1}^{(t,l+t,l)}
     &= [l+1] \big( B_{1}^{(t,l+t+1,l)}+ B_{1}^{(t-1,l+t+1,l+1)}
      - B_{1}^{(t-1,l+t,l)} \{ k_1;l-t-1/2 \} \big).
     \label{eq:BBa}
 \end{align} 
 \end{proposition}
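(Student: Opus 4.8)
The plan is to prove the identity \eqref{eq:BBad} by induction on $a$, after which the stated $d=0$ specialization \eqref{eq:BBa} follows at once by putting $d=0$, using $[l+1+d]=[l+1]$ and $l-a-1/2-2d=l-a-1/2$, and factoring out $[l+1]$. Let me write $Q(a,d,l)$ for the identity \eqref{eq:BBad} and set $m=l+a+d$. There are two base cases. The case $a=0$ is immediate: $B_1^{(0,l+d,l)}=B_1^{(l+d)}B_2^{(l)}$ and $B_1B_1^{(l+d)}=[l+d+1]B_1^{(l+d+1)}$ give exactly $[l+1+d]B_1^{(0,l+d+1,l)}$, while the two terms carrying a factor $B_1^{(-1,\cdots)}$ vanish; the case $a=1$ is precisely Lemma~\ref{lem:BBl}. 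It then remains to treat $a\ge 2$, assuming $Q(a-1,\cdot,\cdot)$ and $Q(a-2,\cdot,\cdot)$.

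For the inductive step I would peel off two copies of $B_2$ from the left via the divided-power product rule $B_2^{(2)}B_2^{(a-2)}=\qbinom{a}{2}B_2^{(a)}$, so that
\[
\qbinom{a}{2}\,B_1B_1^{(a,m,l)}=B_1B_2^{(2)}\,B_1^{(a-2,m,l)},
\]
and then apply the $\imath$Serre relation \eqref{B212} in the form $B_1B_2^{(2)}=B_2B_1B_2-B_2^{(2)}B_1+B_2\{k_1;3/2\}$. This produces three summands. In $B_2B_1B_2\,B_1^{(a-2,m,l)}$ the rightmost $B_2$ is absorbed using $B_2B_1^{(a-2,m,l)}=[a-1]B_1^{(a-1,m,l)}$, leaving $[a-1]\,B_2\bigl(B_1B_1^{(a-1,m,l)}\bigr)$, to which $Q(a-1,d+1,l)$ applies (note $m=l+(a-1)+(d+1)$); the leading $B_2$ is reabsorbed into the resulting $B_1^{(a-1,\cdots)}$ and $B_1^{(a-2,\cdots)}$ factors by $B_2B_2^{(j)}=[j+1]B_2^{(j+1)}$. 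In $-B_2^{(2)}B_1\,B_1^{(a-2,m,l)}=-B_2^{(2)}\bigl(B_1B_1^{(a-2,m,l)}\bigr)$ the hypothesis $Q(a-2,d+2,l)$ applies and $B_2^{(2)}$ is reabsorbed by $B_2^{(2)}B_2^{(j)}=\qbinom{j+2}{2}B_2^{(j+2)}$. In the third summand the scalar factor is commuted to the right using the $k_1$-relations \eqref{KB1}, i.e. $\{k_1;s\}B_2^{(j)}=B_2^{(j)}\{k_1;s+3j\}$ and $\{k_1;s\}B_1^{(p)}=B_1^{(p)}\{k_1;s-3p\}$, turning $B_2\{k_1;3/2\}B_1^{(a-2,m,l)}$ into $[a-1]\,B_1^{(a-1,m,l)}\{k_1;-9/2-3d\}$.

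Assembling these contributions expresses $\qbinom{a}{2}B_1B_1^{(a,m,l)}$ as a combination of $B_1^{(a,m+1,l)}$, $B_1^{(a-1,m+1,l+1)}$ and $B_1^{(a-1,m,l)}\{k_1;\,\cdot\,\}$, and the remaining work is to check that after dividing by $\qbinom{a}{2}$ the coefficients collapse to exactly the three terms of $Q(a,d,l)$. The $q$-integer identities needed are routine in isolation — for instance the coefficient of $B_1^{(a,m+1,l)}$ reduces, via $\qbinom{a}{2}=[a][a-1]/[2]$ and $[2][l+d+2]=[l+d+3]+[l+d+1]$, to the required $\qbinom{a}{2}[l+1+d]$ — so I expect the genuine obstacle to be the $\{k_1;\,\cdot\,\}$ bookkeeping. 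Here three sources (the hypotheses $Q(a-1,d+1,l)$ and $Q(a-2,d+2,l)$ and the explicit $B_2\{k_1;3/2\}$ term) contribute $k_1$-factors with the distinct arguments $l-a-3/2-2d$, $l-a-5/2-2d$ and $-9/2-3d$, and one must verify, using the commutation rules above together with $q$-integer identities such as $[2]^2-1=[3]$, that their weighted sum telescopes to the single factor $\{k_1;l-a-1/2-2d\}$ demanded by \eqref{eq:BBad}. Matching these $k_1$-coefficients is where the computation will be most delicate.
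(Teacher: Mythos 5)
Your proposal is correct and coincides with the paper's own proof: the same induction on $a$ with base cases $a=0$ (trivial) and $a=1$ (Lemma~\ref{lem:BBl}), the same reduction $\qbinom{a}{2}B_1B_{1}^{(a,l+a+d,l)}=B_1B_2^{(2)}B_{1}^{(a-2,l+a+d,l)}$ resolved via \eqref{B212}, the same inductive hypotheses $\eqref{eq:BBad}_{a-1,d+1,l}$ and $\eqref{eq:BBad}_{a-2,d+2,l}$, and the identical treatment of the three resulting summands (including the argument $-9/2-3d$ for the explicit $\{k_1;\cdot\}$ term). The coefficient-matching you flag as the delicate point is exactly the final collection step in the paper's computation, and it does go through: the three contributions $-[a-1]^2[l+2+d]\,\{k_1;l-a-3/2-2d\}$, $+\qbinom{a-1}{2}[l+3+d]\,\{k_1;l-a-5/2-2d\}$, $+[a-1]\,\{k_1;-3d-9/2\}$ sum to $-\qbinom{a}{2}[l+1+d]\,\{k_1;l-a-1/2-2d\}$, as one checks by comparing $k_1$- and $k_1^{-1}$-coefficients (the latter following from the former by $q\mapsto q^{-1}$).
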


 \begin{proof}
We prove \eqref{eq:BBad} by induction on $a$. Denote \eqref{eq:BBad} by $\eqref{eq:BBad}_{a,d,l}$ in this proof. The base case $\eqref{eq:BBad}_{1,d,l}$ is proved in Lemma~\ref{lem:BBl} while $\eqref{eq:BBad}_{0,d,l}$ is trivial.

It remains to show that $\eqref{eq:BBad}_{a-1,d+1,l}+\eqref{eq:BBad}_{a-2,d+2,l}\Rightarrow \eqref{eq:BBad}_{a,d,l}$.
Indeed, we have
\begin{align*}
&\quad \qbinom{a}{2} B_1 B_{1}^{(a,l+a+d,l)} 
\\
&=B_1 B_2^{(2)} B_2^{(a-2)} B_1^{(l+a+d)} B_2^{(l)}
\\
&= B_2 B_1 B_2  B_2^{(a-2)} B_1^{(l+a+d)} B_2^{(l)} - B_2^{(2)} B_1 B_2^{(a-2)} B_1^{(l+a+d)} B_2^{(l)}\\
 &\quad+ B_2 \{k_1; 3/2\}  B_2^{(a-2)} B_1^{(l+a+d)} B_2^{(l)}
 \\
&= [a-1] B_2 B_1 B_{1}^{(a-1,l+a+d,l)} -B_2^{(2)} B_1 B_{1}^{(a-2,l+a+d,l)}\\
 &\quad+ [a-1] \{k_1; -3/2\}  B_2^{(a-1)} B_1^{(l+a+d)} B_2^{(l)}
 \\
&\overset{(*)}{=} [a-1][a][l+2+d]  B_{1}^{(a ,l+a+d+1,l)}+ [a-1]^2 [l+1] B_{1}^{(a-1,l+a+d+1,l+1)}
     \\
     &\quad- [a-1]^2 [l+2+d] B_{1}^{(a-1,l+a+d,l)} \{k_1;l-a-3/2-2d\}
     \\
 &\quad- [l+3+d]\qbinom{a}{2}  B_{1}^{(a ,l+a+d+1,l)}- [l+1] \qbinom{a-1}{2}B_{1}^{(a-1,l+a+d+1,l+1)}
     \\
     & \quad+  [l+3+d] \qbinom{a-1}{2} B_{1}^{(a-1,l+a+d,l)} \{k_1;l-a-5/2-2d\}
     \\
     &\quad+ [a-1] B_{1}^{(a-1,l+a+d,l)} \{k_1; -3d-9/2\} 
     \\
&= [l+1+d] \qbinom{a}{2}  B_{1}^{(a ,l+a+d+1,l)} -[l+1] \qbinom{a}{2} B_{1}^{(a-1,l+a+d+1,l+1)}
     \\
 & \quad-[l+1+d] \qbinom{a}{2}B_{1}^{(a-1,l+a+d,l)}  \{k_1;l-a-1/2-2d\},
\end{align*}
 where the equality (*) follows by applying the inductive assumptions $\eqref{eq:BBad}_{a-1,d+1,l}$ and $\eqref{eq:BBad}_{a-2,d+2,l}$. Hence, we have proved $\eqref{eq:BBad}_{a,d,l}$.
\end{proof}
 
 \begin{remark}
The following multiplicative formula in $\U^-(\sl_3)$, for $i=1, 2$, 
 \begin{align*}
 F_i F_{i}^{(a,l+a+d,l)}
     =& \; [l+1+d]  F_{i}^{(a,l+a+d+1,l)}+[l+1] F_{i}^{(a-1,l+a+d+1,l+1)}
     \end{align*}
can be obtained from \eqref{eq:BBad} by taking the leading terms. Here we have denoted 
\[
F^{(a,b,c)}_{2}= F_{1}^{(a)}F_{2}^{(b)}F_{1}^{(c)}, \qquad
F^{(a,b,c)}_{1}= F_{2}^{(a)}F_{1}^{(b)}F_{2}^{(c)}.
\]
 \end{remark}
 
 \begin{corollary}\label{cor:B2B1}
 The following identity holds, for $t,l\geq 0$:
 \begin{align}\label{eq:B2B1} 
     B_{1}^{(t,l+t,l)} =B_{2}^{(l,l+t,t)}.
 \end{align}
 \end{corollary}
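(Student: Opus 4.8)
The plan is to exploit the anti-involution $\sigma_\tau$ of \cref{lem:inv}(1). In the present type-AIII setting $\tau$ interchanges the two nodes, so $\sigma_\tau(B_1)=B_2$, $\sigma_\tau(B_2)=B_1$, $\sigma_\tau(k_1)=k_1$, and $\sigma_\tau$ reverses products. Applying $\sigma_\tau$ to the shorthand $B^{(t,l+t,l)}_1=B_2^{(t)}B_1^{(l+t)}B_2^{(l)}$ gives
\[
\sigma_\tau\big(B^{(t,l+t,l)}_1\big)=B_1^{(l)}B_2^{(l+t)}B_1^{(t)}=B^{(l,l+t,t)}_2 .
\]
Hence the asserted identity \eqref{eq:B2B1} is \emph{equivalent} to the statement that the balanced element $B^{(t,l+t,l)}_1$ is fixed by $\sigma_\tau$. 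This reformulation is convenient because $\sigma_\tau$ fixes each factor $\{k_1;x\}$ (as $\sigma_\tau(k_1)=k_1$), so the structural relations \eqref{B121}--\eqref{B212} and the commutation \eqref{KB1} are $\sigma_\tau$-covariant; this covariance is what will ultimately force the two sides of \eqref{eq:B2B1} to agree.

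First I would dispose of the base cases: for $l=0$ one has $B^{(t,t,0)}_1=B_2^{(t)}B_1^{(t)}$, which is manifestly $\sigma_\tau$-invariant, and likewise $B^{(0,l,l)}_1=B_1^{(l)}B_2^{(l)}$ for $t=0$; in both cases \eqref{eq:B2B1} holds directly. For the inductive step I would feed in the multiplicative formula \eqref{eq:BBad}/\eqref{eq:BBa} of the preceding Proposition to peel off a $B_1$. Concretely, \eqref{eq:BBa} expands $B_1\cdot B^{(t,l+t,l)}_1$, while on the other side $B_1\cdot B^{(l,l+t,t)}_2=[l+1]\,B^{(l+1,l+t,t)}_2$ by a trivial bump of the leading divided power; comparing the two reduces \eqref{eq:B2B1} to the single identity
\[
B^{(t,l+t+1,l)}_1+B^{(t-1,l+t+1,l+1)}_1-B^{(t-1,l+t,l)}_1\{k_1;l-t-1/2\}=B^{(l+1,l+t,t)}_2 .
\]
Since $\Ui(\sl_3)$ is a subalgebra of the domain $\U$, left multiplication by $B_1$ is injective, so verifying this multiplied identity suffices; I would establish it by the same recursive mechanism, closing with the inductive hypothesis together with \eqref{B121}--\eqref{B212} and \eqref{KB1}.

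The main obstacle is that the recursion \eqref{eq:BBad} does \emph{not} preserve the balanced family $\{B^{(a,l+a,l)}_1\}$: every monomial produced on the right-hand side of \eqref{eq:BBa} is unbalanced, with middle index exceeding the sum of the outer indices, so a naive induction on balanced elements alone does not terminate. To handle this I would enlarge the inductive statement to track all the relevant unbalanced monomials $B^{(a,b,c)}_1$, organizing them by the excess $b-(a+c)$ and by total degree, and use the higher-order Serre--Lusztig vanishing (the analogue in this quasi-split rank-one situation of \eqref{SerreL:split}) as the boundary condition that caps the recursion. The delicate part throughout is the bookkeeping of the $\{k_1;\,\cdot\,\}$ factors, whose weights shift under \eqref{KB1} as they are commuted past the divided powers and must be matched on both sides; the $\sigma_\tau$-covariance recorded above is precisely what guarantees these shifts are compatible, so that the two expressions transform identically and the induction can be completed.
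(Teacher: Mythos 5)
Your opening reduction is sound as far as it goes: multiplying on the left by $B_1$, using \eqref{eq:BBa} together with $B_1 B_2^{(l,l+t,t)}=[l+1]\,B_2^{(l+1,l+t,t)}$, and invoking that $\U(\sl_3)$ is a domain does show that \eqref{eq:B2B1} is equivalent to your displayed identity. But that identity is exactly \eqref{eq:BBb}, which in the paper is a \emph{consequence} of \cref{cor:B2B1} (it is obtained by substituting \eqref{eq:B2B1} into \eqref{eq:BBa}), so you have traded the corollary for a statement of at least equal difficulty, and your plan for proving it does not close. You yourself note that the recursion \eqref{eq:BBa} escapes the balanced family; the boundary condition you propose to cap it --- ``the analogue in this quasi-split rank-one situation of \eqref{SerreL:split}'' --- does not exist. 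The vanishing statements \eqref{SerreL:split}, \eqref{SerreL:diagonal} and \cref{lem:qsvanish} all concern the rank-two elements $b_{i,\tau i,j;\dots}$ attached to a third node $j$; inside $\Ui(\sl_3)$ itself the unbalanced monomials $B_1^{(a,b,c)}$ with large excess $b-a-c$ are nonzero (by \cref{thm:qsibasis} they act nontrivially on suitable $L(m,n)$), so no Serre--Lusztig-type vanishing terminates your enlarged induction, and that induction is in any case never actually formulated. Finally, the $\sigma_\tau$-covariance you record only says that the two families of relations are exchanged by $\sigma_\tau$; it cannot by itself force any individual element to be $\sigma_\tau$-fixed, so the closing sentence of your proposal is not an argument.

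The missing idea is a different choice of multiplier: the paper applies $B_2B_1$, not $B_1$ alone, and this is what restores balance. After expanding $B_1 B_1^{(t,l+t,l)}$ by \eqref{eq:BBa}, every monomial on the right has excess exactly one, and hitting it on the left with $B_2$ simply raises the first index, so $B_2B_1 B_1^{(t,l+t,l)}$ is a combination of the \emph{balanced} terms $B_1^{(t+1,l+t+1,l)}$, $B_1^{(t,l+t+1,l+1)}$ and $B_1^{(t,l+t,l)}\{k_1;l-t-1/2\}$. Likewise $B_2B_1B_2^{(l,l+t,t)}=[l+1]\,B_2B_2^{(l+1,l+t,t)}$ expands, via the index-swapped \eqref{eq:BBad} with $d=-1$, into balanced $B_2$-monomials with matching coefficients. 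Comparing the two expansions, the induction hypotheses $\eqref{eq:B2B1}_{t,l}$ and $\eqref{eq:B2B1}_{t,l+1}$ (induction on $t$, all $l$ simultaneously) cancel everything except the leading terms, which give $\eqref{eq:B2B1}_{t+1,l}$ outright --- no injectivity argument and no bookkeeping of unbalanced monomials is needed. This re-balancing step is precisely what your proposal lacks, and without it the inductive step remains a genuine gap.
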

 
 \begin{proof}
 We prove \eqref{eq:B2B1} by induction on $t$. For $t=0$, \eqref{eq:B2B1} is trivial. Denote $\eqref{eq:B2B1}$ by $\eqref{eq:B2B1}_{t,l}$ in this proof. For $t>0$, we show that $\eqref{eq:B2B1}_{t,l}+\eqref{eq:B2B1}_{t,l+1}\Rightarrow \eqref{eq:B2B1}_{t+1,l}$ for any $l\geq 0$. By \eqref{eq:BBa}, we have
 \begin{align*}
     B_2 B_1 B_{1}^{(t,l+t,l)}
     &= [l+1][t+1]  B_{1}^{(t+1,l+t+1,l)}+[l+1][t] B_{1}^{(t,l+t+1,l+1)}
     \\
     &\quad - [l+1][t] B_{1}^{(t,l+t,l)} \{ k_1;l-t-1/2 \}.
 \end{align*}
 On the other hand, by \eqref{eq:BBad}, we have
 \begin{align*}
     B_2 B_1 B_{2}^{(l,l+t,t)}&=[l+1] B_2 B_{2}^{(l+1,l+t,t)}
     \\
     &= [l+1][t]  B_{2}^{(l+1,l+t+1,t)}+[l+1][t+1] B_{2}^{(l,l+t+1,t+1)}
     \\
     &\quad - [l+1][t] B_{2}^{(l,l+t,t)} \{ k_1;l-t-1/2 \}.
 \end{align*}
 By the induction hypothesis $\eqref{eq:B2B1}_{t,l} $ and $\eqref{eq:B2B1}_{t,l+1}$, the above two identities clearly imply $\eqref{eq:B2B1}_{t+1,l} $, as desired.
 \end{proof}

\begin{corollary} 
The following identities hold, for $t, l \ge 0$:
 \begin{align}
     B_{2}^{(l+1,l+t,t)}
     &= B_{1}^{(t,l+t+1,l)} +B_{1}^{(t-1,l+t+1,l+1)}
     - B_{1}^{(t-1,l+t,l)} \{ k_1;l-t-1/2 \},
     \label{eq:BBb} 
     \\
     B_{1}^{(t,l+t,l)} B_2
     &= [l+1] \big( B_{2}^{(l,l+t+1,t)} + B_{2}^{(l+1,l+t+1,t-1)}
     - \{ k_1;l-t-1/2 \} B_{2}^{(l,l+t,t-1)} \big).
     \label{eq:BBc} 
 \end{align}
\end{corollary}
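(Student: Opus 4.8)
The plan is to obtain both identities directly from the multiplicative formula \eqref{eq:BBa}, the symmetry \eqref{eq:B2B1} of \cref{cor:B2B1}, and the $\F$-linear anti-involution $\sigma_\tau$ of \cref{lem:inv}(1), with no fresh induction. Throughout I use the elementary relation $B_1 B_1^{(l)} = [l+1] B_1^{(l+1)}$ and the convention that $B_i^{(a,b,c)}=0$ when some index is negative, so that the $t=0$ boundary cases are automatic. Here $\tau$ swaps the two nodes, so $\sigma_\tau(B_1)=B_2$, $\sigma_\tau(B_2)=B_1$, and $\sigma_\tau(k_1)=k_1$.

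To prove \eqref{eq:BBb}, note that by \eqref{eq:BBa} the right-hand side of \eqref{eq:BBb} equals $[l+1]^{-1} B_1 B_1^{(t,l+t,l)}$. I then rewrite the factor $B_1^{(t,l+t,l)}$ using \cref{cor:B2B1}, namely $B_1^{(t,l+t,l)} = B_2^{(l,l+t,t)} = B_1^{(l)} B_2^{(l+t)} B_1^{(t)}$. Consequently $B_1 B_1^{(t,l+t,l)} = (B_1 B_1^{(l)}) B_2^{(l+t)} B_1^{(t)} = [l+1]\, B_1^{(l+1)} B_2^{(l+t)} B_1^{(t)} = [l+1]\, B_2^{(l+1,l+t,t)}$, and dividing by $[l+1]$ yields exactly the left-hand side of \eqref{eq:BBb}.

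For \eqref{eq:BBc}, I apply $\sigma_\tau$ to the identity \eqref{eq:BBa}. Since $\sigma_\tau$ reverses products and interchanges $1\leftrightarrow 2$, one has $\sigma_\tau(B_1^{(a,b,c)}) = \sigma_\tau(B_2^{(a)} B_1^{(b)} B_2^{(c)}) = B_1^{(c)} B_2^{(b)} B_1^{(a)} = B_2^{(c,b,a)}$, while $\sigma_\tau$ fixes the scalar $[l+1]$ and the factor $\{k_1;l-t-1/2\}$ (immediate, as $\sigma_\tau$ is $\F$-linear and fixes $k_1^{\pm1}$). Applying these rules term by term to the right-hand side of \eqref{eq:BBa} gives $[l+1]\big( B_2^{(l,l+t+1,t)} + B_2^{(l+1,l+t+1,t-1)} - \{k_1;l-t-1/2\}\, B_2^{(l,l+t,t-1)}\big)$, which is precisely the right-hand side of \eqref{eq:BBc}. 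On the left-hand side, $\sigma_\tau(B_1 B_1^{(t,l+t,l)}) = \sigma_\tau(B_1^{(t,l+t,l)})\, B_2 = B_2^{(l,l+t,t)} B_2 = B_1^{(t,l+t,l)} B_2$, where the last equality again invokes \cref{cor:B2B1}. Equating both sides produces \eqref{eq:BBc}.

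The two identities are therefore formal consequences of results already in hand, and the only genuinely delicate point is the bookkeeping: one must verify that $\sigma_\tau$ reverses each triple product correctly (sending $B_1^{(a,b,c)} \mapsto B_2^{(c,b,a)}$), that it leaves the central-type factor $\{k_1;l-t-1/2\}$ and the coefficient $[l+1]$ untouched, and that a single application of \cref{cor:B2B1} renormalizes the leading term into the desired $B_2$-form. No identities beyond \eqref{eq:BBa}, \eqref{eq:B2B1}, and the defining properties of $\sigma_\tau$ are needed.
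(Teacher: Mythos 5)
Your proof is correct and takes essentially the same approach as the paper: \eqref{eq:BBb} is obtained by substituting $B_{1}^{(t,l+t,l)} = B_{2}^{(l,l+t,t)}$ (Corollary~\ref{cor:B2B1}) into the left-hand side of \eqref{eq:BBa}, and \eqref{eq:BBc} by applying the anti-involution $\sigma_\tau$ from Lemma~\ref{lem:inv}. The only immaterial difference is that you apply $\sigma_\tau$ directly to \eqref{eq:BBa} rather than to \eqref{eq:BBb} as the paper does, which in fact sidesteps the small renormalization step $B_{1}^{(t,l+t,l)}B_2 = [l+1]\,B_{1}^{(t,l+t,l+1)}$ that the paper's version leaves implicit.
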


\begin{proof}
Substituting $B_{1}^{(t,l+t,l)} =B_{2}^{(l,l+t,t)}$ (see \cref{cor:B2B1}) into the left-hand side of the identity \eqref{eq:BBa}, we immediately obtain \eqref{eq:BBb}.
Applying the anti-involution $\sigma_\tau$ in Lemma~\ref{lem:inv} to both sides of \eqref{eq:BBb}, we then obtain \eqref{eq:BBc}. 
\end{proof}

\begin{corollary}
The following identity holds,  for $t, l \ge 0$:
\begin{align}
B_{1}^{(t,l+t,l)} B_1
     &= [t+1] \big(B_{1}^{(t,l+t+1,l)} + B_{1}^{(t+1,l+t+1,l-1)}
     - \{ k_1^{-1};t-l-1/2 \} B_{1}^{(t,l+t,l-1)} \big).
     \label{eq:BBe} 
\end{align}
\end{corollary}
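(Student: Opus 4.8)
The plan is to deduce the right-multiplication formula \eqref{eq:BBe} from the left-multiplication formula \eqref{eq:BBa} by applying the anti-involution $\sigma_\imath$ of \cref{lem:inv}(3), which is available because the parameter fixed throughout this subsection, $\vs_1=\vs_2=-q^{1/2}$, is balanced. Recall that $\sigma_\imath$ fixes each $B_a$, sends $k_1\mapsto k_1^{-1}$, and reverses products. Consequently, on the three-fold monomials it acts by
\[
\sigma_\imath\big(B_1^{(a,b,c)}\big)=\sigma_\imath\big(B_2^{(a)}B_1^{(b)}B_2^{(c)}\big)=B_2^{(c)}B_1^{(b)}B_2^{(a)}=B_1^{(c,b,a)},
\]
while on the Cartan factor it sends $\{k_1;a\}=q^a k_1+q^{-a}k_1^{-1}$ to $q^a k_1^{-1}+q^{-a}k_1=\{k_1^{-1};a\}$.

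First I would apply $\sigma_\imath$ to both sides of \eqref{eq:BBa}. The left-hand side $B_1 B_1^{(t,l+t,l)}$ becomes $\sigma_\imath\big(B_1^{(t,l+t,l)}\big)\,B_1=B_1^{(l,l+t,t)}B_1$, turning the left multiplication into the desired right multiplication. Applying the two rules above to each summand on the right-hand side — in particular noting that order-reversal moves the Cartan factor to the front of the last term — transforms \eqref{eq:BBa} into
\begin{align*}
B_1^{(l,l+t,t)} B_1 &= [l+1]\big(B_1^{(l,l+t+1,t)}+B_1^{(l+1,l+t+1,t-1)} \\ &\qquad\qquad -\{k_1^{-1};l-t-1/2\}\,B_1^{(l,l+t,t-1)}\big).
\end{align*}
Since this identity holds for all $t,l\ge 0$, I would finally interchange the dummy variables $t\leftrightarrow l$ (and use $t+l=l+t$), which produces exactly \eqref{eq:BBe}, including the correct shift $\{k_1^{-1};t-l-1/2\}$ and the prefactor $[t+1]$.

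There is no genuine obstacle here: the argument is a one-line symmetry reduction to the already-established \eqref{eq:BBa}, and the only steps requiring care are bookkeeping — computing the order-reversal of $\sigma_\imath$ on the products $B_i^{(a,b,c)}$, tracking $\{k_1;a\}\mapsto\{k_1^{-1};a\}$, and verifying that the variable swap reproduces the precise indices of \eqref{eq:BBe}. I note that one could alternatively route through \eqref{eq:BBc}, applying $\widehat\tau$ (\cref{lem:inv}(2)) and then substituting $B_2^{(t,l+t,l)}=B_1^{(l,l+t,t)}$ (the $\widehat\tau$-image of \cref{cor:B2B1}) to reach the same intermediate identity; but the single application of $\sigma_\imath$ to \eqref{eq:BBa} is the most economical path.
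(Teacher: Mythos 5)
Your proof is correct. It takes a route different from the paper's primary argument: the paper derives \eqref{eq:BBe} by applying the involution $\widehat\tau$ of \cref{lem:inv}(2) to \eqref{eq:BBc}, which yields an identity for $B_2^{(t,l+t,l)}B_1$, and then converts via \cref{cor:B2B1} (in the form $B_2^{(t,l+t,l)}=B_1^{(l,l+t,t)}$) before interchanging $t$ and $l$. You instead apply the anti-involution $\sigma_\imath$ of \cref{lem:inv}(3) directly to \eqref{eq:BBa} and swap the variables; this is exactly the alternative the paper flags in the last sentence of its own proof (that the intermediate identity can also be obtained by applying $\sigma_\imath$ to \eqref{eq:BBa}), but carried out in full. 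Your route is slightly more economical: since $\sigma_\imath$ fixes both $B_1$ and $B_2$ and only reverses products and inverts $k_1$, the order-reversed monomials remain in the $B_1^{(\cdot,\cdot,\cdot)}$ family, so no appeal to \cref{cor:B2B1} is needed and only the single prior identity \eqref{eq:BBa} enters; the paper's use of $\widehat\tau$, which interchanges $B_1\leftrightarrow B_2$, is what forces the extra conversion step. Your bookkeeping all checks out: $\sigma_\imath\big(B_1^{(a,b,c)}\big)=B_1^{(c,b,a)}$ under the paper's convention $B_1^{(a,b,c)}=B_2^{(a)}B_1^{(b)}B_2^{(c)}$, the Cartan factor transforms as $\{k_1;a\}\mapsto q^a k_1^{-1}+q^{-a}k_1=\{k_1^{-1};a\}$ and moves to the front under order reversal, and $\sigma_\imath$ is available because the parameter $\vs_1=\vs_2=-q^{1/2}$ fixed in this subsection is balanced.
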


\begin{proof}
Applying the involution $\widehat\tau$ (which exists only for balanced parameters) defined in \cref{lem:inv} to \eqref{eq:BBc}, we obtain 
\begin{align} 
B_{2}^{(t,l+t,l)} B_1
     &= [l+1] \big( B_{1}^{(l,l+t+1,t)} + B_{1}^{(l+1,l+t+1,t-1)}
      -  \{ k_1^{-1};l-t-1/2 \} B_{1}^{(l,l+t,t-1)} \big).
     \label{eq:BBd} 
\end{align}

Thanks to $B_{2}^{(t,l+t,l)} =B_{1}^{(l,l+t,t)}$, we obtain \eqref{eq:BBe} from \eqref{eq:BBd} by switching $t,l$.

The formula \eqref{eq:BBd} can be alternatively obtained by applying $\sigma_\imath$ to \eqref{eq:BBa}.
\end{proof}

\subsection{An inversion formula}

In this subsection, we consider the quantum symmetric pair $\big(\U(\sl_3),\Ui(\sl_3)\big)$ with a general parameter $(\vs_1,\vs_2)$. Denote by $\omega_1,\omega_2$ the fundamental weights for $\sl_3$, and denote the irreducible finite-dimensional $\U(\sl_3)$-module $L(m\omega_1+n\omega_2)$ by $L(m,n)$, for $m,n\in \N$.
 

The canonical basis of $\U^-(\sl_3)$ is given by
\begin{align*}
  \cB(\infty)=  \{F_1^{(a)}F_{2}^{(b)}F_1^{(c)},F_2^{(a)}F_1^{(b)}F_2^{(c)}\mid b\geq a+c\},
\end{align*}
where 
\begin{align}
\label{FFF}
    F_1^{(a)}F_{2}^{(b)}F_1^{(c)}=F_2^{(c)}F_{1}^{(b)}F_2^{(a)}
\end{align}
are identified if $b=a+c$, cf. \cite[14.5.4]{Lus93}. 

\begin{lemma}
\label{lem:qBF}
 We have, for $j=1,2, n\in \N$,
 \begin{align}
     B_j^{(n)}=\sum_{t=0}^{n} q_j^{t(n-t)} \vs_j^{n-t}  F_j^{(t)} (E_{\tau j}K_j^{-1})^{(n-t)},
     \qquad
     B_j^{(n)}\eta=F_j^{(n)}\eta.
 \end{align}
\end{lemma}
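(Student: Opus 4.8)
The plan is to prove both formulas by recognizing $B_j$ as a sum of two $q_j^2$-commuting elements and applying the quantum binomial theorem, then specializing.

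First I would record the relevant commutation relation. Write $A=F_j$ and $C=E_{\tau j}K_j^{-1}$, so that $B_j=A+\vs_j C$. Since $\tau j\neq j$, the defining relation \eqref{Q4} gives $[E_{\tau j},F_j]=0$, and the $K$-relations give $K_j^{-1}F_j=q_j^{2}F_jK_j^{-1}$. Combining these yields
\[
CA=E_{\tau j}K_j^{-1}F_j=q_j^{2}E_{\tau j}F_jK_j^{-1}=q_j^{2}F_jE_{\tau j}K_j^{-1}=q_j^{2}AC,
\]
so $A$ and $C$ are $q_j^2$-commuting, and the scalar $\vs_j$ plays no role in this.

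Next I would expand $B_j^{n}=(A+\vs_j C)^n$ via the quantum binomial theorem for $q$-commuting variables: when $CA=v\,AC$ one has $(A+C)^n=\sum_{t}\binom{n}{t}_{v}A^tC^{n-t}$ with $\binom{n}{t}_v$ the nonsymmetric Gaussian binomial in $v=q_j^{2}$. Dividing by $[n]_j!$ and converting to the symmetric quantum binomials used in the paper, through the identity $\binom{n}{t}_{q_j^{2}}=q_j^{t(n-t)}\qbinom{n}{t}_{q_j}$, turns the coefficients into $q_j^{t(n-t)}\vs_j^{n-t}/([t]_j!\,[n-t]_j!)$, which is exactly the first displayed formula. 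The one mildly delicate point is this exponent bookkeeping: the conversion identity follows from $[m]_j=q_j^{-(m-1)}(q_j^{2m}-1)/(q_j^{2}-1)$, hence $[m]_j!=q_j^{-\binom{m}{2}}\prod_{s=1}^m\frac{q_j^{2s}-1}{q_j^2-1}$, together with the elementary count $\binom{n}{2}-\binom{t}{2}-\binom{n-t}{2}=t(n-t)$. As a consistency check, the resulting expansion must agree (after reordering) with the alternative form of $B_i^{(n)}$ recorded in the proof of \cref{lem:integrable}.

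Finally, the second formula follows by specialization to the highest weight vector $\eta$. Applying the first formula and using $E_{\tau j}\eta=0$, every factor $(E_{\tau j}K_j^{-1})^{(n-t)}$ with $n-t\geq 1$ annihilates $\eta$ (the rightmost $E_{\tau j}$ acts on a scalar multiple of $\eta$), so only the $t=n$ summand survives and gives $B_j^{(n)}\eta=F_j^{(n)}\eta$. I expect no real obstacle here; the only genuine work is the binomial conversion in the previous step. An alternative to the binomial-theorem route is a direct induction on $n$ using $F_jC^{(m)}=q_j^{-2m}C^{(m)}F_j$ and the divided-power recursions, which avoids the nonsymmetric binomials at the cost of a comparable exponent computation.
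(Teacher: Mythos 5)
Your proposal is correct and follows essentially the same route as the paper: the paper likewise writes $B_j=F_j+\vs_j E_{\tau j}K_j^{-1}$, notes the relation $F_j(\vs_j E_{\tau j}K_j^{-1})=q_j^{-2}(\vs_j E_{\tau j}K_j^{-1})F_j$, and invokes the $q$-binomial identity of Lusztig [1.3.5], with the second formula then following by applying the first to $\eta$. The only difference is that you re-derive the binomial conversion by hand instead of citing Lusztig, which is fine but not a different argument.
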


\begin{proof}
 Note that $B_j=F_j +\vs_j  E_{\tau j} K_j'$ and 
 $F_j (\vs_jE_{\tau j} K_j^{-1}) =q_j^{-2} (\vs_jE_{\tau j} K_j^{-1}) F_j $. The first formula now follows by the $q$-binomial identity (cf. \cite[1.3.5]{Lus93}), and the second formula follows from the first one. 
\end{proof}

\begin{lemma}
\label{lem:BBFF}
For $b\geq c$, we have
\begin{align*}
    B_2^{(b)} B_1^{(c)}
    =& \; \sum_{l=0}^{b}\sum_{t=0}^c \sum_{s=0}^{\min(b-l,t)}\vs_2^{b-l}q^{-(b-l)(b-3l+1+2t-2s)/2}q^{t(c-t)}\frac{1}{[s]!}
    F_2^{(l)}  F_1^{(t-s)}K_2^{l-b}\times
    \\
   & \times \big(\prod_{j=1}^s [K_1; s+l-b-t+j]\big)E_1^{(b-l-s)} (\vs_1 E_2 K_1^{-1})^{(c-t)},
   \\
    B_2^{(b)} B_1^{(c)}\eta
    =& \; \sum_{l=b-c}^{b}  \vs_2^{b-l} q^{-(b-l)(-b-l+1+2c+2n)/2}  \qbinom{m-c+b-l}{b-l}
    F_2^{(l)}  F_1^{(c-b+l)} \eta.
\end{align*}
\end{lemma}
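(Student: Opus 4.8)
The plan is to prove both identities by expanding the two divided powers into their $E$--$F$ components via Lemma~\ref{lem:qBF} and then commuting every raising operator to the right of every lowering operator using Lemma~\ref{lem:EF}. Since for $\sl_3$ we have $\tau(1)=2$, $\tau(2)=1$ and $q_1=q_2=q$, Lemma~\ref{lem:qBF} gives
\begin{align*}
B_2^{(b)} &=\sum_{l=0}^{b} q^{l(b-l)} \vs_2^{b-l} F_2^{(l)} (E_1 K_2^{-1})^{(b-l)},\\
B_1^{(c)} &=\sum_{t=0}^{c} q^{t(c-t)} \vs_1^{c-t} F_1^{(t)} (E_2 K_1^{-1})^{(c-t)}.
\end{align*}
Multiplying these and folding $\vs_1^{c-t}$ into $(\vs_1 E_2 K_1^{-1})^{(c-t)}$, I am reduced to rewriting the middle factor $(E_1 K_2^{-1})^{(b-l)} F_1^{(t)}$ so that all factors of $E_1$ stand to the right of all factors of $F_1$, while keeping $F_2^{(l)}$ in front and $(\vs_1 E_2 K_1^{-1})^{(c-t)}$ at the back.

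First I would record the torus commutations coming from the defining relations. Using $\langle h_2,\alpha_1\rangle=c_{21}=-1$ one gets $K_2^{-1}E_1=qE_1K_2^{-1}$ and $K_2^{-1}F_1=q^{-1}F_1K_2^{-1}$, whence $(E_1 K_2^{-1})^{(b-l)}=q^{\binom{b-l}{2}} E_1^{(b-l)} K_2^{-(b-l)}$ and $K_2^{-(b-l)}F_1^{(t)}=q^{-(b-l)t}F_1^{(t)}K_2^{-(b-l)}$. The core step is then Lemma~\ref{lem:EF}, which yields
\begin{align*}
E_1^{(b-l)} F_1^{(t)} = \sum_{s=0}^{\min(b-l,t)} F_1^{(t-s)} \qbinom{K_1; 2s-(b-l)-t}{s}_1 E_1^{(b-l-s)}.
\end{align*}
Expanding $\qbinom{K_1;2s-(b-l)-t}{s}_1=\frac{1}{[s]!}\prod_{j=1}^{s}[K_1;2s-(b-l)-t-j+1]_1$ (the definition preceding Lemma~\ref{lem:EF}) and reversing the product index $j\mapsto s+1-j$ turns the arguments into $s+l-b-t+j$, recovering exactly the $K_1$-product in the statement. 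It then remains to slide the surviving $K_2^{-(b-l)}=K_2^{l-b}$ leftward into the displayed position; since $K_2$ commutes with the $K_1$-binomial and $E_1^{(b-l-s)}K_2^{-(b-l)}=q^{-(b-l-s)(b-l)}K_2^{-(b-l)}E_1^{(b-l-s)}$, this contributes one further power of $q$.

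The remaining labor is the scalar bookkeeping. Collecting the exponents $l(b-l)$, $\binom{b-l}{2}$, $-(b-l)t$ and $-(b-l-s)(b-l)$ and setting $r=b-l$, a direct simplification gives $\tfrac{r}{2}(2b-3r-1-2t+2s)$, which equals $-(b-l)(b-3l+1+2t-2s)/2$ after substituting $l=b-r$; together with the separate factor $q^{t(c-t)}$ from $B_1^{(c)}$ and the prefactor $\vs_2^{b-l}$ this matches the coefficient in the first displayed formula, proving the general identity. For the action on $\eta$ I would specialize: the rightmost factor $(\vs_1 E_2 K_1^{-1})^{(c-t)}$ annihilates $\eta$ unless $t=c$, and then $E_1^{(b-l-s)}\eta=0$ unless $s=b-l$, which forces $l\ge b-c$ and leaves only $F_1^{(c-b+l)}$. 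Evaluating the remaining torus factors via $K_1\eta=q^m\eta$ and $K_2\eta=q^n\eta$ turns $K_2^{l-b}$ into $q^{n(l-b)}$ and collapses $\tfrac{1}{[b-l]!}\prod_{j=1}^{b-l}[K_1;-c+j]$ into $\prod_{j=1}^{b-l}\tfrac{[m-c+j]}{[j]}=\qbinom{m-c+b-l}{b-l}$; combining $q^{n(l-b)}$ with the specialized exponent $\tfrac{r}{2}(2b-r-1-2c)$ at $t=c$, $s=r=b-l$ gives $-(b-l)(-b-l+1+2c+2n)/2$, as claimed. (Alternatively the second identity follows directly, since only the $s=b-l$ term of Lemma~\ref{lem:EF} survives on $\eta$.)

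The hard part will be precisely this scalar bookkeeping: tracking every $q$-power produced by the divided-power commutation in Lemma~\ref{lem:EF} alongside the repeated repositioning of the diagonal factor $K_2^{l-b}$ past $E_1^{(b-l-s)}$ and $F_1^{(t)}$. Once the exponents are assembled correctly, the identification of the $K_1$-product with the $q$-binomial $\qbinom{K_1;\cdot}{s}_1$ and the specialization to the highest weight vector are entirely routine.
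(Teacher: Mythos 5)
Your proposal is correct and follows exactly the route the paper intends: expand both divided powers via Lemma~\ref{lem:qBF}, commute $E_1^{(b-l)}$ past $F_1^{(t)}$ with Lemma~\ref{lem:EF}, track the torus factors, and specialize at $\eta$ (where only $t=c$, $s=b-l$ survive) to get the second identity; your exponent bookkeeping, e.g.\ $\tfrac{r}{2}(2b-3r-1-2t+2s)$ with $r=b-l$, and the identification $\tfrac{1}{[b-l]!}\prod_{j=1}^{b-l}[m-c+j]=\qbinom{m-c+b-l}{b-l}$ all check out against the stated formulas. The paper's own proof is just the one-line citation of these two lemmas, so you have supplied the same argument in full detail (correctly reading the evident typo in Lemma~\ref{lem:EF} as the standard Lusztig formula).
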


\begin{proof}
    Follows by applying \cref{lem:qBF} and Lemma~\ref{lem:EF}.
\end{proof}

\begin{proposition}
\label{prop:BBBFFF}
 For $b\geq a+c$, we have
 \begin{align}
 \notag
   B_1^{(a)} B_2^{(b)} B_1^{(c)}\eta
    =& \; \sum_{t=0}^a\sum_{l=b-c}^{ b } \vs_1^{a-t} \vs_2^{b-l} q^{-(b-l)(-b-l+1+2c+2n)/2 -(a-t)(2m+4b-4c-2l-a-t+1)/2}
    \\ \label{eq:BBBFFF}
    &\qbinom{n+a-t-b+c}{a-t}\qbinom{m-c+b-l}{b-l}
    F_1^{(t)} F_2^{(l-a+t)}  F_1^{(c-b+l)} \eta.
 \end{align} 
 Equivalently, we have
 \begin{align}
 \notag
   B_1^{(a)} B_2^{(b)} B_1^{(c)}\eta
    =& \; \sum_{x=0}^a\sum_{y=0}^{c} \vs_1^{x} \vs_2^{y} q^{-y(y+1+2n+2c-2b)/2 -x(x+1+2m+2b-4c-2a-2y)/2}
    \\  \label{eq:BBBFFFxy}
    &\qbinom{n-b+c+x}{x}\qbinom{m-c+y}{y}
    F_1^{(a-x)} F_2^{(b-x-y)}  F_1^{(c-y)} \eta.
 \end{align} 
\end{proposition}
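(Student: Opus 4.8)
The plan is to prove \eqref{eq:BBBFFF} by a direct computation built on the two preceding lemmas, and then to deduce the reindexed form \eqref{eq:BBBFFFxy} by a change of summation variables. First I would group the expression as $B_1^{(a)}\big(B_2^{(b)}B_1^{(c)}\eta\big)$ and insert the second (vector) formula of Lemma~\ref{lem:BBFF}, which is applicable since our hypothesis $b\ge a+c$ in particular gives $b\ge c$; this already expresses $B_2^{(b)}B_1^{(c)}\eta$ as an explicit sum over $l$ (ranging from $b-c$ to $b$) of the vectors $F_2^{(l)}F_1^{(c-b+l)}\eta$. It then remains to compute $B_1^{(a)}F_2^{(l)}F_1^{(c-b+l)}\eta$ for each $l$.

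For this I would expand $B_1^{(a)}$ via the first formula of Lemma~\ref{lem:qBF}, namely $B_1^{(a)}=\sum_{t=0}^{a}q_1^{t(a-t)}\vs_1^{a-t}F_1^{(t)}(E_2K_1^{-1})^{(a-t)}$ (recall $\tau 1=2$). Commuting the $K_1^{-1}$ factors to the right contributes an overall power of $q$ and reduces the task to evaluating $E_2^{(a-t)}F_2^{(l)}F_1^{(c-b+l)}\eta$. Since $[E_2,F_1]=0$ and $E_2\eta=0$, applying Lemma~\ref{lem:EF} shows that only the top contraction term survives, in which all $a-t$ raising operators act against $F_2^{(l)}$: the outcome is $F_2^{(l-a+t)}F_1^{(c-b+l)}\eta$ multiplied by the scalar obtained when the Cartan factor from Lemma~\ref{lem:EF} is evaluated on the weight vector $F_1^{(c-b+l)}\eta$, which produces a $q$-binomial coefficient. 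Reassembling the sums over $t$ and $l$ yields a double sum whose summands are exactly $F_1^{(t)}F_2^{(l-a+t)}F_1^{(c-b+l)}\eta$; moreover these are genuine canonical basis vectors precisely because $b\ge a+c$ forces $l-a+t\ge t+(c-b+l)$ (cf.\ \eqref{FFF}), so no straightening of the triple product is required.

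The main work, and the step most prone to error, is the bookkeeping of the $q$-power and $\vs$-power exponents: I would have to track the contributions of the $K_1^{-1}$ commutations, of the Cartan factor evaluated on $F_1^{(c-b+l)}\eta$ (whose $K_2$-eigenvalue I would read off from the weight of $\eta$ shifted by $F_1^{(c-b+l)}$), and of the prefactors already present in Lemma~\ref{lem:BBFF}, then verify that their sum reproduces the exponent displayed in \eqref{eq:BBBFFF}. Notably there is no nontrivial summation identity to invoke, since for each pair $(t,l)$ exactly one term survives; the entire content is the exponent arithmetic together with the correct identification of the surviving $q$-binomial coefficients. Finally I would obtain \eqref{eq:BBBFFFxy} from \eqref{eq:BBBFFF} by the substitution $x=a-t$, $y=b-l$, rewriting the two $q$-binomials and re-expressing the quadratic $q$-exponent in the new variables, which is a routine check.
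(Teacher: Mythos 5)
Your proposal is correct and takes essentially the same route as the paper's own proof, which is exactly: apply $B_1^{(a)}$ to the second formula of Lemma~\ref{lem:BBFF}, expand it via Lemma~\ref{lem:qBF}, and simplify using Lemma~\ref{lem:EF} (your observation that $[E_2,F_1]=0$ and $E_2\eta=0$ leave only the top contraction term is precisely the content of that simplification). The passage to \eqref{eq:BBBFFFxy} by the substitution $x=a-t$, $y=b-l$ is likewise how the paper obtains the second form.
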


\begin{proof}
Apply $B_1^{(a)}$ to the second formula in \cref{lem:BBFF}. Then apply \cref{lem:qBF} and \cref{lem:EF} and simplify.
\end{proof}


It follows by \eqref{eq:BBBFFF} that $B_1^{(a)} B_2^{(b)} B_1^{(c)}\eta$ is a linear combination of $F_1^{(a')} F_2^{(b')} F_1^{(c')}\eta$ for some $a', b', c'$ such that $b'-a'-c'=b-a-c, c'\leq c, a'\leq a$. Therefore, for $b-a-c=r$ fixed, we can reformulate \eqref{eq:BBBFFF} as
\begin{align}
 \notag
   B_1^{(a)} B_2^{(a+c+r)} B_1^{(c)}\eta
    =& \; \sum_{t=0}^a\sum_{l=0}^{ c } \vs_1^{a-t} \vs_2^{c-l} q^{-(c-l)(c-l+1+2n-2a-2r)/2 -(a-t)(2m+2r-2l+a-t+1)/2}
    \\ \label{eq:BBBFFFr}
    &\qbinom{n-r-t}{a-t}\qbinom{m -l}{c-l}
    F_1^{(t)} F_2^{(t+l+r )}  F_1^{(l)} \eta.
 \end{align} 

It is remarkable that the formula \eqref{eq:BBBFFFr} admits a close inverse formula below.

\begin{proposition}
\label{prop:FB}
We have, for $a,c,r\in \N$,
\begin{align}\notag 
    & F_1^{(a)} F_2^{(a+c+r)} F_1^{(c)}\eta
    \\\notag
    &= \;  \sum_{t=0}^a\sum_{l=0}^{ c }(-1)^{a-t+c-l} 
     q^{ (a-t)(-2m-3-2r+2l+a-t)/2+(c-l)(-2n -3+2r+2a +c -l )/2}
    \\
    &\qquad \times \vs_1^{a-t} \vs_2^{c-l}\qbinom{n-r-t}{a-t}\qbinom{m -l}{c-l}
    B_1^{(t)} B_2^{(t+l+r )} B_1^{(l)} \eta.
    \label{eq:FFFBBB}
\end{align}
Equivalently, we have, for $a,c,r\in \N$,
\begin{align}
\notag 
    &F_1^{(a)} F_2^{(a+c+r)} F_1^{(c)}\eta
    \\
    \notag
    &= \;  \sum_{x=0}^a\sum_{y=0}^{ c }(-1)^{x+y} 
     q^{x(-2m-3-2r+2c-2y+x)/2 +y(-2n -3+2r+2a +y )/2}
    \\
    &\quad \times \vs_1^{x} \vs_2^{y}\qbinom{n-r-a+x}{x}\qbinom{m -c+y}{y}
    B_1^{(a-x)} B_2^{(a+c-x-y+r )} B_1^{(c-y)} \eta.
    \label{eq:FFFBBBxy}
\end{align}
\end{proposition}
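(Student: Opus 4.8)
The two displayed identities \eqref{eq:FFFBBB} and \eqref{eq:FFFBBBxy} are related by the reindexing $t=a-x$, $l=c-y$ (so that $\vs_1^{a-t}=\vs_1^{x}$, etc.), so it suffices to establish one of them; I would work with \eqref{eq:FFFBBBxy}. The conceptual skeleton is that Proposition~\ref{prop:BBBFFF}, rewritten as \eqref{eq:BBBFFFr}, expresses each $\beta_{a,c}:=B_1^{(a)}B_2^{(a+c+r)}B_1^{(c)}\eta$ as a combination of the vectors $\phi_{t,l}:=F_1^{(t)}F_2^{(t+l+r)}F_1^{(l)}\eta$ with $0\le t\le a$, $0\le l\le c$, and the diagonal coefficient (at $t=a$, $l=c$) equals $1$. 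Thus the transition matrix from the $\beta$'s to the $\phi$'s is unitriangular for the product partial order on the index set $\{(a,c)\}$, hence invertible with a unique inverse, and the assertion of Proposition~\ref{prop:FB} is that this inverse is given explicitly by \eqref{eq:FFFBBBxy}. This mirrors the rank-one inversion of Proposition~\ref{prop:inverse}, and I would prove it by the same strategy of direct substitution.

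Concretely, I would substitute the proposed expansion \eqref{eq:FFFBBBxy} for each $\phi$ into the forward expansion \eqref{eq:BBBFFFxy} (with $b=a+c+r$) and show that the coefficient of $\phi_{a-X,c-Y}$ in the result equals $\delta_{X,0}\delta_{Y,0}$. After substituting $r=b-a-c$ one checks the crucial simplification that the $q$-binomial factors of the inverse coincide with those of the forward direction: the first collapses to $\qbinom{n-r-a+X}{X-x}$ and the second to $\qbinom{m-c+Y}{Y-y}$. Consequently the composite coefficient is a signed $q$-power times a product of four $q$-binomials that separates into an $x$-factor $\qbinom{n-r-a+x}{x}\qbinom{n-r-a+X}{X-x}$ and a $y$-factor $\qbinom{m-c+y}{y}\qbinom{m-c+Y}{Y-y}$, while the sign $(-1)^{x+y}$ and the parameter powers $\vs_1^{X}\vs_2^{Y}$ also separate. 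The whole verification thereby reduces to a double $q$-binomial summation over $0\le x\le X$, $0\le y\le Y$.

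The main obstacle is that the accumulated $q$-exponent does not split cleanly into an $x$-part and a $y$-part: writing $x'=X-x$, $y'=Y-y$, a short computation shows that, up to an overall constant $q$-power $q^{-XY}$, the only non-separable contribution is $q^{\,2x'y'}$ (the $q$-power recording the commutation between the two $\mathfrak{sl}_2$-copies). When $X=0$ or $Y=0$ this coupling is inert and the sum collapses to a single application of the alternating $q$-binomial identity \cite[1.3.4]{Lus93} (equivalently \eqref{eq:1.3.1}), yielding the desired $\delta$. The genuinely delicate case is $X,Y>0$: here I would discharge the coupling by summing over $x$ first, writing $2x'y'=2Xy'-2xy'$ and absorbing the $q^{-2xy'}$ factor into the exponent so that the inner $x$-sum fits the $q$-Vandermonde identity \eqref{eq:1.3.1}; this collapses the $x$-sum to a single $q$-binomial carrying a controlled $y'$-dependence, after which the remaining alternating sum over $y$ vanishes by \cite[1.3.4]{Lus93}. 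Checking that the exponents align to match \eqref{eq:1.3.1} at each stage is the routine-but-lengthy heart of the argument. Should this bookkeeping prove unwieldy, an alternative is an inductive proof: using the multiplicative recursions of \S\ref{subsec:mult} (such as \eqref{eq:BBa} and \eqref{eq:BBe}) together with the rank-one inversion \eqref{FviaB}, one reduces the rank-two inversion on $\Ui(\sl_3)$ to repeated rank-one inversions, at the cost of tracking the interleaving of the two $\mathfrak{sl}_2$-directions.
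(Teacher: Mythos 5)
Your global strategy coincides with the paper's: both substitute the claimed inverse expansion into the forward expansion, invoke unitriangularity, and aim to show the composite coefficient is $\delta_{X,0}\delta_{Y,0}$ via alternating $q$-binomial sums (the paper does this with the pair \eqref{eq:BBBFFFr}, \eqref{eq:FFFBBB} and the identity \eqref{eq:qbinom2}). However, the technical heart of your plan --- discharging a non-separable coupling $q^{2x'y'}$ by summing over $x$ first with $q$-Vandermonde \eqref{eq:1.3.1} and then over $y$ --- addresses a problem that should not exist, and as described the verification would not close.

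The coupling you found is spurious. If you compose mutually consistent versions of the two expansions, the cross terms in the accumulated exponent cancel \emph{identically}: the forward expansion contributes $-xy$ (rewrite \eqref{eq:BBBFFFr} in the variables $x=a-t$, $y=c-l$), while the inverse expansion applied at $(a-x,c-y)$ contributes $-x'y-x'y'-xy'$, so the total cross term is $-(x+x')(y+y')=-XY$, a constant once $X,Y$ are fixed. The double sum then factors into an independent $x$-sum and $y$-sum; after the subset-of-subset identity $\qbinom{n-r-a+x}{x}\qbinom{n-r-a+X}{X-x}=\qbinom{n-r-a+X}{X}\qbinom{X}{x}$, each collapses to a Kronecker delta by \eqref{eq:qbinom2} --- no two-step Vandermonde mechanism is needed; this is exactly the paper's proof. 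Your $q^{2xy}$ (equivalently $q^{2x'y'}$ modulo separable terms) arises because you compose \eqref{eq:FFFBBBxy} with \eqref{eq:BBBFFFxy} \emph{as printed}, and the printed \eqref{eq:BBBFFFxy} carries a sign error: re-deriving it from \eqref{eq:BBBFFF} via $t=a-x$, $l=b-y$ yields $+2y$, not $-2y$, in the second exponent factor, so the printed \eqref{eq:BBBFFFxy} differs from the correct forward coefficient by $q^{2xy}$ and is \emph{not} inverse to \eqref{eq:FFFBBBxy}. Consequently your composite cannot equal the identity: already for $X=Y=1$ the coefficient of $B_1^{(a-1)}B_2^{(a+c+r-2)}B_1^{(c-1)}\eta$ comes out to $(q^2-1)\,[n-r-a+1]\,[m-c+1]\,\vs_1\vs_2$ times a power of $q$, which is nonzero for generic $m,n$. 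So the step you call ``routine-but-lengthy'' would in fact fail; the repair is to first correct the sign (or work with \eqref{eq:BBBFFFr} and \eqref{eq:FFFBBB}), after which your argument becomes the paper's proof verbatim. Two smaller points: the coefficient you extract is that of $\beta_{a-X,c-Y}$, not $\phi_{a-X,c-Y}$; and your fallback induction via \eqref{eq:BBa}, \eqref{eq:BBe} is only available at the special parameter $\vs_1=\vs_2=-q^{1/2}$ of \S\ref{subsec:mult}, whereas the proposition concerns general parameters.
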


\begin{proof}
Recall a standard $q$-binomial identity 
\begin{align}
\label{eq:qbinom2}
\sum_{t=0}^x (-1)^t q^{t-tx} \qbinom{x}{t}=\sum_{t=0}^x (-1)^t q^{-t+tx} \qbinom{x}{t}=0,\qquad \forall x\geq 1.
\end{align} 
Plugging \eqref{eq:FFFBBB} into RHS \eqref{eq:BBBFFFr}, we obtain
\begin{align*}
& \text{RHS }\eqref{eq:BBBFFFr}
 \\
 = & \sum_{t=0}^a\sum_{l=0}^{ c } \sum_{i=0}^t\sum_{j=0}^{l } (-1)^{t-i+l-j} \vs_1^{a-i} \vs_2^{c-j} q^{-(c-l)(c-l+1+2n-2a-2r)/2}
    \\
    & \quad \times q^{-(a-t)(2m+2r-2l+a-t+1)/2}q^{ (t-i)(-2m-3-2r+2j+t-i)/2}    q^{(l-j)(-2n -3+2r+2t +l -j )/2}
    \\
    &\quad \times \qbinom{n-r-i}{t-i}\qbinom{m -j}{l-j}\qbinom{n-r-t}{a-t}\qbinom{m -l}{c-l}     B_1^{(i)} B_2^{(i+j+r )} B_1^{(j)} \eta
    \\
 = &\sum_{i=0}^a\sum_{j=0}^{ c } q^{-(a-i)(m+r)-(c-j)(n-r)+ac-ij}\vs_1^{a-i} \vs_2^{c-j}\times
 \\
    &\times\Big(\sum_{t=i}^a (-1)^{t-i }  q^{-(a-t)( a-t+1)/2}q^{ (t-i)( -3  +t-i)/2}\qbinom{a-i}{t-i}\Big)\times
    \\
    &\times\Big(\sum_{l=j}^{c}(-1)^{l-j} q^{-(c-l)(c-l+1 )/2}   q^{(l-j)( -3 +l -j )/2}\qbinom{c -j}{l-j}\Big)\times
    \\
    &\times \qbinom{n-r-i}{a-i}\qbinom{m -j}{c-j}     B_1^{(i)} B_2^{(i+j+r )} B_1^{(j)} \eta.
\end{align*}
Using \eqref{eq:qbinom2}, we have
\begin{align}
\begin{split}
\sum_{t=i}^a (-1)^{t-i }  q^{-(a-t)( a-t+1)/2}q^{ (t-i)( -3  +t-i)/2}\qbinom{a-i}{t-i}=\delta_{a,i},
\\
\sum_{l=j}^{c}(-1)^{l-j} q^{-(c-l)(c-l+1 )/2}   q^{(l-j)( -3 +l -j )/2}\qbinom{c -j}{l-j}=\delta_{c,j}.
\end{split}
\end{align}
Then, by these two identities, the only nonzero term in the above formula of RHS \eqref{eq:BBBFFFr} is $ B_1^{(a)} B_2^{(a+c+r )} B_1^{(c)} \eta$, which is exactly  LHS \eqref{eq:BBBFFFr}. 
\end{proof}
  

By symmetry and \eqref{eq:FFFBBBxy}, we have
\begin{align}
\notag 
    & F_2^{(a)} F_1^{(a+c+r)} F_2^{(c)}\eta
    \\ \notag 
    &= \;  \sum_{x=0}^a\sum_{y=0}^{ c }(-1)^{x+y} 
     q^{x(-2n-3-2r+2c-2y+x)/2 +y(-2m -3+2r+2a +y )/2}
    \\
    &\quad \times \vs_2^{x} \vs_1^{y}\qbinom{m-r-a+x}{x}\qbinom{n-c+y}{y}
    B_2^{(a-x)} B_1^{(a+c-x-y+r )} B_2^{(c-y)} \eta. 
    \label{eq:FB2}
\end{align}

\subsection{An monomial basis}

\begin{lemma}
\label{lem:CBA2}
The canonical basis of the simple $\U(\sl_3)$-module $L(m,n)$ is 
\[
\{ F_1^{(a)} F_2^{(b)} F_1^{(c)}\eta \mid
 c \le m, \;
 a\le b-c \le n
\}
\cup\{
F_2^{(c)} F_1^{(b)} F_2^{(a)}\eta  \mid
 a \le n, \;
 c\le b-a \le m \},
 \]
modulo the identification $F_1^{(a)} F_2^{(b)} F_1^{(c)}\eta =F_2^{(c)} F_1^{(b)} F_1^{(a)} \eta$ when $b=a+c$.
\end{lemma}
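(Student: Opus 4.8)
The plan is to deduce the statement from the general structure theory of canonical bases together with an exact dimension count, rather than by a direct nonvanishing computation for each vector. By Lusztig's theory \cite{Lus93}, the assignment $u \mapsto u\eta$ sends each element of the canonical basis $\cB(\infty)$ of $\U^-(\sl_3)$ recalled above either to $0$ or to a canonical basis element of $L(m,n)$, and the nonzero images are pairwise distinct and constitute the canonical basis of $L(m,n)$. In particular, writing $S_{\neq 0} := \{u \in \cB(\infty) : u\eta \ne 0\}$, we have $|S_{\neq 0}| = \dim L(m,n)$. Thus it suffices to identify precisely which elements of $\cB(\infty)$ act nontrivially on $\eta$, keeping in mind the identification $F_1^{(a)}F_2^{(b)}F_1^{(c)} = F_2^{(c)}F_1^{(b)}F_2^{(a)}$ in force when $b=a+c$.

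First I would establish the necessary conditions by a short sequence of $\sl_2$-string arguments, using $\langle h_1,\lambda\rangle = m$ and $\langle h_2,\lambda\rangle = n$ for $\lambda = m\omega_1+n\omega_2$. For the family $F_1^{(a)}F_2^{(b)}F_1^{(c)}\eta$, applying $F_1^{(c)}$ to $\eta$ forces $c \le m$; since $E_2 F_1^{(c)}\eta = F_1^{(c)}E_2\eta = 0$, the vector $F_1^{(c)}\eta$ is a highest weight vector for $\langle E_2, F_2\rangle \cong \U(\sl_2)$ of $h_2$-weight $n+c$, whence $F_2^{(b)}F_1^{(c)}\eta \ne 0$ forces $b-c \le n$. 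Together with the membership inequality $a \le b-c$ built into $\cB(\infty)$, this shows every nonzero element of this family satisfies $c \le m$ and $a \le b-c \le n$. The symmetric argument, using the diagram symmetry exchanging $1 \leftrightarrow 2$ and $m \leftrightarrow n$, shows every nonzero element of the family $F_2^{(c)}F_1^{(b)}F_2^{(a)}\eta$ satisfies $a \le n$ and $c \le b-a \le m$.

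It then remains to prove that these necessary conditions are also sufficient, which I would obtain by counting. Let $S$ denote the set of basis elements cut out by the two displayed systems of inequalities, with the identification at $b=a+c$ removing the double count. Setting $d=b-c$ in the first family gives $(m+1)\binom{n+2}{2}$ elements, the second family gives $(n+1)\binom{m+2}{2}$ elements, and the overlap (the identified elements, parametrized by $0\le a\le n$ and $0\le c\le m$) contributes $(m+1)(n+1)$; hence
\[
|S| = (m+1)\binom{n+2}{2} + (n+1)\binom{m+2}{2} - (m+1)(n+1) = \tfrac{1}{2}(m+1)(n+1)(m+n+2) = \dim L(m,n).
\]
Since $S_{\neq 0} \subseteq S$ by the necessity established above, while $|S_{\neq 0}| = \dim L(m,n) = |S|$, we conclude $S_{\neq 0} = S$, which is exactly the asserted description.

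The main obstacle I anticipate is bookkeeping rather than conceptual: one must treat the overlap along $b=a+c$ consistently both in the necessary-condition analysis and in the count, so that no element is tallied twice and the total lands exactly on $\dim L(m,n) = \tfrac12(m+1)(n+1)(m+n+2)$. The indirect sufficiency argument is what makes the proof clean, since it replaces the nonvanishing of each individual vector (which would otherwise require explicit commutation identities) by a single cardinality identity.
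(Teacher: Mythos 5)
Your proof is correct, but it takes a genuinely different route from the paper's. You and the paper both start from Lusztig's Theorem 14.4.11, but you use it only for the cardinality statement (the nonzero images $u\eta$, $u\in\cB(\infty)$, are distinct and form the canonical basis of $L(m,n)$, so their number equals $\dim L(m,n)$), whereas the paper uses its combinatorial form: the vanishing locus is exactly $\cB_{1,m+1}\cup\cB_{2,n+1}$, where $\cB_{j,k}=\cB(\infty)\cap\U^-F_j^{(k)}$. Consequently the two hard steps diverge. For necessity you argue directly in $L(m,n)$ with elementary $\sl_2$-strings ($F_1^{(c)}\eta\neq 0$ forces $c\le m$; since $E_2F_1^{(c)}\eta=0$ and the $h_2$-weight of $F_1^{(c)}\eta$ is $n+c$, nonvanishing of $F_2^{(b)}F_1^{(c)}\eta$ forces $b-c\le n$), while the paper's corresponding ``if'' direction is algebraic, invoking higher-order Serre relations to place $F_2^{(b)}F_1^{(c)}$ in $\cB_{2,n+1}$ when $b-c>n$. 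For sufficiency you count: $|S|=(m+1)\binom{n+2}{2}+(n+1)\binom{m+2}{2}-(m+1)(n+1)=\tfrac12(m+1)(n+1)(m+n+2)=\dim L(m,n)$, which together with $S_{\neq 0}\subseteq S$ and $|S_{\neq 0}|=\dim L(m,n)$ forces equality; the paper instead proves nonvanishing element-by-element via a clever change of module, showing that if $F_1^{(a)}F_2^{(b)}F_1^{(c)}$ lay in $\cB_{1,m+1}\cup\cB_{2,n+1}$ it would kill the highest weight vector of $L(c,b-c)$, and then exhibiting $F_2^{(b)}F_1^{(c)}\eta'$ as the nonzero extremal vector $T_{s_2s_1}\eta'$ annihilated by $E_1$ of $h_1$-weight $b-c\ge a$. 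Your argument is more elementary (no Serre--Lusztig relations, no braid operators) and your counting verification against the Weyl dimension formula is a genuine independent check; its cost is that it is tied to the finite-dimensional setting where an exact dimension count is available, while the paper's membership analysis of the sets $\cB_{j,k}$ is more structural and would survive in contexts where one cannot close the argument by counting. Do make sure, as you note, that the overlap along $b=a+c$ is subtracted exactly once — your computation handles this correctly.
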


\begin{proof}
This should be well known; since we did not find a good reference to the explicit indexing set as formulated, we give a self-contained proof. 
For $j=1,2$ and $k\in \Z$, we set $\cB_{j,k}=\cB(\infty)\cap \U^-F_j^{(k)}$. By \cite[14.4.11]{Lus93}, it suffices to show that: for $b\geq a+c$,
\begin{align}
\label{CB-mod}
& F_1^{(a)} F_2^{(b)} F_1^{(c)}\in \cB_{1,m+1}\cup \cB_{2,n+1} \text{ if and only if }
c>m \text{ or } b-c>n ,
\\
&F_2^{(c)} F_1^{(b)} F_2^{(a)}\in \cB_{1,m+1}\cup \cB_{2,n+1} \text{ if and only if }
a>n \text{ or } b-a>m .
\end{align}
We prove the statement \eqref{CB-mod}; the second statement can be obtained by symmetry.

Clearly, if $c>m$, then $F_1^{(a)} F_2^{(b)} F_1^{(c)}\in \cB_{1,m+1}$. If $b-c>n$, using the higher order Serre relation in \cite[Corollary 7.1.7]{Lus93}, we can write  $F_2^{(b)} F_1^{(c)}$ as a linear combination of $F_2^{(s)} F_1^{(c)} F_2^{(b-s)} $ for $0\leq s\leq c$ and this implies that $F_2^{(b)} F_1^{(c)} \in \cB_{2,n+1}$.

It remains to show the ``only if'' direction in \eqref{CB-mod}. Suppose that $F_1^{(a)} F_2^{(b)} F_1^{(c)}\in \cB_{1,m+1}\cup \cB_{2,n+1} $, for some $a,b,c$ with $c\leq m$ and $a\leq b-c\leq n$. Since $ \cB_{j,k}\subset  \cB_{j,l}$ for $k\leq l$, we have $F_1^{(a)} F_2^{(b)} F_1^{(c)}\in \cB_{1,c+1}\cup \cB_{2,b-c+1} $. By \cite[14.4.11]{Lus93}, this implies that $F_1^{(a)} F_2^{(b)} F_1^{(c)}\eta'=0$, where $\eta'$ denotes the highest weight vector for $L(c,b-c)$.

However, we claim that $F_1^{(a)} F_2^{(b)} F_1^{(c)}\eta' \neq 0$ in $L(c,b-c)$ and this leads to a contradiction. We write $T_{w}$ for Lusztig symmetry $T'_{w,-1}$. Note that $F_2^{(b)} F_1^{(c)}\eta'$ is the extremal weight vector $ T_{s_2 s_1}\eta'$ in $L(c,b-c)$ (cf. \cite{Lus93}); in particular, $F_2^{(b)} F_1^{(c)}\eta'$ is nonzero. Moreover, we have 
\[
E_1 T_{s_2 s_1}\eta'=T_{s_2 s_1}\big( T_{s_2 s_1}^{-1}(E_1)\eta'\big)=0.
\]
i.e., $T_{s_2 s_1}\eta'$ is a highest weight vector for the action of $\langle E_1, F_1, K_1^{\pm1}\rangle\cong \U(\sl_2)$. Since $K_1  T_{s_2 s_1}\eta'=q^{b-c} T_{s_2 s_1}\eta'$, we conclude that $F_1^{(a)} F_2^{(b)} F_1^{(c)}\eta'$ is nonzero for any $a\leq b-c$, as desired.
\end{proof}

\begin{theorem}
\label{thm:qsibasis}
Let $\eta$ be a highest weight vector of $L(m,n)$. For any $b\ge a+c$, $B_1^{(a)}B_{2}^{(b)}B_1^{(c)}\eta=0$ if and only if $F_1^{(a)}F_{2}^{(b)}F_1^{(c)}\eta=0$. Furthermore, the set
\begin{align*}
\{ B_1^{(a)} B_2^{(b)} B_1^{(c)}\eta \mid
 c \le m, \;
 a\le b-c \le n
\}
\cup\{
B_2^{(c)} B_1^{(b)} B_2^{(a)}\eta  \mid
 a \le n, \;
 c\le b-a \le m \},
 %
\end{align*}
forms a basis for $L(m,n)$, modulo the identification $B_1^{(a)}B_{2}^{(b)}B_1^{(c)}\eta =B_2^{(c)}B_1^{(b)}B_2^{(a)}\eta$ for $b= a+c.$ 
\end{theorem}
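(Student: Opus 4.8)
The plan is to compare the $B$-vectors with Lusztig's canonical basis of $L(m,n)$ one $\imath$weight space at a time, exploiting the mutually inverse unitriangular expansions \eqref{eq:BBBFFFr} and \eqref{eq:FFFBBB}. Since $\tau$ swaps the two nodes we have $\ov{\alpha}_2 = -\ov{\alpha}_1$ in $X_\imath$, so $L(m,n)$ decomposes as $\bigoplus_{r\in\Z} L(m,n)_{r}$, where $L(m,n)_{r}$ is the $\imath$weight space of $\imath$weight $\ov{\eta} + r\ov{\alpha}_1$ ($\ov{\eta}$ denoting the $\imath$weight of $\eta$); a vector $F_1^{(a)}F_2^{(b)}F_1^{(c)}\eta$, resp. $B_1^{(a)}B_2^{(b)}B_1^{(c)}\eta$, lies in $L(m,n)_{r}$ exactly when $r = b-a-c$. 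By \cref{lem:CBA2} and its proof (see \eqref{CB-mod}), for $b \ge a+c$ with $r = b - a - c\ge 0$ the vector $F_1^{(a)}F_2^{(b)}F_1^{(c)}\eta$ is a canonical basis element, hence nonzero, precisely when $c \le m$ and $b - c \le n$, and vanishes otherwise; thus for $0 \le r \le n$ the set $S_r := \{(a,c) : 0 \le a \le n-r,\ 0 \le c \le m\}$ indexes a basis of $L(m,n)_r$, and $L(m,n)_r = 0$ for $r>n$.

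First I would establish the vanishing equivalence. Fix $b \ge a+c$ and set $r = b-a-c \ge 0$. If $c > m$, then $B_1^{(c)}\eta = F_1^{(c)}\eta = 0$ by \cref{lem:qBF} (the $F_1$-string through $\eta$ has length $m$), so both $B_1^{(a)}B_2^{(b)}B_1^{(c)}\eta$ and $F_1^{(a)}F_2^{(b)}F_1^{(c)}\eta$ vanish. If $c \le m$ but $b - c > n$, i.e.\ $a > n - r$, then $F_1^{(a)}F_2^{(b)}F_1^{(c)}\eta = 0$ by the above; on the $B$-side I would expand via \eqref{eq:BBBFFFr} and note that a summand $F_1^{(t)}F_2^{(t+l+r)}F_1^{(l)}\eta$ is nonzero only when $t \le n-r$ (again by \eqref{CB-mod}, since $l\le c\le m$), while its coefficient carries the factor $\qbinom{n-r-t}{a-t}$, which vanishes for $0 \le n-r-t < a-t$, i.e.\ for every such $t \le n-r < a$. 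Hence $B_1^{(a)}B_2^{(b)}B_1^{(c)}\eta = 0$ too. Finally, if $c \le m$ and $b-c\le n$ then $(a,c)\in S_r$ and both vectors are nonzero by the basis statement below. This exhausts the cases and yields the asserted equivalence.

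For the basis statement I would fix $r$ with $0 \le r \le n$ and work inside $L(m,n)_r$. Formula \eqref{eq:BBBFFFr} expresses each $B_1^{(a)}B_2^{(a+c+r)}B_1^{(c)}\eta$ with $(a,c)\in S_r$ as a combination of the $F_1^{(t)}F_2^{(t+l+r)}F_1^{(l)}\eta$ with $0 \le t \le a$, $0 \le l \le c$ — all again in $S_r$, hence all canonical basis vectors — with diagonal coefficient $\vs_1^{0}\vs_2^{0}\qbinom{n-r-a}{0}\qbinom{m-c}{0}=1$ at $(t,l)=(a,c)$; conversely \eqref{eq:FFFBBB} gives the inverse expansion of the same unitriangular shape. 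Thus the transition matrix between $\{F_1^{(a)}F_2^{(a+c+r)}F_1^{(c)}\eta\}_{(a,c)\in S_r}$ and $\{B_1^{(a)}B_2^{(a+c+r)}B_1^{(c)}\eta\}_{(a,c)\in S_r}$ is unitriangular for the product order $(t,l)\preceq(a,c)\iff t\le a,\ l\le c$, so the $B$-vectors form a basis of $L(m,n)_r$ whenever the $F$-vectors do (\cref{lem:CBA2}). Letting $r$ run over $0\le r\le n$ accounts for Family I; the mirror argument under the diagram symmetry $1\leftrightarrow 2$, $m\leftrightarrow n$ — using \eqref{eq:FB2} in place of \eqref{eq:FFFBBB} — accounts for Family II, i.e.\ the spaces $L(m,n)_{-r}$ with $r\ge 1$. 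The two families overlap only at $r=0$, where \cref{cor:B2B1} supplies exactly the identification $B_1^{(a)}B_2^{(a+c)}B_1^{(c)}\eta = B_2^{(c)}B_1^{(a+c)}B_2^{(a)}\eta$; assembling these bases across the $\imath$weight decomposition gives the claimed basis of $L(m,n)$.

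The bulk of the remaining work is bookkeeping rather than conceptual: verifying the $q$-binomial vanishing $\qbinom{n-r-t}{a-t}=0$ for $t\le n-r<a$, and confirming that the expansions \eqref{eq:BBBFFFr} and \eqref{eq:FFFBBB} never produce $F$-vectors outside $S_r$ (guaranteed by the ranges $t\le a\le n-r$ and $l\le c\le m$), so that the transition is genuinely unitriangular rather than merely block-triangular. The one point that demands care is matching the index constraints of \cref{lem:CBA2} to the sets $S_r$ and their mirror, and checking that the diagram symmetry legitimately transports the Family I argument to Family II; I expect this symmetry bookkeeping to be the main, though elementary, obstacle.
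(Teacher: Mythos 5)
Your proposal is correct and follows essentially the same route as the paper: the unitriangular expansion \eqref{eq:BBBFFFr}/\eqref{eq:BBBFFF} against the canonical basis of \cref{lem:CBA2}, the $q$-binomial vanishing $\qbinom{n-r-t}{a-t}=0$ handling the case $b-c>n$, and \eqref{eq:B2B1} for the identification at $b=a+c$. Your organization by $\imath$weight spaces, the handling of Family II by the diagram symmetry, and the shortcut $B_1^{(c)}\eta=F_1^{(c)}\eta=0$ for the case $c>m$ are only cosmetic refinements of the paper's global triangularity argument.
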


\begin{proof}
A nonzero term $F_1^{(a)}F_{2}^{(b)}F_1^{(c)}\eta$ appears as the leading term (of lowest weight) in the expression \eqref{eq:BBBFFF} of $B_1^{(a)}B_{2}^{(b)}B_1^{(c)}\eta$, forcing the latter to be nonzero. By \cref{lem:CBA2}, $F_1^{(a)}F_{2}^{(b)}F_1^{(c)}\eta\neq0$ if and only if $c\leq m$ and $ b-c\leq n $. Hence, it remains to show that, if $c>m$ or $b-c> n$, then $B_1^{(a)}B_{2}^{(b)}B_1^{(c)}\eta=0$.

Assume first that $c>m$.  Since $F_1^{(k)}\eta=0$ for $k>m$, any nontrivial summand on the right-hand side \eqref{eq:BBBFFF} of the expansion of $B_1^{(a)}B_{2}^{(b)}B_1^{(c)}\eta$ must be of the form
\begin{align*}
    q^{*}\qbinom{n+a-t-b+c}{a-t}\qbinom{m-c+b-l}{b-l}
    F_1^{(t)} F_2^{(l-a+t)}  F_1^{(c-b+l)} \eta
\end{align*}
for some $l$ such that $ m-c+b-l\geq 0$. This inequality together with $m-c+1\leq 0$ implies that $\qbinom{m-c+b-l}{b-l}=0$. Hence $B_1^{(a)}B_{2}^{(b)}B_1^{(c)}\eta =0$.

Assume now that $b-c> n$. Since $F_2^{(k_1)}F_1^{(k_2)}\eta=0$ for $k_1-k_2>n$,  any nontrivial summand on the right-hand side \eqref{eq:BBBFFF} of the expansion of $B_1^{(a)}B_{2}^{(b)}B_1^{(c)}\eta$ must be of the form \begin{align*}
    q^{*}\qbinom{n+a-t-b+c}{a-t}\qbinom{m-c+b-l}{b-l}
    F_1^{(t)} F_2^{(l-a+t)}  F_1^{(c-b+l)} \eta
\end{align*}
for some $t$ such that $n-b+c+a-t\geq 0$. This inequality together with $n-b+c+1\leq0$ implies that $\qbinom{n+a-t-b+c}{a-t}=0$. Hence $B_1^{(a)}B_{2}^{(b)}B_1^{(c)}\eta =0$.

By \cref{lem:BBFF,prop:BBBFFF}, the transition matrix between the set specified in the theorem and the canonical basis for $L(m,n)$ is upper triangular with all diagonal entries equal to 1. Hence this set forms a basis for $L(m,n)$.

Finally, the identity $B_1^{(a)}B_{2}^{(b)}B_1^{(c)}\eta =B_2^{(c)}B_1^{(b)}B_2^{(a)}\eta$ for $b= a+c$ follows by \eqref{eq:B2B1}. 
\end{proof}

We refer to the basis in \cref{thm:qsibasis} {\em an monomial basis} for $L(m,n)$. In case $\vs_1\vs_2=q$, there exists a bar involution $\psi^\imath$ on $\Ui$ and $L(m,n)$ compatible with each other, and the monomial basis elements are $\psi^\imath$-invariant. 

\section{Braid group operators of quasi-split type on integrable $\Ui$-modules}
\label{sec:quasisplit}

In this section, we fix $i\in \wI$ such that $c_{i,\tau i}=-1$. We introduce linear operators  $\TT'_{i,-1},\TT''_{i,+1}$ (which are shown to be mutually inverse) via explicit formulas on any integrable $\Ui$-modules and show that they are compatible with corresponding symmetries on $\Ui$ given in \cite{WZ23, Z23}. We further show that $\TT'_{i,-1},\TT''_{i,+1}$ specialize to the corresponding ones {\em loc. cit.} on integrable $\U$-modules by relating to Lusztig symmetries.

\subsection{Rank one formulas of quasi-split type}

Recall $c_{i,\tau i}=-1$. Let $\bvs=(\vs_i)_{i\in \wI}$ be an arbitrary parameter and $M\in \mathcal{C}$ be an integrable $\Ui$-module. 

We define linear operators $\TT'_{i,-1},\TT''_{i,+1}$ on $M$ by letting 
\begin{align} 
\label{eq:ibraid3}
    \TT'_{1,-1}(v) &= q_i^{\la_{i,\tau}^2/2} k_i^{-\la_{i,\tau}}\sum_{t\ge 0,l\ge 0} (-1)^{t+l}  q_i^{ -\frac{(t-l)^2}{2}-\frac{t+l}{2}} k_i^{t-l} \vs_i^{\frac{\la_{i,\tau}}{2}-t } \vs_{\tau i}^{-\frac{\la_{i,\tau}}{2}-l} B^{(t)}_{\tau i} B^{(t+l)}_{i}B^{(l)}_{\tau i}v,
    \\
\label{eq:ibraid3'}
    \TT''_{1,+1}(v) &=q_i^{-\la_{i,\tau}^2/2} k_i^{\la_{i,\tau}}\sum_{t\ge 0,l\ge 0} (-1)^{t+l}  q_i^{ \frac{(t-l)^2}{2}+\frac{3(t+l)}{2}} k_i^{l-t} \vs_i^{-\frac{\la_{i,\tau}}{2}-l} \vs_{\tau i}^{\frac{\la_{i,\tau}}{2}-t} B^{(t)}_{\tau i} B^{(t+l)}_{i}B^{(l)}_{\tau i}v,
\end{align}
for any $v\in M_{\ov{\lambda}}$, where $\la_{i,\tau}=\langle h_i-h_{\tau i},\ov{\lambda}\rangle$. The summations on the right-hand sides of \eqref{eq:ibraid3}--\eqref{eq:ibraid3'} are well-defined, as only finitely many terms are nonzero since $M$ is integrable. 

We introduce the shorthand notation, for $a,b,c \in \N$,  
\[
B^{(a,b,c)}_{i}:= B_{\tau i}^{(a)}B_{ i}^{(b)}B_{\tau i}^{(c)},
\qquad
F^{(a,b,c)}_{i}:= F_{\tau i}^{(a)}F_{ i}^{(b)}F_{\tau i}^{(c)}.
\]
In addition, we set $B^{(a,b,c)}_{i}=0$ if one of $a,b,c$ is negative. 
We introduce the following element in some completion of $\Ui$, for $\nu\in \Z$:
\begin{align}
     \label{eq:gnu}
    g_{\nu,\bvs}(k_i, B_i, B_{\tau i}) 
    &= q_i^{\nu^2/2} k_i^{-\nu}\sum_{t\ge 0,l\ge 0} (-1)^{t+l}  q_i^{ -\frac{(t-l)^2}{2}-\frac{t+l}{2}} k_i^{t-l} \vs_i^{\frac{\nu}{2}-t } \vs_{\tau i}^{-\frac{\nu}{2}-l} B^{(t,t+l,l)}_{i},
\\
\label{eq:hnu}
    h_{\nu,\bvs} (k_i,B_i,B_{\tau i}) 
    &=   q_i^{-\nu^2/2} k_i^{\nu}\sum_{t\ge 0,l\ge 0} (-1)^{t+l}  q_i^{ \frac{(t-l)^2}{2}+\frac{3(t+l)}{2}} k_i^{l-t} \vs_i^{-\frac{\nu}{2}-l} \vs_{\tau i}^{\frac{\nu}{2}-t} B^{(t,t+l,l)}_{i}.
\end{align}
We sometimes drop the subscript $\bvs$  in \eqref{eq:gnu}--\eqref{eq:hnu} to write $g_{\nu }, h_{\nu }$ when there is no ambiguity.

By \eqref{eq:ibraid3}, for $v\in M_{\ov{\lambda}}$ and $\nu= \langle h_i-h_{\tau i},\ov{\lambda}\rangle $, we have 
\[
\TT'_{i,-1}(v)=g_\nu(k_i, B_i, B_{\tau i})  v,\qquad \TT''_{i,+1}(v) =h_\nu(k_i, B_i, B_{\tau i})  v.
\]

Recall from Lemma~\ref{lem:inv} the anti-involution $\sigma_\tau$ and the involution $\htau$ of $\Ui$.

\begin{lemma}
\label{lem:sigmatau}
For an arbitrary parameter $\bvs$, the element $g_\nu(k_i, B_i, B_{\tau i}) $ is fixed by $\sigma_\tau$.
For any balanced parameter $\bvs$, we have
$\htau  \big(g_\nu(k_i, B_i, B_{\tau i})\big)=  g_{-\nu}(k_i, B_i, B_{\tau i}).$ 
\end{lemma}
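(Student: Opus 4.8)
The plan is to apply $\sigma_\tau$ and $\htau$ directly to the defining sum \eqref{eq:gnu} and to match the outcome with $g_\nu$ (resp.\ $g_{-\nu}$) term by term. Two structural facts drive both assertions. First, each monomial $B^{(t,t+l,l)}_{i}=B_{\tau i}^{(t)}B_i^{(t+l)}B_{\tau i}^{(l)}$ has $k_i$-weight zero: it contains exactly $t+l$ factors $B_i$ and $t+l$ factors $B_{\tau i}$, and $B_i,B_{\tau i}$ have opposite $k_i$-weights (concretely $k_iB_i=q_i^{-3}B_ik_i$ and $k_iB_{\tau i}=q_i^{3}B_{\tau i}k_i$, relations independent of $\bvs$). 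Hence $k_i$ commutes with every such monomial, and the two powers of $k_i$ in \eqref{eq:gnu} may be merged into one central factor $k_i^{\,t-l-\nu}$. Second, I will use the symmetry identity $B^{(t,t+l,l)}_{i}=B^{(l,l+t,t)}_{\tau i}$, which is precisely Corollary~\ref{cor:B2B1} (that is, \eqref{eq:B2B1}) read in the $\{i,\tau i\}$-notation; since both sides are homogeneous of the same bidegree in $(B_i,B_{\tau i})$, transport by $\phi_{\bvs,\bvs'}$ (Lemma~\ref{lem:iso-parameter}) shows it persists for an arbitrary parameter.

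For the first assertion I would apply the $\F$-linear anti-involution $\sigma_\tau$ termwise. As $\sigma_\tau$ reverses products, $\sigma_\tau(k_i)=k_i$, and $\sigma_\tau(B_i)=B_{\tau i}$, one gets $\sigma_\tau\!\big(B^{(t,t+l,l)}_{i}\big)=B_i^{(l)}B_{\tau i}^{(t+l)}B_i^{(t)}=B^{(l,l+t,t)}_{\tau i}$, which equals $B^{(t,t+l,l)}_{i}$ by the symmetry identity. Since $\sigma_\tau$ is $\F$-linear the scalar coefficients are untouched, and the central factor $k_i^{\,t-l-\nu}$ may be moved back to the left by the weight-zero fact; thus every term of \eqref{eq:gnu} is fixed and $\sigma_\tau(g_{\nu,\bvs})=g_{\nu,\bvs}$ for an arbitrary $\bvs$.

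For the second assertion I would apply the $\F$-linear involution $\htau$ (available for balanced $\bvs$) termwise, using $\htau(k_i)=k_i^{-1}$ and, by the symmetry identity, $\htau\!\big(B^{(t,t+l,l)}_{i}\big)=B^{(t,t+l,l)}_{\tau i}=B^{(l,l+t,t)}_{i}$. After collecting the central factor into $k_i^{\,\nu-t+l}$, I would reindex by the swap $t\leftrightarrow l$ (writing $t'=l,\ l'=t$). The sign $(-1)^{t+l}$ and the power $q_i^{-(t-l)^2/2-(t+l)/2}$ are invariant under this swap, the $k_i$-power becomes $k_i^{\,\nu+t'-l'}$, and the monomial becomes $B^{(t',t'+l',l')}_{i}$ — all matching $g_{-\nu}$. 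The only discrepancy is in the parameter factors: the swap yields $\vs_i^{\nu/2-l'}\vs_{\tau i}^{-\nu/2-t'}$, whereas $g_{-\nu}$ carries $\vs_i^{-\nu/2-t'}\vs_{\tau i}^{\nu/2-l'}$, and these agree exactly when $\vs_i=\vs_{\tau i}$. This is where, and the only place where, the balanced hypothesis enters, giving $\htau(g_{\nu,\bvs})=g_{-\nu,\bvs}$.

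The conceptual content sits entirely in the symmetry identity of Corollary~\ref{cor:B2B1}; granting it, the remainder is bookkeeping. I expect the one genuine check to be that the $q_i$-, $\vs$-, and $k_i$-exponents transform as claimed under $t\leftrightarrow l$, and the main (if mild) obstacle to be verifying that the balanced condition is precisely what reconciles the $\vs$-factors in the second part, together with the routine extension of \eqref{eq:B2B1} from the fixed parameter of \S\ref{subsec:mult} to an arbitrary parameter via bidegree-homogeneity under $\phi_{\bvs,\bvs'}$ in the first part.
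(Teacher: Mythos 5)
Your proof is correct and follows essentially the same route as the paper's: both parts rest on Lemma~\ref{lem:inv}, the identity $B^{(t,t+l,l)}_{i}=B^{(l,l+t,t)}_{\tau i}$ from Corollary~\ref{cor:B2B1}, the fact that $k_i$ commutes with these monomials, and (for the second assertion) the reindexing $t\leftrightarrow l$ after applying $\htau$, with balancedness reconciling the $\vs$-exponents exactly as in the paper's computation. Your explicit remark that Corollary~\ref{cor:B2B1}, proved at a fixed parameter, transports to arbitrary parameters via $\phi_{\bvs,\bvs'}$ and homogeneity is a point the paper leaves implicit, but it does not change the substance of the argument.
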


\begin{proof} 
By \cref{lem:inv}, $\sigma_\tau$ fixes $k_i$ and sends $B^{(t,t+l,l)}_{i}$ to $B^{(l,t+l,t)}_{\tau i}$. By \cref{cor:B2B1}, $B^{(l,t+l,t)}_{\tau i}=B^{(t,t+l,l)}_{i}$. Hence, $\sigma_\tau$ fixes $B^{(t,t+l,l)}_{i}$ and $k_i$. Note also that $k_i$ commutes with $B^{(t,t+l,l)}_{i}$. Thus, using the formula \eqref{eq:gnu}, it is clear that $\sigma_\tau$ fixes $g_\nu(k_i, B_i, B_{\tau i}) $.

For any balanced parameter $\bvs$, the formula \eqref{eq:gnu} simplifies to
\begin{align*} 
    g_{\nu}(k_i, B_i, B_{\tau i}) 
    &= q_i^{\nu^2/2} k_i^{-\nu}\sum_{t\ge 0,l\ge 0} (-1)^{t+l}  q_i^{ -\frac{(t-l)^2}{2}-\frac{t+l}{2}} k_i^{t-l} \vs_i^{ -t-l }   B^{(t,t+l,l)}_{i}.
\end{align*}
By \cref{lem:inv} and \cref{cor:B2B1}, $\htau (k_i)=k_i^{-1}$ and $\htau\big(B^{(t,t+l,l)}_{i}\big)=B^{(t,t+l,l)}_{\tau i}=B^{(l,t+l,t)}_{i}$. Applying $\htau$ to the above formula of $g_{\nu}(k_i, B_i, B_{\tau i})$, we obtain
\begin{align*}
 \htau\big( g_{\nu}(k_i, B_i, B_{\tau i}) \big)
    &= q_i^{\nu^2/2} k_i^{\nu}\sum_{t\ge 0,l\ge 0} (-1)^{t+l}  q_i^{ -\frac{(t-l)^2}{2}-\frac{t+l}{2}} k_i^{-t+l} \vs_i^{ -t-l }   B^{(l,t+l,t)}_{i}
    \\
    &=q_i^{\nu^2/2} k_i^{\nu}\sum_{t\ge 0,l\ge 0} (-1)^{t+l}  q_i^{ -\frac{(t-l)^2}{2}-\frac{t+l}{2}} k_i^{t-l} \vs_i^{ -t-l }   B^{(t,t+l,l)}_{i}
    \\
    &=g_{-\nu}(k_i, B_i, B_{\tau i}).
\end{align*}
The lemma is proved. 
\end{proof}

\subsection{Compatibility with relative braid group symmetries}

Recall $c_{i,\tau i}=-1$. 
\begin{lemma}
 \label{lem:qsmu}
For $\mu\in Y^\imath$, we have
\begin{align*}
    \langle \mu,\alpha_i+\alpha_{\tau i}\rangle &=0 ,
    \qquad\text{ and }
    \quad
\bs_i\mu =\mu.
\end{align*}
\end{lemma}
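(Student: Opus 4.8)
The plan is to prove \cref{lem:qsmu} by direct computation, following the same pattern as the two analogous results already established in the split case (\cref{lem:musplit}) and the diagonal case (\cref{lem:diagmu}). The key inputs are that $\mu \in Y^\imath$ means $\tau\mu = -\mu$, that $\tau$ preserves the bilinear pairing (so $\langle \tau\mu, \tau\alpha_j\rangle = \langle \mu, \alpha_j\rangle$), and that in the quasi-split case $c_{i,\tau i} = -1$ we have $\bs_i = s_i s_{\tau i} s_i$ by \eqref{risi}.

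First I would establish the identity $\langle \mu, \alpha_i + \alpha_{\tau i}\rangle = 0$. Since $\tau\alpha_i = \alpha_{\tau i}$ and $\tau\mu = -\mu$, I compute
\[
\langle \mu, \alpha_i\rangle = \langle \tau\mu, \tau\alpha_i\rangle = \langle -\mu, \alpha_{\tau i}\rangle = -\langle \mu, \alpha_{\tau i}\rangle,
\]
which immediately gives $\langle \mu, \alpha_i\rangle + \langle \mu, \alpha_{\tau i}\rangle = 0$, i.e. $\langle \mu, \alpha_i + \alpha_{\tau i}\rangle = 0$. Write $r := \langle \mu, \alpha_i\rangle$, so that $\langle \mu, \alpha_{\tau i}\rangle = -r$.

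Next I would compute $\bs_i\mu = s_i s_{\tau i} s_i(\mu)$ by applying the simple reflections one at a time using $s_j(\mu) = \mu - \langle \mu, \alpha_j\rangle h_j$. Applying $s_i$ gives $s_i(\mu) = \mu - r\, h_i$. Then applying $s_{\tau i}$, I must track $\langle s_i(\mu), \alpha_{\tau i}\rangle = \langle \mu, \alpha_{\tau i}\rangle - r\langle h_i, \alpha_{\tau i}\rangle = -r - r\,c_{i,\tau i} = -r + r = 0$ (using $c_{i,\tau i} = -1$), so $s_{\tau i} s_i(\mu) = s_i(\mu) = \mu - r\,h_i$. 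Finally applying $s_i$ again: since $\langle s_i(\mu), \alpha_i\rangle = \langle\mu,\alpha_i\rangle - r\langle h_i,\alpha_i\rangle = r - 2r = -r$, I get $s_i(s_i(\mu)) = (\mu - r h_i) - (-r) h_i = \mu$. Hence $\bs_i\mu = \mu$.

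This computation is entirely routine and I do not anticipate any genuine obstacle; the only point requiring care is the intermediate reflection, namely verifying that $\langle s_i(\mu), \alpha_{\tau i}\rangle = 0$ so that the middle reflection $s_{\tau i}$ acts trivially. This is precisely where the hypothesis $c_{i,\tau i} = -1$ enters, producing the cancellation $-r - r c_{i,\tau i} = 0$, and it is the analogue of the key cancellations in the proofs of \cref{lem:musplit,lem:diagmu}. Everything else follows from the defining properties of $Y^\imath$ and the $\tau$-invariance of the pairing.
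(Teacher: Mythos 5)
Your proof is correct and follows essentially the same route as the paper: the first identity is obtained exactly as in \cref{lem:diagmu} from $\tau\mu=-\mu$ and the $\tau$-invariance of the pairing, and the second by directly expanding $\bs_i\mu=s_is_{\tau i}s_i(\mu)$. The only cosmetic difference is that you substitute $\langle\mu,\alpha_{\tau i}\rangle=-\langle\mu,\alpha_i\rangle$ early (so the middle reflection acts trivially), whereas the paper records the general identity $\bs_i\mu=\mu-\langle\mu,\alpha_i+\alpha_{\tau i}\rangle(h_i+h_{\tau i})$ and then applies the vanishing at the end; both computations are the same in substance.
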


\begin{proof}
The proof for $\langle \mu,\alpha_i+\alpha_{\tau i}\rangle=0$ is the same as Lemma~\ref{lem:diagmu}. The second identity follows by a direct computation: 
$\bs_i\mu=s_i s_{\tau i} s_i(\mu)=\mu-\langle \mu,\alpha_i+\alpha_{\tau i}\rangle(h_i+h_{\tau i}) =\mu.$
\end{proof}

The proof of the next theorem will be reduced to two key identities, which will then be verified in the subsequent subsections (see \cref{thm:qs-rk1,thm:qs-rktwo}).

\begin{theorem}
\label{thm:qs-split}
Let $i\in \wI$ be such that $c_{i,\tau i}=-1$. We have, for any $x\in \Ui,v\in M,$
\begin{align}\label{eq:qs-split}
&\TT'_{i,-1}(x v)=\TT'_{i,-1}(x) \TT'_{i,-1}(v).
\end{align}
\end{theorem}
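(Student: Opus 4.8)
The plan is to mimic the structure used in the proofs of \cref{thm:split} and \cref{thm:diag}. First I would reduce to checking \eqref{eq:qs-split} on a generating set of $\Ui$, exploiting that $\TT'_{i,-1}$ is already known to be an algebra automorphism of $\Ui$ by \cref{thm:braid-iQG}: if the compatibility identity holds for $x$ and for $y$, then
\begin{align*}
\TT'_{i,-1}(xyv)=\TT'_{i,-1}(x)\TT'_{i,-1}(yv)=\TT'_{i,-1}(x)\TT'_{i,-1}(y)\TT'_{i,-1}(v)=\TT'_{i,-1}(xy)\TT'_{i,-1}(v),
\end{align*}
so it suffices to treat $x=K_\mu$ ($\mu\in Y^\imath$), $x=B_i$, $x=B_{\tau i}$, and $x=B_j$ for $j\neq i,\tau i$, acting on an arbitrary iweight vector $v\in M_{\ov\lambda}$ with $\nu=\la_{i,\tau}=\langle h_i-h_{\tau i},\ov\lambda\rangle$. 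Moreover, using the parameter-rescaling isomorphisms $\phi_{\bvs_\dm,\bvs}$ together with \cref{thm:braid-iQG}(b) and the evident identity $g_{\nu,\bvs}(k_i,B_i,B_{\tau i})=\phi_{\bvs_\dm,\bvs}\,g_{\nu,\bvs_\dm}(k_i,B_i,B_{\tau i})$ (which follows from \cref{lem:iso-parameter} and the definition \eqref{eq:gnu}), I can reduce every case to the distinguished parameter $\bvs_\dm$.

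Next I would dispatch the easy cases. For $x=K_\mu$, \cref{lem:qsmu} gives $\langle\mu,\alpha_i+\alpha_{\tau i}\rangle=0$ and $\bs_i\mu=\mu$; combined with \cref{prop:rkone}(1) (so $\TT'_{i,-1}(K_\mu)=K_{\bs_i\mu}=K_\mu$) and the commutation of $K_\mu$ past $B^{(t,t+l,l)}_{i}$, the identity $\TT'_{i,-1}(K_\mu v)=K_\mu\,\TT'_{i,-1}(v)$ follows by a direct weight bookkeeping exactly as in the split and diagonal cases. The substantive content is in the remaining generators. For $x\in\{B_i,B_{\tau i}\}$, using the values of $\TT'_{i,-1}(B_i)$ and $\TT'_{i,-1}(B_{\tau i})$ from \cref{prop:rkone}(2) (these are $q$-root vectors supported on $\{i,\tau i\}$), the required identities
\begin{align*}
\TT'_{i,-1}(B_i)\,g_{\nu}(k_i,B_i,B_{\tau i})v=g_{\nu-2+?}(k_i,B_i,B_{\tau i})\,B_i v
\end{align*}
(with the shift in $\nu$ read off from the weight of $B_i v$) are \emph{rank one} identities internal to the subalgebra generated by $B_i,B_{\tau i},k_i$; these I expect to be \cref{thm:qs-rk1}. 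For $x=B_j$, the identity to verify is the \emph{rank two} relation $\TT'_{i,-1}(B_j)\,g_{\nu}(k_i,B_i,B_{\tau i})v=g_{\nu+\,?}(k_i,B_i,B_{\tau i})\,B_j v$, where $\TT'_{i,-1}(B_j)=b_{i,\tau i,j;1,-c_{\tau i,j},-c_{ij}-c_{\tau i,j},-c_{ij}}$ by \cref{prop:TiBjZ}; this is \cref{thm:qs-rktwo}.

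The hard part will be the rank two identity of \cref{thm:qs-rktwo}. Unlike the split case, where Proposition~\ref{prop:BBij0} reduced everything to a single quantum binomial identity after passing through \cref{prop:BBij2}, here the operator $g_\nu$ is a triple-product expansion in $B_{\tau i}^{(t)}B_i^{(t+l)}B_{\tau i}^{(l)}$ and the root vector $b_{i,\tau i,j;n,a,b,c}$ carries the full quasi-split combinatorics of \eqref{eq:qsdv1}. The plan is to move $b_{i,\tau i,j;1,\ldots}$ past $g_\nu$ by iterating the recursions \eqref{def:qsqsBij1}--\eqref{def:qsqsBij2} together with the multiplicative formulas \eqref{eq:BBad}, \eqref{eq:BBb}--\eqref{eq:BBc}, \eqref{eq:BBe} and the key symmetry $B^{(t,t+l,l)}_{i}=B^{(l,t+l,t)}_{\tau i}$ from \cref{cor:B2B1} established in \S\ref{subsec:mult}, collapsing the resulting multiple sums via $q$-binomial identities of the type \eqref{eq:1.3.1} and \eqref{eq:qbinom2}; the vanishing relations \cref{lem:qsvanish} will be needed to truncate the sums. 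The transition-matrix results of \cref{prop:FB} and \cref{thm:qsibasis} are tailored precisely to control these cancellations on $\U(\sl_3)$-modules, so I anticipate routing the verification through the quasi icanonical basis on $L(m,n)$. Finally, the companion statement for $\TT''_{i,+1}$ (\cref{prop:quasisplit2}) I would deduce, as in Proposition~\ref{prop:split2} and Proposition~\ref{prop:diag2}, by applying the $\imath$bar involution $\psi^\imath$ (available for $\bvs_\star$) and the anti-involution $\sigma_\tau$ of \cref{lem:sigmatau} to the already-proven $\TT'_{i,-1}$ identity, tracking the sign $(-1)^{c_{ij}-c_{\tau i,j}}$ exactly as in \cref{lem:Tbar}.
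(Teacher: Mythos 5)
Your proposal is correct and takes essentially the same route as the paper: reduce to generators $K_\mu$, $B_i$, $B_{\tau i}$, $B_j$ acting on iweight vectors, dispatch $K_\mu$ via \cref{lem:qsmu} and \cref{prop:rkone}, reduce $B_i,B_{\tau i}$ to the rank one identities of \cref{thm:qs-rk1} and $B_j$ ($j\neq i,\tau i$) to the rank two identity of \cref{thm:qs-rktwo}, using $\phi_{\bvs,\bvs'}$ and \cref{thm:braid-iQG}(b) to fix a convenient parameter. Two minor points of divergence in your anticipations: the paper verifies the rank one identities at the parameter $\vs_i=\vs_{\tau i}=-q_i^{1/2}$ (not $\bvs_\dm$, whose value is $-q_i^{-1/2}$ here), and it proves \cref{thm:qs-rktwo} by direct commutation computations (\cref{lem:qsqsKM1}--\cref{lem:qsqsKM3}) rather than through the quasi icanonical basis of Section~\ref{sec:quasi basis}, which enters only in the comparison with Lusztig symmetries (\cref{thm:res-qssplit}).
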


\begin{proof}
By similar arguments as in the proof of Theorem~\ref{thm:split}, it suffices to verify \eqref{eq:qs-split} when $x$ runs over a generating set of $\Ui$ and for $v\in M_{\ov{\lambda}}$ with $\ov{\lambda}\in X_\imath$. Set $\nu= \langle h_i-h_{\tau i},\ov{\lambda}\rangle$.

We prove \eqref{eq:qs-split} for $x=K_\mu\; (\mu\in Y^\imath)$. By  \cref{thm:braid-iQG}, \cref{prop:rkone} and Lemma~\ref{lem:qsmu}, $\TT'_{i,\bvs ,-1}(K_\mu)=K_\mu$. 
It remains to show that $\TT'_{i,-1}(K_\mu v)=K_\mu \TT'_{i,-1}(v) $. Since $\mu\in Y^\imath$, by \cref{lem:qsmu} we have
\[
K_\mu B_{\tau i} K_{\mu}^{-1}
= q^{-\langle \mu,\alpha_{\tau i}\rangle}B_{\tau i} 
=q^{\langle \mu,\alpha_{ i}\rangle}B_{\tau i},
\qquad
K_\mu B_{ i} K_{\mu}^{-1}=q^{-\langle \mu,\alpha_{ i}\rangle}B_{ i}. 
\]
Thus, $K_\mu$ commutes with $ B^{(t)}_{\tau i} B^{(t+l)}_{i}B^{(l)}_{\tau i}$, and by \eqref{eq:ibraid3}, $K_\mu$ commutes with $\TT'_{i,-1}$ when acting on $M$ as desired.

We prove \eqref{eq:qs-split} for $x=B_j \, (j\in \I)$. Since $B_jv\in M_{\ov{\lambda-\alpha_j}}$, we have
\[
\TT'_{i,\bvs,-1} (B_j v)=g_{\nu-c_{ij}+c_{\tau i,j},\bvs}(k_i, B_i, B_{\tau i}) B_j v.
\]
Hence, it suffices to show that
\begin{align}
\label{eq:Tgnu}
 \TT'_{i,\bvs,-1}(B_j) g_{\nu,\bvs}(k_i, B_i,B_{\tau i})=g_{\nu-c_{ij}+c_{\tau i,j},\bvs}(k_i,B_i,B_{\tau i}) B_j.
\end{align}
Let $\bvs,\bvs'$ be two parameters. By Lemma~\ref{lem:iso-parameter} and the definition \eqref{eq:gnu}, we have 
\[
g_{\nu,\bvs' }(k_i, B_i,B_{\tau i})=\phi_{\bvs ,\bvs'} g_{\nu,\bvs}(k_i, B_i,B_{\tau i}),
\]
for any $\nu\in \Z$. By \cref{thm:braid-iQG}(b), we have $\TT'_{i,\bvs',-1}\phi_{\bvs ,\bvs'}(B_j)=\phi_{\bvs,\bvs'} \TT'_{i,\bvs ,-1}(B_j)$. Therefore, it suffices to verify \eqref{eq:Tgnu} for a special parameter. 

For $j=i$ or $\tau i$, \eqref{eq:Tgnu} will be established in Theorem~\ref{thm:qs-rk1} below for the parameter $\bvs$ such that $\vs_i=\vs_{\tau i}=-q_i^{1/2}$. For $j\neq i,\tau i$, \eqref{eq:Tgnu} will be proved for the distinguished parameter $\bvs_\dm$ in Theorem~\ref{thm:qs-rktwo} and \cref{cor:qs-rktwo}. 

This completes the proof of the desired identity \eqref{eq:qs-split}. 
\end{proof}

Recall $g_{\nu,\bvs_\star}$ and $h_{\nu,\bvs_\star}$ from  \eqref{eq:gnu}--\eqref{eq:hnu}.

 \begin{lemma}\label{lem:qs-bar}
 Set the parameter $\bvs =\bvs_\star$. For any $\nu\in\Z ,j\in \I$, we have 
 \begin{align*}
 \psi^\imath\big(g_{\nu,\bvs_\star}(k_i, B_i, B_{\tau i})\big)&= h_{\nu,\bvs_\star}(k_i, B_i, B_{\tau i}),
 \\
 \psi^\imath\big(\TT'_{i, -1}(B_j)\big)&=\TT''_{i, +1}(B_j).
 \end{align*}
 \end{lemma}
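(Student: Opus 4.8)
The plan is to treat the two asserted identities separately, following the template of \cref{lem:sbar} and \cref{lem:Tbar}, and to reduce everything to the defining features of the $\imath$bar involution at $\bvs_\star$: $\psi^\imath$ is an anti-linear algebra homomorphism with $\psi^\imath(B_i)=B_i$ and $\psi^\imath(B_{\tau i})=B_{\tau i}$, whence it fixes each divided power (as $[m]_i!$ is bar-invariant) and each monomial $B^{(t,t+l,l)}_i=B_{\tau i}^{(t)}B_i^{(t+l)}B_{\tau i}^{(l)}$, while $\psi^\imath(k_i)=k_i^{-1}$.

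For the first identity I would argue by direct inspection. Since $\bvs_\star$ is balanced with $\vs_i=\vs_{\tau i}=q_i^{1/2}$, the $\vs$-factors in \eqref{eq:gnu} and \eqref{eq:hnu} both collapse to $q_i^{-(t+l)/2}$, so that
\begin{align*}
g_{\nu,\bvs_\star}&=q_i^{\nu^2/2}\sum_{t,l\ge 0}(-1)^{t+l}q_i^{-(t-l)^2/2-(t+l)}\,k_i^{-\nu+t-l}B^{(t,t+l,l)}_i,\\
h_{\nu,\bvs_\star}&=q_i^{-\nu^2/2}\sum_{t,l\ge 0}(-1)^{t+l}q_i^{(t-l)^2/2+(t+l)}\,k_i^{\nu-t+l}B^{(t,t+l,l)}_i.
\end{align*}
Applying $\psi^\imath$ term by term to $g_{\nu,\bvs_\star}$ negates every power of $q_i$ (the prefactor $q_i^{\nu^2/2}$ and the quadratic exponent alike) and replaces $k_i^{-\nu+t-l}$ by $k_i^{\nu-t+l}$, which is precisely the corresponding summand of $h_{\nu,\bvs_\star}$. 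No re-indexing is needed, so this identity is immediate.

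For the second identity I would split according to whether $j\in\{i,\tau i\}$. In the rank one case, Proposition~\ref{prop:rkone}(2)---transported to $\bvs_\star$ via $\phi_{\bvs_\dm,\bvs_\star}$ and \cref{thm:braid-iQG}(b)---shows that $\TT'_{i,-1}(B_i)$ and $\TT''_{i,+1}(B_i)$ are explicit scalar multiples of $B_ik_i^{-1}$ and $B_ik_i$ respectively (and likewise for $B_{\tau i}$); using $\psi^\imath(B_ik_i^{-1})=B_ik_i$ together with conjugation of the scalars, the claim reduces to a finite check that $\overline{(\text{scalar of }\TT')}$ equals the scalar of $\TT''$. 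When $j\neq i,\tau i$, I would invoke the explicit rank two formulas for $\TT'_{i,-1}(B_j)$ and $\TT''_{i,+1}(B_j)$ from \cite[Theorem 3.7(iii)]{Z23}, carried over to $\Ui_{\bvs_\star}$ by central reduction and $\phi_{\bvs_\dm,\bvs_\star}$; both are triple sums of the shape $(\text{scalar})\cdot B^{(a,b,c)}_ik_i^{*}$. As $\psi^\imath$ fixes each $B^{(a,b,c)}_i$, conjugates the $q_i$-scalars, and inverts the $k_i$-powers, one then verifies by inspection that it sends the $\TT'_{i,-1}(B_j)$-formula to the $\TT''_{i,+1}(B_j)$-formula, after matching summation indices.

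I expect the real difficulty to reside entirely in this last case: the quasi-split rank two formulas are the most intricate in the paper---triple sums carrying $q$-binomial coefficients and quadratic exponents such as $q_i^{-(u^2+w^2)/2}$---so matching $\psi^\imath\big(\TT'_{i,-1}(B_j)\big)$ against $\TT''_{i,+1}(B_j)$ comes down to careful exponent-by-exponent bookkeeping together with a correct change of summation variables. A cleaner but less self-contained alternative is to first record the algebra-level relation $\TT''_{i,+1}=\sigma_\imath\,\TT'_{i,-1}\,\sigma_\imath$ on $\Ui_{\bvs_\star}$ (as used in the proof of \cref{thm:split-T1DP}, cf. \cite{WZ23}); since $\sigma_\imath$ fixes $B_j$, this gives $\TT''_{i,+1}(B_j)=\sigma_\imath\big(\TT'_{i,-1}(B_j)\big)$, reducing the claim to showing that the anti-linear anti-automorphism $\sigma_\imath\psi^\imath$, which fixes $B_i,B_{\tau i},k_i$, fixes $\TT'_{i,-1}(B_j)$; here the reversal $B^{(a,b,c)}_i\mapsto B^{(c,b,a)}_i$ is handled by the symmetry of \cref{cor:B2B1}.
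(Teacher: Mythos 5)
Your proof is correct and follows essentially the same route as the paper: the first identity is checked by direct termwise application of $\psi^\imath$ to the formulas \eqref{eq:gnu}--\eqref{eq:hnu} specialized at $\bvs_\star$ (where both $\vs$-factors collapse to $q_i^{-(t+l)/2}$, $\psi^\imath$ fixes each $B^{(t,t+l,l)}_i$, conjugates $q_i$-powers, and inverts $k_i$-powers), and the second by inspecting the explicit formulas---Proposition~\ref{prop:rkone} for $j\in\{i,\tau i\}$ and \cite[Theorem 3.7(iii)]{Z23}, transported to $\bvs_\star$ via $\phi_{\bvs_\dm,\bvs_\star}$, for $j\neq i,\tau i$---exactly as the paper does. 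The only caveat is cosmetic: the quasi-split rank two formulas are not sums of terms $B^{(a,b,c)}_i k_i^{*}$ but rather sums (over more indices) of terms with $B_j$ sandwiched between products of divided powers in $B_i,B_{\tau i}$; since $\psi^\imath$ fixes every such monomial, conjugates the scalars, and inverts the $k_i$-powers, your verification plan goes through unchanged.
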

 
 \begin{proof}
 Recall from \S\ref{subsec:ibar} that $\vs_{\star,i}=q_i^{1/2}$ (since $c_{i,\tau i}=-1$) and the bar involution $\psi^\imath$ exists on $\Ui_{\bvs_\star}$. The first identity follows directly by the formulas \eqref{eq:gnu}--\eqref{eq:hnu}.

 The second identity follows from the explicit formulas for $\TT'_{i, -1}(B_j)$ and $\TT''_{i,+1}(B_j)$; see Proposition~\ref{prop:rkone} for $j=i$ and \cite[Theorem 3.7(iii)]{Z23} for $j\neq i$.
 \end{proof}

\begin{proposition}
\label{prop:quasisplit2}
Let $i\in \wI$ be such that $c_{i,\tau i}=-1$. We have, for any $x\in \Ui,v\in M,$
\begin{align}
\label{eq:qs-split2}
&\TT''_{i,+1}(x v)=\TT''_{i,+1}(x) \TT''_{i,+1}(v).
\end{align}
\end{proposition}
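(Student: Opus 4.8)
The plan is to mirror the proofs of \cref{prop:split2} and \cref{prop:diag2}: reduce \eqref{eq:qs-split2} to a generating set of $\Ui$, pass to the balanced parameter $\bvs_\star$ of \eqref{def:bvs-star} at which the $\imath$bar involution $\psi^\imath$ is available, and then transport the already-established identity for $\TT'_{i,-1}$ (namely \eqref{eq:Tgnu}) to the corresponding one for $\TT''_{i,+1}$ by applying $\psi^\imath$.

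First I would run the standard reduction from the opening paragraph of the proof of \cref{thm:split}. Since $\TT''_{i,+1}$ is an algebra automorphism of $\Ui$, the multiplicativity \eqref{eq:qs-split2} is preserved under products, so it suffices to verify it as $x$ ranges over the generators $K_\mu$ ($\mu\in Y^\imath$) and $B_j$ ($j\in \I$), acting on an $\imath$weight vector $v\in M_{\ov{\lambda}}$; write $\nu = \langle h_i - h_{\tau i},\ov{\lambda}\rangle$. Moreover, using \cref{lem:iso-parameter}, the parameter-rescaling relation for $h_{\nu,\bvs}$ in \eqref{eq:hnu} (the analog of $g_{\nu,\bvs'} = \phi_{\bvs,\bvs'}\, g_{\nu,\bvs}$ used in \cref{thm:qs-split}), and the $\TT''$-version of \cref{thm:braid-iQG}(b), the whole statement reduces to the single balanced parameter $\bvs_\star$, where $\psi^\imath$ exists on $\Ui_{\bvs_\star}$. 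The case $x = K_\mu$ is then handled exactly as in \cref{thm:qs-split}: by \cref{lem:qsmu} we have $\bs_i\mu = \mu$ and $\langle\mu,\alpha_i+\alpha_{\tau i}\rangle = 0$, so $K_\mu$ commutes with each monomial $B^{(t,t+l,l)}_i$ and hence with $h_{\nu,\bvs_\star}$.

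The heart of the argument is the case $x = B_j$, where it suffices to prove
\[
\TT''_{i,+1}(B_j)\, h_{\nu,\bvs_\star}(k_i,B_i,B_{\tau i}) = h_{\nu - c_{ij} + c_{\tau i,j},\bvs_\star}(k_i,B_i,B_{\tau i})\, B_j.
\]
Here I would invoke \eqref{eq:Tgnu}, proved in the course of \cref{thm:qs-split} and specialized to $\bvs_\star$:
\[
\TT'_{i,-1}(B_j)\, g_{\nu,\bvs_\star}(k_i,B_i,B_{\tau i}) = g_{\nu - c_{ij} + c_{\tau i,j},\bvs_\star}(k_i,B_i,B_{\tau i})\, B_j,
\]
and apply the $\F$-algebra homomorphism $\psi^\imath$ to both sides. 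Because $\psi^\imath$ is an algebra homomorphism it preserves the order of factors, and by \cref{lem:qs-bar} it sends $g_{\nu,\bvs_\star} \mapsto h_{\nu,\bvs_\star}$ and $\TT'_{i,-1}(B_j) \mapsto \TT''_{i,+1}(B_j)$ while fixing $B_j$; this yields the desired $h$-identity at once, completing the proof.

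The reason this direct route succeeds — and the point worth emphasizing — is that, unlike the diagonal case of \cref{prop:diag2} (where $z_m$ and $z'_m$ involve opposite orderings of $B_i, B_{\tau i}$, forcing a detour through $\tau$ and a twisted module $M^\star$), here both $g_\nu$ and $h_\nu$ are built from the \emph{same} ordered monomial $B^{(t,t+l,l)}_i = B_{\tau i}^{(t)}B_i^{(t+l)}B_{\tau i}^{(l)}$, so an order-preserving homomorphism already carries $g_\nu$ to $h_\nu$. The main obstacle is therefore not in the present proof but is packaged into \cref{lem:qs-bar}: checking $\psi^\imath(g_{\nu,\bvs_\star}) = h_{\nu,\bvs_\star}$ amounts to verifying that the scalar coefficients and $k_i$-powers in \eqref{eq:gnu}--\eqref{eq:hnu} transform correctly under $q\mapsto q^{-1}$ and $k_i\mapsto k_i^{-1}$, together with ensuring that these infinite sums remain well-defined in the completion when $\psi^\imath$ is applied and the operators act on the integrable module $M$.
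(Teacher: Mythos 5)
Your proposal is correct and takes essentially the same route as the paper's proof: reduce to generators and to the balanced parameter $\bvs_\star$, handle $K_\mu$ as in \cref{thm:qs-split}, and for $x=B_j$ apply the ($\imath$bar) homomorphism $\psi^\imath$ to \eqref{eq:Tgnu}, using \cref{lem:qs-bar} to convert $g_{\nu,\bvs_\star}\mapsto h_{\nu,\bvs_\star}$ and $\TT'_{i,-1}(B_j)\mapsto \TT''_{i,+1}(B_j)$. Your closing remark explaining why no twisted-module detour (as in \cref{prop:diag2}) is needed here---both $g_\nu$ and $h_\nu$ are built from the same ordered monomials $B^{(t,t+l,l)}_i$---is an accurate reading of the structural difference between the two cases.
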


\begin{proof}
By the same arguments as in the first paragraph in the proof of Theorem~\ref{thm:split}, it suffices to prove \eqref{eq:qs-split2} when $x$ runs a generating set of $\Ui$ and for $v\in M_{\ov{\lambda}}$ with $\ov{\lambda}\in X_\imath$. 

For $x=K_\mu \, (\mu\in Y^\imath)$, the proof of \eqref{eq:qs-split2} is essentially the same as the proof of \eqref{eq:qs-split} in Theorem~\ref{thm:qs-split}.

We prove \eqref{eq:qs-split2} for $x=B_j \, (j\in \I)$. Using $\phi_{\bvs,\bvs'}$ and similar arguments in the proof of Theorem~\ref{thm:qs-split}, we can set the parameter to be  $\bvs_\star$. Applying $\psi^\imath$ to \eqref{eq:Tgnu} and using Lemma~\ref{lem:qs-bar}, we obtain that  
\begin{align}
\label{eq:Thnu}
 \TT''_{i,\bvs_\star,+1}(B_j) h_{\lambda_{i,\tau},\bvs_\star}(k_i, B_i,B_{\tau i})=h_{\lambda_{i,\tau}-c_{ij}+c_{\tau i,j},\bvs_\star}(k_i,B_i,B_{\tau i}) B_j.
\end{align}
By \eqref{eq:ibraid3'}, this implies that 
$\TT''_{i,+1}(B_j v)=\TT''_{i,+1}(B_j) \TT''_{i,+1}(v)$ as desired.
\end{proof}

\subsection{Identities in quasi-split rank one}

Recall $c_{i,\tau i}=-1$. In this subsection,  fixing a parameter $\bvs$ with $\vs_i=\vs_{\tau i}=-q_i^{1/2}$, we shall verify a key identity in the proof of Theorem~\ref{thm:qs-split}, for $x=B_{i},$ or $B_{\tau i}$. Set 
\begin{align}
     \label{eq:g}
\begin{split}
    g(k_i, B_i, B_{\tau i}) 
    &= \sum_{t\ge 0,l\ge 0}   q_i^{ -\frac{(t-l)^2}{2}-t-l } k_i^{t-l}  B^{(t,t+l,l)}_{i}.
    \end{split}
\end{align}

By definition in \eqref{eq:gnu}, we have \begin{align}
 \label{ggnu}
    g_\nu(k_i, B_i, B_{\tau i}) =q_i^{\nu^2/2} k_i^{-\nu}g(k_i, B_i, B_{\tau i}).
\end{align}


Recall from Proposition~\ref{prop:rkone} that
\begin{align*}
    \TT'_{i,-1}(B_i) =  q_i^{3/2} B_i k_i^{-1},
    \qquad
    \TT'_{i,-1}(B_{\tau i}) =  q_i^{3/2}  B_{\tau i} k_i.
\end{align*}

Let $\Ui_{i,\tau i}$ be the subalgebra of $\Ui$ generated by $B_i,B_{\tau i},k_i$. Since $c_{i,\tau i}=-1$, there is an algebra isomorphism $\Ui_{i,\tau i}\cong \Ui(\sl_3)$ such that $q_i\mapsto q, B_i\mapsto B_1,B_{\tau i}\mapsto B_2, k_i\mapsto k_1$. In this way, we are free to apply the results on the quantum symmetric pair $(\U(\sl_3), \Ui(\sl_3))$ in Section~\ref{sec:quasi basis}.

\begin{theorem}
\label{thm:qs-rk1}
Assume $c_{i,\tau i}=-1$. The element $g_\nu(k_i, B_i, B_{\tau i})$ in \eqref{eq:gnu} satisfies the following two identities for any $\nu\in \Z$,
\begin{align}
\label{eq:gnuB1}
    g_{\nu-3}(k_i, B_i, B_{\tau i})B_i &=\TT'_{i,-1}(B_i) g_\nu(k_i, B_i, B_{\tau i}),
    \\
    g_{\nu+3}(k_i, B_i, B_{\tau i})B_{\tau i} &=\TT'_{i,-1}(B_{\tau i}) g_\nu(k_i, B_i, B_{\tau i}).
\label{eq:gnuB2}
\end{align} 
\end{theorem}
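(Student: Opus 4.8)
The plan is to strip away the scalar and $k_i$-prefactors so that both \eqref{eq:gnuB1} and \eqref{eq:gnuB2} become $\nu$-free commutation relations between $g=g(k_i,B_i,B_{\tau i})$ and the generators $B_i,B_{\tau i}$. First I observe that each monomial $B^{(t,t+l,l)}_i$ occurring in $g$ has $k_i$-weight $0$ (since $\tau$ preserves the form, $c_{i,\tau i}=c_{\tau i,i}=-1$), so $g$ commutes with $k_i$. Using \eqref{ggnu}, i.e. $g_\mu=q_i^{\mu^2/2}k_i^{-\mu}g$, the values $\TT'_{i,-1}(B_i)=q_i^{3/2}B_ik_i^{-1}$ and $\TT'_{i,-1}(B_{\tau i})=q_i^{3/2}B_{\tau i}k_i$ from Proposition~\ref{prop:rkone}, and the commutations $k_iB_i=q_i^{-3}B_ik_i$, $k_iB_{\tau i}=q_i^{3}B_{\tau i}k_i$ of \eqref{KB1}, I will cancel the common factor $q_i^{\nu^2/2-3\nu}$ and a power of $k_i$ from both sides. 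A direct bookkeeping then reduces \eqref{eq:gnuB1} to
\[
B_i\,g=q_i^{6}\,k_i^{4}\,g\,B_i,
\]
and \eqref{eq:gnuB2} to $B_{\tau i}\,g=q_i^{6}\,k_i^{-4}\,g\,B_{\tau i}$.

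Second, I claim it suffices to prove the first of these. Applying the anti-involution $\sigma_\tau$ of Lemma~\ref{lem:inv}, which fixes $k_i$, fixes $g$ (by the computation in the proof of Lemma~\ref{lem:sigmatau}), and interchanges $B_i\leftrightarrow B_{\tau i}$, to the identity $B_ig=q_i^6k_i^4gB_i$ yields $gB_{\tau i}=q_i^6B_{\tau i}gk_i^4$; rearranging this using that $g$ commutes with $k_i$ together with \eqref{KB1} gives exactly $B_{\tau i}g=q_i^6k_i^{-4}gB_{\tau i}$. Thus the $\tau$-counterpart is automatic.

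Third, I will prove $B_ig=q_i^6k_i^4gB_i$ inside $\Ui_{i,\tau i}\cong\Ui(\sl_3)$ by means of the multiplicative formulas of Section~\ref{sec:quasi basis}. Writing $g=\sum_{t,l\ge0}q_i^{-(t-l)^2/2-t-l}k_i^{t-l}B^{(t,t+l,l)}_i$ and moving $B_i$ past $k_i^{t-l}$, I expand $B_i\,B^{(t,t+l,l)}_i$ by \eqref{eq:BBa} and, on the other side, $B^{(t,t+l,l)}_i\,B_i$ by \eqref{eq:BBe}. A $k_i$-weight count shows that every resulting monomial lies in the single family $B^{(a,a+c+1,c)}_i$ with $a,c\ge0$; these have $k_i$-weight $-3$ and are linearly independent standard monomials, so it is enough to match, term by term, the coefficient of each $k_i^{e}B^{(a,a+c+1,c)}_i$. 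After reindexing the three term-types produced by \eqref{eq:BBa} and by \eqref{eq:BBe} and commuting all $k_i$-factors to the left, the identity reduces to elementary equalities of $q_i$-powers and $q_i$-integers.

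The main obstacle will be precisely this last bookkeeping: because the target monomials $B^{(a,a+c+1,c)}_i$ have nonzero $k_i$-weight $-3$ (and are \emph{not} central), the factors $\{k_i;\cdot\}$ appearing on the right in \eqref{eq:BBa} and $\{k_i^{-1};\cdot\}$ appearing on the left in \eqref{eq:BBe} acquire extra powers $q_i^{\pm3}$ as they are commuted to a common side, and these must cancel exactly against the $[l+1]$ versus $[t+1]$ prefactors and the Gaussian exponent $-(t-l)^2/2$. I expect to dispatch the resulting $q$-identities by the same inductive technique used for \eqref{eq:BBad}--\eqref{eq:BBe}. As a cross-check, and an alternative route should the bookkeeping prove unwieldy, one may instead verify \eqref{eq:gnuB1}--\eqref{eq:gnuB2} as operator identities on every simple module $L(m,n)$ --- which suffices by the separation statement following Lemma~\ref{lem:integrable} --- using that $g_\nu$ acts as $\TT'_{i,-1}$ together with the transition matrices between canonical and quasi $\imath$canonical bases obtained in Section~\ref{sec:quasi basis}.
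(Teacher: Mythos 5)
Your proposal is correct and takes essentially the same route as the paper: the paper likewise reduces everything to a $\nu$-free commutation identity (its equation \eqref{gB=Bg}, which is exactly your $B_i\,g=q_i^{6}k_i^{4}\,g\,B_i$ rearranged via \eqref{KB1} and \eqref{ggnu}), proves it by expanding $g\,B_i$ with \eqref{eq:BBe} and $B_i\,g$ with \eqref{eq:BBa} so that both sides collapse to the common sum $\sum_{t,l\ge 0}q_i^{-\frac{(t-l+1)^2}{2}+t-4l+1}k_i^{t-l+1}B^{(t,t+l+1,l)}_{i}$, and then deduces \eqref{eq:gnuB2} from \eqref{eq:gnuB1} by applying $\sigma_\tau$ together with Lemma~\ref{lem:sigmatau}. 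The only cosmetic difference is that you organize the final cancellation as coefficient-matching within the family $k_i^{e}B^{(a,a+c+1,c)}_{i}$ instead of exhibiting the common closed form directly.
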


\begin{proof}
 We prove \eqref{eq:gnuB1}; the other formula \eqref{eq:gnuB2} is obtained by applying $\sigma_\tau$ to \eqref{eq:gnuB1} and using Lemma~\ref{lem:sigmatau}. 

Using \eqref{eq:BBe}, we compute the LHS \eqref{eq:gnuB1} up to a simple factor as follows:
\begin{align*}
&k_i^3 q_i^{9/2}  g(k_i, B_i, B_{\tau i})  B_i
\\
=& \;  \sum_{t\ge 0,l\ge 0}   q_i^{ -\frac{(t-l)^2}{2}-t-l+9/2} k_i^{t-l+3} B^{(t,t+l,l)}_{i} B_i
\\
=& \;  \sum_{t\ge 0,l\ge 0} [t+1]_i  q_i^{ -\frac{(t-l)^2}{2}-t-l+9/2} k_i^{t-l+3} \big( B^{(t,t+l+1,l)}_{i} + B^{(t+1,t+l+1,l-1)}_{i} \big)
  \\
 &-\sum_{t\ge 0,l\ge 0} [t+1]_i  q_i^{ -\frac{(t-l)^2}{2}-t-l+9/2} k_i^{t-l+3}
 \{ k_i^{-1};t-l-1/2 \} B^{(t,l+t,l-1)}_{i}
 \\
=& \;  \sum_{t\ge 0,l\ge 0}  q_i^{ -\frac{(t-l-1)^2}{2}-t-2l+1}   k_i^{t-l+1} B^{(t,t+l+1,l)}_{i}
\\
=& \;  \sum_{t\ge 0,l\ge 0}  q_i^{ -\frac{(t-l+1)^2}{2}+t-4l+1}   k_i^{t-l+1} B^{(t,t+l+1,l)}_{i}.
\end{align*}

On the other side, using \eqref{eq:BBa}, we compute the RHS \eqref{eq:gnuB1} up to a simple factor as follows:
\begin{align*}
& q_i^{3/2}  B_i k_i^{-1} g(k_i, B_i, B_{\tau i})
\\
=& \;  q_i^{3/2} \sum_{t\ge 0,l\ge 0}   q_i^{ -\frac{(t-l)^2}{2}-t-l }  B_i k_i^{t-l-1} B^{(t,t+l,l)}_{i}
\\
=& \; q_i^{-3/2} \sum_{t\ge 0,l\ge 0}   q_i^{ -\frac{(t-l)^2}{2}+2t-4l }  k_i^{t-l-1}B_i B^{(t,t+l,l)}_{i}
\\
=& \; q_i^{-3/2} \sum_{t\ge 0,l\ge 0} [l+1]_i q_i^{ -\frac{(t-l)^2}{2}+2t-4l }  k_i^{t-l-1} \big( B^{(t,t+l+1,l)}_{i}+B^{(t-1,t+l+1,l+1)}_{i} \big)
\\
&-q_i^{-3/2} \sum_{t\ge 0,l\ge 0} [l+1]_i q_i^{ -\frac{(t-l)^2}{2}+2t-4l }  k_i^{t-l-1}\{ k_i;l-t+5/2 \} B_{i}^{(t-1,l+t,l)}
\\
=& \; \sum_{t\ge 0,l\ge 0}  q_i^{ -\frac{(t-l+1)^2}{2}+t-4l+1}  k_i^{t-l+1}
  B^{(t,t+l+1,l)}_{i}.
\end{align*}
Hence, combining the above computations gives us
\begin{align}
    \label{gB=Bg}
    k_i^3 q_i^{9/2} g(k_i,B_i, B_{\tau i})B_i - q_i^{3/2} B_i k_i^{-1 } g(k_i,B_i, B_{\tau i}) =0.
\end{align}
Using \eqref{ggnu} and \eqref{gB=Bg}, we have
\begin{align*}
    g_{\nu-3}& (k_i,B_i, B_{\tau i})B_i -q_i^{3/2} B_i k_i^{-1 } g_\nu(k_i,B_i, B_{\tau i})
    \\
    =& \; q_i^{(\nu-3)^2/2} k_i^{-\nu+3} g(k_i,B_i, B_{\tau i}) B_i -q_i^{\nu^2/2-3\nu} k_i^{-\nu} q_i^{3/2} B_i k_i^{-1 } g(k_i,B_i, B_{\tau i})
    \\
    =& \; q_i^{\nu^2/2-3\nu} k_i^{-\nu} \big(k_i^3 q_i^{9/2} g(k_i,B_i, B_{\tau i})B_i - q_i^{3/2} B_i k_i^{-1 } g(k_i,B_i, B_{\tau i})\big)
    =   0,
\end{align*}
whence \eqref{eq:gnuB1}.
\end{proof}


\begin{proposition}
The element $h_\nu(k_i, B_i, B_{\tau i})$ satisfies the following two identities, for any $\nu\in \Z$:
\begin{align}
\label{eq:hnuB1}
   h_{\nu-3}(k_i, B_i, B_{\tau i})B_i &=\TT''_{i,+1}(B_i) h_\nu(k_i, B_i, B_{\tau i}),
    \\
   h_{\nu+3}(k_i, B_i, B_{\tau i})B_{\tau i} &=\TT''_{i,+1}(B_{\tau i}) h_\nu(k_i, B_i, B_{\tau i}).
\label{eq:hnuB2}
\end{align}  
\end{proposition}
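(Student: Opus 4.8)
The plan is to deduce this proposition from its $\TT'_{i,-1}$-counterpart \cref{thm:qs-rk1} by applying the $\imath$bar involution $\psi^\imath$, exactly in the spirit of how \cref{prop:quasisplit2} was obtained from \cref{thm:qs-split}. First I would record that, for the parameter fixed in this subsection with $\vs_i=\vs_{\tau i}=-q_i^{1/2}$, the condition $\vs_{\tau i}=q_i^{-c_{i,\tau i}}\ov{\vs_i}$ of \S\ref{subsec:ibar} is satisfied, since $q_i^{-c_{i,\tau i}}\ov{\vs_i}=q_i(-q_i^{-1/2})=-q_i^{1/2}=\vs_{\tau i}$. Hence the anti-linear algebra automorphism $\psi^\imath$ of $\Ui$ is available for this parameter; it fixes $B_i$ and $B_{\tau i}$, inverts $q_i$, and sends $k_i\mapsto k_i^{-1}$.

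Second, I would establish the two bar-involution relations that play the role of \cref{lem:qs-bar} for this parameter:
\begin{align*}
\psi^\imath\big(g_{\nu}(k_i,B_i,B_{\tau i})\big)=h_{\nu}(k_i,B_i,B_{\tau i}),\qquad
\psi^\imath\big(\TT'_{i,-1}(B_j)\big)=\TT''_{i,+1}(B_j)\quad(j=i,\tau i).
\end{align*}
The first is a direct comparison of the defining formulas \eqref{eq:gnu} and \eqref{eq:hnu}: substituting $\vs_i=\vs_{\tau i}=-q_i^{1/2}$ makes both scalar products $\vs_i^{\nu/2-t}\vs_{\tau i}^{-\nu/2-l}$ and $\vs_i^{-\nu/2-l}\vs_{\tau i}^{\nu/2-t}$ collapse to $(-1)^{t+l}q_i^{-(t+l)/2}$, so that $g_\nu=q_i^{\nu^2/2}k_i^{-\nu}\sum q_i^{-(t-l)^2/2-(t+l)}k_i^{t-l}B^{(t,t+l,l)}_i$ and $h_\nu=q_i^{-\nu^2/2}k_i^{\nu}\sum q_i^{(t-l)^2/2+(t+l)}k_i^{l-t}B^{(t,t+l,l)}_i$; applying $\psi^\imath$ (which inverts $q_i$ and $k_i$ while fixing each $B^{(t,t+l,l)}_i$) carries the former to the latter. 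The second relation, needed only for $j=i,\tau i$, follows from the explicit rank one formulas $\TT'_{i,-1}(B_i)=q_i^{3/2}B_ik_i^{-1}$, $\TT'_{i,-1}(B_{\tau i})=q_i^{3/2}B_{\tau i}k_i$ and $\TT''_{i,+1}(B_i)=q_i^{-3/2}B_ik_i$, $\TT''_{i,+1}(B_{\tau i})=q_i^{-3/2}B_{\tau i}k_i^{-1}$ of \cref{prop:rkone} (which hold for any balanced parameter), again because $\psi^\imath$ inverts $q_i$ and $k_i$.

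With these in hand I would simply apply $\psi^\imath$ to the identities \eqref{eq:gnuB1} and \eqref{eq:gnuB2}. Since $\psi^\imath$ is an algebra homomorphism preserving the order of multiplication and fixes $B_i,B_{\tau i}$, applying it to \eqref{eq:gnuB1} sends the left-hand side $g_{\nu-3}B_i$ to $h_{\nu-3}B_i$ and the right-hand side $\TT'_{i,-1}(B_i)g_\nu$ to $\TT''_{i,+1}(B_i)h_\nu$, which is exactly \eqref{eq:hnuB1}; likewise \eqref{eq:gnuB2} yields \eqref{eq:hnuB2}.

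The entire computational burden is already discharged in \cref{thm:qs-rk1} through the multiplicative formulas \eqref{eq:BBa} and \eqref{eq:BBe}, so there is no genuinely hard step remaining. The one point requiring care is the parameter bookkeeping: one must verify the two $\psi^\imath$-relations above for the subsection parameter $\vs_i=\vs_{\tau i}=-q_i^{1/2}$ rather than quoting \cref{lem:qs-bar} verbatim (which is stated for $\bvs_\star$, where $\vs_{i,\star}=q_i^{1/2}$). An equivalent but more laborious alternative would be to transport \eqref{eq:gnuB1}--\eqref{eq:gnuB2} to $\bvs_\star$ via $\phi_{\bvs,\bvs_\star}$ and then invoke \cref{lem:qs-bar} directly, or to repeat the computation of \cref{thm:qs-rk1} with $h_\nu$ in place of $g_\nu$; both are redundant given the symmetry afforded by $\psi^\imath$.
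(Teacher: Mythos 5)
Your proof is correct, but it takes a genuinely different route from the paper. The paper's own proof of this proposition is a direct computation: it simply redoes the argument of \cref{thm:qs-rk1} with $h_\nu$ in place of $g_\nu$, again using the multiplicative formulas \eqref{eq:BBa} and \eqref{eq:BBe} (the details are skipped there). You instead deduce the proposition from \cref{thm:qs-rk1} by applying the $\imath$bar involution $\psi^\imath$, which is the same symmetry trick the paper uses elsewhere (e.g.\ to get \cref{prop:split2} from \cref{thm:split}, and \cref{prop:quasisplit2} from \cref{thm:qs-split} via \cref{lem:qs-bar}), but which the paper does \emph{not} invoke for this particular statement. Your bookkeeping is the part that makes this work and it checks out: (a) the subsection parameter $\vs_i=\vs_{\tau i}=-q_i^{1/2}$ does satisfy $\vs_{\tau i}=q_i^{-c_{i,\tau i}}\ov{\vs_i}$, so $\psi^\imath$ exists there even though \cref{lem:qs-bar} itself is stated only for $\bvs_\star$; (b) at this parameter both scalar strings collapse so that $\psi^\imath(g_\nu)=h_\nu$ (using that $k_i$ commutes with $B_i^{(t,t+l,l)}$ and that divided powers are $\psi^\imath$-fixed); and (c) the rank one formulas $\TT'_{i,-1}(B_i)=q_i^{3/2}B_ik_i^{-1}$, $\TT''_{i,+1}(B_i)=q_i^{-3/2}B_ik_i$, etc., are independent of the choice of balanced parameter (by \cref{lem:iso-parameter} and \eqref{eq:phiTT}), a fact the paper itself uses implicitly when citing \cref{prop:rkone} inside the proof of \cref{thm:qs-rk1}, so $\psi^\imath$ exchanges them. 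What your approach buys is economy and a structural explanation — the $h$-identities are literally the bar-conjugates of the $g$-identities, so no computation remains; what the paper's approach buys is independence from the existence of $\psi^\imath$ at the chosen parameter, at the cost of repeating the rank one calculation.
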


\begin{proof}
Follows by similar calculations and arguments as in the proof of Theorem~\ref{thm:qs-rk1}. We skip the details.
\end{proof}
\subsection{Identities in quasi-split rank two with parameter $\bvs_\dm$}

Recall $c_{i,\tau i}=-1$. In this subsection, we set the parameter to be the distinguished parameter $\bvs_\dm$, and thus $\vs_i=\vs_{\tau i}=-q_i^{-1/2}$. We shall verify a key identity needed for the proof of Theorem~\ref{thm:qs-split}.

For the parameter $\bvs_\dm$, the element $g_\nu(k_i, B_i, B_{\tau i})$ defined in \eqref{eq:gnu} specializes to 
\begin{align}
     \label{eq:gnupar}
\begin{split}
    g_\nu(k_i, B_i, B_{\tau i}) 
    &= q_i^{\nu^2/2}  k_i^{-\nu}\sum_{t\ge 0,l\ge 0} q_i^{ -\frac{(t-l)^2}{2} } k_i^{t-l} B^{(t,t+l,l)}_{i}
    \\
    &= q_i^{\nu^2/2} k_i^{-\nu}\sum_{t\ge 0,l\ge 0} q_i^{ -\frac{(t-l)^2}{2} } k_i^{t-l}   B^{(l,t+l,t)}_{\tau i}.
    \end{split}
\end{align}

Recall from \eqref{def:qsqsBij1}--\eqref{def:qsqsBij2} the elements $b_{i,\tau i, j;n,a,b,c}$; we shall denote them by a shorthand $b_{n,a,b,c}$ (with $j \neq i,\tau i$ fixed) in this subsection.
Write $\alpha=-c_{ij}$ and $\beta =-c_{\tau i,j}$. 

\begin{lemma}\label{lem:qsqsKM1}
Let $k\geq 0$. The following identities hold: 
\begin{align}
    b_{n,n\beta-x,n\alpha+n\beta,n\alpha} B_{i}^{(k)}
    &=\sum_{y=0}^x \sum_{u=0}^{n\alpha+n\beta} (-1)^y q_i^{(k-u)(n\beta-2x+y)-\frac{u(u+2k)}{2}-y} \qbinom{n\beta-x+y}{y}_i \times
    \notag \\
    &\qquad \times B_i^{(k-y-u)} b_{n,n\beta-x+y,n\alpha+n\beta-u,n\alpha} k_i^u, 
    \qquad \text{for } 0\leq x\leq n\beta,
\\
    b_{n,n\beta,n\alpha+n\beta ,n\alpha}B_{\tau i}^{(k)}&= 
    \sum_{r=0}^{k} q_i^{(k-r)n\alpha-kr-\frac{1}{2}r^2} 
    B_{\tau i}^{(k-r)}b_{n,n\beta-r,n\alpha+n\beta,n\alpha} k_i^{-r},
\\
    b_{n,0,n\alpha+n\beta-u,n\alpha} B_{\tau i}^{(k)}
    &=\sum_{v=0}^u \sum_{r=0}^{n\alpha} (-1)^v q_i^{(k-r)(n\alpha +n\beta-2u+v)-\frac{r(r+2k)}{2}-v}  \times
    \notag \\
    & \quad \times\qbinom{n\alpha+n\beta-u+v}{v}_i B_{\tau i}^{(k-v-r)} b_{n,0,n\alpha+n\beta-u+v,n\alpha-r} k_i^{-r},
    \notag\\
    &\qquad \text{for } 0\leq u\leq n\alpha+n\beta.
\end{align}
\end{lemma}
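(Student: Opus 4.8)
The plan is to prove all three identities by induction on $k$, following closely the template of the diagonal-type commutation formulas \cref{lem:qsBBij,lem:qsbij}. The only tools needed are the recursive relations \eqref{def:qsqsBij1}--\eqref{def:qsqsBij2} of \cref{lem:qsrecur}, the vanishing \cref{lem:qsvanish}, the commutation relations $k_i B_i = q_i^{-3} B_i k_i$ and $k_i B_{\tau i} = q_i^{3} B_{\tau i} k_i$ (valid since $\Ui_{i,\tau i}\cong \Ui(\sl_3)$; see \eqref{KB1}), and standard $q$-binomial identities such as \cite[1.3.1(e), 1.3.4]{Lus93}. In every case the base case $k=0$ is immediate, since all summation indices on the right-hand side are forced to vanish and the unique surviving term equals the left-hand side.

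For the first identity I would write $b_{n,n\beta-x,n\alpha+n\beta,n\alpha} B_i^{(k+1)} = \tfrac{1}{[k+1]_i}\big(b_{n,n\beta-x,n\alpha+n\beta,n\alpha} B_i^{(k)}\big)B_i$, substitute the inductive hypothesis, and right-multiply each term $B_i^{(k-y-u)} b_{n,n\beta-x+y,n\alpha+n\beta-u,n\alpha}\, k_i^{u}$ by $B_i$. After commuting $B_i$ past $k_i^{u}$ (which contributes a factor $q_i^{-3u}$) I apply the rearranged recursion \eqref{def:qsqsBij1} to $b_{n,a',b',c'} B_i$ with $(a',b',c')=(n\beta-x+y,\,n\alpha+n\beta-u,\,n\alpha)$. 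Two features make this tractable: the lowering coefficient simplifies to $[-b'+c'+1+n\beta]_i=[u+1]_i$, so only an $a'$-raising term and a $b'$-lowering term survive; and the leading $B_i b_{n,a',b',c'}$ is reabsorbed on the left via $B_i^{(k-y-u)}B_i=[k-y-u+1]_i B_i^{(k-y-u+1)}$. Reindexing the three resulting families and matching against the asserted $(k+1)$-expansion reduces to a Pascal-type recursion for $\qbinom{n\beta-x+y}{y}_i$.

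The second identity is the simplest: for the indices $(a,b,c)=(n\beta-r,\,n\alpha+n\beta,\,n\alpha)$ one checks via \cref{lem:qsvanish} that in \eqref{def:qsqsBij2} both raising terms $b_{n,a,b+1,c}$ and $b_{n,a-1,b+1,c+1}$ vanish (conditions (ii) and (i)) and that the $c$-lowering term $b_{n,a,b,c-1}$ vanishes as well (condition (ii)), so only a two-term recursion lowering the first index $a$ survives; iterating this single-variable induction (using $k_i^{-r}B_{\tau i}=q_i^{-3r}B_{\tau i}k_i^{-r}$) produces the single sum over $r$ and its quadratic exponent $-\tfrac12 r^2$. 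The third identity runs exactly as the first, but now with $a=0$ fixed: in \eqref{def:qsqsBij2} the two terms carrying $a-1=-1$ drop out by the convention $b_{n,-1,\ast,\ast}=0$, leaving a $b$-raising term $[b+1]_i b_{n,0,b+1,c}$ and a $c$-lowering term, which supply the summation indices $v$ and $r$ respectively. (One might hope to obtain some of these identities from the others by applying the anti-involution $\sigma_\tau$ of \cref{lem:inv} together with the ladder symmetry of \cref{cor:B2B1}, but the differing index patterns make three independent inductions the safer route.)

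The main obstacle is the coefficient bookkeeping in the two double-sum identities (the first and the third). After the inductive step, the $(k+1)$-coefficient of each term $B_{\ast}^{(\cdots)} b_{n,\cdots}\, k_i^{\pm}$ is assembled from the inductive $q_i$-powers, the factors $[u+1]_i$ or $[r+1]_i$ thrown off by the recursion, and the $q_i^{-3u}$ (resp.\ $q_i^{-3r}$) from the $k_i$-commutation; one must check that these fuse, through a $q$-binomial recursion, into precisely the quadratic exponents $-\tfrac{u(u+2k)}{2}$ and $-\tfrac{r(r+2k)}{2}$ and the binomials $\qbinom{n\beta-x+y}{y}_i$, $\qbinom{n\alpha+n\beta-u+v}{v}_i$ of the statement. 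Controlling these quadratic $q_i$-exponents, exactly as in the $\varepsilon_{y,r}$ computation in the proof of \cref{lem:qsbij}, is the delicate point; the remaining manipulations are routine.
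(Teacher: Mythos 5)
Your proposal is correct and follows essentially the same route as the paper, which simply notes that these identities are proved "using Lemmas \ref{lem:qsrecur}--\ref{lem:qsvanish}" by induction on $k$ in the same manner as \cref{lem:qsbij} and omits the details. Your fleshed-out version — base case $k=0$, right-multiplication by $B_i$ or $B_{\tau i}$, commuting past $k_i^{\pm u}$, applying the rearranged recursions with the surviving terms identified via \cref{lem:qsvanish} (in particular the coefficients $[u+1]_i$, $[r+1]_i$ and the vanishing of the raising/$c$-lowering terms in the second identity), and matching coefficients through Pascal-type $q$-binomial recursions as in the $\varepsilon_{y,r}$ computation — is exactly the argument the paper intends.
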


\begin{proof}
The proof for these identities using Lemmas~\ref{lem:qsrecur}--\ref{lem:qsvanish} is similar to the proof of \cref{lem:qsbij}, and hence will be skipped.
\end{proof}

Set, for $0\leq d \le a$ with $a>0$, 
\begin{align}
\label{Had}
H(a,d)=\sum_{t,l\geq 0} q_i^{-\frac{(t-l)^2}{2} +(l-d)a+(l-2t)d} k_i^{t-l+2d} \frac{ 1}{[l+1]_i\cdots[l+a]_i} B_{\tau i}^{(l,l+t+d,t)} B_{\tau i}^{a-d}.
\end{align}

\begin{lemma}
\label{lem:qsqsKM2}
The following identity holds:  
\begin{align}\label{eq:qsqsKM5}
H(a,d)=[a-d-1]_i H(a,d+1),
\qquad \text{for } 0\leq d< a.
\end{align}
\end{lemma}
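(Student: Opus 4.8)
The plan is to prove this contiguous relation directly, by absorbing one factor $B_{\tau i}$ from the trailing power $B_{\tau i}^{a-d}$ in \eqref{Had} into the central block $B_{\tau i}^{(l,l+t+d,t)}$, thereby raising its middle index, and then reorganizing the resulting triple sum according to the power of $k_i$ standing to the left of each block.

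First I would record a right-multiplication formula for a central block at the parameter $\bvs_\dm$. Applying the anti-involution $\sigma_\tau$ of \cref{lem:inv} to the left-multiplication formula \eqref{eq:BBad}, and transporting the parameter from $\vs_i=\vs_{\tau i}=-q_i^{1/2}$ to $\bvs_\dm$ by the isomorphism $\phi_{\bvs,\bvs'}$ of \cref{lem:iso-parameter} (under which $k_i$ is fixed and each $B$ rescales by $q_i^{1/2}$), one obtains
\[
B_{\tau i}^{(l,l+t+d,t)}B_{\tau i}=[l+1+d]_i\,B_{\tau i}^{(l,l+t+d+1,t)}+[l+1]_i\,B_{\tau i}^{(l+1,l+t+d+1,t-1)}-q_i^{-1}[l+1+d]_i\,\{k_i;l-t-\tfrac12-2d\}\,B_{\tau i}^{(l,l+t+d,t-1)},
\]
in which the two leading terms carry no $k_i$ and only the last involves $\{k_i;c\}=q_i^{c}k_i+q_i^{-c}k_i^{-1}$.

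Next I would substitute $B_{\tau i}^{(l,l+t+d,t)}B_{\tau i}^{a-d}=(B_{\tau i}^{(l,l+t+d,t)}B_{\tau i})B_{\tau i}^{a-d-1}$ into \eqref{Had} and expand, splitting $H(a,d)$ into three sums $S_1,S_2,S_3$ coming from the three terms above. In $S_3$ I would move $\{k_i;c\}$ past the prefactor $k_i^{t-l+2d}$, so that $S_3$ separates into a piece carrying $k_i^{t-l+2d+2}$ and a piece carrying $k_i^{t-l+2d}$. Every surviving block has the shape $B_{\tau i}^{(l,l+t+d+1,t)}$ and appears with one of these two $k_i$-powers. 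Collecting the contributions to a fixed such block, the pieces with the spurious power $k_i^{t-l+2d}$ — namely $S_1$ (from source $(t,l)$) and the $k_i^{-1}$-piece of $S_3$ (from source $(t+1,l)$) — acquire coefficients $\pm q_i^{E}[l+1+d]_i$ with the \emph{same} exponent $E$ once the Gaussian weight $q_i^{-(t-l)^2/2}$ is inserted, and therefore cancel identically; this is exactly the step that forces the $q_i^{-1}$ in the formula above, and it accounts for the vanishing $H(a,a-1)=[0]_i H(a,a)=0$.

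The remaining pieces with the correct power $k_i^{t-l+2d+2}$ — namely $S_2$ reindexed by $(t,l)\mapsto(t+1,l-1)$ and the $k_i^{+1}$-piece of $S_3$ (from source $(t+1,l)$) — I would then combine using $\tfrac{[l-1]_i!}{[l-1+a]_i!}[l]_i=\tfrac{[l]_i!}{[l+a]_i!}[l+a]_i$ together with the $q$-integer identity $[l+a]_i=q_i^{a-d-1}[l+1+d]_i+q_i^{-(l+1+d)}[a-d-1]_i$ (a case of $[m+n]_i=q_i^{n}[m]_i+q_i^{-m}[n]_i$), which extracts the factor $[a-d-1]_i$ times the coefficient of that block in $H(a,d+1)$. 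Summing over all blocks then gives $H(a,d)=[a-d-1]_i H(a,d+1)$; the boundary terms with a negative index vanish since $B_{\tau i}^{(\cdot,\cdot,\cdot)}=0$ there. The main obstacle is precisely the $q_i$-power bookkeeping in this second sector: the Gaussian exponent $-\tfrac{(t-l)^2}{2}$ and the linear terms $(l-d)a+(l-2t)d$ in \eqref{Had} are tuned so that the two groupings yield, respectively, an exact sign cancellation and the exact coefficient $[a-d-1]_i$, and confirming that these collapses are clean — leaving no stray monomial in $q_i$ — is where the bulk of the (routine but delicate) computation lies.
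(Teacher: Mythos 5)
Your proposal is correct and follows essentially the same route as the paper's own proof: the paper likewise starts from the right-multiplication formula for $B_{\tau i}^{(l,l+t+d,t)}B_{\tau i}$ (exactly the identity you derive from \eqref{eq:BBad} via $\sigma_\tau$ and the parameter transport that produces the factor $q_i^{-1}$), substitutes it into \eqref{Had}, lets one pair of the resulting families of terms cancel outright, and combines the remaining pair through the $q$-integer identity $q_i^{-a}[l+a]_i-q_i^{-d-1}[l+d+1]_i=q_i^{-l-a-d-1}[a-d-1]_i$ (equivalent to your $[l+a]_i=q_i^{a-d-1}[l+d+1]_i+q_i^{-(l+d+1)}[a-d-1]_i$) to extract $[a-d-1]_i\,H(a,d+1)$. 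If anything, your organization of the surviving terms by powers of $k_i$ is more explicit than the paper's two-line computation.
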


\begin{proof}
By \eqref{eq:BBad}, we have the following identity, for $t,l\geq 0$:
\begin{align*} 
    B_{\tau i}^{(l,t+l+d,t)}B_{\tau i}
    =& \; [l+d+1]_i B_{\tau i}^{(l,l+t+d+1,t)}+[l+1]_i B_{\tau i}^{(l+1,l+t+d+1,t-1)}
    \\
    &-[l+d+1]_i q_i^{-1}\{k_i;l-t-\frac{1}{2}-2d\}B_{\tau i}^{(l,l+t+d,t-1)}.
\end{align*}
Using this identity, we rewrite $H(a,d)$ in \eqref{Had} as
\begin{align*} 
     H(a,d) =&\sum_{t,l\geq 0} q_i^{-\frac{(t-l)^2}{2} +(l-d)a+(l-2t)d} k_i^{t-l+2d} \frac{ 1}{[l+1]_i\cdots[l+a]_i} B_{\tau i}^{(l,l+t+d+1,t)} B_{\tau i}^{a-d}
     \\
     =& \; \sum_{t,l\geq 0} q_i^{-\frac{(t-l)^2}{2} +(l-d)a+(l-2t)d+2(l-t)-2-3d} k_i^{t-l+2d} \times
     \\
     &\qquad \times \frac{ q_i^{-a}[l+a]_i-q_i^{-d-1}[l+d+1]_i}{[l+1]_i\cdots[l+a]_i} B_{\tau i}^{(l,l+t+d+1,t)} B_{\tau i}^{a-d-1} 
     \\
     =& \; [a-d-1]_i H(a,d+1),
\end{align*}
as desired.
\end{proof}

\begin{lemma}
\label{lem:qsqsKM3}
The following identity holds, for $a> 0$:
\begin{align*} 
     &\sum_{t,l\geq 0} q_i^{-\frac{(t-l)^2}{2} + l a } k_i^{t-l } \frac{ 1}{[l+1]_i\cdots[l+a]_i} B_{\tau i}^{(l,l+t ,t)} B_{\tau i}^{a }
     = 0.
\end{align*}
\end{lemma}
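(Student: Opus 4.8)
The plan is to recognize the left-hand side of the asserted identity as the $d=0$ instance of the family $H(a,d)$ introduced in \eqref{Had}, and then to run the recursion of \cref{lem:qsqsKM2} down to a term that manifestly vanishes. Since \cref{lem:qsqsKM2} is already established, essentially all the work has been done; what remains is bookkeeping plus one elementary observation about $q$-integers.

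First I would set $d=0$ in the defining formula \eqref{Had} and check that the three ingredients specialize correctly: the $q_i$-exponent $-\frac{(t-l)^2}{2}+(l-d)a+(l-2t)d$ collapses to $-\frac{(t-l)^2}{2}+la$, the $k_i$-power $t-l+2d$ becomes $t-l$, and the monomial $B_{\tau i}^{(l,l+t+d,t)}B_{\tau i}^{a-d}$ becomes $B_{\tau i}^{(l,l+t,t)}B_{\tau i}^{a}$. Consequently the sum in the statement is exactly $H(a,0)$, so it suffices to prove $H(a,0)=0$.

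Next I would invoke the recursion $H(a,d)=[a-d-1]_i\,H(a,d+1)$ from \cref{lem:qsqsKM2}, valid for $0\le d<a$. Taking $d=a-1$ (admissible since $a>0$, so $0\le a-1<a$) produces $H(a,a-1)=[a-(a-1)-1]_i\,H(a,a)=[0]_i\,H(a,a)=0$, because $[0]_i=0$. A downward induction on $d$ then propagates this vanishing: assuming $H(a,d+1)=0$, the recursion gives $H(a,d)=[a-d-1]_i\,H(a,d+1)=0$ for every $0\le d\le a-1$; in particular $H(a,0)=0$, which is the claim. Equivalently, one may iterate the recursion in a single step to write $H(a,0)=\big(\prod_{d=0}^{a-2}[a-d-1]_i\big)H(a,a-1)=[a-1]_i!\,H(a,a-1)=0$.

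There is no genuine obstacle left at this stage: the entire difficulty has been absorbed into \cref{lem:qsqsKM2} (and, upstream, into the multiplicative formulas of \cref{subsec:mult} and the commutation lemmas \cref{lem:qsqsKM1}). The only points requiring care are the exponent bookkeeping that identifies the stated sum with $H(a,0)$, and the simple fact that the factor $[0]_i$ generated at $d=a-1$ annihilates the chain. I would therefore present this result as an immediate corollary of \cref{lem:qsqsKM2} rather than as an independent computation.
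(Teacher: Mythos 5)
Your proposal is correct and follows essentially the same route as the paper: the paper likewise identifies the sum with $H(a,0)$ and applies \cref{lem:qsqsKM2} repeatedly, obtaining $H(a,0)=\prod_{m=1}^{a}[a-m]_i\cdot H(a,a)=0$, where the vanishing comes from the factor $[0]_i$ exactly as in your base case $H(a,a-1)=[0]_i\,H(a,a)=0$. Your downward-induction phrasing and the iterated-product phrasing are just two presentations of the identical argument.
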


\begin{proof}
To prove the lemma amounts to showing that $H(a,0)=0$; for notation $H(a,d)$ see \eqref{Had}. 
Applying \cref{lem:qsqsKM2} repeatedly, we have
\begin{align*}
H(a,0) =[a-1]_i H(a,1) &=[a-1]_i [a-2]_i H(a,2)\\
&=\cdots = \prod_{m=1}^{a} [a-m]_i \cdot H(a,a)=0,
\end{align*}
proving the lemma.
\end{proof}

\begin{theorem}\label{thm:qs-rktwo}
For any $j\neq i,\tau i$, we have
\begin{align}\label{eq:qqsgb}
    g_{\nu+n\alpha-n\beta}(k_i,B_i,B_{\tau i}) b_{n,0,0,0} =
    b_{n,n\beta,n\alpha+n\beta,n\alpha} g_{\nu }(k_i,B_i,B_{\tau i}).
\end{align}
\end{theorem}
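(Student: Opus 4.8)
The plan is to prove \eqref{eq:qqsgb} by a direct computation of its right-hand side, pushing the higher $q$-root vector $b_{n,n\beta,n\alpha+n\beta,n\alpha}$ rightward through $g_\nu$ until it becomes the extremal vector $b_{n,0,0,0}=X_{j,n}$ standing to the right of $g_{\nu+n\alpha-n\beta}$. This is the quasi-split rank-two analogue of \cref{prop:qsBB} from the diagonal case, and, together with the rank-one identities of \cref{thm:qs-rk1}, it supplies the last missing ingredient in the proof of \cref{thm:qs-split}. I would work in the relevant completion of $\Ui$ (equivalently, as operators on any integrable $\Ui$-module), starting from the explicit form \eqref{eq:gnupar} of $g_\nu$ for the distinguished parameter $\bvs_\dm$, writing each summand as the triple product $B^{(t,t+l,l)}_i=B_{\tau i}^{(t)}B_i^{(t+l)}B_{\tau i}^{(l)}$, and commuting $b_{n,n\beta,n\alpha+n\beta,n\alpha}$ past these three divided-power factors one at a time.

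The commutation pass uses the three formulas of \cref{lem:qsqsKM1}, one per factor: the extremal vector is moved past $B_{\tau i}^{(t)}$ by the second formula, which lowers its first index by some $r$; the result is moved past $B_i^{(t+l)}$ by the first formula (applicable since the first index now has the shape $n\beta-x$), which shifts the first and second indices; and the remaining factor $B_{\tau i}^{(l)}$ is treated by the third formula, which applies once the first index has been returned to $0$. The terms in which the first index fails to return to $0$ are precisely those that will be eliminated in the cancellation step. Throughout I would carry all the $q_i$-exponents, the $k_i$-powers, and the quantum-binomial coefficients, and then reindex the resulting multi-sum so that the summation over the surviving root-vector indices $(a',b',c')$ is outermost.

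The heart of the argument is the cancellation of all off-diagonal terms, i.e. those with surviving root vector $b_{n,a',b',c'}$ and $(a',b',c')\neq(0,0,0)$. For each such fixed root vector I would collect the attached sum over $t,l$ and, using the symmetry $B^{(t,t+l,l)}_i=B^{(l,t+l,t)}_{\tau i}$ of \cref{cor:B2B1} together with the multiplicative recursions \eqref{eq:BBa} and \eqref{eq:BBe}, rewrite the accompanying $B$-product into the canonical shape
\[
\sum_{t,l\geq 0} q_i^{-\frac{(t-l)^2}{2}+la}\, k_i^{t-l}\,\frac{1}{[l+1]_i\cdots[l+a]_i}\,B_{\tau i}^{(l,l+t,t)}B_{\tau i}^{a},\qquad a>0,
\]
which vanishes by \cref{lem:qsqsKM3} (and its $\sigma_\tau$-image, recalling from \cref{lem:sigmatau} that $g_\nu$ is $\sigma_\tau$-fixed). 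This is the exact analogue of the role played by \cref{lem:braid-neq} inside \cref{prop:qsBB}. What remains is the single term carrying $b_{n,0,0,0}$, whose coefficient, $B$-monomials, and net $k_i$-power I would match term by term against \eqref{eq:gnupar} for $g_{\nu+n\alpha-n\beta}$; the shift $\nu\mapsto\nu+n\alpha-n\beta$ emerges from the accumulated $k_i$-power and the $q_i^{\nu^2/2}$ prefactor, while the Serre-type vanishing of \cref{lem:qsvanish} truncates the sums so that the starting indices are exactly the extremal $(n\beta,n\alpha+n\beta,n\alpha)$.

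The main obstacle will be the bookkeeping of the three nested commutations: there is no conceptual shortcut here, and the $q_i$-power accounting together with the telescoping quantum-binomial sums must line up exactly. The most delicate point is arranging the off-diagonal terms into precisely the form required by \cref{lem:qsqsKM3}; this forces one to track carefully how the extra $B_{\tau i}$ factors produced by \cref{lem:qsqsKM1} attach to the triple product, so that the telescoping mechanism underlying \cref{lem:qsqsKM3} can be invoked cleanly.
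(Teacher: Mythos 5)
Your overall strategy coincides with the paper's: compute the right-hand side by pushing $b_{n,n\beta,n\alpha+n\beta,n\alpha}$ through $g_\nu$ using the three formulas of \cref{lem:qsqsKM1} in exactly the order you describe (second, then first, then third), and identify the surviving term with $g_{\nu+n\alpha-n\beta}\,b_{n,0,0,0}$. However, your account of the cancellation step has a genuine flaw that would stall the computation if followed literally. You propose to complete all three commutations first and then, for every off-diagonal root vector $b_{n,a',b',c'}\neq b_{n,0,0,0}$, kill the attached sum over $t,l$ by massaging it into the shape required by \cref{lem:qsqsKM3}. This cannot work as stated: the third formula of \cref{lem:qsqsKM1} is only available for root vectors of the form $b_{n,0,n\alpha+n\beta-u,n\alpha}$, i.e., with first index already equal to $0$, so the terms whose first index has not returned to $0$ cannot be pushed past $B_{\tau i}^{(l)}$ at all --- they must be eliminated \emph{before} the third commutation, not in a final cancellation pass.

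In the paper this elimination is not an operator identity but a scalar one: after the first two commutations, the coefficient of each term with first index $n\beta-y$, $y<n\beta$, contains the alternating sum $\sum_{x}(-1)^{x-y}q_i^{(x-y)(n\beta-y-1)}\qbinom{n\beta-y}{x-y}_i=\delta_{y,n\beta}$, which factors out of the monomial (the monomial does not depend on $x$) and vanishes; an analogous delta identity disposes of the terms with nonzero second index after the third commutation has been applied. Only the remaining family $b_{n,0,0,n\alpha-r}$ with $0\le r<n\alpha$ requires the operator identity \eqref{eq:qsqsKM3}, equivalently \eqref{eq:qsqsKM4}, which is where \cref{lem:qsqsKM3} enters --- and that is the \emph{only} place it enters. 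So your plan has the right ingredients but assigns \cref{lem:qsqsKM3} a job it cannot do (and which the recursions \eqref{eq:BBa}, \eqref{eq:BBe} and \cref{cor:B2B1} cannot convert it to do, since the vanishing of the first- and second-index families holds pointwise in $(t,l)$ for purely scalar reasons), while omitting the two $q$-binomial collapses that actually dispose of those families and that are logically required before the third commutation formula is even applicable.
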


\begin{proof}
We compute the RHS \eqref{eq:qqsgb} (up to a simple factor) using \cref{lem:qsqsKM1} as follows:
\begin{align*}
  &q_i^{-\nu^2/2-(n\alpha-n\beta)\nu} k_i^{\nu}  b_{n,n\beta,n\alpha+n\beta,n\alpha} g_{\nu }(k_i,B_i,B_{\tau i})
   \\
   =& \; \sum_{t,l\geq 0} q_i^{-\frac{(t-l)^2}{2}-(n\alpha-n\beta)(t-l)} k_i^{t-l} b_{n,n\beta,n\alpha+n\beta,n\alpha} B_{\tau i}^{(t)} B_i^{(t+l)}B_{\tau i}^{(l)}
   \\
   =& \;  \sum_{t,l\geq 0}\sum_{x=0}^{n\beta} q_i^{-\frac{(t-l)^2}{2}-n(\alpha-\beta)(t-l)}q_i^{(t-x)n\alpha-tx-\frac{1}{2}x^2} k_i^{t-l} B_{\tau i}^{(t-x)}b_{n,n\beta-x,n\alpha+n\beta,n\alpha} k_i^{-x} B_i^{(t+l)}B_{\tau i}^{(l)}
   \\
   =& \;  \sum_{t,l\geq 0}\sum_{x=0}^{n\beta} \sum_{y=0}^x \sum_{u=0}^{n\alpha+n\beta}(-1)^y q_i^{-\frac{(t-l)^2}{2}}q_i^{(t-x)n\alpha-tx-\frac{1}{2}x^2+3(t+l)x+(t+l-u)(n\beta-2x+y)-\frac{u(u+2t+2l)}{2}-y}
   \times 
   \\
   &\qquad \times \qbinom{n\beta-x+y}{y}_i k_i^{t-l} B_{\tau i}^{(t-x)}B_i^{(t+l-y-u)}b_{n,n\beta-x+y,n\alpha+n\beta-u,n\alpha}  k_i^{u-x}B_{\tau i}^{(l)}
   \\
   =& \;  \sum_{t,l\geq 0}  \sum_{u=0}^{n\alpha+n\beta}\sum_{y=0}^{n\beta}\sum_{x=y}^{n\beta} (-1)^{x-y}q_i^{-\frac{(t-l+x)^2+x^2}{2}-(n\alpha-n\beta)(t-l)+tn\alpha+3(t+l+x)x+(t+l+x-u)(n\beta-x-y)}
   \times 
   \\
   & \times q_i^{-\frac{u(u+2t+2l+2x)}{2}-x+y-3lx-(t+x)x}\qbinom{n\beta-y}{x-y}_i k_i^{t-l} 
   B_{\tau i}^{(t)} B_i^{(t+l+y-u)}b_{n,n\beta-y,n\alpha+n\beta-u,n\alpha} k_i^u  B_{\tau i}^{(l)}
   \\
   =& \;  \sum_{t,l\geq 0}  \sum_{u=0}^{n\alpha+n\beta}\sum_{y=0}^{n\beta}\sum_{x=y}^{n\beta} q_i^{-\frac{(t-l)^2}{2}-(n\alpha-n\beta)(t-l)}q_i^{tn\alpha +(t+l-u)(n\beta-y)-\frac{u(u+2t+2l)}{2}+y+x(n\beta-y-1) }
   \times 
   \\
   &\qquad \times (-1)^{x-y}\qbinom{n\beta-y}{x-y}_i k_i^{t-l} 
   B_{\tau i}^{(t)}B_i^{(t+l+y-u)}b_{n,n\beta-y,n\alpha+n\beta-u,n\alpha} k_i^u  B_{\tau i}^{(l)}.
\end{align*}

Using the following standard $q$-binomial identity
\begin{align*}
    \sum_{x=y}^{n\beta} (-1)^{x-y} q_i^{(x-y)(n\beta-y-1)}\qbinom{n\beta-y}{x-y}_i=\delta_{y,n\beta},
\end{align*}
we simplify the above formula of $ b_{n,n\beta,n\alpha+n\beta,n\alpha} g_{\nu }(k_i,B_i,B_{\tau i})$ as
\begin{align}\notag
     & q_i^{-\nu^2/2-n(\alpha-\beta)\nu} k_i^{\nu}  b_{n,n\beta,n\alpha+n\beta,n\alpha} g_{\nu }(k_i,B_i,B_{\tau i})
     \\\label{eq:qsqsKM1}
      =& \;  \sum_{t,l\geq 0}  \sum_{u=0}^{n\alpha+n\beta} q_i^{-\frac{(t-l)^2}{2}+
     tn\alpha -\frac{u(u+2t+2l)}{2}-n(\alpha-\beta)(t-l)} k_i^{t-l} 
   B_{\tau i}^{(t)}B_i^{(t+l+n\beta-u)}b_{n,0,n\alpha+n\beta-u,n\alpha} k_i^u  B_{\tau i}^{(l)}.
\end{align}
We next compute the right-hand side of \eqref{eq:qsqsKM1} using the third identity in \cref{lem:qsqsKM1}: 
\begin{align*}
    &q_i^{-\nu^2/2-n(\alpha-\beta)\nu} k_i^{\nu}  b_{n,n\beta,n\alpha+n\beta,n\alpha} g_{\nu }(k_i,B_i,B_{\tau i})
    \\
    =& \; \sum_{t,l\geq 0}  \sum_{u=0}^{n\alpha+n\beta} \sum_{v=0}^u\sum_{r=0}^{n\alpha} q_i^{-\frac{(t-l)^2}{2}-n(\alpha-\beta)(t-l)+tn\alpha -\frac{u(u+2t)}{2}+(l-r)(n\alpha+n\beta-2u+v)-\frac{r(r+2l)}{2}-v+2lu}\times 
    \\
   & \times (-1)^v\qbinom{n\alpha+n\beta-u+v}{v}_i  k_i^{t-l} B_{\tau i}^{(t)}B_i^{(t+l+n\beta-u)} B_{\tau i}^{(l-v-r)}b_{n,0,n\alpha+n\beta-u+v,n\alpha-r} k_i^{u-r}
   \\
   =& \; \sum_{t,l\geq 0} \sum_{r=0}^{n\alpha} \sum_{v=0}^{n\alpha+n\beta} \sum_{u=v}^{n\alpha+n\beta} q_i^{-\frac{(t-l)^2}{2}-n(\alpha-\beta)(t-l)+tn\alpha +(l-r)(n\alpha+n\beta -v)-\frac{r(r+2l)}{2}+u(n\alpha+n\beta-v-1)} \times
    \\
   & \times (-1)^{u-v} q_i^{v}\qbinom{n\alpha+n\beta-v}{u-v}_i k_i^{t-l}  B_{\tau i}^{(t)}B_i^{(t+l+n\beta)} B_{\tau i}^{(l+v-r)}b_{n,0,n\alpha+n\beta-v,n\alpha-r} k_i^{-r}.
\end{align*}

Using the following $q$-binomial identity
\begin{align*}
    \sum_{u=v}^{n\alpha+n\beta} (-1)^{u-v} q_i^{(u-v)(n\alpha+n\beta-v-1)}\qbinom{n\alpha+n\beta-v}{u-v}_i=\delta_{v,n\alpha+n\beta},
\end{align*}
we further simplify the above formula as
\begin{align}\notag
    &q_i^{-\nu^2/2-n(\alpha-\beta)\nu} k_i^{\nu}  b_{n,n\beta,n\alpha+n\beta,n\alpha} g_{\nu }(k_i,B_i,B_{\tau i})
    \\\notag
   =& \; \sum_{t,l\geq 0} \sum_{r=0}^{n\alpha} q_i^{-\frac{(t-l)^2}{2}-n(\alpha-\beta)(t-l)+tn\alpha -\frac{r(r+2l)}{2} } k_i^{t-l}  B_{\tau i}^{(t)}B_i^{(t+l+n\beta)} B_{\tau i}^{(l+n\alpha+n\beta-r)}b_{n,0,0,n\alpha-r} k_i^{-r}
   \\\notag
   =& \; \sum_{r=0}^{n\alpha} \sum_{t,l\geq 0} q_i^{-\frac{(t-l+n\beta)^2}{2}-n(\alpha-\beta)(t-l+n\beta)+(t+r)n\alpha -\frac{r(r+2l)}{2} } \times
   \\\label{eq:qsqsKM2}
   &\qquad\quad\times k_i^{t-l+n\beta-r}  B_{\tau i}^{(t)}B_i^{(t+l)} B_{\tau i}^{(l+n\alpha-r)}b_{n,0,0,n\alpha-r}.
\end{align}

We claim that, for any $0\leq r<\alpha$,
\begin{align} \label{eq:qsqsKM3}
    \sum_{t,l\geq 0} q_i^{-\frac{(t-l+n\beta)^2}{2}-n(\alpha-\beta)(t-l+n\beta)+(t+r)n\alpha -\frac{r(r+2l)}{2} } k_i^{t-l+n\beta-r}  B_{\tau i}^{(t)}B_i^{(t+l)} B_{\tau i}^{(l+n\alpha-r)}=0,
\end{align}
which is equivalent to
\begin{align}\label{eq:qsqsKM4}
    \sum_{t,l\geq 0} q_i^{-\frac{(t-l)^2}{2} +l(n\alpha-r)} k_i^{t-l} \frac{1}{[l+1]_i\cdots[l+n\alpha-r]_i} B_i^{(l)} B_{\tau i}^{(t+l )}B_{i}^{(t)} B_{\tau i}^{n\alpha-r}=0.
\end{align} 
Indeed, the identity \eqref{eq:qsqsKM4} follows by the identity in Lemma~\ref{lem:qsqsKM3} for $a=n\alpha-r$.

Using \eqref{eq:qsqsKM2}--\eqref{eq:qsqsKM3}, we finally obtain that 
\begin{align*}
    &b_{n,n\beta,n\alpha+n\beta,n\alpha} g_{\nu }(k_i,B_i,B_{\tau i})
    \\
    =& \;  q_i^{(\nu+n\alpha-n\beta)^2/2}  k_i^{-\nu-n\alpha+n\beta}\sum_{t\ge 0,l\ge 0} q_i^{ -\frac{(t-l)^2}{2} } k_i^{t-l} B^{(t,t+l,l)}_{i} b_{n,0,0,0}
    \\
    =& \; g_{\nu+n\alpha-n\beta }(k_i,B_i,B_{\tau i})b_{n,0,0,0},
\end{align*}
proving the desired identity \eqref{eq:qqsgb}.
\end{proof}

\begin{corollary}
\label{cor:qs-rktwo}
 For any $j\in \wI,j\neq i,\tau i$, the identity \eqref{eq:Tgnu} holds.
\end{corollary}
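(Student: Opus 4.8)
This corollary is a direct specialization of \cref{thm:qs-rktwo} to $n=1$, combined with the rank-two formula for $\TT'_{i,-1}(B_j)$ from \cref{prop:TiBjZ}; accordingly the plan is one of index bookkeeping rather than new computation. I keep the conventions $\alpha=-c_{ij}$, $\beta=-c_{\tau i,j}$, and the shorthand $b_{n,a,b,c}=b_{i,\tau i,j;n,a,b,c}$ of this subsection, and I recall that \eqref{eq:Tgnu} need only be verified for the distinguished parameter $\bvs_\dm$, the case to which the proof of \cref{thm:qs-split} was already reduced.

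First I would set $n=1$ in \eqref{eq:qqsgb}, which yields the identity $g_{\nu+\alpha-\beta}(k_i,B_i,B_{\tau i})\,b_{1,0,0,0}=b_{1,\beta,\alpha+\beta,\alpha}\,g_\nu(k_i,B_i,B_{\tau i})$. Next I would identify the two $b$-terms with the quantities appearing in \eqref{eq:Tgnu}. The base-case convention $b_{i,\tau i,j;n,0,0,0}=X_{j,n}$ gives $b_{1,0,0,0}=X_{j,1}$, and since $\dv{j,\ov{t}}{1}=B_j$ when $\tau j=j$ and $B_j^{(1)}=B_j$ when $\tau j\neq j$, in either parity convention $b_{1,0,0,0}=B_j$. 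On the other side, \cref{prop:TiBjZ} in the case $c_{i,\tau i}=-1$ reads $\TT'_{i,\bvs_\dm,-1}(B_j)=b_{i,\tau i,j;1,-c_{\tau i,j},-c_{ij}-c_{\tau i,j},-c_{ij}}$, and the index triple $(-c_{\tau i,j},-c_{ij}-c_{\tau i,j},-c_{ij})$ is precisely $(\beta,\alpha+\beta,\alpha)$, so $b_{1,\beta,\alpha+\beta,\alpha}=\TT'_{i,\bvs_\dm,-1}(B_j)$.

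Finally I would match the weight shift: since $\alpha-\beta=-c_{ij}+c_{\tau i,j}$, the subscript $\nu+\alpha-\beta$ on the left-hand $g$ coincides with the shift $\nu-c_{ij}+c_{\tau i,j}$ recorded on the right-hand side of \eqref{eq:Tgnu}. Substituting these three identifications turns the $n=1$ instance of \eqref{eq:qqsgb} into exactly \eqref{eq:Tgnu} for the parameter $\bvs_\dm$ (with the two sides merely transposed), completing the verification demanded in the proof of \cref{thm:qs-split}. Since \cref{thm:qs-rktwo} carries all of the analytic weight, I do not expect a genuine obstacle here; the only point requiring attention is the purely combinatorial matching just described, in particular confirming that the index triple in \cref{prop:TiBjZ} aligns with $(\beta,\alpha+\beta,\alpha)$ and that $X_{j,1}=B_j$ in both parity conventions.
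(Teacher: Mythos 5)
Your proposal is correct and matches the paper's own proof essentially verbatim: both set $n=1$ in \cref{thm:qs-rktwo}, invoke the identifications $b_{1,0,0,0}=B_j$ and $\TT'_{i,\bvs_\dm,-1}(B_j)=b_{1,\beta,\alpha+\beta,\alpha}$ from \cref{prop:TiBjZ}, and observe that $\alpha-\beta=-c_{ij}+c_{\tau i,j}$ gives the required weight shift in \eqref{eq:Tgnu}. The only difference is that you spell out the index bookkeeping more explicitly than the paper does, which is harmless.
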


\begin{proof}
Setting $n=1$ in \cref{thm:qs-rktwo}, we obtain
\[
    g_{\nu+\alpha-\beta}(k_i,B_i,B_{\tau i}) b_{1,0,0,0} =
    b_{1,\beta,\alpha+\beta,\alpha} g_{\nu }(k_i,B_i,B_{\tau i}).
\]
Recall from \S\ref{sec:braid-formula} that $\TT_{i,-1}'(B_j)=b_{1,\beta,\alpha+\beta,\alpha}$ and $b_{1,0,0,0} =B_j$. Then the desired identity \eqref{eq:Tgnu} follows.
\end{proof}

\subsection{Relation with Lusztig symmetries} 
\label{sec:qsrk1-mod}

We can view any $\U$-module as a $\Ui$-module by restriction. Recall from \eqref{braid_rescale} the normalized Lusztig symmetries $T'_{\bs_i,\bvs, -1}$ on $\U$.

\begin{theorem}
\label{thm:res-qssplit}
Assume $c_{i,\tau i}=-1$ and let $\bvs$ be a balanced parameter. Let $M$ be an integrable $\U$-module with weights bounded above. For any $v\in M$, we have
\begin{align}
\label{eq:qssTmod0}
  \TT'_{i,-1}(v)=\fX_{i,\bvs} T'_{\bs_i,\bvs,-1}(v),\qquad
  \TT''_{i,+1}(v)=T''_{\bs_i,\bvs,+1}(\fX_{i,\bvs }^{-1} v).
\end{align}
\end{theorem}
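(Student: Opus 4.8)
The plan is to follow the template of \cref{thm:res-split} and \cref{thm:res-diag}, replacing the rank-one Levi $\U(\sl_2)$ (resp.\ $\U(\sl_2)\otimes\U(\sl_2)$) by $\U(\sl_3)$ and using the quasi icanonical basis of Section~\ref{sec:quasi basis} in place of \cref{prop:inverse}. First I would reduce to a single convenient balanced parameter. For two balanced parameters the isomorphism $\phi_{\bvs,\bvs'}$ is the restriction of $\Phi_{\ov{\bvs}\bvs'}$, which sends $\fX_{i,\bvs}$ to $\fX_{i,\bvs'}$ and, by \cref{thm:braid-iQG}, intertwines $\TT'_{i,\bvs,-1}$ with $\TT'_{i,\bvs',-1}$ and $T'_{\bs_i,\bvs,-1}$ with $T'_{\bs_i,\bvs',-1}$; hence it suffices to prove \eqref{eq:qssTmod0} for $\bvs=\bvs_\star$, where $\vs_i=\vs_{\tau i}=q_i^{1/2}$, so that $\vs_1\vs_2=q$ and the $\imath$bar involution $\psi^\imath$ exists on $\Ui$ and on $M$.

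Next I would restrict to the subalgebra $\U_{i,\tau i}\cong\U(\sl_3)$ generated by $E_i,E_{\tau i},F_i,F_{\tau i}$ and the torus, and use that an integrable $\U$-module with weights bounded above decomposes as a direct sum of finite-dimensional simple $\U(\sl_3)$-submodules; thus I may assume $v$ lies in a single $L(m,n)$ with highest weight vector $\eta$. The key reduction is to $v=\eta$: both sides of the first identity in \eqref{eq:qssTmod0} intertwine the $\Ui$-action in the same way---the left side $\TT'_{i,-1}$ by \cref{thm:qs-split}, and the right side because $\fX_{i}T'_{\bs_i,-1}(xv)=\fX_i T'_{\bs_i,-1}(x)T'_{\bs_i,-1}(v)=\TT'_{i,-1}(x)\fX_i T'_{\bs_i,-1}(v)$ by the characterization \eqref{eq:compTT}. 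Since $L(m,n)$ is generated by $\eta$ over $\Ui(\sl_3)$ (its quasi icanonical basis $\{B_1^{(a)}B_2^{(b)}B_1^{(c)}\eta\}$ from \cref{thm:qsibasis} consists of $\Ui$-monomials applied to $\eta$), it is enough to verify $\TT'_{i,-1}(\eta)=\fX_{i,\bvs_\star}T'_{\bs_i,\bvs_\star,-1}(\eta)$.

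For this final computation I would evaluate each side on $\eta$. On the left, \eqref{eq:ibraid3} gives $\TT'_{i,-1}(\eta)=g_{m-n,\bvs_\star}(k_i,B_i,B_{\tau i})\eta$, whose expansion in the monomials $B_2^{(t)}B_1^{(t+l)}B_2^{(l)}\eta$ is governed by \cref{lem:qBF} and \cref{prop:FB}. On the right, $\bs_i=s_is_{\tau i}s_i$ is the longest element of the rank-two Weyl group, so $T'_{\bs_i,\bvs_\star,-1}(\eta)$ is an explicit scalar multiple (computable from \cref{prop:rkone}, \cite[Proposition 5.2.2]{Lus93} and the rescaling \eqref{braid_rescale}) of the extremal lowest-weight canonical basis vector $\xi$ of $L(m,n)$. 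Since $\xi$ is bar-invariant, $\fX_{i,\bvs_\star}\xi=\fX_{i,\bvs_\star}\psi(\xi)=\psi^\imath(\xi)$; rewriting $\xi$ in the quasi icanonical basis via the transition matrix \cref{prop:FB} (equivalently \cref{thm:qsibasis}), applying $\psi^\imath$---which fixes the quasi icanonical basis elements when $\vs_1\vs_2=q$---and re-expanding then yields a $B$-monomial expression that I would match term-by-term with the left side. The second identity in \eqref{eq:qssTmod0} follows either by the parallel computation with $h_{m-n}$ (via \eqref{eq:ibraid3'}) in place of $g_{m-n}$, or by conjugating the first identity by $\psi^\imath$ using the relations $\psi^\imath(g_{\nu,\bvs_\star})=h_{\nu,\bvs_\star}$ and $\psi^\imath=\fX_{i,\bvs_\star}\psi$ from \cref{lem:qs-bar}.

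The hard part will be the bookkeeping of scalars in the highest-weight computation: matching the signs, half-integer powers of $q_i$, and powers of $\vs_i,\vs_{\tau i}$ produced by $g_{m-n}$ against those coming from the rescaled Lusztig operator and the action of $\fX_{i,\bvs_\star}$ through the quasi icanonical transition matrix. This is precisely the role played by \cref{prop:inverse} in the split case, and here the more intricate $\U(\sl_3)$ transition matrices of Section~\ref{sec:quasi basis}---which is exactly why that section was developed---must be combined carefully; the $q$-binomial collapse underlying \cref{prop:FB} (cf.\ \eqref{eq:qbinom2}) is expected to reduce the resulting double sums to the single matching term.
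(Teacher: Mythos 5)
Your proposal follows essentially the same route as the paper's own proof: reduction to the balanced parameter $\bvs_\star$, restriction to $\U_{i,\tau i}\cong\U(\sl_3)$ and to a simple summand $L(m,n)$, reduction to the highest weight vector $\eta$ via the intertwining property \eqref{eq:compTT} (the paper's Proposition~\ref{prop:TTv}), and then the evaluation on $\eta$ by expressing the extremal vector $F_2^{(n,m+n,m)}\eta$ through the quasi icanonical basis via Proposition~\ref{prop:FB} and using $\psi^\imath$-invariance together with $\psi^\imath=\fX_{i,\bvs_\star}\circ\psi$ (the paper's Proposition~\ref{prop:TTeta}). The scalar bookkeeping you defer is exactly what the paper carries out, and your identified ingredients suffice for it.
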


\begin{proof}

Recall that the isomorphism $\phi_{\bvs ,\bvs'}$ is the restriction of $\Phi_{\ov{\bvs }\bvs'}$ when $\bvs,\bvs'$ are balanced parameters. Note that $\Phi_{\ov{\bvs}\bvs'} (\fX_{i,\bvs}) = \fX_{i,\bvs'}$. Therefore, by \cref{thm:braid-iQG}, it suffices to prove the identities \eqref{eq:qssTmod0} for the parameter $\bvs_\star$. In this case, $\vs_{i,\star}=\vs_{\tau i,\star}=q_i^{1/2}$.

Let $\U_{i,\tau i}$ be the subalgebra of $\U$ generated by $E_j,F_j,K_j^{\pm1}$ for $j\in \{i,\tau i\}$ and $\Ui_{i,\tau i}$ be the subalgebra of $\Ui$ generated by $B_i,B_{\tau i},k_i$. There is a natural isomorphism $\U_{i,\tau i}\cong \U(\sl_3)$ such that
$q_i\mapsto q,E_i\mapsto E_1,E_{\tau i}\mapsto E_2,F_i\mapsto F_1,F_{\tau i}\mapsto F_2$. Moreover, $(\U_{i,\tau i},\Ui_{i,\tau i})$ is itself a quantum symmetric pair of quasi-split type AIII. Since the identity \eqref{eq:qssTmod0} only involves elements in $\U_{i,\tau i}$, it suffices to assume that $\U=\U(\sl_3)$ and $\Ui$ is the subalgebra of $\U$ generated by 
\[
B_1=F_1+q^{1/2} E_2 K_1^{-1},\quad B_2=F_2+q^{1/2} E_1 K_2^{-1},\quad k_1=K_1 K_2^{-1}.
\]
Then it also suffices to assume that $M$ is an irreducible $\U(\sl_3)$-module, $L(m,n)$. Under this setting, we prove the desired identity \eqref{eq:qssTmod0} in Propositions~\ref{prop:TTeta}--\ref{prop:TTv} below.
\end{proof}

\begin{corollary}
\label{cor:qssmod-inv}
For $i\in \I$ such that $c_{i,\tau i}=-1$, and any integrable $\Ui$-module $M$, $\TT'_{i,-1}$ and $\TT''_{i,+1}$ are mutually inverse linear operators on $M$.
\end{corollary}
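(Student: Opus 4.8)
The plan is to mirror the structure of the proofs of \cref{cor:mod-inv} and \cref{cor:qsmod-inv} established in the split and diagonal cases, which reduce the statement on arbitrary integrable $\Ui$-modules to the $\U$-module case via the parameter isomorphisms $\phi_{\bvs,\bvs'}$. First I would assume $M$ is an integrable $\U$-module and $\bvs$ is a balanced parameter. By \cref{thm:res-qssplit}, for any $v\in M$ we have $\TT'_{i,-1}(v)=\fX_{i,\bvs}T'_{\bs_i,\bvs,-1}(v)$ and $\TT''_{i,+1}(v)=T''_{\bs_i,\bvs,+1}(\fX_{i,\bvs}^{-1}v)$. Since $\bs_i=s_is_{\tau i}s_i$ and Lusztig's operators satisfy $T'_{w,\bvs,-1}T''_{w,\bvs,+1}=\mathrm{Id}_M$ for every $w\in W$ (in particular $w=\bs_i$), I would compute directly
\[
\TT'_{i,-1}\TT''_{i,+1}v=\fX_{i,\bvs}\,T'_{\bs_i,\bvs,-1}T''_{\bs_i,\bvs,+1}(\fX_{i,\bvs}^{-1}v)=\fX_{i,\bvs}\fX_{i,\bvs}^{-1}v=v,
\]
and symmetrically $\TT''_{i,+1}\TT'_{i,-1}v=v$.

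The second step is to translate this identity, valid on all integrable $\U$-modules, into an identity of operators expressed through the rank one formulas \eqref{eq:ibraid3}--\eqref{eq:ibraid3'}. By \eqref{eq:gnu}--\eqref{eq:hnu}, the action of $\TT'_{i,-1}$ (resp.\ $\TT''_{i,+1}$) on $M_{\ov{\lambda}}$ is given by the element $g_{\nu,\bvs}(k_i,B_i,B_{\tau i})$ (resp.\ $h_{\nu,\bvs}(k_i,B_i,B_{\tau i})$), with $\nu=\langle h_i-h_{\tau i},\ov{\lambda}\rangle$. Because every integrable $\U$-module is an integrable $\Ui$-module (\cref{lem:integrable}) and an element of $\Ui$ acting as $0$ on all integrable $\Ui$-modules must vanish, the identity $\TT'_{i,-1}\TT''_{i,+1}=\mathrm{Id}$ on all integrable $\U$-modules yields the operator identity $g_{\nu,\bvs}(k_i,B_i,B_{\tau i})\,h_{\nu,\bvs}(k_i,B_i,B_{\tau i})=1$ in the relevant completion, for every $\nu\in\Z$ and every balanced parameter $\bvs$.

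Finally I would remove the balancedness restriction by applying the parameter isomorphism $\phi_{\bvs,\bvs'}$ of \cref{lem:iso-parameter}. Since \eqref{eq:gnu}--\eqref{eq:hnu} show $g_{\nu,\bvs'}=\phi_{\bvs,\bvs'}(g_{\nu,\bvs})$ and $h_{\nu,\bvs'}=\phi_{\bvs,\bvs'}(h_{\nu,\bvs})$, applying the algebra homomorphism $\phi_{\bvs,\bvs'}$ to the identity $g_{\nu,\bvs}h_{\nu,\bvs}=1$ gives $g_{\nu,\bvs'}h_{\nu,\bvs'}=1$ for an arbitrary parameter $\bvs'$. This last identity, interpreted on any integrable $\Ui$-module $M$ via \eqref{eq:ibraid3}--\eqref{eq:ibraid3'}, shows $\TT'_{i,-1}\TT''_{i,+1}v=v$ for all $v\in M$; the reverse composition is handled identically. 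I expect the only genuinely delicate point to be bookkeeping the scalar factors $q_i^{\nu^2/2}k_i^{\mp\nu}$ appearing in \eqref{ggnu} and its $h$-analogue, ensuring the cancellation $g_\nu h_\nu=1$ rather than merely $g_\nu h_\nu$ being a nonzero scalar; this is precisely the rescaling-compatibility verification already carried out in the split case (\cref{cor:mod-inv}), so the argument is routine once \cref{thm:res-qssplit} is in hand.
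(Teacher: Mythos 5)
Your proposal is correct and follows essentially the same route as the paper's own proof: apply \cref{thm:res-qssplit} for a balanced parameter to get $\TT'_{i,-1}\TT''_{i,+1}=\TT''_{i,+1}\TT'_{i,-1}=\mathrm{Id}$ on integrable $\U$-modules, deduce the operator identity $g_{\nu,\bvs}h_{\nu,\bvs}=1$, and transport it to arbitrary parameters via $\phi_{\bvs,\bvs'}$. The only difference is your final worry about scalar bookkeeping, which is unnecessary: once $\TT'_{i,-1}\TT''_{i,+1}v=v$ holds on all such modules, the identity $g_{\nu,\bvs}h_{\nu,\bvs}=1$ (not merely up to scalar) is automatic, since these elements realize exactly the operators $\TT'_{i,-1}$, $\TT''_{i,+1}$ on each iweight space.
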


\begin{proof}
Assume that $\bvs$ is a balanced parameter for now. It follows from \eqref{eq:qssTmod0} that $\TT'_{i,-1}\TT''_{i,+1}v$ $=\TT''_{i,+1}\TT'_{i,-1}v=v$, for any $v\in M$. This implies that 
\[
 g_{\nu,\bvs}(k_i, B_i, B_{\tau i})  h_{\nu,\bvs}(k_i, B_i, B_{\tau i})v =v,
\]
for any $v\in M_{\ov{\lambda}}$, where $\nu=\langle h_i-h_{\tau i}, \ov{\lambda}\rangle$. Since $M$ can be taken to be an arbitrary integrable $\U$-module, we have for any $\nu\in \Z$,
\[
 g_{\nu,\bvs}(k_i, B_i, B_{\tau i})  h_{\nu,\bvs}(k_i, B_i, B_{\tau i})=1.
\]
Let $\bvs'$ be an arbitrary parameter. Applying $\phi_{\bvs,\bvs'}$ to the above identity, we obtain
\[
 g_{\nu,\bvs'}(k_i, B_i, B_{\tau i})  h_{\nu,\bvs'}(k_i, B_i, B_{\tau i})=1.
\]
i.e., $\TT'_{i,-1}\TT''_{i,+1}v=v$ for any $v\in M_{\ov{\lambda}}$, where $\nu=\langle h_i-h_{\tau i}, \lambda\rangle$. The argument for $\TT''_{i,+1}\TT'_{i,-1}v=v$ is entirely similar and omitted.
\end{proof}

In the remainder of this subsection, we shall prove \eqref{eq:qssTmod0} when $M=L(m,n)$ is an irreducible $\U(\sl_3)$-module  with highest weight vector $\eta$ (see Propositions~\ref{prop:TTeta}--\ref{prop:TTv}), completing the proof of Theorem~\ref{thm:res-qssplit}.

\begin{proposition}
\label{prop:TTeta}
Set the parameter to be $\bvs_\star=(q^{1/2},q^{1/2})$. Set $\nu=m-n$. Then we have 
\begin{align}
 \label{TT:qs}
    \fX_{1,\bvs_\star} T'_{\bs_1,\bvs_\star,-1} \eta &= g_\nu(k_1, B_1, B_2)\eta,
    \\ 
    T''_{\bs_1,\bvs_\star,+1}(\fX_{1,\bvs_\star }^{-1} \eta) &= h_\nu(k_1, B_1, B_2)\eta.
\end{align}
\end{proposition}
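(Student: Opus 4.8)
The plan is to use that $\bs_1=s_1s_2s_1$ is the longest element $w_0$ of the Weyl group of $\U(\sl_3)$, so that under the identification $\Ui_{i,\tau i}\cong \Ui(\sl_3)$ both $T'_{\bs_1,\bvs_\star,-1}\eta$ and $T''_{\bs_1,\bvs_\star,+1}\eta$ are extremal weight vectors of the lowest weight $w_0\lambda$, hence scalar multiples of the unique lowest weight vector $\ell:=F_2^{(n)}F_1^{(m+n)}F_2^{(m)}\eta=F_1^{(m)}F_2^{(m+n)}F_1^{(n)}\eta$ (the two presentations agree by \eqref{FFF}). First I would record that $\fX_{1,\bvs_\star}\eta=\eta$, since $\eta$ is a highest weight vector and $\fX_{1,\bvs_\star}\in 1+\sum_{\mu>0}\U^+_\mu$; thus the second identity in \eqref{TT:qs} reduces to $T''_{\bs_1,\bvs_\star,+1}\eta=h_\nu(k_1,B_1,B_2)\eta$. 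Next, as $\ell$ is a canonical basis vector it is bar invariant, $\psi(\ell)=\ell$, and on the module the $\imath$bar involution satisfies $\psi^\imath=\fX_{1,\bvs_\star}\psi$ (see \S\ref{subsec:ibar}); hence $\fX_{1,\bvs_\star}\ell=\psi^\imath(\ell)$, which handles the $\fX$ appearing in the first identity.

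The key algebraic input is \cref{prop:FB}: specializing \eqref{eq:FB2} to $(a,c,r)=(n,m,0)$ and to the parameter $\vs_1=\vs_2=q^{1/2}$ (see \eqref{def:bvs-star}) expands $\ell$ in the quasi icanonical basis as $\ell=\sum_{t,l}c_{t,l}\,B_2^{(t)}B_1^{(t+l)}B_2^{(l)}\eta$, with $c_{t,l}$ an explicit sign times a half-integer power of $q$ times two Gaussian binomials (after the substitution $x=n-t,\ y=m-l$). By \cref{thm:qsibasis} each $B_2^{(t)}B_1^{(t+l)}B_2^{(l)}\eta$ is a quasi icanonical basis element, so it is $\psi^\imath$-invariant; since Gaussian binomials are bar invariant, $\psi^\imath(\ell)=\sum_{t,l}\overline{c_{t,l}}\,B_2^{(t)}B_1^{(t+l)}B_2^{(l)}\eta$, where $\overline{c_{t,l}}$ is obtained from $c_{t,l}$ by $q\mapsto q^{-1}$. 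On the other side, $k_1$ acts on $B_2^{(t)}B_1^{(t+l)}B_2^{(l)}\eta$ by the scalar $q^{\nu}$, because its $\imath$weight is $\overline{\lambda-(t+l)(\alpha_1+\alpha_2)}$ and $\langle h_1-h_2,\alpha_1+\alpha_2\rangle=0$; hence \eqref{eq:gnu} and \eqref{eq:hnu} give $g_\nu(k_1,B_1,B_2)\eta=\sum_{t,l}d_{t,l}\,B_2^{(t)}B_1^{(t+l)}B_2^{(l)}\eta$ and $h_\nu(k_1,B_1,B_2)\eta=\sum_{t,l}e_{t,l}\,B_2^{(t)}B_1^{(t+l)}B_2^{(l)}\eta$ with explicit $d_{t,l},e_{t,l}$. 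Writing $T'_{\bs_1,\bvs_\star,-1}\eta=\kappa'\ell$ and $T''_{\bs_1,\bvs_\star,+1}\eta=\kappa\,\ell$ and using linear independence of the quasi icanonical basis, the two identities in \eqref{TT:qs} become the coefficientwise equalities $d_{t,l}=\kappa'\,\overline{c_{t,l}}$ and $e_{t,l}=\kappa\, c_{t,l}$, to be checked over the range where the corresponding basis vector is nonzero.

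It then remains to determine the scalars $\kappa',\kappa$ and to verify these coefficientwise identities. For the scalars I would apply the rescaling \eqref{braid_rescale}: for $\bvs_\star$ the per-step factor on a weight vector of $\langle h_i,\cdot\rangle=r$ is $(\overline{\vs_{\dm,i}}\vs_{i,\star})^{-r/2}=(-q)^{-r/2}$, so that $T'_{\bs_1,\bvs_\star,-1}\eta=(-q)^{-(m+n)}\,T'_{s_1,-1}T'_{s_2,-1}T'_{s_1,-1}\eta$. The first two reflections act on $\sl_2$-highest weight vectors and are read off from \cref{prop:braid1}, giving $T'_{s_2,-1}T'_{s_1,-1}\eta=F_2^{(m+n)}F_1^{(m)}\eta$. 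The third application $T'_{s_1,-1}\bigl(F_2^{(m+n)}F_1^{(m)}\eta\bigr)$ is the delicate point, since $F_2^{(m+n)}F_1^{(m)}\eta$ is extremal but is neither highest nor lowest weight for the $\sl_2$ generated by $E_1,F_1$. Here I would use the module compatibility \eqref{eq:mod3} together with $T'_{s_1,-1}(F_2)=F_1F_2-qF_2F_1$ and $T'_{s_1,-1}(F_1)=-E_1K_1$ (from \cref{prop:braid1}) to reduce $T'_{s_1,-1}\bigl(F_2^{(m+n)}F_1^{(m)}\eta\bigr)$ to a scalar multiple of $(F_1F_2-qF_2F_1)^{(m+n)}\eta$, a root-vector divided power on $\eta$ which lands in the one-dimensional lowest weight space and thus equals an explicit multiple of $\ell$; this pins down $\kappa'$, and an entirely parallel computation with $T''$ pins down $\kappa$.

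The main obstacle will be the combination of this scalar computation with the remaining $q$-power bookkeeping: after substituting $x=n-t,\ y=m-l$ one must check that the exponent of $q$ in $d_{t,l}$ agrees with that of $\kappa'\,\overline{c_{t,l}}$ up to a $(t,l)$-independent shift equal to $\kappa'$, and similarly for $e_{t,l}$ versus $\kappa\,c_{t,l}$, while keeping track of the half-integer powers of $q$ coming from $\vs_1=\vs_2=q^{1/2}$ and of the admissible ranges dictated by the vanishing criterion in \cref{thm:qsibasis}. Once \eqref{TT:qs} is established for $v=\eta$, the passage to general $v$ (in the subsequent proposition) will follow formally from the compatibility $\TT'_{i,-1}(xv)=\TT'_{i,-1}(x)\TT'_{i,-1}(v)$ of \cref{thm:qs-split} and the fact that $\Ui(\sl_3)\,\eta$ spans $L(m,n)$.
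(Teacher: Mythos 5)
Your overall strategy coincides with the paper's (reduce to $v=\eta$; expand the extremal vector in the quasi icanonical basis via \cref{prop:FB}; use $\psi^\imath=\fX_{1,\bvs_\star}\psi$ together with $\psi^\imath$-invariance of that basis to absorb $\fX_{1,\bvs_\star}$), but two of your steps are wrong, and the first is fatal as written. Your lowest weight vector is misidentified: the string lengths along a reduced expression of $w_0=s_1s_2s_1$ are $\langle h_1,\lambda\rangle=m$, then $\langle h_2,s_1\lambda\rangle=m+n$, then $\langle h_1,s_2s_1\lambda\rangle=n$, so the lowest weight canonical basis vector is $F_1^{(n)}F_2^{(m+n)}F_1^{(m)}\eta=F_2^{(m)}F_1^{(m+n)}F_2^{(n)}\eta$, whereas your $\ell=F_1^{(m)}F_2^{(m+n)}F_1^{(n)}\eta$ has $m$ and $n$ interchanged. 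By the vanishing criterion of \cref{thm:qsibasis} (equivalently \cref{lem:CBA2}), your $\ell$ is zero unless $m=n$; e.g.\ in $L(1,0)$ one has $\ell=F_1F_2\eta=0$ since $F_2\eta=0$. Hence the equation $T'_{\bs_1,\bvs_\star,-1}\eta=\kappa'\ell$ is impossible for $m\neq n$ (a nonzero vector cannot be a multiple of $0$), and your specialization of \eqref{eq:FB2} at $(a,c,r)=(n,m,0)$ is vacuous: when $m>n$, every term has either $\qbinom{n-m+y}{y}=0$ (for $y\geq m-n$) or a vanishing basis vector $B_2^{(n-x)}B_1^{(m+n-x-y)}B_2^{(m-y)}\eta$ with rightmost exponent $m-y>n$ (for $y<m-n$), so the identity reads $0=0$ and produces no coefficients $c_{t,l}$ to match against $g_\nu\eta$. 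The repair is exactly the paper's choice: expand $F_1^{(n)}F_2^{(m+n)}F_1^{(m)}\eta$, i.e.\ use \eqref{eq:FFFBBBxy} with $(a,c,r)=(n,m,0)$, equivalently \eqref{eq:FB2} with $(a,c,r)=(m,n,0)$.

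Second, your ``delicate point'' rests on a false premise: $F_2^{(m+n)}F_1^{(m)}\eta$ \emph{is} annihilated by $E_1$. Indeed it is the extremal vector $T'_{s_2s_1,-1}\eta$, and $E_1\,T'_{s_2s_1,-1}\eta=T'_{s_2s_1,-1}\bigl((T'_{s_2s_1,-1})^{-1}(E_1)\,\eta\bigr)=0$, because $s_1s_2\alpha_1=\alpha_2>0$ forces $(T'_{s_2s_1,-1})^{-1}(E_1)\in\U^+_{\alpha_2}$, which kills $\eta$; this is precisely the argument in the paper's proof of \cref{lem:CBA2}. Consequently the third reflection is once more Lusztig's rank-one formula applied to an $E_1$-highest vector of $h_1$-weight $n$, giving exactly $F_1^{(n)}F_2^{(m+n)}F_1^{(m)}\eta$ with no further scalar, so the scalars $\kappa',\kappa$ come out as $(-q)^{\mp(m+n)}$ after the rescaling \eqref{braid_rescale}. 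Your proposed detour through $T'_{s_1,-1}(F_2)^{(m+n)}\eta=(F_1F_2-qF_2F_1)^{(m+n)}\eta$ is therefore unnecessary, and it would in any case leave you with the nontrivial task of computing the proportionality constant between that root-vector divided power and the lowest weight vector. With these two corrections (and keeping your correct observations that $\fX_{1,\bvs_\star}^{\pm1}\eta=\eta$, that $k_1$ acts by $q^{\nu}$ on each $B_2^{(t)}B_1^{(t+l)}B_2^{(l)}\eta$, and the $\psi$/$\psi^\imath$ bookkeeping), your argument becomes the proof given in the paper.
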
 

\begin{proof}
By definition \eqref{eq:gnu}, we have
\begin{align}
\label{eq:gnu-star}
    g_\nu(k_1, B_1, B_2) \eta=q^{-\nu^2/2}\sum_{t\ge 0,l\ge 0} (-1)^{t+l}  q^{-\frac{(t-l)^2}{2}-t-l} q^{(t-l)\nu} B^{(t,t+l,l)}_{1} \eta.
\end{align}

Since $c_{1,2}=-1$, we have $\bs_1=s_1 s_{2} s_1=s_{2} s_1 s_2$. Lusztig's braid group symmetry $T'_{\bs_1,-1}$ acts on $\eta$ via the composition
\begin{align}
\label{eq:qsTmod}
    \eta \overset{T'_{1,-1}}{\longrightarrow}
    F_1^{(m)}\eta \overset{T'_{2,-1}}{\longrightarrow } 
    F_2^{(m+n)}F_1^{(m)}\eta \overset{T'_{1,-1}}{\longrightarrow} F_2^{(n,m+n,m)} \eta.
\end{align}
Indeed, one can show that $F_1^{(m)}\eta$ is annihilated by $E_2$ and  $F_2^{(m+n)}F_1^{(m)}\eta$ is annihilated by $E_1$. Then this action \eqref{eq:qsTmod} follows by \cite[Proposition 5.2.2]{Lus93}. Similarly, the action of $T''_{\bs_1,+1}$ on $\eta$ is given by the composition
\begin{align*} 
    \eta \overset{T''_{1,+1}}{\longrightarrow}
    (-q)^m F_1^{(m)}\eta \overset{T''_{2,+1}}{\longrightarrow } 
    (-q)^{2m+n} F_2^{(m+n)}F_1^{(m)}\eta \overset{T''_{1,+1}}{\longrightarrow} 
    (-q)^{2m+2n} F_2^{(n,m+n,m)} \eta.
\end{align*}

Recall from \cref{thm:braid-iQG} that $T'_{\bs_1,\bvs_\star,-1},T_{\bs_1,\bvs_\star,+1}''$ are obtained from $T'_{\bs_1,-1},T''_{\bs_1,+1}$ using the rescaling map $\Phi_{\ov{\bvs_\dm}\bvs}$, and we have
\begin{align}
\label{eq:qsimod1} 
T'_{\bs_1,\bvs_\star,-1}(\eta)= (-q)^{-m-n} F_2^{(n,m+n,m)} \eta,
\qquad 
T''_{\bs_1,\bvs_\star,+1}(\eta)= (-q)^{m+n} F_2^{(n,m+n,m)} \eta.
\end{align}
 
Using Proposition~\ref{prop:FB} and Theorem~\ref{thm:qsibasis}, we have
\begin{align}
\label{eq:qsimod2} 
F_2^{(n,m+n,m)}\eta= \sum_{\substack{t\ge 0,\\ l\ge 0}} (-1)^{n-t+m-l}q^{(n-t)(-2m-2+2l+n-t)/2+(m-l)(m-l-2)/2} B^{(t,t+l,l)}_{2} \eta,
\end{align}
where $B^{(t,t+l,l)}_{2} \eta =0$ if $l> m$ or $t> n$.
Note that the left-hand side of \eqref{eq:qsimod2} is $\psi$-invariant. Applying $\psi$ to the above identity, we have
\begin{align}
\label{eq:qsimod2'} 
F_2^{(n,m+n,m)}\eta= \sum_{\substack{t\ge 0,\\ l\ge 0}} (-1)^{n-t+m-l} q^{-(n-t)(-2m-2+2l+n-t)/2-(m-l)(m-l-2)/2} \ov{B^{(t,t+l,l)}_{2}} \eta.
\end{align}

On the other hand, since the elements $B^{(t,t+l,l)}_{2}\eta$ are $\psi^\imath$-invariant and $\psi^\imath =\fX_{1,\bvs_\star} \circ \ov{{\,\cdot\,}}$ (cf. \cite{BW18a}), we have 
\begin{align}
\label{eq:qsimod3} 
\fX_{1,\bvs_\star}\ov{B^{(t,t+l,l)}_{2}}\eta
=\psi^\imath(B^{(t,t+l,l)}_{2}\eta)=B^{(t,t+l,l)}_{2}\eta.
\end{align}
 
Using \eqref{eq:qsimod1}, \eqref{eq:qsimod2'} and \eqref{eq:qsimod3}, we obtain that
\begin{align*}
&\quad\fX_{1,\bvs_\star} T'_{\bs_1,\bvs_\star,-1}\eta 
\\
&=(-q)^{-m-n} \fX_{1,\bvs_\star} F_2^{(n,m+n,m)}\eta
   \\
   &=(-q)^{-m-n} \sum_{t\ge 0,l\ge 0} (-1)^{n-t+m-l} q^{-(n-t)(-2m-2+2l+n-t)/2-(m-l)(m-l-2)/2} \fX_{1,\bvs_\star}\ov{B^{(t,t+l,l)}_{2}} \eta
   \\
   &=q^{-m-n} \sum_{t\ge 0,l\ge 0} (-1)^{t+l} q^{-(n-t)(-2m-2+2l+n-t)/2-(m-l)(m-l-2)/2}  B^{(t,t+l,l)}_{2} \eta 
   \\
   &=  \sum_{t\ge 0,l\ge 0} (-1)^{t+l} q^{-\frac{(m-l-n+t)^2}{2}}q^{-t-l}  B^{(t,t+l,l)}_{2} \eta
   \\
   &=  \sum_{t\ge 0,l\ge 0} (-1)^{t+l} q^{-\frac{(\nu-l +t)^2}{2}}q^{-t-l}  B^{(t,t+l,l)}_{2} \eta=g_\nu(k_1, B_1, B_2) \eta.
\end{align*}
Similarly, using \eqref{eq:qsimod1}, \eqref{eq:qsimod2} and \eqref{eq:qsimod3}, we obtain that
\begin{align*}
   &\quad  T''_{\bs_1,\bvs_\star,+1}(\fX_{1,\bvs_\star}^{-1} \eta)
    \\
    & = (-q)^{m+n} F_2^{(n,m+n,m)}\eta
   \\
   &= (-q)^{m+n} \sum_{t\ge 0,l\ge 0} (-1)^{n-t+m-l}q^{(n-t)(-2m-2+2l+n-t)/2+(m-l)(m-l-2)/2} B^{(t,t+l,l)}_{2} \eta
   \\
   &=\sum_{t\ge 0,l\ge 0} (-1)^{t+l}  q^{\frac{(m-n+t-l)^2}{2}}q^{t+l}  B^{(t,t+l,l)}_{2} \eta
   \\
   &=h_\nu(k_1, B_1, B_2) \eta.
\end{align*}
Comparing the above formulas, we have proved the proposition. 
\end{proof}

\begin{proposition}
\label{prop:TTv}
Set the parameter $\bvs_\star=(q^{1/2},q^{1/2})$. For any $v\in L(m,n)$, we have 
\begin{align}
 \label{TT:qsv}
    \TT'_{1,-1}v &= \fX_{1,\bvs_\star} T'_{\bs_1,\bvs_\star,-1} (v ),
    \\ 
 \label{TT:qsv'}
    \TT''_{1,+1}v &= T''_{\bs_1,\bvs_\star,+1}(\fX_{1,\bvs_\star}^{-1} v)  .
\end{align} 
\end{proposition}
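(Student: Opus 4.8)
The plan is to deduce the general-vector statement \eqref{TT:qsv}--\eqref{TT:qsv'} from the highest-weight case already established in Proposition~\ref{prop:TTeta}, by exploiting the module-compatibility results that have been proved earlier. The key observation is that Proposition~\ref{prop:TTeta} computes the two sides on the single vector $\eta$, and the quasi icanonical basis of \cref{thm:qsibasis} exhibits $L(m,n)$ as spanned by vectors of the form $B_{1}^{(a)}B_{2}^{(b)}B_{1}^{(c)}\eta$ (equivalently $B_{2}^{(c)}B_{1}^{(b)}B_{2}^{(a)}\eta$). Since these are obtained from $\eta$ by the action of elements $x\in\Ui$, I would like to ``propagate'' the identity from $\eta$ to an arbitrary basis vector using the intertwining property of $\TT'_{1,-1}$ and the compatibility already proved in \cref{thm:qs-split}.

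Concretely, first I would fix $v = x\eta$ for $x\in\Ui$ a product of $B_1, B_2$ (it suffices to check the identities on the spanning set from \cref{thm:qsibasis}). For the left-hand side of \eqref{TT:qsv}, \cref{thm:qs-split} gives
\begin{align*}
\TT'_{1,-1}(x\eta) = \TT'_{1,-1}(x)\,\TT'_{1,-1}(\eta).
\end{align*}
For the right-hand side, the quasi $K$-matrix $\fX_{1,\bvs_\star}$ together with the defining intertwining relation \eqref{eq:compTT} of Theorem~\ref{thm:braid-iQG} yields, on $\U$-modules with weights bounded above,
\begin{align*}
\fX_{1,\bvs_\star}\,T'_{\bs_1,\bvs_\star,-1}(x\eta)
= \fX_{1,\bvs_\star}\,T'_{\bs_1,\bvs_\star,-1}(x)\,T'_{\bs_1,\bvs_\star,-1}(\eta)
= \TT'_{1,-1}(x)\,\fX_{1,\bvs_\star}\,T'_{\bs_1,\bvs_\star,-1}(\eta),
\end{align*}
where the last equality uses \eqref{eq:compTT} to move $\fX_{1,\bvs_\star}$ past $T'_{\bs_1,\bvs_\star,-1}(x)$ and convert it to $\TT'_{1,-1}(x)$. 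Comparing the two displays, the desired \eqref{TT:qsv} for $v=x\eta$ reduces exactly to the highest-weight identity $\TT'_{1,-1}(\eta)=\fX_{1,\bvs_\star}T'_{\bs_1,\bvs_\star,-1}(\eta)$, which is Proposition~\ref{prop:TTeta}. Since the vectors $x\eta$ span $L(m,n)$ by \cref{thm:qsibasis}, linearity finishes \eqref{TT:qsv}. The identity \eqref{TT:qsv'} follows by the same argument, now invoking \cref{prop:quasisplit2} in place of \cref{thm:qs-split} and the second formula of Proposition~\ref{prop:TTeta}; alternatively, since $\TT'_{1,-1}$ and $\TT''_{1,+1}$ are mutually inverse by \cref{cor:qssmod-inv} (and likewise $T'_{\bs_1,\bvs_\star,-1}$ and $T''_{\bs_1,\bvs_\star,+1}$), one can invert \eqref{TT:qsv} directly, replacing $v$ by $T''_{\bs_1,\bvs_\star,+1}(\fX_{1,\bvs_\star}^{-1}v)$.

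I expect the only genuine subtlety to be the handling of $\fX_{1,\bvs_\star}$ and the braid operators on $\U$-modules: these live in a completion, so one must ensure that all infinite sums act with only finitely many nonzero terms (which holds because $M$ has weights bounded above and is integrable) and that the intertwining relation \eqref{eq:compTT}, stated for $\Ui$, transfers correctly to the module action. The step requiring the most care is verifying that $T'_{\bs_1,\bvs_\star,-1}(x)\,\fX_{1,\bvs_\star} = \fX_{1,\bvs_\star}^{-1}$-conjugation produces precisely $\TT'_{1,-1}(x)$ in the module action rather than merely in $\Ui$; this is exactly the content of how \eqref{eq:compTT} is applied in \cite[\S10.2]{WZ23}, and I would cite that compatibility directly. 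Everything else is formal bookkeeping once Proposition~\ref{prop:TTeta} is in hand.
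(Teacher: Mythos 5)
Your proposal is correct and is essentially the paper's own argument: both propagate the identity from the highest-weight case (Proposition~\ref{prop:TTeta}) to all of $L(m,n)$ by combining \cref{thm:qs-split} on the left-hand side with the module-level intertwining compatibility $\fX_{1,\bvs_\star} T'_{\bs_1,\bvs_\star,-1}(B_j v)=\TT'_{1,-1}(B_j)\,\fX_{1,\bvs_\star} T'_{\bs_1,\bvs_\star,-1}(v)$ from \cite[\S10]{WZ23} on the right-hand side, then use that $\Ui\eta$ spans $L(m,n)$ (the paper cites \cite[Lemma 4.1]{BW18b}, you invoke \cref{thm:qsibasis} — an equally valid spanning argument). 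The one point you flag as delicate — that \eqref{eq:compTT} must be applied at the level of the module action, not merely in $\Ui$ — is exactly what the paper resolves by citing \cite[Definition 10.4 and Theorem 10.5]{WZ23}, so citing that compatibility directly, as you propose, closes the gap.
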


\begin{proof}
We prove the first identity \eqref{TT:qsv} only, skipping a similar argument for \eqref{TT:qsv'}.

By \cite[Definition 10.4 and Theorem 10.5]{WZ23}, we have
\begin{align}
\label{eq:fXTT}
 \fX_{1,\bvs_\star} T'_{\bs_1,\bvs_\star,-1} ( B_j v )= \TT'_{1,-1}(B_j)\fX_{1,\bvs_\star} T'_{\bs_1,\bvs_\star,-1} (v ),\qquad j=1,2.
\end{align}

Thanks to \eqref{eq:fXTT} and Theorem~\ref{thm:qs-split}, if the identity \eqref{TT:qsv} holds for $v\in L(m,n) $ then it also holds for $B_j v,j=1,2$. 
When $v$ is the highest weight vector $\eta$ of $L(m,n)$, the identity \eqref{TT:qsv} is proved in Proposition~\ref{prop:TTeta}. Therefore, the identity \eqref{TT:qsv} holds for any $v\in \Ui\eta$. By \cite[Lemma~4.1]{BW18b}, we have $L(m,n)=\Ui\eta$, and hence \eqref{TT:qsv} holds for any $v\in L(m,n)$.
\end{proof}

\subsection{Formulas of $\TT'_{i,-1}$ on divided powers}

Set the parameter $\bvs=\bvs_\dm$ in this subsection. Then we have $\vs_i=\vs_{\tau i}=-q_i^{-1/2}$. Recall $X_{j,n,\ov{t}}$ from \eqref{Xjn}.

\begin{theorem}
\label{thm:qsplit-T1DP}
Assume $c_{i,\tau i}=-1$. 
Let $j\in\I$ such that $j\neq i,\tau i$. Set $\alpha=-c_{ij},\beta=-c_{\tau i,j}$. Then we have $\TT'_{i,-1}(X_{j,n,\ov{t}})=b_{i,\tau i, j;n,n\beta,n\alpha+n\beta,n\alpha}$, for $n\ge 0,\ov{t}\in \Z_2$; that is,   
\begin{align}\notag
\TT'_{i,-1}(X_{j,n,\ov{t}})&=\sum_{u,w\geq 0} \sum_{t=0}^{n\beta-w} \sum_{s=0}^{n\beta+n\alpha-w-u} \sum_{r=0}^{n\alpha-u} (-1)^{t+r+s}
 q_i^{t(-2w+1) + r(u+1)+s(w-2u+1)+uw+3ut} 
 \\
 \label{eq:qs-Tdiv1}
&\quad\times q_i^{-\frac{u^2+w^2}{2}} B_i^{(n\beta-w-t)}B_{\tau i}^{(n\alpha+n\beta-w-u-s)}B_i^{(n\alpha-u-r)} X_{j,n,\ov{t}} B_i^{(r)} B_{\tau i}^{(s)} B_i^{(t)} k_i^{w-u}.
\end{align}
Moreover, the formula of $\TT''_{i,+1}(X_{j,n,\ov{t}})$ is given by
\begin{align}\notag
\TT''_{i,+1}(X_{j,n,\ov{t}})&=\sum_{u,w\geq 0} \sum_{t=0}^{n\beta-w} \sum_{s=0}^{n\beta+n\alpha-w-u} \sum_{r=0}^{n\alpha-u} (-1)^{t+r+s}
 q_i^{t(-2w+1) + r(u+1)+s(w-2u+1)+uw+3ut} 
 \\
 \label{eq:qs-Tdiv2}
&\quad\times q_i^{-\frac{u^2+w^2}{2}} k_i^{u-w} B_i^{(t)} B_{\tau i}^{(s)} B_i^{(r)} X_{j,n,\ov{t}} B_i^{(n\alpha-u-r)} B_{\tau i}^{(n\alpha+n\beta-w-u-s)} B_i^{(n\beta-w-t)} .
\end{align}
\end{theorem}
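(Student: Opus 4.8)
The plan is to mirror the proof of \cref{thm:split-T1DP}, now supplying the quasi-split ingredients from the preceding subsections; throughout the standing parameter is $\bvs=\bvs_\dm$, which is balanced. The two displayed formulas are exchanged by the anti-involution $\sigma_\imath$ of \cref{lem:inv}(3) (available since $\bvs_\dm$ is balanced): as $\sigma_\imath$ is $\F$-linear and fixes every $B_j$, it fixes the polynomial $X_{j,n,\ov{t}}$ in $B_j$, so the relation $\TT''_{i,+1}(X_{j,n,\ov{t}})=\sigma_\imath\big(\TT'_{i,-1}(X_{j,n,\ov{t}})\big)$ of \cite{WZ23} holds. Applying $\sigma_\imath$ to \eqref{eq:qs-Tdiv1} reverses the order of the factors, fixes each $B_i$ and $B_{\tau i}$, and sends $k_i^{w-u}\mapsto k_i^{u-w}$, producing \eqref{eq:qs-Tdiv2} term by term; hence it suffices to establish \eqref{eq:qs-Tdiv1}.

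For \eqref{eq:qs-Tdiv1}, I would first prove the operator identity $\TT'_{i,-1}(X_{j,n,\ov{t}})=b_{i,\tau i,j;n,n\beta,n\alpha+n\beta,n\alpha}$ in $\Ui$. Fix $M\in\mathcal C$ and $v\in M_{\ov{\lambda}}$, and set $\nu=\langle h_i-h_{\tau i},\ov{\lambda}\rangle$. The compatibility \cref{thm:qs-split} gives $\TT'_{i,-1}(X_{j,n,\ov{t}})\,\TT'_{i,-1}(v)=\TT'_{i,-1}(X_{j,n,\ov{t}}v)$. Since $X_{j,n,\ov{t}}v\in M_{\ov{\lambda-n\alpha_j}}$ and $\langle h_i-h_{\tau i},\ov{\lambda-n\alpha_j}\rangle=\nu-n(\beta-\alpha)=\nu+n\alpha-n\beta$, the definition $\TT'_{i,-1}(w)=g_{\langle h_i-h_{\tau i},\ov{\mu}\rangle}(k_i,B_i,B_{\tau i})\,w$ for $w\in M_{\ov{\mu}}$, the base value $b_{i,\tau i,j;n,0,0,0}=X_{j,n,\ov{t}}$, and the key identity \eqref{eq:qqsgb} of \cref{thm:qs-rktwo} yield $\TT'_{i,-1}(X_{j,n,\ov{t}}v)=g_{\nu+n\alpha-n\beta}\,b_{i,\tau i,j;n,0,0,0}\,v=b_{i,\tau i,j;n,n\beta,n\alpha+n\beta,n\alpha}\,g_\nu\,v=b_{i,\tau i,j;n,n\beta,n\alpha+n\beta,n\alpha}\,\TT'_{i,-1}(v)$. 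Subtracting, $\big(\TT'_{i,-1}(X_{j,n,\ov{t}})-b_{i,\tau i,j;n,n\beta,n\alpha+n\beta,n\alpha}\big)\TT'_{i,-1}(v)=0$; replacing $v$ by $\TT''_{i,+1}(v)$ and using the mutual-inverse statement \cref{cor:qssmod-inv}, this element annihilates every integrable $\Ui$-module and therefore vanishes, by the remark following \cref{lem:integrable}. The explicit shape \eqref{eq:qs-Tdiv1} then results from specializing the definition \eqref{eq:qsdv1} at $(a,b,c)=(n\beta,\,n\alpha+n\beta,\,n\alpha)$.

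Since the genuinely hard input, the rank-two identity \eqref{eq:qqsgb} for all $n\ge 1$, is already in hand from \cref{thm:qs-rktwo}, the remaining argument is formal, and I expect the main obstacle to be purely computational bookkeeping in this last specialization. Concretely, I would check that under $(a,b,c)=(n\beta,n\alpha+n\beta,n\alpha)$, with $-nc_{\tau i,j}=n\beta$ and $-nc_{ij}=n\alpha$, both $q_i$-binomial factors in \eqref{eq:qsdv1} degenerate, namely $\qbinom{-nc_{\tau i,j}-b+c+v}{v}_i=\qbinom{v}{v}_i=1$ and $\qbinom{-nc_{ij}-c+u}{u}_i=\qbinom{u}{u}_i=1$, and that the accumulated $q_i$-exponent matches \eqref{eq:qs-Tdiv1} after renaming the index $v$ to $w$. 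These steps are routine but error-prone, so I would write them out in full rather than assert them, paying particular attention to the weight-shift $\nu\mapsto\nu+n\alpha-n\beta$ used to align the source iweight of $X_{j,n,\ov{t}}v$.
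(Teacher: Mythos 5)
Your proposal is correct and follows essentially the same route as the paper: the paper's proof of \cref{thm:qsplit-T1DP} is stated to be identical to that of \cref{thm:split-T1DP}, and your argument is exactly that adaptation, using \cref{thm:qs-split} for compatibility, \cref{thm:qs-rktwo} (identity \eqref{eq:qqsgb}) in place of \cref{prop:BBij0}, \cref{cor:qssmod-inv} plus the remark after \cref{lem:integrable} to conclude the operator identity in $\Ui$, and the anti-involution $\sigma_\imath$ to deduce \eqref{eq:qs-Tdiv2} from \eqref{eq:qs-Tdiv1}. Your final bookkeeping (the degeneration of both $q_i$-binomials to $1$ and the exponent match under $(a,b,c)=(n\beta,n\alpha+n\beta,n\alpha)$, $v\mapsto w$) is also verified correctly.
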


\begin{proof}
The proof is the same as for \cref{thm:split-T1DP} and is hence omitted.
\end{proof}

\section{Integral relative braid group symmetries}
\label{sec:integral}

In this section we rescale naturally the symmetries on integrable $\Ui$-modules to become integral and then formulate symmetries on $\dUi$ which preserve its integral form. We show these symmetries on integral forms of modules and $\dUi$ satisfy the relative braid group relations.

\subsection{Modified iquantum groups and completions}

Set $\A=\bZ[q,q^{-1}]$. Let $\dot{\U}$ be the modified quantum group in \cite[23.1]{Lus93} and $\dot{\U}_\A$ be the $\A$-subalgebra of $\dot{\U}$ generated by $F_i^{(m)}\one_\lambda,E_i^{(m)}\one_\lambda$ for $\lambda\in X,m\in \N,i\in \I$. Let $\widehat{\U}$ be the $\A$-algebra consisting of all $\A$-linear combinations
of the canonical basis on $\dot\U$ \cite[1.11]{Lus09}. The algebra $\dot{\U}$ is naturally a subalgebra of $\widehat{\U}$. Following \cite[1.12]{Lus09}, any integrable $\U$-module admits a natural $\widehat{\U}$-module structure.

Let $\dUi$ be the modified iquantum group defined in \cite{BW18b} and \cite[\S3.5]{BW21}, which contains the idempotents $\one_{\ov{\lambda}}$, for $\ov{\lambda}\in X_\imath$. Moreover, any $M\in \mathcal{\mathcal{C}}$ admits a natural $\dUi$-module structure. The modified quantum group $\dot{\U}$ is naturally a $(\dUi,\dUi)$-bimodule by \cite[3.7]{BW18b}.  Following \cite[Definition 3.19]{BW18b}, we define 
\[
\dUiA:=\{u\in \dUi \mid u m\in \dot{\U}_{\A} , \forall m\in \dot{\U}_{\A} \}.
\]

Let $\dot{\bB}^\imath$ be the icanonical basis of $\dUi$ \cite{BW18b, BW21}, which forms an $\A$-basis of $\dUiA$ and an $\F$-basis of $\dUi$. For $a,b,c\in  \dot{\bB}^\imath$, we write $ab=\sum_{c\in \dot{\bB}^\imath} m_{a,b}^c c$ where $m_{a,b}^c \in \A$ are $0$ except finitely many $c$.  

\begin{lemma} \cite[Corollary 3.2]{BS22}
\label{lem:BS}
For any given $c\in \dot{\bB}^\imath$, the set $\{(a,b)\in \dot{\bB}^\imath\times \dot{\bB}^\imath \mid m_{a,b}^c\neq 0\}$ is finite.
\end{lemma}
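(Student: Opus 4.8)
The plan is to transfer the finiteness from the modified quantum group $\dot\U$, where the analogous statement is already available, to $\dUi$ through the based bimodule structure relating the two.

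First I would record the $X_\imath\times X_\imath$-bigrading $\dUi=\bigoplus_{\ov\lambda,\ov\mu}\one_{\ov\lambda}\dUi\one_{\ov\mu}$, under which every element of $\dot{\bB}^\imath$ is homogeneous; write $b=\one_{\ov\lambda_b}b\one_{\ov\mu_b}$. Since $\one_{\ov\lambda}\dUi\one_{\ov\mu}\cdot\one_{\ov\lambda'}\dUi\one_{\ov\mu'}$ vanishes unless $\ov\mu=\ov\lambda'$, the condition $m^c_{a,b}\neq0$ forces $\ov\lambda_a=\ov\lambda_c$, $\ov\mu_b=\ov\mu_c$, and $\ov\mu_a=\ov\lambda_b=:\ov\nu$ for a common inner iweight $\ov\nu\in X_\imath$. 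Thus it remains to bound the single free parameter $\ov\nu$ and, for each admissible $\ov\nu$, the candidates $a\in\one_{\ov\lambda_c}\dUi\one_{\ov\nu}$ and $b\in\one_{\ov\nu}\dUi\one_{\ov\mu_c}$. Note that the $\Z\I$-weight grading is already determined by $(\ov\lambda,\ov\mu)$, so it contributes nothing further: a genuinely new mechanism is needed to cut down $\ov\nu$ and the degrees.

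That mechanism is the based $(\dUi,\dUi)$-bimodule structure on $\dot\U$ (\cite[3.7]{BW18b}): left multiplication gives a map $L\colon\dUi\to\End_\F(\dot\U)$, and $\dot{\bB}^\imath$ acts on the canonical basis $\dot{\mathbf B}$ of $\dot\U$ by expansions $b\cdot\gamma=\sum_{\gamma'}\rho^{\gamma'}_{b,\gamma}\gamma'$ with $\rho^{\gamma'}_{b,\gamma}\in\A$ enjoying triangularity and positivity for the partial orders defining the bases. The crucial input I would invoke is Lusztig's completion: $\widehat\U$, the set of all $\A$-linear combinations of $\dot{\mathbf B}$, is an $\A$-algebra (\cite[1.11]{Lus09}), which is exactly the assertion that for each $\gamma\in\dot{\mathbf B}$ the set $\{(\alpha,\beta)\in\dot{\mathbf B}\times\dot{\mathbf B}:m^\gamma_{\alpha,\beta}\neq0\}$ is finite. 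Choosing a canonical column $\gamma_0\in\dot{\mathbf B}$ lifting $\ov\mu_c$ and using $(ab)\cdot\gamma_0=a\cdot(b\cdot\gamma_0)$, the finiteness of $\dot\U$-structure constants bounds the pairs that can contribute to the $\dot{\mathbf B}$-expansion of $c\cdot\gamma_0$; the triangularity of $b\mapsto b\cdot\gamma_0$ on each bigraded piece then pulls this bound back to $c$ itself.

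The hard part will be making the pull-back precise, i.e. showing that $b\mapsto b\cdot\gamma_0$ is injective (or at least order-reflecting) on the relevant column $\dUi\one_{\ov\mu_c}$, and controlling the degree-lowering inherent to $\dUi$: relations such as $[B_i,B_{\tau i}]\sim k_i$ allow a product of two $\imath$canonical basis elements to produce terms of strictly smaller $\U^-$-degree, so no naive count by total degree in the $B_i$ can succeed. It is precisely here that positivity of the bimodule structure constants, combined with Lusztig's finiteness for $\dot\U$, must do the work: positivity prevents infinitely many degree-lowering contributions from surviving simultaneously, forcing both $\ov\nu$ and the degrees of $a,b$ into a finite range. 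I expect this positivity-plus-transfer step, rather than the elementary bigrading bookkeeping, to be the substantive core of the argument.
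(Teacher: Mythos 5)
You are trying to prove a statement that the paper itself does not prove: it is quoted directly from \cite[Corollary 3.2]{BS22}, so your sketch has to be judged as a would-be complete argument. By that standard it has a genuine gap, and you name it yourself: the entire substantive content --- showing that $b\mapsto b\cdot\gamma_0$ is injective (or order-reflecting) on $\dot{\bB}^\imath\one_{\ov{\mu}_c}$, and that the degree-lowering relations such as $[B_i,B_{\tau i}]\sim k_i$ cannot produce infinitely many surviving contributions --- is deferred to a ``positivity-plus-transfer step'' that you expect to work but never carry out. A plan whose core step is an expectation is not a proof.

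Moreover, the mechanism you propose for that core step is not actually available. Positivity of the structure constants of $\dot{\bB}^\imath$, equivalently of its action on the canonical basis $\dot{\mathbf B}$ of $\dot{\U}$ as a based bimodule, is not a theorem in the generality of this paper (quasi-split Kac--Moody type, parameters as in \eqref{parameter_iCB}); it is a conjecture of Bao--Wang, verified only in special cases, and the proof in \cite{BS22} does not use it. That proof instead exploits the $\A$-integrality and stability properties of icanonical bases relative to Lusztig's canonical basis, i.e.\ the algebra embedding $\widehat{\U}_\A^\imath \hookrightarrow \widehat{\U}_\A$, $x\one_{\ov{\lambda}}\mapsto x\sum_{\mu:\,\ov{\mu}=\ov{\lambda}}\one_{\mu}$, combined with Lusztig's finiteness for $\dot{\U}$. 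There is also a technical mismatch you would need to resolve even if positivity were granted: Lusztig's theorem \cite[1.11]{Lus09} concerns pairs of elements of $\dot{\mathbf B}$, whereas your pairs $(a,\,b\cdot\gamma_0)$ mix icanonical and canonical elements. To invoke Lusztig you must expand each $a\one_{\nu}$, over \emph{all} lifts $\nu$ of the inner iweight $\ov{\nu}$ (an infinite family), in the basis $\dot{\mathbf B}$, and then prove a column-finiteness statement: a fixed $\gamma\in\dot{\mathbf B}$ occurs in the expansion of $a\one_{\nu}$ for only finitely many $a\in\dot{\bB}^\imath$. This is precisely the kind of statement established in \cite{BS22}, it is where the real work lies, and your sketch never formulates it.
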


Let $\widehat{\U}_\A^\imath$ be the $\A$-module consisting of all formal linear combinations
$\sum_{a\in \dot{\bB}^\imath} n_a a$
with $n_a\in \A$. 
By Lemma~\ref{lem:BS}, we have a well-defined $\A$-algebra structure on $\widehat{\U}_\A^\imath$ extending the algebra structure on $\dUiA$, 
and $\dUiA \subset \widehat{\U}_\A^\imath$ as a subalgebra. The algebra $\widehat{\U}_\A^\imath$ contains $\sum_{\ov{\lambda}\in X_\imath} \one_{\ov{\lambda}}$ as the unit. Then $\widehat{\U}^\imath:= \widehat{\U}^\imath_{\A} \otimes_{\A} \F$ is naturally an $\F$-algebra. 
Following \cite[\S3.1]{BS22}, there is an algebra embedding 
\[
\widehat{\U}_\A^\imath \longrightarrow\widehat{\U}_\A,
\qquad 
x\one_{\ov{\lambda}}\mapsto x \sum_{\mu\in X,\ov{\mu}=\ov{\lambda}}\one_{\mu}.
\]

In the remainder of this section, we shall fix a set $\fwItau$ of representatives for $\tau$-orbits on $\Ifin$ and choose the parameter $\bvs $ such that 
\begin{align} \label{parameter_iCB}
\vs_i\in q_i^\Z, \qquad \vs_{\tau i}\vs_i =q_i^{-c_{i,\tau i}}  ,\qquad \forall i\in \I.
\end{align}
Note that $\vs_i =q_i^{-1}$  if $\tau i=i.$ (More generally, we could have allowed $\vs_i\in \pm q_i^\Z$ in \eqref{parameter_iCB}, though we choose to use the positive signs throughout.) The $\dUi$ with these parameters admits the bar involution \cite{CLW21a} and icanonical basis \cite{BW18b, BW21}.

Following \cite[Proposition 3.17]{BW18b}, there is a bar involution $\psi^\imath$ on $\dot{\U}^\imath$ such that $\psi^\imath(q)=q^{-1},\psi^\imath(\one_{\ov{\lambda}})=\one_{\ov{\lambda}},
\psi^\imath(B_i\one_{\ov{\lambda}})=B_i\one_{ \ov{\lambda}}$, for $i\in \I,\ov{\lambda}\in X_\imath$. By definition, $\dot{\bB}^\imath$ is $\psi^\imath$-invariant. 
We extend $\psi^\imath$ to a bar involution, denoted again by $\psi^\imath$, on $\widehat{\U}^\imath$ via $\psi^\imath(\sum_{a\in \dot{\bB}^\imath} n_a a)=\sum_{a\in \dot{\bB}^\imath} \ov{n}_a a$.

\subsection{Integral braid group symmetries on modules}
 \label{subsec:split_mod}
 
We separate $i\in \Ifin$ into 3 cases (i)--(iii) with distinguished choices of parameters, and will often do so in the subsequent subsections.

(i) $\boxed{c_{i,\tau i}=2 \text{ and } \vs_i=q_i^{-1}}$. 
In this case, we have $i=\tau i$. For $\la\in X$, set $\la_i=\langle h_i,\lambda\rangle \in \Z.$ We define 
\begin{align}
{\bf t}_i' &:= \sum_{m\geq 0} \sum_{\substack{\ov{\lambda}\in X_\imath\\\ov{\la_i}=\ov{0}}} (-1)^{m}  q_i ^{-m} \dvi{\ov{0} }{2m}\one_{\ov{\lambda}}
+\sum_{m\geq 0} \sum_{\substack{\ov{\lambda}\in X_\imath\\\ov{\la_i}=\ov{1}}} (-1)^{m} q_i^{-m} \dvi{\ov{1} }{2m+1}\one_{\ov{\lambda}}\in \widehat{\U}^\imath_\A,
\label{ti'}
\\
{\bf t}_i'' &:= \sum_{m\geq 0} \sum_{\substack{\ov{\lambda}\in X_\imath\\\ov{\la_i}=\ov{0}}} (-1)^{m}  q_i ^{m} \dvi{\ov{0} }{2m}\one_{\ov{\lambda}}
+\sum_{m\geq 0} \sum_{\substack{\ov{\lambda}\in X_\imath\\\ov{\la_i}=\ov{1}}} (-1)^{m} q_i^{m} \dvi{\ov{1} }{2m+1}\one_{\ov{\lambda}}\in \widehat{\U}^\imath_\A.
\label{ti''}
\end{align}
The above formulas are related to $f_{\ov{p},\bvs_\star}(B_i)$ in \eqref{def:fbvs} as follows: 
\begin{align}
\label{ti'f}
\begin{split}
{\bf t}_i' &=\sum_{\substack{\ov{\lambda}\in X_\imath\\\ov{\la_i}=\ov{0}}} f_{\ov{0},\bvs_\star}(B_i) \one_{\ov{\lambda}} + (-q_i)^{1/2} \sum_{\substack{\ov{\lambda}\in X_\imath\\\ov{\la_i}=\ov{1}}} f_{\ov{1},\bvs_\star}(B_i) \one_{\ov{\lambda}},
\\
{\bf t}_i'' &=\sum_{\substack{\ov{\lambda}\in X_\imath\\\ov{\la_i}=\ov{0}}} \tilde{f}_{\ov{0},\bvs_\star}(B_i) \one_{\ov{\lambda}} + (-q_i)^{-1/2} \sum_{\substack{\ov{\lambda}\in X_\imath\\\ov{\la_i}=\ov{1}}} \tilde{f}_{\ov{1},\bvs_\star}(B_i) \one_{\ov{\lambda}}.
\end{split}
\end{align}

(ii) 
$\boxed{c_{i,\tau i}=0 \text{ and } \vs_i=\vs_{\tau i}=1}.$ Set $\la_{i,\tau}=\langle h_i-h_{\tau i},\ov{\lambda}\rangle$ for $\ov{\lambda}\in X_\imath$. We define integral elements in $\widehat{\U}_\A^\imath$ as follows:
\begin{align}
{\bf t}_i' &:=   \sum_{ \ov{\lambda}\in X_\imath }  \sum_{a-b=\la_{i,\tau}} (-1)^b q_i^{-b}  B_{i}^{(a)}  B_{\tau i}^{(b)} \one_{\ov{\lambda}},
\label{diagti'}
\\
{\bf t}_i'' &:= \sum_{ \ov{\lambda}\in X_\imath }  \sum_{b-a
=\la_{i,\tau}}  (-1)^b q_i^{b} B_{\tau i}^{(a)}  B_{ i}^{(b)}\one_{\ov{\lambda}}. 
\label{diagti''}
\end{align}
These elements are related to $ z_{m,\bvs}( B_i,B_{\tau i}), z'_{m,\bvs}( B_i,B_{\tau i})$ in \eqref{eq:zm}--\eqref{eq:zm'} as follows:
\begin{align}
  \label{ti:diag}
\begin{split}
{\bf t}_i' &= \sum_{ \ov{\lambda}\in X_\imath } (-1)^{-\la_{i,\tau}/2} q_i^{\la_{i,\tau}/2}z_{\la_{i,\tau},\bvs}( B_i,B_{\tau i}) \one_{\ov{\lambda}},
\\
{\bf t}_i'' &= \sum_{ \ov{\lambda}\in X_\imath } (-1)^{-\la_{i,\tau}/2} q_i^{\la_{i,\tau}/2}z'_{\la_{i,\tau},\bvs}( B_i,B_{\tau i}) \one_{\ov{\lambda}}.
\end{split}
\end{align}

(iii) $\boxed{c_{i,\tau i}=-1 \text{ and } \vs_i, \vs_{\tau i}\in q_i^\Z \text{ with } \vs_i \vs_{\tau i}=q_i}$. Set $\la_{i,\tau}=\langle h_i-h_{\tau i},\ov{\lambda}\rangle$ for $\ov{\lambda}\in X_\imath$.
We define 
\begin{align}
{\bf t}_i' &:=   \sum_{ \ov{\lambda}\in X_\imath }  q_i^{-\la_{i,\tau}(\la_{i,\tau}-1)/2}  \sum_{t\ge 0,l\ge 0} (-1)^{t+l}  q_i^{ -\frac{(t-l)(t-l+1)}{2}-t-l+\la_{i,\tau}(t-l)}  \vs_{\tau i}^{-\la_{i,\tau}+t-l}   B^{(t,t+l,l)}_{i} \one_{\ov{\lambda}},
\label{qss-ti'}
\\
{\bf t}_i'' &:=   \sum_{ \ov{\lambda}\in X_\imath }   q_i^{\la_{i,\tau}(\la_{i,\tau}-1)/2}  \sum_{t\ge 0,l\ge 0} (-1)^{t+l}  q_i^{ \frac{(t-l)(t-l+1)}{2}+t+l+\la_{i,\tau}(l-t)} \vs_{\tau i}^{\la_{i,\tau}-t+l}  B^{(t,t+l,l)}_{i}\one_{\ov{\lambda}}.
\label{qss-ti''}
\end{align}
Recall the elements $ g_{\nu,\bvs}, h_{\nu,\bvs}$ from \eqref{eq:gnu}--\eqref{eq:hnu}. We can rewrite \eqref{qss-ti'}--\eqref{qss-ti''} as 
\begin{align}\label{eq:ttgh}
{\bf t}_i' &=   \sum_{ \ov{\lambda}\in X_\imath } g_{\la_{i,\tau},\bvs}(k_i, B_i, B_{\tau i})\one_{\ov{\lambda}},
\qquad 
{\bf t}_i'' =   \sum_{ \ov{\lambda}\in X_\imath } h_{\la_{i,\tau},\bvs}(k_i, B_i, B_{\tau i})\one_{\ov{\lambda}},
\end{align}

For any integrable $\Ui$-module $M$, we have well-defined linear maps, for $i\in \Ifin$:
\begin{align}
\label{TTdM}
\begin{split}
    \TTd'_{i,-1}:  & M \longrightarrow M, \quad v\mapsto {\bf t}_i' v, 
    \qquad
    \TTd'_{i,+1}:   M \longrightarrow M, \quad v\mapsto \psi^\imath({\bf t}_i') v, 
    \\
    \TTd''_{i,+1}:  & M \longrightarrow M, \quad v\mapsto {\bf t}_i'' v,
    \qquad
    \TTd''_{i,-1}:  M \longrightarrow M, \quad v\mapsto  \psi^\imath({\bf t}_i'') v.
\end{split}
\end{align}

\begin{remark}
When $c_{i,\tau i}=2$ or $-1$, we have ${\bf t}_i'=\psi^\imath({\bf t}_i'')$ and thus $\TTd''_{i,e}=\TTd'_{i,e}$, for $ e=\pm 1$. These elements ${\bf t}_i', {\bf t}_i''$ in 
\eqref{ti'}--\eqref{ti''}, \eqref{diagti'}--\eqref{diagti''}, and 
\eqref{qss-ti'}--\eqref{qss-ti''} should be viewed as three $\mathrm{i}$-counterparts of the elements $s_{i,-1}', s_{i,+1}''$ in \eqref{si'si''} or \cite[2.2(b)]{Lus09}.
\end{remark}

\begin{lemma}
\label{lem:inverse}
Let $i\in \wI$ and $e =\pm 1$. Then we have ${\bf t}_i'{\bf t}_i''={\bf t}_i''{\bf t}_i'=1$, and the linear maps $\TTd'_{i,-e},\TTd''_{i,e}$ in \eqref{TTdM} are mutual inverses on any integrable $\Ui$-module $M$.
\end{lemma}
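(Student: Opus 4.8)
The statement asserts two facts: the algebraic identity ${\bf t}_i'{\bf t}_i''={\bf t}_i''{\bf t}_i'=1$ in the completion $\widehat{\U}^\imath_\A$, and the operator statement that $\TTd'_{i,-e}$ and $\TTd''_{i,e}$ are mutually inverse on any integrable $\Ui$-module. The plan is to deduce both from the rank one inverse statements already proved in the three separate cases, namely \cref{cor:mod-inv}, \cref{cor:qsmod-inv}, and \cref{cor:qssmod-inv}, together with the explicit identifications of ${\bf t}_i', {\bf t}_i''$ with the operators $f_{\ov{p},\bvs_\star}, \tilde f_{\ov{p},\bvs_\star}$, $z_{m,\bvs}, z'_{m,\bvs}$, and $g_{\nu,\bvs}, h_{\nu,\bvs}$ recorded in \eqref{ti'f}, \eqref{ti:diag}, and \eqref{eq:ttgh}.

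First I would treat the three cases $c_{i,\tau i}=2,0,-1$ separately, since the scalars involved in the rescaling differ. In each case the corollaries cited above already give an identity of the form (operator)$\cdot$(operator)$=1$ when acting on any integrable $\Ui$-module. For instance, in the split case \cref{cor:mod-inv} yields $f_{\ov p,\bvs_\star}(B_i)\tilde f_{\ov p,\bvs_\star}(B_i)=1$ on each iweight space $M_{\ov\la}$ with $\ov{\la_i}=\ov p$; the scalar factors $(-q_i)^{\pm 1/2}$ appearing in \eqref{ti'f} on the odd-parity summand cancel in the product ${\bf t}_i'{\bf t}_i''$, so the idempotent-by-idempotent product collapses to $\sum_{\ov\la}\one_{\ov\la}$, which is the unit of $\widehat{\U}^\imath_\A$. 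The diagonal and quasi-split cases are analogous: by \eqref{ti:diag} and \eqref{eq:ttgh} the products ${\bf t}_i'{\bf t}_i''$ reduce on each $M_{\ov\la}$ to $z_{\la_{i,\tau}}z'_{\la_{i,\tau}}$ (resp.\ $g_{\la_{i,\tau}}h_{\la_{i,\tau}}$) up to scalars that cancel, and these equal $1$ by \cref{cor:qsmod-inv} (resp.\ \cref{cor:qssmod-inv}).

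The only subtlety is passing from ``equality as operators on every integrable $\Ui$-module'' to ``equality in $\widehat{\U}^\imath_\A$''. Here I would invoke the faithfulness principle noted right after \cref{lem:integrable}: an element of (a completion of) $\Ui$ that acts as zero on every integrable $\Ui$-module must vanish, because the analogous statement holds for $\U$. Strictly, since ${\bf t}_i', {\bf t}_i''$ live in $\widehat{\U}^\imath_\A$ rather than $\Ui$, I would verify that ${\bf t}_i'{\bf t}_i''-\sum_{\ov\la}\one_{\ov\la}$ acts as zero componentwise on each $M_{\ov\la}$; since this difference is likewise a well-defined element of $\widehat{\U}^\imath_\A$ and $\widehat{\U}^\imath_\A$ embeds in $\widehat{\U}_\A$ acting faithfully on the sum of all integrable modules, the difference is zero. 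Once ${\bf t}_i'{\bf t}_i''={\bf t}_i''{\bf t}_i'=1$ is established in $\widehat{\U}^\imath_\A$, the operator statement $\TTd'_{i,-e}\TTd''_{i,e}=\TTd''_{i,e}\TTd'_{i,-e}=\Id_M$ follows immediately from the definitions in \eqref{TTdM}, using $\psi^\imath({\bf t}_i')\psi^\imath({\bf t}_i'')=\psi^\imath({\bf t}_i''{\bf t}_i')=\psi^\imath(1)=1$ to handle the $e=-1$ versions.

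I expect the main obstacle to be purely bookkeeping: tracking the parity- and weight-dependent scalar prefactors (the $(-q_i)^{\pm 1/2}$ in the split case and the $(-1)^{\mp\la_{i,\tau}/2}q_i^{\pm\la_{i,\tau}/2}$ in the diagonal case, together with the $q_i^{\pm\la_{i,\tau}(\la_{i,\tau}-1)/2}$ and $\vs_{\tau i}$-powers in the quasi-split case) and confirming that they cancel exactly in the two products, so that no residual scalar survives to spoil the unit. This is routine but must be done carefully in each of the three cases; the conceptual content is entirely supplied by the already-proved rank one inverse corollaries and the faithfulness of integrable modules.
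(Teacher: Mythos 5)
Your proposal is correct and follows essentially the same route as the paper: case-by-case reduction to \cref{cor:mod-inv,cor:qsmod-inv,cor:qssmod-inv} via the identifications \eqref{ti'f}, \eqref{ti:diag}, \eqref{eq:ttgh}, cancellation of the rescaling scalars, passage to the identity in $\widehat{\U}^\imath_\A$ by faithfulness on integrable modules, and $\psi^\imath$ to handle the remaining variants. The only slip is in the diagonal case, where the composite on $M_{\ov\la}$ is $z_{-\la_{i,\tau}}z'_{\la_{i,\tau}}$ rather than $z_{\la_{i,\tau}}z'_{\la_{i,\tau}}$, since ${\bf t}_i''$ sends $M_{\ov\la}$ to $M_{\ov{\la-\la_{i,\tau}\alpha_i}}$ whose $\la_{i,\tau}$-value is $-\la_{i,\tau}$ --- precisely the weight-dependent bookkeeping you flagged as needing care.
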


\begin{proof}
Let $M$ be an integrable $\Ui$-module and let $v\in M_{\ov\lambda}$, for $\ov{\lambda} \in X_\imath$. We divide the considerations into 3 cases for $i\in \wI$.

(i) $\boxed{c_{i,\tau i}=2}$. 
Recall that $B_i v$ has iweight $\ov{\lambda-\alpha_i}$ and note that $\ov{\lambda-2r\alpha_i}=\ov{\lambda}$, for $r\in \Z$. Then by \eqref{def:idv}, we have
\begin{align}\label{eq:slambda}
 f_{\ov{0},\bvs_\star}(B_i)v, \ \tilde{f}_{\ov{0},\bvs_\star}(B_i)v\in M_{\ov{\lambda}};\qquad
 f_{\ov{1},\bvs_\star}(B_i)v, \ \tilde{f}_{\ov{1},\bvs_\star}(B_i)v\in M_{\ov{\lambda-\alpha_i}}.
\end{align}
If $\ov{\la_i}$ is even, using Corollary~\ref{cor:mod-inv}, we have
\begin{align}
{\bf t}_i'{\bf t}_i''v=  f_{\ov{0},\bvs_\star}(B_i) \tilde{f}_{\ov{0},\bvs_\star}(B_i)v=\TT'_{i,-1}\TT''_{i,+1}(v) =v.
\end{align}
If $\ov{\la_i}$ is odd, using Corollary~\ref{cor:mod-inv}, we similarly have
\begin{align}
{\bf t}_i'{\bf t}_i''v= (-q_i)^{1/2} f_{\ov{1},\bvs_\star}(B_i) (-q_i)^{-1/2} \tilde{f}_{\ov{1},\bvs_\star}(B_i)v=\TT'_{i,-1}\TT''_{i,+1}(v) =v.
\end{align}
Therefore, we have $\TTd'_{i,-1} \TTd''_{i,+1} =\text{Id}_M$.
As this holds for any $M$, we have ${\bf t}_i'{\bf t}_i''=1$. Similarly, we have $\TTd''_{i,+1} \TTd'_{i,-1} =\text{Id}_M$ and ${\bf t}_i''{\bf t}_i'=1$.

(ii) $\boxed{c_{i,\tau i}=0}$. 
Note that ${\bf t}_i''v\in M_{\ov{\lambda-\la_{i,\tau}\alpha_i}}$ and $\langle h_i-h_{\tau i},\ov{\lambda-\la_{i,\tau}\alpha_i}\rangle=-\la_{i,\tau}$. By \eqref{ti:diag} and \eqref{ibraid:diag}, we have
\[
{\bf t}_i'{\bf t}_i''v=(-1)^{-\la_{i,\tau}/2} q_i^{\la_{i,\tau}/2} {\bf t}_i'( z'_{\la_{i,\tau},\bvs}v)=z_{-\la_{i,\tau},\bvs} z'_{\la_{i,\tau},\bvs}v=\TT'_{i,-1}\TT''_{i,+1}v=v,
\]
where the last equality follows from Corollary~\ref{cor:qsmod-inv}. Following the same arguments as in Case (i), we conclude that $\TTd'_{i,-1} \TTd''_{i,+1} = \TTd''_{i,+1} \TTd'_{i,-1} =\text{Id}_M$ and ${\bf t}_i'{\bf t}_i''={\bf t}_i''{\bf t}_i'=1$.

Applying $\psi^\imath$, we have $\psi^\imath({\bf t}_i')\psi^\imath({\bf t}_i'')=\psi^\imath({\bf t}_i'')\psi^\imath({\bf t}_i')=1$. Hence, we have $\TTd'_{i,-1} \TTd''_{i,+1}=\TTd''_{i,+1} \TTd'_{i,-1}  =\text{Id}_M$.

(iii) $\boxed{c_{i,\tau i}=-1}$. 
Note that ${\bf t}_i''v\in M_{\ov{\lambda}}$. By \eqref{eq:ttgh} and \eqref{eq:ibraid3}--\eqref{eq:ibraid3'}, we have
\[
{\bf t}_i'{\bf t}_i''v=g_{\la_{i,\tau},\bvs}(k_i, B_i, B_{\tau i}) h_{\la_{i,\tau},\bvs}(k_i, B_i, B_{\tau i})v= \TT'_{i,-1}\TT''_{i,+1}v=v,
\]
where the last equality follows from Corollary~\ref{cor:qssmod-inv}.

Therefore, we have $\TTd'_{i,-1} \TTd''_{i,+1} =\text{Id}_M$.
As this holds for any integrable $\Ui$-module $M$, we have ${\bf t}_i'{\bf t}_i''=1$. Similarly, we have $\TTd''_{i,+1} \TTd'_{i,-1} =\text{Id}_M$ and ${\bf t}_i''{\bf t}_i'=1$. The same arguments as in (i)--(ii) finishes the proof in this case. 
\end{proof}

\begin{lemma}\label{lem:iweight}
Let $M$ be an integrable $\Ui$-module and let $v\in M_{\ov{\lambda}}$, for $\ov{\lambda}\in X_\imath$. Then we have $\TTd'_{i,-1}(v), \TTd''_{i,+1}(v) \in M_{\ov{\bs_i\lambda}}$. 
\end{lemma}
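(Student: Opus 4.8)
The plan is to reduce the statement, in each of the three cases $c_{i,\tau i}=2,0,-1$, to tracking the iweight shift of the individual monomials in $B_i,B_{\tau i}$ (and $k_i$) that constitute ${\bf t}_i'$ and ${\bf t}_i''$. The only structural input needed is the relation $\ov{\alpha_i+\alpha_{\tau i}}=0$ in $X_\imath$, which is immediate from \eqref{XY} since $\alpha_i+\alpha_{\tau i}=\alpha_i+\tau\alpha_i$. Because each $k_i\in\U^{\imath 0}$ acts by a scalar on a fixed iweight space and the overall prefactors in \eqref{ti'}--\eqref{qss-ti''} depend only on $\ov\lambda$, multiplying $v\in M_{\ov\lambda}$ by ${\bf t}_i'$ or ${\bf t}_i''$ lands in a single iweight space determined by the root content of those monomials; it then remains only to match that iweight with $\ov{\bs_i\lambda}$.

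First I would record the action of $\bs_i$ on $X_\imath$ in each case, writing $\la_i:=\langle h_i,\lambda\rangle$ and $\la_{\tau i}:=\langle h_{\tau i},\lambda\rangle$. For $c_{i,\tau i}=2$ (so $i=\tau i$ and $\bs_i=s_i$) one has $\bs_i\lambda=\lambda-\la_i\alpha_i$, and since $\ov{2\alpha_i}=0$ makes $\ov{\alpha_i}$ two-torsion, this gives $\ov{\bs_i\lambda}=\ov\lambda$ when $\la_i$ is even and $\ov{\bs_i\lambda}=\ov{\lambda-\alpha_i}$ when $\la_i$ is odd. For $c_{i,\tau i}=0$ (so $\bs_i=s_is_{\tau i}$) a direct computation using $c_{i,\tau i}=0$ gives $\bs_i\lambda=\lambda-\la_i\alpha_i-\la_{\tau i}\alpha_{\tau i}$, whence, using $\ov{\alpha_{\tau i}}=-\ov{\alpha_i}$ and $\la_i-\la_{\tau i}=\la_{i,\tau}$, $\ov{\bs_i\lambda}=\ov{\lambda-\la_{i,\tau}\alpha_i}$. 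For $c_{i,\tau i}=-1$ (so $\bs_i=s_is_{\tau i}s_i$) the same kind of computation yields $\bs_i\lambda=\lambda-(\la_i+\la_{\tau i})(\alpha_i+\alpha_{\tau i})$, so $\ov{\bs_i\lambda}=\ov\lambda$.

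Next I would match these with the monomial shifts. In the split case, the idempotents $\one_{\ov\lambda}$ in \eqref{ti'} select $\dvi{\ov 0}{2m}$ when $\la_i$ is even and $\dvi{\ov 1}{2m+1}$ when $\la_i$ is odd; since $\dvi{\ov p}{k}$ is a polynomial in $B_i$ of degree $k$ by \eqref{def:idv}, it shifts the iweight of $v$ by $\ov{-k\alpha_i}$, which is $0$ for even $k$ and $\ov{-\alpha_i}$ for odd $k$, matching the two subcases above. In the diagonal case, the monomial $B_i^{(a)}B_{\tau i}^{(b)}$ in \eqref{diagti'} shifts the iweight by $\ov{-a\alpha_i-b\alpha_{\tau i}}=\ov{-(a-b)\alpha_i}=\ov{-\la_{i,\tau}\alpha_i}$ since $a-b=\la_{i,\tau}$. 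In the quasi-split case, the monomial $B^{(t,t+l,l)}_i=B_{\tau i}^{(t)}B_i^{(t+l)}B_{\tau i}^{(l)}$ in \eqref{qss-ti'} shifts the iweight by $\ov{-(t+l)(\alpha_i+\alpha_{\tau i})}=0$, while the $k_i$-powers do not move iweight spaces, so ${\bf t}_i'v\in M_{\ov\lambda}=M_{\ov{\bs_i\lambda}}$. The argument for ${\bf t}_i''$ is identical, as \eqref{ti''}, \eqref{diagti''}, \eqref{qss-ti''} involve the very same monomials (with the roles of $a,b$ interchanged in the diagonal case, which leaves the net shift $\ov{-\la_{i,\tau}\alpha_i}$ unchanged).

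Since every step is just bookkeeping of root content modulo $X_\imath$, there is no serious obstacle. The only points requiring care are the parity split in case (i)—where one must confirm that $\one_{\ov\lambda}$ picks out exactly the summand whose degree-parity matches that of $\la_i$—and the short Weyl-group computations of $\bs_i\lambda$ in the rank-two cases (ii) and (iii), which is where I expect to spend the bulk of the (modest) verification effort.
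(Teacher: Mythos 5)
Your proposal is correct and follows essentially the same route as the paper's proof: compute $\ov{\bs_i\lambda}$ case-by-case (giving $\ov{\lambda}$ or $\ov{\lambda-\alpha_i}$ according to the parity of $\la_i$ in the split case, $\ov{\lambda-\la_{i,\tau}\alpha_i}$ in the diagonal case, and $\ov{\lambda}$ in the quasi-split case), and then match this against the iweight shift of the monomials in ${\bf t}_i'$, ${\bf t}_i''$, using $\ov{\alpha_i+\alpha_{\tau i}}=0$ in $X_\imath$ and the fact that $k_i$ preserves iweight spaces. The only cosmetic difference is that the paper invokes its earlier observation \eqref{eq:slambda} for the split case, whereas you re-derive the same parity statement from \eqref{def:idv}; your phrase ``polynomial of degree $k$'' should strictly read ``polynomial whose terms all have degree $\equiv k \pmod 2$,'' which is exactly what \eqref{def:idv} gives and what your parity argument actually uses.
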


\begin{proof}
(i) $\boxed{c_{i,\tau i}=2}$. 
In this case, $\bs_i=s_i$, and then we have
\begin{align} \label{rilambda_i}
\ov{\bs_i\lambda}=\ov{s_i\lambda}=\ov{\lambda-\la_i\alpha_i}
=
\begin{cases}
\ov{\lambda}, &\text{ if } \ov{\la_i}=\ov{0},
\\
\ov{\lambda-\alpha_i}, &\text{ if } \ov{\la_i}=\ov{1}.
\end{cases}
\end{align}
If $\ov{\la_i}=\ov{0}$, then we have $\TTd'_{i,-1}(v) ={\bf t}_i' v= f_{\ov{0},\bvs_\star}(B_i)v \in M_{\ov{\lambda}}$, by \eqref{eq:slambda}. If $\ov{\la_i}=\ov{1}$, then $\TTd'_{i,-1}(v) ={\bf t}_i' v= (-q_i)^{1/2} f_{\ov{0},\bvs_\star}(B_i)v\in M_{\ov{\lambda-\alpha_i}}$, by \eqref{eq:slambda}. Similar for $\TTd''_{i,+1}(v)$.

(ii) $\boxed{c_{i,\tau i}=0}$. 
In this case, $\bs_i=s_i s_{\tau i}$, and then by \cref{lem:diagmu} we have
\begin{align} \label{rilambda_ii}
\begin{split}
\bs_i\lambda &=\lambda-\langle h_i ,\lambda\rangle \alpha_i-\langle h_{\tau i} ,\lambda\rangle \alpha_{\tau i},
\\
\ov{\bs_i\lambda} &=\ov{\lambda-\la_{i,\tau}\alpha_i}. 
\end{split}
\end{align}
On the other hand, by definition \eqref{diagti'}--\eqref{diagti''}, we have ${\bf t}_i'v,{\bf t}_i''v\in M_{\ov{\lambda-\la_{i,\tau}\alpha_i}}$ and hence the desired statement follows.

(iii) $\boxed{c_{i,\tau i}=-1}$. 
In this case, $\bs_i=s_i s_{\tau i} s_i$, and then by \cref{lem:qsmu} we have
\begin{align} \label{rilambda_iii}
\begin{split}
\bs_i\lambda &=\lambda-(\langle h_i ,\lambda\rangle +\langle h_{\tau i} ,\lambda\rangle)( \alpha_i+\alpha_{\tau i}),
\\
\ov{\bs_i\lambda} &=\ov{\lambda}.  
\end{split}
\end{align}
On the other hand, by definition \eqref{qss-ti'}--\eqref{qss-ti''}, we have ${\bf t}_i'v,{\bf t}_i''v\in M_{\ov{\lambda }}$ and hence the desired statement follows.
\end{proof}


\subsection{Integral braid group symmetries on $\dot{\U}^\imath$}
\label{subsec:split_dUi}

Let $M\in \mathcal{C}$. A free $\A$-submodule $M_\A$ of $M$ is called an $\A$-form if $\dUiA M_\A=M_\A$ and $M_\A \otimes_\A \F=M$. 

For any integrable weight $\U$-module $N$ with an $\A$-form $N_\A$, by \cref{lem:integrable}, $N$ viewed as a $\Ui$-module by restriction lies in $\mathcal{C}$; moreover, by \cite[Lemma 3.20]{BW18b}, $N_\A$ is also an $\A$-form for the $\Ui$-module $N$. In this way, we have obtained many (though not all) integrable $\Ui$-modules with $\A$-forms. 

\begin{lemma}\label{int-mod}
Let $M\in \mathcal{C}$ with an $\A$-form $M_\A$. Then the linear operators $\TTd'_{i,\pm 1},\TTd''_{i,\pm 1}$ preserve $M_\A$.
\end{lemma}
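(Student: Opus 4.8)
The plan is to show that all four operators in \eqref{TTdM} are realized by left multiplication by elements of the completed integral form $\widehat{\U}_\A^\imath$, and that every element of $\widehat{\U}_\A^\imath$ preserves an arbitrary $\A$-form $M_\A$; the lemma is then immediate. First I would record that ${\bf t}_i'$ and ${\bf t}_i''$ lie in $\widehat{\U}_\A^\imath$ — this is already built into the defining formulas \eqref{ti'}--\eqref{ti''}, \eqref{diagti'}--\eqref{diagti''} and \eqref{qss-ti'}--\eqref{qss-ti''}. Concretely, under the parameter constraint \eqref{parameter_iCB} we have $\vs_i\in q_i^{\Z}$ and $q_i\in\A$, so every coefficient occurring there lies in $\A$, while each accompanying monomial, multiplied by an idempotent $\one_{\ov\lambda}$, lies in $\dUiA$ by the integrality of idivided powers \cite{BW18a,BeW18,BW18b}. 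Since $\psi^\imath$ is $\A$-semilinear and fixes the icanonical basis $\dot{\bB}^\imath$ elementwise, it preserves $\widehat{\U}_\A^\imath$; hence $\psi^\imath({\bf t}_i')$ and $\psi^\imath({\bf t}_i'')$ belong to $\widehat{\U}_\A^\imath$ as well. By \eqref{TTdM}, each of $\TTd'_{i,\pm1}$ and $\TTd''_{i,\pm1}$ is thus the action on $M$ of one of these four elements of $\widehat{\U}_\A^\imath$.

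It then remains to verify that the action of any $u\in\widehat{\U}_\A^\imath$ sends $M_\A$ into itself. As $\one_{\ov\lambda}\in\dUiA$, the form decomposes along iweights, $M_\A=\bigoplus_{\ov\lambda}(M_\A\cap M_{\ov\lambda})$, so I may take an iweight vector $v\in M_\A\cap M_{\ov\lambda}$. Expanding $u=\sum_{a\in\dot{\bB}^\imath}n_a a$ with $n_a\in\A$, integrability of $M$ (\cref{def:integrable}) forces $av=0$ for all but finitely many $a$ — equivalently, the explicit sums \eqref{ti'}--\eqref{qss-ti''} acting on $v$ are finite, as already noted below \eqref{eq:ibraid1}, \eqref{eq:ibraid2} and \eqref{eq:ibraid3}. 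Hence $uv=\sum_a n_a(av)$ is a finite sum; since $\dot{\bB}^\imath\subseteq\dUiA$ and $\dUiA M_\A=M_\A$ by the definition of an $\A$-form, each term $n_a(av)$ lies in $M_\A$, and therefore so does $uv$. Taking $u\in\{{\bf t}_i',\psi^\imath({\bf t}_i'),{\bf t}_i'',\psi^\imath({\bf t}_i'')\}$ yields the assertion for all of $\TTd'_{i,\pm1}$ and $\TTd''_{i,\pm1}$.

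I expect the formal argument above to be routine once integrality of the ${\bf t}_i'$ is granted; the substance lies upstream. The unnormalized operators $\TT'_{i,-1},\TT''_{i,+1}$ carry coefficients such as $(q_i^2\vs_i)^{-k/2}$ that plainly fail to lie in $\A$, and the role of the parity-indexed idivided-power packaging together with the rescalings in \eqref{ti'}--\eqref{qss-ti''} is exactly to clear these denominators. Accordingly, the only point genuinely requiring care is the case-by-case bookkeeping — in the split, diagonal, and quasi-split types separately — confirming that every coefficient of ${\bf t}_i'$ and ${\bf t}_i''$ lies in $\A$ and that each monomial is a bona fide idivided power belonging to $\dUiA$. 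Once this is checked, integrality of the module action is automatic.
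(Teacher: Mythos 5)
Your proof is correct and follows essentially the same route as the paper, whose entire argument is the one-liner that ${\bf t}_i',{\bf t}_i''\in \widehat{\U}^\imath_\A$ by the defining formulas \eqref{ti'}--\eqref{qss-ti''}, "and then the statement follows." You have merely made explicit the details the paper leaves implicit: the parameter constraint \eqref{parameter_iCB} puts all coefficients in $\A$ and all parity-matched idivided-power monomials in $\dUiA$, the bar involution $\psi^\imath$ preserves $\widehat{\U}^\imath_\A$ (covering $\TTd'_{i,+1}$ and $\TTd''_{i,-1}$), and integrability plus $\dUiA M_\A=M_\A$ gives the finiteness and integrality of the action.
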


\begin{proof}
By definition \eqref{ti'}-\eqref{qss-ti''}, ${\bf t}_i',{\bf t}_i''\in \widehat{\U}^\imath_\A$ and then the statement follows.
\end{proof}

We now lift the operators $\TTd'_{i,-1} $ and $\TTd''_{i,+1}$ to the algebra $\dUi$.

\begin{theorem}
\label{thm:integralTi}
Let $i\in \Ifin$. 
\begin{enumerate}
    \item 
For any $x\in \dUi$, there exist unique $x',x''\in \dUi$ such that
\begin{align}\label{eq:x'x''}
x' {\bf t}_i' ={\bf t}_i' x,\qquad  x'' {\bf t}_i'' ={\bf t}_i'' x.
\end{align}
\item
The maps $\TTd'_{i,-1} : \dUi \rightarrow \dUi\; (x\mapsto x')$ and $\TTd''_{i,+1}: \dUi \rightarrow \dUi \; (x\mapsto x'')$ are automorphisms of the algebra $\dUi$; see \eqref{dotTi:idemp}--\eqref{dotTi''iii} or Table~\ref{table:rkone-int}--\ref{table:rktwo-int2} for explicit formulas. Moreover, $\TTd'_{i,-1} \TTd''_{i,+1} = \TTd''_{i,+1} \TTd'_{i,-1} =\mathrm{Id}.$
\item 
Let $M$ be an integrable $\Ui$-module. For $v\in M$ and $x\in \dUi$, we have
$\TTd'_{i,-1} (xv) =\TTd'_{i,-1}(x) \TTd'_{i,-1}(v)$ and $\TTd''_{i,+1}(xv) =\TTd''_{i,+1}(x) \TTd''_{i,+1}(v)$.
\item 
The automorphisms $\TTd'_{i,-1} $ and $\TTd''_{i,+1}$ of $\dUi$ preserve the $\A$-subalgebra $\dUiA$. 
\end{enumerate}
\end{theorem}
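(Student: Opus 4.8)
\textbf{Proof strategy for Theorem~\ref{thm:integralTi}.}
The plan is to establish the four assertions in a logical order where each later part builds on the earlier ones, with parts (1)--(2) forming the algebraic core and parts (3)--(4) following by transfer from the module-level results already proved in \cref{thm:main1Intro} and \cref{lem:inverse}. First I would dispose of part (3), the compatibility $\TTd'_{i,-1}(xv)=\TTd'_{i,-1}(x)\TTd'_{i,-1}(v)$ on an integrable module, by reducing it to the non-modified compatibilities already in hand. Concretely, the elements ${\bf t}_i', {\bf t}_i''$ act on each weight space $M_{\ov\lambda}$ by the operators $f_{\ov p,\bvs_\star}(B_i)$, $z_{\la_{i,\tau},\bvs}$, or $g_{\la_{i,\tau},\bvs}$ up to explicit scalars depending only on $\ov\lambda$ (see \eqref{ti'f}, \eqref{ti:diag}, \eqref{eq:ttgh}); these are exactly $\TT'_{i,-1}$ and $\TT''_{i,+1}$ up to the rescaling scalars. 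Thus part (3) becomes the statement that $\TT'_{i,-1}(xv)=\TT'_{i,-1}(x)\TT'_{i,-1}(v)$ (from \cref{thm:split,thm:diag,thm:qs-split}) holds after one checks that the weight-dependent scalars match up correctly under the weight shift $\ov\lambda \mapsto \ov{\bs_i\lambda}$, which is controlled by \cref{lem:iweight}. The main bookkeeping here is verifying that the scalar attached to the iweight of $xv$ equals the product of the scalars attached to the iweights of $x$ (acting as an operator lowering weight) and of $v$.

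Next I would treat parts (1) and (2) together. For existence and uniqueness of $x'$ with $x'{\bf t}_i'={\bf t}_i'x$, the natural route is to work inside $\widehat{\U}^\imath$ and use that ${\bf t}_i'$ is invertible with inverse ${\bf t}_i''$ (by \cref{lem:inverse}, ${\bf t}_i'{\bf t}_i''={\bf t}_i''{\bf t}_i'=1$); then one simply \emph{defines} $\TTd'_{i,-1}(x) := {\bf t}_i' x {\bf t}_i''$ and must show this conjugation lands back in $\dUi$ (not merely in the completion $\widehat{\U}^\imath$). Uniqueness is then immediate from invertibility of ${\bf t}_i'$. To show $\TTd'_{i,-1}(x)\in\dUi$, I would verify it on the algebra generators $B_j\one_{\ov\lambda}$ and idempotents $\one_{\ov\lambda}$: the action on idempotents is governed by $\bs_i$ via \cref{lem:iweight} (giving $\TTd'_{i,-1}(\one_{\ov\lambda})=\one_{\ov{\bs_i\lambda}}$), and the action on $B_j\one_{\ov\lambda}$ is computed by the explicit rank-one and rank-two formulas already established, namely \cref{thm:split-T1DP,thm:diag-T1DP,thm:qsplit-T1DP} for the idivided-power images, suitably rescaled. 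That these images are honest elements of $\dUi$ (finite sums of idivided powers times idempotents) is exactly what those theorems provide. The automorphism property $\TTd'_{i,-1}(xy)=\TTd'_{i,-1}(x)\TTd'_{i,-1}(y)$ is then automatic since conjugation is an algebra homomorphism, and $\TTd'_{i,-1}\TTd''_{i,+1}=\mathrm{Id}$ follows from ${\bf t}_i'{\bf t}_i''=1$.

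Finally, part (4), the integrality, is the assertion I would derive last and the one I expect to be the genuine crux. The strategy is to deduce it from the module-level integrality in \cref{int-mod}: since $\TTd'_{i,-1}$ preserves the $\A$-form $M_\A$ of every integrable $\Ui$-module $M$ that carries one, and since $\dUiA$ can be characterized via its action on the $\A$-forms $\dot{\U}_\A$-modules (through the defining relation $\dUiA=\{u\in\dUi \mid u\dot{\U}_\A \subseteq \dot{\U}_\A\}$ and the fact that icanonical basis gives an $\A$-basis of $\dUiA$), one transfers integrality from modules to the algebra. The delicate point is that $\dUi$ itself is not an integrable module, so one cannot apply \cref{int-mod} directly; instead I would use that $\dUiA$ is cut out by its action on sufficiently many integrable $\U$-modules with $\A$-forms (the highest-weight modules $L(\lambda)_\A$, whose restrictions lie in $\mathcal C$ with $\A$-forms by \cref{lem:integrable} and \cite[Lemma 3.20]{BW18b}). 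Given $x\in\dUiA$, for each such module $M$ the operator $\TTd'_{i,-1}(x)$ acts as $\TTd'_{i,-1}\circ x\circ\TTd''_{i,+1}$, a composite of three $\A$-form-preserving maps by \cref{int-mod} and part (3), hence preserves $M_\A$; since this holds for all $M$ in a faithful enough family, $\TTd'_{i,-1}(x)\in\dUiA$. The hard part will be ensuring the family of $\A$-forms is \emph{separating} for $\dUiA$ --- i.e.\ that an element of $\dUi$ preserving every $L(\lambda)_\A$ actually lies in $\dUiA$ --- which is where the structure theory of \cite{BW18b, BW21} (icanonical basis as an $\A$-basis, and the bimodule compatibility $\dUiA\dot{\U}_\A\subseteq\dot{\U}_\A$) must be invoked carefully rather than taken for granted.
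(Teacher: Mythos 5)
Your architecture for parts (1)--(3) matches the paper's: existence/uniqueness of $x'$, $x''$ via invertibility of ${\bf t}_i'$, ${\bf t}_i''$ in $\widehat{\U}^\imath$ and verification on the generators $\one_{\ov\lambda}$, $B_j\one_{\ov\lambda}$ (the paper checks the intertwining identity \eqref{eq:x'x''} using \eqref{eq:fsplit}, \eqref{eq:Tzm}, \eqref{eq:Tgnu} and their $''$-counterparts, which is the $n=1$ content of the divided-power theorems you cite); the automorphism property and $\TTd'_{i,-1}\TTd''_{i,+1}=\mathrm{Id}$ from \cref{lem:inverse}; and part (3) by reduction to generators plus the case-by-case bookkeeping of the $(-q_i)^{1/2}$-powers against the iweight shift $\ov\lambda\mapsto\ov{\bs_i\lambda}$ from \cref{lem:iweight}. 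This is essentially the paper's proof, modulo the order in which the parts are treated.

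Part (4), however, has a genuine gap, and you have located it yourself without closing it. Your separating family is wrong: you propose to detect membership in $\dUiA$ by the action on the $\A$-forms of the highest-weight modules $L(\lambda)$. But an element of $\dUi$ that preserves every $L(\lambda)_\A$ need not lie in $\dUiA$: the lattice $\dUiA$ is defined through $\dot\U_\A$, and $\dot\U_\A\one_\lambda$ is detected (by Lusztig's construction) on the vectors $\xi_{-\mu}\otimes\eta_\nu$ in the \emph{tensor products} ${}^\omega L(\mu)\otimes L(\nu)$, $\nu-\mu=\lambda$. In finite type such a tensor product decomposes into simples, but the decomposition is not defined over $\A$, so preserving each abstract $L(\kappa)_\A$ does not imply preserving the lattice ${}^\omega L(\mu)_\A\otimes_\A L(\nu)_\A$; in Kac--Moody type the tensor product is not even a sum of highest-weight modules. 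So "invoke the structure theory carefully" is exactly the missing step, not a routine citation. The paper's resolution is concrete: by \cite[Lemma 3.20]{BW18b} it suffices to show $\TTd'_{i,-1}(x)\one_\lambda\in\dot\U_\A$ for each $\lambda\in X$; then one runs your composite argument $\TTd'_{i,-1}(x)v=\TTd'_{i,-1}\bigl(x\,\TTd''_{i,+1}(v)\bigr)$ (which indeed uses part (3), \cref{lem:inverse}, and \cref{int-mod}) on the specific module $M={}^\omega L(\mu)\otimes L(\nu)$ with $v=\xi_{-\mu}\otimes\eta_\nu$ and $\mu,\nu\gg0$, where landing in ${}^\omega L(\mu)_\A\otimes_\A L(\nu)_\A$ is precisely the criterion for membership in $\dot\U_\A\one_\lambda$. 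Replace your family of highest-weight modules by these tensor products (with the reduction through $\dot\U_\A$), and your outline becomes a complete proof.
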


While the formulation of \cref{thm:integralTi} is uniform for $i\in \Ifin$ in all 3 types, the explicit formulas can only be presented case-by-case. 
The automorphisms $\TTd'_{i,-1} : \dUi \rightarrow \dUi\; (x\mapsto x')$ and $\TTd''_{i,+1}: \dUi \rightarrow \dUi \; (x\mapsto x'')$ are given by 
\begin{align}
\label{dotTi:idemp}
\TTd'_{i,-1} (\one_{\ov{\lambda}})
& =\one_{\ov{\bs_i\lambda}},
\qquad
\TTd''_{i,+1} (\one_{\ov{\lambda}}) 
=\one_{\ov{\bs_i\lambda}},
\end{align}
and
$\boxed{\text{(i) } c_{i,\tau i}=2}:$ 
\begin{align}
\TTd'_{i,-1}(B_j\one_{\ov{\lambda}}) 
&=
\begin{dcases}
\TT'_{i,-1}(B_j)\one_{\ov{\bs_i\lambda}}, & \text{for } c_{ij}\text{ even},
\\
(-q_i)^{1/2}\TT'_{i,-1}(B_j)\one_{\ov{\bs_i\lambda}}, & \text{for } c_{ij}\text{ odd } \& \  \ov{\la_i}=\ov{0},
\\
(-q_i)^{-1/2}\TT'_{i,-1}(B_j)\one_{\ov{\bs_i\lambda}}, & \text{for } c_{ij}\text{ odd } \& \  \ov{\la_i}=\ov{1}; 
\end{dcases}
\label{dotTi'}
\\
\TTd''_{i,+1} (B_j\one_{\ov{\lambda}}) 
&=
\begin{dcases}
\TT''_{i,+1}(B_j)\one_{\ov{\bs_i\lambda}}, & \text{for } c_{ij}\text{ even},
\\
(-q_i)^{-1/2}\TT''_{i,+1}(B_j)\one_{\ov{\bs_i\lambda}}, & \text{for } c_{ij}\text{ odd } \& \  \ov{\la_i}=\ov{0},
\\
(-q_i)^{1/2}\TT''_{i,+1}(B_j)\one_{\ov{\bs_i\lambda}}, & \text{for } c_{ij}\text{ odd } \& \ \ov{\la_i}=\ov{1}.
\end{dcases}
\label{dotTi''}
\end{align}

$\boxed{\text{(ii) } c_{i,\tau i}=0}:$  
\begin{align}
\TTd'_{i,-1} (B_j \one_{\ov{\lambda}}) = (-1)^{(c_{ij}-c_{\tau i,j})/2} q_i^{(c_{\tau i, j}-c_{ij})/2}\TT'_{i,-1} (B_j) \one_{\ov{\bs_i\lambda}},
\\
\TTd''_{i,+1} (B_j \one_{\ov{\lambda}}) = (-1)^{(c_{ij}-c_{\tau i,j})/2} q_i^{(c_{\tau i,j}-c_{ij})/2}\TT''_{i,+1} (B_j) \one_{\ov{\bs_i\lambda}}.
\end{align}

$\boxed{\text{(iii) } c_{i,\tau i}=-1}:$
\begin{align}
\TTd'_{i,-1} (B_j \one_{\ov{\lambda}})=\TT'_{i,-1} (B_j) \one_{\ov{\bs_i\lambda}},
\qquad
\TTd''_{i,+1} (B_j \one_{\ov{\lambda}})=\TT''_{i,+1} (B_j) \one_{\ov{\bs_i\lambda}}.
\label{dotTi''iii}
\end{align}

\begin{proof}
Let us focus on Case (i) when $c_{i,\tau i}=2$, i.e., $i=\tau i$.

(1). Since ${\bf t}_i',{\bf t}_i''$ are invertible in $\widehat{\U}^\imath$, it is clear that $x',x'' $ are unique if they exist.

Suppose that $x',y'$ satisfying the first identity in \eqref{eq:x'x''} exist, for $x,y\in \dUi$. Then we have $x'y'{\bf t}_i' ={\bf t}_i'xy$. i.e., $(xy)':=x'y'$ satisfies \eqref{eq:x'x''}. Hence $\TTd'_{i,-1}: \dUi \rightarrow \dUi\; (x\mapsto x')$ is a well-defined endomorphism of $\dUi$, once we show that $x'$ satisfying the first identity in \eqref{eq:x'x''} exists for all generators $x$ of $\dUi$. 

(The reduction from proving $\TTd''_{i,+1}: \dUi \rightarrow \dUi \; (x\mapsto x'')$ is a well-defined endomorphism of $\dUi$ to the existence for $x''$ satisfying the second identity in \eqref{eq:x'x''} to $x$ being generators is entirely similar.)

Recall that $\dUi$ are generated by $\{\one_{\ov{\lambda}}, \  B_j\one_{\ov{\lambda} } \mid \ov{\lambda}\in X_\imath, j\in \I \}$. For  $x$ being any of these generators, $x',x''$ are given by the right-hand sides of the formulas \eqref{dotTi:idemp}--\eqref{dotTi''}. 

Using \eqref{eq:fsplit} and \eqref{eq:fsplit'}, one directly check that these elements satisfy the identities \eqref{eq:x'x''}. This proves (1). 

(2). We already showed above that $\TTd'_{i,-1}$ and $\TTd''_{i,+1}$ are endomorphisms of $\dUi$. Combining the identities \eqref{eq:x'x''} and  Lemma~\ref{lem:inverse}, we readily see that the maps $\TTd'_{i,-1}$ and $\TTd''_{i,+1}$ are mutual inverses; in particular, they are automorphisms of $\dUi$. The formulas \eqref{dotTi:idemp}--\eqref{dotTi''} were already obtained in the process of proving (1) above.

(3). The proof of the two formulas are similar, and let us verify the first one. One clearly reduces the verification to the cases when $x$ is any of the generators of $\dUi$ and for $v\in M_{\ov\la}$, that is, we shall verify the following identities:
\begin{align}
    \TTd'_{i,-1}(\one_{\ov\mu} v) &= \TTd'_{i,-1}(\one_{\ov\mu}) \, \TTd'_{i,-1}(v), \quad \text{for } \ov\mu\in X_\imath,
    \label{Tonev} \\
    \TTd'_{i,-1}(B_j v) &= \TTd'_{i,-1}(B_j \one_{\ov\la}) \, \TTd'_{i,-1}(v).
    \label{TBjv}
\end{align}
If $\ov\mu \neq \ov\la$, the identity \eqref{Tonev} holds trivially as both sides are $0$. If $\ov\mu = \ov\la$, both sides of \eqref{Tonev} are equal to $\TTd'_{i,-1}(v)$ (since this vector has iweight $\ov{\bs_i\la}$ by Lemma~\ref{lem:iweight}).

By Theorem~\ref{thm:split}, we have $\TT'_{i,-1}(B_j v) = \TT'_{i,-1}(B_j) \, \TT'_{i,-1}(v).$ The difference of the three items in this identity from their counterparts in \eqref{TBjv} is some suitable integer powers of $(-q_i)^{1/2}$. Hence, the proof of \eqref{TBjv} reduces to checking the compatibility of these powers of $(-q_i)^{1/2}$. Write $\ov{\la_i}=\ov{\langle h_i,\la\rangle}\in \Z_2$. By \eqref{ti'f} and \eqref{dotTi'}, LHS \eqref{TBjv} contributes to a $(-q_i)^{1/2}$-power $\delta_{\ov{\la_i+c_{ij}},\ov{1}}$ while RHS \eqref{TBjv} contributes to a $(-q_i)^{1/2}$-power $(-1)^{\delta_{\ov{\la_i},\ov{1}}} \delta_{\ov{c_{ij}},\ov{1}} +\delta_{\ov{\la_i},\ov{1}}$. One checks case-by-case that $\delta_{\ov{\la_i+c_{ij}},\ov{1}} =(-1)^{\delta_{\ov{\la_i},\ov{1}}} \delta_{\ov{c_{ij}},\ov{1}} +\delta_{\ov{\la_i},\ov{1}}$. This proves \eqref{TBjv}. 

(4). 
We only prove that  $\TTd'_{i,-1}$ preserves the $\A$-subalgebra $\dUiA$.
Let $x\in \dUiA$ and let $\lambda\in X$.
By \cite[Lemma~ 3.20]{BW18b}, it suffices to show that $\TTd'_{i,-1}(x)\one_\lambda\in \dot{\U}_\A$. Let $M$ be an integrable $\U$-module with $\A$-form $M_\A$. Then $M_\A$ is preserved by $\TTd'_{i,-1},\TTd''_{i,+1}$ thanks to  \cref{lem:integrable} and \cref{int-mod}. By \cref{lem:inverse}, for any $v\in M_\A$, we have
\begin{align}\label{Tx-mod}
\TTd'_{i,-1}(x)(v)=\TTd'_{i,-1}\big(x\TTd''_{i,+1}(v)\big).
\end{align}
By \cref{int-mod}, both $\TTd'_{i,-1},\TTd''_{i,+1}$ preserve $M_\A$. Hence, since $x M_\A \subset M_\A$, the right-hand side of \eqref{Tx-mod} lies in $M_\A$ and so  $\TTd'_{i,-1}(x)(v)\in M_\A$. Now take $M={}^\omega L(\mu) \otimes L(\nu)$ with $\nu-\mu=\la$ and $v=\xi_{-\mu} \otimes \eta_\nu$; here ${}^\omega L(\mu)$ denotes the lowest weight $\U$-module with lowest weight vector $\xi_{-\mu}$. Since $\TTd'_{i,-1}(x)(\xi_{-\mu} \otimes \eta_\nu)\in {}^\omega L(\mu)_\A \otimes_\A L(\nu)_\A$ with $\nu-\mu=\la$ for $\mu,\nu\gg 0$,  we conclude that $\TTd'_{i,-1}(x)\one_\lambda\in \dot{\U}_\A$.

Therefore, we have completed the proof of the theorem in Case (i). 

The proofs of (1)--(4) for Case (ii)--(iii) are entirely similar. In place of \eqref{eq:fsplit} and \eqref{eq:fsplit'} in Case (i), we shall use \eqref{eq:Tzm} and \eqref{eq:Tzm'} in Case (ii) and use \eqref{eq:Tgnu} and \eqref{eq:Thnu} in Case (iii). We skip the details. 
\end{proof}

Recall the idivided powers $X_{j,n,\ov{t}}$ from \eqref{Xjn}, and closed formulas for $\TTd_{i,-1} (X_{j,n,\ov{t}})$ in $\Ui$ are given in \eqref{eq:splitT1}, \eqref{TiBjn:diag}, \eqref{eq:qs-Tdiv1}.  

\begin{proposition}
\label{prop:dotTiDP}
For any $n\ge 0$, we have

(i) $\boxed{c_{i,\tau i}=2}:$ 
\begin{align}
\TTd'_{i,-1}(X_{j,n,\ov{\lambda}_j}\one_{\ov{\lambda}}) 
&=
\begin{dcases}
\TT'_{i,-1}(X_{j,n,\ov{\lambda}_j})\one_{\ov{\bs_i\lambda}}, & \text{ for } nc_{ij} \text{ even},
\\
(-q_i)^{1/2}\TT'_{i,-1}(X_{j,n,\ov{\lambda}_j})\one_{\ov{\bs_i\lambda}}, & \text{ for } nc_{ij} \text{ odd } \& \  \ov{\la_i}=\ov{0},
\\
(-q_i)^{-1/2}\TT'_{i,-1}(X_{j,n,\ov{\lambda}_j})\one_{\ov{\bs_i\lambda}}, & \text{ for } nc_{ij}\text{ odd } \& \  \ov{\la_i}=\ov{1}; 
\end{dcases}
\label{dotTi'n}
\end{align}

(ii) $\boxed{c_{i,\tau i}=0}:$  
\begin{align}
\TTd'_{i,-1} (X_{j,n,\ov{\lambda}_j} \one_{\ov{\lambda}}) = (-1)^{(nc_{ij}-nc_{\tau i,j})/2} q_i^{(nc_{\tau i, j}-nc_{ij})/2}\TT'_{i,-1} (X_{j,n,\ov{\lambda}_j}) \one_{\ov{\bs_i\lambda}},
\end{align}

(iii) $\boxed{c_{i,\tau i}=-1}:$
\begin{align}
\TTd'_{i,-1} (X_{j,n,\ov{\lambda}_j} \one_{\ov{\lambda}})=\TT'_{i,-1} (X_{j,n,\ov{\lambda}_j}) \one_{\ov{\bs_i\lambda}}.
\label{dotTi'iiin}
\end{align}
\end{proposition}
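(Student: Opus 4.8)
The plan is to deduce all three formulas from the corresponding non-modified identities $\TT'_{i,-1}(X_{j,n,\ov{t}})=b_{\cdots}$ of \cref{thm:split-T1DP,thm:diag-T1DP,thm:qsplit-T1DP}, by carefully tracking the iweight-dependent scalars that relate $\TTd'_{i,-1}$ to $\TT'_{i,-1}$ on integrable modules. First I would record, directly from the definitions \eqref{ti'f}, \eqref{ti:diag}, \eqref{eq:ttgh} together with the module formulas \eqref{ibraid:diag} and the definition of $g_\nu$, that for every $v\in M_{\ov{\lambda}}$ one has $\TTd'_{i,-1}(v)=\kappa_{\ov{\lambda}}\,\TT'_{i,-1}(v)$ for an explicit scalar $\kappa_{\ov{\lambda}}$ depending only on the iweight: namely $\kappa_{\ov{\lambda}}=1$ in case (iii), $\kappa_{\ov{\lambda}}=(-q_i)^{\delta_{\ov{\la_i},\ov{1}}/2}$ in case (i), and $\kappa_{\ov{\lambda}}=(-1)^{-\la_{i,\tau}/2}q_i^{\la_{i,\tau}/2}$ in case (ii). Here $\TT'_{i,-1}$ is taken at the parameter fixed in each case of \S\ref{subsec:split_mod}, which is the same parameter entering ${\bf t}_i'$.

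Next I would run the analogue of the proof of \cref{thm:split-T1DP} at the modified level. Fix an integrable $M$ and $v\in M_{\ov{\lambda}}$. By \cref{thm:integralTi}(3) and $\one_{\ov{\lambda}}v=v$,
\[
\TTd'_{i,-1}(X_{j,n,\ov{\lambda}_j}\one_{\ov{\lambda}})\,\TTd'_{i,-1}(v)=\TTd'_{i,-1}(X_{j,n,\ov{\lambda}_j}v).
\]
Since $X_{j,n,\ov{\lambda}_j}v\in M_{\ov{\lambda-n\alpha_j}}$, the right-hand side equals $\kappa_{\ov{\lambda-n\alpha_j}}\TT'_{i,-1}(X_{j,n,\ov{\lambda}_j}v)=\kappa_{\ov{\lambda-n\alpha_j}}\TT'_{i,-1}(X_{j,n,\ov{\lambda}_j})\TT'_{i,-1}(v)$, using the scalar relation from Step~1 and the module compatibility \cref{thm:split} (resp.\ \cref{thm:diag}, \cref{thm:qs-split}). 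Substituting $\TTd'_{i,-1}(v)=\kappa_{\ov{\lambda}}\TT'_{i,-1}(v)$ on the left and using that $\TT'_{i,-1}$ maps $M_{\ov{\lambda}}$ bijectively onto $M_{\ov{\bs_i\lambda}}$ (invertibility, \cref{cor:mod-inv,cor:qsmod-inv,cor:qssmod-inv}), the common factor $\TT'_{i,-1}(v)$ cancels, yielding, as operators on every integrable module,
\[
\TTd'_{i,-1}(X_{j,n,\ov{\lambda}_j}\one_{\ov{\lambda}})=\frac{\kappa_{\ov{\lambda-n\alpha_j}}}{\kappa_{\ov{\lambda}}}\,\TT'_{i,-1}(X_{j,n,\ov{\lambda}_j})\,\one_{\ov{\bs_i\lambda}}.
\]
As both sides are supported on $M_{\ov{\bs_i\lambda}}$ and $\dUi$ acts faithfully on the direct sum of all integrable modules (as in the remark following \cref{lem:integrable}), this identity holds in $\dUi$.

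Finally I would evaluate the ratio $\kappa_{\ov{\lambda-n\alpha_j}}/\kappa_{\ov{\lambda}}$ using $\langle h_i,\lambda-n\alpha_j\rangle=\la_i-nc_{ij}$ and $\langle h_i-h_{\tau i},\ov{\lambda-n\alpha_j}\rangle=\la_{i,\tau}-nc_{ij}+nc_{\tau i,j}$. In case (iii) the ratio is $1$; in case (ii) it is $(-1)^{(nc_{ij}-nc_{\tau i,j})/2}q_i^{(nc_{\tau i,j}-nc_{ij})/2}$; and in case (i) a parity case-analysis on $nc_{ij}$ produces $1$, $(-q_i)^{1/2}$, $(-q_i)^{-1/2}$ exactly as in \eqref{dotTi'n}. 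Inserting the explicit values of $\TT'_{i,-1}(X_{j,n,\ov{\lambda}_j})$ from \eqref{eq:splitT1}, \eqref{TiBjn:diag}, \eqref{eq:qs-Tdiv1} (transported from $\bvs_\dm$ to the parameter of \S\ref{subsec:split_mod} via $\phi$ and \eqref{eq:phiTT}, cf.\ \cref{lem:div-par}) then gives the three stated formulas. The main obstacle is the bookkeeping in the split case: verifying that the parity-dependent half-integer powers $(-q_i)^{\pm 1/2}$ in $\kappa$ combine correctly under the parity case-analysis to reproduce \eqref{dotTi'n}; the diagonal case carries a single uniform scalar and the quasi-split case none, so both are routine by comparison.
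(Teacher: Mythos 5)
Your proposal is correct, but it follows a genuinely different route from the paper. The paper's proof stays entirely inside $\dUi$: for $\tau j\neq j$ it factors $X_{j,n,\ov{\lambda}_j}\one_{\ov{\lambda}}=\frac{1}{[n]_j!}B_j\one_{\ov{\lambda-(n-1)\alpha_j}}\cdots B_j\one_{\ov{\lambda}}$, applies the already-established generator formulas \eqref{dotTi'}--\eqref{dotTi''iii} to each factor using that $\TTd'_{i,-1}$ is an algebra automorphism (\cref{thm:integralTi}(2)), and multiplies out the parity-dependent scalars via $\langle h_i,\lambda-k\alpha_j\rangle=\lambda_i-kc_{ij}$; for $\tau j=j$ it expands the idivided power as a linear combination of ordinary divided powers $\dv{j}{n-2t}$ and observes the resulting scalar is independent of $t$. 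You instead work at the module level, mirroring the proof of \cref{thm:split-T1DP}: you extract the iweight-dependent scalar $\kappa_{\ov{\lambda}}$ relating $\TTd'_{i,-1}$ to $\TT'_{i,-1}$ on integrable modules, run the cancellation argument through \cref{thm:integralTi}(3), the non-modified compatibility theorems, invertibility, and faithfulness of the $\dUi$-action on integrable modules, and then compute the ratio $\kappa_{\ov{\lambda-n\alpha_j}}/\kappa_{\ov{\lambda}}$ (your ratio computations agree with the paper's scalars in all three cases). What your approach buys is uniformity: no case split between $\tau j=j$ and $\tau j\neq j$, and no need for the expansion into ordinary divided powers. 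What it costs is the reliance on the faithfulness principle for $\dUi$ (which the paper only states explicitly for $\Ui$ after \cref{lem:integrable}, though it does invoke the $\dUi$-version in the proof of \cref{thm:TTd_braid_split}, so you are on the same footing) and on the invertibility corollaries, whereas the paper's argument is self-contained algebra in $\dUi$ once \cref{thm:integralTi} is in hand.
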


\begin{proof}
The arguments for all 3 cases are similar, and we only provide details in verifying \eqref{dotTi'n}. If $\tau j\neq j$, then 
\[
X_{j,n,\ov{\lambda}_j}\one_{\lambda}=\dv{j}{n}\one_{\lambda}=\frac{1}{[n]_j!} B_j \one_{\lambda-(n-1)\alpha_j}\cdots B_j \one_{\lambda-\alpha_j}B_j \one_{\lambda}.
\]
Note that $\langle h_i, \lambda-k\alpha_j \rangle=\lambda_i-k c_{ij}$. Hence, \eqref{dotTi'n} can be directly derived from \eqref{dotTi'}.

If $\tau j=j$, then $X_{j,n,\ov{\lambda}_j}$ is a linear combination of $\dv{j}{n-2t}$ for $0\le t\le \lfloor n/2\rfloor$. Let us assume $\ov{\lambda}_j=\ev$ (the other case for $\ov{\lambda}_j=\odd$ is entirely similar). In this case, using \eqref{dotTi'}, we have for any $0\le t\le \lfloor n/2\rfloor$
\begin{align}\label{eq:TTdBjn}
\TTd'_{i,-1}(\dv{j}{n-2t}\one_{\ov{\lambda}}) =
\begin{dcases}
\TT'_{i,-1}(\dv{j}{n-2t})\one_{\ov{\bs_i\lambda}}, & \text{for } nc_{ij} \text{ even},
\\
(-q_i)^{1/2}\TT'_{i,-1}(\dv{j}{n-2t})\one_{\ov{\bs_i\lambda}}, & \text{for } nc_{ij}\text{ odd}.
\end{dcases}
\end{align} 
In particular, the scalar multiple between $\TTd'_{i,-1}(\dv{j}{n-2t} \one_{\ov{\lambda}})$ and $\TT'_{i,-1}(\dv{j}{n-2t} )\one_{\ov{\bs_i\lambda}}$ does not depend on $t\ge 0$. The formula \eqref{eq:TTdBjn} together with this observation implies \eqref{dotTi'n}.
\end{proof}

\begin{remark}
It is not manifestly clear that the formulas in \cref{prop:dotTiDP} are integral, i.e., $\TT'_{i,-1}(X_{j,n,\ov{\lambda}_j}\one_{\lambda})\in \dUiA$, but this indeed holds by \cref{thm:integralTi}(4). Closed formulas for $\TT''_{i,+1}(X_{j,n,\ov{\lambda}_j}\one_{\lambda})$ can be obtained similarly.
\end{remark}
\subsection{Relative braid group relations}

Let $i, j\in \fwItau$ (see \eqref{def:Ifinite}) be such that $j\neq i,\tau i$. We use $m_{ij}$ to denote the order of $\bs_i \bs_j$ in $W^\circ$, where $m_{ij}\in \{2,3,4,6,\infty\}$. Recall from \eqref{eq:ovc} the Cartan integers $\oc_{ij}$ of the relative root system. Then $m_{ij}<\infty$ if and only if $0\leq \oc_{ij} \oc_{ji}\leq 3$, and
\begin{align}
\label{eq:mij}
    m_{ij} =
    \begin{dcases}
        2, & \text{ if } \oc_{ij} \oc_{ji} =0
        \\
        3, & \text{ if } \oc_{ij} \oc_{ji} =1
        \\
        4, & \text{ if } \oc_{ij} \oc_{ji} =2
        \\
        6, & \text{ if } \oc_{ij} \oc_{ji} =3.
    \end{dcases}
\end{align}

For $i\neq j\in \wI$ such that $i=\tau i$ and $j=\tau j$, we have $\oc_{ij} =c_{ij}$. We first verify the braid relations in the most involved (split) case separately.

\begin{theorem}
\label{thm:TTd_braid_split}
    Let $i\neq j\in \wI$ such that $i=\tau i$, $j=\tau j$ and $m_{ij} <\infty$. When acting on $\dUi$ or on any integrable $\Ui$-module $M$ the following braid relations hold:
    \begin{align*}
      \small  \underbrace{\TTd'_{i,-1} \TTd'_{j,-1} \TTd'_{i,-1} \ldots}_{m_{ij}} 
        =
        \underbrace{\TTd'_{j,-1} \TTd'_{i,-1} \TTd'_{j,-1} \ldots}_{m_{ij}} \ ,
        \qquad
        \underbrace{\TTd''_{i,+1} \TTd''_{j,+1} \TTd''_{i,+1} \ldots}_{m_{ij}} 
        =
        \underbrace{\TTd''_{j,+1} \TTd''_{i,+1} \TTd''_{j,+1} \ldots}_{m_{ij}} \ .
    \end{align*}
\end{theorem}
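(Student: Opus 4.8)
The plan is to prove the relation first for $\TTd'_{i,-1}$ and then deduce the $\TTd''_{i,+1}$ version from it. Since we are in the split type ($i=\tau i$, $j=\tau j$, so $\bs_i=s_i$, $\bs_j=s_j$ and $\oc_{ij}=c_{ij}$), the two $m_{ij}$-fold composites are given by fixed elements ${\bf p}=\underbrace{{\bf t}_i'{\bf t}_j'\cdots}_{m_{ij}}$ and ${\bf q}=\underbrace{{\bf t}_j'{\bf t}_i'\cdots}_{m_{ij}}$ of $\widehat{\U}_\A^\imath$. By the algebra embedding $\widehat{\U}_\A^\imath\hookrightarrow\widehat{\U}_\A$ recorded in \S\ref{subsec:split_dUi} and the faithfulness of $\widehat{\U}$ on integrable $\U$-modules, it suffices to prove ${\bf p}={\bf q}$ as operators on every integrable $\U$-module $M$ with weights bounded above. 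Once ${\bf p}={\bf q}$ is known, the equality of the corresponding automorphisms of $\dUi$, and of the operators on an arbitrary integrable $\Ui$-module, follow from \cref{thm:integralTi}(3) together with the faithfulness of the family of integrable $\Ui$-modules (the remark after \cref{lem:integrable}). Finally the $\TTd''_{i,+1}$ statement is obtained by conjugating with $\psi^\imath$: in the split type ${\bf t}_i'=\psi^\imath({\bf t}_i'')$ (the Remark preceding \cref{lem:inverse}) gives $\TTd''_{i,+1}=\psi^\imath\,\TTd'_{i,-1}\,\psi^\imath$, so applying the algebra automorphism $\psi^\imath$ to the $\TTd'$-relation yields the $\TTd''$-relation.

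On such a bounded $\U$-module I would separate the ``operator part'' from the ``scalar part''. Combining \eqref{ti'f} with \cref{thm:res-split}, on the iweight space $M_{\ov\lambda}$ the element ${\bf t}_i'$ acts as $\mu_i(\ov\lambda)\,\fX_{i}\,T'_{s_i,\bvs_\star,-1}$, where $\mu_i(\ov\lambda)\in\{1,(-q_i)^{1/2}\}$ is the explicit rescaling factor of \eqref{ti'f}, depending only on the parity of $\lambda_i=\langle h_i,\lambda\rangle$. Iterating the characterization \eqref{eq:compTT} of \cref{thm:braid-iQG}(a) along a word builds up the quasi $K$-matrix of the rank two split subdiagram $(\{i,j\},\tau|)$, which is word-independent by its uniqueness in \cref{prop:qK}; hence, after extracting the scalars, both ${\bf p}$ and ${\bf q}$ act as the same operator $\fX_{\{i,j\}}\,T'_{\bs_i\bs_j\cdots,\bvs_\star,-1}$ and $\fX_{\{i,j\}}\,T'_{\bs_j\bs_i\cdots,\bvs_\star,-1}$ respectively. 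The two Lusztig composites coincide by the braid relations of \cref{prop:braid1} (valid with the same $m_{ij}$ since $\bs_i=s_i$, $\bs_j=s_j$), so the entire identity ${\bf p}={\bf q}$ reduces to a purely scalar statement: for each $\ov\lambda$, the product of the factors $\mu$ accumulated along the chain of iweights $\ov\lambda\to\ov{\bs_?\lambda}\to\cdots$ dictated by the word $\bs_i\bs_j\cdots$ must equal the corresponding product for $\bs_j\bs_i\cdots$. The two chains share their endpoint because $\bs_i\bs_j\cdots=\bs_j\bs_i\cdots$ in $W^\circ$.

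The main obstacle I expect is exactly this scalar compatibility, and it is why the split type is the most delicate case: the factors $\mu_i(\ov\lambda)$ are half-integer powers of $-q_i$ governed by the parities $\ov{\langle h_i,\lambda\rangle}$ and $\ov{\langle h_j,\lambda\rangle}$, whereas the diagonal and quasi-split rescalings of \eqref{ti:diag} and \eqref{eq:ttgh} involve only integral $q_i$-powers. I would handle it by recording, via \eqref{rilambda_i}, how each relevant parity transforms under $s_i$ and $s_j$ as one moves along a word, thereby reducing the claim to a finite case check according to $m_{ij}\in\{2,3,4,6\}$ and the two starting parities, and then verifying that the total exponent of $(-q_i)^{1/2}$ along the two reduced words agrees on every iweight space. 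This is precisely the ``additional checking on the compatibility of scalars arising from the rescaling'' flagged in the introduction; the operator-level content is inherited from the already-established relations in \cref{thm:braid-iQG} and from Lusztig's braid relations, so the genuinely new work is this parity bookkeeping.
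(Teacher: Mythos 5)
Your overall strategy coincides with the paper's: reduce the statement on $\dUi$ and on arbitrary integrable $\Ui$-modules to an identity between the two $m_{ij}$-fold products of ${\bf t}_i',{\bf t}_j'$ inside $\widehat{\U}{}^\imath$, verify that identity on integrable $\U$-modules by comparing $\TTd'_{k,-1}$ with $\TT'_{k,-1}$ through the explicit scalars $(-q_k)^{1/2\cdot\delta_{\ov{\la_k},\ov{1}}}$ of \eqref{ti'f}, and finish with parity bookkeeping along the two reduced words — exactly the paper's case check via \eqref{pairingG2}. Your $\psi^\imath$-argument for the $\TTd''_{i,+1}$-relation is actually cleaner than the paper's ``the proofs are similar'': it is valid because $\psi^\imath$ extends to an anti-linear algebra automorphism of $\widehat{\U}{}^\imath$ fixing the icanonical basis and $\psi^\imath({\bf t}_i')={\bf t}_i''$ in split type, so applying $\psi^\imath$ to the identity in $\widehat{\U}{}^\imath$ gives the second relation.

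There is, however, a genuine gap at the operator level. Where the paper simply quotes \cite[Theorem 10.6]{WZ23} for the braid relation \eqref{TTG2} of the non-dotted operators, you attempt to re-derive it: after factoring ${\bf t}_k'$ on each iweight space as $\mu_k(\ov{\la})\,\fX_{k}\,T'_{s_k,\bvs_\star,-1}$ (via \cref{thm:res-split}), you assert that ``iterating the characterization \eqref{eq:compTT} builds up the quasi $K$-matrix of the rank two split subdiagram, which is word-independent by its uniqueness in \cref{prop:qK}.'' Uniqueness does not give this. What you actually need is that the alternating product $\fX_{i}\cdot T'_{\bs_i,\bvs}(\fX_{j})\cdot T'_{\bs_i}T'_{\bs_j}(\fX_{i})\cdots$ \emph{satisfies the defining intertwining relations} of \cref{prop:qK} for the subdiagram $\{i,j\}$; only then does uniqueness identify it with $\fX_{\{i,j\}}$ independently of the word. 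That verification is precisely the factorization theorem for quasi $K$-matrices (\cite{DK19}, equivalently \cite[Theorem 10.5]{WZ23}) and is the hard content behind \cite[Theorem 10.6]{WZ23}; your sketch assumes it rather than proves it. The gap is repairable by citing these results — which collapses your argument back onto the paper's — but as written this step fails.

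A secondary mismatch: you verify ${\bf p}={\bf q}$ only on integrable $\U$-modules with weights bounded above, yet invoke faithfulness of $\widehat{\U}$ on \emph{all} integrable $\U$-modules; what your reduction actually requires is faithfulness on the smaller bounded-above class, which you neither state nor justify. The clean repair is to observe that since $m_{ij}<\infty$ and $i=\tau i$, $j=\tau j$, the indices $i,j$ span a finite-type rank-two subdiagram, and every integrable module restricted to the corresponding subalgebra decomposes into finite-dimensional submodules, so the bounded-above hypothesis costs nothing after this restriction; alternatively, argue on all integrable $\U$-modules as the paper does.
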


\begin{proof}
    The proofs for the two braid relations above are similar, and we will prove only the first one. 

We start with showing that the first braid group relation when acting on all integrable $\Ui$-modules imply the first braid group relations when acting on $\dUi$. Indeed, for any $v\in M$ and $x\in \dUi$, by Theorem~\ref{thm:integralTi}(3), we have 
    \begin{align*}
       \TTd'_{i,-1} \TTd'_{j,-1} \TTd'_{i,-1} \ldots (xv)
        &=  \TTd'_{i,-1} \TTd'_{j,-1} \TTd'_{i,-1} \ldots (x) \cdot \TTd'_{i,-1} \TTd'_{j,-1} \TTd'_{i,-1} \ldots (v),
        \\
        \TTd'_{j,-1} \TTd'_{i,-1} \TTd'_{j,-1} \ldots (xv)
        &= \TTd'_{j,-1} \TTd'_{i,-1} \TTd'_{j,-1} \ldots (x) \cdot \TTd'_{j,-1} \TTd'_{i,-1} \TTd'_{j,-1} \ldots (v).
    \end{align*}
Implicitly, we have assumed the number of $\TTd$'s is $m_{ij}$ in each product in the above formulas. By the assumption, the left-hand sides above are equal and the two expressions acting on $v$ on the right-hand sides are also equal. This implies that 
\[
\underbrace{\TTd'_{i,-1} \TTd'_{j,-1} \TTd'_{i,-1} \ldots}_{m_{ij}} (x)=\underbrace{\TTd'_{j,-1} \TTd'_{i,-1} \TTd'_{j,-1} \ldots}_{m_{ij}} (x)
\] when acting on an arbitrary integrable $\Ui$-module, and so this identity must hold in $\dUi$. 

To show that the first braid relation holds for all integrable $\Ui$-modules, it suffices to show that it holds for all integrable $\U$-modules. Indeed, assuming the first braid relation holds for all integrable $\U$-modules, we have that 
\begin{align}\label{eq:titj}
\underbrace{\bt'_{i} \bt'_{j} \bt'_{i} \ldots}_{m_{ij}} 
=\underbrace{\bt'_{j} \bt'_{i} \bt'_{j} \ldots }_{m_{ij}}
\end{align}
when acting on any integrable $\U$-modules. This forces that the identity \eqref{eq:titj} holds in $\widehat{\U}^\imath$ and hence the identity \eqref{eq:titj} holds on any integrable $\Ui$-module, i.e., the first braid relation holds on all integrable $\Ui$-modules by definition \eqref{TTdM}.

It remains to verify the first braid relation holds for any integrable $\U$-module $M$. Let $v\in M_{\ov\la}$. 

Let $i,j \in \wI$ such that $c_{ij}=-1 \ \& \ c_{ji}=-3$. Set $\alpha :=c_{ij} c_{ji}$, and $m_{ij}=6$. The braid group relation
\begin{align}
\label{TTG2}
    \TT'_{i,-1} \TT'_{j,-1}\TT'_{i,-1} \TT'_{j,-1}\TT'_{i,-1} \TT'_{j,-1}(v) = 
    \TT'_{j,-1} \TT'_{i,-1}\TT'_{j,-1} \TT'_{i,-1}\TT'_{j,-1} \TT'_{i,-1}(v)
\end{align}
already holds by \cite[Theorem~10.6]{WZ23}. We claim that the dot version of the braid group relation \eqref{TTG2} holds: 
\begin{align}
\label{TTdG2}
    \TTd'_{i,-1} \TTd'_{j,-1}\TTd'_{i,-1} \TTd'_{j,-1}\TTd'_{i,-1} \TTd'_{j,-1}(v) = 
    \TTd'_{j,-1} \TTd'_{i,-1}\TTd'_{j,-1} \TTd'_{i,-1}\TTd'_{j,-1} \TTd'_{i,-1}(v).
\end{align}
For either $k=i$ or $k=j$, the actions of $\TTd'_{k,-1}$ and $\TT'_{k,-1}$ on a vector of iweight ${\ov\mu}$ differ by a scalar multiple $(-q_k)^{1/2 \cdot \delta_{\ov\mu_k,\ov{1}}}$ due to \eqref{ti'f}. 

The parity sequence $\ov\mu_k$ (where $k$ alternates between $j,i$) for the iweights $\ov\mu$ of the 6 vectors $v, \TTd'_{j,-1}(v), \TTd'_{i,-1} \TTd'_{j,-1}(v)$, $\TTd'_{j,-1}\TTd'_{i,-1} \TTd'_{j,-1}(v)$, $\TTd'_{i,-1} \TTd'_{j,-1}\TTd'_{i,-1} \TTd'_{j,-1}(v)$, and the vector $\TTd'_{j,-1}\TTd'_{i,-1} \TTd'_{j,-1}\TTd'_{i,-1} \TTd'_{j,-1}(v)$ is given by 
\begin{align}
    \label{paritysequence}
\ov{\la_j}, \ \brown{\ov{\la_i+\la_j}}, \ \ov{\la_i}, \ \brown{\ov{\la_j}}, \ \ov{\la_i+\la_j},\text{ and }   \brown{\ov{\la_i}}. 
\end{align}
This follows by a direct computation using the following formulas:
\begin{align}
\label{pairingG2}
\begin{split}
    \langle h_j,\la \rangle &=\la_j, 
    \\
\langle \brown{h_i},s_j\la \rangle &=\langle h_i-c_{ij}h_j,\la \rangle, \\ 
\langle h_j,s_is_j\la \rangle &=\langle (\alpha-1)h_j-c_{ji}h_i,\la \rangle, \\ 
\langle \brown{h_i},s_{jij}\la \rangle &=\langle (\alpha-1)h_i+(2-\alpha)c_{ij}h_j,\la \rangle, \\ 
\langle h_j,s_{ijij}\la \rangle &=\langle h_j-c_{ji}h_i,\la \rangle, \\ 
\langle \brown{h_i},s_{jijij}\la \rangle &= \langle h_i,\la \rangle. 
\end{split}
\end{align}
Here we have use the shorthand $s_{jij} =s_js_is_j$ and so on.

It follows from \eqref{paritysequence} that LHS \eqref{TTdG2} is equal to LHS \eqref{TTG2} times the following scalar
\begin{align}
\label{scaleG2}
(-q_i)^{1/2 \cdot (\delta_{\brown{\ov{\la_i+\la_j}},\ov{1}} +\delta_{\brown{\ov{\la_j}},\ov{1}} +\delta_{\brown{\ov{\la_i}},\ov{1}})}
(-q_j)^{1/2 \cdot (\delta_{\ov{\la_j},\ov{1}} +\delta_{\ov{\la_i},\ov{1}} +\delta_{\ov{\la_i+\la_j},\ov{1}})}. 
\end{align}
Note this scalar is fixed upon interchange of indices $i,j$, and this scalar (with $i,j$ interchanged) is exactly the scalar difference between 
the RHS  of \eqref{TTdG2} and \eqref{TTG2}. This proves \eqref{TTdG2} in case when $c_{ij}=-1\ \& \ c_{ji}=-3$.

The proof of the first braid relation for an integrable $\Ui$-module $M$ is similar and easier for the remaining 3 cases:
\begin{enumerate}
    \item[(0)] $c_{ij}=c_{ji}=0$; 
\item[(1)] $c_{ij}=c_{ji}=-1$;   
\item[(2)] $c_{ij}=-1, c_{ji}=-2$. 
\end{enumerate}
The counterpart of the scalar \eqref{scaleG2} in Case (0) is computed using the first two formulas in \eqref{pairingG2} to be $(-q_i)^{1/2 \cdot \delta_{\ov{\la_i},\ov{1}}} (-q_j)^{1/2 \cdot \delta_{\ov{\la_j},\ov{1}}}$. The counterpart of the scalar \eqref{scaleG2} in Case (1) is computed using the first 3 formulas with $\alpha=1$ in \eqref{pairingG2} to be $(-q_i)^{1/2 \cdot (\delta_{\ov{\la_i+\la_j},\ov{1}} +\delta_{\ov{\la_j},\ov{1}} +\delta_{\ov{\la_i},\ov{1}})}$. The counterpart of the scalar \eqref{scaleG2} in Case (2) is computed using the first four formulas with $\alpha=2$ in \eqref{pairingG2} to be 
$ 
(-q_i)^{1/2 \cdot (\delta_{\ov{\la_i+\la_j},\ov{1}}  +\delta_{\ov{\la_i},\ov{1}}) }
(-q_j)^{1/2 \cdot 2\delta_{\ov{\la_j},\ov{1}}}.
$ 
All these scalar multiples in Case (0)-(2) are invariant under the interchange of $i,j$.

This completes the proof of \cref{thm:TTd_braid_split}. 
\end{proof}


Now we prove the braid group relations in general. 

\begin{theorem}
\label{thm:TTd_braid}
    Let $i,j\in \fwItau$ such that $j\neq i,\tau i$ and $m_{ij}<\infty$. When acting on $\dUi$ or on any integrable $\Ui$-module $M$ the following braid relations hold:
    \begin{align}
      \small  \underbrace{\TTd'_{i,-1} \TTd'_{j,-1} \TTd'_{i,-1} \ldots}_{m_{ij}} 
        &\small =
        \underbrace{\TTd'_{j,-1} \TTd'_{i,-1} \TTd'_{j,-1} \ldots}_{m_{ij}} \ ,
        \label{braidTdot1} \\
    \small    \underbrace{\TTd''_{i,+1} \TTd''_{j,+1} \TTd''_{i,+1} \ldots}_{m_{ij}} 
        &\small =
        \underbrace{\TTd''_{j,+1} \TTd''_{i,+1} \TTd''_{j,+1} \ldots}_{m_{ij}} \ .
        \label{braidTdot2}
    \end{align}
\end{theorem}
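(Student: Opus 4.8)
The plan is to reduce the general braid relations \eqref{braidTdot1}--\eqref{braidTdot2} to the corresponding relations for the non-modified operators $\TT'_{i,-1}$ (respectively $\TT''_{i,+1}$) on integrable $\Ui$-modules, which in turn follow from the braid relations already established in \cite{WZ23} for the relative braid group symmetries on $\Ui$ together with the compatibility \cref{thm:main1Intro} (here \cref{thm:split,thm:diag,thm:qs-split} and their primed counterparts). The first reduction is the formal argument appearing at the beginning of the proof of \cref{thm:TTd_braid_split}: using \cref{thm:integralTi}(3), the relation on $\dUi$ follows once we know the relation holds on every integrable $\Ui$-module, and the relation on integrable $\Ui$-modules follows once it holds on every integrable $\U$-module (since the resulting identity \eqref{eq:titj} among elements $\bt'_i,\bt'_j$ of $\widehat{\U}^\imath$ can be detected on integrable $\U$-modules, these separating $\widehat{\U}^\imath$). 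So I would state this reduction once and then concentrate on an integrable $\U$-module $M$ with a vector $v\in M_{\ov\la}$.

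The core of the argument is a scalar-bookkeeping computation exactly parallel to \cref{thm:TTd_braid_split}, but now using the scalar discrepancies recorded in \cref{prop:dotTiDP} (equivalently in the formulas \eqref{dotTi'}, the diagonal case, and \eqref{dotTi''iii}) rather than only the split-case factor \eqref{ti'f}. For each $k\in\{i,j\}$ and each iweight $\ov\mu$, the operators $\TTd'_{k,-1}$ and $\TT'_{k,-1}$ differ by an explicit scalar depending on the type of $k$ (a power of $(-q_k)^{1/2}$ controlled by a parity in the split case $c_{k,\tau k}=2$; a factor $(-1)^{(c_{kl}-c_{\tau k,l})/2}q_k^{(c_{\tau k,l}-c_{kl})/2}$ in the diagonal case $c_{k,\tau k}=0$; and trivial scalar $1$ in the quasi-split case $c_{k,\tau k}=-1$). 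I would compute, along the alternating word of length $m_{ij}$, the sequence of iweights $\ov\mu$ of the intermediate vectors, precisely as in \eqref{paritysequence}--\eqref{pairingG2} but now tracking the relevant pairings $\langle h_k-h_{\tau k},\ov\mu\rangle$ (diagonal and quasi-split cases) or $\langle h_k,\mu\rangle$ (split case). The braid relation for the $\TT'$-operators, \cref{thm:main5-braid} in its non-dotted form, already gives equality of the two $\TT'$-words. Hence the dotted relation reduces to checking that the total scalar accumulated along the left word equals the total scalar along the right word; by the symmetry of the relative Cartan data the right word's scalar is obtained from the left's by interchanging the roles of $i$ and $j$, so it suffices to verify that this accumulated scalar is \emph{symmetric under $i\leftrightarrow j$}.

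I would organize this verification by the value of $m_{ij}\in\{2,3,4,6\}$ and by the combined types of $i$ and $j$. Because $i,j\in\fwItau$ each fall into one of the three rank-one types, there are finitely many combinations, and in each the accumulated scalar is a monomial in $(-1)$, $q_i$, $q_j$ whose exponents are linear expressions in the pairings $\la_i,\la_j,\la_{i,\tau},\la_{j,\tau}$ and the relative Cartan integers $\oc_{ij},\oc_{ji}$. Using \eqref{eq:mij} relating $m_{ij}$ to $\oc_{ij}\oc_{ji}$, together with the relation \eqref{eq:ovc} between $\oc$ and the ordinary Cartan integers, each case becomes a finite identity of exponents that I would check directly, just as the four cases (0)--(2) and the $G_2$ case were handled in \cref{thm:TTd_braid_split}.

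The main obstacle I anticipate is precisely this scalar bookkeeping in the mixed-type cases (for instance $c_{i,\tau i}=2$ with $c_{j,\tau j}=-1$, or $c_{i,\tau i}=0$ with $c_{j,\tau j}=2$), where the discrepancy factors attached to $i$ and $j$ have genuinely different shapes, so the symmetry under $i\leftrightarrow j$ is not manifest and must be teased out from the explicit iweight sequence. The computation of the intermediate iweights themselves, generalizing \eqref{pairingG2} to the relative reflections $\bs_i,\bs_j$ acting on iweights (using \eqref{rilambda_i}, \eqref{rilambda_ii}, \eqref{rilambda_iii}), is the technically delicate bookkeeping; but once these are tabulated, each of the finitely many cases reduces to a routine verification of a monomial identity, and no new structural input beyond \cref{thm:integralTi}, \cref{prop:dotTiDP}, and the non-dotted braid relations of \cref{thm:main5-braid} is required.
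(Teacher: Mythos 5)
Your overall skeleton --- reduce from $\dUi$ to integrable $\Ui$-modules and then to integrable $\U$-modules via \cref{thm:integralTi}(3) and the fact that an identity among the elements $\bt_i',\bt_j'$ of $\widehat{\U}^\imath$ can be detected on integrable $\U$-modules, then compare the dotted and non-dotted words by tracking the iweight-dependent rescaling scalars and checking that the accumulated scalar is symmetric under $i\leftrightarrow j$ --- is exactly the paper's strategy. However, there is a genuine gap at the foundation, namely your source for the equality of the two non-dotted $\TT'$-words on modules. You propose to deduce it from the braid relations on the algebra $\Ui$ established in \cite{WZ23} ``together with the compatibility'' of \cref{thm:main1Intro}. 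That inference is invalid: compatibility only shows that the two composite operators $\Phi=\TT'_{i,-1}\TT'_{j,-1}\cdots$ and $\Psi=\TT'_{j,-1}\TT'_{i,-1}\cdots$ on $M$ intertwine the \emph{same} automorphism of $\Ui$, hence $\Phi\Psi^{-1}$ is a $\Ui$-module endomorphism of $M$; it may still act by a nontrivial scalar on each irreducible constituent, and pinning those scalars to $1$ is precisely the nontrivial content of the module-level statement. (Your alternative citation, ``\cref{thm:main5-braid} in its non-dotted form,'' is \cref{cor:braid}, which the paper deduces \emph{from} the theorem you are proving, so that reference is circular.) The correct input, and the one the paper uses, is the module-level braid relation \eqref{eq:qsTTbraid} on integrable $\U$-modules, i.e.\ \cite[Theorem 10.6]{WZ23}; with that as input, your scalar bookkeeping does go through.

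A second, smaller divergence: in the case $c_{i,\tau i}=c_{j,\tau j}=0$ (both of diagonal type), the paper does no scalar bookkeeping at all. There $m_{ij}$ can be any of $2,3,4,6$, and the discrepancy factors $(-1)^{\mp\la_{i,\tau}/2}q_i^{\pm\la_{i,\tau}/2}$ would make a word-by-word comparison heavy; instead \cref{lem:diagonal} identifies $\Ui_{i,j}$ with a rank-two quantum group, under which $\TTd'_{i,-1},\TTd'_{j,-1}$ become \emph{exactly} Lusztig's operators of \cite{Lus09} (no scalar discrepancy), so the braid relation is immediate. If you insist on uniform bookkeeping in this case, you must in any event first justify the non-dotted module-level relation for a diagonal-diagonal pair, which again amounts to such an identification; so this case genuinely requires the structural argument rather than exponent checking.
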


\begin{proof}
 The proofs for the two braid relations are similar, and we will prove only the first one \eqref{braidTdot1}. By similar arguments in the proof of Theorem~\ref{thm:TTd_braid_split}, it suffices to show that 
\begin{align}\label{eq:qsTTdbraid}
\underbrace{\TTd'_{i,-1} \TTd'_{j,-1} \TTd'_{i,-1} \ldots}_{m_{ij}}(v) 
        &=
\underbrace{\TTd'_{j,-1} \TTd'_{i,-1} \TTd'_{j,-1} \ldots}_{m_{ij}}(v).
\end{align}
for any integrable $\U$-module $M$ and any iweight vector $v\in M_{\ov{\lambda}}$.

Recall from \cite[Theorem~10.6]{WZ23} the following version of the braid relation:
\begin{align}\label{eq:qsTTbraid}
\underbrace{\TT'_{i,-1} \TT'_{j,-1} \TT'_{i,-1} \ldots}_{m_{ij}}(v) 
        &=
\underbrace{\TT'_{j,-1} \TT'_{i,-1} \TT'_{j,-1} \ldots}_{m_{ij}}(v).
\end{align}
We separate the proof of the identity \eqref{eq:qsTTdbraid} into several cases.
\begin{itemize}
\item[(a)] $c_{i,\tau i}=c_{j,\tau j}=2$. i.e., $\tau i=i,\tau j=j$. This case was already treated in Theorem~\ref{thm:TTd_braid_split}.

\item[(b)] $c_{i,\tau i}=2,c_{j,\tau j}=-1$. Then $\tau i=i$. In this case, by \eqref{eq:ovc} and \eqref{eq:mij}, the only possibility is $m_{ij}=2$. This implies that $c_{ij}=c_{i,\tau j}=0$ and then $B_i$ commutes with $B_j,B_{\tau j},k_j$. By \eqref{ti'} and \eqref{qss-ti'}, we have ${\bf t}_i' {\bf t}_j'v={\bf t}_j'{\bf t}_i'v$ as desired. 

\item[(c)] $c_{i,\tau i}=2,c_{j,\tau j}=0$. In this case, by \eqref{eq:ovc} and \eqref{eq:mij}, the only possibilities are $m_{ij}=2,4$. If $m_{ij}=2$, then $c_{ij}=c_{\tau i,j}=0$ and hence \eqref{eq:qsTTdbraid} can be easily verified. 

Let $m=4$. Then $c_{ij}=c_{\tau i,j}=-1$. The LHS of \eqref{eq:qsTTdbraid} differs from the LHS of \eqref{eq:qsTTbraid} by the scalar
\begin{align}\label{eq:qsdot1}
(-q_i)^{\frac{1}{2}\delta_{\ov{\langle h_i, \bs_j \lambda\rangle},\ov{1}}
+\frac{1}{2}\delta_{\ov{\langle h_i,\bs_{j}\bs_i\bs_j\lambda\rangle},\ov{1}}}
(-q_j^{-1})^{-\frac{1}{2}\langle h_j-h_{\tau j},\lambda+\bs_i\bs_j\lambda\rangle},
\end{align}
while the RHS of \eqref{eq:qsTTdbraid} differs from the RHS of \eqref{eq:qsTTbraid} by the scalar
\begin{align}\label{eq:qsdot2}
(-q_i)^{\frac{1}{2}\delta_{\ov{\langle h_i,  \lambda\rangle},\ov{1}}
+\frac{1}{2}\delta_{\ov{\langle h_i,\bs_j\bs_i\lambda\rangle},\ov{1}}}
(-q_j^{-1})^{-\frac{1}{2}\langle h_j-h_{\tau j},\bs_i\lambda+\bs_i\bs_j\bs_i\lambda\rangle}.
\end{align}
By a direct computation, one can show that $\langle h_i, \lambda\rangle$ and $\langle h_i,\bs_{j}\bs_i\bs_j\lambda\rangle$ have the same parity; $\langle h_i, \bs_{j}\lambda\rangle$ and $\langle h_i, \bs_j\bs_i\lambda\rangle$ have the same parity. Moreover, since $\bs_i(h_j-h_{\tau j})=h_j-h_{\tau j}$, we have
\[
\langle h_j-h_{\tau j},\bs_i\lambda+\bs_i\bs_j\bs_i\lambda\rangle
=
\langle h_j-h_{\tau j}, \lambda+ \bs_j\bs_i\lambda\rangle. 
\]
Using these formulas, we see that the two scalars in \eqref{eq:qsdot1} and \eqref{eq:qsdot2} are equal and hence \eqref{eq:qsTTdbraid} is proved.

\item[(d)] $c_{i,\tau i}=0,c_{j,\tau j}=0$. In this case, \eqref{eq:qsTTdbraid} is proved in Lemma~\ref{lem:diagonal} below.

\item[(e)] $c_{i,\tau i}=0,c_{j,\tau j}=-1$. In this case, by \eqref{eq:ovc} and \eqref{eq:mij}, the only possibilities are $m_{ij}=2,4$. If $m_{ij}=2$, then $c_{ij}=c_{\tau i,j}=0$ and hence \eqref{eq:qsTTdbraid} is easily verified. 

Let $m_{ij}=4$. By \eqref{eq:ttgh}, we have $\TTd'_{j,-1}v=\TT'_{j,-1}v$ for any $v\in M$. Hence, both sides of \eqref{eq:qsTTdbraid} differ from those of \eqref{eq:qsTTbraid} by an integer power of $(-q_i^{-1})^{1/2}$. The power arising from the difference of LHS of these two relations is 
\[
-\langle h_i-h_{\tau i},\ov{\bs_j\lambda}+\ov{\bs_j\bs_i\bs_j\lambda}\rangle,
\]
while the power arising from RHS is  
\[
-\langle h_i-h_{\tau i},\ov{\lambda}+\ov{\bs_j\bs_i\lambda}\rangle.
\]
Since $\ov{\bs_j\lambda}=\ov{\lambda}$ (see \eqref{rilambda_iii} in the proof of Lemma~\ref{lem:iweight}), these two powers are equal and hence the relation \eqref{eq:qsTTdbraid} holds in this case.

\item[(f)] $c_{i,\tau i}=-1,c_{j,\tau j}=-1$. In this case, by \eqref{eq:ttgh}, we have
$\TTd'_{i,-1}v=\TT'_{i,-1}v$ and $\TTd'_{j,-1}v=\TT'_{j,-1}v$ for any $v\in M$. Then the desired statement follows from \cite[Theorem~10.6]{WZ23}.
\end{itemize}
This completes the proof of \cref{thm:TTd_braid} (see \cref{lem:diagonal} below for Case (d)). 
\end{proof}

Below we complete Case (d) in the proof of \cref{thm:TTd_braid}.

\begin{lemma}
\label{lem:diagonal}
Let $i,j\in \fwItau$ such that $j\neq i,\tau i,c_{i,\tau i}=c_{j,\tau j}=0$ and $m_{ij}<\infty$. When acting on any integrable $\U$-module $M$, the braid relations \eqref{braidTdot1}--\eqref{braidTdot2} hold.
\end{lemma}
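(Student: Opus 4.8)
The enclosing proof of \cref{thm:TTd_braid} has already reduced Case (d) to verifying, for every integrable $\U$-module $M$ and every iweight vector $v\in M_{\ov{\lambda}}$, the two braid identities \eqref{braidTdot1}--\eqref{braidTdot2} acting on $v$. My plan is to exhibit the scalars relating $\TTd'_{k,-1},\TTd''_{k,+1}$ to $\TT'_{k,-1},\TT''_{k,+1}$ as a \emph{coboundary} along the $W^\circ$-orbit of $\ov{\lambda}$, so that the statement collapses onto the already known braid relations \eqref{eq:qsTTbraid} for $\TT'$ (and its $+1$ analogue for $\TT''$). First I would record the scalar: for $k\in\{i,j\}$, both of diagonal type, \eqref{ti:diag} together with \eqref{ibraid:diag} gives, for any $v\in M_{\ov{\mu}}$,
\[
\TTd'_{k,-1}(v)=\kappa_k(\ov{\mu})\,\TT'_{k,-1}(v),\qquad \TTd''_{k,+1}(v)=\kappa_k(\ov{\mu})\,\TT''_{k,+1}(v),
\]
with the \emph{same} scalar $\kappa_k(\ov{\mu})=(-q_k^{-1})^{-x_k(\ov{\mu})/2}$, where $x_k(\ov{\mu})=\langle h_k-h_{\tau k},\ov{\mu}\rangle\in\Z$; moreover $\TTd'_{k,-1}(v),\TTd''_{k,+1}(v)\in M_{\ov{\bs_k\mu}}$ by \cref{lem:iweight}.

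The key point is to realize $\kappa_k$ as a coboundary. A direct computation with $\bs_l=s_ls_{\tau l}$, using $c_{\tau k,l}=c_{k,\tau l}$ and $c_{\tau k,\tau l}=c_{kl}$, yields for $k,l\in\{i,j\}$
\[
x_k(\bs_l\ov{\mu})=x_k(\ov{\mu})-\tilde c_{kl}\,x_l(\ov{\mu}),\qquad \tilde c_{kl}:=c_{kl}-c_{k,\tau l},
\]
so that $\tilde c_{kk}=2$ and the pair $(x_i,x_j)$ is stable under $\langle\bs_i,\bs_j\rangle$. I would then seek a function $\Phi$, valued in $\F^\times$ and given by fixed fractional powers of $-q_i^{-1},-q_j^{-1}$ with exponents linear in $x_i,x_j$ (formally $\Phi=\exp(b_ix_i+b_jx_j)$), such that $\Phi(\bs_k\ov{\mu})/\Phi(\ov{\mu})=\kappa_k(\ov{\mu})$ for $k\in\{i,j\}$. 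This amounts to the linear system $2b_i+\tilde c_{ji}b_j=\tfrac12\log(-q_i^{-1})$ and $\tilde c_{ij}b_i+2b_j=\tfrac12\log(-q_j^{-1})$, whose determinant is $4-\tilde c_{ij}\tilde c_{ji}$.

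I would then check this determinant is nonzero. Since every off-diagonal Cartan entry is $\le 0$, one has $|\tilde c_{ij}|=|c_{ij}-c_{i,\tau j}|\le |c_{ij}|+|c_{i,\tau j}|=|\oc_{ij}|$ and likewise $|\tilde c_{ji}|\le|\oc_{ji}|$; as $m_{ij}<\infty$ forces $\oc_{ij}\oc_{ji}\le 3$ by \eqref{eq:mij}, we get $|\tilde c_{ij}\tilde c_{ji}|\le 3$, hence $4-\tilde c_{ij}\tilde c_{ji}\ge 1\ne 0$ and $\Phi$ exists. Applying the length-$m_{ij}$ alternating word of $\TTd'$-operators to $v$ (rightmost first) picks up, at each stage, the factor $\kappa_k(\ov{\,\cdot\,})=\Phi(\ov{\text{next}})/\Phi(\ov{\text{current}})$, so the accumulated scalar telescopes to $\Phi(\ov{w\lambda})/\Phi(\ov{\lambda})$, where $w=\bs_i\bs_j\cdots$ is the corresponding element of $W^\circ$; the opposite word gives $\Phi(\ov{w'\lambda})/\Phi(\ov{\lambda})$ with $w'=\bs_j\bs_i\cdots$. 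Because $m_{ij}$ is the order of $\bs_i\bs_j$, the braid relation $w=w'$ holds in $W^\circ$, whence $\ov{w\lambda}=\ov{w'\lambda}$ and the two scalars coincide. Since the corresponding $\TT'$-words already agree on $v$ by \eqref{eq:qsTTbraid}, relation \eqref{braidTdot1} follows on $M$; as $\TTd''_{k,+1}$ carries the identical scalar $\kappa_k$ and $\TT''$ obeys the $+1$ analogue of \eqref{eq:qsTTbraid}, relation \eqref{braidTdot2} follows verbatim.

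The main obstacle is precisely that the whole argument hinges on $\kappa_k$ being an \emph{exact} coboundary, i.e. on solvability of the $2\times 2$ system; the sole nonformal input is therefore the determinant bound $4-\tilde c_{ij}\tilde c_{ji}\ne 0$ just described. Should one prefer to avoid constructing $\Phi$, an equivalent route is to match the two accumulated scalars directly, by computing the chain of pairings $\langle h_k-h_{\tau k},\bs_w\lambda\rangle$ along each word case-by-case in $m_{ij}\in\{2,3,4,6\}$, exactly in the style of Cases (c) and (e) above; the coboundary formulation merely organizes these computations uniformly.
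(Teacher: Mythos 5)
Your proof is correct, but it takes a genuinely different route from the paper's. The paper disposes of this case structurally: after assuming (as $m_{ij}<\infty$ forces) that $c_{\tau i,j}=c_{i,\tau j}=0$, it identifies the subalgebra $\Ui_{i,j}=\langle B_i,B_{\tau i},k_i,B_j,B_{\tau j},k_j\rangle$ with a rank-two quantum group $\un{\U}$ via $B_i\mapsto F_1$, $B_{\tau i}\mapsto E_1$, $k_i\mapsto K_1$, $B_j\mapsto F_2$, $B_{\tau j}\mapsto E_2$, $k_j\mapsto K_2$; under this isomorphism the operators $\TTd'_{i,-1},\TTd'_{j,-1}$ of \eqref{diagti'} become literally Lusztig's symmetries $T'_{i,-1},T'_{j,-1}$ on integrable $\un{\U}$-modules, so both braid relations \eqref{braidTdot1}--\eqref{braidTdot2} are inherited from \cite[\S 2.1--2.3]{Lus09}, with no reference to the undotted relation \eqref{eq:qsTTbraid}. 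You instead reduce to \eqref{eq:qsTTbraid} by tracking the scalars $\kappa_k(\ov{\mu})=(-q_k^{-1})^{-x_k(\ov{\mu})/2}$ relating $\TTd'_{k,-1}$ (and, with the same scalar, $\TTd''_{k,+1}$) to $\TT'_{k,-1}$ (resp.\ $\TT''_{k,+1}$), which indeed follows from \eqref{ti:diag} and \eqref{ibraid:diag}; your transformation rule $x_k(\bs_l\ov{\mu})=x_k(\ov{\mu})-\tilde c_{kl}x_l(\ov{\mu})$ checks out, and the coboundary $\Phi$ exists because the determinant $4-\tilde c_{ij}\tilde c_{ji}$ is at least $1$, using $|\tilde c_{kl}|\le|\oc_{kl}|$ together with $\oc_{ij}\oc_{ji}\le 3$ from \eqref{eq:ovc} and \eqref{eq:mij}; the telescoping then transfers \eqref{eq:qsTTbraid} (and its $+1$ analogue) to the dotted operators. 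What each approach buys: the paper's argument is conceptually sharper (in diagonal type the dotted operators \emph{are} Lusztig's operators for a diagonally embedded quantum group) and is independent of \cite[Theorem 10.6]{WZ23} for a pair of diagonal-type indices, whereas yours is uniform with the paper's Cases (c) and (e) --- it is exactly their scalar-matching argument, organized so that no case-by-case computation over $m_{ij}\in\{2,3,4,6\}$ is needed --- at the cost of requiring \eqref{eq:qsTTbraid} for diagonal pairs, an input the paper's treatment of this case does not use but which is legitimately available inside the enclosing proof of \cref{thm:TTd_braid}.
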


\begin{proof}
We prove the first relation \eqref{braidTdot1}, omitting a similar proof for the second relation \eqref{braidTdot2}. By \eqref{eq:ovc}, the condition $m_{ij}<\infty$ forces either $c_{ij}=c_{\tau i,\tau j}=0$ or $c_{\tau i,j}=c_{i,\tau j}=0$. Without loss of generality, we assume $c_{\tau i,j}=c_{i,\tau j}=0$.

Let $\Ui_{i,j}$ be the subalgebra of $\Ui$ generated by $B_i,B_{\tau i},k_i,B_j,B_{\tau j},k_j$. Let $(\{i,j\},\cdot)$ be the Cartan datum such that $c_{ij}=2\frac{i\cdot j}{i\cdot i},c_{ji}=2\frac{i\cdot j}{j\cdot j}$. Let $\un{\U}$ be the quantum group  associated with $(\{i,j\},\cdot)$. Then we have an algebra isomorphism $\Ui_{i,j}\overset{\sim}{\longrightarrow} \un{\U}$ given by
\begin{align*}
&B_i\mapsto F_1,\qquad B_{\tau i}\mapsto E_1,\qquad k_i\mapsto K_1,
\\
&B_j\mapsto F_2,\qquad B_{\tau j}\mapsto E_2,\qquad k_j\mapsto K_2.
\end{align*}

Under this isomorphism, an integrable $\Ui$-module $M$ can be viewed as an integrable $\un{\U}$-module; moreover, by \eqref{diagti'}, the actions of $\TTd'_{i,-1} ,\TTd'_{j,-1}$ on $M$ are identified with the actions of Lusztig's symmetries $T'_{i,-1}, T'_{j,-1}$ \cite[\S2.1-2.3]{Lus09}. Since $T'_{i,-1}, T'_{j,-1}$ satisfy the braid relation when acting on $M$, $\TTd'_{i,-1} ,\TTd'_{j,-1}$ also satisfy the braid relation \eqref{braidTdot1}.
\end{proof}

The following relative braid relations when acting on integrable $\U$-modules with weights bounded above were established in \cite{WZ23}.

\begin{corollary}
\label{cor:braid}
    Let $i,j\in \fwItau$ be such that $j\neq i,\tau i$ and $m_{ij}<\infty$. When acting on any integrable $\Ui$-module $M$ the following braid relations hold:
    \begin{align*}
    \small
        \underbrace{\TT'_{i,-1} \TT'_{j,-1} \TT'_{i,-1} \ldots}_{m_{ij}} 
        =
        \underbrace{\TT'_{j,-1} \TT'_{i,-1} \TT'_{j,-1} \ldots}_{m_{ij}} \ ,
        \qquad
        \underbrace{\TT''_{i,+1} \TT''_{j,+1} \TT''_{i,+1} \ldots}_{m_{ij}} 
        =
        \underbrace{\TT''_{j,+1} \TT''_{i,+1} \TT''_{j,+1} \ldots}_{m_{ij}} \ .
    \end{align*}
\end{corollary}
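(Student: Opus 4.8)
\textbf{Proof plan for \cref{cor:braid}.}
The plan is to deduce the braid relations for the non-modified operators $\TT'_{i,-1}$ and $\TT''_{i,+1}$ on integrable $\Ui$-modules directly from the corresponding relations for the rescaled operators $\TTd'_{i,-1}$ and $\TTd''_{i,+1}$ in \cref{thm:TTd_braid}. The key observation is that $\TTd'_{i,-1}$ and $\TT'_{i,-1}$ differ only by scalar factors depending on the iweight of the vector being acted upon; explicitly, for $v\in M_{\ov\mu}$ the two operators agree up to a monomial in $(-q_i)^{1/2}$ (resp.\ $(-q_i^{-1})^{1/2}$) determined by $\ov\mu$ via \eqref{ti'f}, \eqref{ti:diag}, \eqref{eq:ttgh}. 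Since both sides of a putative braid relation of length $m_{ij}$ apply the operators along the two reduced words for the longest element, and these words pass through the same sequence of relative Weyl group elements applied to $\ov\la$ (the iweights of the intermediate vectors are $\ov\la, \ov{\bs_j\la}, \ov{\bs_i\bs_j\la},\dots$ on one side and $\ov\la,\ov{\bs_i\la},\dots$ on the other), the total accumulated scalar on each side is a product of the per-step scalars over the respective iweight sequence.

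First I would fix $i,j\in\fwItau$ with $m_{ij}<\infty$ and an integrable $\Ui$-module $M$ (it suffices, by the reduction already used in the proofs of \cref{thm:TTd_braid_split,thm:TTd_braid}, to take $M$ an integrable $\U$-module, since the identity then propagates to $\widehat{\U}^\imath$ and to all integrable $\Ui$-modules). For a fixed iweight vector $v\in M_{\ov\la}$, I would write out both the LHS and the RHS of the desired braid relation for $\TT'_{i,-1}$ as a product of $m_{ij}$ operators, and compare term-by-term with the established relation \eqref{braidTdot1} for $\TTd'_{i,-1}$. By the defining relations \eqref{TTdM} together with \eqref{ti'f}, \eqref{ti:diag}, \eqref{eq:ttgh}, each $\TTd'$ factor equals the corresponding $\TT'$ factor times an explicit scalar attached to the iweight of the vector it acts on. Thus LHS$(\TTd')$ equals LHS$(\TT')$ times a scalar $S_{\mathrm{L}}(\ov\la)$, and similarly RHS$(\TTd')$ equals RHS$(\TT')$ times $S_{\mathrm{R}}(\ov\la)$.

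The heart of the argument is then to show $S_{\mathrm{L}}(\ov\la)=S_{\mathrm{R}}(\ov\la)$, after which \eqref{braidTdot1} of \cref{thm:TTd_braid} forces the corresponding relation for $\TT'_{i,-1}$. But this scalar identity is \emph{exactly} what was already verified inside the proofs of \cref{thm:TTd_braid_split} (the detailed parity/pairing computations culminating in \eqref{scaleG2} and its Case (0)--(2) analogues) and \cref{thm:TTd_braid} (the scalar comparisons in Cases (b)--(f)); in each instance the per-step scalar product was shown to be invariant under interchanging $i$ and $j$, which is precisely the equality $S_{\mathrm{L}}=S_{\mathrm{R}}$. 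Therefore I would simply invoke those computations: since $\TTd'$ and $\TT'$ differ by equal scalars on the two sides, and the $\TTd'$-relation holds, the $\TT'$-relation holds. The same argument with \eqref{ti:diag} replaced by its $\TT''$-counterpart and $\psi^\imath$ applied where needed yields the relation for $\TT''_{i,+1}$.

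I expect the main (and essentially only) obstacle to be purely bookkeeping: ensuring that the scalar factors relating $\TTd'_{i,-1}$ to $\TT'_{i,-1}$ are correctly tracked through all $m_{ij}$ steps and across all mixed type combinations (split/diagonal/quasi-split) of $i$ and $j$, so that the cancellation $S_{\mathrm{L}}=S_{\mathrm{R}}$ genuinely follows from the iweight-sequence symmetry. Since every such scalar comparison has already been carried out in the preceding two theorems, no new computation is required, and the corollary follows formally.
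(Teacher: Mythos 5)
Your proposal is correct and takes essentially the same route as the paper's own proof: the paper likewise reads off, from the scalar comparisons already carried out in the proofs of \cref{thm:TTd_braid_split,thm:TTd_braid} (and the passage through the completion identity \eqref{eq:titj}), that the braid relation for $\TT'_{i,-1}$, $\TT'_{j,-1}$ on integrable $\Ui$-modules is equivalent to the one for $\TTd'_{i,-1}$, $\TTd'_{j,-1}$, and then concludes by invoking \cref{thm:TTd_braid}. As you anticipated, no new computation is required beyond those scalar comparisons.
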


\begin{proof}
    From the proofs of \cref{thm:TTd_braid_split,thm:TTd_braid} (which goes through the identity \eqref{eq:titj}), we can read off the following equivalence: when acting on integrable $\Ui$-modules the braid relation between $\TT'_{i,-1}$ and $\TT'_{j,-1}$ is equivalent to the braid relation between $\TTd'_{i,-1}$ and $\TTd'_{j,-1}$. The latter holds by \cref{thm:TTd_braid}, and we are done.
\end{proof}
By \cref{thm:TTd_braid,cor:braid}, for any $w\in W^\circ$, we have well-defined relative braid group symmetries $\TT'_{w,-1},\TT''_{w,+1}$ on $M$ as well as $\TTd'_{w,-1},\TTd''_{w,+1}$
on $M$ and $\dUi$.

%
\section{Tables on formulas for relative braid group action} \label{sec:tables}

We list the formulas for $\TTd'_{i,-1}$, $\TTd''_{i,+1}$ when acting on (1) $B_{i}\one_{\ov{\lambda}}, B_{\tau i}\one_{\ov{\lambda}}$, (2) $B_{j}\one_{\ov{\lambda}}$, for $j \neq i, \tau i$ in the 3 tables.

\begin{table}[H]
\caption{Rank one formulas for $\TTd'_{i,-1}$, $\TTd''_{i,+1}$,
\text{ for } $i \in \Ifin$\qquad\qquad ${}$}
     \label{table:rkone-int}
\resizebox{5.5in}{!}{%
\begin{tabular}{| c | c | c |c|c|}
\hline
&
\begin{tikzpicture}[baseline=0]
\node at (0, -0.15) {
\qquad$\TTd'_{i,-1}(B_i\one_{\ov{\lambda}})$\qquad};
\end{tikzpicture}
&
\begin{tikzpicture}[baseline=0]
\node at (0, -0.15) {
\qquad $\TTd'_{i,-1}(B_{\tau i}\one_{\ov{\lambda}})$\qquad};
\end{tikzpicture}
&
\begin{tikzpicture}[baseline=0]
\node at (0, -0.15) {
\qquad $\TTd''_{i,+1}(B_i\one_{\ov{\lambda}})$\qquad};
\end{tikzpicture}
&
\begin{tikzpicture}[baseline=0]
\node at (0, -0.15) {
\qquad $\TTd''_{i,+1}(B_{\tau i}\one_{\ov{\lambda}})$\qquad};
\end{tikzpicture}
\\
\hline 

\begin{tikzpicture}[baseline=0]
\node at (0, -0.15) {$c_{i,\tau i}=2$ };
\end{tikzpicture}
&
\begin{tikzpicture}[baseline=0]
\node at (0, -0.15) {$ B_i \one_{\ov{\bs_i\lambda}}$ };
\end{tikzpicture}
&
\begin{tikzpicture}[baseline=0]
\node at (0, -0.15) {$ B_i \one_{\ov{\bs_i\lambda}}$ };
\end{tikzpicture}
&
\begin{tikzpicture}[baseline=0]
\node at (0, -0.15) {$ B_i \one_{\ov{\bs_i\lambda}}$ };
\end{tikzpicture}
&
\begin{tikzpicture}[baseline=0]
\node at (0, -0.15) {$ B_i \one_{\ov{\bs_i\lambda}}$ };
\end{tikzpicture}
\\
\hline
\begin{tikzpicture}[baseline=0]
\node at (0, -0.15) {\large  $c_{i,\tau i}=0$ };
\end{tikzpicture}
&
\begin{tikzpicture}[baseline=0]
\node at (0, -0.15) {
$-q_i^{-\la_{i,\tau}} B_{\tau i}\one_{\ov{\bs_i\lambda}}$ 
};
\end{tikzpicture}
&
\begin{tikzpicture}[baseline=0]
\node at (0, -0.15) {
$ -q_i^{\la_{i,\tau}+2} B_{i}\one_{\ov{\bs_i\lambda}}$ };
\end{tikzpicture}
&
\begin{tikzpicture}[baseline=0]
\node at (0, -0.15) {
$-q_i^{\la_{i,\tau}-2} B_{\tau i}\one_{\ov{\bs_i\lambda}}$ 
};
\end{tikzpicture}
&
\begin{tikzpicture}[baseline=0]
\node at (0, -0.15) {
$ -q_i^{-\la_{i,\tau}} B_{i}\one_{\ov{\bs_i\lambda}}$ };
\end{tikzpicture}
\\
\hline
\begin{tikzpicture}[baseline=0]
\node at (0, -0.15) { $c_{i,\tau i}=-1$};
\end{tikzpicture}
&
\begin{tikzpicture}[baseline=0]
\node at (0, -0.15) {
$ q_i^{-\la_{i,\tau}+1}\vs_{i}  B_i \one_{\ov{\bs_i\lambda}}$ 
};
\end{tikzpicture}
&
\begin{tikzpicture}[baseline=0]
\node at (0, -0.15) {$ q_i^{\la_{i,\tau}+1}\vs_{\tau i} B_{\tau i} \one_{\ov{\bs_i\lambda}}$
};
\end{tikzpicture}
&

\begin{tikzpicture}[baseline=0]
\node at (0, -0.15) {
$ q_i^{\la_{i,\tau}-2}\vs_{\tau i}  B_i \one_{\ov{\bs_i\lambda}}$ 
};
\end{tikzpicture}
&
\begin{tikzpicture}[baseline=0]
\node at (0, -0.15) {$ q_i^{-\la_{i,\tau}-2}\vs_{i} B_{\tau i} \one_{\ov{\bs_i\lambda}}$
};
\end{tikzpicture}
\\
\hline
\end{tabular}
}
\end{table}


\begin{table}[H]
\caption{Rank two formulas for $\TTd'_{i,-1}(B_j\one_{\ov{\lambda}})$,  for  $i \in \Ifin$ and $j \neq i, \tau i$
\\
${}$ \qquad\qquad \qquad (Set $\alpha=-c_{ij}$, $\beta= -c_{\tau i,j}$)
}
     \label{table:rktwo-int}
\resizebox{5.5 in}{!}{%
\begin{tabular}{| c | c|}
\hline
\begin{tikzpicture}[baseline=0]
\node at (0, -0.15) {
$c_{i,\tau i}$};
\end{tikzpicture}
&
\begin{tikzpicture}[baseline=0]
\node at (0, -0.15) {
$\TTd'_{i,-1}( B_j\one_{\ov{\lambda}})$};
\end{tikzpicture}
\\
\hline
\begin{tabular}{c}
$ c_{i,\tau i}=2$
\end{tabular}
&
\begin{tikzpicture}[baseline=0]
\node at (0, -0.15) { 
$(-1)^{ \alpha} \sum\limits_{\substack{r+s=\alpha\\ \ov{r}=\ov{\lambda_i+1}}} (-q_i)^{r+\left\lfloor\frac{ 1-\alpha-\delta_{\ov{\la}_i,\ov{1}}}{2}\right\rfloor} 
\dv{i,\ov{\lambda_i+\alpha}}{s} B_j \dv{i,\ov{\lambda_i}}{r}\one_{\ov{\bs_i\lambda}}$};
 \node at (0, -1.5) {
$+(-1)^{ \alpha}\sum\limits_{u\geq 0}\sum\limits_{\substack{r+s+2u=\alpha
\\ \ov{r}=\ov{\lambda_i}}} (-q_i)^{r+u+\left\lfloor\frac{ 1-\alpha-\delta_{\ov{\la}_i,\ov{1}}}{2}\right\rfloor} 
\dv{i,\ov{\lambda_i+\alpha}}{s} B_j \dv{i,\ov{\lambda_i}}{r}\one_{\ov{\bs_i\lambda}}$};
\end{tikzpicture}
\\
\hline
$c_{i,\tau i}= 0$
&
\begin{tabular}{c}
$\sum\limits_{u=0}^{ \min(\alpha,\beta)} \sum\limits_{r=0}^{\alpha-u}
\sum\limits_{s=0}^{\beta-u} (-1)^{r+s+u+\beta}q_i^{r(-u+1)+s(u+1)+u-\beta-u\la_{i,\tau}}$
\\ 
$\qquad\qquad\qquad\qquad\qquad\qquad
\times B_i^{(\alpha-r-u)}B_{\tau i}^{(\beta-s-u)} B_j B_{\tau i}^{(s)}B_i^{(r)} \one_{\ov{\bs_i\lambda}}$
 \end{tabular}
\\
\hline
\begin{tikzpicture}[baseline=0]
\node at (0, -1.15) {
$c_{i,\tau i}= -1$};
\end{tikzpicture}
&
\begin{tikzpicture}[baseline=0]
\node at (0, -0.15) {
$\sum\limits_{u,w\geq 0} \sum\limits_{t=0}^{\beta-w} \sum\limits_{s=0}^{\beta+\alpha-w-u} \sum\limits_{r=0}^{\alpha-u} (-1)^{t+r+s+u+w+\alpha+\beta}
 q_i^{t(3u-2w+1) + r(u+1)}$};
 \node at (0, -1.15) {
$\qquad \times q_i^{s(w-2u+1)+uw+u+w-\alpha-\beta-\frac{u(u-1)+w(w+1)}{2}+(w-u)\la_{i,\tau}}$ };
 \node at (0, -2.15) {
 $\qquad\qquad\qquad\times\vs_{\tau i}^{w-u} B_i^{(\beta-w-t)}B_{\tau i}^{(\alpha+\beta-w-u-s)}B_i^{(\alpha-u-r)} B_j B_i^{(r)} B_{\tau i}^{(s)}  B_i^{(t)} \one_{\ov{\bs_i\lambda}}$};
\end{tikzpicture}
\\
\hline
\end{tabular}
}
\end{table}


\begin{table}[H]
\caption{Rank two formulas for $\TTd''_{i,+1}(B_j\one_{\ov{\lambda}})$, for $i\in \Ifin$ and $j \neq i,\tau i$
\qquad\qquad\qquad ${}$ }
     \label{table:rktwo-int2}
\resizebox{5.5 in}{!}{%
\begin{tabular}{| c | c|}
\hline
\begin{tikzpicture}[baseline=0]
\node at (0, -0.15) {
$c_{i,\tau i}$};
\end{tikzpicture}
&
\begin{tikzpicture}[baseline=0]
\node at (0, -0.15) {
$\TTd''_{i,+1}(B_j\one_{\ov{\lambda}})$};
\end{tikzpicture}
\\
\hline
\begin{tabular}{c}
$c_{i,\tau i}=2$
\end{tabular}
&
\begin{tikzpicture}[baseline=0]
\node at (0, -0.15) { 
$(-1)^{ \alpha}\sum\limits_{\substack{r+s= \alpha\\ \ov{s}=\ov{\lambda_i+1}}} (-q_i)^{-s-\left\lfloor\frac{1-\alpha-\delta_{\ov{\la}_i,\ov{1}}}{2}\right\rfloor} 
\dv{i,\ov{\lambda_i+ \alpha}}{r} B_j \dv{i,\ov{\lambda_i}}{s} \one_{\ov{\bs_i\lambda}}$}; 
\node at (0, -1.5) {
$+(-1)^{ \alpha}\sum\limits_{u\geq 0}\sum\limits_{\substack{r+s+2u= \alpha
\\ \ov{s}=\ov{\lambda_i}}} (-q_i)^{-s-u-\left\lfloor\frac{1-\alpha-\delta_{\ov{\la}_i,\ov{1}}}{2}\right\rfloor} 
\dv{i,\ov{\lambda_i+\alpha}}{r} B_j \dv{i,\ov{\lambda_i}}{s} \one_{\ov{\bs_i\lambda}}$}; 
\end{tikzpicture}
\\
\hline
$c_{i,\tau i}=0$
&
\begin{tabular}{c}
$\sum\limits_{u=0}^{ \min(\alpha,\beta)} \sum\limits_{r=0}^{\alpha-u}\sum\limits_{s=0}^{\beta-u} (-1)^{r+s+u+\beta}q_i^{(r-\alpha)(-u+1)+(s-\beta)(u+1)+u+\alpha+u\la_{i,\tau}}$
\\ 
$\qquad \qquad\qquad\qquad\qquad\times B_i^{(r)}B_{\tau i}^{(s)} B_j B_{\tau i}^{(\beta-s-u)}  B_i^{(\alpha-r-u)}\one_{\ov{\bs_i\lambda}}$
 \end{tabular}
\\
\hline
\begin{tikzpicture}[baseline=0]
 \node at (0, -1.15) {$c_{i,\tau i}=-1$};
\end{tikzpicture}
&
\begin{tikzpicture}[baseline=0]
\node at (0, -0.15) {
$\sum\limits_{u,w\geq 0} \sum\limits_{t=0}^{\beta-w} \sum\limits_{s=0}^{\beta+\alpha-w-u} \sum\limits_{r=0}^{\alpha-u} (-1)^{t+r+s+u+w+\alpha+\beta}
 q_i^{t(3u-2w+1) + r(u+1)+s(w-2u+1)}$};
 \node at (0, -1.15) {
$\times q_i^{uw+u+w-\alpha-\beta-\frac{u(u+1)+w(w-1)}{2}+(u-w)(\la_{i,\tau}+\alpha-\beta)} $};
 \node at (0, -2.15) {
 $\qquad\qquad\times\vs_{\tau i}^{u-w} B_i^{(t)}B_{\tau i}^{(s)}B_i^{(r)} B_j B_i^{(\alpha-u-r)} B_{\tau i}^{(\alpha+\beta-w-u-s)}B_i^{(\beta-w-t)}\one_{\ov{\bs_i\lambda}}$};
\end{tikzpicture}
\\
\hline
\end{tabular}
}
\end{table}
\vspace{1cm}

\appendix

\section{Proof of Proposition~\ref{prop:BBij2}}
\label{sec:proof}

In this appendix, we prove Proposition~\ref{prop:BBij2} by induction on $k$, with the base case for $k=0$ being trivial. We shall refer to \eqref{Com:qrootB2} and \eqref{Com:qrootB3} as \eqref{Com:qrootB2}$_k$ and \eqref{Com:qrootB3}$_{k}$, respectively. The inductive steps are carried out below in two steps: $\boxed{\eqref{Com:qrootB3}_k \Rightarrow\eqref{Com:qrootB2}_{k+1}}$ and $\boxed{\eqref{Com:qrootB2}_k + \eqref{Com:qrootB3}_{k-1}\Rightarrow\eqref{Com:qrootB3}_{k+1}}$.
We fix $\alpha=-c_{ij}$. 

\subsection{The implication ${\eqref{Com:qrootB3}_k \Rightarrow\eqref{Com:qrootB2}_{k+1}}$ }

 \begin{lemma}
\label{lem:BB1}
We have for any $m\geq 0$,
\begin{align*}
    b_{i,j;n,m} B_i = q_i^{2m-n \alpha} B_i b_{i,j;n,m} -[m+1]_i q_i^{2m-n \alpha} b_{i,j;n,m+1}
    +[n \alpha-m+1]_i b_{i,j;n,m-1}.
\end{align*}
\end{lemma}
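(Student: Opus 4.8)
Looking at the statement of Lemma~\ref{lem:BB1}, I need to prove a commutation-type recursive relation between $b_{i,j;n,m}$ and $B_i$. Let me plan the proof.

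The plan is to derive this directly from the defining recursive relation \eqref{def:splitBij} of the elements $b_{i,j;n,m}$. Recall that \eqref{def:splitBij} reads
\[
-q_i^{-(n c_{ij}+2m)}b_{i,j;n,m} B_i +B_i b_{i,j;n,m}
= [m+1]_i  b_{i,j;n,m+1} + [n c_{ij} +m-1]_i q_i^{-2m-n c_{ij}} b_{i,j;n,m-1}.
\]
Since $\alpha = -c_{ij}$, I would substitute $c_{ij}=-\alpha$ throughout, so that $-(nc_{ij}+2m) = n\alpha - 2m$ and $nc_{ij}+m-1 = -n\alpha+m-1$. The strategy is simply to solve \eqref{def:splitBij} algebraically for the term $b_{i,j;n,m}B_i$, i.e. isolate it on one side.

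First I would rewrite \eqref{def:splitBij} in terms of $\alpha$ as
\[
-q_i^{n\alpha-2m}\, b_{i,j;n,m} B_i + B_i b_{i,j;n,m}
= [m+1]_i\, b_{i,j;n,m+1} + [-n\alpha+m-1]_i\, q_i^{n\alpha-2m}\, b_{i,j;n,m-1}.
\]
Then I would multiply through by $-q_i^{-(n\alpha-2m)} = -q_i^{2m-n\alpha}$ to isolate $b_{i,j;n,m}B_i$, obtaining
\[
b_{i,j;n,m}B_i = q_i^{2m-n\alpha} B_i b_{i,j;n,m} - [m+1]_i\, q_i^{2m-n\alpha}\, b_{i,j;n,m+1} - [-n\alpha+m-1]_i\, b_{i,j;n,m-1}.
\]
The only remaining point is to check that the coefficient of $b_{i,j;n,m-1}$ matches the claimed $[n\alpha-m+1]_i$. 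This follows from the identity $[-N]_i = -[N]_i$ for the balanced $q$-integers: indeed $-[-n\alpha+m-1]_i = [n\alpha-m+1]_i$, which is exactly the claimed coefficient.

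This proof is entirely routine — there is no genuine obstacle, only careful bookkeeping of signs and the sign-flip $[-N]_i=-[N]_i$. The main thing to verify is the consistency of the conventions: that the recursion \eqref{def:splitBij} is being read with $c_{ij}=-\alpha$ and that the boundary conventions $b_{i,j;n,m}=0$ for $m<0$ carry over so the formula remains valid at $m=0$ (where the $b_{i,j;n,m-1}$ term vanishes). I would state these substitutions explicitly and present the one-line rearrangement, flagging the $q$-integer sign identity as the sole nontrivial manipulation.
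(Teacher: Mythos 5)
Your proof is correct and is exactly the paper's argument: the paper's proof of this lemma is simply ``follows by the recursive definition of $b_{i,j;n,m}$ from \eqref{def:splitBij},'' which is the rearrangement you carry out, including the sign flip $[-N]_i=-[N]_i$ and the boundary convention $b_{i,j;n,-1}=0$.
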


\begin{proof}
    Follows by the recursive definition of $b_{i,j;n,m}$ from \eqref{def:splitBij}. 
\end{proof}

\begin{lemma}
\label{lem:qbinom1}
We have
$[2b]_i\qbinom{a}{b}_{q_i^2}=[2a]_i\qbinom{a-1}{b-1}_{q_i^2},$
for any $a,b\in \N$.
\end{lemma}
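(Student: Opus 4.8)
The statement to prove is the $q$-binomial identity
\[
[2b]_i\qbinom{a}{b}_{q_i^2}=[2a]_i\qbinom{a-1}{b-1}_{q_i^2},
\qquad a,b\in\N.
\]
This is an elementary identity, so the plan is simply to expand both sides using the definition of the Gaussian binomial coefficient and verify they agree. The main point to keep straight is the distinction between the two quantum-integer conventions in play: on the left $[2b]_i$ and $[2a]_i$ are ordinary quantum integers in base $q_i$ (i.e. $[r]_i=(q_i^r-q_i^{-r})/(q_i-q_i^{-1})$), while the binomial coefficients are taken in base $q_i^2$, whose internal quantum integers $[\,\cdot\,]_{q_i^2}$ satisfy $[r]_{q_i^2}=(q_i^{2r}-q_i^{-2r})/(q_i^2-q_i^{-2})$. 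The bridge between the two is the relation $[2r]_i=[2]_i\,[r]_{q_i^2}$, which follows directly since $[2]_i=q_i+q_i^{-1}$ and $(q_i+q_i^{-1})(q_i^{2r}-q_i^{-2r})=(q_i^2-q_i^{-2})\cdot\tfrac{q_i^{2r}-q_i^{-2r}}{q_i-q_i^{-1}}\cdot\tfrac{q_i-q_i^{-1}}{q_i-q_i^{-1}}$; more cleanly, $[2r]_i(q_i-q_i^{-1})=q_i^{2r}-q_i^{-2r}=[r]_{q_i^2}(q_i^2-q_i^{-2})=[r]_{q_i^2}[2]_i(q_i-q_i^{-1})$.

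Using $[2b]_i=[2]_i[b]_{q_i^2}$ and $[2a]_i=[2]_i[a]_{q_i^2}$, the claimed identity reduces, after cancelling the common factor $[2]_i$, to
\[
[b]_{q_i^2}\qbinom{a}{b}_{q_i^2}=[a]_{q_i^2}\qbinom{a-1}{b-1}_{q_i^2}.
\]
This I would verify by writing out the binomials in terms of quantum factorials in base $q_i^2$:
\[
[b]_{q_i^2}\frac{[a]_{q_i^2}!}{[b]_{q_i^2}!\,[a-b]_{q_i^2}!}
=\frac{[a]_{q_i^2}!}{[b-1]_{q_i^2}!\,[a-b]_{q_i^2}!}
=[a]_{q_i^2}\frac{[a-1]_{q_i^2}!}{[b-1]_{q_i^2}!\,[a-b]_{q_i^2}!}
=[a]_{q_i^2}\qbinom{a-1}{b-1}_{q_i^2},
\]
where the first equality uses $[b]_{q_i^2}!=[b]_{q_i^2}[b-1]_{q_i^2}!$ and the third uses $[a]_{q_i^2}!=[a]_{q_i^2}[a-1]_{q_i^2}!$.

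There is no real obstacle here; the only thing to be careful about is the boundary behavior when $b=0$ (both sides vanish since $[0]_i=0$) and when $b>a$ (both binomials vanish), and the base-change factor $[2]_i$ must be nonzero to cancel, which holds generically over $\F$. I would state the base-change relation $[2r]_i=[2]_i[r]_{q_i^2}$ as a one-line observation, then present the two displayed reductions above as the proof.
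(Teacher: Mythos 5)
Your proof is correct and matches the paper's approach: the paper dispatches this lemma with "follows by a direct computation," and your computation — reducing via $[2r]_i=[2]_i[r]_{q_i^2}$ to the standard absorption identity $[b]_{q_i^2}\qbinom{a}{b}_{q_i^2}=[a]_{q_i^2}\qbinom{a-1}{b-1}_{q_i^2}$ — is exactly such a computation, with the boundary cases handled appropriately.
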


\begin{proof}
    Follows by a direct computation. 
\end{proof}

It follows by \eqref{recursion} that  
\begin{align}
\label{eq:BBij1}
b_{i,j;n,n\alpha}\dvi{ \ov{k+1} }{k} B_i = [k+1]_i b_{i,j;n,n\alpha}\dvi{ \ov{k+1} }{k+1}.
\end{align}
Using Lemma~\ref{lem:BB1} and $\eqref{Com:qrootB3}_k$, we compute the LHS of \eqref{eq:BBij1} as follows: 
\begin{align}\notag
&\quad b_{i,j;n,n\alpha}\dvi{ \ov{k+1} }{k} B_i
\\\notag
=& \; \sum_{x=0}^{n\alpha} q_i^{(k-x)(n\alpha-x) } \left(\sum_{y=0}^{\lfloor \frac{n\alpha-x}{2} \rfloor} (-1)^y q_i^{2y(\lfloor \frac{n\alpha-x}{2} \rfloor-n\alpha+x)}\qbinom{\lfloor \frac{n\alpha-x}{2} \rfloor}{y}_{q_i^2} \dvi{\ov{k+1+n \alpha}}{k-x-2y}\right) \times
\\\notag
&\times \big( q_i^{n\alpha-2x} B_i b_{i,j;n,n\alpha-x}-q_i^{n\alpha-2x}[n\alpha-x+1]_i b_{i,j;n,n\alpha-x+1}+[x+1]_i b_{i,j;n,n\alpha-x-1}\big)
\\
=& \; \sum_{x=0}^{n\alpha} c_x b_{i,j;n,n\alpha-x},
\label{eq:BBij2}
\end{align}
where the coefficient $c_x$ is given by
\begin{align*}
&c_x=q_i^{n \alpha-2x} q_i^{(k-x)(n\alpha-x) } \sum_{y=0}^{\lfloor \frac{n\alpha-x}{2} \rfloor} (-1)^y q_i^{2y(\lfloor \frac{n\alpha-x}{2}  \rfloor-n\alpha+x)}\qbinom{\lfloor \frac{n\alpha-x}{2} \rfloor}{y}_{q_i^2} \dvi{\ov{k+1+n \alpha}}{k-x-2y}B_i 
\\
& -q_i^{-k-1}[n\alpha-x ]_i q_i^{(k-x)(n\alpha-x) }\sum_{y=0}^{\lfloor \frac{n\alpha-x-1}{2} \rfloor} (-1)^y q_i^{2y\big(\lfloor \frac{n\alpha-x-1}{2}  \rfloor-n\alpha+x+1\big)}\qbinom{\lfloor \frac{n\alpha-x-1}{2} \rfloor}{y}_{q_i^2} \dvi{\ov{k+1+n \alpha}}{k-x-2y-1}
\\
&+[x]_i q_i^{(k-x+1)(n \alpha-x+1) }\sum_{y=0}^{\lfloor \frac{n\alpha-x+1}{2} \rfloor} (-1)^y q_i^{2y(\lfloor \frac{n\alpha-x+1}{2} \rfloor-n\alpha+x-1)}\qbinom{\lfloor \frac{n\alpha-x+1}{2} \rfloor}{y}_{q_i^2} \dvi{\ov{k+1+n \alpha}}{k-x-2y+1}.
\end{align*}
We simplify this formula by separating it into two cases (1)--(2) below. 

(1) $\boxed{n\alpha-x \text{ is even}}$. Then $k-x-2y$ and $k+1+n\alpha$ have different parities, and thus by \eqref{recursion} we have $\dvi{\ov{k+1+n \alpha}}{k-x-2y}B_i=[k-x-2y+1]_i\dvi{\ov{k+1+n \alpha}}{k-x-2y+1}$. By Lemma~\ref{lem:qbinom1}, we have
\begin{align*}
[n\alpha-x]_i\qbinom{\lfloor \frac{n\alpha-x-1}{2} \rfloor}{y-1}_{q_i^2}=[n\alpha-x ]_i\qbinom{\frac{n\alpha-x }{2} -1}{y-1}_{q_i^2}=[2y]_i\qbinom{ \frac{n\alpha-x }{2} }{y }_{q_i^2}.
\end{align*}
Hence, we have
\begin{align*}
c_x&=q_i^{n\alpha-2x} q_i^{(k-x)(n\alpha-x) } \sum_{y=0}^{  \frac{n\alpha-x}{2} } (-1)^y q_i^{y(-n\alpha+x)}[k-x-2y+1]_i \qbinom{ \frac{n\alpha-x}{2} }{y}_{q_i^2} \dvi{\ov{k+1+n \alpha}}{k-x-2y+1} 
\\
&\quad +q_i^{-k-1}  q_i^{(k-x)(n\alpha-x) } \sum_{y=1}^{ \frac{n\alpha-x}{2} } (-1)^y q_i^{(y-1)(-n\alpha+x)}[2y]_i\qbinom{ \frac{n\alpha-x}{2}}{y}_{q_i^2} \dvi{\ov{k+1+n \alpha}}{k-x-2y+1}
\\
&\quad+[x ]_i q_i^{(k-x+1)(n\alpha-x+1) }\sum_{y=0}^{ \frac{n\alpha-x }{2} } (-1)^y q_i^{2y( \frac{n\alpha-x }{2} -1-n\alpha+x)}\qbinom{  \frac{n\alpha-x }{2} }{y}_{q_i^2} \dvi{\ov{k+1+n \alpha}}{k-x-2y+1}
\\
&= q_i^{(k+1-x)(n\alpha-x) }[k+1]_i \sum_{y=0}^{  \frac{n\alpha-x}{2} } (-1)^y q_i^{y(-n\alpha+x-2)} \qbinom{ \frac{n\alpha-x}{2} }{y}_{q_i^2} \dvi{\ov{k+1+n \alpha}}{k-x-2y+1}.
\end{align*}
This formula, together with \eqref{eq:BBij1}--\eqref{eq:BBij2}, verifies the coefficient of $b_{i,j;n,n\alpha-x}$ in $\eqref{Com:qrootB2}_{k+1}$ for any $n\alpha-x$ even.

(2) $\boxed{n\alpha-x \text{ is odd}}$. Then $k-x-2y$ and $k+1+n\alpha$ have the same parity, and thus by \eqref{recursion} we have 
\begin{align*}
\dvi{\ov{k+1+n \alpha}}{k-x-2y}B_i=[k-x-2y+1]_i \dvi{\ov{k+1+n \alpha}}{k-x-2y+1} - q_i^{-1} [k-x-2y]_i \dvi{\ov{k+1+n \alpha}}{k-x-2y-1}.
\end{align*}
Hence, we have
\begin{align*}
c_x &=q_i^{(k+1-x)(n \alpha-x)-x } \sum_{y=0}^{  \frac{n\alpha-x-1}{2} } (-1)^y q_i^{y(x-n\alpha-1)}[k-x-2y+1]_i\qbinom{ \frac{n\alpha-x-1}{2} }{y}_{q_i^2} \dvi{\ov{k+1+n \alpha}}{k-x-2y+1} 
\\
&+q_i^{(k+1-x)(n\alpha-x) -x-1 } \sum_{y=1}^{  \frac{n\alpha-x+1}{2} } (-1)^y q_i^{(y-1)(x-n\alpha-1)}[k-x-2y+2]_i\times
\\
&\qquad\qquad \qquad\qquad\qquad\qquad\qquad \times \qbinom{ \frac{n\alpha-x-1}{2} }{y-1}_{q_i^2} \dvi{\ov{k+1+n \alpha}}{k-x-2y+1} 
\\
&+q_i^{-k-1}  q_i^{(k-x)(n\alpha-x) } [n\alpha-x]_i\sum_{y=1}^{ \frac{n\alpha-x+1}{2} } (-1)^y q_i^{(y-1)(-n\alpha+1+x)} \qbinom{ \frac{n\alpha-x-1}{2}}{y-1}_{q_i^2} \dvi{\ov{k+1+n \alpha}}{k-x-2y+1}
\\
&+[x ]_i q_i^{(k-x+1)(n\alpha-x+1) } \sum_{y=0}^{ \frac{n\alpha-x+1}{2} } (-1)^y q_i^{ y(-n\alpha-1+x)}\qbinom{  \frac{n\alpha-x+1}{2} }{y}_{q_i^2} \dvi{\ov{k+1+n \alpha}}{k-x-2y+1}.
\end{align*}
Applying the $q$-binomial identity 
$\tiny \qbinom{\frac{n\alpha-x-1}{2} }{y}_{q_i^2} =q_i^{2y }\qbinom{\frac{n\alpha-x+1}{2} }{y}_{q_i^2} -q_i^{n\alpha-x+1} \qbinom{ \frac{n\alpha-x-1}{2} }{y-1}_{q_i^2}$
to the first line of the above formula, we obtain
\begin{align*}
&c_x= 
 q_i^{n\alpha-x-k} q_i^{(k-x)(n\alpha-x) } \sum_{y=1}^{  \frac{n\alpha-x+1}{2} } (-1)^y q_i^{(y-1)(-n\alpha+1+x)} \qbinom{ \frac{n\alpha-x-1}{2} }{y-1}_{q_i^2} \dvi{\ov{k+1+n \alpha}}{k-x-2y+1} 
\\
& \quad + q_i^{(k-x)(n\alpha-x)-k-1 } [n\alpha-x]_i\sum_{y=1}^{ \frac{n\alpha-x+1}{2} } (-1)^y q_i^{(y-1)(-n\alpha+1+x)} \qbinom{ \frac{n\alpha-x-1}{2}}{y-1}_{q_i^2} \dvi{\ov{k+1+n \alpha}}{k-x-2y+1}
\\
&\quad + q_i^{(k-x+1)(n\alpha-x) } \sum_{y=0}^{ \frac{n\alpha-x+1}{2} } (-1)^y q_i^{ y(-n\alpha+1+x)}[k-2y+1]_i\qbinom{\frac{n\alpha-x+1}{2} }{y}_{q_i^2} \dvi{\ov{k+1+n \alpha}}{k-x-2y+1}
\\
&= q_i^{(k-x)(n\alpha-x) -k} [n\alpha-x+1]_i\sum_{y=1}^{ \frac{n\alpha-x+1}{2} } (-1)^y q_i^{(y-1)(-n\alpha+1+x)} \qbinom{ \frac{n\alpha-x-1}{2}}{y-1}_{q_i^2} \dvi{\ov{k+1+n \alpha}}{k-x-2y+1}
\\
&\quad + q_i^{(k-x+1)(n\alpha-x) } \sum_{y=0}^{ \frac{n\alpha-x+1}{2} } (-1)^y q_i^{ y(-n\alpha+1+x)}[k-2y+1]_i\qbinom{\frac{n\alpha-x+1}{2} }{y}_{q_i^2} \dvi{\ov{k+1+n \alpha}}{k-x-2y+1}
\\
& \overset{\text{Lem}\; \ref{lem:qbinom1}}{=} q_i^{(k-x+1)(n\alpha-x)-k-1}  \bigg(\sum_{y=0}^{ \frac{n\alpha-x+1}{2} } (-1)^y q_i^{y (-n\alpha+1+x)} [2y]_i \qbinom{ \frac{n\alpha-x+1}{2}}{y }_{q_i^2} \dvi{\ov{k+1+n \alpha}}{k-x-2y+1}\bigg)
\\
&\quad + q_i^{(k-x+1)(n\alpha-x) } \bigg(\sum_{y=0}^{ \frac{n\alpha-x+1}{2} } (-1)^y q_i^{ y(-n\alpha+1+x)}[k-2y+1]_i\qbinom{\frac{n\alpha-x+1}{2} }{y}_{q_i^2} \dvi{\ov{k+1+n\alpha}}{k-x-2y+1}\bigg)
\\
&= q_i^{(k-x+1)(n\alpha-x) } [k +1]_i\bigg(\sum_{y=0}^{ \frac{n\alpha-x+1}{2} } (-1)^y q_i^{ y(-n\alpha-1+x)}\qbinom{\frac{n\alpha-x+1}{2} }{y}_{q_i^2} \dvi{\ov{k+1+n \alpha}}{k-x-2y+1}\bigg).
\end{align*}
This formula, together with \eqref{eq:BBij1}--\eqref{eq:BBij2}, verifies the coefficient of $b_{i,j;n,n\alpha-x}$ in the formula $\eqref{Com:qrootB2}_{k+1}$ for any $n\alpha-x$ odd.

Therefore, combining the above two cases, we have proved $\eqref{Com:qrootB3}_k \Rightarrow\eqref{Com:qrootB2}_{k+1}$.

\subsection{The implication ${\eqref{Com:qrootB2}_k + \eqref{Com:qrootB3}_{k-1}\Rightarrow\eqref{Com:qrootB3}_{k+1}}$}


It follows by \eqref{recursion} that  
\begin{align}
\label{eq:BBij3}
 [k+1]_i b_{i,j;n,n\alpha}\dvi{ \ov{k} }{k+1}=b_{i,j;n,n\alpha}\dvi{ \ov{k} }{k} B_i + q_i^{-1} [k]_i b_{i,j;n,n\alpha}\dvi{ \ov{k} }{k-1}.
\end{align}
Using Lemma~\ref{lem:BB1}, $\eqref{Com:qrootB2}_k$ and $\eqref{Com:qrootB3}_{k-1}$, we compute the RHS of \eqref{eq:BBij3} as follows:
\begin{align}\notag
&\quad b_{i,j;n,n\alpha}\dvi{ \ov{k} }{k} B_i+ q_i^{-1} [k]_i b_{i,j;n,n\alpha}\dvi{ \ov{k} }{k-1}
\\\notag
&= \sum_{x=0}^{n\alpha} q_i^{(k-x)(n\alpha-x) } \bigg(\sum_{y=0}^{\lceil \frac{n\alpha-x}{2} \rceil} (-1)^y q_i^{2y(\lceil \frac{n\alpha-x}{2} \rceil-1-n\alpha+x)}\qbinom{\lceil \frac{n\alpha-x}{2} \rceil}{y}_{q_i^2} \dvi{\ov{k+n \alpha}}{k-x-2y}\bigg) \times
\\\notag
&\qquad \times \big( q_i^{n\alpha-2x} B_i b_{i,j;n,n\alpha-x}-q_i^{n\alpha-2x}[n\alpha-x+1]_i b_{i,j;n,n\alpha-x+1}+[x+1]_i b_{i,j;n,n\alpha-x-1}\big)
\\\notag
&+[k]_i \sum_{x=0}^{n\alpha} q_i^{(k-1-x)(n\alpha-x)-1} \bigg(\sum_{y=0}^{\lfloor \frac{n\alpha-x}{2} \rfloor} (-1)^y q_i^{2y(\lfloor \frac{n\alpha-x}{2} \rfloor -n\alpha+x)}\qbinom{\lfloor \frac{n\alpha-x}{2} \rfloor}{y}_{q_i^2} \dvi{\ov{k+n \alpha}}{k-1-x-2y}\bigg)\times
\\\notag
& \qquad  \times b_{i,j;n,n\alpha-x}
\\
&= \sum_{x=0}^{n\alpha} d_x b_{i,j;n,n\alpha-x},
\label{eq:BBij4}
\end{align}
where the coefficient $d_x$ of $b_{i,j;n,n\alpha-x}$ is given by
\begin{align*}
&d_x=q_i^{n\alpha-2x} q_i^{(k-x)(n\alpha-x) } \bigg(\sum_{y=0}^{\lceil \frac{n\alpha-x}{2} \rceil} (-1)^y q_i^{2y(\lceil \frac{n\alpha-x}{2}  \rceil-1-n\alpha+x)}\qbinom{\lceil \frac{n\alpha-x}{2} \rceil}{y}_{q_i^2} \dvi{\ov{k+n \alpha}}{k-x-2y}B_i\bigg)
\\
& -[n\alpha-x]_i q_i^{(k-x)(n\alpha-x)-k-1 } \times
\\
&\qquad\quad\times\bigg(\sum_{y=0}^{\lceil \frac{n\alpha-x-1}{2} \rceil} (-1)^y q_i^{2y(\lceil \frac{n\alpha-x-1}{2}  \rceil-n\alpha+x)}\qbinom{\lceil \frac{n\alpha-x-1}{2} \rceil}{y}_{q_i^2} \dvi{\ov{k+n \alpha}}{k-x-2y-1}\bigg)
\\
&+[x]_i q_i^{(k-x+1)(n\alpha-x+1) } \bigg(\sum_{y=0}^{\lceil \frac{n\alpha-x+1}{2} \rceil} (-1)^y q_i^{2y(\lceil \frac{n\alpha-x+1}{2} \rceil-n\alpha+x-2)}\qbinom{\lceil \frac{n\alpha-x+1}{2} \rceil}{y}_{q_i^2} \dvi{\ov{k+n \alpha}}{k-x-2y+1}\bigg)
\\
&+[k]_i q_i^{(k-1-x)(n\alpha-x)-1} \bigg(\sum_{y=0}^{\lfloor \frac{n\alpha-x}{2} \rfloor} (-1)^y q_i^{2y(\lfloor \frac{n\alpha-x}{2} \rfloor -n\alpha+x)}\qbinom{\lfloor \frac{n\alpha-x}{2} \rfloor}{y}_{q_i^2} \dvi{\ov{k+n \alpha}}{k-1-x-2y}\bigg).
\end{align*}
We simplify this formula for $d_x$ by separating it into two cases (1)--(2) below.

(1) $\boxed{n\alpha-x \text{ is odd}}$. In this case, $k-x-2y$ and $k+n\alpha$ has different parities, and thus by \eqref{recursion} we have $\dvi{\ov{k+n \alpha}}{k-x-2y}B_i=[k-x-2y+1]_i\dvi{\ov{k+n \alpha}}{k-x-2y+1}$.

We simplify $d_x$ as
\begin{align*}
d_x&=  q_i^{(k-x+1)(n\alpha-x)-x } \sum_{y=0}^{ \frac{n\alpha-x+1}{2} } (-1)^y q_i^{ y( x-1-n\alpha)}[k-x-2y+1]_i\qbinom{ \frac{n\alpha-x+1}{2} }{y}_{q_i^2} \dvi{\ov{k+n \alpha}}{k-x-2y+1}
\\
&+q_i^{-k-1}[n\alpha-x ]_i q_i^{(k-x)(n\alpha-x) } \sum_{y=1}^{ \frac{n\alpha-x+1}{2} } (-1)^y q_i^{(y-1)( -1-n\alpha+x)}\qbinom{ \frac{n\alpha-x-1}{2} }{y-1}_{q_i^2} \dvi{\ov{k+n \alpha}}{k-x-2y+1}
\\
&+[x ]_i q_i^{(k-x+1)(n\alpha-x+1) }  \sum_{y=0}^{ \frac{n\alpha-x+1}{2} } (-1)^y q_i^{y( -n\alpha+x-3)}\qbinom{ \frac{n\alpha-x+1}{2} }{y}_{q_i^2} \dvi{\ov{k+n \alpha}}{k-x-2y+1}
\\
&-[k]_i q_i^{(k-1-x)(n\alpha-x)-1} \sum_{y=1}^{ \frac{n\alpha-x+1}{2} } (-1)^y q_i^{(y-1)(-1 -n\alpha+x)}\qbinom{  \frac{n\alpha-x-1}{2}}{y-1}_{q_i^2} \dvi{\ov{k+n \alpha}}{k-x-2y+1}
\\
&= q_i^{(k-x+1)(n\alpha-x) } \sum_{y=0}^{ \frac{n\alpha-x+1}{2} } (-1)^y q_i^{ y( -1-n\alpha+x)}[k-2y+1]_i\qbinom{ \frac{n\alpha-x+1}{2} }{y}_{q_i^2} \dvi{\ov{k+n \alpha}}{k-x-2y+1}
\\
&+q_i^{-k-1}[n\alpha-x ]_i q_i^{(k-x+1)(n\alpha-x)+1} \sum_{y=1}^{ \frac{n\alpha-x+1}{2} } (-1)^y q_i^{y( -1-n\alpha+x)}\qbinom{ \frac{n\alpha-x-1}{2} }{y-1}_{q_i^2} \dvi{\ov{k+n \alpha}}{k-x-2y+1}
\\
&-[k]_i q_i^{(k-1-x)(n\alpha-x)-1} \sum_{y=1}^{ \frac{n\alpha-x+1}{2} } (-1)^y q_i^{(y-1)(-1 -n\alpha+x)}\qbinom{\frac{\alpha-x-1}{2}}{y-1}_{q_i^2} \dvi{\ov{k+n \alpha}}{k-x-2y+1}.
\end{align*}
Rewriting the first line via $[k-2y+1]_i=q_i^{-2y}[k+1]_i-q_i^{-k-1}[2y]_i$ and applying Lemma~\ref{lem:qbinom1}, we obtain
\begin{align*}
d_x&=  q_i^{(k-x+1)(n\alpha-x) }[k+1]_i \sum_{y=0}^{ \frac{n\alpha-x+1}{2} } (-1)^y q_i^{ y( -3-n\alpha+x)}\qbinom{ \frac{n\alpha-x+1}{2} }{y}_{q_i^2} \dvi{\ov{k+n \alpha}}{k-x-2y+1}
\\
&\;- q_i^{(k-1-x)(n\alpha-x)+x-n\alpha-1}[k+1]_i \sum_{y=1}^{ \frac{n\alpha-x+1}{2} } (-1)^y q_i^{y(-1 -n\alpha+x)}\qbinom{\frac{n\alpha-x-1}{2}}{y-1}_{q_i^2} \dvi{\ov{k+n \alpha}}{k-x-2y+1}.
\end{align*} 
Finally, applying the following $q$-binomial identity to this formula of $d_x$
\begin{align*}
\qbinom{ \frac{n\alpha-x+1}{2} }{y}_{q_i^2}=q_i^{2y}\qbinom{ \frac{n\alpha-x-1}{2} }{y}_{q_i^2}+q_i^{x-n\alpha+2y-1}\qbinom{ \frac{n\alpha-x-1}{2} }{y-1}_{q_i^2},
\end{align*}
we see that $d_x/[k+1]_i$ equals the coefficient of $b_{i,j;n,n\alpha-x}$ in $\eqref{Com:qrootB3}_{k+1}$ for any $n\alpha-x$ odd.

(2) $\boxed{n\alpha-x \text{ is even}}$. In this case, $k-x-2y$ and $k+n\alpha$ has the same parity, and thus by \eqref{recursion} we have 
\begin{align*}
\dvi{\ov{k+n \alpha}}{k-x-2y}B_i =[k-x-2y+1]_i\dvi{\ov{k+n \alpha}}{k-x-2y+1} - q_i^{-1}[k-x-2y]_i\dvi{\ov{k+n \alpha}}{k-x-2y-1}.
\end{align*}

We simplify $d_x$ as
\begin{align*}
&d_x=  q_i^{(k-x+1)(n\alpha-x)-x} \sum_{y=0}^{ \frac{n\alpha-x}{2} } (-1)^y q_i^{ y( -2-n\alpha+x)}[k-x-2y+1]_i\qbinom{\frac{n\alpha-x}{2}}{y}_{q_i^2} \dvi{\ov{k+n \alpha}}{k-x-2y+1}
\\
&\quad+q_i^{(k-x+2)(n\alpha-x)-x+1 } \sum_{y=1}^{ \frac{n\alpha-x}{2}+1 } (-1)^y q_i^{y(x-2-n\alpha)}[k-x-2y+2]_i\qbinom{\frac{n\alpha-x}{2}}{y-1}_{q_i^2} \dvi{\ov{k+n \alpha}}{k-x-2y+1}
\\
& \quad+q_i^{-k-1}[n\alpha-x]_i q_i^{(k-x)(n\alpha-x) }  \sum_{y=1}^{\frac{n\alpha-x}{2}+1} (-1)^y q_i^{ (y-1)( -n\alpha+x)}\qbinom{ \frac{n\alpha-x}{2}}{y-1}_{q_i^2} \dvi{\ov{k+n \alpha}}{k-x-2y+1}
\\
&\quad+[x]_i q_i^{(k-x+1)(n\alpha-x+1) } \sum_{y=0}^{ \frac{n\alpha-x}{2}+1} (-1)^y q_i^{ y(-2-n\alpha+x)}\qbinom{\frac{n\alpha-x}{2}+1}{y}_{q_i^2} \dvi{\ov{k+n \alpha}}{k-x-2y+1}
\\
&\quad-[k]_i q_i^{(k-1-x)(n\alpha-x)-1} \sum_{y=1}^{ \frac{n\alpha-x}{2}+1} (-1)^y q_i^{ (y-1)( -n\alpha+x)}\qbinom{\frac{n\alpha-x}{2}}{y-1}_{q_i^2} \dvi{\ov{k+n \alpha}}{k -x-2y+1} .
\end{align*}
Applying the following $q$-binomial identity 
\begin{align*}
\qbinom{\frac{n\alpha-x}{2}+1}{y}_{q_i^2}=q_i^{2y} \qbinom{\frac{n\alpha-x}{2} }{y}_{q_i^2}+ q_i^{x-n\alpha+2y-2} \qbinom{\frac{n\alpha-x}{2} }{y-1}_{q_i^2},
\end{align*}
to the above formula of $d_x$, we obtain
\begin{align*}
d_x& =  q_i^{(k+1-x)(n\alpha-x) } \sum_{y=0}^{ \frac{n\alpha-x}{2} } (-1)^y q_i^{ y(-n\alpha+x)} d'_{x,y} \qbinom{\frac{n\alpha-x}{2}}{y}_{q_i^2} \dvi{\ov{k+n \alpha}}{k-x-2y+1}
\end{align*}
where 
\begin{align*}
d'_{x,y}&=q_i^{-x-2y}[k-x-2y+1]_i+q_i^{k-x+1}[x]_i +
\\
&\quad + \frac{[2y]_i}{[n\alpha-x-2y+2]_i}\big( q_i^{n\alpha-2x-2y+1}[k-x-2y+2]_i+q_i^{ -k-1}[n\alpha-x ]_i
\\
&\qquad \qquad\qquad \qquad\qquad +q_i^{k-n\alpha-1} [x]_i -q_i^{x-n\alpha-1}[k]_i\big)
\\
& =[k+1]_i.
\end{align*}
This formula shows that $d_x/[k+1]_i$ equals the coefficient of $b_{i,j;n,n\alpha-x}$ in $\eqref{Com:qrootB2}_{k+1}$ for any $n\alpha-x$ even as desired.

Combining the above two cases, we have shown $\eqref{Com:qrootB2}_k + \eqref{Com:qrootB3}_{k-1}\Rightarrow\eqref{Com:qrootB3}_{k+1}$.


\end{document}